\newcommand{\bB}{{\mathbb B}}
\newcommand{\bC}{{\mathbb C}}
\newcommand{\bE}{{\mathbb E}}
\newcommand{\bI}{{\mathbb I}}
\newcommand{\bP}{{\mathbb P}}
\newcommand{\bR}{{\mathbb R}}
\newcommand{\bT}{{\mathbb T}}
\newcommand{\bZ}{{\mathbb Z}}
\newcommand{\bN}{{\mathbb N}}
\newcommand{\bS}{{\mathbb S}}
\newcommand{\bF}{{\mathbb F}}
\newcommand{\bG}{{\mathbb G}}
\newcommand{\bH}{{\mathbb H}}
\newcommand{\bX}{{\mathbb X}}
\newcommand{\bY}{{\mathbb Y}}
\newcommand{\bW}{{\mathbb W}}
\newcommand{\bM}{{\mathbb M}}
\newcommand{\cF}{\mathcal F}
\newcommand{\cA}{\mathcal A}
\newcommand{\cB}{\mathcal B}
\newcommand{\cC}{\mathcal C}
\newcommand{\cD}{\mathcal D}
\newcommand{\cS}{\mathcal S}
\newcommand{\cM}{\mathcal M}
\newcommand{\cO}{\mathcal O}
\newcommand{\cP}{\mathcal P}
\newcommand{\cR}{\mathcal R}
\newcommand{\cT}{\mathcal T}
\newcommand{\scrC}{\EuScript C}
\newcommand{\scrV}{\EuScript V}
\newcommand{\Rbar}{\overline{\cR}}
\newcommand{\Face}{\operatorname{Face}}
\newcommand{\Flow}{\operatorname{Flow}}
\newcommand{\Bord}{\operatorname{Bord}}
\newcommand{\Ob}{\operatorname{Ob}}
\newcommand{\id}{\operatorname{id}}
\newcommand{\colim}{\operatorname*{colim}}
\newcommand{\hocolim}{\operatorname{hocolim}}
\newcommand{\codim}{\operatorname{codim}}
\newcommand{\fr}{\operatorname{fr}}
\newcommand{\Map}{\operatorname{Map}}
\newcommand{\Set}{\operatorname{Set}}
\newcommand{\op}{\operatorname{op}}
\newcommand{\Fun}{\operatorname{Fun}}
\newcommand{\Mbar}{\overline{\cM}}
\newcommand{\Sbar}{\overline{\cS}}
\newcommand{\Fbar}{\overline{\cF}}
\newcommand{\Stab}{\operatorname{Stab}}
\newcommand{\coker}{\operatorname{coker}}
\newcommand{\Cone}{\operatorname{C}}
\def\co{\colon\thinspace}
\renewcommand{\th}{\textsuperscript{th} \hspace{.01pt}}
\newcommand{\htp}{\simeq}
\newcommand{\ob}{\textrm{ob}}
\newcommand{\Ho}{\textrm{Ho}}
\newcommand{\Orb}{\textrm{Orb}}
\newcommand{\strict}{\textrm{strict}}
\newcommand\noloc{%
  \nobreak
  \mspace{6mu plus 1mu}
  {:}
  \nonscript\mkern-\thinmuskip
  \mathpunct{}
  \mspace{2mu}
}
\newtheorem{thm}{Theorem}[section]
\newtheorem{cor}[thm]{Corollary}
\newtheorem{lem}[thm]{Lemma}
\newtheorem{prop}[thm]{Proposition}
\newtheorem{defin}[thm]{Definition}
\newtheorem{def-lem}[thm]{Definition-Lemma}
\theoremstyle{remark}
\newtheorem{rem}[thm]{Remark}
\newtheorem{example}[thm]{Example}
\newtheorem{assu}{Assumption}
\numberwithin{equation}{section}
\title{Foundation of Floer homotopy theory I: Flow categories}
\author{Mohammed Abouzaid}
\author{Andrew J. Blumberg}
\begin{document}

\begin{abstract}
We construct a stable infinity category with objects flow categories
and morphisms flow bimodules; our construction has many flavors,
related to a choice of bordism theory, and we discuss in particular
framed bordism and the bordism theory of complex oriented derived
orbifolds. In this setup, the construction of homotopy types
associated to Floer-theoretic data is immediate: the moduli spaces of
solutions to Floer's equation assemble into a flow category with
respect to the appropriate bordism theory, and the associated Floer
homotopy types arise as suitable mapping spectra in this category.
The definition of these mapping spectra is sufficiently explicit to
allow a direct interpretation of the Floer homotopy groups as Floer
bordism groups. In the setting of framed bordism, we show that the
category we construct is a model for the category of spectra.  We
implement the construction of Floer homotopy types in this new
formalism for the case of Hamiltonian Floer theory.
\end{abstract}

\maketitle
\tableofcontents

\section{Introduction}
\label{sec:introduction}

With very few exceptions, the applications of Gromov and Floer's theories of pseudo-holomorphic curves to symplectic and low dimensional topology have come from extracting \emph{virtual counts} (either integral, rational, or with $\bF_2$-coefficients) from those curves which have expected dimension $0$. Floer \cite{Floer1989} envisioned early on  that, whenever the moduli spaces of pseudo-holomorphic curves can be perturbed to be manifolds with corners, the virtual counts can be refined to take values in appropriate generalized cohomology theories, with the usual counts corresponding to ordinary cohomology. The first concrete proposal along these lines was given by Cohen, Jones, and Segal~\cite{Cohen1995}, who introduced the notion of a framed flow category that encapsulates the basic geometric output of Floer theory in the simplest situations, and associated a (stable) homotopy type to such a datum, which, in geometric situations, is called the \emph{Floer homotopy type}.

The appearance of significant applications of the Floer homotopy type
to problems in symplectic topology~\cite{Abouzaid2021a,Abouzaid2021b}
has revealed that Cohen, Jones, and Segal's formulation, while yielding the correct answer, is too cumbersome to
support the further development of the subject. This is the first of a
series of papers whose purpose is to provide new foundations for the
study of the Floer homotopy type which will both support the study of
(algebraic) operations and extend to more general geometric contexts,
eventually incorporating Floer theory for arbitrary immersed
Lagrangians. The fundamental point of view we take is encapsulated by
the slogan that the construction of the Floer homotopy type from
geometric data is a tautology.

The main task of this paper is to prove that suitable collections of
flow categories are the objects of a category enriched in spectra.
The Floer homotopy type then arises as maps from the unit object in
this category.  To carry this out in a fashion which implements our
slogan, the work of the paper is to construct a bordism-theoretic
model for the category of spectra, and more generally for categories
of modules over certain bordism rings that appear naturally in
pseudo-holomorphic curve theory. A surprising consequence of this
reworking is that it becomes possible to define the Floer homotopy
groups, reinterpreted as Floer bordism groups, without needing to
appeal to any homotopy-theoretic machinery, which we expect to be
particularly useful for applications.

Subsequent papers in this series will construct multiplicative
structures on these categories, for the purpose of formulating
algebraic structures at the spectral level.

\subsection{Derived orbifold bordism}
\label{sec:bordism-theories}

Our main constructions are formulated in terms 
of \emph{derived orbifold bordism}, since this will allow maximal applicability to geometric problems where the moduli spaces arising from Floer theory are not manifolds. Before introducing this notion, recall that an (effective) orbifold is a topological space equipped with charts given by the quotient of an open subset of an effective representation of a finite group, and transition maps given by embeddings of such open subsets. 
This notion goes back to Satake~\cite{Satake1956}, and we will use this point of view for all definitions, though some of our proofs will appeal to results that require the more sophisticated stack-theoretic approach to orbifolds.

A \emph{derived orbifold} $\bX$ consists of an orbifold vector bundle $T^- \bX$ on an orbifold $X$ which we refer to as the \emph{obstruction bundle}, together with a section $s$ of this vector bundle. We write $T^+ \bX$ for the tangent bundle of $X$, and define the \emph{tangent bundle of $\bX$} to be the virtual vector bundle
\begin{equation}
  T \bX \equiv (T^+ \bX, T^- \bX).
\end{equation}
We say that such a derived orbifold is {\em proper} when the zero-locus of $s$ is compact. More generally, we say that $\bX$ is proper over a base $B$ if the zero locus is proper over $B$; this notion will be essential in formulating Floer homotopy in a setting adequate for application to general symplectic manifolds, because moduli spaces of pseudo-holomorphic curves are proper over the positive real axis $[0,\infty)$ via the map which records their energy.
\begin{rem}
The reader familiar with Floer theory may reasonably object at this point that the outcome of Floer theory is a derived orbifold which is (i) not necessarily smooth, and (ii) not necessarily effective. Regarding the first point, the techniques of equivariant stable smoothing (due to Lashof \cite{Lashof1979}) 
  yield smooth structures which are unique after stabilization. For
  the second point, as noted by Bai and Xu~\cite{Bai2022}, Pardon's
  result on the existence of enough vector bundles~\cite{Pardon2019}
  implies that every derived orbifold is equivalent to one whose
  ambient orbifold is effective, so that little is lost by restricting
  to this class. Moreover, in geometric applications, one need not
  appeal to Pardon's abstract result, as the methods of
  \cite{Abouzaid2021b} naturally produce vector bundles whose total
  space is effective. 
\end{rem}

There are two fundamental notions that are essential to our use of derived orbifolds in the study of Floer theory:
\begin{enumerate}
\item {\em Equivalence of derived orbifolds}:  the total space of an
orbifold vector bundle $E$ over $\bX$ underlies a derived orbifold
$\bE$ with associated vector bundle given by the direct sum of the
pullback of $T^- \bX$ and of $E$, and with section given by the direct
sum of $s$ with the identity section of $E$.  We declare $\bE$ to be
equivalent to $\bX$.
\item {\em Null-bordism of a proper derived orbifold}: this is specified by a bounding
orbifold, equipped with an extension of the vector bundle and of the
section, again with the assumption that the $0$-locus is proper.
\end{enumerate}

The derived orbifold bordism groups are then given by the quotient of the
set of equivalence classes of proper derived orbifolds by those which
are null-bordant; addition in the group is given by disjoint union.

Unfortunately, the theory described in the previous paragraph is
trivial, since the product of an orbifold with the quotient of the
interval $[-1,1]$ by its nontrivial involution defines a
null-bordism. Geometric applications thus rely on recording two
additional pieces of data: (i) a choice of family of \emph{isotropy groups,}
which prescribes the possible isomorphism classes of groups that appear in
charts, and (ii) \emph{tangential structures} which record orientation
data on the tangent space. For example, one defines \emph{oriented
derived orbifolds} by recording the additional datum of an orientation
on $ T \bX$, and requiring equivalences and cobordisms to preserve
this structure. In a different direction, requiring that all isotropy
groups be trivial recovers ordinary bordism groups~\cite{Spivak2010}.  

\begin{rem}
The first appearance of bordism groups of derived orbifolds in the
literature is Joyce's formulation of Kuranishi bordism~\cite{Joyce2019}, which was identified as the coefficients of a
generalized homology theory by Pardon~\cite{Pardon2019}.
\end{rem}

We shall focus our discussion on the following two examples:
\begin{enumerate}
\item Framed manifold bordism: this is the theory where all isotropy groups are trivial, and the tangent space is assumed to be stably trivial. Under the Pontryagin-Thom construction, this corresponds to the initial cohomology theory given by stable cohomotopy.
\item Periodic complex-oriented derived orbifold bordism: in this theory, finite isotropy groups are allowed, and the tangent bundle is assumed to be stably complex. The importance of this example is that we expect that every closed symplectic manifold admits a \emph{Fukaya category} with such coefficients (though not all Lagrangians will support nontrivial objects). As a preliminary result, we prove in Appendix \ref{sec:hamilt-floer-theory-1} that every Hamiltonian dynamical system on a closed symplectic manifold determines a flow category equipped with this structure, which should be considered as the \emph{Hamiltonian Floer bordism type}.
\end{enumerate}

\begin{rem} \label{rem:complex_manifold_bordism}
  The case of complex cobordism (in its ordinary or periodic flavor), it particularly easy to extract from our construction: in the formulation of derived orbifold bordism, one has to specify which isomorphism classes of stabilisers are allowed (see Definition \ref{def:family_of_groups}). By admitting only the trivial group, one obtains the bordism theory of derived (complex oriented) manifolds which, because of transversality, is equivalent to the usual bordism theory of (complex oriented) manifolds.
\end{rem}

\subsection{Flow categories and bimodules}
\label{sec:flow-categories-1}

The definition of the bordism groups in
Section~\ref{sec:bordism-theories} uses orbifolds as generators and
orbifolds with boundary as relations; in the case of framed manifolds,
this recovers the stable homotopy groups of spheres. The starting
point of our study of flow categories is the idea that, by considering
framed manifolds with corners, we can extend this description of
homotopy groups to a description of stable homotopy types and beyond that to 
the spaces of stable maps.  In the case of other classical bordism
theories, we expect to obtain a model for the category of modules over
the corresponding bordism ring spectrum. In the bulk of the paper, we
handle homotopy coherence questions by implementing these ideas using
quasicategories, but we shall postpone discussing such technicalities
for now.

In order to explain the previous paragraph, recall that Cohen, Jones, and Segal~\cite{Cohen1995} defined a \emph{framed flow category} to be a (non-unital) category $\bF$ in which the morphism spaces are smooth manifold with corners, equipped with stable framings, so that the facets of each morphism space are given by the images of the composition maps
\begin{equation} 
  \bF(p,q) \times \bF(q,r) \to \bF(p,r)
\end{equation}
which are assumed to be compatible with the stable framings.  This
notion extends to any of the bordism theories that we study, since
derived orbifolds with corners are closed under products.  We refer to
these generalized flow categories as \emph{structured flow
categories}, with the understanding that we fix a structure of
interest (e.g., framings or stable complex structures), and all the
maps that we consider implicitly respect the chosen structure. Note
that irrespective of the structure considered, there is always a
distinguished flow category which we denote $\ast$, which has a single
object and no morphisms, and which we refer to as the \emph{unit}.

\begin{rem} \label{rem:properness_flow_category}
It follows formally from the definition of a framed flow category that
the condition that $ \bF(p,q)$ be nonempty defines a partial order
on the set of objects; if we assume that $\bF(p,q)$ and $\bF(q,p)$
are both nonempty, we can conclude that $\bF(p,p) $ is non-empty, but since
the product $ \bF(p,p) \times \bF(p,p)$ must be a boundary stratum of
$\bF(p,p)$, the dimension would have to be unbounded, which
contradicts the hypothesis of properness. 

In practice, our definition of a flow category will be more
complicated to account for the fact that moduli spaces in Floer theory
are compact only in very restricted contexts
(c.f. Section~\ref{sec:flow-categories}). Instead, as alluded to
earlier, the morphism spaces are equipped with a proper map to the set
of non-negative real numbers, and all operations respect this map. One
can specialize to the notion discussed in this introduction by
requiring that this morphism vanish.
\end{rem}

In order to construct a category whose objects are flow categories, we
have to describe the morphisms. The starting point is that a morphism from a flow category
$\bF_1$ to a flow category $\bF_2$ is given by a generalized functor
encoded as what we call a {\em flow bimodule}: this is specified by an
assignment to each object $p_1$ of $\bF_1$ and to each object $p_2$ of
$\bF_2$ a derived orbifold with corners $\bF_{12}(p_1, q_2)$, whose
facets are enumerated by objects of $\bF_1$ and $\bF_2$, and which are
respectively equipped with equivalences
\begin{align} \label{eq:composition_maps_Flow}
  \bF_1(p_1,q_1) \times \bF_{12}(q_1,q_2) & \to \partial^{q_1}
  \bF_{12}(p_1,q_2) \\
 \label{eq:composition_maps_Flow-2}
  \bF_{12}(p_1,p_2) \times \bF_{2}(p_2,q_2) & \to \partial^{p_2}
  \bF_{12}(p_1,q_2)
\end{align}
that satisfy natural associativity conditions. The reader with
experience in Floer theory will recognise this structure as the one
appearing when studying continuation maps; in fact, a similar
construction, in the setting of Kuranishi spaces, was considered
in~\cite{Fukaya2020}, as a formal framework for formulating the
invariance of Floer homology under auxiliary choices.

\begin{example}
A proper Morse-Smale $f$ function on a complete Riemannian manifold
$M$ defines a framed flow category $\Mbar(f)$ whose objects are the
critical points of $f$ and whose morphisms are the (compactified)
moduli spaces of gradient flow lines connecting these critical
point. The work of Cohen, Jones, and Segal~\cite{Cohen1995} implies
that the stable homotopy type of $M$ can be reconstructed from this
framed flow category. The formalism of flow bimodules can be used to
recover maps of stable homotopy types from similar data.  Indeed,
after a $C^\infty$-small perturbation, we may assume that any smooth
map from a manifold equipped with such a function $f_1$ to a manifold
equipped with a function $f_2$ has the property that the images of the
descending manifold of a critical point $p_1$ of $f_1$ are transverse
to the ascending manifolds of the critical points of each critical
point $q_2$ of $f_2$. These transverse fibre products have natural
Morse-theoretic compactifications $\Mbar(p_1,q_2)$, which are the
underlying spaces of the flow bimodule associated by Morse theory to
the given map of spaces.  
\end{example}

We now state our main result, which is proved in
Section~\ref{sec:structured-case}:

\begin{thm} \label{thm:main-thm-Flow}
Structured flow categories are the objects of a stable
$(\infty,1)$-category 
whose
morphisms are flow bimodules.
\end{thm}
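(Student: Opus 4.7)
The plan is to construct the $(\infty,1)$-category $\Flow$ as a quasicategory by the method of "multi-bimodules" parametrized by simplices, following the general pattern used to promote bicategories of bimodules (e.g., of ring spectra or dg categories) to $(\infty,1)$-categories. Strict associativity cannot be achieved here, since composition of two flow bimodules is only canonically defined as a gluing of derived orbifolds with corners along a common boundary stratum, and such gluing is associative only up to natural collar identifications; hence we must work directly with a simplicial model.

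Concretely, I would define a $k$-simplex of $\Flow$ to consist of structured flow categories $\bF_0,\ldots,\bF_k$ together with, for each chain $i_0<\cdots<i_r$ in $[k]$, a coherent "multi-bimodule" $\bF_{i_0\cdots i_r}$---a derived orbifold with corners of higher codimension, properly mapping to $[0,\infty)$---whose codimension-one boundary strata are indexed by refinements of the chain and are identified with products of lower-level multi-bimodules via the analogues of the structure maps \eqref{eq:composition_maps_Flow}--\eqref{eq:composition_maps_Flow-2}, subject to natural associativity at higher codimension. Face and degeneracy maps are defined by omitting and doubling indices, respectively, with the identity bimodule (a "translation bimodule" for $\bF_i$) handling the degenerate case. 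The principal verification is the inner horn lifting property: given a $\Lambda^k_i$-worth of such data with $0<i<k$, one must produce an extension to a $k$-simplex. At the 2-simplex level this amounts to constructing the composite bimodule $\bF_{02}$ from $\bF_{01}$ and $\bF_{12}$ by a geometric gluing along $\bF_1$ using a collar structure, and the same gluing construction iterated over the boundary stratification of the simplex handles higher horns.

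To establish stability, the empty structured flow category is a zero object (both initial and terminal, since there are no objects to map out of or into). Given a flow bimodule $\bF_{12}\co \bF_1 \to \bF_2$, the cofiber is realized by a mapping-cone flow category $\bF_2\sqcup_{\bF_1}\Cone(\bF_1)$ glued from $\bF_2$ and a suitable "mapping cylinder" flow category that encodes $\bF_{12}$; dually, one describes fibers. The cleanest route to stability, however, is to observe that the mapping spaces $\Map_\Flow(\bF_1,\bF_2)$ admit, by the explicit description of the simplicial set, a direct combinatorial description whose $\pi_0$ is the set of bordism classes of flow bimodules from $\bF_1$ to $\bF_2$. Since derived orbifold bordism with the chosen structure is already an infinite loop space, the mapping spaces are naturally spectra---suspension being implemented by a framing/virtual-dimension shift on bimodules---and so, combined with the zero object and the cofiber construction above, $\Flow$ is stable.

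The main obstacle is the coherence of higher bimodule composition: constructing the multi-bimodules indexed by simplices and verifying the inner horn filling requires a careful treatment of iterated gluings of derived orbifolds with corners along boundary strata, carried out equivariantly and compatibly with collars, tangential structures, and the properness map to $[0,\infty)$. A secondary difficulty is making the stability argument precise: while the spectrum structure on $\Map_\Flow$ is morally clear from bordism theory, rigorously matching the simplicial-level loop structure with the geometric suspension requires either directly producing finite (co)limits and checking that pullback and pushout squares coincide, or identifying $\Flow$ with a known stable $(\infty,1)$-category of modules over the relevant bordism ring spectrum---the latter being, in the framed case, exactly the further identification promised in the introduction.
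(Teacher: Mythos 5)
Your architecture coincides with the paper's in outline: a (semi)simplicial set whose $k$-simplices are systems of multi-bimodules indexed by the faces of $\Delta^k$, inner horns filled by gluing, degeneracies supplied by identity bimodules, the empty flow category as zero object, and an explicit mapping-cone flow category as cofiber. However, three steps you treat as routine conceal genuine obstructions, each requiring a specific idea you have not supplied. First, the gluing that fills an inner horn does not produce an orbifold from the raw data: the structure maps $\bF_1(p,q)\times\bF_{12}(q,r)\to\partial^{q}\bF_{12}(p,r)$ are only strong equivalences, i.e.\ the boundary stratum is the total space of a vector bundle over the product rather than the product itself, so the colimit over the horn is merely an orbispace. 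One must first stabilize all the derived orbifolds in the horn so that the obstruction bundles have constant rank; this requires Pardon's theorem on the existence of enough vector bundles together with a descending induction over strata, and it is not something that ``careful treatment of collars'' repairs. Even after stabilization, the glued space carries the wrong corner structure, and an explicit local smoothing model (the paper's $L$-blocks, a weighted colimit over the horn's stratifying category) is needed to exhibit the filler as a smooth stratified derived orbifold.

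Second, the ``translation bimodule'' you invoke for degeneracies does not exist naively: for the product $\bF(p,r)\times[0,1]$ the two bimodule structure maps have coinciding images, so its corner strata are not enumerated by the stratifying category and it is not a flow bimodule. One must deform the corner structure (a conic degeneration with discriminant along the boundary), and even then this only produces the initial and terminal degeneracies compatibly with faces; the remaining degeneracies require an abstract rigidification result for weak units in semisimplicial sets (a variant of Steimle's theorem). Finally, your ``cleanest route'' to stability---that the mapping spaces are spectra because derived orbifold bordism is already an infinite loop space---is circular: the infinite-loop structure on these bordism theories is exactly what the construction is meant to produce, and for derived orbifold bordism no independent model exists. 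The non-circular argument constructs the comparison $\Flow(\bF_1,\bF_2)\to\Omega\,\Flow(\bF_1,\bF_2)$ combinatorially by relabelling simplices, identifies nullhomotopies of the zero map with flow bimodules to see it is an equivalence, and then verifies the long exact sequence of the explicit cone directly before invoking the characterization of stability.
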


Stable $(\infty,1)$-categories are one of many possible models for the
notion of spectrally enriched categories, i.e., categories whose
morphisms are spectra and whose compositions are maps of spectra.
(See e.g.,~\cite[4.23]{Blumberg2013} for a precise statement.)  Peeling away the
$\infty$-categorical language, the theorem above therefore implies
that for flow categories $\bF_1$ and $\bF_2$ there is a {\em mapping
  spectrum} $\Flow(\bF_1, \bF_2)$.  Even more concretely, the homotopy
groups of this mapping spectrum (i.e., the groups of underlying
homotopy classes of morphisms between objects), which we write
$\Flow_*(\bF_1, \bF_2)$, can be explicitly described as \emph{bordism
classes of flow bimodules}.  Here a null bordism of $\bF_{12}$
consists of an assignment $\bW(p_1, q_2)$ of a derived orbifold with
corners for each pair of objects of $\bF_1$ and $\bF_2$,
together with maps
\begin{align}
\bF_{12}(p_1, q_2) & \to  \bW(p_1, q_2) \\
  \bF_1(p_1, q_1) \times \bW(q_1, q_2) & \to \bW(p_1, q_2) \\
  \bW(p_1, p_2) \times \bF_{2}(p_2, q_2) & \to \bW(p_1, q_2)
\end{align}
that are equivalences onto boundary strata (and which enumerate
them). We require associativity as before, as well as the
compatibility of the second and third sets of maps with
Equations~\eqref{eq:composition_maps_Flow}
and~\eqref{eq:composition_maps_Flow-2}.

\begin{example} \label{ex:bordism_groups}
Specializing to the case that $\bF_1 = \ast$ is the unit, the groups $
\Flow_*(\ast, \bF)$ are the \emph{bordism groups of $\bF$}, which are 
generated by \emph{right flow modules} modulo bordism. This is the
explicit model for the Floer homotopy groups of a flow category which
we mentioned earlier. In the case of Morse theory, the bordism groups
of the flow category associated to a Morse function $f$ on a manifold
$M$ are naturally isomorphic to the classical bordism groups of
$M$. The map in one direction is given by assigning to a manifold over
$M$ its fibre product with the Morse-theoretic closure of the
ascending manifold of each critical point of $f$ (c.f.~\cite[Appendix
  D]{Abouzaid2021a}). 
\end{example}

Returning to our discussion of Theorem \ref{thm:main-thm-Flow}, its next consequence is that bordism classes of flow bimodules are equipped with natural composition maps
\begin{equation}
   \Flow_*(\bF_1, \bF_2) \times  \Flow_*(\bF_2, \bF_3) \to \Flow_*(\bF_1, \bF_3)
\end{equation}
which have units. The construction of this composition is essentially a gluing argument, which is easiest to describe when the obstruction bundles are trivial: assuming that $\bF_{12}$ and $\bF_{23}$ are such flow bimodules, we may define
\begin{equation}
  \bF_{12} \circ \bF_{23} (p, r) \equiv \coprod_{q \in \Ob(\bF_2)}
  \bF_{12}(p, q) \times  \bF_{23}(q, r) / \simeq,
\end{equation}
where the equivalence relation identifies the images of the two maps
\begin{equation}
    \bF_{12}(p, q) \times  \bF_{23}(q, r) \leftarrow  \bF_{12}(p, q)
    \times \bF_{2}(q, q') \times \bF_{23}(q', r) \to   \bF_{12}(p, q')
    \times  \bF_{23}(q', r).
\end{equation}
Because the combinatorics of the gluing defining $\bF_{12} \circ
\bF_{23}$ are relatively simple, it is not too difficult to check that
the result of this gluing procedure is a topological manifold with the
desired boundary stratification, which moreover can be equipped with a
smooth structure in a canonical way up to contractible choice, as
described in Section~\ref{sec:kan-condition-flow}.  The description of
the unit in $\Flow_*(\bF,\bF)$ is slightly more elaborate, arising
from a nontrivial manifold with corner structure on the product of
morphism spaces $\bF(p,r) \times [0,1]$; see
Section~\ref{sec:quasi-units} for the construction. 

\begin{example}
Returning to Example \ref{ex:bordism_groups}, and specializing further
to the case where both flow categories are the unit category, we find
that $\Flow_*(\ast, \ast)$, which is generated by bordism classes of
derived orbifolds, is a ring. It is easy to see from the above
description of composition that the ring structure is given by
products of derived orbifolds, so that it recovers the usual ring
structure on bordism groups.
\end{example}

At the level of bordisms, the next consequence of
Theorem~\ref{thm:main-thm-Flow} is that the homotopy category
$\Ho(\Flow)$, i.e., the category obtained by passing to bordism
classes of flow bimodules, is \emph{triangulated}.  This means in
particular that there is an invertible shift endofunctor  $\Sigma$ and
an object $\Cone(\bF_{12})$ associated to each flow bimodule, giving
rise to a long exact sequence of morphism groups 
\begin{multline}
   \cdots \to \Flow_*(\bF, \Sigma^{-1} \Cone(\bF_{12})) \to  \Flow_*(\bF, \bF_1) \to \Flow_*(\bF, \bF_2) \to  \\ \Flow_*(\bF, \Cone(\bF_{12})) \to \Flow_*(\bF, \Sigma \bF_1) \to \cdots
\end{multline}
for each flow category $\bF$.

We end this section by highlighting an additional consequence of
Theorem \ref{thm:main-thm-Flow}, which, unlike those discussed above,
cannot be extracted by passing to bordism groups: 

\begin{cor}
The endomorphism spectrum of the unit flow category is an
(associative) ring spectrum, whose homotopy groups compute the bordism
ring of a point. \qed 
\end{cor}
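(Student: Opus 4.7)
The plan is to deduce the corollary from Theorem~\ref{thm:main-thm-Flow} together with the explicit description of morphism spectra and composition given in the preceding discussion. Granting that structured flow categories form a stable $(\infty,1)$-category, the endomorphism object $\Flow(\ast,\ast)$ is automatically an $E_1$-algebra in spectra: in any spectrally enriched $(\infty,1)$-category, the composition on a single object furnishes an associative (i.e.\ $A_\infty$) multiplication on its endomorphism spectrum, and stability ensures that this endomorphism object is indeed a spectrum rather than merely a space. This gives the first half of the statement essentially for free, with no further work beyond invoking the theorem.

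For the identification of homotopy groups, I would argue directly using the concrete description of $\Flow_*$ as bordism classes of flow bimodules. A flow bimodule from the unit $\ast$ to itself consists of a single derived orbifold with corners $\bF_{\ast\ast}(\ast,\ast)$, where the composition maps \eqref{eq:composition_maps_Flow} and \eqref{eq:composition_maps_Flow-2} become vacuous because $\ast$ has no non-identity morphisms; hence no boundary strata are prescribed and the facet conditions degenerate. A null-bordism similarly degenerates to an ordinary null-bordism of the derived orbifold. Thus $\Flow_n(\ast,\ast)$ is, by inspection, the group of bordism classes of $n$-dimensional proper derived orbifolds (with the chosen tangential structure and family of isotropy groups), i.e.\ the $n$\th bordism group of a point.

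The ring structure on homotopy groups induced by composition then needs to match the classical bordism ring structure given by cartesian product. This follows from unpacking the explicit gluing formula
\begin{equation}
  \bF_{12}\circ\bF_{23}(p,r) \equiv \coprod_{q\in\Ob(\bF_2)} \bF_{12}(p,q)\times\bF_{23}(q,r)/\simeq
\end{equation}
displayed earlier: when $\bF_1=\bF_2=\bF_3=\ast$, the indexing set has a unique object, the equivalence relation is trivial (no non-identity morphisms in $\bF_2$), and the composition reduces to the cartesian product of the two derived orbifolds representing the classes. The unit in $\Flow_*(\ast,\ast)$ is similarly represented by the point, viewed as a zero-dimensional derived orbifold with trivial obstruction bundle.

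The main obstacle is really contained in Theorem~\ref{thm:main-thm-Flow} itself, which provides the $(\infty,1)$-categorical structure; once that is granted, the corollary is obtained purely formally for the ring spectrum assertion, and by a direct combinatorial degeneration of the definitions in the unit case for the computation of the bordism ring. A minor technical point worth verifying carefully is that the unit $1\in\Flow_0(\ast,\ast)$ constructed via the $[0,1]$-thickening in Section~\ref{sec:quasi-units} indeed reduces, in this degenerate case, to the class of a point; this is a bordism computation one can carry out explicitly by exhibiting an obvious null-bordism between the thickened unit and the point.
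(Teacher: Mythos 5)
Your proposal is correct and follows the same route the paper intends: the corollary is stated with no separate proof precisely because it is the formal consequence of Theorem~\ref{thm:main-thm-Flow} (endomorphism objects in a stable, spectrally enriched category are associative ring spectra) combined with the explicit description, given just before the corollary, of $\Flow_*(\ast,\ast)$ as bordism classes of derived orbifolds with composition the cartesian product. Your extra remark about checking that the $[0,1]$-thickened diagonal bimodule represents the class of a point is a reasonable point of care, but in the unit case the morphism spaces of $\ast$ are empty, so the degeneracy construction contributes only the added copy of the unit derived orbifold and nothing needs to be null-bordant away.
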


In the case of derived orbifold bordism, there is currently no other model for this ring spectrum. On the other hand, for bordism theories of manifolds, the
Pontryagin-Thom construction implies that Thom spectra give models for
these ring spectra, which in the case of framed cobordism agrees with
the sphere spectrum. Comparing the constructions in this case, we shall
prove the following result in Section~\ref{sec:comp-with-sphere}:

\begin{prop} \label{prop:framed_flow=spectra}
The stable $\infty$-category of framed flow categories is equivalent
to the stable $\infty$-category of spectra.
\end{prop}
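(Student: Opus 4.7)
The plan is to identify the unit flow category $\ast$ as a compact generator whose endomorphism ring spectrum is the sphere spectrum $\bS$, and then invoke the $\infty$-categorical Schwede--Shipley recognition theorem (Lurie, \emph{Higher Algebra}) to deduce that the stable $\infty$-category of framed flow categories is equivalent to $\Sp$.

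The first step is to identify the endomorphism spectrum $\End(\ast)$. Its homotopy groups are $\Flow_*(\ast, \ast)$, and by the concrete bordism description in Example \ref{ex:bordism_groups}, these are the framed bordism groups of a point; the Pontryagin--Thom construction identifies them with $\pi_* \bS$. To upgrade this to an equivalence of ring spectra, I would construct a canonical map $\bS \to \End(\ast)$ using the universal property of $\bS$ as the initial $\bE_\infty$-ring, together with the observation that composition and the symmetric monoidal product of flow bimodules over $\ast$ are geometrically realized by products and disjoint unions of framed manifolds. Verifying that this map is a $\pi_*$-isomorphism then reduces to the classical Pontryagin--Thom theorem.

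The second step is to show that $\ast$ is a compact generator. Compactness can be checked by inspecting the explicit formula for the mapping spectrum $\Flow(\ast, -)$ and verifying it commutes with filtered colimits in the relevant diagrams of flow bimodules. For generation, I would use the partial order on the objects of a framed flow category $\bF$ noted in Remark \ref{rem:properness_flow_category}: each object $p$ produces a ``cell'' which can be realized as an iterated mapping cone of bimodules built from the framed manifolds $\bF(p,q)$, exhibiting $\bF$ as belonging to the localizing subcategory generated by $\ast$.

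Applying Lurie's recognition theorem then yields the equivalence with $\Sp$. The main obstacle is the second step: constructing a sufficiently functorial cellular filtration of an arbitrary framed flow category and verifying its convergence in the stable $\infty$-category. This requires identifying the cofibers produced abstractly by the stable structure of Theorem \ref{thm:main-thm-Flow} with geometrically defined bimodules, and organizing the attaching data encoded in the morphism spaces $\bF(p, q)$ into a coherent diagram indexed by the poset of objects of $\bF$.
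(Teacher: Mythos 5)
Your overall strategy --- compact generation by the unit plus identification of its endomorphism ring with $\bS$, followed by the Schwede--Shipley/Lurie recognition theorem --- is exactly the strategy of the paper, and your first and third steps are fine in outline (the paper even sidesteps your universal-property construction of a ring map: once the endomorphism \emph{spectrum} is identified with $\bS$, the uniqueness of the $A_\infty$-structure on the sphere spectrum finishes the ring comparison). The genuine gap is where you yourself locate it: the generation step. A bottom-up cellular filtration indexed by the poset of objects does not converge for a general flow category, because the partial order need not be bounded below and mapping spectra out of the unit do not commute with the relevant inverse limits. The paper makes this failure explicit with the flow category on countably many objects and empty morphism spaces: maps from $\ast$ into it form a direct sum of copies of $\End(\ast)$, whereas the inverse limit of its truncations computes the direct product, and these differ whenever $\pi_*\bS$ does not vanish. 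So ``exhibiting $\bF$ as an iterated mapping cone'' cannot be organized as a single colimit; some convergence input is required.

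What rescues the argument in the framed case is a \emph{graded connectivity} condition: the dimension function on objects of a framed flow category forces $\bF(p,q)$ to be empty once $\dim(p) \geq \dim(q)$, so for a fixed $p$ only finitely many levels below it contribute. The paper then writes $\bF$ both as the homotopy colimit of the truncations $\bF_{(-\infty,\alpha]}$ (which always works, Lemma~\ref{colimit_bounded_above}) and as the homotopy limit of the truncations $\bF_{[\beta,\infty)}$ (which needs the connectivity condition, Lemma~\ref{limit_bounded_below}), using that the subcategory generated by a compact object is closed under both operations and that the discrete pieces are coproducts of shifted units. You would need to supply this two-sided argument, or an equivalent convergence statement, to close the gap. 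A secondary, smaller gap: identifying $\pi_*\End(\ast)$ with $\pi_*\bS$ is not literally classical Pontryagin--Thom, since a framed flow bimodule over the point is a framed \emph{derived} manifold with an arbitrary obstruction bundle and an arbitrary stable framing; one must first make the section transverse, degenerate to the normal cone to trivialize the obstruction bundle, and then deform the framing into the normal form of Definition~\ref{def:framed_flow_simplex-elementary} using the connectivity of $O(V) \to O(V \oplus \bR^{\infty})$ --- this is the content of Lemma~\ref{lem:bordism_is_flow-in-limit}.
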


In addition to the Pontryagin-Thom construction, the proof of the
above result entails showing that the unit object generates the
category of framed flow categories in the sense that any nontrivial
flow category must admit a nontrivial map from a shift of the unit.
As will be apparent from the discussion in
Section~\ref{sec:generation}, our proof of this result applies more
generally to manifold flow categories.

\subsection{Hamiltonian Floer theory}
\label{sec:hamilt-floer-theory}

We now briefly indicate how Theorem~\ref{thm:main-thm-Flow} can be
applied to study Hamiltonian Floer theory. A time-dependent function
$H \co M \times S^1 \to \bR$ on a symplectic manifold determines a
time-dependent vector field which on a closed manifold can be
integrated to a diffeomorphism. Those diffeomorphisms arising from
this construction are called \emph{Hamiltonian diffeomorphisms}, and
we say that they are nondegenerate if the associated graph is
transverse to the diagonal. A precise version of the following result
is stated in Proposition~\ref{prop:Hamiltonian_Flow_category}:

\begin{prop} \label{prop:complex_oriented_flow_category}
   Floer theory associates to each nondegenerate Hamiltonian $H$ a
   flow category whose objects are the time-$1$ periodic orbits of $H$
   and whose morphisms are complex-oriented derived orbifolds. The
   construction of this flow category depends on choices, but its
   equivalence class in the homotopy category of flow categories does not. 
 \end{prop}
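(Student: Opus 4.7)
The plan is to construct the flow category $\bF(H)$ directly from the standard Floer-theoretic moduli spaces, endow their Gromov-Floer compactifications with complex-oriented derived orbifold structure, and verify independence of auxiliary choices via continuation bimodules. I would take the objects of $\bF(H)$ to be the (finite, by nondegeneracy) set of time-$1$ periodic orbits of the Hamiltonian flow of $H$. Fixing a family $\{J_t\}$ of $\omega$-compatible almost complex structures, define $\cM(p,q)$ as the moduli space of finite-energy solutions to Floer's equation
\[
\partial_s u + J_t(u)\bigl(\partial_t u - X_H(u)\bigr) = 0
\]
on the cylinder with asymptotics $p$ at $s=-\infty$ and $q$ at $s=+\infty$, modulo $\bR$-translation. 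Its Gromov-Floer compactification $\Mbar(p,q)$ becomes the morphism space from $p$ to $q$, and the symplectic action difference furnishes the proper map $\Mbar(p,q) \to [0,\infty)$ required by Remark~\ref{rem:properness_flow_category}.

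Next I would give each $\Mbar(p,q)$ the structure of a complex-oriented derived orbifold with corners via the standard Kuranishi / thickening construction: the linearization $D_u$ of Floer's equation is Fredholm, and I would produce an ambient orbifold chart carrying an obstruction bundle $T^-\Mbar(p,q)$ whose defining section cuts out $\Mbar(p,q)$, with virtual tangent bundle $T\Mbar(p,q)$ identified with the index bundle of $D_u$. Since the leading symbol of $D_u$ is complex-linear with respect to $J_t$, this index bundle acquires a canonical stable complex structure, giving the desired tangential structure on the derived orbifold. The usual description of broken Floer cylinders gives the codimension-$1$ boundary identification
\[
\partial^{q}\Mbar(p,r)\;\simeq\;\Mbar(p,q)\times\Mbar(q,r),
\]
and the Floer gluing theorem produces collars compatible with composition; additivity of Fredholm indices under gluing makes the complex orientations match the product. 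This realizes $\bF(H)$ as an object of the category of flow categories built in Theorem~\ref{thm:main-thm-Flow}.

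For independence, given two choices of auxiliary data producing flow categories $\bF(H)_0$ and $\bF(H)_1$, I would construct a flow bimodule $\bF_{01}$ whose $(p,q)$-component is the Gromov-Floer compactification of the moduli of solutions to Floer's equation with $s$-dependent data interpolating between the two choices (no longer quotienting by translation). Breaking at the negative end produces $\bF(H)_0$-trajectories and breaking at the positive end produces $\bF(H)_1$-trajectories, yielding exactly the flow-bimodule boundary identifications of \eqref{eq:composition_maps_Flow} and \eqref{eq:composition_maps_Flow-2}, while the same Fredholm package equips $\bF_{01}$ with complex-oriented derived orbifold structure. Constructing the reverse bimodule $\bF_{10}$ analogously and exhibiting bordisms from $\bF_{01}\circ\bF_{10}$ and $\bF_{10}\circ\bF_{01}$ to the unit bimodules via standard $2$-parameter (homotopy-of-homotopies) continuation data shows that $\bF_{01}$ is an equivalence in $\Ho(\Flow)$, giving the stated independence of equivalence class.

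The main obstacle is neither the construction of the moduli spaces nor the complex orientation, but the \emph{coherent} compatibility of the derived orbifold chart data across all boundary strata with the identifications coming from gluing, and likewise for the bimodule data: one must verify that Kuranishi charts chosen locally near a broken configuration agree with the products of charts on the components together with the collar normal direction, and that analogous compatibilities hold at higher codimension and on the continuation bimodules. This coherence is precisely what the flow-category formalism developed earlier in the paper is designed to absorb, and its careful verification for Hamiltonian Floer moduli spaces is the technical content of Appendix~\ref{sec:hamilt-floer-theory-1}.
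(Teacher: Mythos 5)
Your outline is the classical local-chart approach, and it diverges from the paper's actual construction at precisely the point you flag as "the main obstacle"; unfortunately that obstacle is not something the rest of your argument can absorb, and the appendix does not resolve it in the way you suggest. The paper does \emph{not} build local Kuranishi charts near each broken configuration and then verify their coherence. Instead it uses the global Kuranishi chart technology of \cite{Abouzaid2021b}, as adapted by Bai--Xu and Rezchikov: the domains are rigidified by realizing them as framed stable maps to $\bC\bP^d$ (with $d$ a quantized energy obtained by deforming $\omega$ to an integral form vanishing near the orbits), the whole moduli space $\Mbar^{(d)}(p,r)$ is presented as a single $U(d)$-quotient, and the obstruction bundle is a single finite-dimensional $U(d)$-representation $\scrV^{(d)}(p,r)$ chosen \emph{inductively} so that the restrictions to boundary strata contain the products $\scrV^{(d_1)}(p,q)\oplus\scrV^{(d_2)}(q,r)$. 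Compatibility across strata is thus built into the construction rather than verified afterwards; with local charts as in your proposal, producing an honest derived orbifold (one smooth effective orbifold, one vector bundle, one section per morphism space) with strictly associative strong equivalences onto boundary strata is exactly the hard open-ended problem that the global-chart method was invented to avoid. Two further consequences you elide: the thickening is a priori only a topological manifold with a fibrewise smooth structure, so one needs Lashof's equivariant smoothing theory plus the collared completions $\hat X^{(d)}(p,r)$ to reconcile the smooth structures induced on a boundary stratum from the two sides.

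The complex-orientation step also has a gap as stated. The linearized Floer operator $D_u$ is only complex-linear to leading order, and its asymptotic operators at the Hamiltonian orbits are real and not complex-linear, so the index bundle carries no canonical stable complex structure and "additivity of the index under gluing" does not by itself make the orientations multiplicative. The paper's fix is to orient the base $\bar B_d$ of framed curves and the fibre direction separately; for the fibre one deforms, through a one-parameter family obtained by cutting off the inhomogeneous term, from $D_p\# D_u$ to $D_u^{\bC}\# D_r$ where $D_u^{\bC}$ is genuinely complex-linear, at the cost of introducing the virtual vector spaces $V_p=(V_p^+,V_p^-)$ attached to each orbit. This is why the resulting structure is a complex orientation only \emph{relative} to the $V_p$, in the sense of $d\Orb^U(V_p,V_r)$, and why the stabilizing spaces $W^{(d)}(p,r)$ must again be chosen inductively to make the homotopies compatible with composition. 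Your invariance argument via continuation bimodules is the right idea in outline, but it too must be run through the global-chart machinery (doubly and triply framed curves, pairs of quantized energies for non-cohomologous forms $\Omega_\pm$), and the identification of the constant continuation map with the diagonal bimodule requires the conic-degeneration structure on the extra marked point rather than a generic homotopy-of-homotopies argument.
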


The approach we take follows the insight of~\cite{Abouzaid2021b},
 where moduli spaces of genus-$0$ pseudo-holomorphic curves of a given
 degree were shown to be realized as \emph{global Kuranishi charts},
 i.e., as quotients under a compact Lie group action of zero-loci of an
 equivariant vector bundle on a smooth manifold equipped with a finite
 isotropy action (a global Kuranishi chart determines a derived
 orbifold by passing to the quotient). The key point was to realize
 the domain of all such curves as stable curves in projective space,
 so that the unitary group acts via its natural action on the space
 of domains. This was extended to Hamiltonian Floer theory in work of
 Bai-Xu and Rezchikov~\cite{BaiXu2022,Rezchikov2022}, and Appendix~\ref{sec:hamilt-floer-theory-1} is really a combination of their
 results with the discussion of complex orientations from
 \cite{Abouzaid2021a}, with minor modifications to account for the
 formalism developed in this paper. 

 Returning to Remark~\ref{rem:complex_manifold_bordism}, we may specialize to the situation in which sphere bubbles are precluded, so the moduli spaces of pseudo-holomorphic curves do not have any point with nontrivial isotropy. The most natural way to ensure this is to require that the symplectic form on $M$ be symplectically aspherical in the sense that the integral of $\omega$ vanishes on any $2$-sphere in $M$. 
 \begin{cor}
   If $M$ is symplectically aspherical, the flow category associated by Floer theory to a Hamiltonian $H$ has morphisms which are complex-oriented derived manifolds.  \qed
 \end{cor}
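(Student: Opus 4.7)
The plan is to trace through the construction of the flow category provided by Proposition \ref{prop:complex_oriented_flow_category}, as laid out in Appendix \ref{sec:hamilt-floer-theory-1}, and to verify that the sole possible source of non-trivial isotropy in the derived-orbifold morphism spaces is the presence of sphere bubbles in the Gromov--Floer compactification, a phenomenon which symplectic asphericity rules out.

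First, I would recall that, following \cite{BaiXu2022, Rezchikov2022}, the morphism spaces of the flow category are presented as global Kuranishi charts: the derived orbifold underlying $\bF(p,q)$ arises as the zero-locus of a section of an equivariant vector bundle on a smooth manifold with a finite-isotropy action of a compact Lie group $G$, and the isotropy group at the class of a Floer cylinder $u$ is by construction the stabilizer of a representative lift of $u$ under $G$. Geometrically, this stabilizer is identified with the automorphism group of the stabilized domain of $u$ preserving the auxiliary data embedding the curve into projective space.

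Second, I would observe that for a time-dependent, non-degenerate Hamiltonian $H \co M \times S^1 \to \bR$, an irreducible Floer cylinder between two periodic orbits has no automorphisms: the $\bR$-translation symmetry of the cylinder is quotiented out, and no rotation of the $S^1$-factor survives because $H$ depends nontrivially on it. The only additional components that can appear in a limit curve are the broken cylinders, which are already accounted for in the boundary stratification of $\bF(p,q)$ and contribute no extra isotropy, and sphere bubbles; a sphere component carrying at most two special (nodal or marked) points has positive-dimensional continuous automorphism group, and multiply-covered spheres contribute discrete automorphisms. These are the sole possible contributors to orbifold isotropy.

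Third, I would invoke asphericity to exclude every sphere bubble: any $J$-holomorphic map $v \co S^2 \to M$ has energy equal to its symplectic area $\int_{S^2} v^* \omega = \langle [\omega], v_*[S^2]\rangle$, which vanishes by hypothesis, so positivity of energy forces $v$ to be constant, and constant spherical components are in turn excluded by the stability condition of the stable-map compactification since they would carry at most two special points. Consequently every point of $\bF(p,q)$ is represented by a (possibly broken) Floer trajectory with no sphere components, hence has trivial stabilizer, so the derived orbifold is in fact a derived manifold; the complex orientation supplied by Proposition \ref{prop:complex_oriented_flow_category} descends unchanged. The main subtlety lies in the first step: one must unpack the global-chart construction of \cite{BaiXu2022, Rezchikov2022} carefully enough to confirm that the pointwise $G$-stabilizer is genuinely the domain-automorphism group, so that in the absence of sphere bubbles the isotropy is truly trivial rather than hidden in the auxiliary embedding data; this is bookkeeping rather than new analysis, and once done the asphericity argument is standard.
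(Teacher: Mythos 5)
Your proposal is correct and follows essentially the same route as the paper, which treats the corollary as immediate from the observation (stated just before it, and supported in Appendix~\ref{sec:hamilt-floer-theory-1} by the fact that the $U(d)$-action on the chain locus $\cF^{\bR}_2(d,*)$ is free and that the stabilizer of a point in the global chart is the isotropy of the underlying stable Floer solution) that sphere bubbles are the only source of nontrivial isotropy and are excluded by asphericity via the energy identity. Your write-up simply makes explicit the bookkeeping the paper leaves to the reader.
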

\begin{rem}
While we expect that the category of flow categories which are
structured with respect to complex-oriented derived manifolds may be
described in terms of modules over the periodic $MU$ spectrum, we do
not prove such a result in this paper.
\end{rem}
 
We are also interested in understanding when Hamiltonian Floer theory
gives rise to a framed flow category, i.e., to a spectrum (module over
the sphere spectrum), according to
Proposition~\ref{prop:framed_flow=spectra}.  The starting point is the
fact that a complex-linear connection on $TM$ induces a monodromy map
on the based loop space
\begin{equation}
  \Omega M \to U(n)  
\end{equation}
given by parallel transport, after chosing an identification of the
tangent space of $M$ at the basepoint with $\bC^n$. The following
result is proved in Appendix \ref{sec:fram-flow-categ}: 

\begin{prop} \label{prop:framed-ham-flow}
If the $M$ is symplectically aspherical, 
then a lift of the
monodromy map to the orthogonal group, through the complexification map $O(n) \to U(n)$,  determines  a lift of the
unstructured flow category associated to a Hamiltonian on $M$ to a
framed flow category.
\end{prop}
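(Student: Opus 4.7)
The plan is to upgrade the construction of the complex-oriented flow category produced by Proposition~\ref{prop:complex_oriented_flow_category} so that the monodromy lift replaces, at each morphism space, the complex orientation by a stable framing, compatibly with the composition laws~\eqref{eq:composition_maps_Flow} and~\eqref{eq:composition_maps_Flow-2}.

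I would first revisit how the complex orientation is assembled in Appendix~\ref{sec:hamilt-floer-theory-1}. A complex-linear connection on $TM$ together with a complex trivialization $T_{x_0}M \cong \bC^n$ at a basepoint produces, via parallel transport, a complex trivialization of $\gamma^{*}TM \to S^1$ for every time-$1$ orbit $\gamma$, whose holonomy is the class of $\gamma$ under $\Omega M \to U(n)$. The virtual tangent bundle of each morphism space $\bF(\gamma_-,\gamma_+)$ is the index of the linearized Floer operator $D_u$ along cylinders $u$ between $\gamma_-$ and $\gamma_+$, and inherits a complex orientation from these asymptotic complex trivializations, with symplectic asphericity ensuring the absence of sphere bubbling and hence that the resulting virtual bundle is genuine.

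Next, I would unpack the hypothesis. Choosing a real form $\bR^n \subset \bC^n$ at the basepoint, the factorization $\Omega M \to O(n) \to U(n)$ provides, for each periodic orbit $\gamma$, a real trivialization of $\gamma^{*}TM$ whose complexification recovers the complex trivialization above. The asymptotic operators of $D_u$ are then complexifications of self-adjoint real operators, and the index problem for $D_u$ becomes a complexified real Cauchy–Riemann type problem on the cylinder. Following the strategy used for framed Floer homotopy, the stabilizations performed to establish the complex orientation in the Appendix all descend to the real level, and the resulting real index bundle on $\bF(\gamma_-,\gamma_+)$ admits a canonical stable trivialization; its complexification, viewed as a real virtual bundle, is thereby stably framed, providing the required framing.

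Compatibility with the boundary stratification is then essentially immediate, because the maps~\eqref{eq:composition_maps_Flow} and~\eqref{eq:composition_maps_Flow-2} correspond to linear gluing of index problems along a common asymptotic orbit, and the real trivialization attached to that orbit is a single piece of data common to both sides of the gluing. The main obstacle is homotopical coherence: all the choices of real trivializations, stabilizing summands, and framings must be made naturally enough to deliver a well-defined lift of the unstructured flow category to the $\infty$-category of framed flow categories of Theorem~\ref{thm:main-thm-Flow}, rather than an object depending on auxiliary choices. This is addressed by importing wholesale the machinery set up for the complex-oriented case in the body of the paper, with unitary groups, complex trivializations, and complex-linear Fredholm data systematically replaced by their orthogonal and real counterparts provided by the hypothesis.
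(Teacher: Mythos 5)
There is a genuine gap at the heart of your argument: the step where you claim that, given the real trivializations of $\gamma^*TM$ at the orbits, ``the index problem for $D_u$ becomes a complexified real Cauchy--Riemann type problem on the cylinder'' whose real form is then canonically trivialized. This is not correct as stated. The monodromy lift equips the asymptotic data (equivalently, the clutching function $\phi_u \co S^1 \to U(n)$ describing $u^*TM$ after the deformation to the complex-linear operator $D_u^{\bC}$) with a lift to $O(n)$; it does \emph{not} equip the bundle $u^*TM$ over the interior of the cylinder or sphere with a real structure, so there is no real Cauchy--Riemann problem whose complexification is $D_u^{\bC}$. The paper's Remark following the statement makes exactly this point: what gets trivialized is the \emph{real bundle underlying} the complex index bundle, not a real form of it, and correspondingly it is the unstructured flow category that lifts, not the complex-oriented one. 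The missing idea is the geometric model for the forgetful map $BU \to BO$: one glues the sphere carrying $u^*TM$ to a disc at a point (Lemma \ref{sec:BO-BU=gluing}), obtaining a real Cauchy--Riemann problem on the disc with totally real boundary conditions $\phi_u(t)\bR^n$; the $O(n)$-lift of $\phi_u$ extends these boundary conditions to a Lagrangian subbundle over the whole disc, which trivializes the real index by Lemma \ref{lem:trivialisation_clutching}. Without this (or an equivalent device relating Bott periodicity to the realification map), the hypothesis cannot be converted into a framing.

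The second gap is your assertion that compatibility with the boundary stratification is ``essentially immediate'' from linear gluing at a common orbit. It is not: the disc-with-attached-spheres configurations used to define the framing on a product stratum differ from the restriction of the configuration on the ambient stratum (the attaching points and the gluing maps of the bundles across the nodes do not agree), so one must construct explicit homotopies of families of operators comparing the two trivializations (Lemma \ref{lem:homotopy_codim_1}), and then make these homotopies coherent in all codimensions. The paper does this by parametrizing the interpolations over the auxiliary contractible moduli spaces $\Sbar_n$ of discs with boundary marked points and choosing maps from the collars of $\hat{X}^{(d)}(p,r)$ into them, compatibly with the collar structure. Finally, ``replacing unitary groups by orthogonal ones wholesale'' cannot work: the $U(d)$-action on the thickenings comes from the framed curves in projective space and is unrelated to the tangential structure of $M$, so it survives untouched in the framed construction; only the index-theoretic comparison over the target is realified.
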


In general, the monodromy map does not lift through the
complexifications; for instance, this implies that the first Chern
class of $M$ vanishes on every sphere.

\begin{rem}
The proof that we shall give immediately applies to the setting where
the lift is defined after stabilizing by a trivial complex vector
space. More generally, one may assume that the lift is defined only
after passing to the limit $ O(\infty) \to U(\infty)$. 

We note as well that the origin of this statement goes back to Cohen,
Jones, and Segal~\cite{Cohen1995}, who arrived at the conclusion that
a stable isomorphism of the tangent bundle of $M$ with the
complexification of a real vector bundle yields compatible framings of
moduli spaces of solutions to Floer's equation. Such an isomorphism
can be stated as a factorization of the map $M \to BU$ through the
complexification map $BO \to BU$, which yields our  assumption after
passing to based loop spaces. Closed Riemann surfaces of positive
genus show that the assumption on monodromy is strictly weaker. 
\end{rem}
\begin{rem}
 One delicate part of the statement of Proposition \ref{prop:framed-ham-flow} is that it is the \emph{unstructured} flow category that lifts to the framed setting, not the complex oriented one from Proposition \ref{prop:complex_oriented_flow_category}. This is because our result is based on trivializing the real bundle underlying the index bundle on the complex-linear Cauchy-Riemann operator on pseudo-holomorphic spheres, rather than the corresponding virtual complex vector bundle.
\end{rem}
\subsection{What is contained in this paper?}
\label{sec:what-contained-this}

This paper is fundamentally about extracting algebraic structures from
manifolds, and more generally orbifolds and derived orbifolds, with
corners. To this end, we begin in Section~\ref{sec:preliminaries} by
introducing a categorical framework for labelling corner strata,
defining a symmetric monoidal category of derived orbifolds with such
labelled strata, and giving a precise definition of the tangential
structures of framings and complex orientations, which we package as
$2$-categories.  In Section~\ref{sec:flow-categories} we define flow
categories as certain categories enriched in derived orbifolds, and
structured flow categories as lifts of the enrichment structure to the
appropriate $2$-category.  In Section~\ref{sec:quas-struct-floe} we
show that structured flow categories and flow bimodules assemble into
a semisimplicial set, which we prove satisfies the weak Kan condition
(i.e., has fillers for inner horns) in
Section~\ref{sec:kan-condition-flow}.  In
Section~\ref{sec:quasi-units}, we combine geometric and abstract
arguments to show that the semisimplicial set of structured flow
categories admits weak units and thus can be extended to a
quasicategory.  In Section~\ref{sec:prestable-structure}, we show that
this quasicategory is stable, completing the proof of
Theorem~\ref{thm:main-thm-Flow}, while
Section~\ref{sec:comp-with-sphere} proves
Proposition~\ref{prop:framed_flow=spectra}, comparing the stable
$\infty$-category of framed flow categories to the stable
$\infty$-category of spectra. We include two appendices:
Appendix~\ref{sec:orbif-orbib} collects some elementary facts and
terminology about orbifolds, while
Appendix~\ref{sec:hamilt-floer-theory-1} contains our results on
Hamiltonian Floer theory.

\subsection{What is missing from this paper?}
\label{sec:what-missing}
While there is a long list of results that need to be developed to
flesh out the foundations of Floer homotopy theory, we mention two
that will appear in subsequent work:

\begin{enumerate}

\item The category of flow categories is equipped with the
  quasicategorical analogue of a symmetric monoidal structure, which
  makes it possible to formulate algebraic structures in this
  context. This is the subject of the next paper~\cite{Abouzaidc}. 

\item The study of Floer theory for general Lagrangian submanifolds
  naturally leads to the notion of a \emph{flow multicategory}, which
  lifts curved $A_\infty$ algebras~\cite{Fukaya2009} to the level of
  bordism. This is the subject of the work in
  progress~\cite{Abouzaide,Abouzaidf}. 

\end{enumerate}

\subsection{Why did we do it this way?}

A natural question to ask is why did we make the specific choices in
this paper to set up the foundations.  There are various other
possible models for the stable $\infty$-category $\Flow^{\cS}$, some
of which make the comparison with the category of spectra in the framed case easier to prove or even essentially a
tautology; for example, the on-going program of Lurie-Tanaka proposes
another approach to foundations~\cite{Lurie2018}.  The central
motivation that guides our approach to the foundations is to remain as
close to the geometry of the situation as possible, while still
supporting the constructions made possible by spectral algebra.
Specifically, we want the output of the analysis of the moduli spaces
of pseudoholomorphic curves  to plug directly into the
foundations in a way that allows geometric arguments to be used to
study the resulting algebraic structures. The first applications of this point of view have already appeared in the work of Porcelli and Smith~\cite{Porcelli2024}, who use a simplification of our framework to establish constraints on the topology of exact Lagrangian embeddings. As discussed earlier, we take a further step towards our goal in 
Appendix~\ref{sec:hamilt-floer-theory-1}, in the setting of Hamiltonian Floer theory.

As a final aside, we recall that there is a long history of results
relating geometric to homotopical constructions in algebraic topology,
starting with Thom's identification of Thom spaces as a model for
bordism~\cite{Thom1954}.  We would like to highlight Quinn's
construction of bordism spectra~\cite{Quinn1995} and Laures and
McClure's~\cite{Laures2014} subsequent comparison of the multiplicative
structure on these spectra to those obtained from homotopy theory.
While this paper uses only the most basic version of the
Pontryagin-Thom construction, and thus does not rely on this
literature, we expect that future work will fruitfully engage with it.

\subsection*{Acknowledgments}

MA would like to start by thanking Madhav Nori for starting the Fall
2002 first year graduate topology course at the University of Chicago
with a geometric construction of oriented cobordism groups of smooth
manifolds.  Both authors would like to thank Shaoyun Bai, Mike Hill,
Mike Hopkins, Thomas Kragh, Tyler Lawson, Mike Mandell, Ciprian
Manolescu, and Semon Rezchikov for helpful conversations over the
course of this work, as well as comments on a draft.  They are
grateful to Lucy Yang and Wolfgang Steimle, as well as to Amanda
Hirschi and the reading group which she led in April 2024 for identifying multiple issues in the
first publicaly released version of the paper which needed to be
clarified or corrected.  We are particularly grateful to Colin Fourel's comments, which led us to correct the proof of the horn-filling axiom. MA would like to end by thanking Mark McLean
and Ivan Smith for the ongoing collaboration to study the global chart
approach to Floer homotopy theory; discussions with them, as well as
their feedback on an early draft of
Appendix~\ref{sec:hamilt-floer-theory-1}, were essential in ensuring
that the abstract framework described in this paper is adapted to
geometric applications.

MA was partially supported by NSF award DMS-2103805.  AJB was
partially supported by NSF awards DMS-2104420 and DMS-2052970.

\section{Derived effective orbifolds with corners}
\label{sec:preliminaries}

As discussed in the introduction, the constructions of this paper are
all formulated in elementary terms using effective orbifolds, which we
review in Appendix~\ref{sec:classical-orbifolds}.

\subsection{Categories with corners}
\label{sec:categ-with-corn}

We shall be performing many constructions with orbifolds with corners, so it is useful to begin by introducing some notation.

A manifold with corners $X$ determines a category $\cP_X $ with objects given by components of the corner strata, and morphisms given by (local) adjacency: explicitly,  arrows from $\partial^\sigma X$ to $\partial^\tau X$ correspond bijectively to components of the intersections of (the interior of) $ \partial^\sigma X$ with any sufficiently small tubular neighbourhood of $ \partial^\tau X$. Our conventions are such that the interior of $X$ is the initial element, and that we have a functor:
\begin{equation}
\codim \co   \cP_X  \to \bN
\end{equation}
which records the codimension of each stratum. This discussion applies without modification to orbifolds with corners, since we impose the condition, in Definition \ref{def:orbifold-chart-with-corners}, that the group acts trivially on the normal directions to each stratum.

The labelling of strata by components will be too fine for our
purposes, so we introduce an abstract notion that will lead to coarser
labellings. Consider a category $\cP$ which equipped with a functor
  \begin{equation}
    \codim \co \cP \to \bN.
  \end{equation}
 
  Denote by
  $\cP^{p}$ the over category of $p \in \ob(\cP)$, i.e., the category
  whose objects are arrows in $\cP$ with target $p$. The following
  definition reflects the local combinatorial structure of manifolds
  with corners:

\begin{defin}\label{def:model-manifolds-with-corners} 
The category $\cP$ is a \emph{model for manifolds with corners} if for each object $p$,  the overcategory $\cP^p$ is isomorphic to the power set of $\{1, \ldots, \codim p\}  $ (the set of subsets, partially ordered by inclusion).
\end{defin}
Concretely, this means that we can assign to each finite subset $K$ of $\{1, \ldots, \codim p\}$ an arrow $\alpha^K$ in $\cP$ with target $p$, and for each inclusion $J \subset K$ there is a unique arrow $\alpha_{J}^{K}$ in $\cP$ with $\alpha_{J}^{K} \circ \alpha^{J} = \alpha^K$. It will be useful to note that a choice of identification between the minimal elements of $\cP^{p} \setminus \{\min \cP^p\}$ and $\{1, \ldots, \codim p\}  $ determines the isomorphism in the definition, so we introduce the notation
\begin{equation} \label{eq:qp-normal_directions}
  Q_{\cP}(p) \equiv \min \left( \cP^{p} \setminus \{\min \cP^p\} \right).
\end{equation}
We have the following basic result where $\partial^{p} \cP$ denotes the undercategory of $p$ (i.e., the category whose objects are arrows with source $p$): 
\begin{lem} \label{lem:decomp_next_to_minimal_elts}
  Every arrow $\alpha \co p \to q$ functorially induces a decomposition
  \begin{equation} \label{eq:decomposition_normal_directions}
       Q_{\cP}(q)\cong Q_{\cP}(p) \amalg Q_{\partial^{p} \cP}(\alpha) .
  \end{equation}
 \qed
\end{lem}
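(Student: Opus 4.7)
The plan is to reduce everything to the combinatorics of the power-set identification $\cP^q \cong \mathcal{P}(\{1, \ldots, \codim q\})$ provided by Definition~\ref{def:model-manifolds-with-corners}. After fixing such an isomorphism $\iota_q$, the arrow $\alpha$ becomes an object of $\cP^q$ labelled by a subset $K_\alpha \subseteq \{1, \ldots, \codim q\}$. Because the codimension functor restricts to the rank function on the poset $\cP^q$ (both measure the length of a maximal chain ending at an element), one has $|K_\alpha| = \codim p$, and the required decomposition will ultimately be the tautology $\{1, \ldots, \codim q\} = K_\alpha \amalg K_\alpha^c$, with the two pieces identified respectively with $Q_\cP(p)$ and $Q_{\partial^p \cP}(\alpha)$.

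The first key identification is the natural equivalence $\cP^p \cong (\cP^q)^\alpha$: an object $\beta \co r \to p$ of $\cP^p$ corresponds to the morphism $\alpha \circ \beta \to \alpha$ in $\cP^q$ whose underlying arrow in $\cP$ is $\beta$ itself, and this extends to morphisms in the obvious way. Under $\iota_q$, the overcategory $(\cP^q)^\alpha$ is the principal down-set of $K_\alpha$, namely $\mathcal{P}(K_\alpha)$. Extracting atoms yields $Q_\cP(p) \cong K_\alpha$, realised naturally as a subset of $\{1, \ldots, \codim q\} \cong Q_\cP(q)$.

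The second key identification is of the overcategory $(\partial^p \cP)^\alpha$ with the poset of factorizations $p \to q' \to q$ of $\alpha$. Using the uniqueness of the transition arrows $\alpha_J^K$ in a model for manifolds with corners, such a factorization is completely determined by the subset $K' \subseteq \{1, \ldots, \codim q\}$ labelling the intermediate object in $\cP^q$, and the existence of the map $p \to q'$ forces $K_\alpha \subseteq K'$. This yields a poset isomorphism $(\partial^p \cP)^\alpha \cong [K_\alpha, \{1, \ldots, \codim q\}] \cong \mathcal{P}(K_\alpha^c)$ via $K' \mapsto K' \setminus K_\alpha$. The minimum is the trivial factorization ($K' = K_\alpha$), and its atoms are labelled by the singletons $\{i\}$ for $i \in K_\alpha^c$, hence $Q_{\partial^p \cP}(\alpha) \cong K_\alpha^c$. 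Combining gives $Q_\cP(q) \cong \{1, \ldots, \codim q\} = K_\alpha \amalg K_\alpha^c \cong Q_\cP(p) \amalg Q_{\partial^p \cP}(\alpha)$.

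Functoriality will follow from the observation that for a composable pair $p \xrightarrow{\alpha_1} q_1 \xrightarrow{\alpha_2} q_2$, the labels in $\cP^{q_2}$ satisfy $K_{\alpha_2 \circ \alpha_1} \subseteq K_{\alpha_2}$, and both the direct decomposition of $Q_\cP(q_2)$ via $\alpha_2 \circ \alpha_1$ and the iterated decomposition (first via $\alpha_2$, then via $\alpha_1$ on the $Q_\cP(q_1) \cong K_{\alpha_2}$ piece) produce the same tripartition $K_{\alpha_2 \circ \alpha_1} \amalg (K_{\alpha_2} \setminus K_{\alpha_2 \circ \alpha_1}) \amalg K_{\alpha_2}^c$ of $\{1, \ldots, \codim q_2\}$. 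The main obstacle, though modest, will be the factorization analysis in the second identification: one must carefully invoke the uniqueness of $\alpha_J^K$ to check that factorizations of $\alpha$ through $p$ are labelled exactly by subsets $K' \supseteq K_\alpha$, and no others. Once that is in place, independence of the construction from the choice of $\iota_q$ follows automatically, since both the rank function and the interval structure of the poset are intrinsic.
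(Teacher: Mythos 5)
Your argument is correct: the identifications $\cP^p \cong (\cP^q)^\alpha \cong \mathcal{P}(K_\alpha)$ and $(\partial^p\cP)^\alpha \cong [K_\alpha, \{1,\ldots,\codim q\}] \cong \mathcal{P}(K_\alpha^c)$ are exactly the canonical unwinding of Definition~\ref{def:model-manifolds-with-corners}, and they do force $|K_\alpha| = \codim p$ and yield the stated decomposition together with its compatibility under composition. The paper states this lemma without proof, treating it as immediate from the power-set structure of the overcategories, and your write-up supplies precisely those details.
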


\begin{example}
The face poset of the simplex $\Face^{op} \Delta^{n}$, equipped with the functor that records the codimension of every face is the standard example of a category with corners. Specializing to this case will recover the theory of $\langle n \rangle$-ads of Quinn~\cite{Quinn1995}.
\end{example}

\begin{rem}
Every construction that we will consider in this paper can be phrased in terms of 
categories which are in fact partially ordered sets (i.e., so that
there is at most one arrow between any pair of objects), as discussed in Remark \ref{rem:imposing_poset} below, but follow-up work will require the more general notion discussed above. 
\end{rem}

We define a morphism $f$ of models to be an embedding $\cP_0 \to \cP_1$, preserving  codimension up to an overall shift, and inducing an isomorphism on the categories of factorisations of each arrow, i.e., so that if $\alpha$ is an arrow in $\cP_0$, the functor induces a bijection between factorisations of $\alpha$ in $\cP_0$ and of its image in $\cP_1$.    This class of morphisms is closed under compositions. Given an object $p$ of $\cP_0$, denote by $Q_f(p)$ the set $Q_{\cP_1}(f(p_0))$ where $p_0$ is the unique minimal element of $\cP_0$ which admits an arrow to $p$. As a consequence of Lemma \ref{lem:decomp_next_to_minimal_elts}, we have
\begin{lem}
  If $f$ and $g$ are maps of models, we have a natural decomposition
  \begin{equation}
    Q_{g \circ f}(p) \cong Q_f(p) \amalg Q_g(f(p)).
  \end{equation} \qed
\end{lem}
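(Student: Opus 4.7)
The plan is to deduce this from Lemma \ref{lem:decomp_next_to_minimal_elts} applied in $\cP_2$, combined with the factorisation-preservation property built into the definition of a map of models. Write $p_0$ for the minimum of $\cP_0^{p}$ and $q_0$ for the minimum of $\cP_1^{f(p)}$, so that by definition
\begin{equation}
Q_f(p) = Q_{\cP_1}(f(p_0)), \quad Q_g(f(p)) = Q_{\cP_2}(g(q_0)), \quad Q_{g\circ f}(p) = Q_{\cP_2}(g(f(p_0))).
\end{equation}

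The first step I would carry out is a preliminary identification: $q_0$ is also the minimum of $\cP_1^{f(p_0)}$. Using the power-set description of $\cP_1^{f(p)}$, the arrow $f(p_0) \to f(p)$ factors through $q_0$, which produces an arrow $\alpha \co q_0 \to f(p_0)$; since $q_0$ has codimension $0$ and any codimension-$0$ object over $f(p_0)$ would also be a codimension-$0$ object over $f(p)$, the uniqueness of the minimum in the power-set overcategory at $f(p)$ forces it to coincide with $q_0$.

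I would then apply Lemma \ref{lem:decomp_next_to_minimal_elts} to the image arrow $g(\alpha) \co g(q_0) \to g(f(p_0))$ in $\cP_2$, which yields a natural decomposition
\begin{equation}
Q_{\cP_2}(g(f(p_0))) \cong Q_{\cP_2}(g(q_0)) \amalg Q_{\partial^{g(q_0)} \cP_2}(g(\alpha)).
\end{equation}
The left-hand side is $Q_{g\circ f}(p)$ and the first summand on the right is $Q_g(f(p))$, so the statement reduces to identifying the second summand on the right with $Q_f(p) = Q_{\cP_1}(f(p_0))$.

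For this final identification I would invoke the defining property of a map of models: $g$ induces a bijection between factorisations of $\alpha$ in $\cP_1$ and of $g(\alpha)$ in $\cP_2$. The relevant overcategory inside $\partial^{g(q_0)} \cP_2$ is precisely the category of such factorisations, and on the $\cP_1$ side the factorisation category of $\alpha$ through $q_0$ is canonically isomorphic to the full overcategory $\cP_1^{f(p_0)}$, because $q_0$ is its minimum and the power-set structure means every object admits a unique arrow from $q_0$. Passing to $Q$ on both sides then gives $Q_{\cP_1}(f(p_0)) = Q_f(p)$, completing the decomposition; naturality in $p$ is automatic from the naturality of the input data. The only genuinely delicate point is the opening step identifying $q_0$ with the interior of $f(p_0)$ — once this is in hand, the rest of the argument is just a matter of bookkeeping among overcategories.
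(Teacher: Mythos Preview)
Your argument is correct and is precisely the unpacking the paper has in mind: the lemma is stated with a bare \qed\ immediately after Lemma~\ref{lem:decomp_next_to_minimal_elts}, signalling that it is meant to be read as a direct consequence of that decomposition together with the factorisation-bijection property of maps of models, which is exactly the route you take. One small wording issue: saying ``the arrow $f(p_0) \to f(p)$ factors through $q_0$'' is backwards --- what you actually use (and what the power-set structure gives) is that the minimum $q_0 \to f(p)$ factors through $f(p_0)$, producing the arrow $\alpha\co q_0 \to f(p_0)$; the rest of your reasoning is sound.
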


Returning to the geometry of orbifolds, we have the following:
\begin{defin}
  A  \emph{stratification of an orbifold with corners} $X$ is a functor $\cP_X \to \cP$ with target a model for manifolds with corners. 
  We require that this functor preserve codimension, and that the induced functor on overcategories be an isomorphism for each corner stratum.
\end{defin}
What the second condition means is that, for each stratum $\tau$ of $X$, the functor $\cP_X \to \cP$ induces an isomorphism between strata $\sigma$ whose closure contains $\tau$ and the set of arrows whose target is the image of $\tau$ in $\cP$.
\begin{rem}
In the case that $\cP$ is itself a partially ordered set, the
labelling of the corners by $\cP$ can be formulated simply as an order preserving map
from the set of corners to $\cP$, which when restricted to those
adjacent to a corner labelled by $p$ induces an isomorphism with the subset of elements lying over $p$.
\end{rem}

For $p \in \cP$, recall that $\partial^p \cP$ denotes the undercategory of $p$.

\begin{defin}
Given an object $p$ of $ \cP$, the {\em corner stratum} $\partial^p X$ of a
$\cP$-stratified orbifold $X$, is the 
$\partial^p \cP$-stratified orbifold consisting of pairs $(\alpha, x)$ where $\alpha$ is an object of $\partial^p \cP$ and $x$ is an element of $X$ labelled by the image of $\alpha$.
\end{defin}

Note that $\partial^p X$ is a suborbifold of $X$ exactly when objects of $\cP$ admit at most one morphism to $p$. Nonetheless, we abuse terminology below in writing that we \emph{restrict} various data, for example vector bundles, from $X$ to $\partial^p X$.

For the next definition, it is convenient to note that, if $   (X, \cP_X \to \cP) $ is a stratified orbifold, then we have a decomposion
\begin{equation}
  X \equiv \coprod_{p} \partial^p X  
\end{equation}
indexed by the minimal elements $p$ of $\cP$.

\begin{defin}
   A morphism of stratified (effective) orbifolds 
  \begin{equation}
   (X, \cP_X \to \cP) \to  (X', \cP_{X'} \to \cP') 
 \end{equation}
 consists of a functor $ \cP \to \cP'$, and a map of orbifolds
 \begin{equation} \label{eq:morphism_to_boundary_stratum}
\partial^p X \to \partial^{f(p)} X
 \end{equation}
 for each minimal element $p$ of $\cP$, inducing a commutative diagram
 \begin{equation}
   \begin{tikzcd}
     \cP_X  \ar[r] \ar[d] & \cP_{X'}\ar[d] \\
     \cP \ar[r] &  \cP.
   \end{tikzcd}
 \end{equation}
\end{defin}

\subsection{Derived orbifolds}
\label{sec:derived_orbifold}
We need a preliminary definition before introducing the notion of a derived orbifold:
\begin{defin}
A virtual vector bundle on an orbifold $X$ is a pair $(V^+, V^-)$ of vector
bundles on $X$.  An equivalence $(V^+, V^-) \cong (W^+, W^-)$ of
virtual vector bundles is an isomorphism of vector bundles
\begin{equation}
V^+ \oplus W^{-} \cong V^{-} \oplus W^{+}.
\end{equation}
\end{defin}

We do not assume that the vector bundles that we consider are smooth, though it is a standard fact that such a unique such structure (up to isomorphism) exists. Subsequently, the following definition does not refer to any smoothness on the datum of the vector bundle:
\begin{defin} \label{def:derived_orbifold}
  A \emph{derived orbifold with corners} $\bX$ consists of a triple $(X, T^- \bX, s)$ where $X$ is an (effective and smooth) orbifold with corners, $T^- \bX$ is a vector bundle on $X$, and $s$ is a  section of $T^- \bX$. We say that $\bX$ is proper if the zero-locus $s^{-1}(0)$ is compact.
\end{defin}
We write $T^+ \bX$ for the tangent bundle of $X$, and $T \bX$ for the pair
\begin{equation}
  T \bX \equiv (T^+ \bX, T^- \bX),
\end{equation}
which we call the \emph{tangent bundle of $\bX$}, considered as a virtual vector bundle on $X$. More generally, we abuse notation by refering to data defined on $X$ as data defined on $\bX$, e.g., by a vector bundle on $\bX$, we mean a vector bundle on $X$.
\begin{rem}
  Definition \ref{def:derived_orbifold} is a compromise between the concrete output of pseudo-holomorphic curve theory, and the desire to have a relatively simple structure with which to perform formal constructions. In the setting of \cite{Abouzaid2021b} which we use for applications, the actual output is a topological orbifold, equipped with a topological submersion over a smooth manifold, a fibrewise smooth structure, and a vector bundle with a section which is smooth in the directions of the fibre. Moreover, the total space has a natural stratification (finer than the stratification by corner strata because of the existence of even codimension strata in the interior), so that each stratum is smooth,  and the restriction of the section is smooth with respect to this structure. We choose to throw away the information contained in the smooth structures of strata, keeping only the smooth structure on the base and the fibre, from which we extract a (stable) smooth structure on the total space by appealing to $G$-smoothing theory. We could further choose to smooth the section, which would allow us to simplify the definition of equivalence of derived orbifolds in Definition \eqref{def:strong_equivalence}, but that would further complicate the arguments in Appendix \ref{sec:hamilt-floer-theory-1}.
\end{rem}
For the next definition, the reader should recall that vector bundles
on effective orbifolds have functorially defined pullbacks with
respect to good morphisms (c.f.~Proposition
\ref{prop:pullback_good_maps}).
\begin{defin}
  A morphism $f \co \bX \to \bX'$ of derived orbifolds consists of a
  good map $f\co X \to X'$ of orbifolds, and a map $\tilde{f} \colon
  T^- \bX \to f^* T^- \bX'$ of vector bundles so that the following
  diagram commutes:
\begin{equation} \label{eq:section-compatibility}
\begin{tikzcd}
T^- \bX \ar[r,"\tilde{f}"] & f^* T^- \bX'. \\
X \ar[u,"s"] \ar[ur,"f^*s'"'] \\
\end{tikzcd}
\end{equation}
\end{defin}

Derived orbifolds form a category under this notion of morphism; given
maps $f \colon \bX \to \bX'$ and $g \colon \bX' \to \bX''$, the
composite map on vector bundles is defined as
\begin{equation}
\begin{tikzcd}
T^- \bX \ar[r,"\tilde{f}"] & f^* T^- \bX' \ar[r, "f^* \tilde{g}"] & f^* (g^*
T^- \bX'') \ar[r,"\cong"] & (g \circ f)^* T^- \bX'',
\end{tikzcd}
\end{equation}
where the last map is the canonical isomorphism.  Coherence for the
canonical comparisons $f^* g^* \cong (g \circ f)^*$ provides the
required associativity property. 

We want to distinguish a special class of morphism (equipped with
additional data) which are associated to total spaces of vector
bundles, which we denote $N$ below because we have in mind normal bundles:

\begin{defin}
Let $\bX$ be a derived orbifold and $p \colon N \to X$ a vector bundle
on its underlying orbifold $X$.  We define the derived orbifold $\bN$
to have underlying orbifold the total space $N$, with the vector
bundle $T^- \bN$ on $N$ given by the pullback $p^* (T^- \bX) \oplus N$
and section defined by $p^* s \oplus \id_N$.
\end{defin}

Note that the identity map of $\bX$ factors through $\bN$: 
\begin{equation}
\begin{tikzcd}
\bX \ar[rr,bend right=30, swap, "\id"] \ar[r,"\iota"] & \bN \ar[r,"p"] & \bX,  
\end{tikzcd}
\end{equation}
where $\iota$ denotes the inclusion of the zero section.

\begin{defin} \label{def:strong_equivalence}
A \emph{strong equivalence} between derived orbifolds $\bX$ and $\bX'$ is a map $f \co \bX \to \bX'$ of derived orbifolds, together with a vector bundle structure on $X'$ over $X$ identifying this map with the map induced by the inclusion of $0$-section.
\end{defin}

Since the total space of a vector bundle on $\bX$ is again a vector
bundle on $\bX$, whose $0$-section is given by composing $0$-sections,
we can see that strong equivalences are closed under composition.

\begin{rem}
The above limited definition is a replacement for the general definition of an equivalence of derived orbifolds, which is easiest to state when the sections $s$ and $s'$ are smooth, namely that the following sequence of vector bundles is exact along $s^{-1}(0)$
    \begin{equation}
      T^+ \bX \to T^- \bX \oplus T^+ \bX' \to   T^- \bX',
    \end{equation}
    and that the map of $0$-loci be an isomorphism of orbispaces.
\end{rem}

We now incorporate the discussion of Section \ref{sec:categ-with-corn}.
\begin{defin}
  A \emph{$\cP_{\bX}$-statified derived orbifold} is a derived orbifold $\bX$ whose underlying derived orbifold with corners is equipped with a stratification $\cP_X \to \cP_{\bX}$. The {\em corner stratum} $\partial^p \bX$  associated to an element  $p$ of $ \cP_{\bX}$  consists of the orbifold with corners  $\partial^p X$, stratified by $\partial^p \cP_{\bX} $, and equipped with the pullback of $ T^{-} \bX$, and the pullback of the section $s$.
\end{defin}

We can now define the category of stratified derived orbifolds we
will use.

\begin{defin}
  The category $d\Orb$ of (stratified effective) derived orbifolds has objects pairs $(\cP_{\bX}, \bX)$ where
      $\bX$ is a $\cP_{\bX}$-stratified derived orbifold, and morphisms
  \begin{equation}
   f \co (\bX, \cP_X \to \cP_{\bX}) \to  (\bX', \cP_{X'} \to \cP_{\bX'}) 
 \end{equation}
 given by a morphism of stratified orbifolds, and a lift (for each minimal element of $\cP_{\bX})$ of the map $ \partial^p X \to \partial^{f(p)} X$ to a strong equivalence
 \begin{equation} \label{eq:strong_equivalence_to_boundary_stratum}
\partial^p \bX \to \partial^{f(p)} \bX.
 \end{equation}
\end{defin}

\begin{rem}
Note that in contrast to the usual framework for derived orbifolds
(e.g., via stacks), $d\Orb$ is an ordinary category and not a
$2$-category.  This suffices for our purposes because we only need the
very restricted class of morphisms arising as boundary inclusions
along strong equivalences.  However, $d\Orb$ does not have colimits;
in our proof that the quasicategory of flow categories that we construct satisfies the horn-filling condition, we
will appeal to Pardon's work on vector bundles in order to glue, which implicitly uses the full $2$-category of stacks.
\end{rem}

\begin{defin}
We define the product of charts $\bX$ and $\bY$ to be given by: 
\begin{enumerate}
\item the product of models $\cP_{\bX} \times \cP_{\bY}$,
\item the product of orbispaces $\bX \times \bY$,
\item the sum of vector bundles $T^{-} \bX \oplus T^{-} \bY$, 
\item and the sum of sections $s_0 \oplus s_1$.
\end{enumerate}
\end{defin}

It is immediate from the monoidal structures on the constituents that
this specifies a monoidal structure on $d\Orb$:

\begin{prop}
The product of charts endows $d\Orb$ with a symmetric monoidal
structure.  The unit is specified by the derived orbifold with
underlying orbifold a point, stratified by a point, equipped with the rank $0$ obstruction
bundle.\qed
\end{prop}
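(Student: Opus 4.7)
The plan is to verify the symmetric monoidal axioms by transporting them from the constituent ingredients (category of models, effective orbifolds with corners, vector bundles with direct sum, sections) and checking that these structures are compatible with the stratification and strong-equivalence data that distinguish $d\Orb$ from the plain product category. Almost everything reduces to three observations: products of models are models, products of (good morphisms of) effective orbifolds behave well, and direct sums of vector bundles together with summation of sections are symmetric monoidal in the obvious way.

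First I would verify that the product construction actually lands in $d\Orb$. For this I need to check that $\cP_{\bX} \times \cP_{\bY}$, equipped with codimension defined as the sum of the two codimensions, is a model for manifolds with corners in the sense of Definition \ref{def:model-manifolds-with-corners}. This is immediate from the natural isomorphism
\begin{equation}
(\cP_{\bX} \times \cP_{\bY})^{(p,q)} \cong \cP_{\bX}^{p} \times \cP_{\bY}^{q} \cong 2^{\{1,\ldots,\codim p\}} \times 2^{\{1,\ldots,\codim q\}} \cong 2^{\{1,\ldots,\codim p + \codim q\}}.
\end{equation}
The stratification functor $\cP_{X \times Y} \to \cP_{\bX} \times \cP_{\bY}$ is the product of the two given stratifications, which preserves codimension and induces isomorphisms on overcategories because this holds for each factor. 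The obstruction bundle $T^{-}\bX \boxplus T^{-}\bY$ and the section $s_{\bX} \boxplus s_{\bY}$ are standard, and the compatibility with corners, $\partial^{(p,q)}(\bX \times \bY) = \partial^{p}\bX \times \partial^{q}\bY$, is checked locally.

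Next I would establish functoriality. Given morphisms $f \co \bX \to \bX'$ and $g \co \bY \to \bY'$, the product $f \times g$ uses the product functor on models and the product of good morphisms of orbifolds (which is good, since effectivity and the good-map condition are stable under products). The required strong equivalence
\begin{equation}
\partial^{(p,q)} (\bX \times \bY) \to \partial^{(f(p),g(q))} (\bX' \times \bY')
\end{equation}
on each minimal stratum is the external product of the strong equivalences supplied by $f$ and $g$; the vector bundle structure witnessing this strong equivalence is the fibrewise direct sum of the two witnessing vector bundle structures. Bifunctoriality in the interchange law follows because products of vector bundle structures are associative and commute with the pullbacks used to compose morphisms.

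Finally I would assemble the associator, symmetry, and unitors. Each of these is constructed as the evident external product of the corresponding canonical isomorphism at the level of models, of effective orbifolds, and of pairs (vector bundle, section); the strong equivalence datum on boundary strata is automatic because the underlying comparison is literally a canonical isomorphism rather than a proper inclusion into a vector bundle. The coherence diagrams (pentagon, hexagon, triangle for the unitor) then commute because they commute factor by factor, and each factor (Cartesian product of categories, Cartesian product of effective orbifolds, direct sum of vector bundles) is a genuinely symmetric monoidal structure. For the unit, the key point is that the product with a terminal model having a single object of codimension $0$ is canonically identified with the original model, and adding a rank $0$ bundle with its unique section leaves $(T^{-}\bX, s)$ unchanged; the left and right unitors are these canonical identifications. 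The main obstacle is organizational rather than conceptual, namely keeping track that the strong-equivalence lifts on boundary strata required by the definition of morphisms in $d\Orb$ are preserved under products and that their composition is compatible with the canonical isomorphisms $(f \times g) \circ (f' \times g') \cong (f \circ f') \times (g \circ g')$; this is handled by the remark that strong equivalences are closed under composition together with the fact that direct sum of vector bundles is itself symmetric monoidal.
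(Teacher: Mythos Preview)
Your proposal is correct and follows the expected approach. The paper itself gives no proof for this proposition: it is marked with a terminal \qed, signalling that the authors regard it as immediate from the symmetric monoidal structures on the constituent categories (models for manifolds with corners, effective orbifolds, vector bundles with direct sum). Your verification simply unpacks what the authors leave implicit, and does so in the natural way.
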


We will work with categories enriched in $d\Orb$, lifting specified
stratifications.  For clarity, we spell out the associated
stratification information implied by such an enrichment.

\begin{lem}\label{lem:stratifying_2category}
Let $\cC$ be a category enriched in $d\Orb$.  Associated to the
derived orbifold $\cC(x,y)$ of morphisms from an object $x$ to an
object $y$ is the category $\cP_{\cC(x,y)}$, which we will denote
$\cP_{x,y}$.  The composition
\begin{equation}
\cC(x,y) \times \cC(y,z) \to \cC(x,z)
\end{equation}
specified by the enrichment implies that we have functors
\begin{equation}
\cP_{x,y} \times \cP_{y,z} \to \cP_{x,z}
\end{equation}
and therefore we obtain a category $\cP_{\cC}$ enriched in categories
with objects $\ob(\cC)$ and morphism categories $\cP_{\cC}(x,y) =
\cP_{x,y}$.
\end{lem}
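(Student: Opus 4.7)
The assertion is essentially a formality once we observe that the stratification data is functorial and monoidal. The plan is to exhibit a symmetric monoidal functor $\cP_{(-)} \co d\Orb \to \Cat$ and then transport the enrichment of $\cC$ through it.

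First I would check functoriality: by definition, a morphism $f \co (\bX, \cP_X \to \cP_{\bX}) \to (\bX', \cP_{X'} \to \cP_{\bX'})$ in $d\Orb$ includes as part of its data a functor $\cP_{\bX} \to \cP_{\bX'}$, and composition of morphisms in $d\Orb$ acts by composition of these functors (this is immediate from the definition of morphisms of stratified orbifolds and the fact that composition of underlying orbifold maps is strictly associative). Thus $\bX \mapsto \cP_{\bX}$ defines a functor to $\Cat$.

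Next, I would verify symmetric monoidality: the product of derived orbifolds $\bX \times \bY$ was defined to have stratification model $\cP_{\bX} \times \cP_{\bY}$, and the unit object (the point with trivial stratification) has as its model the terminal category. The coherence isomorphisms on $d\Orb$ are induced from those on the underlying orbifolds and the product of models, so they are sent to the corresponding coherence isomorphisms in $\Cat$. Hence $\cP_{(-)}$ is a strict symmetric monoidal functor.

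Given this, the composition $\cC(x,y) \times \cC(y,z) \to \cC(x,z)$, which is a morphism in $d\Orb$, is sent by $\cP_{(-)}$ to a functor $\cP_{x,y} \times \cP_{y,z} \to \cP_{x,z}$. The associativity and (when present) unit diagrams for the enrichment of $\cC$ over $d\Orb$ are carried by the monoidal functor $\cP_{(-)}$ to the corresponding associativity and unit diagrams expressing that $\cP_{\cC}$ is a category enriched in $\Cat$ with object set $\Ob(\cC)$.

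The only step requiring care is the bookkeeping for the product stratification, but since this was built into the definition of the monoidal structure on $d\Orb$ in the previous subsection, no further work is needed. In particular, there is no genuine obstacle here; the lemma is really a reminder that stratifications are transported through any $d\Orb$-enriched structure.
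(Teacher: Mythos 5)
Your proposal is correct and matches the paper's (implicit) reasoning: the paper states this lemma without proof precisely because, as you observe, a morphism in $d\Orb$ carries a functor of stratifying categories as part of its data, the product of charts is defined to have $\cP_{\bX}\times\cP_{\bY}$ as its model, and so $\bX\mapsto\cP_{\bX}$ is a strict monoidal functor $d\Orb\to\Cat$ along which the (non-unital) enrichment transports. No gap.
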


\subsection{Structured derived orbifolds}
\label{sec:tang-spher-struct}

As noted in the introduction, the bordism theory of orbifolds is trivial unless we fix some structure either at the level of  stabiliser groups or tangent spaces. For the former, we use the following notion:
\begin{defin} \label{def:family_of_groups}
  A \emph{multiplicative family of groups} is a collection of isomorphism classes of finite groups which is closed under direct sums. 
\end{defin}
For applications to Floer theory, the most important classes of examples are the trivial family (consisting only of the group with one element), and the family of all finite groups. We shall assume that such a family of groups is fixed, and not mention it unless required for specificity.

A concrete way to impose structure on tangent spaces is to have a chosen isomorphism
between the tangent space and a prescribed virtual vector bundle. We
recall that $T^+ \bX$ is our chosen notation for the tangent bundle of
$X$, so that $T \bX$ stands for the formal difference of this tangent
space with the vector bundle $T^- \bX$, i.e., $(T^+ \bX, T^- \bX)$.

\begin{defin} \label{def:relative_framing}
Let $\bX$ be a derived orbifold.  A {\em framing of $\bX$ relative
  $E$}, for a virtual vector bundle $E = (E^+, E^-)$ on $\bX$,
consists of an equivalence $T\bX \cong E $ of virtual vector bundles
along $s^{-1}(0)$.
\end{defin}

We remind the reader that such an equivalence consists of an
isomorphism, over the $0$-locus, of vector bundles
\begin{equation}
T^+ \bX \oplus E^- \cong T^- \bX \oplus E^+.
\end{equation} 

The main disadvantage of the notion of framing in
Definition~\ref{def:relative_framing} is that it is not automatically
inherited by strata.  Using the notation $Q_{\cP}(p)$ for the normal direction to a stratum, as in Equation \eqref{eq:qp-normal_directions}, we have:
\begin{defin}
  A \emph{consistent normal framing} of the corner strata of a derived
  orbifold $\bX$ is a choice for each $p \in \cP_{\bX}$ of an
  isomorphism 
  \begin{equation} \label{eq:splitting-normal}
         T^+ \partial^p \bX \oplus \bR^{Q_{\cP}(p)} \cong  T^+  \bX|_{\partial^p X}
       \end{equation}
       extending the inclusion of the tangent space of $\partial^p X$, with the property that the vectors corresponding to each element of $ Q_{\cP}(p)$ point inwards. For each arrow from $p$ to $q$ in $\cP$, we require that the following diagram commute
       \begin{equation} \label{eq:consistency_normal_vector_fields}
         \begin{tikzcd}
           T \partial^q \bX \oplus \bR^{Q_{\cP}(p)} \ar[r] \ar[d] &   T \partial^p \bX|_{\partial^q X} \oplus \bR^{Q_{\cP}(p)} \ar[d] \\
          T \partial^q \bX \oplus \bR^{Q_{\cP}(q)} \ar[r] &   T \bX|_{\partial^q X}.
         \end{tikzcd}
       \end{equation}
\end{defin}
The commutativity of Diagram \eqref{eq:consistency_normal_vector_fields} should be interpreted as the standard condition that the normal vector field to $ \partial^q \bX$ associated to a stratum which also contains $\partial^p \bX$ is obtained by restriction from the larger stratum. Proceeding inductively, this implies that the normal vector fields to the codimension $1$ strata determine the normal framing of all strata, from which we immediately conclude:
\begin{lem}
  The space of consistent normal framings is contractible. \qed
\end{lem}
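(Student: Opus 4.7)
The plan is to reduce the statement to contractibility of the space of inward-pointing normal sections along codimension~$1$ strata, via a forgetful homeomorphism, and then invoke convexity.

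First, I would observe that for a codimension~$1$ stratum $q$, specifying an isomorphism \eqref{eq:splitting-normal} that extends the inclusion of the tangent bundle and sends the chosen basis vector to an inward-pointing normal is exactly the data of a single inward-pointing section of the normal bundle of $\partial^q X \subset X$. The set of such sections is a convex open subset of the space of all sections of the normal bundle, hence contractible, and the product over all codimension~$1$ strata is therefore contractible as well.

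Next, I would construct a homeomorphism between the space of consistent normal framings and this product. In one direction, given inward-pointing sections at all codimension~$1$ strata, I reconstruct the framing at an arbitrary corner $\partial^p \bX$ by iterating Lemma \ref{lem:decomp_next_to_minimal_elts}, which identifies $Q_{\cP}(p)$ with the set of codimension~$1$ strata whose closure contains $\partial^p X$. Restricting the corresponding sections to $\partial^p X$ produces an inward-pointing vector indexed by each element of $Q_{\cP}(p)$; the standard local model $(\bR_{\geq 0})^{\codim p} \times \bR^{\dim X - \codim p}$ for corners (which applies in the orbifold setting because the isotropy acts trivially on normal directions) guarantees that these vectors are linearly independent, and together with the inclusion of $T^+ \partial^p \bX$ they furnish the required isomorphism. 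The commutativity of \eqref{eq:consistency_normal_vector_fields} is then automatic, since both paths around the diagram express the framing at the deeper stratum as the direct sum of the same restrictions of codimension~$1$ sections. Conversely, any consistent framing is determined by its restriction to codimension~$1$ strata, because the consistency condition along each arrow $p \to q$ with $q$ of codimension~$1$ forces the corresponding summand of $\bR^{Q_{\cP}(p)}$ to coincide with the restriction of the chosen section.

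The main (and essentially only) subtlety is the linear independence claim in the reconstruction step, which reduces to the standard local model for corners; with that in hand, the forgetful map is a homeomorphism onto a product of convex sets, and contractibility follows.
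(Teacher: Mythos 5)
Your overall strategy coincides with the paper's: the consistency diagram \eqref{eq:consistency_normal_vector_fields} forces the framing at every corner stratum to be the restriction of the codimension-$1$ data, and one then argues that the space of codimension-$1$ choices is convex. The paper's entire proof is the sentence preceding the lemma asserting exactly this reduction, so in spirit you have reproduced it, with more detail.

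There is, however, a genuine problem with your reconstruction step. It is not true that arbitrary inward-pointing normal sections along the codimension-$1$ strata restrict to linearly independent vectors at a deeper corner, so the "standard local model" does not deliver the claimed linear independence. In the model $(\bR_{\geq 0})^2 \times \bR^{n-2}$, the constant vector field $\partial_{x_1} + \partial_{x_2}$ is a perfectly good inward-pointing normal section along the face $\{x_1 = 0\}$, and the same vector field is one along $\{x_2 = 0\}$; at the corner $\{x_1 = x_2 = 0\}$ the two restrictions coincide, so \eqref{eq:splitting-normal} fails to be an isomorphism there. Hence the forgetful map is injective but not surjective onto the product you describe, and your identification with a product of convex sets breaks down. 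Worse, if one only demands that the vector attached to $i \in Q_{\cP}(p)$ have positive component in the direction normal to the $i$-th face, the locus where the resulting normal matrix is invertible is disconnected (the matrices $\left(\begin{smallmatrix} 1 & 2 \\ 2 & 1 \end{smallmatrix}\right)$ and $\left(\begin{smallmatrix} 1 & 0 \\ 0 & 1 \end{smallmatrix}\right)$ both have positive diagonal but determinants of opposite sign), so some further constraint is genuinely needed for the lemma to be true at all. The repair is to interpret "points inwards" at a corner as requiring the vector attached to $i$ to be inward for the $i$-th face \emph{and tangent to the other faces through that corner}, i.e.\ its normal part is a positive multiple of $\partial_{x_i}$ -- this is what the Remark following the lemma (canonicity up to positive dilation of each factor of $\bR^{Q_{\cP}(p)}$) is implicitly assuming. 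With that convention the admissible codimension-$1$ sections form a convex (hence contractible) set, the restricted vectors are automatically independent, and your product argument goes through.
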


We shall henceforth assume that such a choice is fixed for each derived orbifold.
\begin{rem}
  An alternative approach is to modify all constructions to refer only to the short exact sequence
 \begin{equation}
T \partial^p \bX \to   T \bX|_{\partial^p \bX} \to  \bR^{Q_{\cP}(p)}
\end{equation}
which is canonical up to a choice of positive real dilation of each factor in $\bR^{Q_{\cP}(p)}$. 
\end{rem}

The choice of splitting in Equation \eqref{eq:splitting-normal} leads to:
\begin{lem} \label{lem:inherit_framings}
Let $\bX$ be a derived orbifold and $E$ a virtual vector bundle on
$\bX$.  A framing of $\bX$ relative $E$ induces a framing of
$\partial^p \bX$ relative $E \ominus \bR^{Q_\cP(p)}$ for each $p \in
\cP_{\bX}$.  This assignment is functorial in the sense that for each
arrow $\alpha \co p \to q$, the isomorphism in Equation
\eqref{eq:decomposition_normal_directions} intertwines the two
framings of $\partial^q \bX$ relative $E \ominus \bR^{Q_\cP(q)}$
obtained by (i) restricting from $\bX$, and (ii) restricting from the
induced framing on $\partial^p \bX $. \qed
\end{lem}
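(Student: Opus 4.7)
The plan is to unwind the definition of a relative framing and substitute the canonical splitting of the tangent bundle along a stratum; the functoriality statement then reduces to the commutativity of Diagram \eqref{eq:consistency_normal_vector_fields}.

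First, I would fix $p \in \cP_{\bX}$ and write $E = (E^+, E^-)$, so that a framing of $\bX$ relative $E$ is an isomorphism
\begin{equation*}
T^+ \bX \oplus E^- \cong T^- \bX \oplus E^+
\end{equation*}
of vector bundles along $s^{-1}(0)$. I would then restrict this isomorphism to $\partial^p X$, observing that $s^{-1}(0) \cap \partial^p X$ is exactly the zero locus of the restricted section, and that by definition $T^- \partial^p \bX = T^- \bX|_{\partial^p X}$. Substituting the consistent normal framing isomorphism $T^+ \bX|_{\partial^p X} \cong T^+ \partial^p \bX \oplus \bR^{Q_{\cP}(p)}$ on the left and rearranging the $\bR^{Q_{\cP}(p)}$ summand to the other side produces
\begin{equation*}
T^+ \partial^p \bX \oplus \bigl(E^- \oplus \bR^{Q_{\cP}(p)}\bigr) \cong T^- \partial^p \bX \oplus E^+,
\end{equation*}
which is precisely a framing of $\partial^p \bX$ relative $E \ominus \bR^{Q_{\cP}(p)}$. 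This defines the induced framing.

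For functoriality, given $\alpha \co p \to q$, I would compare the two framings of $\partial^q \bX$ relative $E \ominus \bR^{Q_{\cP}(q)}$ by tracing both through the substitution recipe above. The direct restriction uses the single splitting $T^+ \partial^q \bX \oplus \bR^{Q_{\cP}(q)} \cong T^+ \bX|_{\partial^q X}$, while the two-step restriction uses the composite $T^+ \partial^q \bX \oplus \bR^{Q_{\partial^p \cP}(\alpha)} \oplus \bR^{Q_{\cP}(p)} \cong T^+ \partial^p \bX|_{\partial^q X} \oplus \bR^{Q_{\cP}(p)} \cong T^+ \bX|_{\partial^q X}$. Under the decomposition $Q_{\cP}(q) \cong Q_{\cP}(p) \amalg Q_{\partial^p \cP}(\alpha)$ of Lemma \ref{lem:decomp_next_to_minimal_elts}, the commutativity of Diagram \eqref{eq:consistency_normal_vector_fields} asserts exactly that these two isomorphisms of $T^+ \bX|_{\partial^q X}$ agree, so the two induced framings coincide.

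The argument is essentially bookkeeping, and I do not anticipate a genuine obstacle; the only point requiring care is making sure that the identification of $E \ominus \bR^{Q_{\cP}(q)}$ with $\bigl(E \ominus \bR^{Q_{\cP}(p)}\bigr) \ominus \bR^{Q_{\partial^p \cP}(\alpha)}$ used in (ii) matches, up to the canonical decomposition of $\bR^{Q_{\cP}(q)}$, the one used in (i). This is forced by the same decomposition of $Q_{\cP}(q)$, so the lemma follows.
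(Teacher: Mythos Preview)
Your argument is correct and is exactly the unwinding the paper has in mind: the lemma is stated with a \qed and no proof, prefaced by ``The choice of splitting in Equation \eqref{eq:splitting-normal} leads to:'', so the authors regard the substitution you wrote out as immediate. Your treatment of functoriality via Diagram \eqref{eq:consistency_normal_vector_fields} is precisely the intended one.
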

In some situations, we shall require more control over the induced framing at the boundary: if $E^+$ has a distinguished trivial sub-bundle which is identified with $\bR^{Q_\cP(p)}$, we may ask that the framing of $\bX$ relative $E$ respect this identification at the boundary. This is generally not possible, but can be achieved by a stabilisation:
\begin{lem} 
 Given a framing of $\bX$ relative $E$, a stratum  $p \in \cP_{\bX}$, and a sub-bundle of $E^+$ which is identified with $\bR^{Q_\cP(p)}$, there exists a vector bundle $W$ on $X$, and a framing of $\bX$ relative $E \oplus (W,W)$, which agrees with the direct sum of the given framing with the identity on $W$ away from a neighbourhood of the image of $\partial^p \bX$, and whose restriction to this stratum identifies the normal directions with the given sub-bundle of $E^+$.
\end{lem}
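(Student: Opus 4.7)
The plan is to localise the modification to a tubular neighbourhood $U \cong \partial^p \bX \times [0,\epsilon)$ of $\partial^p \bX$ in $X$, and to produce the new framing as the composition of $\phi \oplus \id_W$ with an automorphism of the target bundle which, at the stratum, carries the image of the normal sub-bundle $\bR^{Q_\cP(p)}$ under the given framing onto the prescribed sub-bundle $\bR^{Q_\cP(p)} \subset E^+$, and which is the identity outside a smaller neighbourhood of $\partial^p \bX$.

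First I would restrict to the compact set $\partial^p \bX \cap s^{-1}(0)$. Using the consistent normal framing \eqref{eq:splitting-normal}, the restriction of $\phi$ becomes an isomorphism
\begin{equation*}
T^+ \partial^p \bX \oplus \bR^{Q_\cP(p)} \oplus E^- \;\longrightarrow\; T^- \bX|_{\partial^p \bX} \oplus E^+,
\end{equation*}
exhibiting $\phi(\bR^{Q_\cP(p)})$ as a trivialised rank-$|Q_\cP(p)|$ sub-bundle of the target, sitting alongside the prescribed sub-bundle $\bR^{Q_\cP(p)} \subset E^+$. The content of the lemma is then to produce, after stabilisation by a suitable $W$, an automorphism $\Psi$ of $T^- \bX|_{\partial^p \bX} \oplus E^+ \oplus W$ taking the first sub-bundle to the second and lying in the path-component of the identity.

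To produce $\Psi$, I would choose $W$ of sufficiently high rank (and, in the orbifold case, with enough isotypic content relative to the stabilisers at finitely many chart centres covering the compact locus) that the Stiefel bundle of injections $\bR^{Q_\cP(p)} \hookrightarrow T^- \bX \oplus E^+ \oplus W$ becomes highly connected, so that any two sections over $\partial^p \bX \cap s^{-1}(0)$ are connected by a path. Since both sub-bundles in question are trivial with trivial isotropy action (per the convention that stabilisers act trivially on normal directions to each stratum), this reduces locally to the classical connectivity of Stiefel manifolds. The resulting path yields $\Psi$ together with an isotopy from $\Psi$ to the identity. A cut-off function $\rho \co [0,\epsilon) \to [0,1]$ with $\rho(0)=1$ vanishing near $\epsilon$ then extends $\Psi$ over the collar to a family $\Psi_\rho$ interpolating to the identity; we set $\phi' = \Psi_\rho \circ (\phi \oplus \id_W)$ on $U$ and $\phi' = \phi \oplus \id_W$ outside.

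The main obstacle is the stabilisation step: ensuring that the two distinguished trivial sub-bundles of the target become isotopic after a bounded enlargement of the framing data. The triviality of both sub-bundles and the compactness of the relevant locus make this tractable by standard obstruction theory; in the effective orbifold case one additionally passes to a finite open cover by orbifold charts and invokes the fact that the representations of the local stabilisers all contain the trivial representation of the required rank, so that a single $W$ suffices globally.
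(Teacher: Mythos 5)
Your argument is correct and is essentially the paper's proof spelled out in full: the paper disposes of this lemma in one line by appealing to the high connectivity of the inclusions $O(N) \to O(N+1)$ as $N \to \infty$, and your stabilisation-plus-Stiefel-connectivity argument (with the collar cut-off to localise the modification) is precisely the detailed version of that appeal.
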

\begin{proof}
  This is an immediate consequence of the high connectivity of the inclusion of the orthogonal groups $O(N) \to O(N+1)$ as $N \to \infty$.
\end{proof}

\subsection{A bicategory of relatively framed derived orbifolds}
\label{sec:bicat-relat-fram}

The structures that we shall introduce will require stably comparing
the tangent spaces of derived orbifolds to structured vector bundles,
relative fixed virtual vector spaces. In Morse theory, we can restrict attention to ordinary vector spaces, which correspond to the negative-definite subspace for the Hessian on the tangent space at a critical point (one may switch to the positive-definite subspace depending on conventions).

In order to adequately formulate
the functoriality and multiplicativity of these constructions, it is
most convenient to introduce bicategories which are labelled by the structure groups of the vector bundles. We start with the case of the orthogonal group $O$.

\begin{defin}
The bicategory $d\Orb^{O}$ has
\begin{enumerate}
\item $0$-cells given by virtual vector spaces $V$, i.e., pairs
  $V = (V^+,V^-)$ of real vector spaces.

\item $1$-cells between virtual vector spaces $V_0$ and $V_1$
  (i.e., objects of $d\Orb^O(V_0,V_1) $) given by an object $\bX$  of $d\Orb$
 together a vector bundle $E_{\bX}$, and an equivalence of virtual bundles
\begin{equation} \label{eq:relative_isomorphism_TX-vbunlde}
  T \bX \oplus V_1 \cong E_{\bX} \oplus V_0.
\end{equation}
We will abuse notation and write $\bX$ for a $1$-cell.

\item $2$-cells, i.e., morphisms in $d\Orb^O(V_0,V_1)$, consisting of a morphism $f \co \bX \to \bX'$ in $d\Orb$, a decomposition
  \begin{equation} \label{eq:normal_decomposition}
  Q_f = Q_f^+ \amalg Q_f^-      
  \end{equation}
   of the finite set associated by the map of
stratifying categories $\cP_{\bX} \to \cP_{\bX'}$ to each minimal
element of $\cP_{\bX}$, split surjections of vector bundles
  \begin{align} 
 \label{eq:add_normal_directions_to_E+_sur}
\rho_f^{+} \colon E^+_{\bX'}   & \rightarrow E^+_{\bX} \oplus \bR^{Q_f^+}  \\
    E^-_{\bX}  & \leftarrow  E^-_{\bX'} \oplus \bR^{Q_f^-} \noloc \rho^{-}_f \label{eq:stabilise_E-_sur}
  \end{align}
  and a chosen isomorphism $\gamma \colon \ker \rho_f^+ \to \ker
  \rho_f^{-}$ between the kernels of these maps.  In mild abuse of
  notation, we write $E_f$ for the vector bundle over $\bX$ given by
  this kernel (and $E_{f^-}$ and $E_{f^+}$ when necessary to
  disambiguate), so that we obtain isomorphisms:
   \begin{align}  \label{eq:add_normal_directions_to_E+} 
     E^+_{\bX} \oplus \bR^{Q_f^+} \oplus E_{f^+}  & \cong E^+_{\bX'} \\
     \label{eq:stabilise_E-}
    E^-_{\bX'} \oplus \bR^{Q_f^-} & \cong  E^-_{\bX} \oplus E_{f^-}.
  \end{align} 
  
 We require the commutativity of the following diagram
  \begin{equation} \label{eq:commutative_diagram_morphism_Kuranishi-stable}
    \begin{tikzcd}
     T^+ \bX  \oplus V^+_1 \oplus E_{\bX}^- \oplus V^-_0 
        \oplus  E_{f^-} \oplus \bR^{Q^+_f}
        \ar[r] \ar[d] & T^+ \bX'  \oplus V^+_1 \oplus E_{\bX'}^- \oplus V^-_0 
      \ar[d] \\
    T^- \bX  \oplus V^-_1 \oplus E_{\bX}^+ \oplus V^+_0  
        \oplus  E_{f^+} \oplus \bR^{Q^+_f} \ar[r] &  T^- \bX'  \oplus V^-_1 \oplus E_{\bX'}^+ \oplus V^+_0,
    \end{tikzcd}
  \end{equation}
  where the top horizontal map is induced by Equation \eqref{eq:stabilise_E-} and the normal framing of $\bX$ in $\bX'$, and the bottom horizontal map by Equation \eqref{eq:add_normal_directions_to_E+}.
\end{enumerate}
\end{defin}

\begin{rem}
The appearance of the decomposition in Equation
\eqref{eq:normal_decomposition} is due to the fact that codimension
$1$ strata of moduli spaces in Morse theory arise from two different
types of phenomena:
  \begin{enumerate}
    \item breaking of Morse trajectories at an intermediate critical
      point, and
    \item boundaries of families of continuation equations.
      \end{enumerate}

  In the first case, the moduli space of solutions is the free
  $\bR$-quotient of a manifold whose tangent space is naturally equivalent to the virtual bundle formed by the negative definite subspaces of the Hessian at the critical points associated to the ends. This implies that we have an isomorphism
  \begin{equation}
    T \cM(p,r) \oplus V_{r} \oplus  \bR^{\{r\}} \cong V_{p},
  \end{equation}
  where we choose to label the $\bR$-translation by the input critical point. This leads us to set
  \begin{equation}
    E_{\cM(p,r)} \equiv ( 0, \bR^{\{p\}}).
  \end{equation}
  Along a boundary stratum given by a product $ \cM(p,q)
  \times\cM(q,r)$, we can identify the $\bR$-translation associated to
  the critical point $q$ with the normal direction, which corresponds
  to the case where $Q^+_f = \emptyset $ in Equation
  \eqref{eq:normal_decomposition}.

  In the second case, we have a manifold $B$ with boundary parametrising a family of continuation maps between Morse functions $F$ and $F'$. Writing $\cM_{B}(p,r')$ for the moduli space of solutions to such a family of equations, we have a natural isomorphism
  \begin{equation}
    T \cM_{B}(p,r') \oplus V_{r'}  \cong TB \oplus V_{p},
  \end{equation}
  which leads one to set
  \begin{equation}
   E_{\cM_B(p,r')} \equiv ( TB, 0).      
  \end{equation}
  It then follows from the formalism that, along the boundary of $B$,
  we are in the case $Q^-_f = \emptyset$ in Equation
  \eqref{eq:normal_decomposition}. 

The general case is a mixture of these two, as the boundary of the
parametrized moduli space $\cM_{B}(p,r')$ has some codimension $1$ boundary strata
given by breaking of Morse trajectories at one of the two ends (which
corresponds to the first case).  There are thus some strata with $ Q^-_f \neq \emptyset$ and others with $Q^+_f \neq \emptyset$; the general stratum is associated to a nontrivial decomposition of $Q_f$.
\end{rem}

\begin{rem}
We note that every derived orbifold is tautologically framed by $E = (T
\bX \oplus V_0, V_1)$, so that no interesting constraint is
imposed by assuming a lift of a derived orbifold to $d\Orb^O(V_0,V_1)$.
In Section~\ref{sec:stably-framed-stably}, we shall see such
nontrivial constraints appear when we equip $E$ with additional
structures.
\end{rem}
  
The composition of morphisms $f\colon \bX \to \bX'$ and $g\colon \bX' \to
\bX''$ in $d\Orb^O(V_0,V_1)$ is straightforward to define: we use the
decomposition $Q_{g \circ f} \cong Q_g \amalg Q_f$ 
to set 
\begin{align}
Q^+_{g \circ f} & \equiv Q^+_g \amalg Q^+_f \\
Q^-_{g \circ f} & \equiv Q^-_g \amalg Q^-_f,
\end{align}
and identify the kernels $E^+_{g \circ f}$ and $E^-_{g \circ f}$ of
the resulting composite maps 
\begin{align}
E^+_{\bX''} & \rightarrow E^+_{\bX'} \oplus \bR^{Q_g^+} \rightarrow
E^+_{\bX} \oplus \bR^{Q_f^+} \oplus \bR^{Q_g^+} \cong E^+_{\bX}
\oplus \bR^{Q_{g \circ f}^+}
\\
E^-_{\bX}  & \leftarrow  E^-_{\bX'} \oplus \bR^{Q_f^-} \leftarrow E^-_{\bX''}
\oplus \bR^{Q^-_g} \oplus \bR^{Q^-_f} \cong E^-_{\bX''}
\oplus \bR^{Q_{g \circ f}^-}
\end{align}
with each other using their canonical identifications as $E_{g}^{+}
\oplus (\rho^+_g)^* E_f^{+}$ and $E_{g}^{-}
\oplus (\rho^-_g)^* E_f^{-}$ and the given isomorphisms.

With these data, the commutativity of
Diagram~\eqref{eq:commutative_diagram_morphism_Kuranishi-stable} is
easy to check.  To see that this composition law is associative, the
checks are immediate except for the comparison of the isomorphisms of
kernels.  To see this, we use the coherence of pullback and the direct
sum.  To be more precise, given morphisms $f \colon \bX \to \bX'$, $g
\colon \bX' \to \bX''$, and $h \colon \bX'' \to \bX'''$, we are
comparing the isomorphisms induced by the decomposition
\begin{equation}
(E_h^- \oplus (\rho_h^-)^* E^-_g) \oplus (\rho^-_{h \circ g})^* E_f^-
\end{equation}
and the decomposition
\begin{equation}
E_h^- \oplus (\rho^-_h)^* (E^-g \oplus (\rho^-_g)^* E_f^-)
\end{equation}
(and analogously for $E_*^+$.)
By the associativity coherence for direct sums, these isomorphisms are
naturally isomorphic by a unique isomorphism, and therefore induce the
same isomorphisms between the kernels $\ker(\rho^+_h \circ \rho^+_g \circ
\rho^+_f)$ and $\ker(\rho^-_h \circ \rho^-_g \circ \rho^-_f)$.

Next, we construct the horizontal composition functor
\begin{equation} \label{eq:composition_functor_dOrbO}
      d\Orb^O(V_0,V_1) \times d\Orb^O(V_1,V_2) \to d\Orb^O(V_0,V_2).    
    \end{equation}
    At the level of derived orbifolds, this is given by the product, taking $\bX_{01}$ and $\bX_{12}$ to $\bX_{01} \times \bX_{12}$. We assign to virtual vector bundles $E_{01}$ and $E_{12}$ the virtual vector bundle
    \begin{align} \label{eq:composition_virtual_bundles}
        E_{02} \equiv E_{01} \oplus E_{12} \oplus (V^-_1,V_1^-) \cong \left( E_{01}^+ \oplus E_{12}^+ \oplus V_1^-,  E_{01}^- \oplus E_{12}^- \oplus V^-_1 \right)
          \end{align}
          and thus obtain the desired equivalence  of virtual bundles
          \begin{equation}\label{eq:composition_equivalence}
                  T \left( \bX_{01} \times \bX_{12}\right)  \oplus  V_2  \cong E_{02} \oplus  V_0
          \end{equation}
          as a composition of the isomorphism of Equation \eqref{eq:relative_isomorphism_TX-vbunlde} for the two factors.
            \begin{rem}
             There is an alternative way to define horizontal composition, which replaces Equation \eqref{eq:composition_virtual_bundles} by
             \begin{equation}
                E_{01} \oplus E_{12} \oplus V_1 \ominus V_1,
              \end{equation}
              and thus does not privilege one of the two formal summands of $V_1$. We prefer the above approach because, in the applications to Floer homotopy, the Morse-theoretic setting is privileged, and the formulae we obtain in that setting are simpler with the choice we made.
           \end{rem}
At the level of morphisms, we assign to a pair of morphisms $f_{01}$ from $\bX_{01}$ to $\bX'_{01}$ and $f_{12}$  from $\bX_{12}$ to $\bX'_{12}$,  the product map of derived orbifolds, the decomposition
\begin{align}\label{eq:bicategory_composition_q}
  Q^+_{f_{01} \times f_{12}} & \equiv Q^+_{f_{01}}  \amalg Q^+_{f_{12}} \\ \notag
  Q^-_{f_{01} \times f_{12}} & \equiv Q^-_{f_{01}}  \amalg Q^-_{f_{12}}
\end{align} using the natural isomorphism $Q_{f_{01}} \amalg Q_{f_{12}} \cong Q_{f_{01} \times f_{12}}$, and the direct sum of split-surjections of vector bundles. 
It is easy to check that this data makes
Diagram~\eqref{eq:commutative_diagram_morphism_Kuranishi-stable}
commute for the product.

Finally, we need to consider the associativity of this composition to
describe the bicategory structure.  Consider the diagram of
composition functors
\begin{equation}
\begin{tikzcd}[column sep=tiny]
d\Orb^O(V_0, V_1) \times d\Orb^O(V_1, V_2) \times d\Orb^O(V_2, V_3)
\ar[r] \ar[d] & d\Orb^O(V_0, V_2) \times d\Orb^O(V_2, V_3) \ar[d] \\
d\Orb^O(V_0, V_1) \times d\Orb^O(V_1, V_3) \ar[r] & d\Orb^O(V_0, V_3).\\
\end{tikzcd}
\end{equation}

This diagram does not commute strictly; on the level of objects, we
have an associator coming from the coherence isomorphisms for the
products:
\begin{equation}
(\bX_{01} \times \bX_{12}) \times \bX_{23} \cong \bX_{01} \times
  (\bX_{12} \times \bX_{23}),
\end{equation}
the coherence isomorphisms for the direct sum of 
Equation~\eqref{eq:composition_virtual_bundles}, and the associated
coherences for the required isomorphisms of
Equation~\eqref{eq:composition_equivalence}.  On the level of
morphisms, the key point is just that the composition is coherently
compatible with the product, using the coherence of pullback.  The
decompositions of Equation~\eqref{eq:bicategory_composition_q} and the
resulting split surjections of vector bundles are
similarly coherently compatible with the composition.  The unitors are
constructed similarly.  It is tedious but straightforward to verify
that the pentagon identity and the unit identities are satisfied.

We conclude our discussion by lifting the notion of a strong
equivalence to this setting: 
\begin{defin}
  A \emph{strong equivalence} between objects $\bX$ and $\bX'$ of
  $d\Orb^O(V_0,V_1)$ consists of:
  \begin{enumerate}
    \item a strong equivalence of the underlying orbifolds (with
    underlying vector bundle $N$),
  \item a vector bundle $W$ over the orbifold underlying $\bX$, and
    \item  an isomorphism
  \begin{equation}
    E^{\pm}_{\bX'} \cong E^{\pm}_{\bX} \oplus N \oplus W,
  \end{equation}
with the property that the relative framing of $\bX'$ agrees with
the direct sum of the relative framing of $\bX$ with the identity on
$N \oplus W$.
\end{enumerate}
\end{defin}

\subsection{Stably framed and stably complex derived orbifolds}
\label{sec:stably-framed-stably}

From the point of view of the abstract theory, it would seem most
natural to define a bicategory of stably complex derived orbifolds to
consist of those derived orbifolds with the structure of a complex
structure on the virtual vector bundle $E$. However, this is not
compatible with Equation~\eqref{eq:add_normal_directions_to_E+}, so we
would need to formulate the complex structure using an additional
stabilization procedure on $E$.

A slightly different problem arises when formulating the notion of
stably framed charts: the natural condition to impose is that the
virtual vector bundle $E$ is constant, i.e., is identified with a
trivial virtual vector bundle $U$. This would imply that the derived
orbifold is framed relative $U \oplus V_1 \ominus V_0$. However, in
concrete situation, we would like to formulate that our chart is
\emph{stably framed} relative a fixed vector bundle. 

A solution to these technical issues is to introduce a decomposition
of the virtual vector bundle to which we compare the tangent space
into three factors: a stable vector bundle which we denote by $I$,
that in geometric situations will be an index bundle, a stable vector
space denoted $U$ which arises from automorphisms or parameter spaces
of the equations defining the moduli spaces, and a vector bundle
denoted $W$ that is required for the stabilisation procedure:

\begin{defin}
The bicategory $d\Orb^{U}$ of \emph{stably complex derived orbifolds}
is the bicategory obtained from $d\Orb^O$ with the same $0$-cells, with
$1$-cells given by the $1$-cells of $d\Orb^O$ together a vector bundle
$W_{\bX}$ over $\bX$ for each object, and a decomposition of virtual
bundles 
\begin{equation}
  E_{\bX} \cong (W_{\bX}, W_{\bX}) \oplus I_{\bX} \oplus U_{\bX},  
\end{equation}
where $I_{\bX}$ is a complex virtual vector bundle, and $U_{\bX}$ is a
virtual vector space. 

The $2$-cells are given by the $2$-cells of $d\Orb^O$, with the property that the split surjections of Equations~\eqref{eq:stabilise_E-_sur} and \eqref{eq:add_normal_directions_to_E+_sur} split as a direct sum of maps
\begin{align}
  W_\bX  & \leftarrow W_{\bX'} \\
     U^+_{\bX} \oplus \bR^{Q^+_f}   & \cong U^+_{\bX'} \\ 
     U^-_{\bX}  & \cong U^-_{\bX'} \oplus \bR^{Q^-_f} \\ \label{eq:maps_index_bundles}
        I^{\pm}_{\bX}  & \leftarrow I^{\pm}_{\bX'},
\end{align}
where the middle two maps are maps of vector spaces, and the last map respects the complex structure. Writing $W_f$ for the kernel of the first map and $I_f$ for the kernel of the last one, we require the identification between these kernels give rise to a decomposition  
\begin{equation}
E_{f} \cong (W_{f},W_f) \oplus  I_{f}.
\end{equation}
The composition in the $2$-cells of $d\Orb^{U}$ are then induced by composition in $d\Orb^O$. 
 
Finally, the composition functors in $d \Orb^U$ lift Equation \eqref{eq:composition_functor_dOrbO}  $ d \Orb^O$ by setting 
  \begin{align}
    W_{02} & \equiv W_{01} \oplus W_{12} \oplus (V^-_1,V_1^-) \\
    U_{02} & \equiv U_{01} \oplus U_{12} \\
    I_{02} & \equiv I_{01} \oplus I_{12} \\
    Q^{\pm}_{02} & \equiv Q^{\pm}_{01} \amalg Q^{\pm}_{12}.
  \end{align}
\end{defin}
Analogous checks to the discussion of Section \ref{sec:bicat-relat-fram} show that adding this
augmented data retains the necessary coherences to form a bicategory.

In this setting, we require additional structures on strong equivalences:

\begin{defin}
  A \emph{strong equivalence} between objects $\bX$ and $\bX'$ of $d\Orb^U(V_0,V_1)$ consists of a strong equivalence in $d \Orb^O(V_0,V_1)$ between their images under the forgetful map, which we require to give rise to decompositions
  \begin{align}
    I^\pm_{\bX'} & \cong I^\pm_{\bX} \oplus I_f \\
    W_{\bX'} & \cong W^\pm_{\bX} \oplus W_f,
  \end{align}
  that induces a complex structure on $I_f$ in the first case.
\end{defin}

There is a straightforward extension of the constructions we described
to other classical bordism groups, but we want to highlight one
particular case: 
\begin{defin}
The bicategory $d\Orb^{fr}$ of \emph{stably framed derived orbifolds}
is the bicategory obtained from $d\Orb^U$ with the same $0$-cells, with
$1$-cells given by a $1$-cell in $d\Orb^U$ for which $I_{\bX}$ is
trivial, and $2$-cells given by a $2$-cell in $d\Orb^U$ for which
$I_{f}$ is the $0$ vector space. 
\end{defin}

The notion of strong equivalence for framed derived orbifolds is
inherited from the corresponding notion for stably complex derived
orbifolds.

\begin{defin}
A \emph{strong equivalence} between objects $\bX$ and $\bX'$ of
$d\Orb^{fr}(V_0,V_1)$ consists of a strong equivalence in $d
\Orb^U(V_0,V_1)$ between the underlying objects of
$d\Orb^U(V_0,V_1)$.
\end{defin}

We refer to both the complex oriented and framed cases as
\emph{structured derived orbifolds}, and generically refer to them as
$d \Orb^{\cS}$.  In both cases, we have the following straightforward
lifting property, which is established by taking the direct sum of the
structure maps with the identity on the normal bundle:

\begin{lem} \label{lem:canonical_lift_structure_equivalence}
If $\bX \to \bX'$ is a strong equivalence in $d \Orb$, and
$\tilde{\bX}$ is a lift of $\bX$ to $d \Orb^{\cS}$, then there exists
a canonical lift $ \tilde{\bX}'$ of $\bX'$ to $d \Orb^{\cS}$, and a
lift of the strong equivalence to a strong equivalence from
$\tilde{\bX} $ to $ \tilde{\bX}' $. \qed
\end{lem}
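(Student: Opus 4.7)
The plan is to construct $\tilde{\bX}'$ by pulling back the structure on $\tilde{\bX}$ along the bundle projection underlying the strong equivalence, and augmenting both $E^+$ and $E^-$ by the tautologically trivialized pair $(p^*N, p^*N)$. By the definition of a strong equivalence in $d\Orb$, the underlying orbifold of $\bX'$ is the total space of a vector bundle $N \to X$, with $T^-\bX' \cong p^*T^-\bX \oplus p^*N$ and section $p^*s \oplus \id_{p^*N}$. Since the tangent bundle of a vector bundle splits canonically along the zero section as $T^+\bX' \cong p^*T^+\bX \oplus p^*N$, we obtain a tautological equivalence of virtual bundles
\begin{equation}
T\bX' \cong p^*T\bX \oplus (p^*N, p^*N).
\end{equation}

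The first step is to define the underlying $1$-cell of $\tilde{\bX}'$ in $d\Orb^O(V_0, V_1)$. I would set $E^{\pm}_{\bX'} \equiv p^*E^{\pm}_{\bX} \oplus p^*N$, and define the relative framing of $\bX'$ as the direct sum of the pullback by $p$ of the relative framing of $\bX$ with the identity map on $p^*N$ (applied to both the LHS and RHS of the framing equivalence). This immediately produces an isomorphism $T\bX' \oplus V_1 \cong E_{\bX'} \oplus V_0$ which extends along $(s \oplus \id)^{-1}(0) = s^{-1}(0) \subset X \hookrightarrow X'$.

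In the structured settings $d\Orb^U$ and $d\Orb^{fr}$, I would extend the decomposition $E_{\bX} \cong (W_{\bX}, W_{\bX}) \oplus I_{\bX} \oplus U_{\bX}$ by declaring
\begin{equation}
W_{\bX'} \equiv p^*W_{\bX} \oplus p^*N, \qquad I_{\bX'} \equiv p^*I_{\bX}, \qquad U_{\bX'} \equiv U_{\bX},
\end{equation}
so that the added summand $(p^*N, p^*N)$ is absorbed entirely into the $(W, W)$-factor and carries no nontrivial complex or framing data. In particular, the framed condition $I_{\bX} = 0$ is preserved. Finally, I would assemble the lifted strong equivalence $\tilde{\bX} \to \tilde{\bX}'$ using the strong equivalence of the underlying orbifolds together with the tautological isomorphisms $E^{\pm}_{\bX'} \cong E^{\pm}_{\bX} \oplus p^*N$ (i.e., with auxiliary $W = 0$), and observe that by construction the relative framing of $\tilde{\bX}'$ is the direct sum of the relative framing of $\tilde{\bX}$ with the identity on $N$, which is the required condition.

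The main work is organizational rather than conceptual: one must verify that the assignments above satisfy the compatibility conditions built into the definitions of $d\Orb^{\cS}$, including the split surjections of Equations~\eqref{eq:stabilise_E-_sur} and \eqref{eq:add_normal_directions_to_E+_sur} restricted to each direct summand, and the commutativity of Diagram~\eqref{eq:commutative_diagram_morphism_Kuranishi-stable} for the lifted equivalence. These hold since every piece of data is obtained by direct sum with the identity on $(p^*N, p^*N)$ applied to a piece of data already present on $\tilde{\bX}$. Canonicity of the lift follows because the construction commutes with isomorphisms of the input data: any two such lifts differ by a unique strong equivalence in $d\Orb^{\cS}$ covering the identity on the underlying strong equivalence in $d\Orb$.
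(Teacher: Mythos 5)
Your construction is exactly the paper's argument: the Lemma is justified there by the single remark that the lift "is established by taking the direct sum of the structure maps with the identity on the normal bundle," which is precisely your assignment $E^{\pm}_{\bX'} = p^*E^{\pm}_{\bX}\oplus p^*N$ with the added summand absorbed into $(W_{\bX'},W_{\bX'})$ and $I$, $U$ unchanged. Your version is correct and simply spells out the verifications the paper leaves implicit.
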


\section{Flow categories}
\label{sec:flow-categories}

The purpose of this section is to define precisely what we mean by a
flow category and a structured flow category.  That is, we will
specify the vertices of the stable $(\infty,1)$-category of flow
categories.  Most of the work of the section is to make precise the
idea that a (structured) flow category is a stratified category
enriched in (structured) derived orbifolds.

\subsection{Stratifying categories}
\label{sec:strat-part-order}

We fix an abelian monoid $\Gamma$, equipped with a homomorphism
\begin{equation}
  \cA \co \Gamma \to [0,\infty),
\end{equation}
which we refer to as the energy.  The key examples are given by $\Gamma
= \{0\}$, and $\Gamma = [0,\infty)$.  We will generically use additive
  notation for $\Gamma$; the unit is denoted by $0$ and the operation
  by $+$.

We will work with categories graded by $\Gamma$; a $\Gamma$-graded
category $\cC$ is specified by a (small) category $\cC$ and a functor
$\cC \to \Gamma$, where $\Gamma$ is regarded as a category with a
single object.  Explicitly, this means that for each pair of objects
$x,y \in \ob(\cC)$ and $\gamma \in \Gamma$ there is a decomposition of
the morphism sets
\begin{equation}
\cC(x,y) = \coprod_{\gamma \in \Gamma} \cC^{\gamma}(x,y)
\end{equation}
such that the composition on
$\cC$ restricts to a map
\begin{equation}
\cC^{\gamma_1}(x,y) \times \cC^{\gamma_2}(y,z) \to \cC^{\gamma_1 +
  \gamma_2}(x,z).
\end{equation}
We will work with $\Gamma$-graded
categories without identities.

We can form $2$-categories whose $1$-cells are $\Gamma$-graded
categories (possibly without identities) as follows: regard $\Gamma$
as a category enriched in categories where for $\gamma, \gamma' \in
\ob(\Gamma)$, the morphism category $\Gamma(\gamma, \gamma')$ has only
identity morphisms.  Then we define a $\Gamma$-graded strict
$2$-category $\scrC$ as specified by a strict $2$-functor $\scrC \to
\Gamma$.  Note that this definition makes sense for strict non-unital
$2$-categories; i.e., categories enriched in non-unital categories.

We now turn to our main definitions for this section.  Let $\cP$ be a
set. For each pair $(p,q)$ of elements of $\cP$ and
each element $\lambda$ of $\Gamma$, we define a category as follows:

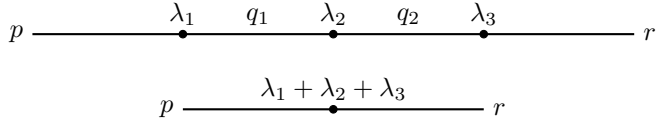
\begin{figure}
  \centering
   \begin{tikzpicture}
     \coordinate[label= left:$p$] (p) at (-4,0);
     \coordinate[label= right:$r$] (r) at (4,0);
     \draw [thick] (p) -- (r);
     \coordinate[label= above:$q_1$] (q1) at (-1,0);
     \coordinate[label= above:$\lambda_1$] (l1) at (-2,0);
     \draw [fill,color=black] (l1) circle (.05);
      \coordinate[label= above:$q_2$] (q2) at (1,0);
     \coordinate[label= above:$\lambda_2$] (l2) at (0,0);
     \draw [fill,color=black] (l2) circle (.05);
     \coordinate[label= above:$\lambda_3$] (l3) at (2,0);
     \draw [fill,color=black] (l3) circle (.05);
     \begin{scope}[shift={(0,-1)}]
      \coordinate[label= left:$p$] (p) at (-2,0);
     \coordinate[label= right:$r$] (r) at (2,0);
     \draw [thick] (p) -- (r);
     \coordinate[label= above:$\lambda_1 + \lambda_2 + \lambda_3$] (l) at (0,0);
     \draw [fill,color=black] (l) circle (.05);
   \end{scope}
   \end{tikzpicture}
   \caption{There is a morphism in $\cP^\lambda(p, r) $ from the bottom to the top labeled arc, given by collapsing the interior edges. }
  \label{fig:arcs_labeled_byP}
\end{figure}

\begin{defin}\label{def:poset-flow-categories}
For $p,q \in \cP$, the category $\cP^\lambda(p, q)$ has
\begin{enumerate}
\item objects  given by directed arcs (i.e., rooted trees with only bivalent
  vertices) with edges labeled by elements of $\cP$ and vertices
  labeled by elements of $\Gamma$ whose sum is $\lambda$, so that the incoming
  leaf is labeled by $p$ and the outgoing leaf by $q$. 
\item morphisms from $\gamma_1$ to $\gamma_2$ given by collapsing internal edges of $\gamma_2$
  (dropping the labels associated to the collapsed edges), adding
  the elements of $\Gamma$ associated to the endpoints of collapsed
  edges, such that the resulting arc is $\gamma_1$.
\end{enumerate}
\end{defin}

It is straightforward to see that the tree with a single vertex
labeled by $\lambda$ and no interior edges is the initial object of
$\cP^\lambda(p, q)$ and to prove the following result.

\begin{lem}
The category $\cP^\lambda(p, q)$ is a model for
manifolds with corners. \qed
\end{lem}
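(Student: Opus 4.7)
The plan is to verify Definition~\ref{def:model-manifolds-with-corners} directly by exhibiting the codimension functor and computing the overcategory of each object. I would define $\codim \co \cP^\lambda(p,q) \to \bN$ to send an arc $\gamma$ to its number of interior edges. Any morphism $\gamma_1 \to \gamma_2$ is by definition a choice of interior edges of $\gamma_2$ to collapse so that the result equals $\gamma_1$; in particular $\gamma_2$ has at least as many interior edges as $\gamma_1$, so $\codim$ is a well-defined functor into $\bN$ viewed as a poset.

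Next, I would fix $\gamma$ with $n$ interior edges, labelled $E = \{e_1, \dots, e_n\}$, and analyse the overcategory $\cP^\lambda(p,q)^\gamma$. An object of this overcategory is a morphism $\gamma' \to \gamma$, which is specified by a subset $T \subseteq E$ of interior edges to collapse; collapsing produces an arc $\gamma_T$ whose vertex labels are obtained by summing, in $\Gamma$, the labels at the endpoints of each collapsed edge. This yields a bijection between the power set $2^E$ and the objects of the overcategory. A morphism in the overcategory from $(\gamma_T \to \gamma)$ to $(\gamma_{T'} \to \gamma)$ is a morphism $\gamma_T \to \gamma_{T'}$ in $\cP^\lambda(p,q)$ making the triangle over $\gamma$ commute; this amounts to a choice of interior edges of $\gamma_{T'}$ (whose interior edge set is $E \setminus T'$) to collapse so as to produce $\gamma_T$, and the triangle condition forces this collapse to remove exactly the edges in $T \setminus T'$. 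Such a collapse is valid and unique precisely when $T' \subseteq T$. The overcategory is therefore isomorphic to $(2^E)^{\op}$, and via complementation to $2^{\{1,\dots,n\}}$ with the inclusion order, as required.

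The argument is essentially a bookkeeping exercise in subsets of interior edges; the only point requiring care is the direction of arrows in $\cP^\lambda(p,q)$, which go from a coarser arc to a finer one, since one collapses edges of the target to produce the source. With this convention, the single-vertex arc has no interior edges and is the initial object of $\cP^\lambda(p,q)$ (every arc admits a unique morphism from it by collapsing all its interior edges), consistent with the convention that the codimension-zero stratum is initial. No deeper obstacle arises.
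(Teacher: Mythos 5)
Your proof is correct and is exactly the direct verification the paper has in mind (the paper omits the argument as straightforward, marking the lemma with \qed): identify arrows into $\gamma$ with subsets of its interior edges, check that morphisms in the overcategory correspond to reverse inclusion, and pass to the complement. The one point needing care — that arrows point from coarser to finer arcs, so the single-vertex arc is initial — is handled correctly.
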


We have a natural map
\begin{equation}
    \cP^{\lambda}(p, q) \times   \cP^{\mu}(q, r) \to \cP^{\lambda + \mu} (p, r)
\end{equation}
given by concatenation of trees. This map is associative, which
justifies the following definition.

\begin{defin}
The non-unital $\Gamma$-graded strict $2$-category $\cP^{\Gamma}$ has
object set $\cP$ and morphism categories $\cP^{\Gamma}(p,q)$ given by the union of the categories $ \cP^\lambda(p, q)$.
\end{defin}

Since $\cP^{\Gamma}(p,q)$ is a disjoint union of connected categories,
a derived orbifold $\bX$ stratified by
$\cP^{\Gamma}(p,q)$ decomposes as a union of components
$\bX^{\lambda}$ where $\bX^{\lambda}$ is stratified by
$\cP^{\lambda}(p,q)$.  (Note that $\bX^\lambda$ might itself consist
of multiple connected components.)

\subsection{Unstructured Flow categories}
\label{sec:unstr-flow-categ}

We are finally in a position to define our most basic notion of an
(unstructured) flow category.  Recall from
Lemma~\ref{lem:stratifying_2category} that associated to a category
$\bX$ enriched in $d\Orb$ is a strict $2$-category $\cP_{\bX}$
encoding the fact that each morphism object $\bX(p,q)$ is a
$\cP_{\bX(p,q)}$-stratified derived orbifold.

\begin{defin}\label{def:flow_category}
A \emph{flow category} $\bX$ with object set $\cP$ is a non-unital
category enriched in the category $d\Orb$ of derived orbifolds,
stratified by $\cP^{\Gamma}$ in the sense that it is equipped with an
isomorphism of $2$-categories $\cP_{\bX} \cong \cP^{\Gamma}$; i.e.,
$\bX(p,q)$ is stratified by $\cP^{\Gamma}(p,q)$ and the stratification
is compatible with composition.

Furthermore, $\bX$ satisfies the
property that, for each element $p$ of $\cP$, the projection map  
\begin{equation}
\coprod_{q} \bX(p,q) \to [0,\infty)   
\end{equation}
is proper in the sense that, for each finite interval $[0,E]$, the zero set of
\begin{equation}
    \coprod_{q}  \coprod_{\cA(\lambda) \in [0,E]} \bX^\lambda(p,q)
\end{equation}
is compact. When we exceptionally need to drop this requirement, we shall specifically use the term \emph{non-proper flow category}.
\end{defin}

Concretely, this means that $\bX$ consists of the following data:
\begin{enumerate}
\item A derived orbifold $\bX^\lambda(p,q)$, stratified by $\cP^\lambda(p,q)$, for each pair $(p,q)$ of objects and each element $\lambda$ of $\Gamma$,
\item A morphism of derived orbifolds
\begin{equation}
\bX^{\lambda_1}(p,q) \times \bX^{\lambda_2}(q,r) \to \bX^{\lambda_1 + \lambda_2}(p,r)     
\end{equation}
lifting the morphism
\begin{equation}
\cP^{\lambda_1}(p,q) \times \cP^{\lambda_2}(q,r) \to \cP^{\lambda_1 + \lambda_2}(p,r)
\end{equation}
of stratifying categories for each triple
$(p,q,r)$, so that the following diagram (strictly) commutes for every quadruple
$(p,q,r,s)$: 
\begin{equation}
\begin{tikzcd}
\bX^{\lambda_1}(p,q) \times \bX^{\lambda_2}(q,r) \times \bX^{\lambda_3}(r,s) \ar[r] \ar[d] & \bX^{\lambda_1 + \lambda_2}(p,r)  \times \bX^{\lambda_3}(r,s) \ar[d] \\
\bX^{\lambda_1}(p,q) \times \bX^{\lambda_2 + \lambda_3}(q,s) \ar[r] & \bX^{\lambda_1 + \lambda_2 + \lambda_3}(p,s).
\end{tikzcd}
\end{equation}
\end{enumerate}
Moreover, we note that, as discussed in Remark
\ref{rem:properness_flow_category}, the set $\cP$ of objects of $\bX$
acquires a partial order from the condition that the subset  of the
morphism spaces with trivial energy be non-empty. In mild abuse of
terminology, we shall sometimes mention the partial order on $\cP$
without specifying the fact that it comes from $\bX$.

\begin{rem} \label{rem:imposing_poset}
  An alternative approach to the construction of this section would be to equip $\cP$ with a partial order, and then impose, in Definition \ref{def:poset-flow-categories}, the following condition on objects:
  \begin{equation}
\parbox{31em}{if a vertex is labelled by an element $\lambda_v \in \Gamma$ with trivial energy, then the previous and subsequent elements $(q_i, q_{i+1})$ satisfy $q_i < q_{i+1}$.}    
\end{equation}
Incorporating this restriction into the definition, one finds that $ \cP^{\lambda}(p,r)$ is a partially ordered set. At this stage, Definition \ref{def:flow_category} would be adapted to impose the requirement that $\bX^{0}(p,q) $ is empty unless $p<q$, which then implies that the partial order $\cP$ refines the partial ordering on objects induced by the non-emptiness of morphisms of energy $0$.
\end{rem}

\subsection{Structured Flow categories}
\label{sec:struct-flow-categ}

The purpose of this section is to define a notion of flow categories
with tangential structure, which amounts to lifting the
unstructured data to $d\Orb^{\cS}$. To this end, we make the following
useful observation:
\begin{lem} \label{lem:identify_normal_direction}
  The set $Q_{\iota_q}$ arising as the image of the minimal element of
  $ \cP^{\lambda_1}(p,q) \times \cP^{\lambda_2}(q,r)$ for the
  composition map
  \begin{equation}
 \iota_q \co \cP^{\lambda_1}(p,q) \times \cP^{\lambda_2}(q,r) \to
 \cP^{\lambda_1 + \lambda_2}(p,r)
  \end{equation}
   is canonically identified with the singleton $\{q\}$. \qed
\end{lem}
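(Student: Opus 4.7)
The plan is to unwind definitions and trace through the combinatorics of arcs. The lemma is essentially a computation once the relevant categories are made explicit, so the proof is short, but it requires carefully tracking how the minimal element of the source category sits inside the target category under $\iota_q$.

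First, I would identify the minimal element of the product $\cP^{\lambda_1}(p,q) \times \cP^{\lambda_2}(q,r)$. By Definition~\ref{def:poset-flow-categories}, each factor $\cP^{\lambda_i}(\cdot,\cdot)$ has a unique minimal object, namely the arc with a single vertex (labelled by $\lambda_i$) and no interior edges. The minimum of the product is the pair of these two minimal arcs, which I will call $p_0$. Applying $\iota_q$, which is concatenation along $q$, I obtain the arc $\iota_q(p_0)$ from $p$ to $r$ with exactly two vertices (labelled $\lambda_1$ and $\lambda_2$) and a single interior edge labelled by $q$.

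Next, I compute $Q_{\iota_q}$. By the definition of $Q_f$ for a map of models applied to the minimal element of the source, this is $Q_{\cP^{\lambda_1+\lambda_2}(p,r)}(\iota_q(p_0))$, which by definition equals $\min(\cP^{\iota_q(p_0)} \setminus \{\min \cP^{\iota_q(p_0)}\})$. The overcategory $\cP^{\iota_q(p_0)}$ consists of morphisms in $\cP^{\lambda_1 + \lambda_2}(p,r)$ with target $\iota_q(p_0)$, i.e.\ arcs that collapse internal edges to yield $\iota_q(p_0)$. Since $\iota_q(p_0)$ itself has only one interior edge, there are exactly two such objects: the identity on $\iota_q(p_0)$ and the arrow from the single-vertex arc labelled $\lambda_1 + \lambda_2$ (obtained by collapsing the unique interior edge). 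The latter is the initial (minimal) object of this overcategory, so removing it leaves precisely the singleton $\{\id_{\iota_q(p_0)}\}$.

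Finally, I give the canonical identification with $\{q\}$. Under the isomorphism of $\cP^{\iota_q(p_0)}$ with the power set of $\{1,\ldots,\codim \iota_q(p_0)\} = \{1\}$, the singletons in $Q_{\cP}$ correspond bijectively to the interior edges of $\iota_q(p_0)$; here there is exactly one such interior edge, and it carries the label $q$. This bijection is functorial in the combinatorial data of the arc (independent of any auxiliary choice), giving the claimed canonical identification $Q_{\iota_q} \cong \{q\}$. The only substantive point to verify is that the ordering on the two-element overcategory places the collapsed arrow as the minimum, which is immediate from the definition of morphisms in $\cP^{\lambda}(p,q)$ as arrows going from collapsed to expanded arcs.
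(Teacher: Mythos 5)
Your proof is correct and matches the paper's intent: the paper states this lemma with no proof (treating it as immediate from the definitions), and your argument is precisely the expected unwinding — the image of the minimal element is the two-vertex arc with a single interior edge labelled $q$, whose overcategory is the power set of a one-element set, so $Q_{\iota_q}$ is the singleton indexed by that edge. The identification of the collapsed arrow as the minimum of the overcategory is handled correctly, and no further verification is needed.
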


Next, we recall the definition of a (small) category enriched in a
bicategory.  Given a set $O$ and an enriching bicategory $B$, a
(unital) category with object set $O$ enriched in $B$ is specified by
a lax functor $O \to B$, where $O$ is regarded as the codiscrete
(chaotic) category on the underlying set.  The non-unital variant is
specified by dropping the unit conditions on the lax functor $O \to
B$.  Explicitly, a category $\cC$ enriched in $B$ is specified by
giving for each object $x \in ob(B)$ a set $\cC_x$ of objects over
$x$, for each pair of objects $a \in \cC_x$ and $b \in \cC_y$
respectively an arrow in $B(a,b)$, and for each triple of objects $a
\in \cC_x$, $b \in \cC_y$, and $c \in \cC_z$, a $2$-cell $B(a,b)
\times B(b,c) \Rightarrow B(a,c)$.  An enriched functor $F$ between
categories $\cC$ and $\cD$ enriched over $B$ is specified by a
function that takes an object $a \in \cC_x$ to $Fa \in \cD_x$ and for
a pair of objects $a \in \cC_x$ and $b \in \cC_y$ specifies a $2$-cell
$\cC(a,b) \to \cC(Fa, Fb)$.  (See for example~\cite{Street1983} for
more details.)
  
\begin{defin}
A \emph{lift} of a flow category $\bX$ with object set $\cP$ to
$d\Orb^\cS$ is a category enriched in $d\Orb^\cS$ whose image under
the forgetful pseudofunctor $d\Orb^\cS \to d\Orb$ is $\bX$.  That is,
we have a (non-unital) lax functor $\cP \to d\Orb^{\cS}$
such that the composite
\begin{equation}
\cP \to d\Orb^{\cS} \to d\Orb
\end{equation}
is equal to $\bX$.
\end{defin}

We impose some additional conditions on the lift to specify a
structured flow category.

\begin{defin} \label{def:framed_flow_category}
A \emph{structured flow category} $\bX$ with object set
$\cP$ is a lift of a flow category from $d\Orb$ to $d\Orb^{\cS}$,
satisfying the following properties: 
\begin{enumerate}
\item The decomposition of $Q_{\iota_q}$ associated to the codimension-$1$ boundary strata is
 \begin{equation}
   Q_{\iota_q}^{-} = \{q\} \qquad Q_{\iota_q}^{+}= \emptyset.
 \end{equation}
\item  The virtual vector space
    $U_{\bX(p,q)}$ is given by $(0,\bR^{\{q\}})$.
   \item The isomorphism
\begin{equation} \label{eq:isomorphism-product-compatible-decomposition}
U^{-}_{\bX(p,q) \times \bX(q,r)}  \cong  U^-_{\bX(p,r)} \oplus \bR^{\{q\}}
\end{equation}
is given by the identity map on $\{q,r\}$, where we use the fact that
\begin{equation}
U^{-}_{\bX(p,q) \times \bX(q,r)}  \cong U^{-}_{\bX(p,q)} \oplus
U^-_{\bX(q,r)}.
\end{equation}
\end{enumerate}
\end{defin}

Explicitly, since $d\Orb^{\cS}$ is a bicategory, the first datum of
the lift is a choice of virtual vector space $V_p$ for each element of
$\cP$. Next, for each pair $(p,q)$, and each $\lambda \in \Gamma$, we
pick a vector bundle $W^\lambda(p,q)$ on $\bX(p,q)$,  a structured
vector bundle $I^\lambda(p,q)$, and an equivalence 

\begin{equation}
T \bX^\lambda(p,q) \oplus V_q \oplus \bR^{q} \oplus W^\lambda(p,q)
\cong I^\lambda(p,q) \oplus V_p \oplus W^\lambda(p,q)       
\end{equation}
of virtual vector bundles over $\bX^\lambda(p,q)$.  

Omitting the subscripts which record the energy, these isomorphisms
then satisfy the following associativity property: for each triple
$(p,q,r)$ of objects, the following diagram of equivalences of virtual
vector bundles over $\bX(p,q) \times \bX(q,r) $ commutes: 
    \begin{equation}
      \begin{tikzcd}
        \begin{gathered}
          T \bX(p,q) \oplus V^-_q \oplus \bR^{q} \oplus W(p,q)  \\
           T \bX(q,r) \oplus V_r \oplus \bR^{r} \oplus W(q,r)
        \end{gathered}
        \ar[r] \ar[d] &
        \begin{gathered}
          I(p,q) \oplus V_p \oplus W(p,q)   \oplus \\
         I(q,r) \oplus V^-_q  \oplus W(q,r)
       \end{gathered} \ar[d]  \\
        T \bX(p,r) \oplus V_r \oplus \bR^{r} \oplus W(p,r) \ar[r] & I(p,r) \oplus V_p \oplus W(p,r).
      \end{tikzcd}
    \end{equation}

\section{The semisimplicial set of flow categories}
\label{sec:quas-struct-floe}

In this section we construct a semisimplicial set with vertices flow
categories and $1$-simplices flow bimodules.  In subsequent sections
we will show that this is the underlying semisimplicial set of a
quasicategory.

\subsection{Stratifying categories associated to a sequence of flow categories}
\label{sec:strat-categ-assoc}

Before providing the formal definitions that underly the construction
of the category of flow categories, we discuss the example of Morse
theory, which might help give the reader some intuition about our
goals. 

\begin{rem}
  Say that $\vec{f} = (f_0, \ldots, f_n)$ is a sequence of Morse
  functions, and that we are given continuations maps $f_{ij}$ between
  $f_i$ and $f_j$ whenever $i<j$. In the formal definition we will
  presently give, the Morse functions will label the vertices of the
  $n$-simplex, and the continuation maps its edges, and the goal is to
  understand the combinatorial data associated to choosing families of
  continuation maps relating all possible choices of compositions
  among the continuation maps $f_{ij}$. The first nontrivial case is
  that of a $2$-simplex whose edges are labeled by continuation maps
  $f_{ij}$, $f_{jk}$, and $f_{ik}$: in the parlance of Floer theory
  one can compose $f_{ij}$ and $f_{jk}$ to a \emph{broken}
  continuation map from $f_i$ to $f_k$, and ask for a family,
  parametrized by an interval, relating this broken continuation map
  to $f_{ik}$.

The full structure can be encoded by an $(n-1)$-cube (with coordinates
labeled by $\{1, \ldots, n-1\}$) of families of broken continuation
maps from $f_0$ to $f_n$. The vertices of this cube determine a
composition of continuation maps, with $f_{ij}$ appearing in the
composition (assuming $1 \leq i < j \leq n-1$) if and only if the
coordinates associated to $i$ and $j$ are both equal to $0$, and all
coordinates strictly between $i$ and $j$ equal $1$. The two facets
associated to setting a coordinate $1 \leq i \leq n$ equal to either
$0$ or $1$ are then respectively given  by (i) a family of continuation maps
for the sequence of Morse functions obtained by omitting $f_i$ or (ii)
a family of (broken) continuation maps obtained by composition a
product of families of continuation maps for the sequences of Morse
functions $(f_0, \ldots, f_i)$ and $(f_i, \ldots, f_n)$. There are
various compatibility conditions between the families associated to
these codimension-$1$ boundary strata, which are best encoded by
Definition \ref{def:Poset-flow-simplex} below.

  As a final observation on the structure of these moduli spaces, note that the codimension $1$ boundary strata associated to a coordinate $i$ behave differently according to whether we set this coordinate equal to $0$ or $1$: in the former case, there is a codimension-$1$ boundary stratum associated to each critical point of the function $f_i$, while in the second case (with $f_i$ omitted from the sequence), there is a single codimension $1$ boundary stratum. This dichotomy will lead us to have to separate some of our discussions into cases.
\end{rem}

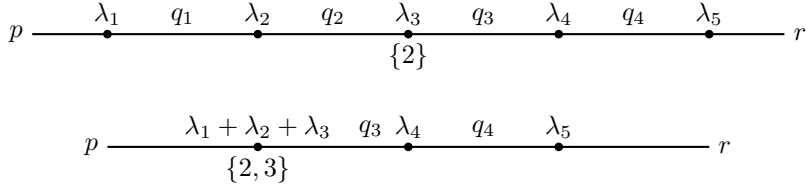
\begin{figure}[h]
  \centering
   \begin{tikzpicture}
     \coordinate[label= left:$p$] (p0) at (-5,0);
     \coordinate[label= right:$r$] (p4) at (5,0);
     \draw [thick] (p0) -- (p4);
     \coordinate[label= above:$q_1 $] (q1) at (-3,0);
     \coordinate[label= above:$\lambda_1$] (l1) at (-4,0);
     \draw [fill,color=black] (l1) circle (.05);
      \coordinate[label= above:$q_2$] (q1p) at (-1,0);
     \coordinate[label= above:$\lambda_2$] (l2) at (-2,0);
     \draw [fill,color=black] (l2) circle (.05);
     \coordinate[label= above:$\lambda_3$] (l3) at (0,0);
     \coordinate[label= below:$\{2\}$] (l3) at (0,0);
     \draw [fill,color=black] (l3) circle (.05);
     \coordinate[label= above:$q_3$] (q4) at (1,0);
     \coordinate[label= above:$\lambda_4$] (l4) at (2,0);
      \draw [fill,color=black] (l4) circle (.05);
      \coordinate[label= above:$q_4$] (q4p) at (3,0);
      \coordinate[label= above:$\lambda_5$] (l5) at (4,0);
       \draw [fill,color=black] (l5) circle (.05);
     \begin{scope}[shift={(0,-1.5)}]
      \coordinate[label= left:$p$] (p1) at (-4,0);
     \coordinate[label= right:$r$] (p4) at (4,0);
     \draw [thick] (p1) -- (p4);
     \coordinate[label= above:$\lambda_1 + \lambda_2 + \lambda_3$] (l) at (-2,0);
   \coordinate[label= below:$\protect{\{1,2,3\}}$] (l3) at (-2,0);
      \draw [fill,color=black] (l3) circle (.05);
     \coordinate[label= above:$q_3$] (q4) at (-.5,0);
     \coordinate[label= above:$\lambda_4$] (l4) at (0,0);
      \draw [fill,color=black] (l4) circle (.05);
      \coordinate[label= above:$q_4$] (q4p) at (1,0);
      \coordinate[label= above:$\lambda_5$] (l5) at (2,0);
      \draw [fill,color=black] (l5) circle (.05);
   \end{scope}
   \end{tikzpicture}
   \caption{There is a morphism in $\cP^\Gamma(p, r)$ from the bottom to the top labeled arc, with $q_1$ and $q_2$ in $ \cP_1$ and $q_3$ and $q_4$ in $\cP_4$,  given by collapsing two interior edges and adding the element $3$ to the set of labels of one of the vertices.}
  \label{fig:arcs_labeled_byvecP}
\end{figure}

\begin{defin}\label{def:Poset-flow-simplex}
Given a sequence $\vec{\cP} = (\cP_0, \ldots, \cP_n)$ of sets, and a
pair of elements $p \in \cP_j$ and $r \in \cP_\ell$ (for $0 \leq j
\leq \ell \leq n$), the category
$\vec{\cP}^{\Gamma}(p,r)$ has objects directed arcs
equipped with the following additional data: 
\begin{enumerate}
\item Each edge is labeled by an element of the sets $\cP_k$ for $j
  \leq k \leq \ell$, so that these sets appear in increasing
  order, the incoming edge is labeled by $p$, and the outgoing
  edge is labeled by $r$.
\item Each vertex lying on edges labeled by elements of $\cP_k$ and $\cP_{k'}$
  for $k \leq k'$ is labeled by a pair consisting of (i)
  a subset of $\{k+1, \ldots, k'-1\}$, and (ii) an element $\lambda$
  of the abelian monoid obtained by inverting the positive action
  elements of $\Gamma$.  If $k=k'$, we assume that this element lies in $\Gamma$. 
  \end{enumerate}

A morphism in $\vec{\cP}^{\Gamma}(p,r)$ is given as
follows:  on underlying arcs, a morphism from $\gamma$ to $\gamma'$
is obtained by collapsing a sequence of
consecutive edges of $\gamma'$.  For the additional data, $\gamma$
must have labels that satisfy the following constraints:
\begin{itemize}
\item the labels of the uncollapsed edges in $\gamma$ agree with the labels of the corresponding edges in $\gamma'$, and
\item the label of each vertex $v$ of $\gamma$ contains the union of
  the labels of the vertices $v'$ of $\gamma'$ which are collapsed to
  it, together with any element of the sequence $\vec{\cP}$ with the
  property that $\gamma'$ contains an edge labeled by this element
  and all such edges are collapsed to $v$.
\end{itemize}
\end{defin}

\begin{rem}
One can restrict further to the situation where the labels of vertices
always lie in $\Gamma$ itself. This will give rise to a theory
controlling \emph{filtered equivalences} of flow categories and is
important for quantitative applications.
\end{rem}

\begin{example}
In the case $n=0$,
Definition~\ref{def:Poset-flow-simplex} recovers the construction of
Definition~\ref{def:poset-flow-categories}; the edges are all labeled
by elements of the set $P_0$ and the conditions on the labels of the
vertices reduce to simply specifying an element of $\Gamma$.
\end{example}

Additionally, observe that the category $
\vec{\cP}^{\Gamma}(p,r)$ depends only on the subsequence of
$\vec{\cP}$ consisting of the elements between $\cP_j$ and $\cP_\ell$.
Moreover, the minimal elements are given by an arc with a single
vertex labeled by the sequence $\{1, \ldots, n-1\}$, and an arbitrary
element of the localization of $\Gamma$.

\begin{lem} \label{lem:model_manifold_corners_simplex}
The category $\vec{\cP}^{\Gamma}(p,r)$ is a model for manifolds with corners.
\end{lem}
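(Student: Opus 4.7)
The plan is to exhibit, for each object $\gamma$ of $\vec{\cP}^\Gamma(p,r)$, a natural identification of its overcategory with a power set. Write $\gamma$ as an arc with internal edges $e_1, \ldots, e_m$ labeled by $k_1 \leq \cdots \leq k_m$, and with vertices $v_0, \ldots, v_m$ carrying labels $(S_i, \lambda_i)$, where $S_i$ is a subset of the \emph{range} $R_i = \{k_i + 1, \ldots, k_{i+1} - 1\}$ of intermediate indices (with the convention $k_0 = j$ and $k_{m+1} = \ell$). Set
\begin{equation*}
  B_\gamma \;=\; \{e_1, \ldots, e_m\} \;\sqcup\; \bigsqcup_{i=0}^{m} (R_i \setminus S_i), \qquad \codim \gamma \;=\; |B_\gamma|.
\end{equation*}
The first verification is that $\codim$ extends to a functor to $\bN$; this reduces to the two prototypical morphisms (collapsing a single edge, and enlarging a single vertex label by one index), each of which strictly decreases $|B|$ by one.

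Given a subset $K \subseteq B_\gamma$, I construct an object $(h_K, f_K)$ of the overcategory as follows. The arc $h_K$ retains the $\gamma$-edges lying in $K$ and collapses the others; each resulting vertex $w$ of $h_K$ (corresponding to a maximal run of collapsed edges together with its $\gamma$-endpoints) receives as label the union of the labels of the constituent $\gamma$-vertices, of the collapsed edge labels mandated by the ``all-collapsed-to-$w$'' clause of Definition~\ref{def:Poset-flow-simplex}, and of those missing-index directions in the run that are \emph{not} in $K$. The tautological collapsing supplies a morphism $f_K \colon h_K \to \gamma$. The assignment $K \mapsto (h_K, f_K)$ defines a map $\Phi$ from $2^{B_\gamma}$ to the object set of the overcategory, and its inverse is recorded by extracting, from any $(h, f)$, the $\gamma$-edges retained by $f$ together with those missing-index directions of $\gamma$-vertices \emph{not} absorbed into $h$'s labels.

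To promote $\Phi$ to an isomorphism of posets, I observe that any morphism $g \colon h_{K_1} \to h_{K_2}$ in the overcategory must collapse precisely the edges of $h_{K_2}$ corresponding to the edge-part of $K_2 \setminus K_1$, since commutativity with $f_{K_1}$ and $f_{K_2}$ forces this choice; in particular there is at most one morphism between any two objects. Existence then reduces to checking that the labels of $h_{K_1}$ satisfy the containment conditions of Definition~\ref{def:Poset-flow-simplex} relative to this candidate collapsing, which in turn forces $K_1 \subseteq K_2$ on the missing-index part (otherwise a missing index would lie in $h_{K_2}$'s label but not in $h_{K_1}$'s).

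I expect the main obstacle to be the label containment verification, specifically the clause demanding that every index $k$ for which ``all $h_{K_2}$-edges labeled $k$ are collapsed to a vertex $w$'' must lie in $w$'s label. The analogous condition at the level of $\gamma$ (imposed by $f_{K_1}$) quantifies over \emph{all} $\gamma$-edges labeled $k$, including those already absorbed into intermediate vertices of $h_{K_2}$. The key observation resolving this is that the $\gamma$-edges sharing a label $k$ form a consecutive run (as $k_1 \leq \cdots \leq k_m$); the hypothesis $K_1 \subseteq K_2$ together with the assumption forces none of these $\gamma$-edges to survive in $h_{K_1}$, and connectedness of the run then forces all their endpoints into $w$'s $\gamma$-run, whence $k$ lies in $w$'s label by construction of $f_{K_1}$.
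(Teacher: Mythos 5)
Your proof is correct and takes essentially the same route as the paper's one-sentence argument, namely identifying the overcategory of an arc with the power set of its internal edges together with the indices absent from all labels. Your write-up is in fact the more careful one: your indexing set correctly excludes the indices that occur as edge labels (which is needed for consistency with the enumeration of codimension-one objects in Lemma~\ref{lem:inclusion_map_forget_element_sequence}), and your treatment of the label-containment clause via the consecutiveness of equal-index edges supplies a verification the paper leaves implicit.
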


\begin{proof}
The overcategory of any object is given by the powerset of the union
of (i) the set of internal edges of the arc, and (ii) all elements of
the sequence $\vec{\cP}$ (between $\cP_j$ and $\cP_\ell$) which do not
appear in the label of a vertex.
\end{proof}

As in the case of a singleton, the categories
$\vec{\cP}^{\Gamma}(p,r)$ assemble into a strict $2$-category which we
denote $\vec{\cP}^{\Gamma}$.

\begin{defin}
The strict $2$-category $\vec{\cP}^{\Gamma}$ has objects the
disjoint union $\coprod_i \ob(\cP_i)$ and morphism the categories
$\vec{\cP}^{\Gamma}(p,r)$, with natural strictly associative
composition map
\begin{equation}
\vec{\cP}^{\Gamma}(p,q) \times \vec{\cP}^{\Gamma}(q,r) \to
\vec{\cP}^{\Gamma}(p,r).
\end{equation}
\end{defin}

Let $\partial^{j} \vec{\cP}$ denote the sequence obtained by omitting
the $j$\th element from $\vec{\cP}$.  Observe that for each $j$ we
have natural inclusions of strict $2$-categories
\begin{equation}\label{eq:inclusion_categories_forget_element_sequence}
  \partial^{j} \vec{\cP}^{\Gamma} \to \vec{\cP}^{\Gamma}.
\end{equation}
It straightforward to check that these maps satisfy the following
simplicial identities:
 
\begin{lem}\label{lem:poset-bicategory-semisimplicial}
For $0 \leq i < j \leq n$, there is an equality of strict
$2$-categories 
\begin{equation}
\partial^{i} \partial^{j} \vec{\cP}^{\Gamma} = \partial^{j-1}
\partial^i \vec{\cP}^{\Gamma}.
\end{equation}
Moreover, these identifications are compatible with the natural inclusion
of Equation~\eqref{eq:inclusion_categories_forget_element_sequence}. \qed
\end{lem}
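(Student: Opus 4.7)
The strategy is to reduce the identity of strict $2$-categories to the underlying combinatorial simplicial identity on sequences of sets. First I would verify that, for $i < j$, the sequences $\partial^{i} \partial^{j} \vec{\cP}$ and $\partial^{j-1} \partial^{i} \vec{\cP}$ are literally equal: both produce the sequence $(\cP_0, \ldots, \widehat{\cP_i}, \ldots, \widehat{\cP_j}, \ldots, \cP_n)$ obtained by deleting the two entries $\cP_i$ and $\cP_j$ from $\vec{\cP}$. The only content here is the standard reindexing check: removing $\cP_j$ first leaves $\cP_i$ in position $i$, so the second deletion is at position $i$; removing $\cP_i$ first shifts $\cP_j$ into position $j-1$, so the second deletion is at position $j-1$. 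Both yield the same sequence of sets.

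Next, I would observe that the construction $\vec{\cP} \mapsto \vec{\cP}^{\Gamma}$ in Definition~\ref{def:Poset-flow-simplex} is manifestly a functor on the nose (not just up to isomorphism) of the underlying sequence of sets and the fixed monoid $\Gamma$: objects are arcs whose edge labels come from the terms of the sequence and whose vertex labels are subsets of the intervening indices together with an element of $\Gamma$, and morphisms are determined by the combinatorics of edge-collapsing. Since this prescription uses no data beyond the raw sequence, equality of sequences lifts immediately to equality of strict $2$-categories. This gives the first assertion.

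For compatibility with the inclusions of Equation~\eqref{eq:inclusion_categories_forget_element_sequence}, I would unpack the inclusion $\partial^{j} \vec{\cP}^{\Gamma} \hookrightarrow \vec{\cP}^{\Gamma}$: it sends an arc to the same underlying labelled arc, re-interpreted in the longer sequence by augmenting each vertex label straddling position $j$ with the index $j$ added to its subset component (reflecting that $\cP_j$ is skipped). The two composites
\begin{equation*}
\partial^{i} \partial^{j} \vec{\cP}^{\Gamma} \to \partial^{j} \vec{\cP}^{\Gamma} \to \vec{\cP}^{\Gamma}, \qquad \partial^{j-1} \partial^{i} \vec{\cP}^{\Gamma} \to \partial^{i} \vec{\cP}^{\Gamma} \to \vec{\cP}^{\Gamma}
\end{equation*}
then both send an arc to its reinterpretation in $\vec{\cP}^{\Gamma}$ in which the vertex labels straddling the formerly omitted positions have been augmented by the indices $i$ and $j$. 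Because the set-theoretic union is commutative and the reindexing lines up by the first step, these two composites agree.

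The main (mild) obstacle is the notational bookkeeping around the reindexing of positions after each face operation and the corresponding augmentation of vertex labels in the inclusion maps; once these are carefully organised, the verification is essentially formal, mirroring the proof of the simplicial identity $d_i d_j = d_{j-1} d_i$ in $\Delta$.
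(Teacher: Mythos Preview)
Your proposal is correct and follows exactly the approach the paper has in mind: the lemma is stated with a \qed\ and no proof, the preceding sentence describing it as ``straightforward to check,'' and your argument supplies precisely the reindexing bookkeeping (the simplicial identity on sequences plus the observation that the $(-)^{\Gamma}$ construction and the inclusion maps are determined on the nose by the sequence) that the paper leaves implicit. The only minor imprecision is that at most one vertex of an arc can straddle a given omitted position, so ``each vertex label straddling position $j$'' refers to a single vertex, but this does not affect the argument.
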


\subsection{The semisimplicial set of unstructured flow categories}
\label{sec:simpl-set-unstr}

We are now ready to define the most basic version of the
semisimplicial set of flow categories.

\begin{defin} 
  An \emph{elementary flow simplex} consists of
  \begin{enumerate}
    \item a sequence $\vec{\cP}$
      of  sets, and
      \item a lift $\bX$ of the category
$\vec{\cP}^{\Gamma}$ to $d\Orb$ in the sense that there is a strict
$2$-functor $\cP_{\bX} \to \vec{\cP}^{\Gamma}$, which satisfies the
      following property:
      \begin{equation} \label{eq:lower_bound_flow_simplex}
    \parbox{30em}{ the energy map on the union of the morphism space
      with source $p$ is a proper map to $\bR$, with uniform lower
      bound that is independent of $p$.}
  \end{equation}
  \end{enumerate}
\end{defin}

We recall from Section \ref{sec:derived_orbifold} that properness
refers to compactness of the $0$-locus, so that this definition
specializes to Definition \ref{def:flow_category} when the sequence of
 sets $\vec{\cP}$ consists of a singleton.

\begin{defin}
Given an elementary flow simplex $\bX$, and a stratum $\sigma$ of
$\Delta^n$ of dimension $k$, we obtain an elementary flow $k$-simplex
$\partial^\sigma \bX$ by restricting $\bX$ to the subcategory
associated to $\partial^{\sigma} \vec{\cP}$.
\end{defin}

Explicitly,
the objects of $\partial^{\sigma} \bX$ are the elements of
$\partial^{\sigma} \vec{\cP}$, and the morphisms are given by
restricting the morphism spaces to the appropriate strata.
As in the discussion preceding Lemma \ref{lem:model_manifold_corners_simplex}, this means that the
morphisms are the same for objects lying in sets $\cP_j$ and
$\cP_\ell$ whenever all elements between $j$ and $\ell$ lie in
$\sigma$, and are otherwise given by a proper stratum of codimension equal
to the number of such elements.
 
\begin{defin} \label{def:flow_simplex}
A \emph{flow $n$-simplex} consists of a sequence $\vec{\cP}$ of
 sets, and elementary flow simplices $\bX_{\sigma}$
lifting $\partial^\sigma \cP$ for each stratum $\sigma$ of
$\Delta^n$, together with a functor enriched in $d\Orb$ 
\begin{equation}
\bX_{\tau} \to \partial^\tau \bX_\sigma    
\end{equation}
whenever $\tau$ is a subset of $\sigma$, lifting the isomorphism of
stratifying categories fixed by Equation
\eqref{eq:inclusion_categories_forget_element_sequence} so that the
following diagram commutes 
    \begin{equation} \label{eq:diagram_three_nested_strata}
      \begin{tikzcd}
        \bX_{\tau} \ar[r] \ar[d] & \partial^\tau \bX_\sigma   \ar[d] \\
        \partial^{\tau} \bX_{\rho} \ar[r] & \partial^\tau \partial^{\sigma} \bX_\rho
      \end{tikzcd}
    \end{equation}
for each triple $\tau \subset \sigma \subset \rho$.
\end{defin}
As a consequence of the fact that morphisms in $d \Orb$ are equivalences onto boundary strata we have:
\begin{lem}
If $\bX$ is a flow $n$-simplex, then for any pair $\tau \subset
\sigma$ of simplices of $\Delta^n$, and every pair of objects $p$ and
$r$ of the flow categories labelled by the vertices of $\tau$, the
morphism
\begin{equation}
\bX_{\tau}(p,r) \to \partial^\tau \bX_\sigma(p,r)     
\end{equation}
is a strong equivalence, with the property that the following diagram
of strong equivalences commutes:
\begin{equation}
\begin{tikzcd}
    \bX_{\tau}(p,q) \times \bX_{\tau}(q,r) \ar[r] \ar[d] &   \partial^\tau \bX_\sigma(p,q) \times \partial^\tau \bX_\sigma(q,r)\ar[d]  \\
    \partial^{q}   \bX_{\tau}(p,r) \ar[r]  & \partial^{q}   \partial^\tau \bX_\sigma(p,r),
\end{tikzcd}
\end{equation}
      \qed
\end{lem}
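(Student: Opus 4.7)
The plan is to show the statement by directly unpacking the content of the enriched functor $\bX_\tau \to \partial^\tau \bX_\sigma$ at the level of Hom objects and then invoking the definition of morphisms in $d\Orb$. For each pair $(p,r)$, the enriched functor provides a morphism
\begin{equation}
f_{p,r} \co \bX_\tau(p,r) \to \partial^\tau \bX_\sigma(p,r)
\end{equation}
in $d\Orb$ lifting the induced map of stratifying categories $\vec{\cP}_\tau^\Gamma(p,r) \to \partial^\tau \vec{\cP}_\sigma^\Gamma(p,r)$. By Lemma \ref{lem:poset-bicategory-semisimplicial} (together with the elementary observation that restriction to the Hom category $(p,r)$ preserves the simplicial identities), this map of stratifying categories is an \emph{isomorphism}.

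By the definition of morphisms in $d\Orb$, the datum of $f_{p,r}$ reduces, aside from the functor of stratifying categories, to the choice, for each minimal element $s$ of $\vec{\cP}_\tau^\Gamma(p,r)$, of a strong equivalence $\partial^s \bX_\tau(p,r) \to \partial^{f(s)} \partial^\tau \bX_\sigma(p,r)$. Since the underlying functor of stratifying categories is an isomorphism, minimal elements correspond bijectively. Using the canonical coproduct decomposition of a stratified derived orbifold over the minimal elements of its stratifying category (as noted in Section \ref{sec:derived_orbifold}), $f_{p,r}$ is the disjoint union of these strong equivalences. Patching together the corresponding vector bundle structures over the coproduct exhibits $f_{p,r}$ itself as a strong equivalence in the sense of Definition \ref{def:strong_equivalence}.

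For the commutativity of the displayed square of strong equivalences, the argument is that this is precisely the content of enriched functoriality of $\bX_\tau \to \partial^\tau \bX_\sigma$ applied to the composition. In both $\bX_\tau$ and $\partial^\tau \bX_\sigma$, the composition at $q$ is a morphism in $d\Orb$ whose image lies in the boundary stratum at $q$ (via the identification $Q_{\iota_q}\cong \{q\}$ of Lemma \ref{lem:identify_normal_direction}), so all four maps in the square are now known to be strong equivalences and the enriched functor axiom gives the commutativity.

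The main potential obstacle is verifying that a collection of strong equivalences on the minimal strata genuinely assembles to a strong equivalence of the total stratified derived orbifolds, but this is essentially formal once one uses that a vector bundle on a disjoint union is the disjoint union of vector bundles, with zero-section inclusion the disjoint union of the component zero-section inclusions. The slightly delicate point is to ensure that the normal framing compatibility along higher corner strata (Equation \eqref{eq:consistency_normal_vector_fields}) is respected, but this is automatic since the functor of stratifying categories is an isomorphism and the chosen consistent normal framings are compatible under this identification.
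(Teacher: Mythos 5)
Your proposal is correct and is essentially the paper's own argument: the paper gives no proof beyond the remark that morphisms in $d\Orb$ are, by definition, strong equivalences onto boundary strata indexed by minimal elements of the stratifying category, and your write-up is just a careful unpacking of that definition together with enriched functoriality for the square. One small correction: the fact that the induced functor of stratifying categories $\vec{\cP}_\tau^{\Gamma}(p,r) \to \partial^\tau \vec{\cP}_\sigma^{\Gamma}(p,r)$ is an isomorphism is not Lemma~\ref{lem:poset-bicategory-semisimplicial} (which records the simplicial identities $\partial^i\partial^j = \partial^{j-1}\partial^i$), but rather the condition, built into the notion of a morphism of models and into Definition~\ref{def:flow_simplex}, that the inclusion of Equation~\eqref{eq:inclusion_categories_forget_element_sequence} induces isomorphisms on categories of factorisations, hence on the undercategories stratifying the corner strata.
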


\begin{rem}
In order to understand why we do not simply use elementary simplices,
we discuss the case of the $2$-simplex. In that case, we have the
following data: 
  \begin{enumerate}
  \item a triple $\partial^{<0>} \bX$, $\partial^{<1>} \bX$, $\partial^{<2>} \bX$ of flow categories,
    \item a triple of flow $1$-simplices $\partial^{<01>} \bX$, $\partial^{<12>} \bX$, and $\partial^{<02>} \bX$ whose $0$-simplices are indicated in the notation.
  \end{enumerate}
  For a pair $(p_0,p_2)$ whose first element is an object of $\partial^{<0>} \bX$, and whose second element lies in  $\partial^{<2>} \bX$, the codimension $1$ boundary strata of $\bX(p_0,p_2)$ are of two types; for each object $p_1$, of  $\partial^{<1>} \bX$ we have a stratum equivalent to 
  \begin{equation}
      \partial^{<01>} \bX(p_0,p_1) \times \partial^{<12>} \bX(p_1,p_2) ,
  \end{equation}  
  while we also have a stratum which is equal to $\partial^{<02>} \bX (p_0,p_2)$. The fact that this second stratum is equal, rather than strongly equivalent, to $\partial^{<02>} \bX (p_0,p_2) $ implies that the semisimplicial set formed by elementary simplices fails to satisfy the property that elementary simplices that differ by stabilisation are equivalent, which will then invalidate the computations of the homotopy types of flow categories, performed in Section \ref{sec:comp-with-sphere}.
\end{rem}

\begin{defin}\label{def:flow_semisimplicial}
The set $\Flow_n$ has elements flow simplices $\bX$ such that
\begin{equation}
\ob(\bX) = \vec{\cP}^{\Gamma}, \qquad |\vec{\cP}| = n+1.
\end{equation}
For $0 \leq i \leq n$, the face maps
\begin{equation} \label{eq:boundary_map}
\partial^i \co \Flow_n \to \Flow_{n-1},
\end{equation}
are given by restricting a flow simplex $\bX$ to the subcategory
associated to $\partial^{i} \vec{\cP}$.
\end{defin}

\begin{rem}
In order to handle set-theoretic concerns about the size of
$\Flow_\bullet$, we will tacitly appeal to the approach taken
in~\cite[1.2.15]{Lurie2009} to handle size issues; we assume we have
chosen a series of inaccessible cardinals and work with the
Grothendieck universes they specify.
\end{rem}

Explicitly, the objects of $\partial^i \bX$ are given by those elementary flow simplices $\bX_\tau$ with $\tau$ a subset of the facet obtained by omitting $i$. 
It is evident from the construction (and
Lemma~\ref{lem:poset-bicategory-semisimplicial}) that the boundary
maps satisfy the simplicial identities, so we conclude:

\begin{lem}\label{lem:unstructured_flow_semisimplicial}
The face maps of Equation~\eqref{eq:boundary_map} equip the 
collection of sets $\Flow_\bullet = \{\Flow_n\}$ with the structure of
a semisimplicial set. 
\end{lem}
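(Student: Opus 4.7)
The plan is to reduce the verification to the identities already established at the level of stratifying $2$-categories. Recall that for a semisimplicial set we must check, for each $0 \le i < j \le n$, the identity
\begin{equation}
\partial^i \partial^j = \partial^{j-1} \partial^i \co \Flow_n \to \Flow_{n-2}.
\end{equation}
Unpacking Definition~\ref{def:flow_simplex}, a flow simplex is specified by (a) a sequence $\vec{\cP}$ of sets, (b) an elementary flow simplex $\bX_\sigma$ lifting $\partial^\sigma \vec{\cP}^{\Gamma}$ for each face $\sigma$ of $\Delta^n$, and (c) enriched functors $\bX_\tau \to \partial^\tau \bX_\sigma$ for every $\tau \subset \sigma$ making the diagrams of Equation~\eqref{eq:diagram_three_nested_strata} commute. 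Both sides of the claimed identity act on each of these pieces of data, and it suffices to check agreement on each.

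First, on the underlying sequence $\vec{\cP}$, both $\partial^i \partial^j$ and $\partial^{j-1} \partial^i$ delete the entries indexed by $i$ and $j$, yielding the same sub-sequence, and the induced identifications of stratifying $2$-categories agree by Lemma~\ref{lem:poset-bicategory-semisimplicial}. Next, on the collection of elementary flow simplices, both face operations act by selecting the same sub-collection: namely $\{\bX_\sigma\}$ where $\sigma$ ranges over faces of $\Delta^n$ avoiding both the $i$-th and $j$-th vertices. Finally, the compatibility functors $\bX_\tau \to \partial^\tau \bX_\sigma$ restrict to precisely the same functors on either side, since in each case $\tau \subset \sigma$ ranges over pairs of faces that both avoid the $i$-th and $j$-th vertices, and Lemma~\ref{lem:poset-bicategory-semisimplicial} guarantees that the stratification isomorphisms used in defining $\partial^\tau \bX_\sigma$ are compatible.

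The one mildly non-formal point is that the iterated restriction should be literally equal, not merely canonically isomorphic, on the nose. But this is built into our setup: the face operation on an elementary flow simplex is defined by \emph{restriction} to a sub-$2$-category of the stratifying $2$-category $\vec{\cP}^{\Gamma}$, and by Lemma~\ref{lem:poset-bicategory-semisimplicial} the two iterated restrictions give equal sub-$2$-categories, not merely isomorphic ones. The compatibility data in~\eqref{eq:diagram_three_nested_strata} is then automatically preserved because both sides single out the \emph{same} subcollection of functors from the original datum.

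The proof is therefore essentially bookkeeping, and the main (already-resolved) obstacle was the corresponding identity for the stratifying $2$-categories $\vec{\cP}^{\Gamma}$ proved in Lemma~\ref{lem:poset-bicategory-semisimplicial}; once that equality is available on the nose, the flow simplex data inherits it directly. We thus conclude that $\Flow_\bullet$ is a semisimplicial set.
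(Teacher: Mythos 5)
Your proof is correct and takes essentially the same route as the paper, which simply declares the simplicial identities ``evident from the construction'' once Lemma~\ref{lem:poset-bicategory-semisimplicial} supplies the on-the-nose equality $\partial^i\partial^j\vec{\cP}^{\Gamma} = \partial^{j-1}\partial^i\vec{\cP}^{\Gamma}$ of stratifying $2$-categories. Your write-up just makes the bookkeeping explicit — in particular the correct observation that both iterated face maps select literally the same subcollection of elementary flow simplices and compatibility functors, so equality (not mere isomorphism) holds.
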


As promised in the introduction, the morphisms of $\Flow$ (i.e., the
elements of $\Flow_1$) by definition consist of a pair of sets $\vec{\cP} = (\cP_0, \cP_1)$, flow categories $\bX_{01}$
on $\vec{\cP}$, $\bX_0$ on $\cP_0 = \partial^0 \vec{\cP}$, and $\bX_1$
on $\cP_1 = \partial^1 \vec{\cP}$, and morphisms $\bX_0 \to \partial^0
\bX_{01}$ and $\bX_1 \to \partial^1 \bX_{01}$.  Our hypotheses yield
compatible maps of derived orbifolds
\begin{align}
\bX_0(p,q) \times \bX_{01}(q,r) & \to \bX_{01}(p,r) \\ \bX_{01}(p,q)
\times \bX_1(q,r) & \to \bX_{01}(p,r)
\end{align}
that are associative with respect to the composition in $\bX_0$ and
$\bX_1$.  That is, this data specifies a (non-unital) graded bimodule
structure on $\bX_{01}$ over the flow categories $\bX_0$ and $\bX_1$;
this is what we refer to as a {\em flow bimodule}.

\subsection{The semisimplicial set of structured flow categories}
\label{sec:semisimplicial-set}

We now adapt the previous definition to the setting of complex
oriented or framed flow categories. As a starting point, we have the
following analogue of Lemma \ref{lem:identify_normal_direction}: 
\begin{lem} \label{lem:inclusion_map_forget_element_sequence}
Every codimension $1$ object of the category $\vec{\cP}^{\Gamma}(p, r)$ is given by either
    \begin{enumerate}
    \item an element  $j < k < \ell$, corresponding to the inclusion
\begin{equation} 
  \iota_k \co  \partial^{k} \vec{\cP}^{\Gamma}(p, r) \to \vec{\cP}^{\Gamma}(p, r),
\end{equation}
\item or an element $q$ of $\cP_{k}$ for $j \leq k \leq \ell$, corresponding to the inclusion
 \begin{equation} 
  \iota_{q} \co   \vec{\cP}^{\Gamma}(p, q) \times \vec{\cP}^{\Gamma}(q,r)  \to \vec{\cP}^{\Gamma}(p, r).
\end{equation} 
    \end{enumerate} \qed 
  \end{lem}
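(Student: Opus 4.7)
The plan is to apply Definition \ref{def:model-manifolds-with-corners} directly: an object $\gamma$ of $\vec{\cP}^{\Gamma}(p,r)$ has codimension one precisely when its overcategory has exactly two elements (the identity and one nontrivial arrow from the initial object). The task therefore reduces to classifying such $\gamma$ and identifying each with the image of the initial object of the source of one of the two claimed inclusions.

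My first step is to invoke the description given in the proof of Lemma \ref{lem:model_manifold_corners_simplex}: for any object $\gamma$, the overcategory is canonically identified with the powerset of the disjoint union of (i) the set of internal edges of the arc underlying $\gamma$, and (ii) the indices $k$ strictly between $j$ and $\ell$ that do not appear in any vertex label of $\gamma$. The codimension-one condition then amounts to this disjoint union having cardinality exactly one, which splits into two mutually exclusive subcases.

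In the first subcase, $\gamma$ has exactly one internal edge, labeled by some element $q \in \cP_k$ for $j \leq k \leq \ell$, and no index is missing. Combining the latter requirement with the constraints that the two vertex labels lie respectively in $\{j+1, \ldots, k-1\}$ and $\{k+1, \ldots, \ell-1\}$ forces these labels to equal $\{j+1, \ldots, k-1\}$ and $\{k+1, \ldots, \ell-1\}$; this $\gamma$ is therefore the image under the concatenation map $\iota_q$ of the initial object of $\vec{\cP}^{\Gamma}(p,q) \times \vec{\cP}^{\Gamma}(q,r)$. In the second subcase, $\gamma$ has no internal edges, so it is the single-vertex arc whose vertex is labeled by $\{j+1, \ldots, \ell-1\} \setminus \{k\}$ for some $j < k < \ell$; this is precisely the image under $\iota_k$ of the initial object of $\partial^{k} \vec{\cP}^{\Gamma}(p,r)$.

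The main obstacle is careful bookkeeping: correctly extracting the powerset description of the overcategory from Lemma \ref{lem:model_manifold_corners_simplex}, and handling the endpoint cases $k = j$ and $k = \ell$ of the first subcase (in which one of the adjacent vertex labels is empty) consistently with the conventions of Definition \ref{def:Poset-flow-simplex}. Once this is done, the two subcases together exhaust the codimension-one objects and identify them with the images under $\iota_k$ and $\iota_q$ as claimed in the statement.
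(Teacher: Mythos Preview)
Your proposal is correct; the paper itself omits the proof entirely (the statement is marked with a terminal \qed), so your unpacking of the overcategory description from Lemma~\ref{lem:model_manifold_corners_simplex} and the resulting two-case dichotomy is exactly the argument one would supply. One small point of care: the description of (ii) that you quote from the proof of Lemma~\ref{lem:model_manifold_corners_simplex} should be read as excluding indices that occur as the index of an internal edge label (not merely those absent from vertex labels), since otherwise an index $k$ appearing as an edge index would be double-counted---your first subcase implicitly uses this correct reading when you conclude that the vertex labels equal $\{j+1,\ldots,k-1\}$ and $\{k+1,\ldots,\ell-1\}$, so the argument goes through.
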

  In the first case, we set
   \begin{equation} \label{eq:decomposition_normal_direction-forget_vertex}
     Q_{\iota_k}^{-} = \emptyset \qquad Q_{\iota_k}^{+}=  \{k\},
   \end{equation}
while in the second case we set   \begin{equation}\label{eq:decomposition_normal_direction-differential}
     Q_{\iota_{q}}^{-} =  \{q\} \qquad Q_{\iota_{q}}^{+}=  \emptyset.
   \end{equation}
   
\begin{defin} \label{def:framed_flow_simplex-elementary}
A \emph{structured elementary flow simplex} $\bX$ with
object set $\vec{\cP}$ is a lift of an elementary flow simplex from
$d\Orb$ to $d\Orb^{\cS}$, in the sense that we
have a category enriched in $d\Orb^{\cS}$ such that under the forgetful
pseudofunctor $d\Orb^{\cS} \to d\Orb$ the following properties hold
whenever $p$ lies in $\cP_j$ and $r$ in $\cP_\ell$:
\begin{enumerate}
\item The virtual vector space
  $U_{\bX(p,r)}$ is given by $( \bR^{\{j+1, \ldots, \ell\}},\bR^{\{r\}})$.
\item For each subset $K$ of $\{j+1, \ldots, \ell-1\}$, the restriction of the framing of $\bX(p,r)$ to $\partial^K \bX(p,r) $ splits as the direct sum of a framing of this stratum with the identity on $\bR^{K}$ (where we use the inclusion of $ \bR^{K}$ in $U_{\bX(p,r)} $, the choice of decomposition in Equation~\eqref{eq:decomposition_normal_direction-forget_vertex}, and Lemma~\ref{lem:inherit_framings}).
\item The composition maps associated to an element $q$ of $\cP_k$ are given by
  \begin{align}
      U^+_{\bX(p,r)} &  \cong \bR^{\{j+1, \ldots, \ell\}}  \\
      & \cong  U^+_{\bX(p,q)} \oplus \bR^{\{k\}} \oplus U^+_{\bX(q,r)} \\
        U^-_{\bX(p,r)} \oplus \bR^{\{q\}} &  \cong    \bR^{\{q,r\}} \\
   &   \cong   U^-_{\bX(p,q)}   \oplus U^-_{\bX(q,r)}.
    \end{align} 
\end{enumerate}
\end{defin}

\begin{rem}
  The asymmetry in the definition of the virtual vector space $ U^+_{\bX(p,r)}$, including $\ell$ but not $j$, is ultimately due to the fact that we broke symmetry in the definition of the $U^{-}_{\bX(p,q)}$ when introducing flow categories.
\end{rem}

Structured elementary flow simplices assemble to produce structured
flow simplices.

\begin{defin} \label{def:framed_flow_simplex}
A \emph{structured flow $n$-simplex} is a flow $n$-simplex equipped
with a lift of the underlying elementary simplices to structured
elementary simplices, and a lift of each functor 
\begin{equation}
\bX_{\tau} \to \partial^\tau \bX_\sigma    
\end{equation}
whenever $\tau$ is a subset of $\sigma$  to a functor enriched in
$d\Orb^{\cS}$, consistently with the identifications of vector
spaces fixed in Definition \eqref{def:framed_flow_simplex}. We require 
 the lift of each diagram
    \begin{equation} \label{eq:diagram_three_nested_strata_framed}
      \begin{tikzcd}
        \bX_{\tau} \ar[r] \ar[d] & \partial^\tau \bX_\sigma   \ar[d] \\
        \partial^{\tau} \bX_{\rho} \ar[r] & \partial^\tau \partial^{\sigma} \bX_\rho
      \end{tikzcd}
    \end{equation}
to commute for each triple $\tau \subset \sigma \subset \rho$.
\end{defin}

We can define structured flow categories as above.

\begin{defin}\label{def:framed_flow_semisimplicial}
We define $\Flow_n^{\cS}$ as the collection of
structured flow $n$-simplices $\bX$ such that
\begin{equation}
\ob(\bX) = \vec{\cP}, \qquad |\vec{\cP}| = n+1.
\end{equation}
We define face maps
\begin{equation}\label{eq:framed_flow_face}
\partial^i \co \Flow_n^{\cS} \to  \Flow_{n-1}^{\cS} 
\end{equation}
exactly as in Equation \eqref{eq:boundary_map}, by restricting to the
strata indexed by $\partial^i \vec{\cP}^{\Gamma}$.
\end{defin}

To justify Definition~\ref{def:framed_flow_semisimplicial}, we need to
verify that the properties listed in
Definition~\ref{def:framed_flow_simplex} are inherited by this
construction.  It is straightforward to check that the conditions
descend to strata.  Furthermore, we have the following lemma, which
essentially follows from that check and the analysis of the basic case
in Lemma~\ref{lem:unstructured_flow_semisimplicial}.

\begin{lem}
The face maps of Equation~\eqref{eq:framed_flow_face} give
$\Flow_\bullet^{\cS} = \{\Flow_n^{\cS}\}$ the structure of a
semisimplicial set. \qed
\end{lem}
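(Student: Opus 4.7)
The plan is twofold. First, I would verify that the face operation $\partial^i$ on a structured flow $n$-simplex $\bX$ produces a structured flow $(n-1)$-simplex; second, I would check the simplicial identities $\partial^i\partial^j = \partial^{j-1}\partial^i$ for $0 \leq i<j \leq n$. Both tasks reduce to bookkeeping once one observes that the face map acts by selecting the subcollection $\{\bX_\sigma : \sigma \subset \partial^i \Delta^n\}$ together with the already-given morphisms $\bX_\tau \to \partial^\tau \bX_\sigma$, with no new data introduced.

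For the first step, I would note that each $\bX_\sigma$ in the restricted subfamily is a structured elementary flow simplex in its own right, satisfying the conditions of Definition~\ref{def:framed_flow_simplex-elementary} because those conditions are local to the pair of endpoint objects $(p,r)$ and depend only on which subsets $\cP_k$ of $\vec{\cP}$ lie between them. Restricting along $\partial^i$ deletes one of the entries of $\vec{\cP}$, but for any $\sigma$ not containing $i$ the pair of endpoint indices and the intermediate sets are unaltered, so the identifications of $U_{\bX(p,r)}$ and the stratum-compatible framings in Definition~\ref{def:framed_flow_simplex-elementary} carry over verbatim. The coherence diagram \eqref{eq:diagram_three_nested_strata_framed} for triples $\tau \subset \sigma \subset \rho$ in $\partial^i \Delta^n$ is a special case of the same diagram for $\bX$, so the lifted functoriality is inherited.

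For the simplicial identities, Lemma~\ref{lem:poset-bicategory-semisimplicial} gives an equality of strict $2$-categories $\partial^i\partial^j \vec{\cP}^\Gamma = \partial^{j-1}\partial^i \vec{\cP}^\Gamma$, compatible with the inclusions into $\vec{\cP}^\Gamma$. Consequently, both $\partial^i\partial^j \bX$ and $\partial^{j-1}\partial^i \bX$ consist of exactly the same subcollection of structured elementary simplices $\{\bX_\sigma\}$, indexed by the same set of subsimplices of $\Delta^n$, with the same inherited morphisms. The compatibility of the structured lift with these inclusions (built into Definition~\ref{def:framed_flow_simplex} by the requirement that the lifts of the maps $\bX_\tau \to \partial^\tau \bX_\sigma$ be fixed by Diagram~\eqref{eq:diagram_three_nested_strata_framed}) guarantees that the lifted data also agrees.

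I do not expect a serious obstacle here; the argument is essentially a transcription of Lemma~\ref{lem:unstructured_flow_semisimplicial} with an additional line noting that the structural decorations at the level of $d\Orb^{\cS}$ are preserved. The only point requiring minor care is the asymmetric choice $U_{\bX(p,r)} = (\bR^{\{j+1,\ldots,\ell\}}, \bR^{\{r\}})$, which depends on the indices $j$ and $\ell$ of the endpoint objects in $\vec{\cP}$; one must check that these indices, relative to the truncated sequence $\partial^i\vec{\cP}$, yield identifications consistent with those obtained by iterating face operators in either order. Since deleting entries outside the interval $[j,\ell]$ does not affect the intermediate sets, and the shifts in indexing under $\partial^i\partial^j$ and $\partial^{j-1}\partial^i$ coincide exactly when $i<j$, this check is immediate.
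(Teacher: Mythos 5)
Your proposal is correct and follows essentially the same route as the paper, which itself only remarks that the structured conditions of Definition~\ref{def:framed_flow_simplex} descend to strata and that the simplicial identities then reduce to the unstructured case via Lemma~\ref{lem:poset-bicategory-semisimplicial}. Your extra care about the indexing of $U_{\bX(p,r)}$ is harmless but dissolves immediately: since a structured flow $n$-simplex assigns a separate structured elementary simplex $\bX_\sigma$ to each stratum $\sigma$, with indices taken relative to $\partial^\sigma\vec{\cP}$, the face map merely selects a subcollection and no reindexing occurs.
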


\section{Filling inner horns}
\label{sec:kan-condition-flow}

The goal of this section is to show that every inner horn in $\Flow$
(and in its structured variants) has a filler.  This establishes
$\Flow$ as a semisimplicial set that is a weak Kan complex;
constructing degeneracies for $\Flow$, done in Section
\ref{sec:quasi-units}, will later complete the construction of a
quasicategory.  The proof of the lifting for inner horns essentially
amounts to taking an appropriate gluing of the derived orbifolds that
underlie the simplices associated to the horn. To perform this
construction, we need two ideas.  The first is to raise the dimension
of the underlying orbifolds, by appropriate choices of orbibundles, so
that they have the same dimension.  The second is a smoothing result
analogous to the classical fact that manifolds with corners have
canonical smoothing to manifolds with boundary.

Recall that the horn $\Lambda^n_k$ is defined to be the simplicial set
given as the union of all of the faces of the standard simplex
$\Delta^n$ which contain the vertex $k$.  A horn is {\em inner} if $0
< k < n$.  There is a natural inclusion $\Lambda^n_k \to \Delta^n_k$,
and a filler for the map $\Lambda^n_k \to X$ is a lift to $\Delta^n_k
\to X$.  A Kan complex admits such fillers for all horns, and a
quasicategory admits such fillers for all inner horns.

For this section, we thus fix a length $n$ sequence $\vec{\cP}$ of
sets, an integer $0 < k < n$, a collection of elementary flow
simplices $\bY_{\sigma} $,  with underlying category
$\partial^{\sigma} \vec{\cP}^{\Gamma}$, indexed by the nondegenerate
simplices $\sigma$ of the horn $\Lambda^n_k$, together with
equivalences  
\begin{equation} \label{eq:equivalence_restriction_stratum_horn}
    \bY_{\tau} \to \partial^\tau \bY_\sigma    
  \end{equation}
making the diagram in equation~\eqref{eq:diagram_three_nested_strata}
commute for triples of nondegenerate simplices of the horn.  
   
Our goal is to define an $n$-simplex extending the above data:
  \begin{thm}
    \label{thm:horn_filling}
    There exists an elementary $n$-simplex $\bY$ with equivalences
  \begin{equation} \label{eq:data_of_horn}
\bY_{\sigma} \to \partial^{\sigma} \bY,
  \end{equation}
  whenever $\sigma$ is a nondegenerate simplex of the horn, so that diagram~\eqref{eq:diagram_three_nested_strata} commutes.
  \end{thm}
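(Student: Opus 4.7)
My plan is to reduce the horn-filling to the construction of certain distinguished morphism spaces, which I would then build by a stratified gluing argument relying on Pardon's theorem on the existence of enough vector bundles. First, I would show that for any pair $(p,r)$ with $p \in \cP_j$ and $r \in \cP_\ell$ satisfying $(j,\ell) \neq (0,n)$, the morphism space $\bY(p,r)$ is forced by the horn data via the simplicial identities. Indeed, since $0 < k < n$, the set $\{0,\ldots,n\} \setminus \{j,\ell,k\}$ always contains an index $i \notin [j,\ell]$ whenever $(j,\ell) \neq (0,n)$; the face $\partial^i \bY$ is part of the horn data, and since $i \notin [j,\ell]$ it furnishes $\bY(p,r)$ together with its stratification. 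Consistency between different choices of such $i$ follows from the coherence diagrams inherent to the horn data.

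The remaining task is therefore to construct $\bY(p,r)$ for each pair with $p \in \cP_0$ and $r \in \cP_n$. By Lemma~\ref{lem:inclusion_map_forget_element_sequence}, the codimension-$1$ strata of such a $\bY(p,r)$ consist of composition strata $\bY(p,q) \times \bY(q,r)$ for intermediate $q$, together with forget-vertex strata $\partial^i \bY(p,r)$ for $0 < i < n$. The composition strata involve morphism spaces of width strictly less than $n$, already determined by the preceding step, while the forget-vertex strata are given by the horn data whenever $i \neq k$. Only the stratum $\partial^k \bY(p,r)$, corresponding to the missing $k$-th face, must be created as part of the construction.

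To construct $\bY(p,r)$ I would proceed in two steps. First, assemble the given codimension-$1$ strata along their common codimension-$2$ overlaps (compatibility of which follows from the coherence diagrams in the horn data) to form a stratified derived orbifold $B$ realizing the ``horn boundary'' of $\bY(p,r)$. Second, form $\bY(p,r)$ by a collar-like thickening of $B$ that attaches a new codimension-$1$ stratum $\partial^k \bY(p,r)$ along $\partial B$, where the new stratum is itself constructed inductively as the corresponding morphism space in the flow $(n-1)$-simplex $\partial^k \bY$, whose own boundary data is already determined. For the structured variants, the framings or complex orientations on $B$ extend uniquely up to contractible choice across the collar, by Lemma~\ref{lem:canonical_lift_structure_equivalence} and the standard stability properties of structured derived orbifolds.

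The main obstacle is the gluing step: the category $d\Orb$ is not cocomplete, so the obstruction bundles on distinct strata of $B$ will not automatically match. Here Pardon's theorem is the essential input, allowing us to stabilize the obstruction bundles consistently and realize the colimit in the $2$-category of stacks, from which a derived orbifold representative may be extracted. A secondary difficulty is to arrange the collared thickening so that the codimension-$1$ stratification of $\bY(p,r)$ matches the model $\vec{\cP}^{\Gamma}(p,r)$ of Definition~\ref{def:Poset-flow-simplex} precisely, with composition and forget-vertex strata meeting along the expected codimension-$2$ strata; verifying the associativity conditions of Definitions~\ref{def:flow_simplex} and~\ref{def:framed_flow_simplex} then becomes a matter of careful bookkeeping through the corner structure of the collar.
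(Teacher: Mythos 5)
Your overall strategy --- reduce to the morphism spaces between objects of $\cP_0$ and $\cP_n$, observe that all codimension-$1$ strata except the one indexed by the missing facet are already prescribed, glue them, and use Pardon's theorem to reconcile the obstruction bundles --- is the same skeleton as the paper's proof, and your first reduction (every $\bY(p,r)$ with $(j,\ell)\neq(0,n)$ is forced by a face of the horn omitting $0$ or $n$) is exactly what the paper does. Note, however, that the paper stabilizes \emph{first} (Proposition \ref{prop:horn_filling}): it replaces the given functor by a naturally equivalent one with obstruction bundles of constant rank, so that all subsequent gluings are along isomorphisms of orbifolds and no colimit of stacks needs to be taken; your proposed order (glue in the $2$-category of stacks, then extract a derived orbifold representative) leaves you with a nontrivial extraction step that the paper's order deliberately avoids.

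There are two genuine gaps. First, your treatment of the missing facet is circular: you say $\partial^k \bY(p,r)$ is ``constructed inductively as the corresponding morphism space in the flow $(n-1)$-simplex $\partial^k \bY$, whose own boundary data is already determined.'' But $\partial^k\bY$ is precisely the face the horn is missing; its boundary is determined, yet its top-dimensional morphism spaces are not, and no induction produces them --- filling a prescribed boundary by a manifold is the entire content of the problem. In the paper the missing facet is never built separately: it emerges from the gluing as the union of the curved facets $\prod x_i = \epsilon$ of the $L$-blocks, i.e., it is a smoothed, pushed-in copy of the glued horn boundary itself. Second, the ``collar-like thickening'' is where the real geometric work lives and is not bookkeeping: the known codimension-$1$ strata meet along corners, and a naive product of their union with an interval does not carry the corner structure needed to be stratified by $\vec{\cP}^{\Gamma}(p,r)$ with composition and forget-vertex facets in the correct positions. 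The paper resolves this with the explicit local models $L_{d,0}$ and $L_{d,1}$ of Section \ref{sec:l-blocks} and the weighted colimit $\coprod_{\alpha \to \beta} L_\alpha \times \tilde{Y}_{\beta}(p,r)/\sim$, together with coherent choices of collars; some device of this kind is required to make your thickening precise and to obtain the conclusion of Lemma \ref{lem:gluing_fixed_obstruction} that the result is a smooth orbifold whose strata are diffeomorphic to the prescribed ones.
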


  For expository reasons, we shall begin with the following special case:
     \begin{assu} \label{assu:structure_maps_iso}
       The equivalences in Equation~\eqref{eq:equivalence_restriction_stratum_horn} are isomorphisms, as are all structure maps
     \begin{equation} \label{eq:structure-map-horn}
 \bY_{\sigma}(p,q) \times  \bY_{\sigma}(q,r) \to  \partial^{q} \bY_{\sigma}(p,r) .
\end{equation}
\end{assu}
Under this assumption, the orbifolds $Y_{\sigma}$ underlying the flow
simplices $\bY_{\sigma}$ themselves form a horn $\Lambda^n_k$ in the
category of orbifolds. 

We return to the general situation in Section~\ref{sec:stabilization}
below, after resolving this special case. 

\subsection{Constructing a flow simplex in the topological category}
\label{sec:constr-flow-simpl}

We now proceed to explain the construction of an elementary flow
$(n-1)$-simplex, but without the smooth structure, from the data of an
inner horn. This simplex will correspond to the missing facet of the
horn. For the construction, we recall that the objects of the
category $ \vec{\cP}^{\Gamma}(p,r)$ are (directed) arcs
with edges labelled by elements of the sets $\cP_i$, and with each
vertex labelled by a subset of $\{i+1, \ldots, j-1\}$, assuming that
the incoming edge is labelled by an elements of $\cP_i$ and the
outgoing edge by $\cP_j$. 

We define $\Lambda^n_k \vec{\cP}^{\Gamma}(p,r) $ to be the subset of
$\vec{\cP}^{\Gamma}(p,r) $ corresponding to the $k$-horn. Explicitly,
its elements are labelled trees $\alpha$ which 
\begin{equation} \label{eq:k-horn-labelled-trees}
\parbox{31em}{do not contain a vertex which is labelled either by the
  entire set $\{1, \ldots, n-1\}$, or by the complement of $k$ in this
  set.}    
\end{equation}

The given horn thus determines a contravariant functor
\begin{equation}
 \bY_{(\_)}(p, r) \co \Lambda^n_k \vec{\cP}^{\Gamma}(p,r) \to d\Orb,
\end{equation}
specified as follows.  Given a labelled tree $\alpha$ whose vertices
are labelled by a sequence $\{q_i\}_{i=0}^{d}$ of objects with $q_0 =
p$ and $q_d = r$, the value of the functor is the derived orbifold
$\bY_{\alpha}(p, r) $ which is the product, over all components of
$\alpha$ (i.e., complement of the vertices) of derived orbifold $
\bY_{\sigma}(q_i, q_{i+1}) $ associated by a nondegenerate simplex
$\sigma$ of the horn to successive pairs of objects labelling the
vertices of the tree. Assumption \ref{assu:structure_maps_iso} implies
that this functor factors through derived orbifolds with obstruction
bundles of constant rank.

Applying the forgetful map from derived orbifolds to orbifolds, consider the colimit   \begin{equation}
 \colim_{\alpha \in \Lambda^n_k \vec{\cP}^{\Gamma}(p,r)}    Y_{\alpha}(p, r)
  \end{equation}
  which in general is an orbispace.

The main point of the assumption in the special case that we consider is:
\begin{lem}
 If the maps in Equation \eqref{eq:structure-map-horn} are isomorphisms, then  $ \colim Y_{\alpha}(p, r) $ is a topological orbifold. \qed
\end{lem}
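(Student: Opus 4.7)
The strategy is to realize the colimit $\colim Y_\alpha(p,r)$ as an explicit gluing of the orbifolds $Y_\sigma(p,r)$ (for $\sigma$ ranging over the nondegenerate simplices of the horn) along their common boundary facets. Under Assumption \ref{assu:structure_maps_iso}, all transition maps in the diagram are isomorphisms onto closed boundary strata, so no derived or smoothing subtleties arise and the colimit can be computed in the ordinary category of orbispaces.

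First I would identify the minimal objects in the indexing poset $\Lambda^n_k \vec{\cP}^\Gamma(p,r)$: these are the single-vertex trees whose vertex label $K \subseteq \{j+1,\dots,\ell-1\}$ (with $p \in \cP_j$, $r \in \cP_\ell$) avoids both the full set and its complement of $k$, as dictated by Equation \eqref{eq:k-horn-labelled-trees}. Each such minimal $\alpha$ corresponds canonically to a nondegenerate simplex $\sigma(\alpha)$ of the horn, and $Y_\alpha(p,r) = Y_{\sigma(\alpha)}(p,r)$. For any non-minimal tree $\alpha'$ with vertices labelled $p = q_0, q_1, \dots, q_d = r$, the value $Y_{\alpha'}(p,r)$ is the product $\prod_i Y_{\sigma_i}(q_i, q_{i+1})$ over the components of $\alpha'$, and each structure map to a minimal value factors through a composition/boundary-inclusion isomorphism onto a stratum.

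Second, I would rewrite the colimit as the pushout of the family $\{Y_\sigma(p,r)\}_\sigma$ over their common boundary strata, with identifications dictated by the remaining arrows of the diagram. Each $Y_\sigma(p,r)$ is a topological orbifold with corners, and for every inclusion $\tau \subset \sigma$ of horn simplices the corresponding corner stratum of $Y_\sigma(p,r)$ is identified by the given isomorphisms with the analogous stratum in $Y_{\sigma'}(p,r)$ whenever $\sigma'$ also contains $\tau$. That this finite gluing along closed subspaces yields a topological orbifold with corners then follows from the classical fact that gluing manifolds with corners along matching codimension-one facets produces a manifold with corners (by compatible collars, applied inductively on corner codimension), combined with the observation that the gluing is equivariant for the local isotropy groups so the orbifold structure descends. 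Hausdorffness and paracompactness are inherited from the finitely many pieces.

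The main obstacle will be checking that the colimit has the correct corner stratification, i.e., that its boundary strata are enumerated by the complement of the horn subposet inside $\vec{\cP}^\Gamma(p,r)$. This is a combinatorial verification: the two excluded vertex labels in \eqref{eq:k-horn-labelled-trees} correspond respectively to the interior of the missing $k$-th face and to the interior of the $n$-simplex being filled, while every other tree appears as a shared corner stratum of at least two simplices of the horn and is therefore collapsed in the pushout. This matches exactly the expected stratification of an elementary flow $n$-simplex filling the given horn, at the level of the underlying topological orbifold.
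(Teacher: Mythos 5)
The paper offers no argument for this lemma (it is stated with an immediate \verb|\qed|), so the only question is whether your proof is correct. Your overall strategy --- present the colimit as a finite gluing of top-dimensional pieces along shared corner strata, and check that the local models (unions of facets of $[0,\infty)^j$ times a chart) are topological orbifold charts --- is the right one. But your identification of the top-dimensional pieces is wrong, and this is a genuine gap rather than an imprecision.

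Arrows in $\vec{\cP}^{\Gamma}(p,r)$ go from lower to higher codimension, and the contravariant functor $\bY$ sends an arrow $\alpha \to \beta$ to the inclusion of $Y_{\beta}(p,r)$ as a corner stratum of $Y_{\alpha}(p,r)$. Hence the colimit is covered by the values at objects that receive no nonidentity arrow from within the horn, and by Definition~\ref{def:model-manifolds-with-corners} every codimension-one object receives an arrow only from the initial (codimension-zero) object, which is excluded from the horn. By Lemma~\ref{lem:inclusion_map_forget_element_sequence} the codimension-one objects come in two families: the single-vertex trees $\iota_i$ ($i \neq k$), whose values are your facet pieces $Y_{\partial^i}(p,r)$, and the two-vertex trees $\iota_q$ with maximal vertex labels, whose values are the breaking strata $Y_{[0,m]}(p,q) \times Y_{[m,n]}(q,r)$ for $q \in \cP_m$. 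The latter admit no arrow from any single-vertex tree in the horn (collapsing the internal edge forces the resulting label to be the full set $\{1,\ldots,n-1\}$), so they are maximal pieces of the colimit that are \emph{not} contained in any $Y_{\sigma}(p,r)$; your assertion that every non-minimal value "factors through a boundary-inclusion isomorphism onto a stratum" of a facet piece fails exactly for them. As written, your pushout computes only the union of the facet strata, which is a proper subspace of the colimit (geometrically, you have built $\bigcup_{i\neq k}\partial^i Y(p,r)$ but omitted the faces $\partial^q Y(p,r)$). The fix is to include both families of codimension-one objects among the pieces being glued; the local-model and equivariance arguments then go through, though you should note that the local statement you need is that an arbitrary proper union of facets of $[0,\infty)^j$ is a topological manifold with corners, which is slightly more than gluing two pieces along a single matching facet with collars.
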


It is easy to see that, under Assumption \ref{assu:structure_maps_iso},  the obstruction bundles on $  Y_{\alpha}(p, r)$, underlying the Kuranishi spaces $\bY_{\alpha}(p, r)$ assemble to a vector bundle on this colimit so that we obtain a derived topological orbifold.  Equipping it with an appropriate smooth structure will  end up being the value of the horn-filling along the missing facet, which will follow from the construction in the next two section.

\subsection{$L$-blocks}
\label{sec:l-blocks}

We begin by considering a method for locally smoothing the strata of a manifold with corners. As a starting point, fix a constant $\epsilon$ between $0$ and $1$, and consider the manifold with corners
\begin{equation}
  L_{d,0} \subset [0,1]^{d}
\end{equation}
consisting of elements $x_i$ satisfying
\begin{equation}
 \prod_{i=1}^{d} x_i \leq \epsilon
\end{equation}
for some value of $\epsilon$. This subset meets all boundary strata of the cube transversely, and satisfies the following property:
\begin{lem}
  The intersection of $L_{d,0}$ with the facet $x_i = 1$ agrees with $L_{d-1,0}$. \qed
\end{lem}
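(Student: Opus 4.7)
The plan is to unwind the definitions. Recall that $L_{d,0}$ is cut out inside $[0,1]^d$ by the single inequality $\prod_{i=1}^{d} x_i \le \epsilon$, and that the facet of $[0,1]^d$ defined by $x_i = 1$ is canonically identified with $[0,1]^{d-1}$ via the projection that forgets the $i$-th coordinate.

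I would simply substitute $x_i = 1$ into the defining inequality. The product $\prod_{j=1}^{d} x_j$ becomes $\prod_{j \ne i} x_j$, so the inequality cutting out $L_{d,0} \cap \{x_i = 1\}$ inside $[0,1]^{d-1}$ is exactly $\prod_{j \ne i} x_j \le \epsilon$, which is the defining inequality for $L_{d-1,0}$ under the canonical identification. This shows the two subspaces agree as subsets of $[0,1]^{d-1}$; since both are cut out by the same (smooth) inequality in the same ambient manifold with corners, they agree as manifolds with corners as well.

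The only substantive content is the bookkeeping of the identification between the facet of the cube and the lower-dimensional cube, together with noting that the inequality is ``preserved'' by setting one coordinate equal to its maximum value $1$. There is no genuine obstacle, which is why the statement is asserted as an almost immediate observation preparing for the inductive constructions in the subsequent horn-filling argument.
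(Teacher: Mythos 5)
Your proof is correct and is exactly the argument the paper intends (the lemma is stated with no proof, as an immediate consequence of the definitions): setting $x_i = 1$ in the defining inequality $\prod_{j=1}^{d} x_j \le \epsilon$ yields the defining inequality of $L_{d-1,0}$ under the canonical identification of the facet with $[0,1]^{d-1}$. Nothing further is needed.
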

A slightly more precise formulation of the above result is that the inclusion of the $(d-1)$-cube in the $d$-cube induces a diffeomorphism from $L_{d-1,0}$ to the boundary facet of $L_{d,0}$ given by its intersection with $x_i=1$. These maps are compatible with compositions in the sense that they determine a unique map
\begin{equation}
    L_{k,0} \to L_{d,0}
\end{equation}
associated to setting the coordinates labelled by any collection of $d-k$ elements of $\{0, \ldots, d\}$ equal to $1$. We conclude:
\begin{cor}
  The collection of manifolds $\{ L_{d,0} \}_{d=0}^{\infty}$ forms a semicosimplicial set in the category of smooth manifolds with corners. \qed
\end{cor}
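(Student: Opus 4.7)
The plan is to construct explicit coface maps and verify the semicosimplicial identities. For each $d \geq 1$ and each $i \in \{1, \ldots, d\}$, define
\[
\delta^i \colon L_{d-1,0} \to L_{d,0}, \qquad (x_1, \ldots, x_{d-1}) \mapsto (x_1, \ldots, x_{i-1}, 1, x_i, \ldots, x_{d-1}),
\]
i.e., insertion of the value $1$ in the $i$-th coordinate. The preceding lemma guarantees that $\delta^i$ lands in $L_{d,0}$ and identifies $L_{d-1,0}$ with the facet $\{x_i = 1\}$ as a smooth manifold with corners, so each $\delta^i$ is a well-defined morphism in the target category.

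It then remains to verify the semicosimplicial identities $\delta^j \circ \delta^i = \delta^i \circ \delta^{j-1}$ for $1 \leq i < j \leq d$. This is an explicit computation on coordinates: applied to $(x_1, \ldots, x_{d-2}) \in L_{d-2,0}$, both sides produce the point in $L_{d,0}$ whose $i$-th and $j$-th coordinates equal $1$, with the remaining $d-2$ coordinates filled by $x_1, \ldots, x_{d-2}$ in the surviving positions in order. This matches the assertion made in the paragraph preceding the corollary that setting any collection of $d-k$ coordinates equal to $1$ yields a unique map $L_{k,0} \to L_{d,0}$, independent of the order in which those coordinates are inserted.

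There is essentially no obstacle here: once the preceding lemma is granted, the construction is forced and the identities are mechanical. Conceptually, the semicosimplicial structure merely records the combinatorics of the codimension filtration on $[0,1]^d$ viewed through the subsets $L_{d,0}$; the absence of codegeneracies reflects the fact that there is no natural projection $L_{d,0} \to L_{d-1,0}$ compatible with these inclusions, so only the cofacial part of a cosimplicial structure is present.
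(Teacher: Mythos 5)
Your proof is correct and matches the paper's argument: the coface maps are the inclusions obtained by setting a coordinate equal to $1$, well-definedness follows from the preceding lemma, and the semicosimplicial identities are the observation (stated just before the corollary) that setting any collection of coordinates equal to $1$ gives a map independent of the order of insertion.
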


We shall require the following elaboration of the above construction: let $L_{d,1}$ denote the subset of the $d+2$-cube with coordinates $(x_1, \ldots, x_d, y)$ satisfying the inequality
\begin{equation}
    (1-y)^2 + \frac{\prod_{i=1}^{d} x^2_i}{\epsilon^2} \leq 1.
\end{equation}
This is a manifold with corners, as can be inductively seen by computing that the tangent space of the hypersurface $   (1-y)^2 + \frac{\prod_{i=1}^{d} x^2_i}{\epsilon^2}   = 1 $ at $(x,y) = (0,0)$ is the hyperplane $y = 0$. Setting $y=1$ or $x_i = 1$ respectively gives rise to inclusions 
\begin{equation}
  L_{d,0} \to L_{d,1} \leftarrow L_{d-1,1}.
\end{equation}
Using $\mathbf{1}$ to denote the category $0 \to 1$, the functoriality of this construction can therefore be stated as follows:
\begin{lem}
  The assignment $(d,i) \mapsto L_{d,i}$ defines a functor from $\Delta_{+} \times  \mathbf{1}$. \qed 
\end{lem}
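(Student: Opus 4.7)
The plan is to exploit the universal property of products of categories: a functor $\Delta_{+} \times \mathbf{1} \to \cC$ is equivalent to the data of a natural transformation between two functors $F_0, F_1 \co \Delta_{+} \to \cC$, where $F_i$ will be the assignment $d \mapsto L_{d,i}$. It therefore suffices to verify (i) that each of $\{L_{d,0}\}$ and $\{L_{d,1}\}$ is a semicosimplicial object in smooth manifolds with corners, and (ii) that the assignment $L_{d,0} \to L_{d,1}$ given by $(x_1,\ldots,x_d) \mapsto (x_1,\ldots,x_d,1)$ defines a natural transformation between these two semicosimplicial objects.

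For (i), the case $i=0$ is already the content of the preceding corollary. For $i=1$, the basic coface maps are the embeddings $L_{d-1,1} \hookrightarrow L_{d,1}$ obtained by setting $x_j = 1$, whose well-definedness and smoothness follow from the transversality of the hypersurface $(1-y)^2 + \prod x_i^2/\epsilon^2 = 1$ to the coordinate faces of the cube, already noted in the text via the computation of its tangent space at the origin. The cosimplicial identities $\delta^i \delta^j = \delta^{j+1} \delta^i$ for $i \leq j$ then hold because setting $x_i = 1$ and $x_j = 1$ commute as operations on the ambient cube and agree with the standard reindexing convention on $\Delta_{+}$.

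For (ii), I would verify that for each coface map $\delta^j$, the square
\begin{equation*}
\begin{tikzcd}
L_{d-1,0} \ar[r] \ar[d] & L_{d,0} \ar[d] \\
L_{d-1,1} \ar[r] & L_{d,1}
\end{tikzcd}
\end{equation*}
commutes. This is immediate from the fact that the horizontal maps modify only $x$-coordinates (inserting an entry equal to $1$) while the vertical maps append the coordinate $y=1$, so the two operations commute.

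There is no significant geometric obstacle beyond this combinatorial bookkeeping, as the essential transversality checks guaranteeing that the basic maps are smooth embeddings into boundary facets have already been dispatched in the preceding discussion. The only subtlety worth double-checking is that the smoothness of $L_{d,1}$ near the stratum where the curved hypersurface meets several coordinate facets simultaneously is inherited inductively from $L_{d-1,1}$, which again follows from the transversality of the defining hypersurface to each coordinate face.
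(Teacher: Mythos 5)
Your proof is correct and matches the (unwritten) argument the paper intends: the lemma is stated with no proof because it reduces exactly to the observations you make — that $\{L_{d,0}\}$ and $\{L_{d,1}\}$ are each semicosimplicial via setting $x_j=1$, and that appending $y=1$ gives a natural transformation between them, all of which follows from the transversality of the defining hypersurfaces to the coordinate facets and the evident commutativity of setting disjoint coordinates equal to $1$. No gaps.
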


\begin{figure}
  \centering
  \begin{tikzpicture}
    \begin{scope}
       \node  at (0,1.7) {$L_{1,0}$}; 
      \draw[thin,gray] (0,1) -- (0,0) ;
      \draw[thick] (0,0) -- (0,1/2);
      \filldraw[black] (0,1/2) circle (2pt);
      \filldraw[black] (0,0) circle (2pt) node[anchor=north]{$0$};
    \end{scope}

    \begin{scope}[shift={(2,0)}]
           \node  at (0.5,1.7) {$L_{2,0}$};
      
      \draw[thin,gray] (0,0) -- (0,1) -- (1,1) -- (1,0) -- cycle ;
      \filldraw[black] (0,0) circle (2pt) node[anchor=north]{$(0,0)$};
      \filldraw[black] (0,1) circle (2pt) node[anchor=south]{$(0,1)$};
      \filldraw[black] (1,0) circle (2pt) node[anchor=north]{$(1,0)$};
      \filldraw[black] (1/2,1) circle (2pt); 
      \filldraw[black] (1,1/2) circle (2pt); 
 
      \draw[thick] (0,0) -- (0,1) -- (1/2,1) .. controls (1/2,1/2) .. (1,1/2) -- (1,0) -- cycle;
      
    \end{scope}
    
    \begin{scope}[shift={(5,0)}]
      \node  at (0.5,1.7) {$L_{3,0}$};
      
 \filldraw[black] (1,0,0) circle (2pt) node[anchor=west]{$(1,0,0)$};
 \filldraw[black] (0,1,0) circle (2pt) node[anchor=south]{$(0,1,0)$};
 \filldraw[black] (0,0,1) circle (2pt) node[anchor=north]{$(0,0,1)$};

 \filldraw[black] (1,1,0) circle (2pt);
 \filldraw[black] (0,1,1) circle (2pt);
 \filldraw[black] (1,0,1) circle (2pt);

 \filldraw[black] (1,1/2,1) circle (2pt);
 \filldraw[black] (1/2,1,1) circle (2pt);
 \filldraw[black] (1,1,1/2) circle (2pt);

 \filldraw[black] (0,0,0) circle (2pt);
 
   \draw[thick,dashed]  (0,0,0) -- (0,0,1);
   \draw[thick,dashed]  (0,0,0) -- (0,1,0);
   \draw[thick,dashed]  (0,0,0) -- (1,0,0);
   
   \draw[thin,gray]  (0,1,1) -- (1,1,1);
   \draw[thin,gray]  (1,1,0) -- (1,1,1);
   \draw[thin,gray]  (1,0,1) -- (1,1,1);

   \draw[thick] (0,0,1) -- (0,1,1) -- (1/2,1,1) .. controls (1/2,1/2,1) .. (1,1/2,1) -- (1,0,1) -- cycle;
   \draw[thick] (0,1,0) -- (0,1,1) -- (1/2,1,1) .. controls (1/2,1,1/2) .. (1,1,1/2) -- (1,1,0) -- cycle;
   \draw[thick] (1,0,0) -- (1,0,1) -- (1,1/2,1) .. controls (1,1/2,1/2) .. (1,1,1/2) -- (1,1,0) -- cycle;
 \end{scope}

 \begin{scope}[shift={(2,-3)}]
   \node  at (0.5,1.7) {$L_{1,1}$};
   
      \draw[thin,gray] (0,0) -- (0,1) -- (1,1) -- (1,0) -- cycle ;
      \filldraw[black] (0,1) circle (2pt); 
      \filldraw[black] (1/2,1) circle (2pt); 
      \filldraw[black] (0,0) circle (2pt);
      
      \draw[thick] (0,1) -- (0,0) .. controls (1/2,0) .. (1/2,1) -- cycle;
      
    \end{scope}

    \begin{scope}[shift={(5,-3)}]
      \node  at (0.5,1.7) {$L_{2,1}$};
      
      \draw[thin,gray] (0,1,0) -- (0,1,1) -- (1,1,1) -- (1,1,0) -- cycle ;
            \draw[thin,gray] (1,0,0) -- (1,0,1) -- (1,1,1) -- (1,1,0) -- cycle ;
   \draw[thin,gray]  (0,1,1) -- (1,1,1);
   \draw[thin,gray]  (1,1,0) -- (1,1,1);
   \draw[thin,gray]  (1,0,1) -- (1,1,1);

 
   
 \filldraw[black] (0,1,1) circle (2pt);
 \filldraw[black] (1,0,1) circle (2pt);
 \filldraw[black] (0,1,0) circle (2pt);
 \filldraw[black] (1,0,0) circle (2pt);
 \filldraw[black] (0,0,0) circle (2pt);
 \filldraw[black] (0,0,1) circle (2pt);
 \filldraw[black] (1,1/2,1) circle (2pt);
  \filldraw[black] (1/2,1,1) circle (2pt);

   \draw[thick] (0,0,1) -- (0,1,1) -- (1/2,1,1) .. controls (1/2,1/2,1) .. (1,1/2,1) -- (1,0,1) -- cycle;
   \draw[thick] (1,0,1) -- (1,0,0) .. controls (1,1/2,0) .. (1,1/2,1) -- cycle;
   \draw[thick] (0,1,1) -- (0,1,0) .. controls (1/2,1,0) .. (1/2,1,1) -- cycle;

   \draw[thick,dashed]  (1,0,0)--(0,0,0) -- (0,1,0);
   \draw[thick,dashed] (0,0,1)--(0,0,0);

 \end{scope}
    
  \end{tikzpicture}
  \caption{The corners and edges of the first five $L$-blocks.}
  \label{fig:L-blocks}
\end{figure}

\begin{rem}
While one can describe diffeomorphic models for the manifolds $L_{d,0}$ as polyhedra, by replacing the inequality $\prod x_i = \epsilon$ with $\epsilon \leq \sum (1- x_i) $, we do not know how to give a similar description for the manifolds $L_{d,1}$.
\end{rem}

\subsection{Horn-filling for orbifolds}
\label{sec:horn-fill-orbif}

In order to use the $L$-blocks from the previous section, we introduce
a covariant functor from $\vec{\cP}^{\Gamma}(p,r)$ to the product
$\Delta_{+} \times \mathbf{1}$. The functor to $\mathbf{1}$ is quite easy to define, and simply measures whether the tree associated to an object of $ \Lambda^n_k \vec{\cP}^{\Gamma}(p,r) $ has a label which arise from the set $\cP_k$ corresponding to the vertex of the horn, or an element of it. Any object corresponding to a tree which lacks such a label maps to $1$, and the remaining objects map to $0$. Geometrically, this exactly corresponds to assigning $1$ to those strata which correspond to the boundary of the horn, and $0$ to those associated to the interior.

In order to define the functor to $\Delta_{+} $ recall that the morphisms in
$\vec{\cP}^{\Gamma}(p,r)$ are given by
collapsing interior edges and adding elements to the sets labelling
the vertices. Assign to each object of $\vec{\cP}^{\Gamma}(p,r)$ the sum of the number of elements of $\{1, \ldots, n\} \setminus \{k\}$ which do not appear as labels of edges, with the number of elements of $(\cP_1, \ldots, \cP_n) $ that appear as label of edges. The ordering along the labelled arc that determines such an object (and the ordering of the sets $\{i+1, \ldots, j-1\}$) implies that this assignment is functorial.

\begin{rem}
 An alternate way of describing the functor to $\Delta_{+} \times \mathbf{1}$ is that it records all contributions to the codimension of an object which are not shared by objects of the missing boundary facet $\partial^k \vec{\cP}^{\Gamma}(p,r) $. 
\end{rem}

Considering the $L$-blocks as a covariant functor from
$\Delta_{+} \times \mathbf{1}$ to the category of smooth manifolds with corners,
and restricting to the horn, we thus obtain a covariant functor 
\begin{equation}
  L \co  \Lambda^n_k \vec{\cP}^{\Gamma}(p,r) \to d \Orb.
\end{equation}

Under Assumption \ref{assu:structure_maps_iso}, we are now ready to construct a horn-filling in the special case which we are considering. Our construction uses the weighted colimit, which is obtained from the disjoint union of the products of the orbispaces $  Y_{\alpha}(p, r)$ with the $L$-blocks associated to $\alpha$ under the relations which identifies, for each arrow $ \alpha \to \beta$ in $ \Lambda^n_k \vec{\cP}^{\Gamma}(p,r)$, the images of the maps
\begin{equation} \label{eq:weighted_colim_codim-1}
L_{\beta} \times  Y_{\beta}(p, r)    \leftarrow  L_{\alpha} \times  Y_{\beta}(p, r)  \to L_{\alpha} \times  Y_{\alpha}(p, r).
\end{equation}

\begin{lem} \label{lem:gluing_fixed_obstruction}
  If the maps in Equation \eqref{eq:structure-map-horn} are isomorphism, the weighted colimit
  \begin{equation}
    Y(p,r) \equiv \coprod_{\alpha \to \beta}    L_\alpha \times  Y_{\beta}(p, r) / \sim 
  \end{equation}
  is a smooth orbifold stratified by $\vec{\cP}^{\Gamma}(p,r)$, with smooth structure determined by a choice of collars on the $L$-blocks and on the orbifolds $Y_{\alpha}(p,r)$ for $\alpha$ in $\Lambda^n_k \vec{\cP}^{\Gamma}(p,r)$. Moreover, the stratum associated to $\alpha$ in $\Lambda^n_k \vec{\cP}^{\Gamma}(p,r)$ is diffeomorphic to $Y_{\alpha}(p,r)$.
\end{lem}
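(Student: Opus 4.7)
The plan is to prove the lemma in three steps: (i) carry out a local analysis showing the colimit is a topological orbifold stratified correctly by $\vec{\cP}^{\Gamma}(p,r)$, (ii) build a smooth structure inductively over codimension using the chosen collars, and (iii) identify each stratum associated to $\alpha$ in the horn with $Y_\alpha(p,r)$.

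For step (i), I would fix $\alpha \in \Lambda^n_k \vec{\cP}^{\Gamma}(p,r)$ and a point $y$ in the image of $L_\alpha \times Y_\alpha(p,r)$ in the colimit. Because Assumption~\ref{assu:structure_maps_iso} makes the identifications in Equation~\eqref{eq:weighted_colim_codim-1} honest inclusions of boundary strata, a neighborhood of $y$ in $Y(p,r)$ is obtained by assembling open subsets of $L_\beta \times Y_\beta(p,r)$ for arrows $\alpha \to \beta$ in the horn, matching them along shared boundary pieces. The construction of the $L$-blocks as manifolds with corners whose face poset is a truncation of a power set, combined with the compatibility of the functor to $\Delta_+ \times \mathbf{1}$ with codimension, implies that the resulting local picture is a standard corner chart $\bR^m \times \bR^j_{\geq 0}/G$, where $G$ is the isotropy of the stratum of $Y_\alpha(p,r)$ containing the projection of $y$. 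This exhibits $Y(p,r)$ as a topological orbifold whose corner strata are naturally labelled by $\vec{\cP}^{\Gamma}(p,r)$: strata corresponding to objects of the horn come directly from the $L_\alpha \times Y_\alpha$, while strata corresponding to objects not in the horn (namely the minimal element of $\vec{\cP}^{\Gamma}(p,r)$ and the object corresponding to the missing facet $\partial^k$) arise from the smoothed hypersurfaces $\prod x_i = \epsilon$ and $(1-y)^2 + \prod x_i^2/\epsilon^2 = 1$ that define the $L$-blocks.

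For step (ii), I would construct a smooth structure inductively over codimension. The key geometric input is that each $L$-block is itself a smooth manifold with corners and the attaching maps $L_\alpha \to L_\beta$ are smooth embeddings of boundary strata, as encoded by the functoriality of the assignment $(d,i) \mapsto L_{d,i}$. Given collars on each $Y_\alpha(p,r)$, the product collars on each $L_\alpha \times Y_\alpha$ prescribe smooth transitions between adjacent charts on $Y(p,r)$, and the standard uniqueness result for smoothings of manifolds with corners, applied inductively, ensures the smooth structure is well-defined and unique up to diffeomorphism given the collars. For step (iii), the stratum of $Y(p,r)$ associated to $\alpha$ in $\Lambda^n_k \vec{\cP}^{\Gamma}(p,r)$ appears in $L_\alpha \times Y_\alpha$ as the product of a distinguished $0$-dimensional corner of $L_\alpha$ with $Y_\alpha(p,r)$, and is tautologically diffeomorphic to $Y_\alpha(p,r)$; functoriality of the $L$-block assignment guarantees that the quotient imposes no further identifications on this stratum.

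The main obstacle lies in step (i): the local analysis near a point sitting in the overlap of several $L_\alpha \times Y_\alpha$ requires careful bookkeeping to confirm that the combinatorics of the arrows in $\Lambda^n_k \vec{\cP}^{\Gamma}(p,r)$ fit together with the corner strata of the $L$-blocks so that the result is genuinely a standard corner chart with the prescribed stratification. This rests on the design of the functor $\Lambda^n_k \vec{\cP}^{\Gamma}(p,r) \to \Delta_+ \times \mathbf{1}$, which was chosen precisely so that the codimension contributions from the orbifold factor and the $L$-block factor combine correctly, with neither overlap nor gap, producing in particular the new strata for the objects of $\vec{\cP}^{\Gamma}(p,r)$ not in the horn.
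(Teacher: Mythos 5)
Your steps (i) and (ii) follow essentially the same route as the paper: one first checks that the weighted colimit is locally a topological orbifold with corners, the delicate points being those where several of the $x_i=1$ facets of the $L$-blocks are glued, and then one uses collars to smooth across these interior creases. The paper organizes this bookkeeping by identifying the category of factorizations $\alpha \to \gamma \to \beta$ with $\{0 \to 1\}^{k}$ and thereby a neighbourhood of $L_\alpha \times Y_\beta(p,r)$ with $L_\alpha \times Y_\beta(p,r) \times \left( (-1,0] \cup [0,1) \right)^{k}$, the smooth structure coming from $(-1,0] \cup [0,1) \cong (-1,1)$; your deferral of the smoothing to the collar choices is the same idea. (One small slip: the interior stratum, i.e.\ the minimal object of $\vec{\cP}^{\Gamma}(p,r)$, does not arise from the hypersurfaces $\prod x_i = \epsilon$ and $(1-y)^2 + \prod x_i^2/\epsilon^2 = 1$; only the missing facet $\partial^k$ does. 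The interior is the union of the interiors of all the glued pieces.)

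The genuine gap is in step (iii). The stratum of $Y(p,r)$ labelled by $\alpha$ is \emph{not} the product of a corner of $L_\alpha$ with $Y_\alpha(p,r)$, and the quotient very much does impose further identifications on it. For each arrow $\alpha \to \beta$, the facet of $L_\beta$ on which the coordinate corresponding to $\alpha$ vanishes is a cube, and the piece $\{x_{i(\alpha)} = 0\} \times Y_\beta(p,r)$ of $L_\beta \times Y_\beta(p,r)$ also lies in the stratum labelled by $\alpha$. Assembling all of these, the stratum is the union of a copy of $Y_\alpha(p,r)$ with products of its boundary strata with cubes --- that is, the \emph{collared completion} of $Y_\alpha(p,r)$. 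This is diffeomorphic to $Y_\alpha(p,r)$, but not tautologically: the identification requires collapsing the cube directions via a choice of diffeomorphism $[-1,1) \to [0,1)$ which is the identity near $1$, together with coherent collars on the $Y_\alpha(p,r)$. This is not a cosmetic point: the paper needs exactly this non-tautological identification (made coherently for all arrows $\alpha \to \beta$) to obtain the commuting squares relating $Y_\beta(p,r) \to \partial^\beta Y(p,r)$ and $Y_\alpha(p,r) \to \partial^\alpha Y(p,r)$, and to verify that the induced structure maps $Y(p,q) \times Y(q,r) \to \partial^q Y(p,r)$ remain associative. As written, your argument would leave you unable to see why the boundary strata of the filler carry the flow-category structure maps of the original horn.
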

\begin{proof}
  To start, we note that the functor $\alpha \mapsto L_{\alpha}$ is constant on the subcategories of $ \Lambda^n_k \vec{\cP}^{\Gamma}(p,r)$ consisting of arcs whose labels from the sets $(\cP_2, \dots, \cP_{n-1})$ are fixed. This implies that the local structure of $Y(p,r) $ near a point of $ L_\alpha \times  Y_{\beta}(p, r)  $ depends only on the minimal element of each such subcategory, in which the only edge labelled by an element of $\cP_1$ is the incoming edge, and the only edge labelled by an element of $\cP_n$ is the outgoing edge. So we assume that $\beta$ (and hence $\alpha$) both satisfy this property.

  Next, we explain the type of boundary stratum of $  Y(p,r)$ to which each boundary stratum of the $L$-blocks corresponds: if $\alpha$ maps to $(d,0)$, then $L_{\alpha}$ has $2 d +1 $ boundary strata, with $d$ cubes obtained by setting $x_i = 0$ that have a prescribed bijective correspondence with the codimension $1$ objects of $ \vec{\cP}^{\Gamma}(p,r) $ containing $\alpha$, and  will be part of the corresponding boundary strata. There is as well one boundary stratum given by the hypersurface $\prod x_i = \epsilon$, which will be part of the boundary stratum associated to the missing facet of the horn. The remaining facets of $L_{d,0}$, which are $d$ copies of $L_{d-1,0}$ obtained by setting $x_i = 1$, are glued to other facets by the weighted colimit, and thus lie in the interior of $Y(p,r)$.  On the other hand, if $\alpha$ maps to $(d,1)$, then with the exception of the case $d=1$, $L_{\alpha}$ has $2 d + 2$ boundary facets, with $d$ cubes obtained by setting $x_i=0$ that again are part of boundary strata labelled by corresponding objects of $\Lambda^n_k \vec{\cP}^{\Gamma}(p,r) $,  and one boundary facet which is the hypersurface $  (1-y)^2 + \frac{\prod_{i=1}^{d} x^2_i}{\epsilon^2}   = 1 $ which as before contributes to the missing top stratum of the horn. The remaining facets, which consist of  $d$ copies of $L_{d-1,1}$ obtained by setting $x_i=1$ and one copy of $L_{d,0}$ obtained by setting $y=1$, are glued by the weighted colimit, and thus lie in the interior.  In the case $d=1$, we have one fewer boundary stratum since $L_{0,1} $ is empty.

  In order to prove that $Y(p,r) $ is a manifold (with boundary), it remains to analyse the local structure near each point of $ L_\alpha \times  Y_{\beta}(p, r) $, for a given morphism $\alpha \to \beta$. By the preceding discussion, the key point is to describe the structure at a point where multiple gluings occur, i.e. where multiple coordinates of the cube equal $1$.  Having restricted attention at the beginning of the proof to the case where the number of such coordinates on $\alpha$ agrees with its codimension, we have a distinguished identification of the category of factorizations $\alpha \to \gamma \to \beta $ with the product $\{ 0 \to 1 \}^{k} $, where $k$ is the difference between the codimension of $\alpha$ and $\beta$. This implies that a neighbourhood of $  L_\alpha \times  Y_{\beta}(p, r) $  is stratified by $\{ -1 \leftarrow 0 \to 1 \}^k$, with top strata given by $ L_\gamma \times  Y_{\gamma}(p, r)$  for $\gamma$ lying under $\alpha$ and over $\beta$. A choice of collars thus identifies this neighbourhood with the product of $  L_\alpha \times  Y_{\beta}(p, r)$ with
  \begin{equation}
\left(    (-1,0] \cup [0,1) \right)^k ,
  \end{equation}
  so that we inherit a smooth structure  from the identification $ (-1,0] \cup [0,1) \cong (-1,1)$.

Having described the gluing, we see that the stratification  by $\vec{\cP}^{\Gamma}(p,r)$ is explicitly given
as follows: the stratum associated to the tree with a unique vertex 
labelled by $\{0, \ldots, n\}$ is the interior, and the one associated
to the tree with a unique vertex labelled by the complement of $k$ is
the union (over $\alpha$) of the product of $ Y_{\alpha}(p, r)$ with
the facet of the $L$-block which lies in the interior of the cube
(i.e., given by $\prod_{i=0}^{d} x_i = \epsilon$ or  $  (1-y)^2 + \frac{\prod_{i=1}^{d} x^2_i}{\epsilon^2}   = 1 $). The other facets
are each indexed by a codimension-$0$ stratum in the horn
$Y_{\alpha}(p, r)$, and are given by the collared completion of this
component, since they are given by its union with products of its boundary strata with cubes.
\end{proof}

Note that the choice of identification of the boundary strata with the orbifolds $Y_{\alpha}(p, r)$ depends only on a choice of diffeomorphism
\begin{equation} \label{eq:diffeomorphism_intervals}
  [-1,1) \to [0,1)  
\end{equation}
which is the identity near $1$. Fixing such a choice and using a
coherent choice of collars on $Y_{\alpha}(p, r)$ (which we can always
do) gives diffeomorphisms with the property that, for each arrow
$\alpha \to \beta$, we obtain a commutative diagram
\begin{equation}
  \begin{tikzcd}
    Y_{\beta}(p, r) \ar[r] \ar[d] &  \partial^\beta Y(p,r) \ar[d] \\
    Y_{\alpha}(p, r) \ar[r] & \partial^\alpha Y(p,r).
  \end{tikzcd}
\end{equation}

Given these data, we note that the structure maps of the flow horn induce maps
\begin{equation}
    Y(p,q) \times  Y(q,r) \to  \partial^{q} Y(p,r),  
\end{equation}
and the requirement that collars be consistent with the structure maps of the horn (i.e., Equation \eqref{eq:structure-map-horn}) ensure that these maps are associative. We conclude:
\begin{lem}
Under Assumption \ref{assu:structure_maps_iso}, the $k$-horn formed by the orbifolds $Y_\sigma$ admits a filling to a flow simplex. \qed
\end{lem}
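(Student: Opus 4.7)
The plan is to promote the smooth orbifolds $Y(p,r)$ produced by Lemma~\ref{lem:gluing_fixed_obstruction} to derived orbifolds carrying the data of an elementary flow $n$-simplex, and then to assemble the restriction morphisms that exhibit it as a filler of the horn. The first step is to descend the obstruction data along the weighted colimit. Under Assumption~\ref{assu:structure_maps_iso}, the transition morphisms $\bY_\beta(p,r) \to \bY_\alpha(p,r)$ in the diagram $\bY_{(\_)}(p,r)$ are all isomorphisms of derived orbifolds: they cover the face inclusions of the underlying orbifolds by identity maps of obstruction bundles of constant rank. Consequently the bundles $T^- \bY_\alpha(p,r)$ and their defining sections are strictly compatible under the gluings of Equation~\eqref{eq:weighted_colim_codim-1}, and therefore descend to a vector bundle $T^- \bY(p,r)$ with a section $s$ on the colimit $Y(p,r)$. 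This produces the derived orbifolds $\bY(p,r)$, stratified by $\vec{\cP}^{\Gamma}(p,r)$ via the stratification already recorded in Lemma~\ref{lem:gluing_fixed_obstruction}.

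Next I would construct the composition maps. Concatenation of labelled arcs defines a functor
\begin{equation}
\Lambda^n_k \vec{\cP}^{\Gamma}(p,q) \times \Lambda^n_k \vec{\cP}^{\Gamma}(q,r) \to \Lambda^n_k \vec{\cP}^{\Gamma}(p,r)
\end{equation}
whose image lies in the undercategory of the object with a distinguished vertex labelled by $q$; this functor is covered at the level of $L$-blocks by the canonical inclusion of products of cubes into the facet of the larger cube where the coordinate associated to $q$ vanishes, and at the level of the orbifolds $Y_\alpha(\cdot,\cdot)$ by the composition maps of the $\bY_\sigma$ (which are isomorphisms onto their respective boundary strata by Assumption~\ref{assu:structure_maps_iso}). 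Assembling these gives a morphism $\bY(p,q) \times \bY(q,r) \to \partial^q \bY(p,r)$ in $d\Orb$, and associativity follows formally from the associativity of arc concatenation together with the associativity of the compositions inside each $\bY_\sigma$. The compatibility of these maps with collars, already noted in the discussion preceding the lemma, makes them smooth.

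Finally, I would read off the restriction equivalences $\bY_\sigma \to \partial^\sigma \bY$ for each nondegenerate simplex $\sigma$ of the horn. On the level of underlying orbifolds these are the diffeomorphisms produced at the end of Lemma~\ref{lem:gluing_fixed_obstruction}, determined by the chosen diffeomorphism in Equation~\eqref{eq:diffeomorphism_intervals} and a consistent choice of collars. Since the descended obstruction bundle restricts on the stratum indexed by $\alpha \in \sigma$ to the pullback of $T^- \bY_\alpha(p,r)$, these diffeomorphisms automatically upgrade to morphisms in $d\Orb$, and the commutativity of Diagram~\eqref{eq:diagram_three_nested_strata} for triples $\tau \subset \sigma \subset \rho$ is a tautology, as all identifications are built from the same weighted colimit.

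The only delicate point is ensuring that the obstruction bundle data glues strictly rather than up to equivalence; this is precisely what Assumption~\ref{assu:structure_maps_iso} is designed to supply, and is the obstacle that will force us in the general case of Section~\ref{sec:stabilization} to first stabilise all the $\bY_\sigma$ to a common obstruction rank before applying the present construction.
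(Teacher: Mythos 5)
Your argument follows the paper's own route: the weighted colimit of Lemma~\ref{lem:gluing_fixed_obstruction} supplies the underlying stratified orbifolds, concatenation of labelled arcs together with collars chosen consistently with Equation~\eqref{eq:structure-map-horn} induces the associative composition maps $Y(p,q)\times Y(q,r)\to\partial^q Y(p,r)$, and the diffeomorphism of Equation~\eqref{eq:diffeomorphism_intervals} produces the restriction equivalences making Diagram~\eqref{eq:diagram_three_nested_strata} commute. Your opening paragraph on descending the obstruction bundles and sections is really the content of the corollary that immediately follows this lemma in the paper (the lemma itself concerns only the underlying orbifolds $Y_\sigma$), but the argument there is correct and matches the paper's.
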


By construction, the vector bundles $T^- \bY_\alpha(p, r)$ glue to a vector bundle on $Y(p,r)$, as do the sections, yielding a derived orbifold which we denote $ \bY(p,r)$. The fact that no choices are required in the gluing of these vector bundles implies:
\begin{cor}
Under Assumption \ref{assu:structure_maps_iso}, the $k$-horn formed by the derived orbifolds $\bY_\sigma$ admits a filling to a flow simplex. \qed
\end{cor}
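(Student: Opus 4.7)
The plan is to promote the topological-orbifold horn-filling of the preceding lemma to a derived-orbifold horn-filling, with essentially no additional work. The key observation is that, under Assumption \ref{assu:structure_maps_iso}, the assignment $\alpha \mapsto \bY_\alpha(p,r)$ already factors through $d\Orb$: every arrow $\alpha \to \beta$ in $\Lambda^n_k \vec{\cP}^{\Gamma}(p,r)$ is represented by an isomorphism of underlying orbifolds together with an isomorphism of obstruction bundles (via pullback) that intertwines the sections. Consequently, all the gluings that assemble $Y(p,r)$ in the preceding lemma come equipped, for free, with canonical identifications at the level of obstruction bundles and sections.

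Concretely, first I would define a candidate obstruction bundle $T^- \bY(p,r)$ on the orbifold $Y(p,r)$ from the previous lemma by declaring it, on each piece $L_\alpha \times Y_\alpha(p,r)$ of the weighted colimit, to be the pullback of $T^- \bY_\alpha(p,r)$ along the projection to the second factor. The relations in Equation \eqref{eq:weighted_colim_codim-1} along an arrow $\alpha \to \beta$ carry prescribed isomorphisms of obstruction bundles, so these local pieces descend to a well-defined vector bundle on $Y(p,r)$. Functoriality of the assignment $\alpha \mapsto \bY_\alpha(p,r)$ in $d\Orb$ provides the cocycle condition at points where multiple gluings meet. Second, I would assemble the section in exactly the same way: pull the section of $\bY_\alpha(p,r)$ back to each $L_\alpha \times Y_\alpha(p,r)$, and observe that the gluing identifications preserve the section by the very definition of a morphism in $d\Orb$ (cf.\ diagram~\eqref{eq:section-compatibility}).

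Finally, I would verify that the composition maps
\begin{equation}
  \bY(p,q) \times \bY(q,r) \to \partial^q \bY(p,r)
\end{equation}
lift to morphisms of derived orbifolds, and that the stratifying functor $\cP_{\bY} \to \vec{\cP}^{\Gamma}$ of the previous lemma is compatible with this enhanced structure. Both points are automatic: the maps at the orbifold level arose from the structure maps of the horn, which by hypothesis are isomorphisms of derived orbifolds, so the induced maps of obstruction bundles and sections are pulled along and intertwined accordingly; the stratification is literally unchanged.

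I expect no real obstacle here, because Assumption \ref{assu:structure_maps_iso} converts what would otherwise be a delicate compatibility check for obstruction bundles across higher-codimension strata into a tautology: the indexing category of the gluing acts by strict isomorphisms on the obstruction-bundle data, so the rigidity of the orbifold gluing is inherited verbatim by the derived structure. The corollary thus follows immediately by applying the preceding lemma and then installing the (uniquely determined) vector bundle and section on the result.
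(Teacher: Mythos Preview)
Your proposal is correct and matches the paper's approach essentially verbatim: the paper observes, immediately before the Corollary, that ``the vector bundles $T^- \bY_\alpha(p, r)$ glue to a vector bundle on $Y(p,r)$, as do the sections,'' and that ``no choices are required in the gluing of these vector bundles,'' which is precisely your point that under Assumption~\ref{assu:structure_maps_iso} the obstruction-bundle data is carried along by strict isomorphisms. The Corollary is then stated with a \qed and no further argument.
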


\subsection{Stabilization}
\label{sec:stabilization}

The key idea required to construct a horn-filler for diagrams which do
not satisfy Assumption \ref{assu:structure_maps_iso} is that one can
achieve the desired isomorphism property for a given input and output
by an appropriate choice of vector bundles over all the orbifolds
indexed by elements of $\Lambda^n_k \vec{\cP}^{\Gamma}(p,r)$. To state
the result precisely, it is convenient to introduce a new notion: 

\begin{defin}
The category of derived orbifolds with \emph{obstruction bundles of constant rank} is the 
subcategory of $d \Orb$ consisting of all objects and of those
  morphisms that satisfy the property that the strong
  equivalence in
  Equation~\eqref{eq:strong_equivalence_to_boundary_stratum} is an isomorphism.
\end{defin}

The proof of the following result is postponed to Section \ref{sec:enough-orbibundles}:

\begin{prop} \label{prop:horn_filling}
Given a flow $k$-horn $\bY$, and a pair of objects $(p,r)$ of $\cP_0$ and $\cP_n$, there exists a functor
\begin{equation} \label{eq:bimodules_extension}
\tilde{\bY}_{(\_)}(p, r) \co   \Lambda^n_k \vec{\cP}^{\Gamma}(p,r) \to d\Orb,
\end{equation}
factoring through derived orbifolds with obstruction bundles of
constant rank,  and which is equipped with a natural equivalence from
$\bY_{(\_)}(p, r)$. 

  In addition, we may choose the collection $\tilde{\bY}$ of derived orbifolds $  \tilde{\bY}_{(\_)}(p, r) $ so that they form bimodules over the flow categories with objects the elements of $\cP_0$ and $\cP_n$, and so that the natural transformation from $\bY$ is a map of bimodules  to $\tilde{\bY}$.
\end{prop}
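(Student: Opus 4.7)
The plan is to build $\tilde{\bY}$ by a coherent stabilization procedure. For each $\alpha$, I will choose an auxiliary vector bundle $V_\alpha$ on the underlying orbifold $Y_\alpha(p,r)$ and define $\tilde{\bY}_\alpha$ to be the derived orbifold whose underlying orbifold is the total space of $V_\alpha$, with obstruction bundle $\pi^*(T^-\bY_\alpha \oplus V_\alpha)$ and section $\pi^*s \oplus \id$. The projection then gives a canonical strong equivalence $\tilde{\bY}_\alpha \to \bY_\alpha$, yielding the desired natural equivalence of functors. The requirement that $\tilde{\bY}_{(\_)}$ factor through derived orbifolds with obstruction bundles of constant rank reduces to asking that, for each morphism $\alpha \to \beta$, the stabilizations fit into a chosen isomorphism $V_\beta \cong V_\alpha|_{Y_\beta} \oplus N_{\beta,\alpha}$ on $Y_\beta$, where $N_{\beta,\alpha}$ denotes the vector bundle exhibited by the strong equivalence $\bY_\beta \to \partial^\beta \bY_\alpha$.

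First I would analyze the poset structure of $\Lambda^n_k \vec{\cP}^{\Gamma}(p,r)$ and proceed by induction starting from its minimal (simplest) elements. For each minimal $\alpha$, set $V_\alpha$ to be a trivial vector bundle of sufficiently large rank, fixed so as to accommodate the global stabilization budget required later. For each subsequent $\beta$, the morphisms $\alpha_i \to \beta$ from the previously handled $\alpha_i$'s force the restriction of $V_\beta$ to $Y_\beta$ to be $V_{\alpha_i}|_{Y_\beta} \oplus N_{\beta,\alpha_i}$; functoriality of the original flow horn (via the commutativity of diagram~\eqref{eq:diagram_three_nested_strata}) ensures that these prescriptions are mutually compatible across different $\alpha_i$'s. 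Pardon's theorem on the existence of sufficiently many orbibundles~\cite{Pardon2019} then supplies a global vector bundle on $Y_\beta$ extending these boundary data, possibly after a uniform further stabilization by trivial bundles to equalize ranks across the poset. Applying this inductively yields the required system $\{V_\alpha\}$ together with the chosen comparison isomorphisms.

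To promote the resulting $\tilde{\bY}$ to a flow bimodule over the flow categories with objects $\cP_0$ and $\cP_n$, I would extend the same argument by demanding that the $V_\alpha$'s be compatible with the bimodule composition maps $\bX_0(p',p) \times \bY_\alpha(p,r) \to \bY_{\alpha'}(p',r)$ (and their counterparts for $\cP_n$). These maps are themselves strong equivalences onto boundary strata, so the compatibility condition has the same form $V_{\alpha'}|_{Y_\alpha} \cong V_\alpha \oplus N$ and can be incorporated into the inductive construction of the $V_\alpha$'s without additional ideas.

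The main obstacle lies in the coherence of the bundle choices when a single $\beta$ has several incoming morphisms: one must simultaneously realize the required isomorphisms $V_\beta|_{Y_\beta} \cong V_{\alpha_i}|_{Y_\beta} \oplus N_{\beta,\alpha_i}$ for all $i$, and further extend this data to a bundle on $Y_\beta$ itself. This is precisely where Pardon's result enters in an essential way, by providing the global extensions of orbibundles from boundary strata. The remainder of the argument is a systematic but routine bookkeeping exercise, tracking the chosen isomorphisms through the inductive construction to obtain a strict functor rather than only a pseudofunctor.
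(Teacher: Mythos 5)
Your overall target condition is right---you correctly identify that making the stabilized boundary morphisms into isomorphisms amounts to choosing bundles $V_\alpha$ with $V_\beta \cong V_{\alpha}|_{Y_\beta} \oplus N_{\beta,\alpha}$---but the inductive scheme you propose for producing such a system does not work, and the appeal to Pardon's theorem is for a statement it does not provide. Concretely: if you start from the minimal objects of $\Lambda^n_k \vec{\cP}^{\Gamma}(p,r)$ (the top-dimensional strata) and choose the $V_{\alpha_i}$ there independently (e.g.\ trivial of large rank), then at any object $\beta$ admitting morphisms from several minimal $\alpha_i$ the value of $V_\beta$ is \emph{over}determined: it must simultaneously equal $V_{\alpha_1}|_{Y_\beta} \oplus N_{\beta,\alpha_1}$ and $V_{\alpha_2}|_{Y_\beta} \oplus N_{\beta,\alpha_2}$, and these are unrelated vector bundles in general (the normal bundles $N_{\beta,\alpha_i}$ of the two strong equivalences have nothing to do with one another). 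The ``functoriality of the original flow horn'' constrains only the given data $\bY$, not your newly chosen stabilizing bundles, so the asserted mutual compatibility fails at the very first non-minimal step. Moreover, Pardon's result \cite{Pardon2019} is an \emph{embedding} statement---any bundle on a substack embeds into the restriction of a globally defined one---not an \emph{extension} statement; it cannot produce a bundle on $Y_\beta$ whose restrictions to prescribed substacks are prescribed bundles, and equalizing ranks by adding trivial summands does not repair this, since bundles of equal rank on an orbifold need not be isomorphic, let alone isomorphic compatibly with already-fixed boundary identifications.

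The paper resolves exactly this coherence problem by a different mechanism, which your proposal is missing. One first inducts on the energy $\lambda$, forms the single orbispace $\colim_{\alpha} Y_\alpha(p_0,p_n)$ receiving representable maps from all strata, and uses Pardon's theorem once to produce \emph{one} vector bundle $V^{\lambda}(p_0,p_n)$ on this colimit into whose restrictions all the relevant obstruction bundles $U_\alpha$ embed, with embedding spaces of connectivity exceeding the codimension of every stratum. The compatible system is then obtained not by prescribing restrictions but by \emph{deforming} the embeddings $U_\alpha \hookrightarrow V^{\lambda}|_{\alpha}$ into agreement, by decreasing induction on codimension, using the homotopy extension property for representable maps to push an embedding already fixed on $\partial \delta$ over all of $\delta$. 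Since every stabilized obstruction bundle is then literally (the pullback of) the same $V^{\lambda}$ plus fixed normal data, constancy of rank and the bimodule compatibilities follow. If you want to salvage your approach, you would need to replace ``extend the forced boundary data'' with this embed-then-deform step, and you would need the energy induction to organize the bimodule structure maps, which change the pair $(p,r)$ and hence cannot be handled one pair at a time.
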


We can now provide the proof of the main result of this section:
\begin{proof}[Proof of Theorem \ref{thm:horn_filling}]
  We associate to each pair $(p,r)$ the derived orbifold $\bY(p,r)$ with underlying orbifold
  \begin{equation}
      Y(p,r) \equiv     \coprod_{\alpha \to \beta}    L_\alpha \times  \tilde{Y}_{\beta}(p, r) / \sim,
      \end{equation}
      as in Lemma~\ref{lem:gluing_fixed_obstruction}, and whose associated vector bundles and sections are obtained by gluing pullback from $\tilde{Y}_{\beta}(p, r) $. This derived orbifold is stratified by $\vec{\cP}^{\Gamma}(p,r) $, so that the strata associated to elements of  $\Lambda^n_k \vec{\cP}^{\Gamma}(p,r)$ are equipped with distinguished diffeomorphisms to $\tilde{Y}_{\beta}(p, r) $.

      For pairs $(p,q)$ of objects of $\cP_a$ and $\cP_b$, with either $a \neq 0$ or $b \neq n$, we set
      \begin{equation}
            \bY(p,q) \equiv    \bY_{[a,b]}(p,q),  
      \end{equation}
where $[a,b]$ denotes the simplex spanned by all vertices between $a$ and $b$. The strong equivalences
      \begin{equation}
            \bY(p,q) \times  \bY(q,r) \to  \partial^{q} Y(p,r),          
          \end{equation}
          are thus determined by the initial horn whenever   either $a \neq 0$ or $b \neq n$; in the remaining case, it is induced by
          the natural equivalence from $\bY$ to $\tilde{\bY}$, together with the choice of diffeomorphism in Equation \eqref {eq:diffeomorphism_intervals}.
\end{proof}

\subsection{Filling structured horns}
\label{sec:fill-struct-horns}

Assume that the initial data we fixed at the beginning of Section \ref{sec:kan-condition-flow} consists of structured flow simplices $\bY_{\sigma} $ and that the equivalences in Equation \eqref{eq:equivalence_restriction_stratum_horn} are structured. This implies in particular that all the underlying derived orbifolds $  \bY_{\alpha}(p,r)$ for each element $\alpha$ of $  \Lambda^n_k \vec{\cP}^{\Gamma}(p,r)$ are equipped with virtual structured vector bundles $I_{\alpha}(p,r)$, vector bundles $W_{\sigma}(p,r)$, and virtual vector spaces $U_{\alpha}(p,r)$. The latter are prescribed by Definition \ref{def:framed_flow_simplex}, in the sense that $U^+_{\alpha}$ is   $ \bR^{\{1, \ldots, n\}}$ and $U^{-}_{\alpha}$ is freely generated by the sequence of objects that appear as labels of edges of the tree corresponding to $\alpha$, with the exception of $p$.

To begin, let us impose Assumption \ref{assu:structure_maps_iso}, which we now interpret as the requirement that the structure maps are isomorphisms of structured derived orbifolds. This implies in particular that we have isomorphisms of virtual vector bundles
\begin{align}
  I_{\tau}(p,r)  \cong &  I_{\sigma}(p,r)|_{\partial^{\tau} Y_{\sigma}(p,r)} \\
    I_{\sigma}(p,q) \times  I_{\sigma}(q,r) \cong &  I_{\sigma}(p,r) |_{\partial^{q} Y_{\sigma}(p,r)}
\end{align}
which respect compositions, and are compatible with the complex structure. Using the construction of Section \ref{sec:horn-fill-orbif}, this yields a complex vector bundle
\begin{equation}
  I^{\pm}(p,r) \to  Y(p,q) 
\end{equation}
given on $L_\alpha \times  Y_{\beta}(p, r) $ by the pullback of the virtual vector bundle $ I_{\beta}(p,r) $, which is the product of the virtual vector bundles $I_{\tau}(p,r)$ associated to the stratum $\beta$.

Similarly, the vector bundles $W_{\alpha}(p,r)$ assemble, under the assumption that the structure maps are isomorphisms, to a vector bundle
\begin{equation}
  W(p,r) \to   Y(p,q) 
\end{equation}
whose restriction to $ L_\alpha \times  Y_{\beta}(p, r) $ is equipped with a canonical isomorphism with the pullback $W_{\beta}(p,r) $.

We now claim that $\bY$ naturally lifts to a structured derived orbifold: on each codimension $0$ subset given by $L_\alpha \times  Y_{\alpha}(p, r)$, we use the identification
\begin{equation}
T L_{\alpha} \cong \bR^{\{q_1, \ldots, q_d\}},
\end{equation}
where $q_i$ are the orbits that appear as labels in $\alpha$ to obtain an isomorphism
\begin{align} \label{eq:structure_isomorphism_alpha_block}
 T L_{\alpha} \oplus T  Y_{\alpha}(p, r) \oplus \bR^{r} \oplus W_{\alpha}(p,r)\oplus I^-_{\alpha} &   \cong  T  Y_{\alpha}(p, r) \oplus \bR^{\{q_1, \ldots, q_d, r\}} \oplus W_{\alpha}(p,r) \oplus I^-_{\alpha} \\
  & \cong \bR^{\{1, \ldots, n\} } \oplus T^{-}  Y_{\alpha}(p, r) \oplus W_{\alpha}(p,r) \oplus I^+_{\alpha}.  
  \end{align}
  The compatibility of tangential structures for the derived orbifolds $ \bY_{\alpha}(p,r)$ yields the following result, where we set $U_{\alpha}(p,r) = (  \bR^{\{1, \ldots, n\} }, \bR^{r} )$:
  \begin{lem}
    The isomorphisms of Equation \eqref{eq:structure_isomorphism_alpha_block} assemble to a lift of $\bY$ to a structured derived orbifold, together with lifts of the maps
    \begin{equation}
            \bY_{\alpha}(p,r) \to \bY(p,r)
          \end{equation}
          for all corner strata. These maps are compatible with arrows $\alpha \to \beta$, and with the bimodule actions of the flow categories $\bX_0$ and $\bX_n$. \qed
  \end{lem}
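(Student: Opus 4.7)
The plan is to verify that the pointwise data defined on each piece $L_\alpha \times Y_\alpha(p,r)$ by Equation \eqref{eq:structure_isomorphism_alpha_block} glue coherently under the weighted colimit of Lemma \ref{lem:gluing_fixed_obstruction}, descend to a structured lift of $\bY(p,r)$, and interact correctly with both the corner strata and the bimodule actions of $\bX_0$ and $\bX_n$. I would organize the verification around three consistency checks corresponding to the three claims of the lemma.

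First I would check that the isomorphisms \eqref{eq:structure_isomorphism_alpha_block} agree on overlaps. An arrow $\alpha \to \gamma$ in $\Lambda^n_k \vec{\cP}^{\Gamma}(p,r)$ contributes to the colimit an identification of $L_\gamma \times Y_\gamma(p,r)$ with a codimension-one face of $L_\alpha \times Y_\gamma(p,r)$ along which a subset of coordinates of $L_\alpha$ equal $1$. The identification $TL_\alpha \cong \bR^{\{q_1,\ldots,q_d\}}$ then splits as $TL_\gamma \oplus \bR^K$ where $K$ records the edges of $\gamma$ collapsed to form $\alpha$, and under Assumption \ref{assu:structure_maps_iso} the data $W_\alpha$, $I^\pm_\alpha$ are literally pulled back from $Y_\gamma(p,r)$. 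Consequently the two resulting isomorphisms into $U^+ \cong \bR^{\{1,\ldots,n\}}$ differ only by reordering basis vectors indexed by label sets of arcs and therefore coincide. Next I would identify the restriction of the assembled structure to each corner stratum indexed by $\alpha$ with $\bY_\alpha(p,r)$ itself: the collars of Lemma \ref{lem:gluing_fixed_obstruction} exhibit canonical diffeomorphisms of underlying orbifolds, the bundle data restrict tautologically, and the normal contribution from $TL_\gamma$ for the enclosing cell supplies precisely the summand of $U^+$ indexed by the newly appearing labels. The consistent normal framings of Diagram \eqref{eq:consistency_normal_vector_fields}, together with Lemma \ref{lem:canonical_lift_structure_equivalence}, then produce the required structured lifts of the corner inclusion maps $\bY_\alpha(p,r) \to \partial^\alpha \bY(p,r)$.

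Finally I would verify functoriality in arrows $\alpha \to \beta$ and compatibility with the bimodule actions. The first is a restatement of the overlap check on boundary strata. For the second, the product decompositions $\bX_0(p_0,p) \times \bY_\beta(p,r) \to \partial^p \bY_\beta(p_0,r)$ and $\bY_\beta(p_0,q) \times \bX_n(q,r) \to \partial^q \bY_\beta(p_0,r)$ are isomorphisms of structured derived orbifolds by Assumption \ref{assu:structure_maps_iso}, so they intertwine $W$, $I^\pm$ and $U^\pm$; transporting these isomorphisms across the weighted colimit preserves compatibility because the $L$-blocks appearing in the construction do not interact with the $\cP_0$ or $\cP_n$ labels. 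The conditions of Definition \ref{def:framed_flow_simplex-elementary}, in particular $U^+_{\bY(p,r)} = \bR^{\{1,\ldots,n\}}$ and $U^-_{\bY(p,r)} \oplus \bR^{\{q\}} \cong \bR^{\{q,r\}}$, then follow by direct inspection of the label sets. The main obstacle is purely bookkeeping: one must track carefully the identification of $TL_\alpha$ with coordinate vector spaces indexed by collapsed edges, the grading of $U^\pm$ by label sets of arcs, and the decomposition of normal directions under arrows of $\Lambda^n_k \vec{\cP}^{\Gamma}(p,r)$, so that the cumulative bijections between formal vector spaces and label sets remain consistent across all strata. No new geometric input is required beyond the constructions already in hand.
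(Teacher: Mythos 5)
Your proposal is correct and follows exactly the route the paper intends: the paper states this lemma with no written proof (it is \qed'd as an immediate consequence of the preceding construction of $I^{\pm}(p,r)$, $W(p,r)$, and Equation \eqref{eq:structure_isomorphism_alpha_block} under Assumption \ref{assu:structure_maps_iso}), and your three checks — agreement on overlaps via the splitting $TL_\alpha \cong TL_\gamma \oplus \bR^K$ indexed by label sets, restriction to corner strata via collars and the normal summand of $U^+$, and compatibility with the bimodule actions since the $L$-blocks do not see the $\cP_0$ and $\cP_n$ labels — are precisely the routine verifications being elided. Note only that the condition that the framing restrict compatibly to the stratum $\partial^k\bY_\alpha(p,r)$ is deliberately deferred to the subsequent corollary (where $W$ is enlarged), so your proof correctly does not need to address it here.
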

  \begin{cor}
   Under Assumption \ref{assu:structure_maps_iso}, the flow simplex $\bY$ lifts to a structured flow simplex. 
  \end{cor}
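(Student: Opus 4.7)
The plan is to observe that the preceding lemma does essentially all of the analytical work, so the corollary is a matter of repackaging its conclusions into the combinatorial framework of a structured flow simplex, together with a small additional step to equip the missing facet $\partial^k \bY$ with its structure. The lemma provides, for each pair $(p,r)$ with $p \in \cP_0$ and $r \in \cP_n$, a structured derived orbifold $\bY(p,r)$, lifts of all corner inclusions $\bY_\alpha(p,r) \to \bY(p,r)$ to structured morphisms, and compatibilities with both the arrows $\alpha \to \beta$ and with the bimodule actions at the boundary. What remains is to verify the axioms of Definitions \ref{def:framed_flow_simplex-elementary} and \ref{def:framed_flow_simplex}.

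First I would verify the three axioms of Definition \ref{def:framed_flow_simplex-elementary} for $\bY$ itself. The virtual vector space $U_{\bY(p,r)} = (\bR^{\{1,\ldots,n\}}, \bR^{\{r\}})$ is built into Equation \eqref{eq:structure_isomorphism_alpha_block} and matches the prescription. For the splitting axiom associated to a subset $K \subset \{1,\ldots,n-1\}$, the neighborhood $L_\alpha \times \bY_\alpha(p,r)$ of the corresponding codimension-$|K|$ stratum exhibits the framing, via the identification $TL_\alpha \cong \bR^{\{q_1,\ldots,q_d\}}$, as a direct sum of a framing on the stratum with the identity on $\bR^K$. The composition compatibility is inherited from the corresponding compatibilities of the input structured simplices $\bY_\sigma$, transported through the natural equivalences supplied by Proposition \ref{prop:horn_filling}.

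Next I would equip the missing facet $\partial^k \bY$ with the structure of a structured elementary simplex. It is identified, as a codimension-$1$ boundary stratum of $\bY(p,r)$, with the one indexed by the tree having a single vertex labelled by $\{1,\ldots,n-1\}\setminus\{k\}$. Lemma \ref{lem:inherit_framings} endows this stratum with an induced framing, and the splitting axiom just verified provides the required identification of $U$-spaces (splitting off the factor $\bR^{\{k\}}$). The remaining axioms for $\partial^k \bY$ descend from those for $\bY$ by restriction.

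Finally, I would assemble all of this into a structured flow simplex by verifying the commutativity of Diagram \eqref{eq:diagram_three_nested_strata_framed} for each triple $\tau \subset \sigma \subset \rho$. Triples entirely within the horn are given as input, and triples involving $\bY$ or the missing facet reduce, via the weighted-colimit construction of $\bY$, to the corresponding statements for the pieces $L_\alpha \times \bY_\alpha(p,r)$ together with the coherence inherent in the $L$-block construction. I expect the main obstacle to be the bookkeeping needed to ensure that the splittings of $U^+$ across different codimensions and across the two different sources of boundary strata (the missing-facet direction versus the horn-facet directions) are mutually compatible, but this is forced because all such splittings are restrictions of the single structured framing on $\bY(p,r)$.
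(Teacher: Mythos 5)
Your overall strategy matches the paper's, but you assert precisely the one claim that the paper identifies as failing, and so there is a genuine gap. You write that for each subset $K$ the splitting axiom of Definition \ref{def:framed_flow_simplex-elementary}(2) holds automatically because ``the neighborhood $L_\alpha \times \bY_\alpha(p,r)$ \ldots exhibits the framing, via the identification $TL_\alpha \cong \bR^{\{q_1,\ldots,q_d\}}$, as a direct sum of a framing on the stratum with the identity on $\bR^K$.'' This is true for the strata coming from the facets of the horn (the coordinate facets $x_i = 0$ of the $L$-blocks), because there the product structure of $L_\alpha \times \bY_\alpha(p,r)$ is aligned with the coordinate directions. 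It is \emph{false} for the missing facet $\partial^k \bY(p,r)$ and for all strata containing it: that facet is the curved hypersurface $\prod x_i = \epsilon$ (resp.\ $(1-y)^2 + \prod x_i^2/\epsilon^2 = 1$) of the $L$-block, whose normal direction is a varying combination of the coordinate directions and is not identified with a fixed summand $\bR^{\{k\}}$ of $U^+$ under the framing of Equation \eqref{eq:structure_isomorphism_alpha_block}. Consequently your second step --- where you invoke ``the splitting axiom just verified'' to endow $\partial^k \bY$ with its $U$-space identification --- rests on an unverified claim.

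The paper's proof consists exactly of acknowledging this failure and repairing it: one enlarges the stabilizing bundle $W_{\bY}$ and inductively applies a relative version of the lemma following Lemma \ref{lem:inherit_framings} (which uses the connectivity of $O(N) \to O(N+1)$ as $N \to \infty$) to deform the framing near the image of $\partial^k \bY$ so that its restriction there identifies the normal direction with the prescribed sub-bundle $\bR^{\{k\}} \subset U^+$. Without this stabilization step your argument does not produce a structured flow simplex in the sense of Definition \ref{def:framed_flow_simplex-elementary}.
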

  \begin{proof}
    The structure maps constructed in Equation
    \eqref{eq:structure_isomorphism_alpha_block} satisfy all the
    desired properties, except that they do not restrict to a framing
    of the stratum $\partial^k  \bY_{\alpha}(p,r) $. This last
    condition can be ensured by enlarging the vector bundle $W_{\bY}$,
    inductively applying a relative version of Lemma~\ref{lem:inherit_framings}. 
  \end{proof}

It remains, as in the unstructured case, to drop Assumption
\ref{assu:structure_maps_iso}. This is an immediate consequence of
Lemma~\ref{lem:canonical_lift_structure_equivalence}: 

\begin{prop}
Given a structured flow $k$-horn, the filling of the underlying flow
$k$-horn lifts to a structured horn-filling. \qed 
\end{prop}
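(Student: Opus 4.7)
The plan is to reduce to the case already treated under Assumption~\ref{assu:structure_maps_iso} by propagating structure through the stabilization step of Proposition~\ref{prop:horn_filling}, using Lemma~\ref{lem:canonical_lift_structure_equivalence} as the sole additional ingredient.

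First I would take the underlying unstructured horn of the given structured flow $k$-horn $\bY$, and apply Proposition~\ref{prop:horn_filling} to produce, for each pair of objects $(p,r)$, a functor $\tilde{\bY}_{(\_)}(p,r)$ valued in derived orbifolds with obstruction bundles of constant rank, together with a natural equivalence $\bY_{(\_)}(p,r) \to \tilde{\bY}_{(\_)}(p,r)$ whose components are strong equivalences in $d\Orb$, and which globally assemble into maps of bimodules over the flow categories at the endpoints. Since $\bY_\alpha(p,r)$ is already equipped with a lift to $d\Orb^\cS$, Lemma~\ref{lem:canonical_lift_structure_equivalence} yields a canonical lift of each $\tilde{\bY}_\alpha(p,r)$ to $d\Orb^\cS$, together with a canonical lift of the strong equivalence $\bY_\alpha(p,r) \to \tilde{\bY}_\alpha(p,r)$ to a structured strong equivalence.

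Next I would verify that these canonical lifts are coherent with respect to the full horn data: the functoriality in $\alpha$, the bimodule composition maps, and the restrictions to corner strata. The key point is that Lemma~\ref{lem:canonical_lift_structure_equivalence} produces lifts by directly summing the structural data with the identity on the normal bundle and on the stabilizing bundle, so these lifts are natural in $\tilde{\bY}$ and commute with products. Consequently, the collection $\tilde{\bY}$ inherits the structure of a structured flow $k$-horn, and by construction it satisfies Assumption~\ref{assu:structure_maps_iso}, since its structure maps are the isomorphisms coming from constancy of the obstruction rank together with their canonical structured enhancements.

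With this in hand, I would invoke the Corollary obtained earlier in Section~\ref{sec:fill-struct-horns} under Assumption~\ref{assu:structure_maps_iso}: the stabilized structured horn $\tilde{\bY}$ admits a lift of its horn-filling $\tilde{\bY}(p,r)$ (glued from $L$-blocks on the $\tilde{\bY}_\beta(p,r)$) to a structured flow simplex, with the required virtual bundles $I(p,r)$, stabilizing bundle $W(p,r)$, and framing isomorphism of Equation~\eqref{eq:structure_isomorphism_alpha_block}. Since by construction the filling $\bY(p,r)$ of Theorem~\ref{thm:horn_filling} is precisely the $L$-block gluing of the $\tilde{\bY}_\beta(p,r)$, this is the desired structured filling of the original horn, with the compatibility maps from the facets $\bY_\sigma$ being the composition of the lifted equivalence $\bY_\sigma \to \tilde{\bY}_\sigma$ with the structured inclusion into the filling.

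The main potential obstacle is the coherence check in the second step, ensuring that the canonical lifts produced by Lemma~\ref{lem:canonical_lift_structure_equivalence} assemble across all strata and all bimodule actions into a genuine structured horn rather than only a diagram of structured objects. However, because the lifts are characterized by being the direct sum with the identity on the normal and stabilizing bundles, this coherence follows formally from functoriality of direct sums, so no new geometric input beyond what was needed in the unstructured case is required.
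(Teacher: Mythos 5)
Your proposal is correct and follows essentially the same route as the paper: stabilize via Proposition~\ref{prop:horn_filling} to reach the setting of Assumption~\ref{assu:structure_maps_iso}, transport the tangential structure along the strong equivalences using the canonical lifts of Lemma~\ref{lem:canonical_lift_structure_equivalence}, and then invoke the structured filling already constructed in that special case. The coherence check you flag is exactly the point the paper leaves implicit in calling the result an ``immediate consequence'' of that lemma, and your justification (the lifts are direct sums with the identity, hence natural) is the intended one.
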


\subsection{The proof of Proposition  \ref{prop:horn_filling}}
\label{sec:enough-orbibundles}

We begin with some preliminary results:

\begin{lem} \label{lem:large_enough_for_arbitrary_embedding}
If $Y \co \cD \to \Orb$ is a finite diagam in the category of
orbifolds, and $X$ is a orbispace under $Y$ with the property that the
map $Y_\alpha \to X$ is representable for each object $\alpha$ of $\cD$, then
for each assignment of a vector bundle $U_\alpha$  on $Y_\alpha$,
there exists a vector bundle $V$ on $X$ with the property that
$U_\alpha$ embeds in the pullback of $X$ for each $\alpha$. 
\end{lem}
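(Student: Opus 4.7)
The plan is to reduce the statement to the existence of ``enough vector bundles'' on effective orbispaces, which is Pardon's result cited in the introduction (\cite{Pardon2019}). The finiteness hypothesis on $\cD$ is what makes this reduction work.

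First, I would reduce to the case of a single map. Since $\cD$ has only finitely many objects, if for each $\alpha \in \ob(\cD)$ we can produce a vector bundle $V_\alpha$ on $X$ together with a vector bundle embedding $U_\alpha \hookrightarrow V_\alpha|_{Y_\alpha}$, then the direct sum $V \equiv \bigoplus_{\alpha \in \ob(\cD)} V_\alpha$ does the job, since each $U_\alpha$ embeds into the summand $V_\alpha|_{Y_\alpha}$ of $V|_{Y_\alpha}$. So the statement reduces to: given a representable map $f \co Y \to X$ of orbispaces with $Y$ an orbifold, and a vector bundle $U$ on $Y$, find a vector bundle $V$ on $X$ such that $U$ embeds in $f^* V$.

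The key input is Pardon's theorem on the existence of enough equivariant vector bundles on effective orbispaces, which is precisely designed to produce such extensions. The representability hypothesis is exactly the condition required to translate between a vector bundle on $Y$ and data on the corresponding substack of $X$: representability means that, étale-locally on $X$, the map $Y \to X$ looks like an inclusion of a suborbifold (the stabiliser maps inject), and hence a vector bundle on $Y$ corresponds to an equivariant vector bundle on this local substructure. Pardon's result then produces an ambient equivariant vector bundle into which $U$ embeds, and these glue to a global $V$ on $X$ because Pardon's construction is natural (it proceeds by taking sufficiently large embeddings into trivial bundles on tubular neighbourhoods, or equivalently by finding enough global sections of the associated frame bundle).

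The main obstacle I expect is purely bookkeeping: one must verify that the representability hypothesis in the statement matches the technical input required by Pardon's theorem, which is phrased for substacks. This is handled by observing that a representable morphism $Y \to X$ factors as a closed embedding of orbispaces followed by an open inclusion after passing to an appropriate neighbourhood, so the extension problem on $X$ can be solved by first extending $U$ to a vector bundle on a neighbourhood (using that orbifold vector bundles on orbifolds extend across embeddings up to a choice of tubular neighbourhood) and then embedding this local bundle into a globally defined vector bundle using Pardon's result applied to $X$ itself. Since both steps are standard and the finite diagram case is handled by taking a direct sum, no further argument is needed.
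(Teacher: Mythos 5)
Your proposal follows the paper's proof exactly: the paper also reduces to a single representable map by taking the direct sum over the finite diagram of vector bundles $V_\alpha$ on $X$ whose pullbacks admit embeddings of the $U_\alpha$, and invokes Pardon's result on the existence of enough vector bundles for the single-map case. The additional discussion of how representability feeds into Pardon's theorem is reasonable elaboration, but the route is the same.
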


\begin{proof}
This is an immediately consequence of Pardon's
result~\cite{Pardon2019} on the existence of enough vector bundles, as
we can take the direct sum of vector bundles $V_\alpha$ on $X$ whose
pullback to $Y_\alpha$ admit an embedding of $U_\alpha$. 
\end{proof}

\begin{lem}\label{lem:stably_contractible_choice_embedding}
If $f \co Y \to X$ is a representable map of orbifolds, and $U$ is a
vector bundle on $Y$, then there is a vector bundle $V$ on $X$ with
the property that the connectivity of the space of embeddings of $U$ in
$f^* V$ is arbitrarily high. Moreover this connectivity function
increases upon enlarging $V$, and passing to subbundles of $U$. 
\end{lem}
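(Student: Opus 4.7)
The plan is to first invoke Lemma~\ref{lem:large_enough_for_arbitrary_embedding}, applied to the one-object diagram consisting of $Y$ equipped with $U$, to obtain a vector bundle $V_0$ on $X$ such that $U$ embeds in $f^*V_0$. We then stabilize by setting $V = V_0 \oplus V_1$, where $V_1$ is a vector bundle on $X$ to be chosen so as to make the space of embeddings highly connected.

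The key observation is that the space of fiberwise linear embeddings $U \hookrightarrow f^*V$ is the space of (orbifold) sections of a bundle $\cE \to Y$ whose fiber over $y \in Y$ consists of $G_y$-equivariant linear embeddings of $U_y$ into $(f^*V)_y$, where $G_y$ denotes the isotropy group at $y$. Since $f$ is representable, $G_y$ acts faithfully, and by Schur's lemma the fiber decomposes, according to isotypic components, as a product of ordinary Stiefel manifolds, each of whose connectivity grows linearly with the multiplicity of the corresponding irreducible $G_y$-representation in $(f^*V)_y$ relative to that in $U_y$. By choosing $V_1$ to contain many copies of vector bundles on $X$ whose pullbacks to $Y$ contain the regular representation of each isotropy type fiberwise (such bundles exist by Pardon's result applied as in Lemma~\ref{lem:large_enough_for_arbitrary_embedding}, since the set of isotropy types of $Y$ is finite), we can ensure that the fibers of $\cE$ are uniformly as connected as we please.

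Once the fibers of $\cE$ are sufficiently connected relative to the dimension of $Y$, a standard Whitehead-type obstruction theory argument, carried out stratum by stratum over the isotropy stratification of $Y$, shows that the space of global sections is also highly connected, with connectivity growing without bound as $V_1$ is enlarged. The monotonicity of the connectivity function under enlarging $V$ and passing to subbundles of $U$ is then immediate from the formula above: both operations can only increase each multiplicity difference governing the connectivity of the Stiefel factors.

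The main anticipated obstacle is carrying out the equivariant obstruction theory carefully for orbifolds, since one must argue equivariantly on each chart and then patch across the stratification. The representability of $f$ is essential here, as it ensures that the isotropy actions on the fibers of $U$ and $f^*V$ are compatible in a way that permits the fiberwise equivariant Stiefel computation to be performed uniformly over $Y$, and that the ``enough regular representations'' condition on $V$ can be arranged globally rather than merely pointwise.
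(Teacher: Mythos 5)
Your proof is correct and is essentially the paper's argument in expanded form: the paper simply presents $Y$ and $X$ as global quotients by a single compact Lie group $G$ and cites the connectivity of equivariant Stiefel manifolds, which is exactly what your fiberwise Schur-decomposition into Stiefel factors plus obstruction theory unpacks. No gap to report.
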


\begin{proof}
Presenting $Y$ and $X$ as $G$ quotients for some compact Lie group
$G$, this follows from standard results on the connectivity of
equivariant Stieffel manifolds.
\end{proof}

\begin{lem} \label{lem:extend_deformation}
  If $f \co Y \to X$ is a representable map of orbifolds, and $\Phi \co V_0 \to V_1$ is an embedding of vector bundles on $X$, then every family of embeddings starting at the restriction of $\Phi$ to $Y$ extends to a family of embeddings in $X$.
\end{lem}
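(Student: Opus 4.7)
The plan is to reduce to an equivariant extension problem, following the approach of the preceding lemmas, and to use the openness of the embedding condition inside the linear bundle $\Hom(V_0, V_1)$ together with the given family itself as the source of the interpolation. First I would present $Y$ and $X$ as quotients $\tilde Y / G$ and $\tilde X / G$ by a single compact Lie group $G$, so that $f$ is modeled by a $G$-equivariant map $\tilde f \co \tilde Y \to \tilde X$; representability guarantees that $\tilde f$ induces an injection on stabilizers, so equivariant techniques apply uniformly. After stabilizing $\tilde X$ by a $G$-representation (Mostow's equivariant embedding theorem), one may arrange that $\tilde f$ is itself a $G$-equivariant embedding with a $G$-invariant tubular neighborhood $U$, an equivariant retraction $r \co U \to \tilde f(\tilde Y)$, and a $G$-invariant bump function $\chi$ supported in $U$ with $\chi \equiv 1$ on $\tilde f(\tilde Y)$. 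Since the fibers of $r$ are contractible, the bundles $V_0$ and $V_1$ may be equivariantly identified on $U$ with the pullbacks $r^* V_0|_{\tilde f(\tilde Y)}$ and $r^* V_1|_{\tilde f(\tilde Y)}$.

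Next I would write down the candidate extension using the family $\Phi_t$ directly rather than a linear perturbation: for $x \in U$ set
\begin{equation*}
\tilde \Phi_t(x) \;\equiv\; \Phi_{\chi(x) \cdot t}(r(x)) \;+\; \bigl(\Phi(x) - \Phi(r(x))\bigr),
\end{equation*}
where the two summands are compared via the identification above, and extend by $\Phi$ outside $U$. On $\tilde f(\tilde Y)$ one has $r(x) = x$ and $\chi(x) = 1$, so $\tilde \Phi_t = \Phi_t$; outside the support of $\chi$ the formula collapses to $\Phi_0(r(x)) + \Phi(x) - \Phi(r(x)) = \Phi(x)$; and at $t=0$ it evaluates to $\Phi$ throughout $\tilde X$. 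By $G$-equivariance of all the ingredients, $\tilde \Phi_t$ descends to a family of bundle maps $V_0 \to V_1$ on $X$ with the required boundary data.

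The hard part is to verify that $\tilde \Phi_t(x)$ is fibrewise injective for every $(x,t)$, since the interpolation via $\chi(x) \cdot t$ will in general leave the embedding locus if $U$ is too large. I would handle this by shrinking $U$: the key point is that $\Phi_s(r(x))$ is an embedding for all $s \in [0,1]$ (by hypothesis), and the correction $\Phi(x) - \Phi(r(x))$ vanishes on $\tilde f(\tilde Y)$ and is continuous. Using that the embedding locus is open in $\Hom(V_0, V_1)$, the compactness of $[0,1]$, and — when $\tilde f(\tilde Y)$ is non-compact — a $G$-invariant partition of unity on $\tilde f(\tilde Y)$ to allow the size of $U$ to vary with the base point, one can shrink the support of $\chi$ so that
\begin{equation*}
  \Phi_s(r(x)) \;+\; \bigl(\Phi(x) - \Phi(r(x))\bigr)
\end{equation*}
is an embedding for all $s \in [0,1]$ and all $x \in U$. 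This is the essential step; once it is in place, $\tilde \Phi_t$ is the required extension.
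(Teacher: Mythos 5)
Your argument is, in substance, the explicit form of what the paper does in one line: the paper presents $Y$ and $X$ as $G$-quotients and invokes the fact that the resulting equivariant map is a $G$-cofibration, hence satisfies the homotopy extension property. Your tubular neighbourhood $U$, retraction $r$, bump function $\chi$, and the formula $\Phi_{\chi(x)t}(r(x)) + (\Phi(x)-\Phi(r(x)))$ are precisely the NDR-pair data witnessing that cofibration, and they implement the extension by hand; the openness of the embedding locus in $\Hom(V_0,V_1)$, the compactness of $[0,1]$ (via the tube lemma), and the shrinking of $U$ then guarantee that the extension stays in the space of embeddings. That part of the proposal is correct and complete, and is more informative than the paper's citation.

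The one step I would not let stand is the appeal to Mostow to ``arrange that $\tilde f$ is itself a $G$-equivariant embedding.'' If $\tilde f$ is not already injective, replacing it by the graph embedding $(\tilde f,\iota)\colon \tilde Y \to \tilde X \times V$ changes the extension problem: a family of embeddings on $\tilde X \times V$ extending the given family on the graph does not descend to a family on $\tilde X$ pulling back correctly along $\tilde f$, because there is no section of the projection passing through the graph unless $\tilde f$ was injective to begin with. (Indeed, for genuinely non-injective $f$ the statement can fail outright: a family on $Y$ taking different values at two points with the same image cannot be pulled back from $X$.) The statement, like the paper's own proof via the cofibration property, is implicitly about the case where $\tilde f$ is already a closed embedding --- in the application it is the inclusion of a corner stratum into the closure of an adjacent one --- so the correct move is simply to delete the stabilization and use the collar or tubular neighbourhood that such an inclusion already possesses. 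With that step excised, your proof is the paper's proof, written out.
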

\begin{proof}
  Presenting $Y$ and $X$ as $G$ quotients for some compact Lie group $G$, this follows from the fact that the corresponding map is a $G$-cofibration, hence satisfies the homotopy extension property.
\end{proof}

We now prove the desired result:

\begin{proof}[Proof of Proposition \ref{prop:horn_filling}]
  We proceed by induction on the value of the action. In the inductive step, we thus assume that the choice of the functor $\tilde{Y}^{\lambda'}_{(\_)}(p_0,p_n)$, and of the natural transformation from $Y^{\lambda'}_{(\_)}(p_0,p_n)$ have been fixed for all $\lambda'$ of action smaller than a given element $\lambda$ of $\Gamma$, and that the bimodule structure maps have been fixed up to this energy level.

  Next, we consider the colimit
  \begin{equation}
 Y^{\lambda}(p_0, p_n) \equiv \colim_{\alpha \in \Lambda^n_k \vec{\cP}^{\lambda}(p_0,p_n)}    Y_{\alpha}(p_0, p_n).
  \end{equation}
  The map $Y_{\alpha}(p_0, p_n) \to   Y^{\lambda}(p_0, p_n)$ is representable, so we may apply Lemma \ref{lem:large_enough_for_arbitrary_embedding} to obtain a vector bundle $V^{\lambda}(p_0,p_n)$, which admits an embedding of the colimit $U_{\alpha}$ of the restrictions to $Y_{\alpha}(p_0, p_n) $  of  (i) the obstruction bundle $T^+ \tilde{Y}_{\beta}(p_0, p_n) $ for each arrow $\alpha \to \beta$ in the image of the bimodule maps and (ii) the obstruction bundle $T^+ Y_{\beta}(p_0, p_n) $ for all other arrows. The colimit is taken with respect to the maps
  \begin{equation}
    \begin{tikzcd}
      T^+ Y_{\beta}(p_0, p_n) |_{\alpha} \ar[r] \ar[d] & T^+ Y_{\gamma}(p_0, p_n)|_{\alpha} \ar[d]   \\
       T^+ \tilde{Y}_{\beta}(p_0, p_n)|_{\alpha} \ar[r] & T^+ \tilde{Y}_{\gamma}(p_0, p_n)|_{\alpha} ,
    \end{tikzcd}   
  \end{equation}
where the bottom row is only defined if the arrow is in the image of
the bimodule structure maps.  Moreover, by Lemma
\ref{lem:stably_contractible_choice_embedding}, we may assume that the
connectivity of the space of embeddings of $U_\alpha$ in the vector bundle we construct is
arbitrarily large; 
we shall require it to be larger than the codimension of any non empty
stratum of $Y^{\lambda}(p_0, p_n)$. 

There is no assumption so far that the embeddings into $
V^{\lambda}(p_0,p_n)$ that we construct on each stratum are compatible
with restriction. We next proceed by decreasing induction on the
codimension of these strata to achieve this, i.e., to produce
embeddings 
  \begin{equation}
       U_{\alpha} \to V^{\lambda}(p_0,p_n)|_{\alpha}
  \end{equation}
for each object $\alpha$ of $ \Lambda^n_k \vec{\cP}^{\lambda}(p_0,p_n)$ so that the diagram
  \begin{equation}
    \begin{tikzcd}
 U_{\beta}|_{\alpha}  \ar[r]  \ar[dr] & U_{\alpha} \ar[d]  \\
&       V^{\lambda}(p_0,p_n)|_{\alpha} 
    \end{tikzcd}
  \end{equation}
  commutes for each map $\alpha \to \beta$.  In the inductive step, given a stratum $\delta$ of codimension $k$,
  the inductive hypothesis provides  us an embedding
  \begin{equation}
       U_{\delta}|_{\partial \delta} \to V^{\lambda}(p_0,p_n)|_{\partial \delta}
      \end{equation} and Lemma \ref{lem:extend_deformation} allows us
  to deform the embedding to the entire stratum, which completes the
  induction step, and hence the proof.
\end{proof}

\section{The simplicial structure}
\label{sec:quasi-units}

The purpose of this section is to prove the following result:

\begin{prop}
The semisimplicial set $\Flow^{\cS}$ of structured flow categories
admits the structure of a simplicial set.
\end{prop}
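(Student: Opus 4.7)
My plan is to reduce the construction of degeneracies to two ingredients: a geometric construction of a quasi-unit bimodule $\bI_{\bF} \in \Flow^{\cS}_1$ for every structured flow category $\bF$, together with an abstract argument showing that a weak Kan semisimplicial set equipped with such quasi-units at every vertex admits a compatible simplicial structure. With Theorem~\ref{thm:horn_filling} already in hand, the second step is combinatorial; the substantive work is the first.

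\textbf{Geometric step: constructing $\bI_{\bF}$.} Given a structured flow category $\bF$, I will define the bimodule $\bI_{\bF} \co \bF \to \bF$ on pairs $(p,r)$ by taking the underlying derived orbifold to be $\bF(p,r) \times [0,1]$, but stratified in a refined way: instead of regarding the codimension-$1$ stratum $\bF(p,q) \times \bF(q,r) \times [0,1]$ coming from breaking $\bF(p,r)$ at $q$ as a single face, I split it along the $[0,1]$-coordinate into two pieces, to be interpreted respectively as the image of the left bimodule action
\begin{equation}
\bF(p,q) \times \bI_{\bF}(q,r) \to \partial \bI_{\bF}(p,r),
\end{equation}
and the right bimodule action
\begin{equation}
\bI_{\bF}(p,q) \times \bF(q,r) \to \partial \bI_{\bF}(p,r).
\end{equation}
Concretely, this splitting requires gluing in two codimension-$0$ ``action blocks'' along a common codimension-$1$ face (the locus where breaking at $q$ occurs simultaneously on both sides), and the $L$-block construction of Section~\ref{sec:l-blocks} gives a canonical way to do this smoothly. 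The structured lift on $\bI_{\bF}$ is induced from that of $\bF$ by tensoring with the canonical framing of $[0,1]$; compatibility of the tangential structure along corners (cf.~Lemma~\ref{lem:inherit_framings}) must then be verified stratum by stratum.

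\textbf{Defining degeneracies and verifying the identities.} Given a flow $n$-simplex $\bX$ with vertex sequence $\bX_0, \ldots, \bX_n$, I will define $s_i \bX$ to be a flow $(n+1)$-simplex with vertex sequence obtained by repeating $\bX_i$, whose $1$-simplex between the two copies of $\bX_i$ is $\bI_{\bX_i}$, and whose higher-dimensional data is produced by applying Theorem~\ref{thm:horn_filling} to fill inner horns assembled from $\bX$ and $\bI_{\bX_i}$. The simplicial identities $\partial^i s_i = \id = \partial^{i+1} s_i$ will hold by construction, since the faces opposite a repeated vertex recover $\bX$. The cross-identities $\partial^i s_j = s_{j-1} \partial^i$ (for $i \neq j, j+1$) and $s_i s_j = s_{j+1} s_i$ will follow from uniqueness (up to contractible choice) of inner horn-fillers, invoking the general principle that \emph{a weak Kan complex with quasi-units extends canonically to a quasicategory}; this is implicit in the theory of~\cite{Lurie2009} and amounts to a combinatorial exercise in semisimplicial sets.

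\textbf{Main obstacle.} The principal difficulty is the geometric construction of $\bI_{\bF}$ with its full structured lift: the corner stratification must be arranged so that the left and right bimodule actions land in disjoint codimension-$1$ strata (this is why the naive product $\bF(p,r) \times [0,1]$ cannot suffice), and the structured framing of $\bI_{\bF}$ must restrict correctly along both types of strata, consistently with the stratified framing of $\bF$. The $L$-block machinery of Section~\ref{sec:l-blocks} is the natural tool for handling the smoothing at the common codimension-$1$ face, but the bookkeeping for the tangential structure---in particular matching the vector spaces $U^{\pm}$ and the normal direction decompositions of Definition~\ref{def:framed_flow_simplex}---is intricate, and is where I expect to spend most of the effort.
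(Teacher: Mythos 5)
Your geometric step is essentially the right idea and matches the paper's: the unit bimodule is obtained from $\bF(p,r)\times[0,1]$ by refining the corner structure so that the left and right actions land in disjoint codimension-$1$ strata. (The paper implements this not with the $L$-blocks of Section~\ref{sec:l-blocks} but with a separate device, the conic degenerations $\cD_{n+1}$ of Section~\ref{sec:conic-degenerations}, whose semicosimplicial compatibilities --- in particular Lemma~\ref{lem:degeneration_map_boundary_strata_model} --- are exactly what makes the splitting of the breaking strata coherent across all codimensions; whether the $L$-blocks could be adapted to this role is not obvious, since they were designed for the horn-filling problem, not for degenerating an interval factor over the boundary.)

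The genuine gap is in your combinatorial step. To exhibit $\Flow^{\cS}$ as the underlying semisimplicial set of a simplicial set, the simplicial identities for the degeneracies must hold \emph{strictly}, and inner horn-filling cannot deliver this: fillers are unique only up to contractible choice, and a filler of the horn $\Lambda^2_1$ built from $\bI_{\bX_0}$ and a $1$-simplex $\bX$ has third face only \emph{equivalent} to $\bX$, not equal to it, so already $d_1 s_0 = \id$ fails on the nose. The statement that a weak Kan semisimplicial set with quasi-units extends to a quasicategory is not ``implicit in \cite{Lurie2009}'' nor a combinatorial exercise --- it is the content of Steimle's theorem \cite{Steimle}, whose proof is a delicate double induction that moreover requires modifying the candidate degeneracies on degenerate simplices. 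The paper's route is to construct the initial and terminal degeneracies $s_0$ and $s_n$ geometrically on \emph{all} simplices (via conic degenerations with discriminant along the left, resp.\ right, action strata), arrange the strict identities $d_0 s_0 = d_1 s_0 = \id$, $d_{n+1}s_n = d_n s_n = \id$, and $s_{n+1}\circ s_0 = s_0\circ s_n$ (the last of which already requires a redefinition, since the natural isomorphism $\cD_{n+1}\cD_0\bX\cong\cD_0\cD_n\bX$ is not the identity), and only then invoke a variant of Steimle's criterion (Proposition~\ref{prop:degeneracy_condition}) to manufacture the remaining inner degeneracies. Your proposal supplies only the unit on vertices and defers everything else to horn-filling, so the strictness problem is unaddressed; in addition, the structured lift of the degeneracies is a substantive issue (the projection $\cD X\to X$ is not submersive along corners of codimension $\geq 2$, so the framing comparison requires the coherent choices of Diagram~\eqref{eq:consistent_framing_Dn}), which you correctly flag but do not resolve.
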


Our construction of the simplicial structure proceeds in two steps.
First, we give a geometric construction of the initial and terminal
degeneracies $s_0, s_n \colon \Flow_n^{\cS} \to \Flow_{n+1}^{\cS}$.
In particular, this yields an identity element in the endomorphism
space of each object. The remaining degeneracies are then constructed
by adapting an abstract result of Steimle~\cite{Steimle}, as discussed
in Section~\ref{sec:quasi-semi-categ}.

The geometric part of the construction arises from the following observation: there is a natural bimodule associated to each Morse-theoretic flow
category which assigns to each pair of critical points the moduli
space of gradient flow lines with one interior marked point. This
bimodule plays the r\^ole of the diagonal bimodule and represents the
identity map of flow categories.  From the perspective of the
semisimplicial set of flow categories, this corresponds to a choice
of a degeneracy map that will form a part of the quasicategory
structure.  Our first task will be to construct these
bimodules in the abstract setting.

The starting point of the construction is to associate to each flow
category $\bX$ a bimodule $\bX \times [0,1]$ enriched in derived
orbifolds, given by assigning to each pair of objects the product with
an interval of the derived orbifold of morphisms between them with the
following exception: for equal objects, we add a copy of the unit derived orbifold.
This does not define an edge in $\Flow$, i.e., a
flow bimodule, for the simple reason that the images of the following
two bimodule structure maps agree:
\begin{equation}
    \bX(p,q) \times \left( \bX(q,r) \times [0,1] \right) \to \bX(q,r) \times [0,1] \leftarrow  \left(\bX(p,q) \times [0,1] \right) \times \bX(q,r).
\end{equation}
This violates the requirement that the corner strata be enumerated by the objects of the category defined in Section \ref{sec:strat-categ-assoc}.  We shall resolve this problem by introducing a more refined manifold with corner structure on the topological manifold $X(p,q) \times [0,1]$, corresponding to degenerating the interval factor over the higher codimension strata of $ X(p,q)$.

\subsection{Conic degenerations}
\label{sec:conic-degenerations}

Let $\bT \bP^1$ denote the quotient of $[0,+\infty)^{2} \setminus \{( 0 , 0)\}$ by dilation with respect to positive real numbers $\lambda$:
\begin{equation}
  \bT \bP^1 \equiv \frac{\{ (x, y ) | 0 \leq x, y, \quad (x,y) \neq (0,0) \}}{ (x, y) \sim (\lambda \cdot x, \lambda \cdot y)}.
\end{equation}
Note that this smooth manifold with corners is diffeomorphic to the interval, which we equip with the fixed orientation determined by positive rescaling of $x$. The distinguished coordinates (in the complement of a point) obtained from the slices $x=1$ or $y=1$ will be convenient for our setting, because we have a natural family of embeddings
\begin{equation}
   \bT \bP^1 \subset  \bT \bP^1 \times  \bT \bP^1,
\end{equation}
parametrised by $t \in (0,\infty)$, which is given as the closure of the solution set of the equation $x_0 \cdot y_1  = t $, and which degenerates, at $t=0$ to a union of two copies of $\bT \bP^1$, meeting along the origin $x_0 = y_1 = 0$. The total space of this family is a smooth submanifold with corners of the product of $[0,\infty)$ with $ \bT \bP^1 \times  \bT \bP^1$. More generally, 
\begin{lem} \label{lem:conic_bundle_smooth}
  The closure $\cD_{n+1}$ of the solution set  in $[0,\infty)^n \times  (\bT \bP^1)^{n+1}$ of the equations
  \begin{equation} \label{eq:equation_conic}
        x_{i-1} \cdot y_i = t_i \quad 1 \leq i \leq n
      \end{equation}
      is a smooth submanifold with corners of dimension $n+1$.      Its fibre over a point $(t_1, \ldots, t_n) \in [0,\infty)^n$ is a union of intervals, indexed by one more than the number of coordinates $t_i$ which vanish. Moreover, for each subset $I$ of $ \{1, \ldots, n\}$  the inclusion of the open subset given by setting $\{t_{i} \neq 0\}_{i \in I} $ canonically lifts to a commutative pullback diagram:
      \begin{equation} \label{eq:compatible_degenerations}
        \begin{tikzcd}
          \cD_{n - |I|+1} \times (0,\infty)^{|I|} \ar[r] \ar[d] &    \cD_{n+1} \ar[d] \\
          {[}0,\infty)^{n-|I|} \times (0,\infty)^{|I|}  \ar[r] &  {[}0,\infty)^n.
        \end{tikzcd}
      \end{equation}
      \qed
\end{lem}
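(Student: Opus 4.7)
The plan is to cover $(\bT\bP^1)^{n+1}$ by the $2^{n+1}$ affine charts obtained by fixing, on each factor, one of the two distinguished slices $\{x = 1\}$ or $\{y = 1\}$, and to analyze $\cD_{n+1}$ locally in each chart. In such a chart, each defining equation $x_{i-1} y_i = t_i$ takes one of four forms according to the normalization choices on the $(i-1)$-st and $i$-th factors: the bilinear form $x_{i-1} y_i = t_i$ when both $x_{i-1}$ and $y_i$ are the free coordinates on their respective factors; the monomial forms $t_i = y_i$ or $t_i = x_{i-1}$ when exactly one of these is normalized to $1$; or the constraint $t_i = 1$ when both are normalized to $1$ (in which case the chart only meets the slice of $\cD_{n+1}$ lying over $\{t_i = 1\}$).

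First I would verify that $\cD_{n+1}$ is a smooth submanifold with corners of dimension $n+1$. Since on any factor only one of $x_i, y_i$ is free, every free coordinate on $(\bT\bP^1)^{n+1}$ appears in at most one of the defining equations, so the system decouples: each equation either determines $t_i$ as a smooth monomial function of one or two free coordinates, or imposes $t_i = 1$. Consequently $\cD_{n+1}$ is locally the graph of a smooth map from the product $[0,\infty)^{n+1}$ of free coordinates into the $t_i$-directions (restricted to the slice $\{t_i = 1\}$ in the last case above), which is manifestly a smooth submanifold with corners of dimension $n+1$. Transitions between charts are smooth, inherited from the standard involution $y = 1/x$ on the overlap of the two charts of $\bT\bP^1$ composed with smooth reparametrizations in the $t_i$.

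The fibre description then follows by direct inspection: when $t_i > 0$, the equation $x_{i-1} y_i = t_i$ rigidly couples the $(i-1)$-st and $i$-th projective factors; when $t_i = 0$, it forces $x_{i-1} = 0$ or $y_i = 0$, producing a node at which the $(i-1)$-st factor terminates at $[0{:}1]$ and the $i$-th begins at $[1{:}0]$. An induction on $n$ shows that a fibre with exactly $k$ vanishing parameters is a chain of $k+1$ intervals joined at these nodes. The pullback diagram \eqref{eq:compatible_degenerations} is then transparent: restricting to the open subset $\{t_i \neq 0\}_{i \in I} \subset [0,\infty)^n$ freezes the corresponding links into rigid couplings, producing the asserted factorization $\cD_{n-|I|+1} \times (0,\infty)^{|I|}$ together with the induced projection to the parameter space.

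The main obstacle will be the smoothness check at the deep corners where several $t_i$ vanish simultaneously and several projective factors lie at their extreme points, since one must verify that the charts capture all limiting behavior correctly and patch smoothly. This is handled by the observation that the independence of chart selection across factors, combined with the fact that each free coordinate appears in at most one equation, reduces the analysis at any such corner to a product of instances of the $n = 1$ case, where smoothness of the graph $\{x_0 y_1 = t_1\} \subset [0,\infty)^3$ is immediate.
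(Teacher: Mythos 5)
Your chart analysis misreads the defining equations, and the misreading occurs exactly at the step that makes your argument go through. The symbols $x_{i-1}$ and $y_i$ in Equation~\eqref{eq:equation_conic} are the \emph{distinguished} coordinates coming from the slices $\{y_{i-1}=1\}$ and $\{x_i=1\}$ respectively. If you place the $(i-1)$-st factor in the opposite chart $\{x_{i-1}=1\}$, with free coordinate $y_{i-1}$, the distinguished coordinate is $x_{i-1}=1/y_{i-1}$ on the overlap, not the constant $1$; the equation in that chart is therefore the closure of $\{y_i/y_{i-1}=t_i\}$, i.e.\ $\{y_i=t_i\,y_{i-1}\}$, not $\{y_i=t_i\}$. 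Likewise your fourth case gives $\{t_i\,y_{i-1}\,x_i=1\}$ rather than the slice $\{t_i=1\}$: the point with $t_i=2$, $p_{i-1}=[1{:}1]$, $p_i=[1{:}2]$ lies in that chart and in $\cD_{n+1}$. Consequently the decoupling you invoke fails: the free coordinate of a middle factor $j$ appears in \emph{both} equations $j$ and $j+1$. Concretely, for $n=2$ at the node $(t_1,t_2,p_0,p_1,p_2)=(0,0,[0{:}1],[1{:}0],[1{:}0])$, the only available chart has coordinates $(t_1,t_2,x_0,y_1,y_2)$ and $\cD_3$ is cut out there by $t_1=x_0y_1$ and $y_2=t_2y_1$, with $y_1$ occurring in both equations; this is a $3$-dimensional graph over $(x_0,y_1,t_2)$, not a product of two copies of the $n=1$ model (which would live in ambient dimension $6$, not $5$), so the closing "reduction to a product of $n=1$ cases" does not go through, and neither does the claim that $\cD_{n+1}$ is locally a graph of the $t_i$'s over the free coordinates.

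The conclusion is salvageable by your method, but it needs the following repair. In an arbitrary choice of charts, equation $i$ can be solved smoothly for exactly one designated variable: for $t_i$ when the chart pair is $(\{y_{i-1}=1\},\{x_i=1\})$ (giving $t_i=x_{i-1}y_i$) or $(\{x_{i-1}=1\},\{y_i=1\})$ (where $t_i y_{i-1}x_i=1$ forces all three quantities positive, so $t_i=1/(y_{i-1}x_i)$ is smooth on the relevant locus); for $y_i$ (giving $y_i=t_i y_{i-1}$); or for $x_{i-1}$ (giving $x_{i-1}=t_i x_i$). These designated variables are pairwise distinct across equations, since only equation $i$ can ever solve for $y_i$ or for $x_{i-1}$, and the dependencies among them form chains that are monotone in the index $i$ (decreasing for the $y$-type solutions, increasing for the $x$-type), hence acyclic. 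Back-substitution then exhibits $\cD_{n+1}$ locally as the graph of a smooth map over the remaining $n+1$ independent variables, which in general are a mixture of chart coordinates and $t_i$'s. Your fibre description and the sketch of the square~\eqref{eq:compatible_degenerations} are fine, modulo noting that eliminating a rigidly coupled factor entails the reparametrisation $t_{i+1}\mapsto t_it_{i+1}$ of the adjacent parameter.
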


We refer to $\cD_{n+1} $ as the \emph{total space of the conic bundle over $[0,\infty)^n$ with discriminant along the boundary}. Note that this definition makes sense starting with $n=0$, in which case we have $ \cD_{n+1} \cong \bT \bP^1$. We separately define $\cD_{0}$ to be a point.

A useful property of this construction is that, up to canonical diffeomorphism, it does not change when the ordering of the coordinate is reversed, as can be seen by swapping $x$ and $y$ and replacing $i$ by $n-i$ in Equation \eqref{eq:equation_conic}.  The identification of each component of the fibre with $ \bT \bP^1$ is given by projection to some of the factors: explicitly, if $\{ j_1, \ldots, j_d\}$ label the vanishing coordinates of a point in $[0,\infty)^n  $, then, setting $j_0 = 0$, projecting the fibre of $\cD_{n+1} $ over this point  onto any factor of $  (\bT \bP^1)^{n+1}$ labelled by an element $i \in \{j_{r}, \ldots, j_{r+1}-1\}$ is a diffeomorphism on the $r$\th   interval, and is constant on all others.

We now formulate the compatibility of this construction with the inclusion
\begin{equation}
  [0,\infty)^{n} \times [0,\infty)^{m} \to [0,\infty)^{n+m+1}  
\end{equation}
associated to setting $t_{n+1}=0$.
\begin{lem} \label{lem:degeneration_map_boundary_strata_model}
  The fibre of $\cD_{n+m+2}$ over the boundary stratum of $ [0,\infty)^{n+m+1}$ given by $t_{n+1}=0$ is the union of two codimension $1$ boundary strata, which are the images of embeddings
  \begin{equation} \label{eq:boundary_strata_Dn-over-corner}
\cD_{n+1} \times [0,\infty)^{m}  \to  \cD_{n+m+2} \leftarrow [0,\infty)^n \times \cD_{m+1}.
\end{equation}
These embeddings are natural in the sense that the following diagram, associated to setting $t_{n+1} = t_{n+m+2} = 0$, commutes:
\begin{equation}
  \begin{tikzcd}
 {[}0,\infty)^{\ell} \times \cD_{m+1} \times {[}0,\infty)^{\ell} \ar[r] \ar[d] & {[}0,\infty)^n \times    \cD_{m+\ell+1} \ar[d]  \\
    \cD_{n+m+1} \times {[}0,\infty)^{\ell}  \ar[r] & \cD_{n+m+2}.
  \end{tikzcd}
\end{equation} \qed
\end{lem}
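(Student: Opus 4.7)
The plan is to verify the lemma by direct analysis of the defining equations of $\cD_{n+m+2}$. Setting $t_{n+1} = 0$ affects only the single equation $x_n \cdot y_{n+1} = t_{n+1}$, which becomes $x_n \cdot y_{n+1} = 0$ and therefore forces either $x_n = 0$ or $y_{n+1} = 0$. This decomposes the fibre over the stratum $\{t_{n+1} = 0\}$ into two smooth hypersurfaces that meet transversally along the corner $\{x_n = y_{n+1} = 0\}$, each of codimension one in $\cD_{n+m+2}$, and together exhausting the fibre.

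Next, I would identify each branch with the model asserted in Equation~\eqref{eq:boundary_strata_Dn-over-corner}. On $\{y_{n+1} = 0\}$, the $(n+1)$-th copy of $\bT\bP^1$ is pinned to the boundary point $[1:0]$. The first $n+1$ projective factors together with $t_1, \ldots, t_n$, constrained by the first $n$ equations $x_{i-1} y_i = t_i$, form a copy of $\cD_{n+1}$ by definition. The remaining equations $x_{i-1} y_i = t_i$ for $i = n+2, \ldots, n+m+1$, starting from the pinned factor and working inductively using the canonical representative of $[1:0]$, determine the last $m+1$ projective factors as functions of $t_{n+2}, \ldots, t_{n+m+1}$; once the compensatory rescalings intrinsic to the projective quotient are accounted for, no additional moduli remain. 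This produces an embedding $\cD_{n+1} \times [0,\infty)^m \hookrightarrow \cD_{n+m+2}$, and the dimension count $(n+1) + m = n+m+1 = \dim \cD_{n+m+2} - 1$ confirms that it covers $\{y_{n+1} = 0\}$ exactly. A symmetric analysis of $\{x_n = 0\}$, obtained by swapping the roles of $x$ and $y$ and reversing indices, yields the embedding from $[0,\infty)^n \times \cD_{m+1}$; smoothness of each embedding follows from Lemma~\ref{lem:conic_bundle_smooth} applied to each constituent block.

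For the naturality square, I would apply the above analysis iteratively. The coordinates $t_{n+1}$ and $t_{n+m+2}$ govern equations $x_n y_{n+1} = 0$ and $x_{n+m+1} y_{n+m+2} = 0$ which involve disjoint sets of $\bT\bP^1$ factors, so the two decompositions commute. Both paths around the square identify the common refinement as a product of the appropriate smaller conic bundle with cubes of free parameters, matching the top-right and bottom-left corners, and compatibility with the pullback structure of Equation~\eqref{eq:compatible_degenerations} is then automatic.

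The main technical obstacle is verifying that pinning $y_{n+1} = 0$ (respectively $x_n = 0$) genuinely rigidifies the downstream (respectively upstream) projective factors so that they are determined by the $t$-parameters alone rather than contributing extra moduli. The geometric content is that the equation on the pinned factor breaks the chain of compensatory rescalings coupling consecutive $\bT\bP^1$'s, so that the ``other half'' of the degenerate fibre collapses to a configuration controlled entirely by the surviving $t$-coordinates. The dimension count in the previous paragraph provides a useful consistency check throughout.
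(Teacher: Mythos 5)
Your proof is correct, and it is essentially the intended justification: the paper states this lemma with a \qed and no argument, treating it as immediate from the defining equations $x_{i-1}y_i = t_i$, which is exactly what you unwind. One small clarification to your second paragraph: once the $(n+1)$\st factor is pinned at $[1:0]$, the downstream factors are not determined as (varying) functions of $t_{n+2},\ldots,t_{n+m+1}$ — each coupling equation $x_{i-1}y_i=t_i$ for $i\geq n+2$ forces the next factor to $[1:0]$ as well, independently of the $t$'s, and it is precisely because these equations then become vacuous that $t_{n+2},\ldots,t_{n+m+1}$ survive as the free $[0,\infty)^m$ factor in $\cD_{n+1}\times[0,\infty)^m$; your dimension count and the identification of the two branches are otherwise exactly right.
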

We note that Equation \eqref{eq:boundary_strata_Dn-over-corner} labels all the boundary strata if we include as well the maps
  \begin{equation}
\cD_{0} \times [0,\infty)^{n+m+1}  \to  \cD_{n+m+2} \leftarrow [0,\infty)^{n+m+1} \times \cD_{0}
\end{equation}
associated to the boundary of the interval.

If $X$ is an orbifold with corners, with a partial ordering on the codimension $1$ boundary strata whose restriction to a neighbourhood of any corner stratum is a complete order, then the compatiblity condition of Diagram \eqref{eq:compatible_degenerations} implies that a choice of collars determines, by pullback from the local projection maps to $[0,\infty)^k$ a manifold with corners $\cD_{X}$, equipped with a projection map
\begin{equation} \label{eq:projection_degeneration_man_corner_base}
  \cD X  \to X.  
\end{equation}

This construction is compatible with immersions $X \to Y$ which are transverse to the corner strata in the sense that these induces a transverse pullback diagram
\begin{equation} \label{eq:degeneration_compatible_submanifold}
   \begin{tikzcd}
     \cD X  \ar[r] \ar[d] & \cD Y \ar[d] \\
     X \ar[r] & Y,
  \end{tikzcd}
\end{equation}
whenever the collars on the corner strata of $X$ are obtained by pullback from $Y$.
\begin{defin}
 The \emph{conic degeneration with discriminant $\partial \bX$} over a derived orbifold with corners $\bX$, with a partial order on the codimension $1$ boundary strata forming a complete order near each corner stratum,  is the derived orbifold $\cD \bX$ with underlying orbifold $\cD X$,  whose obstruction bundle and section are given by the pullback of $T^- \bX $ under Equation \eqref{eq:projection_degeneration_man_corner_base}.
\end{defin}
The compatibility of conic degenerations with immersions, i.e., Diagram~\eqref{eq:degeneration_compatible_submanifold}, implies:
\begin{lem} \label{lem:strong_equivalence_conic}
  A strong equivalence $\bX \to \bY$ functorially induces a strong equivalence $ \cD \bX \to \cD \bY$. \qed
\end{lem}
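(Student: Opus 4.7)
The plan is to show that the data of a strong equivalence pulls back along the conic degeneration construction and yields a strong equivalence on conic degenerations. Recall that a strong equivalence $f \colon \bX \to \bY$ presents the underlying orbifold $Y$ as the total space of a vector bundle $N \to X$, identifying $f$ with the zero section, with $T^- \bY \cong p^*(T^- \bX) \oplus N$ and section $p^* s \oplus \id_N$, where $p \colon Y \to X$ is the bundle projection.

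First, I would observe that the corner strata of $Y$ are precisely the preimages under $p$ of the corner strata of $X$, so the inclusion $X \hookrightarrow Y$ is an immersion transverse to the corner strata, and moreover the partial ordering on codimension $1$ strata of $X$ transfers canonically to $Y$. Choosing collars on $X$ and extending them to $Y$ by taking the product with the fibres of $N$ in a neighbourhood of $X$, Diagram~\eqref{eq:degeneration_compatible_submanifold} gives a pullback square identifying $\cD X$ with the restriction $\cD Y \times_Y X$.

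The key geometric input is then that, because the conic degeneration is constructed locally from the stratification data (which is identical for $X$ and $Y$ via $p$), the construction is compatible with pullback along $p$: there is a canonical identification
\begin{equation}
\cD Y \cong \cD X \times_X Y
\end{equation}
as orbifolds with corners. In particular, $\cD Y$ is the total space of the pullback vector bundle $p_{\cD}^* N \to \cD X$, where $p_{\cD} \colon \cD Y \to \cD X$ denotes the projection; call this bundle $N_{\cD}$. I would verify this identification either by working in local collar charts where $Y \cong X \times \bR^k$ and both sides reduce to the product of the local model for $\cD X$ with $\bR^k$, or directly from the defining equations~\eqref{eq:equation_conic} which involve only the corner coordinates.

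Finally, I would match up the derived structures. The obstruction bundle of $\cD \bY$ is by definition the pullback of $T^- \bY \cong p^*(T^- \bX) \oplus N$ under $\cD Y \to Y$; using the commutative square $\cD Y \to \cD X \to X$ agreeing with $\cD Y \to Y \to X$, this pullback splits as $p_{\cD}^*(T^- \cD \bX) \oplus N_{\cD}$. Similarly, pulling back the section $p^* s \oplus \id_N$ yields exactly $p_{\cD}^*(\text{section of }\cD \bX) \oplus \id_{N_{\cD}}$. This is precisely the data of a strong equivalence $\cD \bX \to \cD \bY$. Functoriality is then immediate: a composition of strong equivalences corresponds to an iterated vector bundle tower, and the pullback identification $\cD(\text{total space}) \cong \cD X \times_X (\text{total space})$ is functorial in such towers. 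The only subtlety I anticipate is verifying compatibility of collar choices with the vector bundle structure, but this is standard since the vector bundle $N$ provides canonical tubular neighbourhood data along the image of $X$ in $Y$.
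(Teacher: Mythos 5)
Your proposal is correct and takes essentially the same route as the paper, whose proof consists of invoking the compatibility of conic degenerations with immersions transverse to corner strata (Diagram \eqref{eq:degeneration_compatible_submanifold}) applied to the zero section $X \hookrightarrow Y$ of the bundle $N$ exhibiting the strong equivalence. Your identification $\cD Y \cong \cD X \times_X Y$ via product collars, together with the splitting of the pulled-back obstruction bundle and section, is exactly the content implicit in the paper's one-line argument, so this is a faithful expansion rather than a different proof.
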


\begin{rem}
In this construction, the discriminant locus of this conic degeneration is the entire boundary of $X$. One can more generally introduce a degeneration along only a subset of this boundary, by pulling from the corresponding collar direction; we will specify this more general data when required.  
\end{rem}

\subsection{The diagonal bimodule}
\label{sec:diagonal-bimodule}

There is a natural partial order on the set of codimension $1$ boundary strata of the derived orbifolds underlying each morphism space $\bX^{\lambda}(p,q)$ in a flow category $\bX$, given by the induced decomposition $\lambda = \lambda_1 + \lambda_2$ into two elements of the monoid $\Gamma$, and the image of $\lambda_1$ under the action map to $[0,\infty)$. Note that distinguising $\lambda_2$ instead would simply reverse the ordering, which, as discussed following Lemma \ref{lem:conic_bundle_smooth}, would not change the construction of the conic degeneration up to canonical isomorphism. 

Using this order, we associate to each pair of objects $(p,q)$ of $\bX$, and each element $\lambda$ of $\Gamma$ of strictly positive action, the derived orbifold
\begin{equation}
   s \bX^{\lambda}(p,q) \equiv \cD \bX^{\lambda}(p,q).
\end{equation}
In addition, for each object $p$ of $\bX$, we set the derived orbifold $s\bX^0(p,p)$ associated to the unit of $\Gamma$ to be a point. We write $s\bX(p,q)$ for the resulting $\Gamma$-graded derived orbifold. The next result follows from Lemma \ref{lem:degeneration_map_boundary_strata_model}:

\begin{lem}
  There are natural maps of derived orbifolds
  \begin{equation} \label{eq:bimodule_structure_maps}
    \bX(p,q) \times     s \bX(q,r) \to s \bX (p,r) \leftarrow  s \bX(p,q) \times     \bX(q,r)
  \end{equation}
  which are the structure maps of a bimodule over $\bX$. \qed
\end{lem}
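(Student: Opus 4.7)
The plan is to read off the bimodule structure maps directly from the description of the conic degeneration's boundary given in Lemma \ref{lem:degeneration_map_boundary_strata_model}. For $\lambda_1, \lambda_2 \in \Gamma$ with $\lambda = \lambda_1 + \lambda_2$ of positive action, the composition in $\bX$ realises $\bX^{\lambda_1}(p,q) \times \bX^{\lambda_2}(q,r)$ as the codim-$1$ stratum $\partial^q \bX^\lambda(p,r)$, sitting, in the local model $[0,\infty)^n$, at the coordinate indexed by $\cA(\lambda_1)$ (with strata of $\bX^{\lambda_1}(p,q)$ at ordered-below coordinates and strata of $\bX^{\lambda_2}(q,r)$ at ordered-above coordinates). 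Applying Lemma \ref{lem:degeneration_map_boundary_strata_model} gives two codim-$1$ boundary strata of $s\bX^\lambda(p,r) = \cD \bX^\lambda(p,r)$ over this stratum, provided by embeddings
\begin{equation*}
s\bX^{\lambda_1}(p,q) \times \bX^{\lambda_2}(q,r) \to s\bX^\lambda(p,r) \leftarrow \bX^{\lambda_1}(p,q) \times s\bX^{\lambda_2}(q,r),
\end{equation*}
which I take as the desired structure maps. They lift to morphisms of derived orbifolds since $T^- s\bX$ is by definition pulled back from $T^-\bX$, and they are strong equivalences onto their image by construction. For the unit case $s\bX^0(p,p) = \pt$, the structure maps $\bX^\lambda(p,r) \to s\bX^\lambda(p,r)$ on either side are defined as the two endpoint sections of the $\bT \bP^1$-bundle $\cD \bX^\lambda(p,r) \to \bX^\lambda(p,r)$ corresponding to the faces $x = 0$ and $y = 0$ of $\bT \bP^1$.

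To establish the bimodule axioms I would then verify the commutativity of the standard associativity diagrams, the nontrivial case being
\begin{equation*}
\begin{tikzcd}
\bX(p,q) \times \bX(q,q') \times s\bX(q', r) \ar[r] \ar[d] & \bX(p,q') \times s\bX(q',r) \ar[d] \\
\bX(p,q) \times s\bX(q,r) \ar[r] & s\bX(p,r)
\end{tikzcd}
\end{equation*}
and its analogues with $s\bX$ placed in either of the other two positions. Each such diagram expresses the agreement of two composite maps into the codim-$2$ stratum of $s\bX^\lambda(p,r)$ labelled by simultaneously decomposing at $q$ and $q'$. This agreement follows from the naturality statement in Lemma \ref{lem:degeneration_map_boundary_strata_model} (the square involving $t_{n+1} = t_{n+m+2} = 0$) combined with the strict associativity of the composition in $\bX$. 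The diagrams involving the unit $s\bX^0(p,p) = \pt$ reduce to the fact that the endpoint sections $x=0$ and $y=0$ of $\bT \bP^1$ commute with all boundary stratification of the base, and are therefore automatic from the definition of $s\bX$.

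The main obstacle I anticipate is strict bookkeeping. One must check that the natural identifications used to apply Lemma \ref{lem:degeneration_map_boundary_strata_model} to a product $\bX^{\lambda_1}(p,q) \times \bX^{\lambda_2}(q,r)$ — regarded as a boundary stratum of $\bX^\lambda(p,r)$ with its inherited partial order on codim-$1$ faces — are compatible with the partial orders on each factor (i.e., the ordering by the action of the leading summand). The invariance of $\cD_{n+1}$ under reversal of the coordinates of $[0,\infty)^n$, noted after Lemma \ref{lem:conic_bundle_smooth}, ensures that the construction is well-defined independent of whether one orders by $\lambda_1$ or $\lambda_2$, but one must still check consistency across iterated compositions, which is where the full strength of the naturality in Lemma \ref{lem:degeneration_map_boundary_strata_model} together with Lemma \ref{lem:strong_equivalence_conic} is needed.
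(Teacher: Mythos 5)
Your proposal is correct and follows essentially the same route as the paper, which derives the structure maps directly from the boundary-strata description of the conic degeneration in Lemma \ref{lem:degeneration_map_boundary_strata_model} (including the $\cD_0$ endpoint strata for the unit case) and obtains associativity from the naturality square there together with the strict associativity of composition in $\bX$. Your observation that the ordering ambiguity between $\lambda_1$ and $\lambda_2$ is resolved by the reversal-invariance of $\cD_{n+1}$ is exactly the point the paper makes at the start of Section \ref{sec:diagonal-bimodule}.
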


The resulting bimodule is called the \emph{diagonal bimodule}, and is denoted  $s \bX$.
\begin{lem} \label{lem:diagonal_bimodule_is_morphism}
The diagonal bimodule defines an morphism in $\Flow$ with source and target $\bX$.
\end{lem}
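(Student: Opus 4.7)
The plan is to verify that the data packaged in $s\bX$, together with $\bX$ itself, assembles into a flow $1$-simplex in the sense of Definition~\ref{def:flow_simplex}. I take the sequence $\vec{\cP} = (\cP, \cP)$ consisting of two copies of the object set of $\bX$, set $\bX_{\{0\}} = \bX_{\{1\}} = \bX$, and define the elementary $1$-simplex $\bX_{\{0,1\}}$ by declaring its morphism derived orbifold between $p \in \cP_0$ and $q \in \cP_1$ to be $s \bX(p,q)$, and its morphism derived orbifold for pairs of objects in the same copy to be $\bX(p,q)$. The face maps $\bX_{\{i\}} \to \partial^i \bX_{\{0,1\}}$ for $i = 0, 1$ are the tautological identities.

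The first substantive step is to lift the canonical stratification on $s\bX^\lambda(p,q)$ to a stratification by $\vec{\cP}^{\Gamma}(p,q)$. The codimension-$k$ strata of $\cD \bX^\lambda(p,q)$ are indexed by a codimension-$k$ stratum of $\bX^\lambda(p,q)$, i.e., a decomposition $\lambda = \lambda_0 + \cdots + \lambda_k$ together with intermediate objects $q_1,\ldots,q_k$, plus a choice of ``position'' $i \in \{0, 1, \ldots, k\}$ selecting one of the $k+1$ intervals in the conic fiber. I send such a datum to the labelled arc whose first $i$ internal edges carry labels $q_1,\ldots,q_i$ in $\cP_0$ and whose remaining internal edges carry labels $q_{i+1},\ldots,q_k$ in $\cP_1$, with vertex energies $\lambda_0,\ldots,\lambda_k$, the $i$\textsuperscript{th} vertex being the transition vertex. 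Lemma~\ref{lem:conic_bundle_smooth} ensures this is a well-defined stratification in the sense of a model for manifolds with corners. The bimodule structure maps of Equation~\eqref{eq:bimodule_structure_maps}, combined with composition in $\bX$, furnish the required composition maps of the elementary simplex; their associativity and compatibility with the stratification follow from Lemma~\ref{lem:degeneration_map_boundary_strata_model} and Lemma~\ref{lem:strong_equivalence_conic}.

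Next I lift $\bX_{\{0,1\}}$ to a structured elementary flow $1$-simplex in the sense of Definition~\ref{def:framed_flow_simplex-elementary}. The only new data is for the morphism spaces $s\bX^\lambda(p,q)$ with $p \in \cP_0$ and $q \in \cP_1$; here Definition~\ref{def:framed_flow_simplex-elementary} prescribes $U^+ = \bR^{\{1\}}$ and $U^- = \bR^{\{q\}}$. Since $s\bX^\lambda(p,q)$ fibers over $\bX^\lambda(p,q)$ with one-dimensional fibers, the projection induces a canonical splitting
\begin{equation}
T s\bX^\lambda(p,q) \cong T \bX^\lambda(p,q) \oplus \bR,
\end{equation}
which allows me to identify the required $\bR^{\{1\}}$ factor with the interval direction and pull back $I$, $W$, and the $\bR^{\{q\}}$ factor from $\bX$. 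The compatibility conditions of Definition~\ref{def:framed_flow_simplex-elementary}, namely the splitting of the framing along corner strata and the composition identities for $U^\pm$, reduce to straightforward bookkeeping on the basis of $U^+$: merging of positions $i$ and $i+1$ accounts for the unique interior vertex label, while the boundary strata at the extreme positions $i = 0$ and $i = k$ contribute the factors in the composition formula.

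Finally, I verify that the restrictions along $\partial^0$ and $\partial^1$ are identified with $\bX$. These restrictions correspond to strata where the transition vertex lies at the extreme left (all edges labelled by $\cP_1$) or extreme right (all edges labelled by $\cP_0$); by the stratification correspondence above, these recover the morphism spaces of $\bX$ together with their composition, yielding the required equivalences to $\bX_{\{0\}}$ and $\bX_{\{1\}}$. The main subtlety is the structured lift, specifically keeping track of how the interval direction introduced by the conic degeneration is compatible with the formal basis vector of $U^+$ across all codimension-$1$ boundary strata; once this is reconciled, the remaining diagrams commute by the naturality of conic degeneration.
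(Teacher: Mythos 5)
Your core argument for the statement as written (which concerns only the unstructured semisimplicial set $\Flow$) follows the same route as the paper: identify the codimension-$1$ strata of $s\bX(p,r)$ with the images of the two structure maps of Equation~\eqref{eq:bimodule_structure_maps} and induct on codimension to get the stratification by $\vec{\cP}^{\Gamma}(p,r)$ for $\vec{\cP}=(\cP,\cP)$. However, your explicit enumeration of strata is incomplete: indexing the codimension-$k$ strata of $\cD\bX^{\lambda}(p,r)$ by pairs (codimension-$k$ stratum of $\bX^{\lambda}(p,r)$, position $i\in\{0,\dots,k\}$) only accounts for the $k+1$ open interval components of the conic fiber over each corner. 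It omits the strata coming from the \emph{endpoints and nodes} of the fibers, i.e., the factors $\cD_0$ in Lemma~\ref{lem:degeneration_map_boundary_strata_model}; already in codimension $1$ these are the two strata $s\bX^{0}(p,p)\times\bX^{\lambda}(p,r)$ and $\bX^{\lambda}(p,r)\times s\bX^{0}(r,r)$ lying over the \emph{interior} of $\bX^{\lambda}(p,r)$, corresponding to arcs whose extra internal edge repeats $p$ or $r$ with transition vertex of energy $0$. Since the entire content of the lemma is that the stratification functor induces isomorphisms on overcategories, this omission needs to be repaired (it is, fortunately, only a bookkeeping fix: allow the $\bX$-side datum to be a corner of one lower codimension when the transition factor is the energy-zero point).

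The structured lift you include is not part of this lemma (it is the subject of Section~\ref{sec:degen-struct-flow}), and as written it contains a genuine error: the splitting $T s\bX^{\lambda}(p,q)\cong T\bX^{\lambda}(p,q)\oplus\bR$ is \emph{not} canonical, because the projection $\cD X\to X$ has critical points along the corners of codimension $\geq 2$ (the fiber there degenerates to a union of intervals), so the $\bR$-factor cannot be identified with the fiber direction over the boundary. This is exactly the ``naive idea'' the paper warns against at the start of Section~\ref{sec:degen-struct-flow}, and resolving it requires the inductive construction of the isomorphisms of Equation~\eqref{eq:framing_conic_bundle} compatible with Diagram~\eqref{eq:consistent_framing_Dn}. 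If you intend your argument only for the unstructured statement, delete that paragraph; if you intend it to cover the structured case, this step would fail as stated.
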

\begin{proof}
  The key point is that $s \bX(p,q)$ is stratified by the category associated to the pair $(\cP, \cP)$ of sets. The codimension $1$ strata are the images of the two maps in Equation \eqref{eq:bimodule_structure_maps}, and this implies, by induction on the codimension of a stratum, that $s \bX $ has the desired stratification.
\end{proof}

In the next subsection, we show that the morphism represented by the
diagonal bimodule in $\Flow$ is an equivalence; this is part of the
verification that the semisimplicial structure on $\Flow$ extends to a quasicategory.

\subsection{Initial and terminal degeneracies}
\label{sec:fill-init-term}

The diagonal bimodule can be interpreted as a map
\begin{equation}
 s \co \Flow_0 \to \Flow_1.
\end{equation}

In this section, we show that $s$ extends to a system of degeneracies
for $\Flow$, so that $\Flow$ admits the structure of a quasicategory.
In particular, this shows that the diagonal bimodule realizes an
idempotent self-equivalence for each flow category.

We now to extend this map to the higher
simplices of $\Flow$, by defining maps
\begin{align}
  s_0 \co \Flow_{n-1} & \to \Flow_{n} \\
   s_n \co \Flow_{n-1} & \to \Flow_{n}
\end{align}
which are compatibles with the face maps, and which are the initial and terminal degeneracies of a simplicial set. The remaining degeneracies will be constructed in the next section. Since the arguments in the cases $s_0$ and $s_n$ are entirely analogous, we only discuss the initial degeneracy.

We thus start with an elementary $n$-simplex $\bX$, on vertices which we denote $\{1, \ldots, n\}$, in order not to have to subsequently change the notation. The basic data of such a simplex consists of derived orbifolds $\bX(p_j, q_k)$ associated to each pair $(p_j,q_k)$ of objects of $\bX_j$ and $\bX_k$ for $j \leq k$. We shall be particularly interested in the case $j=1$, and in the codimension-$1$ boundary strata associated to the images of the left bimodule structure maps
\begin{equation} \label{eq:left_bimodule_map}
  \bX^{\lambda_1}(p_1, p'_{1}) \times  \bX^{\lambda_2}(p'_1, q_{k}) \to  \bX^{\lambda_1 + \lambda_2}(p_1, q_{k}).
\end{equation}

We write $s_0 \vec{\cP}$ for the sequence of sets obtained by repeating the first element, and assign to each pair $(p_j,q_k) $ of elements of $\cP_j$ and $\cP_k$ the  derived orbifold $s_0 \bX(p_j, q_k) $ given by   \begin{enumerate}
  \item $\bX(p_j, q_k)$ if $j \neq 0$,
  \item $\bX(p_j, q_k)$ if $j = k= 0$, where we use the fact that an element of $\cP_0$ corresponds to an element of $\cP_1$, or
  \item the conic degeneration
    \begin{equation}
      \cD_{0} \bX(p_j,q_k) \to    \bX(p_j,q_k)
    \end{equation}
     with discriminant locus the images of the left bimodule structure maps (Equation \eqref{eq:left_bimodule_map}) if $j=0$ and $k \neq 0$, ordered by the value of $\lambda_1$.
  \end{enumerate}
Lifting Equation \eqref{eq:left_bimodule_map} to the total space of the degeneration using Lemma \ref{lem:degeneration_map_boundary_strata_model}, we obtain composition maps for $p_0$ and $p'_0$ in $\cP_0$, and $q_k$ in $\cP_k$
  \begin{equation}
    \begin{tikzcd}
      s_0 \bX(p_0, p'_0)  \times s_0 \bX(p'_0, q_k) \ar[d] &  s_0 \bX(p_0, q_k) \\
      \bX(p_0, p'_0)   \times \cD_0  \bX(p'_0, q_k) \ar[r] & \cD_0 (p_0, q_k) \ar[u],
    \end{tikzcd}
  \end{equation}
  as well as composition maps for $p_1 \in \cP_1$:
    \begin{equation}
    \begin{tikzcd}
      s_0 \bX(p_0, p_1)  \times s_0 \bX(p_1, q_k) \ar[d] &  s_0 \bX(p_0, q_k) \\
     \cD_0 \bX(p_0, p_1)   \times   \bX(p_1, q_k) \ar[r] & \cD_0 (p_0, q_k) \ar[u].
    \end{tikzcd}
  \end{equation}
The same analysis as in Lemma \ref{lem:diagonal_bimodule_is_morphism} shows that these data determine an elementary $n$-simplex, which we denote $ s_0 \bX$, whose remaining composition maps, which do not involve elements of $\cP_0$, are given by the composition maps of $\bX$. We note the following consequences of the construction:

\begin{defin} \label{def:s0-degeneracy}
  The \emph{initially degenerate $n$-simplex} $s_0 \bX $ associated to an $n-1$-simplex $\bX$ is the simplex with $\cP_0 = \cP_1$, which assigns to a stratum $\sigma$ of $\Delta^n$
  \begin{itemize}
  \item the degenerate simplex $s_0 \bX_{\partial^0 \sigma} $, if the vertices $0$ and $1$ both lie in $\sigma$,
    \item the simplex $\bX_{\sigma} $ otherwise (using the identification of $\cP_0$ and $\cP_1$).
  \end{itemize}
  The morphisms
  \begin{equation}
   s_0  \bX_{\tau} \to \partial^\tau s_0  \bX_\sigma    
  \end{equation}
  asociated to each inclusion $\tau \subset \sigma$ of strata are given by
  \begin{itemize}
  \item The map $\bX_{\tau} \to \partial^\tau \bX_\sigma $ if $\sigma$ and $\tau$ do not contain both $0$ and $1$.
  \item The map $s_0  \bX_{\partial^0 \tau} \to s_0 \partial^{\partial^0 \tau} \bX_{\partial^0  \sigma} $ if $\tau$ and $\sigma$ both contain $0$ and $1$.
  \item The maps $\bX_{\tau} \to \partial^{\tau} \bX_{\partial^0 \sigma} $ or $\bX_{\tau} \to \partial^{\tau} \bX_{\partial^1 \sigma} $ if $\sigma$ contains both $0$ and $1$ but $\tau$ does not.
  \end{itemize}
\end{defin}

To define the terminal degeneracy map, we use the right module action on an $n$-simplex with vertices stratified by $(\cP_0, \ldots, \cP_n)$  instead of the left one in Equation \eqref{eq:left_bimodule_map}, to define a degeneration $ \cD_n \bX$ for an elementary simplex, which defines an $n+1$ simplex with vertices associated to the sequence $(\cP_0, \ldots, \cP_n, \cP_n)$. Following Definition \ref{def:s0-degeneracy}, replacing $\{0,1\}$ with $\{n-1,n\}$, we obtain
\begin{equation}
  s_n \co  \Flow_{n} \to \Flow_{n+1}.
\end{equation}

At this stage, we use the fact that the codimension $1$ strata associated the left and right module structure are disjoint to see that we have a natural isomorphism
\begin{equation}
    \cD_{n+1} \cD_0 \bX \cong    \cD_0 \cD_n \bX 
\end{equation}
for elementary simplices. The fact that this isomorphism is not the identity map means that we cannot expect the property $s_{n+1} \circ s_0 = s_0 \circ s_n$ to hold for the definitions given above. However, the fact that the isomorphism is canonical implies that we may simply redefine $s_{n+1}$ on vertices which are degenerate with respect to $s_0$ so that the equality holds.  This redefinition is well-defined because such a degenerate simplex admits a unique expression as $s_0 \tau$ for some $\tau$.

Tracing through the compatibility of these two constructions, we conclude:
\begin{lem}
  The initial and terminal degeneracy maps agree with $s$ on $\Flow_0$, and satisfy
  \begin{align}
    \partial^0 \circ s_0 & = \id \\
    \partial^{i+1} s_0 & = s_0 \circ  \partial^i \; \textrm{for all } i \neq 0 \\
    s_{n+1} \circ s_0 & = s_0 \circ s_n \\
    \partial^{n+1} \circ s_n & = \id \\
    \partial^{i} \circ s_{n} & = s_{n-1} \circ  \partial^i \; \textrm{for all } i \neq n.
  \end{align} \qed
\end{lem}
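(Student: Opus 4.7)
The plan is to verify each of the listed identities by unpacking the case-by-case construction of $s_0$ given in Definition \ref{def:s0-degeneracy} (and the symmetric definition of $s_n$), using as the geometric input the naturality and pullback compatibility of the conic degeneration functor established in Section \ref{sec:conic-degenerations}.

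First I would dispose of the agreement with $s$ on $\Flow_0$: when $n=0$, a flow category $\bX$ is sent by $s_0$ to a $1$-simplex whose sole morphism space $s_0\bX(p_0,q_1)$ is, by construction, the conic degeneration $\cD_0 \bX(p_0,q_1)$ with discriminant the codimension-$1$ strata ordered by action, which is precisely the diagonal bimodule of Section \ref{sec:diagonal-bimodule}. The identity $\partial^0\circ s_0 = \id$ is then immediate from Definition \ref{def:s0-degeneracy}, since restricting $s_0\bX$ to the face omitting vertex $0$ involves only strata whose label does not contain both $0$ and $1$, and these strata are defined to be $\bX_\sigma$ under the identification $\cP_0 = \cP_1$. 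The identity $\partial^{n+1}\circ s_n = \id$ follows verbatim by the left-right symmetry between the two constructions.

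For $\partial^{i+1}\circ s_0 = s_0\circ\partial^i$ with $i\neq 0$, deleting a vertex $i+1$ with $i\geq 1$ from $s_0\bX$ commutes strictly with the formation of $\cD_0$: this is because the discriminant locus of $\cD_0\bX(p_0,q_k)$ is indexed by objects of $\cP_1$ (and by the action filtration), which is unaffected by removing a later vertex; concretely, this is an application of the pullback diagram in Equation \eqref{eq:compatible_degenerations} and the naturality of conic degeneration under strong equivalences (Lemma \ref{lem:strong_equivalence_conic}). The identity $\partial^i\circ s_n = s_{n-1}\circ\partial^i$ for $i\neq n$ is proved by the same argument on the other side.

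The main obstacle, and the one identity for which something substantive has to be said, is $s_{n+1}\circ s_0 = s_0\circ s_n$. Here my approach is the one already indicated after Definition \ref{def:s0-degeneracy}: one observes that applying $s_0$ and $s_n$ corresponds to conic degenerations along \emph{disjoint} collections of codimension-$1$ strata (left-bimodule vs.\ right-bimodule structure maps), so that the iterated degenerations $\cD_{n+1}\cD_0\bX$ and $\cD_0\cD_n\bX$ are canonically, but not tautologically, isomorphic as $\cP^{\Gamma}$-stratified derived orbifolds. Because this canonical isomorphism is unique (the two degenerations are pullbacks along projections to disjoint sets of collar coordinates), one may rigidify $s_{n+1}$ on the image of $s_0$ by transporting along it; well-definedness of this rigidification amounts to showing that a simplex in the image of $s_0$ admits a unique preimage, which follows because the condition that the two initial vertices coincide and that the vertex-$0$ morphism spaces are given by $\cD_0$ uniquely determines the preimage. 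Finally, I would check that this redefinition remains compatible with the identities proved in the previous paragraphs, so that $s_{n+1}$ is still a map of semisimplicial sets; this is automatic because each of those identities involves face maps $\partial^i$ with $i < n+1$, which commute with the canonical rigidifying isomorphism by the naturality of the pullback squares in Lemma \ref{lem:degeneration_map_boundary_strata_model}.
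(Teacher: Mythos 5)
Your proposal is correct and follows essentially the same route as the paper: the face identities are read off from the case analysis in Definition \ref{def:s0-degeneracy}, and the iterated-degeneracy identity is handled exactly as in the text, by noting that $\cD_{n+1}\cD_0\bX$ and $\cD_0\cD_n\bX$ are only canonically (not tautologically) isomorphic and then rigidifying $s_{n+1}$ on the image of $s_0$, using the unique expression of an initially degenerate simplex as $s_0\tau$. Nothing is missing.
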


\subsection{Terminal degeneracies for structured flow categories}
\label{sec:degen-struct-flow}

The construction of degeneracies on the semisimplicial set constructed in Section \ref{sec:semisimplicial-set} requires a comparison between the tangent spaces of an orbifold $X$ and its degeneration $\cD X$ to the normal cone. The basic difficulty is that the projection map $\cD X \to X$ has critical points along the corners of codimension $2$ or more, so while we have an isomorphism between the tangent space of $\cD X$ and the direct sum of the pullback of $TX$ with a line, which is canonical up to contractible choice, we have to make a choice, and formulate its compatibility at the boundary.

To formulate this compatibility, it is useful to return to the definition of structured flow bimodules, and observe that what we ultimately expect is an isomorphism
\begin{equation}
    T s \bX(p,q) \oplus V_q \oplus \bR^{\{q\}} \oplus W(p,q) \cong I(p,q) \oplus V_p \oplus W(p,q) \oplus \bR.
\end{equation}
The naive idea is that the $\bR$ factor on the right corresponds to the direction of the fibres of $  sX(p,q)$ over $X(p,q)$, but the above discussion implies that this cannot be true over the boundary.

In the local model, the role of $X(p,q)$ is played by the base $[0,\infty)^n$, while that of $ sX(p,q) $ is $\cD_n$.
We need to define isomorphisms for $0 \leq n $
\begin{equation} \label{eq:framing_conic_bundle}
    T \cD_{n+1} \oplus \bR^{\{q\}} \cong T [0,\infty)^n  \oplus \bR^{\{q\}}  \oplus \bR. 
  \end{equation}
  In order to phrase the compatibility of these isomorphisms, recall that the boundary strata of $ \cD_{n+m+2} $ are enumerated by Equation \ref{eq:boundary_strata_Dn-over-corner}. In the exceptional cases, we proceed as follows:
  \begin{enumerate}
  \item For the stratum $\cD_{0} \times [0,\infty)^{n+m+1} $, we identify the normal direction with $\bR^{\{q\}}$ so that the outward pointing vector has positive coordinate and take the direct sum with the identity on $[0,\infty)^{n+m+1}  $ and with the identity map $  \bR^{\{q\}} \cong \bR$.
    \item For the stratum $ [0,\infty)^{n+m+1} \times \cD_{0}$, we identify the inward pointing normal direction with the positive generator of $\bR$ and take the direct sum with the identity on $[0,\infty)^{n+m+1}  $ and with the identity map $  \bR^{\{q\}} \cong \bR^{\{q\}}$.
  \end{enumerate}

 It thus remains to consider the following diagrams, where the horizontal maps have yet to be defined, we respectively restrict to $ \cD_{n+1} \times [0,\infty)^{m}$ or $ [0,\infty)^{n} \times \cD_{m+1} $, and the vertical arrows have yet to be defined on the $\bR^{\{q\}}$ factors (we use the identity on the $\bR$ factor):
  \begin{equation} \label{eq:consistent_framing_Dn}
    \begin{tikzcd}
        T \cD_{n+1} \oplus \bR^{\{q\}}  \oplus T [0,\infty)^{m} \ar[r] \ar[d] &  T [0,\infty)^n  \oplus \bR^{\{q\}}  \oplus \bR \oplus T [0,\infty)^{m}  \ar[d]  \\
        T \cD_{n+m+2} \oplus \bR^{\{q\}} \ar[r] &  T [0,\infty)^{n+m+2}  \oplus \bR^{\{q\}}  \oplus \bR \\
       T [0,\infty)^{n} \oplus   T \cD_{m+1} \oplus \bR^{\{q\}}   \ar[r] \ar[u] &  T [0,\infty)^{n}   \oplus T [0,\infty)^{m}  \oplus \bR^{\{q\}}  \oplus \bR \ar[u]
    \end{tikzcd}
  \end{equation}
\begin{lem}
  There is an inductive choice of isomorphisms of vector bundles in Equation \eqref{eq:framing_conic_bundle}, so that in the interior of $\cD_{n+1}$ the following properties hold:
  \begin{enumerate}
  \item The map $ T \cD_{n+1} \to  T [0,\infty)^n $ agrees with the map on tangent spaces up to post-multiplication by a diagonal matrix with strictly positive entries.
   \item The standard generator of $ \bR^{\{q\}}$  maps to the first quadrant in $\bR^{\{q\}}  \oplus \bR $  (both coordinates are positive).
  \item The generator of the projection $ T \cD_{n+1} \to  T [0,\infty)^n$ corresponding to the vector field $\partial_x$ on $ \bT \bP^1  $  maps to the second quadrant in $\bR^{\{q\}}  \oplus \bR $  (the first coordinate is negative, and second is positive).  
  \end{enumerate}
\end{lem}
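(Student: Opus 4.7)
The plan is to construct the isomorphisms of Equation~\eqref{eq:framing_conic_bundle} by induction on $n$, first propagating them inward from the codimension-one boundary strata of $\cD_{n+1}$ via the compatibility diagrams~\eqref{eq:consistent_framing_Dn}, and then extending over the interior using the convexity of the sign conditions (1)--(3).

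For the base case $n=0$, we have $\cD_1 \cong \bT\bP^1$ and $T[0,\infty)^0 = 0$. The desired isomorphism $T\cD_1 \oplus \bR^{\{q\}} \to \bR^{\{q\}} \oplus \bR$ is then a $2 \times 2$ matrix in the ordered basis $(\partial_x, e_q)$; we take the matrix sending $\partial_x \mapsto (-1,1)$ and $e_q \mapsto (1,1)$, propagated uniformly over $\bT\bP^1$. Condition (1) is vacuous and (2)--(3) hold by direct inspection of the sign pattern $\begin{pmatrix}-&+\\+&+\end{pmatrix}$.

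For the inductive step, assume the construction has been carried out for all $\cD_{n'+1}$ with $n' < n$, compatibly with the diagrams~\eqref{eq:consistent_framing_Dn}. First, for each codimension-one boundary stratum of $\cD_{n+1}$ of the form $\cD_{n'+1}\times [0,\infty)^{m'}$ or $[0,\infty)^{n'}\times \cD_{m'+1}$ with $n'+m'+1=n$, the relevant diagram in~\eqref{eq:consistent_framing_Dn} forces the restriction of the desired isomorphism to be the direct sum of the inductively-chosen isomorphism on the $\cD_*$ factor with the identity on the corresponding cube factor; combined with the prescriptions already given for the exceptional strata $\cD_0 \times [0,\infty)^n$ and $[0,\infty)^n \times \cD_0$, this defines the isomorphism along the entire boundary of $\cD_{n+1}$. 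I would then verify that the boundary values are internally consistent at codimension-$k$ corners for $k \geq 2$: by Lemma~\ref{lem:degeneration_map_boundary_strata_model}, any such corner sits inside several codimension-one strata whose iterated intersections are canonically products of lower-dimensional $\cD_*$'s with cubes, and the compatibility hypothesis guarantees that all restriction routes descend to the same prescribed isomorphism on these lower-dimensional pieces.

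With the boundary data in hand, extending to the interior is straightforward. Over the interior of $[0,\infty)^n$, the tangent space of $\cD_{n+1}$ splits canonically as the direct sum of the pullback of $T[0,\infty)^n$ with the fibre direction spanned by $\partial_x$. In these coordinates, specifying the isomorphism amounts to choosing a positive-diagonal rescaling on the $T[0,\infty)^n$ block (condition (1)) together with a $2\times 2$ block on $(\partial_x,e_q)$ with sign pattern $\begin{pmatrix}-&+\\+&+\end{pmatrix}$ (conditions (2)--(3)). The set of admissible linear maps at each point is a convex open subset of a fixed real vector space; a partition-of-unity interpolation between the boundary-prescribed choice (extended inward along a chosen collar) and any fixed interior reference yields a smooth global isomorphism satisfying all three requirements.

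The main obstacle I anticipate is the codimension-$\geq 2$ consistency check in the inductive step: it requires careful combinatorial bookkeeping of the iterated embeddings of Equation~\eqref{eq:boundary_strata_Dn-over-corner}. I would handle it by a secondary induction on codimension, using the commutative square of Lemma~\ref{lem:degeneration_map_boundary_strata_model} to reduce both restriction routes at any corner to the inductively-fixed isomorphism on a common lower-dimensional $\cD_{n''+1}$, and invoking the already-established consistency for smaller values of $n$.
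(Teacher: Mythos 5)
Your overall architecture (induction on $n$, prescribe on the boundary, extend inward by convexity of the sign conditions) matches the paper's, and your final interpolation step is fine: the paper likewise observes that the boundary values lie in the closure of the admissible convex set and can therefore be pushed into its interior. But there is a genuine gap in the inductive step, and it is located exactly where the paper puts the entire content of the proof.

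You assert that Diagram~\eqref{eq:consistent_framing_Dn} \emph{forces} the restriction of the isomorphism to a stratum $\cD_{n'+1}\times[0,\infty)^{m'}$ or $[0,\infty)^{n'}\times\cD_{m'+1}$ to be ``the direct sum of the inductively-chosen isomorphism on the $\cD_*$ factor with the identity on the corresponding cube factor.'' It does not: the text immediately before the lemma states that the vertical arrows of that diagram \emph{have yet to be defined on the $\bR^{\{q\}}$ factors}, and choosing them is the heart of the construction. Concretely, the source of the big framing restricted to such a stratum is $T\cD_{n'+1}\oplus T[0,\infty)^{m'}\oplus N\oplus\bR^{\{q\}}$, where $N$ is the normal direction of the stratum and $\bR^{\{q\}}$ carries the label of the \emph{ambient} conic bundle, while the inductive framing on the stratum involves a \emph{different} summand $\bR^{\{q'\}}$ (labelled by the intermediate object). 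A ``direct sum with the identity'' does not even parse until you say how $N$, $N'$ (the normal direction inside $T[0,\infty)^{n+m+1}$), $\bR^{\{q\}}$, and $\bR^{\{q'\}}$ are matched. For the stratum $[0,\infty)^{n'}\times\cD_{m'+1}$ the matching is the naive one ($N\leftrightarrow N'$, $\bR^{\{q'\}}\leftrightarrow\bR^{\{q\}}$), but for $\cD_{n'+1}\times[0,\infty)^{m'}$ the correct identification is \emph{crosswise}: $\bR^{\{q'\}}$ goes to the normal direction $N'$ on the target side, and $\bR^{\{q\}}$ goes to the normal direction $N$ on the source side. Without this swap the boundary data is not consistent with the prescriptions on the exceptional strata $\cD_0\times[0,\infty)^{n+m+1}$ and $[0,\infty)^{n+m+1}\times\cD_0$, and the resulting boundary values need not lie in the closure of the region cut out by conditions (1)--(3). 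Your base case shows the same symptom: the constant matrix with $\partial_x\mapsto(-1,1)$ and $e_q\mapsto(1,1)$ on all of $\bT\bP^1$ does not restrict at the two endpoints to the values forced by the exceptional-stratum prescriptions (where, e.g., $e_q\mapsto(0,1)$ at one end), so even for $n=0$ one must interpolate between two distinct boundary conditions rather than take a constant section.

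Your codimension-$\geq 2$ consistency check and the convexity-based interior extension are fine once the vertical maps are correctly specified, but as written the proposal omits the one nontrivial choice the lemma is really about.
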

\begin{proof}
  It is straightforward to see that the space of choices in the interior is contractible, so the essential point to prescribe the boundary condition, i.e., complete the construction of Diagram \ref{eq:consistent_framing_Dn} by defining the vertical maps and the middle horizontal arrow. The case of the inclusion of $ [0,\infty)^n \times \cD_{m+1} $, which is the bottom half of the diagram, is quite easy:  we identify the normal direction in $\cD_{n+m+2}$ with the normal direction of  $[0,\infty)^n \times [0,\infty)^{m}  $ in $ [0,\infty)^{n+m+1} $, and identify $ \bR^{\{q_{m+1}\}} $ with $\bR^{\{q_{n+m+2}\}}$.

  The other case is more delicate: Define the right vertical map to be the direct sum of the identity on $\bR$, the inclusion of boundary stratum of this generalised quadrant, and the map which takes $ \bR^{\{q_{m+1}\}} $ to the normal direction in $T [0,\infty)^{n+m+1}$. For the middle horizontal arrow, we identify $\bR^{\{q_{n+m+2}\}}$ on the right with the normal direction on the left, and vice-versa, which determines this map by induction because the complement of the normal direction is the tangent space of $\cD_{n+1} \times [0,\infty)^m$, on which the map is determined by the top horizontal arrow. Note that the left vertical map is similarly determined by these choices.  

Tracing through the construction, we find that the maps we have
constructed lie at the boundary of the space of maps satisfying the
required conditions, so that they can be extended to the interior. 

\end{proof}

We now apply this construction to degeneracies, discussing the case of $s_0$: given an elementary structured $n$-simplex $\bX$ with vertices labelled $\{1, \ldots, n+1\} $, we equip $s_0 \bX$ with the corresponding structure as follows: for object $(p_i,r_k)$, with $0 < i$, the corresponding morphisms in $s_0 \bX$ are canonically identified with those in $\bX$, so there is nothing to do.  Otherwise, we set $r_k = q$ in Equation \eqref{eq:framing_conic_bundle}, and identify the $\bR$ factor on its right as $\bR^{\{1\}}$. Writing the tangential isomorphism for $\bX$ as
\begin{equation} \label{eq:structure_elementary_simplex}
  T \bX(p_0,r_k) \oplus \bR^{\{r_k\}} \oplus V_{r_k} \oplus W(p_0,r_k) \cong I(p_0,r_k) \oplus V_{p_0} \oplus \bR^{\{2, \ldots, k\}} 
\end{equation}
a choice of collars for the boundary strata of the discriminant of $\cD_0$ locally fibres $\bX(p_0,r_k) $ over $[0, \infty)^{n}$, so we obtain an isomorphism
\begin{equation} \label{eq:structure_elementary_simplex_degenerate}
  T \cD_0 \bX(p_0,r_k) \oplus \bR^{\{r_k\}} \oplus V_{r_k} \oplus W(p_0,r_k) \cong I(p_0,r_k) \oplus V_{p_0} \oplus \bR^{\{1, \ldots, k+1\}}.
\end{equation}

The compatibility conditions of the framings given by the two squares
in Diagram \eqref{eq:consistent_framing_Dn} translate to the following
data: the top square asserts the compatibility for framings involving
composition with the diagonal bimodule, which is associated to the
edge from $0$ to $1$, while the second asserts that forgetting the
vertex $1$ recovers the original framing on the $n$-simplex. The
compatibility of these constructions with face maps implies: 

\begin{lem}
The semisimplicial set $\Flow^{\cS}$ of structured flow categories admits initial and terminal degeneracies, lifting those of $\Flow$. \qed
\end{lem}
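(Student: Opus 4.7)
The plan is to verify that the geometric $s_0$ and $s_n$ constructions built in the unstructured case extend verbatim to the structured setting, once we equip the conic degeneration $\cD \bX$ with the tangential data prescribed by the isomorphisms constructed immediately before the lemma statement. Concretely, for a structured elementary $n$-simplex $\bX$ with vertices labelled $\{1,\dots,n+1\}$, I would define the virtual vector space $V_p$ at each new vertex of $s_0\bX$ to be the one associated to the corresponding old vertex (using $\cP_0 = \cP_1$); for each morphism space $s_0\bX(p_0,r_k) = \cD_0 \bX(p_0,r_k)$ with $r_k \in \cP_k$, $k \geq 1$, I would set $W_{s_0\bX}(p_0,r_k)$ and $I_{s_0\bX}(p_0,r_k)$ to be the pullbacks of $W(p_0,r_k)$ and $I(p_0,r_k)$, and use the framing \eqref{eq:structure_elementary_simplex_degenerate} as the tangential isomorphism. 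For the remaining morphism spaces (those not involving $p_0 \in \cP_0$), I would simply inherit the structure from $\bX$.

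The next step is to verify that these data assemble into a structured flow simplex, meaning that the associativity and boundary compatibility conditions of Definition \ref{def:framed_flow_simplex} are satisfied. The associativity of composition with $\bX_0$ (i.e., with the diagonal bimodule edge between vertices $0$ and $1$) is precisely encoded by the commutativity of the top square of Diagram \eqref{eq:consistent_framing_Dn}, while the compatibility under forgetting vertex $1$ is encoded by the bottom square. Composition on the right with objects from higher-indexed $\cP_k$ uses only the inherited framing and is automatic. I would also check the conditions (2) and (3) of Definition \ref{def:framed_flow_simplex-elementary}: the virtual vector space $U_{s_0\bX(p_0,r_k)}$ works out to $(\bR^{\{1,\dots,k+1\}},\bR^{\{r_k\}})$ as required, and the splitting condition along each codimension-$K$ stratum $\partial^K$ is guaranteed by the construction combined with a relative application of Lemma~\ref{lem:inherit_framings} to enlarge $W$ if needed.

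Compatibility with face maps then reduces to three checks. First, $\partial^0 s_0 = \id$: at the boundary $t_1 = 0$ the conic bundle $\cD_0$ contracts to the original stratum, and the framing \eqref{eq:structure_elementary_simplex_degenerate} restricts to \eqref{eq:structure_elementary_simplex} by construction of the quadrant identifications. Second, $\partial^{i+1} s_0 = s_0 \partial^i$ for $i \neq 0$: this follows because the discriminant locus of $\cD_0$ is defined by the left bimodule structure, which does not involve the vertex $i+1$, so the conic degeneration commutes with the corresponding face map, and the framings are constructed functorially on the fibres. Third, the structural analogue of $s_{n+1} s_0 = s_0 s_n$: the canonical isomorphism $\cD_{n+1} \cD_0 \bX \cong \cD_0 \cD_n \bX$ of the unstructured case upgrades to a structured isomorphism because the left- and right-conic parameters involve disjoint codimension-one strata, so the framings of the two conic bundles, both built via \eqref{eq:framing_conic_bundle}, combine on a direct sum of independent $\bR$ factors. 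The terminal case $s_n$ is entirely parallel, using the right-module variant of Diagram \eqref{eq:consistent_framing_Dn}.

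The main obstacle will be the consistency of the three qualitative conditions on the framing of $\cD_{n+1}$ (positivity of the diagonal matrix, and the quadrant conditions on the images of the generators of $\bR^{\{q\}}$ and of $\partial_x$) across the recursive boundary identifications: one must ensure that the inductive choices made on boundary strata via Diagram \eqref{eq:consistent_framing_Dn} remain in the interior of the contractible space of admissible framings when one passes to strata of higher codimension, so that an extension to the full interior exists without obstruction. Once this extension is secured, the verification that all simplicial identities lift to the structured level is a coherence argument, since at each step the relevant choices are either canonical or live in contractible spaces, and the asymmetry of $U^{\pm}_{\bX(p,r)}$ is respected automatically by the asymmetric rôles played by the two endpoints in the construction of $\cD_0$ versus $\cD_n$.
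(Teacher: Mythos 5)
Your proposal follows essentially the same route as the paper: you equip the conic degenerations underlying $s_0\bX$ and $s_n\bX$ with the tangential isomorphism of Equation \eqref{eq:framing_conic_bundle}, read off the bimodule-composition and vertex-forgetting compatibilities from the two squares of Diagram \eqref{eq:consistent_framing_Dn}, identify the extra $\bR$ factor with $\bR^{\{1\}}$ to match Definition \ref{def:framed_flow_simplex-elementary}, and note that the interchange $s_{n+1}s_0=s_0s_n$ upgrades because the left and right discriminant loci are disjoint. The ``main obstacle'' you flag --- extending the quadrant-constrained framings from the boundary strata into the interior --- is exactly the content of the paper's inductive lemma on choices of the isomorphisms \eqref{eq:framing_conic_bundle}, so your argument is complete and matches the paper's.
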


\subsection{Weak units in semisimplicial sets}
\label{sec:quasi-semi-categ}
In order to complete the arguments of this section, we introduce
a criterion for when a
semisimplicial set  that satisfies the weak Kan condition (i.e., for which inner horns have fillers) 
can be equipped with the
structure of a simplicial set, thus defining 
a quasicategory.  More conceptually, this criterion
explains when the data necessary to rigidify weak units.

Steimle~\cite{Steimle} has given a criterion of this kind as follows:
he requires the existence of a map $s_0 \colon X_0 \to X_1$ such that
$s_0 x$ is an idempotent equivalence for any $x \in X$.  Here 
a map $f$ is an
equivalence if any horn $\Lambda^n_0 \to X$ with first edge $f$ has a
filler and any horn $\Lambda^n_n \to X$ with last edge $f$ has a
filler.

For our application, we need the following variant; the proof is
basically a minor modification of Steimle's argument.

\begin{prop}\label{prop:degeneracy_condition}
Let $X$ be a semisimplicial set that satisfies the weak Kan
condition.  Assume that for each $n$ we have maps:
\begin{enumerate}
\item $s_0 \co X_n \to X_{n+1}$ so that
    $d_0 \circ s_0$ and $d_1 \circ s_0 $ agree with the
    identity of $X_n$, and $d_i \circ s_0 = s_0 \circ
    d_{i-1}$ otherwise,
\item $s_n \co X_n \to X_{n+1}$ so that
      $d_n \circ s_n$ and $d_{n-1} \circ s_n$ agree with the
      identity of $X_n$, and $d_i \circ s_n = s_n \circ
      d_{i+1}$ otherwise,
\item and these maps satisfy the identities
\begin{equation}
s_{n+1} \circ s_0 = s_0 \circ s_n.
\end{equation}
\end{enumerate}
Then $X$ is the underlying semisimplicial set of a simplicial set,
i.e., a quasicategory.
\end{prop}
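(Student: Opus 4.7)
The plan is to follow Steimle's argument with minor modifications. Steimle's theorem asserts that a weak Kan semisimplicial set equipped with suitable weak identity data extends to a simplicial set; the burden of our proof is to verify that our hypotheses supply the required weak identities and then apply (a minor variant of) his construction to produce the middle degeneracies $s_i$ for $0 < i < n$.

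First I would verify that for each vertex $x \in X_0$, the $1$-simplex $s_0 x$ is an idempotent equivalence. Idempotency follows from the assumed identity $s_{n+1} \circ s_0 = s_0 \circ s_n$ specialized to $n = 0$: combined with the face identities $d_i s_0 = s_0 d_{i-1}$ for $i \neq 0$ and $d_0 s_0 = d_1 s_0 = \mathrm{id}$, this forces $s_0 s_0 x$ to be a $2$-simplex all of whose edges equal $s_0 x$. The equivalence property of $s_0 x$ is then established by a standard argument: a horn whose first or last edge is $s_0 x$ can be extended to a degenerate simplex of one higher dimension, with the missing face filled via an inner horn using the weak Kan condition.

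Second, I would carry out an inductive construction of the inner degeneracies $s_i$ for $0 < i < n$. For $\sigma \in X_n$, the simplicial identities force the faces $d_k s_i \sigma$ with $k \neq i, i+1$ to be $s_{i-1} d_k \sigma$ (for $k < i$) or $s_i d_{k-1} \sigma$ (for $k > i+1$), all already defined by the induction on dimension, while $d_i s_i \sigma = d_{i+1} s_i \sigma = \sigma$. One constructs $s_i \sigma$ by filling the inner horn $\Lambda^{n+1}_i$ with the prescribed faces (omitting $d_i$), then using the equivalence property of the degenerate edges appearing in the boundary to modify the resulting filler so that $d_{i+1} s_i \sigma = \sigma$ on the nose. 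The compatibility $s_{n+1} \circ s_0 = s_0 \circ s_n$ ensures the new $s_i$ do not conflict with the provided outer degeneracies.

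The main obstacle, and the technical heart of Steimle's argument, is arranging the inductive choices of horn fillers so that all simplicial identities among the degeneracies, notably $s_i s_j = s_{j+1} s_i$ for $i \leq j$ and the interaction with face maps for indices straddling the inserted vertex, hold strictly rather than merely up to homotopy. This requires a careful ordering of the construction together with a coherent bookkeeping of the constraints each new filler must satisfy. The minor modification required for our statement is to initialize the induction with both outer degeneracies $s_0$ and $s_n$ already fixed and with all their interrelations satisfied, and to verify that Steimle's inductive step respects them; because our hypothesized data is strictly more rigid than Steimle's baseline input, this causes no additional difficulty.
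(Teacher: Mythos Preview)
Your overall plan—reduce to Steimle's framework—matches the paper's, and the idempotency verification is fine.  However, the paper's modification of Steimle is different from yours, and your version has a gap.

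The paper does \emph{not} verify that $s_0 x$ is an equivalence.  Instead it observes that the only place Steimle uses the equivalence property is in the base of the induction for his Lemma~2.3: to produce, for each $N$, the map $\sigma_N\colon X_N\to X_{N+1}$ satisfying the face identities.  Under the present hypotheses that map is already supplied as the top degeneracy $s_N\colon X_N\to X_{N+1}$, so one simply feeds it in and proceeds with Steimle's argument verbatim.  (A consequence, noted in the paper, is that the final simplicial structure keeps the given $s_0$ but may replace the given $s_N$; your claim that both outer degeneracies can be held fixed is therefore not supported, though fortunately the proposition does not require it.)

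Your proposed route—first prove $s_0 x$ is an equivalence, then apply Steimle unchanged—requires filling outer horns $\Lambda^n_0$ whose $\{0,1\}$-edge is $s_0 x$.  Your sketch (``extend to a degenerate simplex one dimension higher and fill an inner horn'') does not go through as stated.  If one sets $d_i\Sigma=s_0\tau_{i-1}$ for $i\ge 2$ in an attempted $(n{+}1)$-simplex $\Sigma$, the face relations force both $d_0\Sigma$ and $d_1\Sigma$ to be fillers of the original $\Lambda^n_0$; so any inner horn $\Lambda^{n+1}_k$ with $k\in\{1,\dots,n\}$ still demands one of these unknown fillers as input.  Using $s_{n-1}$ instead leads to the symmetric obstruction.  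Without a concrete mechanism that avoids this circularity, the equivalence step is unjustified, and the reduction to Steimle's original theorem is incomplete.  The paper's device of plugging in $s_N$ directly is precisely what circumvents this problem.
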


\begin{proof}
We shall show that the hypotheses above suffice to carry out Steimle's
arguments, specifically, to prove variants of \cite[Lemmas 2.3 and 2.5]{Steimle}.
The argument relies on the notion of an $N$-good system:
this is a collection of maps $s_k \colon X_n \to X_{n+1}$ for all $n
\geq 0$ and $0 \leq k \leq \min(n,N)$ that satisfy the simplicial
identities.  That is, an $N$-good system has degeneracies $s_0,
\ldots, s_{\min_{n,N}}$ at each level $X_n$.  We say that a system is
almost $N$-good if the last degeneracy $s_N$ at each level is not
required to satisfy the simplicial identity $d_{N+1} s_N = \id$.

Steimle proves in Lemma 2.3 that under his hypotheses, a $(N-1)$-good
system extends to an almost $N$-good system.  He proves this by
induction, successively constructing $s_N \colon X_n \to X_{n+1}$ for
each $n \geq N$.  The base of the induction is the case $n = N = 0$, where he simply uses $s_0$.  Then he uses the condition that $s_0 x$ is
an equivalence in order to start his induction, in the case that $N =
n$; i.e., to produce a map $\sigma_N \colon X_N \to X_{N-1}$ that
satisifies the simplicial identities with respect to the face maps
(but not yet the degeneracies).  Under our hypotheses, we already have
a map $s_N \colon X_N \to X_{N+1}$ that commutes with the face maps
and so we use it as $\sigma_N$ and proceed with the argument.  In the
remaining cases where $n > N$, he constructs a candidate map $\sigma_N
\colon X_n \to X_{n+1}$ by using the inner horn filling to solve the
equations that express compatibility with the simplicial identities
for the value of $d_i s_N$.  Since the map $\sigma_N$ produced in this
way need not satisfy the simplicial identities with respect to the
other degeneracy maps, we define $s_N$ to be $\sigma_N$ except that
the value on a degenerate simplex $s_i (y)$ is $s_i
s_{N-1}(y)$. 
Note that Steimle's technical Lemma 2.4 applies in our situation
without change to ensure that the modifications we described result in
well-defined maps.  Due to this adjustment, the face maps we produce coincide with our given maps for $s_0$ but not necessarily for
$s_N$.

Steimle concludes the argument by proving in Lemma 2.5 that under his
hypotheses, an almost $N$-good system $(s_0, s_1, \ldots, s_N)$ can be
turned into an $N$-good system by replacing $s_N$.  The idea is to
construct a system of candidates $T_N \colon X_n \to X_{n_2}$ for the
iterated degeneracy $s_N^2$, and replace $s_N$ with $d_N T_N$.  Once
again, his argument proceeds by induction.  When $N=0$ and $n=0$, we
can simply use $s_0^2$ in place of Steimle's appeal to the idempotence
of $s_0$ (although note that our hypotheses suffice to verify that
$s_0$ is idempotent).  When $N > 0$, the required maps are produced by
inner horn filling conditions to satisfy the necessary equations.
When $N = 0$, we again use $s_0^2$.  As in Lemma 2.3, we need to
modify the maps $T_N$ to account for degenerate
simplices.
Steimle's Lemma 2.4 again applies to ensure that the modifications we
describe result in well-defined maps.

\end{proof}

\section{The stable structure}
\label{sec:prestable-structure}

The purpose of this section is to prove the main theorem of the paper,
producing a stable $\infty$-category of structured flow categories.
Since $\Flow^{\cS}$ is a quasicategory, for any pair of objects we
have a space of maps.  Showing that ${\Flow^{\cS}}$ is stable will
allow us to construct for any pair of objects a canonical mapping
spectrum, for which the space of maps arises as the zero-space.  Floer
homotopy types then arise as maps to and from the unit flow category.

\begin{rem}
As the terminology suggests, ${\Flow^{\cS}}$ is in fact a symmetric
monoidal stable $\infty$-category; we study the monoidal structure in
a sequel to this paper.
\end{rem}

Recall that a stable $(\infty,1)$-category is defined to be an
$(\infty,1)$-category that is pointed (i.e., has a zero object), every
morphism admits a cofiber and a fiber, and fiber sequences and cofiber
sequences coincide~\cite[1.1.1.9]{Lurie2012}.  We work with the
following characterization of a stable $\infty$-category.  In this
description, for an object of an $(\infty,1)$ category $x$ we write
$\Sigma x$ to denote the colimit diagram

\begin{equation}
\begin{tikzcd}
x \ar[r] \ar[d] & \ast \ar[d] \\
\ast \ar[r] & \Sigma x.
\end{tikzcd}
\end{equation}

\begin{prop}{~\cite[1.4.2.27]{Lurie2012}}
Let $\cC$ be an $(\infty,1)$-category.  Then $\cC$ is stable if and
only if it satisfies the following conditions:
\begin{enumerate}
\item $\cC$ is pointed (i.e., has a zero object).
\item For each object $x \in \ob(\cC)$, the colimit $\Sigma x$ exists
  and the associated functor $\Sigma \colon \cC \to \cC$ is an
  auto-equivalence of $\cC$.
\item Every map $f \colon x \to y$ in $\cC$ admits a cofiber.
\end{enumerate} \qed
\end{prop}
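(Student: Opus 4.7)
Since this proposition is cited verbatim from Lurie's \emph{Higher Algebra} (1.4.2.27), a proof proposal essentially amounts to recapitulating his argument. The forward direction is immediate from the definition of stability: a stable $\infty$-category is by fiat pointed and has cofibers, while the coincidence of fiber and cofiber sequences produces an inverse functor $\Omega$ to $\Sigma$ (the loops functor sending $y$ to the pullback of $\ast \leftarrow y \rightarrow \ast$), witnessing that $\Sigma$ is an auto-equivalence via natural equivalences $\Omega\Sigma \simeq \id \simeq \Sigma\Omega$.

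For the nontrivial direction, the plan is to use the auto-equivalence of $\Sigma$ as a shifting principle that converts pushout data into pullback data. Assuming conditions (1)--(3), fix an inverse functor $\Omega$ to $\Sigma$. For a morphism $f \co x \to y$ one forms the cofiber $\Cone(f)$ as the pushout of $\ast \leftarrow x \to y$, and proposes $\Omega \Cone(f)$ as the fiber, equipped with the natural comparison to $x$ extracted from the dual of the Puppe construction. The central structural claim that one must then establish is that every pushout square in $\cC$ is also a pullback square. Given such a structural claim, cofibers automatically compute fibers, fiber and cofiber sequences coincide, and the remaining stability axioms follow.

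My plan for the structural step is to extend an arbitrary pushout square by iterating further pushouts along the maps to the zero object, building a $3 \times 3$-type diagram whose outer corner is identified with $\Sigma$ of one of the original corners. Applying $\Omega$ to this extended diagram, and invoking that $\Omega$ is an equivalence of functor categories $\Fun(K,\cC) \to \Fun(K,\cC)$ for the relevant finite indexing posets $K$, lets one re-interpret the original pushout square as a pullback by tracking universal properties backwards along the iterated extensions. This is the $\infty$-categorical analogue of the classical argument that, in a triangulated category with a suspension auto-equivalence, the Puppe sequence is simultaneously a fiber and cofiber sequence.

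The main obstacle is precisely this $\infty$-categorical bookkeeping: one must verify that the canonical comparison maps among the various iterated pushout constructions are equivalences at the level of $\Fun(K,\cC)$, and that the diagrammatic shift afforded by $\Sigma$ commutes appropriately with the pasting of cartesian and cocartesian squares. The classical triangulated-category version of this argument is conceptually straightforward but combinatorially delicate; the $\infty$-categorical version is cleaner because higher coherences are packaged into the finite-limit/finite-colimit formalism, but it does require setting up the indexing posets (typically $\Delta^1 \times \Delta^1$ and its iterated gluings) with some care before invoking pasting lemmas for cocartesian squares.
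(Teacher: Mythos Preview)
The paper does not prove this proposition: it is quoted from Lurie with a \qed and no argument, so there is nothing to compare against. Your sketch is a reasonable outline of the standard argument (and you correctly flag that any ``proof'' here is just a recapitulation of Lurie), so there is no discrepancy to address.
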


We will proceed as follows: we begin by considering the
$(\infty,1)$-category of unstructured flow categories.  In
Section~\ref{sec:flow-pointed}, we verify the first condition and show
that ${\Flow}$ is pointed by the empty flow category.  Next, in
Section~\ref{sec:semistable}, we show that the suspension functor
$\Sigma \colon {\Flow} \to {\Flow}$ exists and induces an
auto-equivalence, i.e., that every object of ${\Flow}$ is equivalent
to a suspension and that the induced map of morphism spaces
${\Flow}(\bX,\bY) \to {\Flow}(\Sigma \bX, \Sigma \bY)$ is an
equivalence.  This condition implies that ${\Flow}$ is a full
subcategory of its stabilization $\Stab({\Flow})$; the mapping
spectrum ${\Flow}(\bX,\bY)$ can be constructed to have $n$\th space
${\Flow}(\bX, \Sigma^n \bY)$, and the structure maps arise from the
equivalences
\begin{equation}
{\Flow}(\bX,\bY) \htp {\Flow}(\Sigma \bX, \Sigma
\bY) \htp \Omega {\Flow}(\bX, \Sigma \bY).
\end{equation}
In particular, this shows that the homotopy category $\Ho({\Flow})$ is
equipped with a canonical enrichment in abelian groups.

In Section~\ref{sec:dist-triangl}, we will construct a candidate
cofiber $C(f)$ associated to a morphism $f \colon \bX \to \bY$ in
${\Flow}$ equipped with a canonical comparison morphism $C_f
\to C(f)$, where $C_f$ denotes the actual cofiber in
$\Stab({\Flow})$.  Finally, we will verify that for any flow
category $\bZ \in \ob({\Flow})$, there is a long exact
sequence of abelian groups
\begin{equation}
\cdots \to [\bZ, \bX] \to [\bZ, \bY] \to [\bZ, C(f)] \to [\bZ, \Sigma \bX]
\to \cdots
\end{equation}
where $[-,-]$ denotes the set of maps in $\Ho({\Flow})$ and moreover that these
sequences are natural in $\bZ$.  This implies that the natural map $C_f
\to C(f)$ is an isomorphism in $\Stab({\Flow})$ by the following argument: the existence of the
long exact sequence for $C_f$ and the $5$-lemma shows that the map
$[\bZ, C(f)] \to [\bZ, C_f]$ is an isomorphism for $\bZ \in
\ob({\Flow})$, and since ${\Flow}$ generates
$\Stab({\Flow})$ under colimits, we see that the map $C_f \to
C(f)$ is an equivalence.  As a consequence, we conclude that the
universal functor 
\begin{equation}
\Stab \colon {\Flow} \to \Stab({\Flow})
\end{equation}
is a categorical equivalence and hence that ${\Flow}$ is a stable
$\infty$-category. Section \ref{sec:structured-case} then explains how to extend these results to the structured case.

Because we do not have explicit control of the degeneracy maps of
${\Flow}$, there is some additional technical complexity which we
address in the next subsection, where we review some details of the
category of semisimplicial sets. 

\subsection{Preliminary facts about semisimplicial sets}

As usual, let $\Delta$ denote the category of finite ordered sets
(nonempty) and monotone maps, and  $\Delta_+$ its subcategory with morphisms the injective maps. The category of simplicial sets is 
the category of presheaves $\Fun(\Delta^{\op}, \Set)$, while the category of
semisimplicial sets are the presheaves
$\Fun(\Delta^{\op}_+, \Set)$.  Both are closed symmetric monoidal
categories, with symmetric monoidal product $\otimes$ on semisimplicial sets given by the
``geometric product'' of semisimplicial sets; this can be defined by
left Kan extension using the nerve functor or given by explicit
formulas.  The geometric product has the property that there is a
natural isomorphism $|X \otimes Y| \cong |X| \times |Y|$.

The inclusion functor $i \colon \Delta_+ \to \Delta$ induces a
forgetful functor
\begin{equation}
\Fun(\Delta^{\op}, \Set) \to \Fun(\Delta^{\op}_+, \Set)
\end{equation}
which has both a 
left adjoint  $i_!$ given by the left
Kan extension along $i;$ 
this is the
simplicial set given by freely adjoining degeneracies.  
Writing
$\Delta^n$ for the presheaf represented by $[n] \in \Delta$ and
$\Delta^n_+$ for the presheaf represented by $[n] \in \Delta_+$, we have
\begin{equation}
  i_! \Delta_+^n = \Delta^n.
\end{equation}
More generally, we also can calculate the boundaries and horns of
represented functors directly.
\begin{equation}
i_! \partial \Delta_+^n = \partial \Delta^n \qquad \textrm{and} \qquad
i_! \Lambda^n_{+,k} \cong \Lambda^n_k.
\end{equation}

The functor $i_!$ is strong monoidal in the sense that  there are natural
isomorphisms 
\begin{equation}
i_! X \times i_! Y \to i_! (X \otimes Y).
\end{equation}

Given a semisimplicial set $X$ that satisfies the conditions of Proposition \ref{prop:degeneracy_condition} we have an extension to a simplicial set; denote the
extension by $\tilde{X}$.  Clearly, $X = i^* \tilde{X}$, and the
adjunctions imply that
\begin{align}
\Map(\Delta^1_+, X) & \cong  \Map(\Delta^1, \tilde{X}) \\
\Map(\Delta^1_+ \otimes \Delta^k_+, X) &\cong  \Map(\Delta^1 \times \Delta^k, \tilde{X}).
\end{align}
As a consequence, we can conclude:
\begin{cor}
The underlying semisimplicial
set associated to the simplicial mapping space $\Fun(\Delta^1,
\tilde{X})$ is the semisimplicial mapping space $\Fun(\Delta^1_+,
X)$:
\begin{equation}
i^* \Fun(\Delta^1, \tilde{X}) \cong \Fun(\Delta^1_+, X).
\end{equation} \qed
\end{cor}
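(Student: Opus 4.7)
The plan is to identify the $k$-simplices on both sides and then verify that the resulting bijections are natural in $[k] \in \Delta_+$; the statement is essentially a corollary of the displayed equations immediately preceding it, promoted from the level of individual mapping sets to the level of the underlying semisimplicial set of the simplicial mapping space.

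First I would unpack the $k$-simplices of the two sides. On the left, since $i^*$ is forgetting degeneracies,
\[
(i^* \Fun(\Delta^1, \tilde{X}))_k = (\Fun(\Delta^1, \tilde{X}))_k = \Map_{sSet}(\Delta^1 \times \Delta^k, \tilde{X}),
\]
using the defining property of the internal hom on simplicial sets. On the right, by the defining property of the internal hom on semisimplicial sets,
\[
(\Fun(\Delta^1_+, X))_k = \Map_{sSet_+}(\Delta^1_+ \otimes \Delta^k_+, X).
\]
The adjunction $i_! \dashv i^*$ (using $X = i^*\tilde{X}$) together with the strong monoidal comparison $i_!(\Delta^1_+ \otimes \Delta^k_+) \cong i_!\Delta^1_+ \times i_!\Delta^k_+ = \Delta^1 \times \Delta^k$ yields the bijection
\[
\Map_{sSet_+}(\Delta^1_+ \otimes \Delta^k_+, i^*\tilde{X}) \cong \Map_{sSet}(\Delta^1 \times \Delta^k, \tilde{X}),
\]
which is precisely the second display immediately preceding the corollary.

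The remaining step is to verify naturality with respect to face maps. Given an injective map $\alpha \co [j] \to [k]$ in $\Delta_+$, the induced map $\Delta^j_+ \to \Delta^k_+$ produces $\Delta^1_+ \otimes \Delta^j_+ \to \Delta^1_+ \otimes \Delta^k_+$, whose image under $i_!$ agrees, via the coherence of the monoidal comparison, with $\id \times i(\alpha)_* \co \Delta^1 \times \Delta^j \to \Delta^1 \times \Delta^k$. Applying $\Map(-, \tilde{X})$ and using naturality of the adjunction counit gives the commuting square that establishes naturality of the bijection in $[k]$. Hence the levelwise bijections assemble to an isomorphism of semisimplicial sets. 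There is no genuine obstacle here beyond bookkeeping; the work was already done in setting up the adjunction and verifying strong monoidality of $i_!$.
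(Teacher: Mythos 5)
Your argument is correct and is exactly the route the paper intends: the corollary is stated with no proof precisely because it amounts to reading off the $k$-simplices of both internal homs, applying the displayed isomorphism $\Map(\Delta^1_+ \otimes \Delta^k_+, X) \cong \Map(\Delta^1 \times \Delta^k, \tilde{X})$ coming from the adjunction $i_! \dashv i^*$ and the strong monoidality of $i_!$, and noting naturality in $[k] \in \Delta_+$. Your spelling out of the naturality check is the only content beyond the preceding displays, and it is handled correctly.
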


We now turn to consideration of the homotopy theory of semisimplicial
sets and the relationship to the homotopy theory of simplicial sets.
The notion of Kan complex extends naturally to semisimplicial sets
(where we ask that any map $\Lambda^{n,k}_+ \to X$ have an
extension to $\Delta^n_+ \to X$), and it is an old result of Rourke
and Sanderson \cite{Rourke1971} that any semisimplicial set $X$ satisfying the Kan condition
can be equipped with a choice of degeneracies that make it a
simplicial set $\tilde{X}$ which satisfies the Kan condition. 
Furthermore, the mapping spaces to targets which satisfy the Kan condition are again Kan, both in the semisimplicial and simplicial settings.

Recall that for a Kan complex $X$ and a basepoint vertex $x \in X_0$,
we can compute $\pi_n(X,x)$ as the set of homotopy classes of maps
$\Delta^n/ \partial \Delta^n \to X$.  Unpacking this definition, a
representative of a homotopy class is specified by an $n$-simplex of
$X$ such that the restriction to the boundary is the constant map to
$x$.  (Here note that the choice of basepoint $x \in X_0$ induces a
system of points $(s_0)^k x \in X_k$; equivalently, by the Yoneda
lemma the choice of basepoint is the same as a map of simplicial sets
$\Delta^0 \to X$.)  The equivalence relation on the maps is determined
by defining two maps to be in the same class if they are connected by
a homotopy through simplices that restrict to $x$ on their boundaries.
Using the fact that $X$ is a Kan complex, we can restate this as
observing that two maps $f,g \colon \Delta^n/ \partial \Delta^n \to X$
are equivalent if there exists a map $h \colon \Delta^{n+1} 
\to \tilde{X}$ such that
\begin{equation}
d_i h = \ast, \,\, i < n, \,\, d_n h = f, \,\,\textrm{and}\,\, d_{n+1} h = g.
\end{equation}

A basic observation in the theory of semisimplicial sets is that maps
$X \to Y$ that induce weak equivalences on geometric realizations $|X|
\to |Y|$ can be detected by homotopy groups for Kan complexes.

Specifically, observe that the formulas
\begin{align}
  \Map(\Delta^n, \tilde{X}) & \cong \Map(\Delta_+^n, X) \\
  \Map( \partial \Delta^n, \tilde{X}) & \cong
\Map( \partial \Delta^n_+, \tilde{X})
\end{align}
imply that we can compute the set of maps $\Delta^n/ \partial \Delta^n
\to \tilde{X}$ in terms of the semisimplicial structure on
$X$.  The boundary requirement involves only the faces and the
basepoint, which is specified by the given degeneracy $s_0$.  The
equivalence relation can evidently be checked without having explicit
control on the degeneracies.  The definition of the sum in the group
structure also only depends on the face maps.

\begin{rem}
As an alternate description, observe that we are considering
homotopies which take the form 
\[
\Map(\Delta^n \times \Delta^1, \tilde{X}),
\]
and these can be obtained by the prismatic decomposition of $\Delta^n
\times \Delta^1$ into $(n+1)$-simplices glued along faces; this can be
described entirely in terms of the semisimplicial data.  We will make
use of this perspective below.
\end{rem}

Summarizing, we have the following result.

\begin{prop}  \label{prop:equivalence_infty_category}
The $\infty$-category of semisimplicial sets and weak equivalences
(detected by the homotopy groups on Kan semisimplicial sets) is
equivalent to the $\infty$-category of simplicial sets. \qed
\end{prop}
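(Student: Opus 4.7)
The plan is to deduce the equivalence from the adjunction $i_! \dashv i^*$ between semisimplicial and simplicial sets already discussed in the excerpt, where $i_!$ is the left Kan extension along $i \colon \Delta_+ \hookrightarrow \Delta$. Concretely, I would show that this adjunction descends to an adjoint equivalence on the $\infty$-categorical localizations at the respective classes of weak equivalences, where on the semisimplicial side one uses the notion described in the excerpt (detected on Kan objects by the combinatorial homotopy group formulas built from face maps alone).

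First I would verify that $i_!$ preserves weak equivalences. The key input is the natural isomorphism $|i_! X| \cong |X|$ of topological spaces, which follows by writing both as coends and observing that $i_! \Delta_+^n \cong \Delta^n$ together with the fact that $i_!$ preserves colimits. Combined with the theorem of Rourke--Sanderson (quoted in the excerpt) that every Kan semisimplicial set admits a compatible simplicial structure whose simplicial homotopy groups agree with the combinatorial homotopy groups above, this identifies the two notions of weak equivalence on either side of the adjunction with weak equivalences of geometric realizations. An analogous argument, using that the underlying semisimplicial set of a Kan simplicial set is Kan and that homotopy groups are computed by the same boundary-based formulas, shows that $i^*$ also preserves and reflects weak equivalences.

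Second I would show that the unit $X \to i^* i_! X$ and counit $i_! i^* Y \to Y$ are weak equivalences, so that the adjunction descends to an equivalence after localization. Applying $|\cdot|$ to the counit yields the canonical comparison from the thick (fat) realization of $i^* Y$ to the thin realization of $Y$; this is a weak equivalence because every simplicial set is Reedy cofibrant (all degeneracy maps are split monomorphisms, hence cofibrations of sets). The unit is handled by the same analysis applied after $i_!$, using $|i_! X| \cong |X|$ to identify its realization with the corresponding comparison map. Together with the preceding paragraph, this shows that $i_!$ and $i^*$ induce mutually inverse equivalences on $\infty$-categorical localizations.

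The main obstacle is the careful bookkeeping between the thick realization inherent to semisimplicial sets and the usual thin realization of simplicial sets, and verifying that the combinatorial homotopy group description used in the excerpt to detect semisimplicial weak equivalences really matches the geometric one under $|\cdot|$. Once these identifications are in place the result is formal; in particular no model-categorical machinery beyond Reedy cofibrancy of simplicial sets is required, and the proof reduces to the observations already sketched in the paragraphs preceding the proposition statement.
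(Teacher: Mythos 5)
Your proposal is correct and follows essentially the same route as the paper, which states this proposition as a summary of the immediately preceding discussion: the adjunction $i_! \dashv i^*$ with $i_!\Delta^n_+ = \Delta^n$, the Rourke--Sanderson theorem identifying Kan semisimplicial sets with Kan simplicial sets, and the purely face-map-based computation of homotopy groups. Your explicit verification that the unit and counit are weak equivalences via the thick/thin realization comparison is the standard way to make the paper's terse argument precise, not a genuinely different strategy.
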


As a corollary, when $Y$ is a Kan simplicial set, we have equivalences
of mapping semisimplicial sets 
\begin{equation}
\Map(i^* Y, i^* X) \htp i^* \Map(X,Y)
\end{equation}
and equivalences of spaces
\begin{equation}
|\Map(i^* Y, i^* X)| \htp |i^* \Map(X,Y)| \htp |\Map(X,Y)|. 
\end{equation}

In particular, we shall elide the difference between the semisimplicial set underlying $\Flow$, and its simplicial structure arising from  
Proposition \ref{prop:degeneracy_condition}, and thus state all our results in terms of simplicial sets and quasicategories, while providing all proofs using only the underlying semisimplicial structure. 

\subsection{Pointed mapping spaces in $\Flow$ }
\label{sec:flow-pointed}
In what follows, we let ${\Flow}(\bX,\bY)$ denote the space
of maps between flow categories $\bX$ and $\bY$, which we can model as
the simplicial set obtained as the pullback
\begin{equation}
\begin{tikzcd}
  {\Flow}(\bX,\bY) \ar[r] \ar[d] & \Fun(\Delta^1,{\Flow}) \ar[d] \\
  \{\bX\} \times \{\bY\} \ar[r] & {\Flow} \times {\Flow},
\end{tikzcd}
\end{equation}
where we are using the canonical isomorphism
\begin{equation}
\Fun(\partial \Delta^1, {\Flow}) \cong {\Flow} \times
{\Flow}
\end{equation}
to define the righthand vertical map.  Explicitly, the $n$-simplices
of ${\Flow}(\bX,\bY)$ are specified by the set of simplicial maps
\begin{equation}
\Delta^1 \times \Delta^n \to {\Flow}
\end{equation}
that restrict to $\bX$ and $\bY$ on the boundaries of the copy of
$\Delta^1$, with the simplicial identities determined by the standard
cosimplicial object $\Delta^{\bullet}$.

As discussed in the preceding section, the underlying semisimplicial
set associated to this Kan complex (equivalently the semisimplicial
mapping space between flow categories $\bX$ and $\bY$) is given by the
analogous pullback in semisimplicial sets
\begin{equation}
\begin{tikzcd}
\Flow(\bX,\bY) \ar[r] \ar[d] & \Fun(\Delta_+^1, \Flow) \ar[d] \\
\{\bX\} \times \{\bY\} \ar[r] & \Flow \times \Flow.
\end{tikzcd}
\end{equation}

The mapping spaces ${\Flow}(\bX,\bY)$ are Kan complexes. 
Therefore, as discussed above we can compute the
homotopy groups of $\Flow(\bX, \bY)$ 
combinatorially.  More generally, we have that the loop space
\begin{equation}
\Omega_+ \Flow(\bX,\bY) = \Map(\Delta_+^1 / \partial \Delta_+^1,
\Flow(\bX,\bY))
\end{equation}
is a Kan complex.

Finally, we identify canonical basepoints in the mapping spaces of
$\Flow$:

\begin{lem}\label{lem:pointedmappingspace}
The simplicial set ${\Flow}(\bX, \bY)$ can be given a canonical
basepoint using the vertex corresponding to the empty elementary
$1$-simplex with boundaries $\bX$ and $\bY$, which we write as
$\emptyset_{\bX,\bY}$. \qed
\end{lem}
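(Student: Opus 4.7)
The plan is to define $\emptyset_{\bX,\bY}$ explicitly as the flow bimodule which, for each pair of objects $p \in \ob(\bX)$ and $q \in \ob(\bY)$, assigns the empty derived orbifold to the morphism space. Concretely, I would construct an elementary flow $1$-simplex $\bZ$ with $\vec{\cP} = (\ob(\bX), \ob(\bY))$ by setting $\bZ(p,q) = \bX(p,q)$ whenever both objects lie in $\ob(\bX)$, $\bZ(p,q) = \bY(p,q)$ whenever both lie in $\ob(\bY)$, and $\bZ(p,q) = \emptyset$ (the empty derived orbifold, equipped with the unique stratification) for $p \in \ob(\bX)$ and $q \in \ob(\bY)$. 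The equivalences onto corner strata required by Definition~\ref{def:flow_simplex} are specified by the identity on the $\bX$- and $\bY$-components, and all other data collapses to maps with empty source.

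Next, I would verify the axioms of Definition~\ref{def:flow_simplex}. The associativity of composition and compatibility with stratification follow tautologically: any composition involving a factor of $\emptyset$ is again $\emptyset$, and the stratifying $2$-category $\vec{\cP}^{\Gamma}(p,q)$ is matched by the empty derived orbifold (whose own stratifying category is trivially any model for manifolds with corners). The properness condition in \eqref{eq:lower_bound_flow_simplex} is inherited from $\bX$ and $\bY$ since the empty components contribute no zero locus. The face maps $\partial^0\bZ = \bY$ and $\partial^1\bZ = \bX$ are manifest from the construction, so $\bZ$ is indeed a vertex of $\Flow(\bX,\bY)$.

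For the structured variant needed for $\Flow^{\cS}$, the same construction lifts canonically: any virtual vector bundle, relative framing, or complex/framed data over an empty underlying orbifold is unique, so the bicategorical lifting conditions in Definition~\ref{def:framed_flow_simplex-elementary} and the compatibility isomorphisms in Definition~\ref{def:framed_flow_simplex} are all satisfied vacuously on the cross components, while on the pure $\bX$- and $\bY$-components we reuse the given structures. In particular, no choices are involved, so $\emptyset_{\bX,\bY}$ is determined by $\bX$ and $\bY$ on the nose.

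There is no real obstacle here; the content of the lemma is simply that the empty derived orbifold exists and is the neutral element for the relevant constructions, so the single $1$-simplex $\emptyset_{\bX,\bY}$ provides a canonically chosen vertex of $\Flow(\bX,\bY)$ (resp.\ $\Flow^{\cS}(\bX,\bY)$), serving as the basepoint used in the formation of $\Omega_+\Flow(\bX,\bY)$ and throughout the arguments of Section~\ref{sec:prestable-structure}.
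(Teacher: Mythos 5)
Your construction is exactly the one the paper has in mind: the paper states this lemma with no proof (it is marked \qed as immediate), and the intended content is precisely that the empty derived orbifold on all cross morphism spaces, together with $\bX$ and $\bY$ on the pure components, vacuously satisfies all the axioms of an elementary flow $1$-simplex and its structured lifts. Your verification is correct and involves no deviation from the paper's (implicit) argument.
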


It is useful for subsequent work to write out explicitly what $s_0^n
(\emptyset_{\bX,\bY})$ looks like.  When $n = 1$, we have an elementary
flow $2$-simplex with vertices $\bX$, $\bX$, and $\bY$ and one edge given by the diagonal bimodule on $\bX$. The other two edges agree with $\emptyset_{\bX,\bY}$, and the value on the top stratum is again empty (in the sense that it assigns the empty derived orbifold to each pair of objects of $\bX$ and $\bY$). 
When $n = 2$, we have a flow $3$-simplex with vertices $\bX$, $\bX$, $\bX$, and $\bY$; each stratum involving a copy of $\bY$ is empty, while those involving only copies of $\bX$ are assigned the diagonal on $\bX$, or its image under $s_0$. 
The basepoint $(s_0^n) \emptyset_{\bX, \bY}$ for $n
> 2$ can be described analogously.

We now use the following criterion to check that the empty flow category
specifies a zero object for ${\Flow}$: given an
$\infty$-category $\cC$, an object $c \in \cC$ is a zero object if
every map of simplicial sets
\begin{equation}
\partial \Delta^n \to \cC
\end{equation}
where the $0$-vertex is $c$ can be extended to a map $\Delta^n \to
\cC$.

\begin{lem}
The quasicategory ${\Flow}$ is pointed with zero object
specified by the flow category $\emptyset$ whose set of
objects is empty.
\end{lem}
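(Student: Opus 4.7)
The plan is to verify that $\emptyset$ is both initial and terminal in $\Flow$, which together make it a zero object. Initiality is the criterion stated just above the lemma: given any $f \co \partial \Delta^n \to \Flow$ with $f(\{0\}) = \emptyset$, one must produce an extension to $\Delta^n$. Terminality is the symmetric assertion with the last vertex in place of the first, proved by the analogous argument, so I will describe only the initial case in detail.

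The decisive observation is a canonical bijection between flow $k$-simplices on a sequence $\vec{\cP} = (\cP_0, \cP_1, \ldots, \cP_k)$ with $\cP_0 = \emptyset$ and flow $(k-1)$-simplices on the truncated sequence $(\cP_1, \ldots, \cP_k)$. Indeed, the morphism derived orbifolds $\bX(p, r)$ are indexed by pairs of objects $p \in \cP_j$, $r \in \cP_\ell$; since $\cP_0$ has no objects, the relevant pairs coincide with those coming from $(\cP_1, \ldots, \cP_k)$. Moreover, the stratifying category $\vec{\cP}^{\Gamma}(p, r)$ of Definition~\ref{def:Poset-flow-simplex} depends only on the subsequence of $\vec{\cP}$ from index $j$ to index $\ell$, and so is unchanged by prepending $\emptyset$. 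The composition maps, together with the internal coherence data of a flow simplex---the elementary simplices on proper strata, the transition maps, and the commuting Diagram~\eqref{eq:diagram_three_nested_strata}---all refer only to pairs of objects, and therefore transport along this bijection in a functorial manner. An analogous bijection holds when $\cP_k = \emptyset$ by the same reasoning.

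The extension is then immediate: let $\bY$ denote the flow $(n-1)$-simplex $f(\partial^0 \Delta^n)$ on $(\cP_1, \ldots, \cP_n)$, and define the extension $\bX \co \Delta^n \to \Flow$ so that its top-dimensional elementary simplex corresponds to $\bY$ under the bijection applied to the sequence $(\emptyset, \cP_1, \ldots, \cP_n)$; on proper faces, $\bX$ is set equal to $f$. By construction, $\bX$ restricts to $\bY$ on $\partial^0 \Delta^n$. For each $i > 0$, both $\bX|_{\partial^i \Delta^n}$ and $f(\partial^i \Delta^n)$ are flow simplices on sequences beginning with $\emptyset$, and under the bijection both correspond to the restriction of $\bY$ to $\partial^0 \partial^i \Delta^n = \partial^{i-1} \partial^0 \Delta^n$ (using the simplicial identity and the coherence of the face maps of $f$); hence they agree. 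The main point requiring care is to verify that the bijection of the second paragraph is natural with respect to restriction to boundary strata and with respect to the transition maps of a flow simplex, but this is essentially by inspection of the definitions, with no serious obstacle.
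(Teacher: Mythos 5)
The central claim of your second paragraph --- that flow $k$-simplices on $(\emptyset, \cP_1, \ldots, \cP_k)$ are in canonical bijection with flow $(k-1)$-simplices on $(\cP_1, \ldots, \cP_k)$ --- is false, and the failure is exactly at the point you wave off at the end. The bijection does hold for \emph{elementary} simplices: since $\cP_0 = \emptyset$ contributes no objects, and the stratifying categories depend only on the subsequence between the indices of the two chosen objects, a lift of $(\emptyset, \cP_1, \ldots, \cP_k)^\Gamma$ is the same data as a lift of $(\cP_1,\ldots,\cP_k)^\Gamma$. But a flow $k$-simplex in the sense of Definition~\ref{def:flow_simplex} is a coherent family of elementary simplices $\bX_\sigma$ over \emph{all} faces $\sigma$ of $\Delta^k$, related by transition maps that are strong equivalences (zero-section inclusions of vector bundles), not isomorphisms. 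For each face $\tau \subset \{1,\ldots,k\}$ it therefore records both $\bX_\tau$ and $\bX_{\{0\}\cup\tau}$, together with a strong equivalence $\bX_\tau \to \partial^\tau \bX_{\{0\}\cup\tau}$; the second elementary simplex is an arbitrary stabilization of the first and is genuinely extra data. This is precisely why a $1$-simplex from $\emptyset$ to a flow category $\bX_1$ is the choice of a stabilization of $\bX_1$, and why the paper insists on flow simplices rather than elementary ones (see the remark following Definition~\ref{def:flow_simplex}).

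Concretely, for $n=2$ your construction breaks as follows. The boundary datum $f$ prescribes, via the edges $01$ and $02$, stabilizations $\bX_{\{0,1\}}$ of $\bX_1$ and $\bX_{\{0,2\}}$ of $\bX_2$, and via the edge $12$ an elementary simplex $\bB$ whose restriction $\partial^{\{1\}}\bB$ is a possibly different stabilization of $\bX_1$. Your filler takes the top elementary simplex $\bX_{\{0,1,2\}}$ to be $\bB$ itself, and must then supply a transition map $\bX_{\{0,1\}} \to \partial^{\{0,1\}}\bX_{\{0,1,2\}} = \partial^{\{1\}}\bB$ factoring the given map $\bX_1 \to \partial^{\{1\}}\bB$; but $\bX_{\{0,1\}}$ and $\partial^{\{1\}}\bB$ are unrelated stabilizations of $\bX_1$, and no such map exists in general. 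For the same reason your assertion that $\bX|_{\partial^i\Delta^n}$ agrees with $f(\partial^i\Delta^n)$ for $i>0$ fails: the latter is not determined by its own $\partial^0$ face. What is actually needed is to take the top elementary simplex to be a sufficiently large \emph{common} stabilization of $\bB$ and of all the prescribed $\bX_{\{0\}\cup\tau}$, which requires extending vector bundles from boundary strata to the interiors of the morphism orbifolds --- the machinery of Section~\ref{sec:stabilization} and Proposition~\ref{prop:horn_filling}, resting on Pardon's theorem on enough vector bundles. The paper's own one-line proof (take the ``empty'' elementary $n$-simplex as filler) is itself terse on this reconciliation, but your write-up rests on a stated bijection that is definitely false and on transition maps that do not exist, so the gap must be addressed rather than dismissed as ``inspection of the definitions.''
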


\begin{proof}
The empty elementary
$n$-simplex provides a filler that extends a map of semisimplicial sets $\partial \Delta_+^n$ to $\Flow$ with $0$-vertex
the empty flow category to a map $\Delta_+^n
\to \Flow$; it is clear that this is compatible with the assignments
on the boundary $\partial \Delta_+^n$.
\end{proof}

\subsection{${\Flow}$ is a semistable quasicategory}
\label{sec:semistable}

The purpose of this section is to construct a pair $({\Sigma}, \Sigma^{-1}) $ of quasi-inverse endofunctors of 
$ {\Flow}$ and show that
for flow categories $\bX$ and $\bY$ there are natural equivalences of
mapping spaces
\begin{align}
 \Omega_+ \Flow(\bX, {\Sigma} \bY)  \leftarrow  \Flow(\bX, \bY) \to \Omega_+ \Flow({\Sigma}^{-1} \bX, \bY).
\end{align}
By Proposition \ref{prop:equivalence_infty_category},  these  equivalences imply that there are corresponding equivalences of
simplicial mapping spaces.
By the Yoneda lemma, the first equivalence
shows that ${\Sigma}$ is the suspension in ${\Flow}$.
The second shows that ${\Flow}$ is semistable.

In the unstructured case that we are considering, 
${\Sigma}$ and $\Sigma^{-1}$ are the identity functor, and so the desired
equivalences collapse to the single comparison 
\begin{equation} \label{eq:comparison_map_loops}
\Flow(\bX, \bY) \to \Omega_+ \Flow(\bX, \bY),
\end{equation}
which equivalently by adjunction is determined by a map 
\begin{equation} \label{eq:suspension_map}
\Sigma_+ 
\Flow(\bX, \bY) \to \Flow(\bX, \bY).
\end{equation}

The comparison map is constructed in the following lemma.  Recall that
for simplicial sets $X_\bullet$ and $Y_\bullet$, a map $\Sigma
X_\bullet \to Y_\bullet$ is specified by a family of maps $\sigma_n
\colon X_n \to Y_{n+1}$ that are compatible with the simplicial
identities and such that
\begin{equation}
d_0 \sigma_n = \ast \,\,\textrm{and}\,\, d_1 \circ d_2 \circ
\ldots \circ d_{n+1} \sigma_n = \ast.
\end{equation}

In what follows, we denote by $\Flow_{k}(-,-)$ the $k$-simplices of
the
mapping space $\Flow(-,-)$.  Recall that by
Lemma~\ref{lem:pointedmappingspace}, $\Flow(-,-)$ is pointed with
basepoint specified (in degree $0$) by the preferred representative of
$\bX \to \emptyset \to \bY$ specified by the empty elementary flow
$1$-simplex.

\begin{lem} \label{lem:suspension_unstructured}
For each $n \geq 0$, there are maps
\begin{equation}
\sigma_n \co  \Flow_{n}(\bX, \bY) \to \Flow_{n+1}(\bX, \bY)
\end{equation}
of sets, which assemble into the map in Equation  \eqref{eq:suspension_map}.

\end{lem}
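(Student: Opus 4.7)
The plan is to construct $\sigma_n$ geometrically, producing an $(n+1)$-simplex of $\Flow(\bX,\bY)$ whose extremal faces collapse to the basepoint $\emptyset_{\bX,\bY}$, by inserting a conic degeneration direction into the given $n$-simplex. Recall that an $n$-simplex of $\Flow(\bX,\bY)$ corresponds to a map $\Delta_+^1 \otimes \Delta_+^n \to \Flow$ restricting to $\bX$ and $\bY$ on the endpoints of $\Delta_+^1$; equivalently, after prismatic decomposition, this is a compatible family of flow $(n+1)$-simplices with pinned initial and terminal vertices.

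First, I would handle the case $n=0$ directly as motivation: given a flow bimodule $\bM \co \bX \to \bY$, I produce a $1$-simplex of $\Flow(\bX,\bY)$ by describing the two flow $2$-simplices obtained from the prismatic triangulation of $\Delta^1 \times \Delta^1$. Each of these is an elementary flow $2$-simplex whose interior stratum is empty (in the sense that the assigned derived orbifolds are empty) and whose nontrivial edges are populated by $\bM$, with the remaining edges realized by the basepoint $\emptyset$. The resulting $1$-simplex has both $d_0$ and $d_1$ equal to $\emptyset_{\bX,\bY}$, as required.

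For general $n$, I would carry out the analogous construction by applying a conic degeneration (Section~\ref{sec:conic-degenerations}) along the new simplicial direction. Given $\sigma \in \Flow_n(\bX,\bY)$ expressed on the prisms partitioning $\Delta_+^1 \otimes \Delta_+^n$, the image $\sigma_n(\sigma)$ is defined on the prisms of $\Delta_+^1 \otimes \Delta_+^{n+1}$ by taking the derived orbifolds from $\sigma$ and replacing them with their conic degeneration $\cD(-)$ along the added direction, with the discriminant chosen so that the two extremal faces of the new $\Delta_+^{n+1}$ direction become empty, while the bimodule compatibility maps \eqref{eq:bimodule_structure_maps} supply the required composition structure. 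The partial order on the discriminant strata is induced, as in Section~\ref{sec:diagonal-bimodule}, by the action filtration together with the new simplicial coordinate.

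The main obstacle is verifying the internal simplicial compatibility $d_i \sigma_n = \sigma_{n-1} d_{i-1}$ on the intermediate faces $1 \leq i \leq n$, as well as the compatibility with the composition maps of the ambient flow simplices. Both follow from the functoriality of the conic degeneration for immersions of strata (Lemma~\ref{lem:strong_equivalence_conic}) together with the compatibility of degenerations over boundary strata established in Lemma~\ref{lem:degeneration_map_boundary_strata_model}. Consistency with the tangential framings (implicit even in the unstructured case through the choice of collars) is guaranteed by the same coherent choice of collars used in Sections~\ref{sec:diagonal-bimodule} and~\ref{sec:fill-init-term}, so the construction descends correctly to flow simplices and produces a well-defined family $\{\sigma_n\}$ assembling into Equation~\eqref{eq:suspension_map}.
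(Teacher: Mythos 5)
Your general-$n$ construction reaches for the wrong tool, and the problem is already visible in the face conditions. For $\{\sigma_n\}$ to assemble into a map out of $\Sigma_+\Flow(\bX,\bY)$ you need $d_0\sigma_n = \ast$ and $d_1 d_2\cdots d_{n+1}\sigma_n = \ast$, where $\ast$ is the \emph{empty} bimodule $\emptyset_{\bX,\bY}$. The conic degeneration cannot deliver this: by Lemma~\ref{lem:degeneration_map_boundary_strata_model} (together with the convention that $\cD_0$ is a point), the two boundary facets of $\cD\bX$ transverse to the interval direction are copies of $\bX$ itself, never empty; the choice of discriminant only governs where the interval fibres break, not whether the endpoint faces vanish. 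This is exactly why conic degenerations are used in Section~\ref{sec:fill-init-term} to build the \emph{degeneracies}, which satisfy the opposite identities $d_0 s_0 = d_1 s_0 = \id$. Since the structure maps of a flow simplex are strong equivalences onto boundary strata, a simplex with empty extremal faces must have the corresponding boundary strata of its top-dimensional morphism spaces empty, and $\cD\bM(p,q)$ does not. Your $n=0$ discussion has the mirror-image problem: if the interior stratum of a triangle is empty, every one of its faces is forced to be empty as well (a strong equivalence onto a stratum of the empty orbifold has empty source), so the triangle cannot simultaneously carry $\bM$ on an edge.

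The paper's construction involves no new geometry at all: it is a pure relabeling. The inclusions $\vec{\cP}^{\Gamma}\to s_i\vec{\cP}^{\Gamma}$ embed the stratifying category of an elementary flow simplex into that of one with a repeated vertex, and one transports the \emph{same} derived orbifolds along this embedding, assigning the empty derived orbifold to every stratum not in the image. Performing this for $0\le i\le n$ and gluing the resulting simplices along the prismatic decomposition of $\Delta^1\otimes\Delta^{n+1}$ produces $\sigma_n$; the vanishing of $d_0\sigma_n$ and of the iterated last face is then immediate, because those faces lie entirely outside the image of the relabeling. If you want to repair your argument, replace the conic degeneration by this relabeling; the degeneration machinery belongs to the construction of units, not of the suspension comparison.
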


\begin{proof}
The desired map in Equation \eqref{eq:suspension_map}  is explicitly given as maps of sets
\begin{equation}
\sigma_n \co \Map(\Delta^1_+ \otimes \Delta^n_+, \Flow) \to
\Map(\Delta^1_+ \otimes \Delta^{n+1}_+, \Flow)
\end{equation}
such that the restrictions to the endpoints of $\Delta^1_+$ are
$\bX$ and $\bY$ respectively and satisfying the conditions above.
To explain the idea of the construction, consider the case of $n=0$.
Then we are looking for a map of sets
\begin{equation}
\Map(\Delta^1_+, \Flow) \to \Map(\Delta^1_+ \otimes \Delta^1_+, \Flow),
\end{equation}
where on each side we are requiring that the restrictions to the
endpoints of the first copy of $\Delta^1_+$ are $\bX$ and $\bY$.
An element of the left hand side is a semisimplicial map $\Delta^1_+
\to \Flow$ that represents a flow $1$-simplex from $\bX$ to $\bY$.
An element of $\Map_{\bX, \bY}(\Delta^1 \times \Delta^1, \Flow)$
is specified by a homotopy commutative diagram  
\begin{equation}
\begin{tikzcd}
\bX \ar[r] \ar[d] & \bY \ar[d] \\
\bX \ar[r] & \bY.  
\end{tikzcd}  
\end{equation}
Using the standard decomposition of the prism $\Delta^1 \times
\Delta^1$ into the union of a pair of $2$-simplices, it suffices to
construct compatible maps $\Map(\Delta^1_+, \Flow) \to
\Map(\Delta^2_+, \Flow)$, which we do as follows.

Given $\vec{\cP} = (\cP_0, \ldots, \cP_n)$, there exists a map 
\begin{equation}
\vec{\cP}^{\Gamma} \to  s_1 \vec{\cP}^{\Gamma},
\end{equation}
whose image consists of arcs whose vertex carries the label $1$.  This
map permits us to relabel the underlying derived orbifolds of an
elementary flow $1$-simplex representing a map from $\bX \to \bY$
to produce an elementary flow $2$-simplex that repeats $\bY$.  On
the other hand, the relabeling induced by the analogous map
\begin{equation}
\vec{\cP}^{\Gamma} \to s_0 \vec{\cP}^{\Gamma}
\end{equation}
produces an elementary flow $2$-simplex that repeats $\bX$.  These
two simplices are compatible on the inner edge and thus produce a prism
with $\bX$ on the top and $\bY$ on the bottom.
Thus, this data specifies the required simplicial map
$\Delta^1 \otimes \Delta^1 \to \Flow$ and therefore a map of sets 
\[
\sigma_0 \colon \Map(\Delta^1, \Flow) \to \Map(\Delta^1 \otimes
\Delta^1, \Flow). 
\]
More generally, to specify an element of $\Map(\Delta^1 \otimes
\Delta^{n+1}, \Flow)$ given an element of $\Map(\Delta^1 \otimes
\Delta^n, \Flow)$, it suffices to consider the decomposition of
$\Delta^1 \otimes \Delta^n$ into $n+1$ $(n+1)$-simplices; 
we can perform analogous relabeling operations to
extend elementary flow $(n+1)$-simplices to elementary flow
$(n+2)$-simplices.  Specifically, we consider the maps
\begin{equation}
\vec{\cP}^{\Gamma} \to  s_i \vec{\cP}^{\Gamma}
\end{equation}
for $0 \leq i \leq n$ which result in relabeled flow simplices where
the first $i$ vertices are $\bX$ and the remaining vertices are
$\bY$.  These agree on their boundaries and the glued prism has
$\bX$ on the top and $\bY$ on the bottom; these relabelings glue
together to produce the desired map.

We now observe that the collection of maps $\sigma_n$ assemble to form
a map of semisimplicial sets.  Again starting with $\sigma_0$, we
observe that $d_0 \sigma_0 = d_1 \sigma_0 = \ast$; the relabeling
operation assigns the empty flow category to the faces.  Similarly,
for $\sigma_n$, we have that $d_0 \sigma_n = \ast$ since the
relabeling has the $0$th face always the image of the empty flow
category and $d_1 \circ \ldots \circ d_{n+1} \sigma_n$ is the empty
flow category.
\end{proof}

We now need to show that the adjoint of this map is an equivalence.
The basic idea of the proof is that for any spaces $X$ and $Y$, the
space of maps $X \to \Omega Y$ is equivalent to the space of null
homotopies of the constant map $X \to \ast \to Y$.  As a null homotopy
of the constant map on $\Flow(\bX,\bY)$ is equivalent to the data
of a flow bimodule representing a map from $\bX$ to $\bY$, the
result will follow.

\begin{lem} \label{lem:equivalence_loop_space}
The map produced in Lemma \ref{lem:suspension_unstructured} 
induces an equivalence in Equation  \eqref{eq:comparison_map_loops}. 
\end{lem}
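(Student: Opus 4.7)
The plan is to check directly that the map of Lemma \ref{lem:suspension_unstructured} induces a bijection on $\pi_n$ for every $n \geq 0$. Since $\Flow(\bX,\bY)$ is a Kan complex based at $\emptyset_{\bX,\bY}$ and $\Omega_+\Flow(\bX,\bY)$ is likewise Kan based at the constant loop at this vertex, this suffices. I will carry this out by identifying both homotopy groups with bordism classes of flow bimodules from $\bX$ to $\bY$, and checking that the map realizing the comparison is, up to this identification, the identity.

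The combinatorial identification of $\pi_n(\Flow(\bX,\bY))$ proceeds as follows. Unpacking the pullback defining $\Flow(\bX,\bY)$ and using that the semisimplicial mapping space $\Fun(\Delta_+^1,\Flow)$ agrees with $i^*\Fun(\Delta^1,\widetilde{\Flow})$, an $n$-simplex of $\Flow(\bX,\bY)$ whose boundary is (the appropriate iterated degeneracy of) $\emptyset_{\bX,\bY}$ is a simplicial map $\Delta_+^1\otimes\Delta_+^n \to \Flow$ satisfying constant-on-endpoints constraints, such that every proper face on $\Delta_+^1\otimes\partial\Delta_+^n$ is empty or diagonal. Decomposing $\Delta_+^1\otimes\Delta_+^n$ prismatically into $(n+1)$-simplices and walking through the stratification category $\vec{\cP}^{\Gamma}$ facet by facet, the basepoint condition forces all faces except one top stratum to be either empty or produced by the diagonal bimodule. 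The remaining unconstrained datum is precisely a single flow bimodule $\bM$ from $\bX$ to $\bY$, and the simplicial homotopy relation specializes to null-bordism of bimodules in the sense of Example~\ref{ex:bordism_groups} extended to the relative setting. The analogous analysis (with $n$ replaced by $n+1$) identifies $\pi_n(\Omega_+\Flow(\bX,\bY))=\pi_{n+1}(\Flow(\bX,\bY))$ with the same set of bordism classes of bimodules.

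The suspension map $\sigma_n$ is constructed by relabeling along the maps $\vec{\cP}^{\Gamma}\to s_i\vec{\cP}^{\Gamma}$, which produces a flow $(n+2)$-simplex by inserting strata that are empty on one side and diagonal-bimodule on the other. Under the bordism-class identification from the previous paragraph, the underlying bimodule $\bM$ is preserved and all inserted faces correspond to the basepoint data; consequently the induced map on $\pi_n$ is the identity on bordism classes of bimodules, hence a bijection.

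The main obstacle is the second step: verifying rigorously that the boundary constraint in a higher-dimensional simplex of the mapping space forces all facets other than a single top stratum to be reconstructible from the empty bimodule and the diagonals. This rests on an inductive analysis over the codimension of corner strata in $\vec{\cP}^{\Gamma}$, combined with the fact (Lemma \ref{lem:diagonal_bimodule_is_morphism}) that the diagonal bimodule is characterised by its stratification, so there is no moduli of diagonal-bimodule fillings of a face whose boundary is already prescribed by diagonals. Once this rigidity is in hand, the induction unfolds and the identifications with bordism classes of bimodules compatible with $\sigma_n$ are direct.
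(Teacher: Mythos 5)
Your overall strategy --- computing homotopy groups on both sides, identifying them with bordism classes of flow bimodules, and checking that the comparison map becomes the identity under these identifications --- is the same as the paper's, and your final step (the suspension $\sigma_n$ only inserts empty and diagonal strata, hence preserves the underlying bimodule) is correct. The gap is precisely at what you call the main obstacle, and the resolution you propose does not work. You claim that for a map $\Delta^1_+\otimes\Delta^n_+\to\Flow$ whose restriction to $\Delta^1_+\otimes\partial\Delta^n_+$ is the basepoint, all faces except a single top stratum are forced, because ``there is no moduli of diagonal-bimodule fillings of a face whose boundary is already prescribed by diagonals.'' Lemma~\ref{lem:diagonal_bimodule_is_morphism} only asserts that the diagonal bimodule \emph{is} a morphism of $\Flow$, not that it is the unique filling of its boundary; the space of fillings of a simplex all of whose proper faces are diagonals on $\bX$ is not a point in general (in the framed analogue with $\bX=\ast$, which the same argument must eventually cover, the endomorphism space realizes the stable stems). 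Concretely, already for $n=1$ the prism decomposes into two $2$-simplices sharing a diagonal edge $\bD\colon\bX\to\bY$ which is itself unconstrained; the free data consist of $\bD$ together with two top strata, each a null-bordism of $\bD$ (one against $s\bX$, one against $s\bY$), not a single unconstrained top stratum. Recovering a single closed bimodule from this requires gluing the two top strata along $\bD$, and for higher $n$ one must glue $n+1$ top strata along the interior faces; nothing in your induction supplies this.

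The paper sidesteps exactly this issue by routing through the zero object: a based loop is traded for a null-homotopy of the zero map, i.e.\ a $2$-simplex with middle vertex $\emptyset$. There the proper faces genuinely are forced --- they are empty because they involve the flow category with no objects --- so the unique free datum is the top stratum, which is literally a flow bimodule, and the higher homotopy groups are treated by the same device one dimension up. If you wish to keep the prismatic route, you must replace the rigidity claim by an explicit gluing of the top strata along the shared interior faces (using the collar and gluing constructions of Section~\ref{sec:kan-condition-flow}) and a verification that the resulting bimodule is well defined up to bordism. As written, the induction you describe does not close.
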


\begin{proof}
We show that the map induces an isomorphism on homotopy groups.  We first
consider the components; i.e., the induced map 
\begin{equation}
\pi_0(\Flow(\bX, \bY)) \to \pi_0(\Omega_+ \Flow(\bX, \bY)).
\end{equation}
An element of $\pi_0(\Flow(\bX, \bY))$ is specified by an
equivalence class of vertices of $\Flow(\bX, \bY)$, i.e.,
equivalence classes of maps $\Delta^1_+ \to \Flow$ (flow bimodules)
which restrict to $\bX$ and $\bY$ on the boundaries of $\Delta_+^1$.
An element of $\pi_0(\Omega_+ \Flow(\bX, \bY))$ is specified by a
map $\Delta^1 \otimes \Delta^1 \to \Flow$ with prescribed behavior on
the boundaries.  Unwinding, the set of such diagrams is equivalent to
the set of $2$-simplices (null homotopies)
\begin{equation}
\begin{tikzcd}
  \bX \ar[dr] \ar[d] & \\
  \emptyset \ar[r] & \bY
\end{tikzcd}
\end{equation}
where the edge $\bX \to \bY$ is the empty bimodule, as are the the other two edges by necessity.

We now unpack what a flow $2$-simplex of this form encodes.  We have
three sets, $\emptyset, \cP_0, \cP_1$, where $\cP_0$
encodes the stratification of $\bX$ and $\cP_1$ encodes the
stratification of $\bY$.  Associated to the vertices are the empty
flow category, $\bX$, and $\bY$.  The edges $[\bX, \emptyset]$
and $[\emptyset, \bY]$ themselves encode simply the flow categories
$\bX$ and $\bY$.
The top stratum
is a flow category $\bM$ with objects $\ob(\cP_0) \times \ob(\cP_1)$
and the boundary inclusions specify action maps
\begin{equation}
 \bX(p,q) \times \bM(q,r) \to \bM(p,r) \leftarrow \bM(p,q) \times \bY(q,r) 
\end{equation}
that are associative.  That is, this data is precisely a flow
bimodule, and it is straightforward to check that the equivalence
relation of a homotopy of such diagrams that preserves the boundaries
is exactly an equivalence of flow bimodules.  By inspection, we can
see that the adjoint map takes the diagram above to the associated
flow bimodule $\bX \to \bY$; the map is an isomorphism on
$\pi_0$.

The argument for the higher homotopy groups follows from an analogous
analysis.  Specifically, the map
\begin{equation}
\pi_n(\Flow(\bX, \bY)) \to \pi_n(\Omega_+ \Flow(\bX, \bY)),
\end{equation}
is a map from the set of equivalence classes of maps
$\Delta^n \to \Flow$ with null boundaries to equivalence classes of maps $\Delta^n
\times \Delta^1 \to \Flow$ with null boundaries.  As above, the target is equivalent to the set of
equivalence classes of maps $\Delta^{n+1} \to \Flow$ with null boundaries. 
\end{proof}

\subsection{Cofibers in $\Flow$}
\label{sec:dist-triangl}

Let $\bB \co \bX \to \bY$ be a morphism in $\Flow$, i.e., a flow
$1$-simplex.  Our purpose in this section is to construct a flow
category $C(\bB)$, which we will refer to as a \emph{cone} of $\bB$,
that provides an explicit model of the cofiber of $\bB$.  This
completes the proof that ${\Flow}$ is a stable
$\infty$-category.

\subsubsection{The construction of the cone}

We begin by recalling that, by assumption, a flow bimodule is bounded below in the sense that the energy map
\begin{equation}
  \bB(x,y) \to \bR  
\end{equation}
has a uniform lower bound. If this lower bound is negative, let $\gamma \in \Gamma$ be an element whose energy is larger than its absolute value (otherwise, we set it equal to $0$), and define $T^\gamma    \bB(p,q)$ to be the derived orbifolds underlying $\bB$, with energy shifted by $\gamma$.  

\begin{defin}
The objects of $C(\bB)$ are the disjoint union $\ob(\bX) \coprod \ob(\bY)$,
and the morphisms are specified by the formulas: 
\begin{equation}
C(\bB)(p,q) \equiv
\begin{cases}
  \bX(p,q) & \textrm{ if } p,q \in \Ob(\bX) \\
 T^\gamma \bB(p,q) & \textrm{ if } p \in \Ob(\bX) \textrm{ and } q \in \Ob(\bY) \\
  \bY(p,q) & \textrm{ if } p,q \in \Ob(\bY) \\
  \emptyset & \textrm{ if } p \in \Ob(\bY) \textrm{ and } q \in \Ob(\bX).
\end{cases}
\end{equation}
\end{defin}

In order to verify that $C(\bB)$ is a flow category, observe that the
required composition maps are induced by the composition on $\bX$,
the composition on $\bY$, and the bimodule structure maps 
of $\bB$.  Implicit in this observation is the fact that the
stratification on $C(\bB)$ is induced from the stratifications on $\bX$,
$\bY$, and $\bB$; these are compatible because of the bimodule
structure.  Finally, $C(\bB)$ inherits properness over $[0,\infty)$ from the fact that
$\bX$, $\bY$, and $\bB$ are proper, and the energy shift.  Summarizing, we have the
following proposition.

\begin{lem}
The category $C(\bB)$ is a flow category with object set $\ob(\bX)
\coprod \ob(\bY)$. \qed
\end{lem}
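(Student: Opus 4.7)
The plan is to verify in order the three pieces of data required by Definition \ref{def:flow_category}: (a) that each morphism object carries a stratification by the appropriate slice of $\cP^{\Gamma}$, where $\cP = \ob(\bX)\sqcup\ob(\bY)$; (b) that the composition maps exist, lift these stratifications, and are strictly associative; and (c) that the properness condition over $[0,\infty)$ is satisfied. Most of the work is essentially assembling data already at hand; the only subtlety is checking that everything is consistent on the ``mixed'' strata.

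First I would define the composition. Given a triple $(p,q,r)$ of objects of $C(\bB)$, inspection of the definition shows that there are four cases in which both $C(\bB)(p,q)$ and $C(\bB)(q,r)$ are nonempty: (i) $p,q,r\in\ob(\bX)$, composition from $\bX$; (ii) $p,q,r\in\ob(\bY)$, composition from $\bY$; (iii) $p,q\in\ob(\bX)$, $r\in\ob(\bY)$, composition from the left bimodule action $\bX(p,q)\times T^{\gamma}\bB(q,r)\to T^{\gamma}\bB(p,r)$; (iv) $p\in\ob(\bX)$, $q,r\in\ob(\bY)$, composition from the right bimodule action $T^{\gamma}\bB(p,q)\times\bY(q,r)\to T^{\gamma}\bB(p,r)$. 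In every remaining configuration at least one factor is the empty derived orbifold, and the composition is the unique map out of the empty orbifold. Strict associativity for each triple reduces to the given associativity of composition in $\bX$ and $\bY$ and the two associativity axioms for a flow bimodule (left action associates with $\bX$-composition, right with $\bY$-composition, and the two actions commute), so no new coherence needs to be checked.

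For the stratification I would construct, for each pair $(p,q)$, a strict $2$-functor from the category stratifying the underlying derived orbifold of $C(\bB)(p,q)$ to $\cP^{\Gamma}(p,q)$. When $p,q$ lie in the same factor this is the given stratification of $\bX$ or $\bY$, composed with the evident inclusion of $\ob(\bX)^{\Gamma}$ or $\ob(\bY)^{\Gamma}$ into $\cP^{\Gamma}$. For $p\in\ob(\bX)$ and $q\in\ob(\bY)$, the bimodule $\bB$ comes stratified by $(\ob(\bX),\ob(\bY))^{\Gamma}(p,q)$ in the sense of Definition \ref{def:Poset-flow-simplex}; forgetting the restriction that $\ob(\bX)$-labels precede $\ob(\bY)$-labels along an arc gives a faithful functor into $\cP^{\Gamma}(p,q)$, and an energy shift by $\gamma$ on every vertex label records the passage from $\bB$ to $T^{\gamma}\bB$. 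Compatibility with composition then follows from the compatibility of the bimodule stratification with the left and right actions, together with the observation that arcs labelled by a $\ob(\bY)$-object followed by a $\ob(\bX)$-object correspond only to strata of morphism objects that have been set to be empty, so no consistency condition is imposed by them.

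Finally, for properness, fix $p\in\ob(C(\bB))$ and consider the energy map on $\coprod_q C(\bB)(p,q)$. If $p\in\ob(\bX)$, this decomposes as the energy map of $\coprod_{q\in\ob(\bX)}\bX(p,q)$ together with the shifted energy map of $\coprod_{q\in\ob(\bY)}T^{\gamma}\bB(p,q)$. The first is proper by the hypothesis that $\bX$ is a flow category; the second is proper because $\bB$, viewed as a flow $1$-simplex, already satisfies the lower-bound properness condition \eqref{eq:lower_bound_flow_simplex}, and our choice of $\gamma\in\Gamma$ with action exceeding the absolute value of the lower bound translates this to a proper map into $[0,\infty)$. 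The case $p\in\ob(\bY)$ reduces to properness of $\bY$. I expect the main fiddly point to be the bookkeeping in the mixed stratification, specifically checking that the $2$-functor into $\cP^{\Gamma}(p,q)$ that forgets the ordering constraint on edge labels is compatible with both bimodule actions at the level of stratifying categories; everything else is routine verification.
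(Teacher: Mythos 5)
Your proposal is correct and follows essentially the same route as the paper, which disposes of the lemma in the paragraph preceding its statement by observing that compositions come from $\bX$, $\bY$, and the bimodule structure maps, that the stratifications are compatible for the same reason, and that properness follows from properness of the constituents together with the energy shift; you have simply written out the case analysis the paper leaves implicit. One small imprecision: the shift $T^{\gamma}$ should be applied to the label of the single distinguished vertex separating the $\ob(\bX)$-labelled edges from the $\ob(\bY)$-labelled edges (equivalently, to the total energy), not to ``every vertex label,'' since shifting each vertex would break compatibility with the composition maps on higher-codimension strata.
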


Next, we show that $\bB$ fits into a candidate cofiber sequence with
$C(\bB)$ by constructing maps $\bI \co \bY \to C(\bB)$ and $\bP \co
C(\bB) \to \bX$ equipped with null homotopies of the composites
\begin{align}
\bI \circ \bB \co &  \bX \to \bY \to C(\bB) \\
\bP \circ \bI \co  & \bY \to C(\bB) \to \bX \\
\bB \circ \bP \co  & C(\bB) \to \bX \to \bY.
\end{align}

We begin by specifying the maps by giving flow bimodules.  The
morphism $\bI \co \bY \to C(\bB)$ is represented by the flow bimodule 
\begin{equation}
\bI(p,q) \equiv
\begin{cases}
  \emptyset  & \textrm{ if } p \in \Ob(\bY) \textrm{ and } q \in \Ob(\bX) \\
T^{\gamma}  s\bY(p,q) & \textrm{ if } p,q \in \Ob(\bY),
\end{cases}
\end{equation}
where we recall that $s \bY$ is the diagonal bimodule and $T^\gamma$ indicates a shift in action.  The left 
action by $\bY$ is given by the left action of $\bY$ on $s\bY$ and the right
action by $C(\bB)$ is specified by the right action of $\bY$ on
$s\bY$. 

The morphism $\bP \co C(\bB) \to \bX $ is specified by the flow bimodule
\begin{equation}
\bP(p,q) \equiv
\begin{cases}
  s\bX(p,q) & \textrm{ if } p,q \in \Ob(\bX)   \\
    \emptyset  & \textrm{ if } p \in \Ob(\bY) \textrm{ and } q \in \Ob(\bX).
\end{cases}
\end{equation}
The left action of $C(\bB)$ is given by the left action of $\bX$ on
$s\bX$ and the right action of $\bX$ is given by the right action of
$\bX$ on $s\bX$.

We now show that the composites of these morphisms are null:
\begin{lem} \label{lem:null_homotopy_exact_sequence}
There are distinguished null-homotopies for the composites $\bI
\circ \bB$, $\bP \circ \bI$, and $\bB \circ \bP$.
\end{lem}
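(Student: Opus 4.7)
The plan is as follows. The case of $\bP \circ \bI$ is immediate: unpacking the composition formula, for any $p \in \ob(\bY)$ and $q \in \ob(\bX)$ the composite $\coprod_{r \in \ob(C(\bB))} \bI(p,r) \times \bP(r,q)/\sim$ vanishes term by term, since any intermediate $r$ lies either in $\ob(\bX)$ (where $\bI(p,r) = \emptyset$ by definition) or in $\ob(\bY)$ (where $\bP(r,q) = \emptyset$). Hence $\bP \circ \bI$ is \emph{literally} the empty bimodule, and the canonical null-homotopy is provided by the unique flow $2$-simplex with $\emptyset$ top stratum and appropriate degenerate faces.

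For $\bI \circ \bB$, I would first compute the composite: on the block $p \in \ob(\bX)$, $q \in \ob(\bX)$ it vanishes exactly as in the previous case (since $\bI(r,q) = \emptyset$ for $r \in \ob(\bY)$, $q \in \ob(\bX)$), while on the block $p \in \ob(\bX)$, $q \in \ob(\bY)$ it equals $\coprod_{r \in \ob(\bY)} \bB(p,r) \times T^\gamma s\bY(r, q)/\sim$. Using that $s\bY$ is the diagonal bimodule on $\bY$ and that $\bB$ is a right $\bY$-module, this composite is naturally equivalent to the bimodule $T^\gamma \bB(p,q)$, i.e.\ exactly the off-diagonal block of morphisms of $C(\bB)$ between these objects. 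To null-bord this, I would construct a flow bimodule $\bW \co \bX \to C(\bB)$ by setting $\bW(p, q) = \emptyset$ on the $\ob(\bX) \times \ob(\bX)$-block and taking, on the $\ob(\bX) \times \ob(\bY)$-block, a cylindrical extension of $T^\gamma \bB(p, q)$ obtained via the conic degeneration construction of Section~\ref{sec:conic-degenerations}. The two codimension-$1$ boundary facets produced by the cylinder direction are distinguished so that one (the ``composite end'') is canonically identified with $(\bI \circ \bB)(p,q)$ via the equivalence above, while the other (the ``null end'') is the empty orbifold. The left $\bX$-action is induced by the left $\bX$-action on $\bB$ combined with the identity on the cylinder direction, and the right $C(\bB)$-action is induced by the right $\bY$-action on $\bB$ whenever the intermediate object lies in $\ob(\bY)$ (and is trivially defined for intermediate objects in $\ob(\bX)$ because $\bW$ vanishes there).

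The case of $\bB \circ \bP$ is strictly symmetric: an analogous unpacking shows that the composite vanishes for $p \in \ob(\bY)$ and is equivalent to $T^\gamma \bB(p, q)$ for $p \in \ob(\bX)$, and the null-bordism is constructed as the corresponding cylinder on $T^\gamma \bB(p, q)$, with left $C(\bB)$-action and right $\bY$-action defined in the mirror fashion.

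The main technical obstacle will be to endow the cylinder $T^\gamma \bB(p, q) \times [0,1]$ with a manifold-with-corners structure for which the bimodule associativity identities hold strictly, so that the constructed $\bW$ is genuinely a flow bimodule. This is precisely the issue addressed by the diagonal bimodule construction of Section~\ref{sec:diagonal-bimodule}: the conic degeneration must be taken with discriminant locus chosen to be the union of the composition strata coming from the $\bX$- and $\bY$-actions on $\bB$, ordered by energy, so that the new corner strata introduced by the cylinder direction are transverse to and disjoint from the composition strata, and the lifts of the action maps provided by Lemma~\ref{lem:degeneration_map_boundary_strata_model} are strictly associative. Given such $\bW$, the null-homotopy of $\bI \circ \bB$ is then witnessed by a flow $2$-simplex in $\Flow$ with vertices $\bX, \emptyset, C(\bB)$, outer edge the composite, and top stratum $\bW$; the null-homotopy of $\bB \circ \bP$ is obtained analogously.
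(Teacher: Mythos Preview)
Your treatment of $\bP \circ \bI$ is correct and matches the paper. For $\bI \circ \bB$ (and symmetrically $\bB \circ \bP$) there is a concrete gap: the conic degeneration of $T^\gamma\bB(p,q)$ with discriminant the full boundary has \emph{two} end-facets, the $\cD_0$-strata discussed after Lemma~\ref{lem:degeneration_map_boundary_strata_model}, and each is a copy of $T^\gamma\bB(p,q)$; neither is empty, so your ``null end'' does not exist. In a $2$-simplex with vertices $\bX, \emptyset, C(\bB)$, the top stratum $\bW(p,q)$ (for $q\in\ob(\bY)$) must have codimension-$1$ boundary consisting of exactly the single edge-facet, the left $\bX$-action strata, and the right $C(\bB)$-action strata. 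Since you set $\bW|_{\ob(\bX)\times\ob(\bX)} = \emptyset$, the right $C(\bB)$-action via objects $q'\in\ob(\bX)\subset\ob(C(\bB))$ is vacuous, and the right action reduces to the right $\bY$-action; your cylinder then carries one $T^\gamma\bB(p,q)$-facet too many. (A related confusion: you call $\bW$ a ``flow bimodule $\bX\to C(\bB)$'' and also ``top stratum''; these are different objects with different stratifications.)

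The paper proceeds differently in two essential ways. First, it builds the witnessing $2$-simplex $\bH$ with vertices $\bX, \bY, C(\bB)$ and edges $\bB, \bI, \emptyset$ directly, so no separate representative of the composite is needed. Second, on the $\ob(\bX)\times\ob(\bY)$-block the top stratum is $T^\gamma\bigl(s_0\bB(p,q)\cup_{\bB(p,q)} s_1\bB(p,q)\bigr)$, the gluing of the two degenerate $2$-simplices along their common $\bB$-face, while on the $\ob(\bX)\times\ob(\bX)$-block it is taken to be the diagonal $s\bX$ rather than empty. The two ends of the glued cylinder are then absorbed respectively by the composition strata at the middle vertex $\bY$, namely $\bB(p,r)\times\bI(r,q)$, and by the right $C(\bB)$-action via $\bX$-objects, namely $s\bX(p,q')\times T^\gamma\bB(q',q)$. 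The missing idea in your approach is precisely this last point: one end of the cylinder must be soaked up by a non-trivial value of the filling on the $\ob(\bX)\times\ob(\bX)$-block, exploiting that $C(\bB)(q',q)=T^\gamma\bB(q',q)$ for $q'\in\ob(\bX)$ and $q\in\ob(\bY)$ are exactly the morphisms you are trying to null-bord.
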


\begin{proof}
The argument is trivial for $\bP \circ \bI$, while the  the other two cases are entirely analogous; we describe the null homotopy
for the composite of $\bI \circ \bB$.  Recall that to exhibit such a
null homotopy we need to construct a flow $2$-simplex $\bH$ such that
$d_0 \bH = \bB$, $d_1 \bH$ is the flow bimodule representing the
trivial map, and $d_2 \bH = \bI$.  The vertices of this $2$-simplex
must then be the flow categories $\bX$, $\bY$, and $C(\bB)$.

We view $\bB$ as a flow $1$-simplex, and observe that by construction
the degenerate flow $2$-simplices $s_0\bB$ and $s_1 \bB$ both have a
face which is canonically equivalent to $\bB$ as a flow bimodule.  The
vertices of $s_0 \bB$ are given by $\bX$, $\bX$, and $\bY$ and the
vertices of $s_1 \bB$ are given by $\bX$, $\bY$, and $\bY$.  The faces
of $s_0 \bB$ are $\bB$, $s \bX$, and $\bB \circ s \bX$; analogously, the
faces of $s_1 \bB$ are $\bB$, $s \bY$, and $s \bY \circ \bB$.

Consider the category enriched in derived orbifolds
\begin{equation}
s_0\bB \cup_{\bB} s_1 \bB,
\end{equation}
with objects the disjoint union of two copies of the objects of $\bX$ and of two copies of the objects of $\bY$, obtained by gluing the categories
associated to the top strata of $s_0 \bB$ and $s_1 \bB$, along this common stratum, to produce a
new category.  We use this to define the flow $2$-simplex $\bH$
as follows.  Given an object of $p$ in $\bX$ and an object $q$ of $C(\bB)$ arising from $\bY$, define
\begin{equation} \label{eq:definition_null-homotopy}
\bH (p,q)  \equiv
T^{\gamma} \left( s_0\bB(p,q) \cup_{\bB(p,q)} s_1 \bB(p,q) \right) 
\end{equation}
We extend this to the case $q$ lies in the subset of $ \ob(C(\bB))$ corresponding to the objects of $\bX$ by the enriched bimodule associated to the diagonal bimodule.

We build a flow $2$-simplex out of this category, which assigns the above data to the simplex $\Delta^2$ as follows: set the vertices to be $\bX$,
$\bY$, and $C(\bB)$,  then assign $\bB$ to the edge between $\bX$
and $\bY$; this uses the identification of the boundary faces of
$s_0\bB $ with the composition of $s\bX \circ \bB$ and of those of
$s_1 \bB$ with the composition $\bB \circ s\bY$.  By construction, Equation \eqref{eq:definition_null-homotopy} is consistent with  specifying that we
can assign $\bI$ to the edge between $\bY$ and $C(\bB)$, and similarly
the trivial flow bimodule to the edge between $C(\bB)$ and $\bX$. Thus, $\bH$ is a flow $2$-simplex that encodes the desired null
homotopy.
\end{proof}

\subsubsection{The long exact sequence of the cofiber}
\label{sec:long-exact-sequence}

Finally, we complete the proof that $\Flow$ is stable by showing that
$\bP$ and $\bI$ induce a long exact sequence of abelian groups of maps
in $\Ho(\Flow)$ associated to $C(\bB)$.  Given flow categories $\bX$
and $\bY$, recall that the abelian group of maps $[\bX, \bY]$ in
$\Ho(\Flow)$ is given by the set of flow bimodules $\bM \colon \bX \to
\bY$ modulo the equivalence relation given by homotopy, as exhibited
by the existence of a flow $2$-simplex witnessing the homotopy.

\begin{prop} \label{lem:exactness_at_B}
Let $\bZ$ be a flow category.  The following periodic sequence of
abelian groups, whose maps are induced by $\bB$, $\bI$, and $\bP$, is exact: 
\begin{equation}\label{eq:flow-LES}
\cdots \to  [\bZ, \bX] \to [\bZ, \bY] \to [\bZ, C(\bB)] \to  [\bZ, \bX] \to \cdots.
\end{equation}

\end{prop}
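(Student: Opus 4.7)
The composites in sequence~\eqref{eq:flow-LES} vanish in $\Ho(\Flow)$ by Lemma~\ref{lem:null_homotopy_exact_sequence}, so image $\subseteq$ kernel at each spot. It remains to prove the reverse inclusion at the three consecutive terms $[\bZ,\bY]$, $[\bZ,C(\bB)]$, $[\bZ,\bX]$; the sequence then continues periodically because $\Sigma$ is the identity for unstructured flow categories (Section~\ref{sec:semistable}). The strategy at each spot is the same: given a flow $2$-simplex witnessing the relevant null homotopy, restrict or glue its top-dimensional derived orbifold along the object partition $\ob(C(\bB)) = \ob(\bX)\sqcup\ob(\bY)$ to extract the required factorization.

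We treat exactness at $[\bZ,\bY]$ in detail. Given $\bM\co\bZ\to\bY$ with $[\bI\circ\bM]=0$, a witnessing $2$-simplex $\bH$ has vertices $\bZ,\bY,C(\bB)$, edges $\bM$, $\bI$, and the empty bimodule $\emptyset\co\bZ\to C(\bB)$. Its top stratum $\bH(z,c)$, restricted to $c=p\in\ob(\bX)$, yields a flow bimodule $\bM'\co\bZ\to\bX$ by setting $\bM'(z,p):=\bH(z,p)$: the potentially obstructing boundary facets $\bM(z,y)\times\bI(y,p)$ and $\bH(z,q')\times C(\bB)(q',p)$ for $y,q'\in\ob(\bY)$ all vanish, since $\bI(y,p) = \emptyset$ and $C(\bB)(q',p) = \emptyset$ (no morphisms from $\ob(\bY)$ to $\ob(\bX)$ in $C(\bB)$). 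The complementary restriction $\bH(z,q)$ for $q\in\ob(\bY)$, together with an appropriate collared gluing against the diagonal bimodule that absorbs the $\bM(z,y)\times\bI(y,q)$ stratum via the unit property of $\bI = T^\gamma s\bY$, serves as the top stratum of a new flow $2$-simplex with vertices $\bZ,\bX,\bY$, edges $\bM'$, $\bB$ (using the evident equivalence $T^\gamma\bB\simeq\bB$), and $\bM$. Its facets indexed by $c'\in\ob(\bX)$ recover the composition stratum $\bM'(z,c')\times T^\gamma\bB(c',q)$, establishing $[\bM] = [\bB\circ\bM']$.

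The arguments at the other two spots are analogous. For exactness at $[\bZ,\bX]$, a witnessing null homotopy $\bL$ of $\bB\circ\bM''$ has top stratum $\bL(z,q)$ for $q\in\ob(\bY)$; the assignment $\bN(z,p):=\bM''(z,p)$ for $p\in\ob(\bX)$ and $\bN(z,q):=\bL(z,q)$ for $q\in\ob(\bY)$ defines a flow bimodule $\bN\co\bZ\to C(\bB)$, since the composition facets $\bM''\times\bB$ of $\bL$ are precisely the right $C(\bB)$-action facets across $\ob(\bX)$-to-$\ob(\bY)$ morphisms; one verifies $\bP\circ\bN\simeq\bM''$ from the unit property of $\bP = s\bX$. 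For exactness at $[\bZ,C(\bB)]$, a witnessing null homotopy $\bK$ of $\bP\circ\bN$ allows us to glue $\bK(z,p)\times T^\gamma\bB(p,q)$ to $\bN(z,q)$ along the obstructing facets $\bN(z,p)\times T^\gamma\bB(p,q) = \partial\bK(z,p)\times T^\gamma\bB(p,q)$, producing a flow bimodule $\bM\co\bZ\to\bY$ with $\bI\circ\bM\simeq\bN$. Naturality in $\bZ$ is automatic from the functoriality of bimodule composition.

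The principal obstacle is the careful combinatorial verification that each restriction and gluing along the partition $\ob(C(\bB)) = \ob(\bX)\sqcup\ob(\bY)$ produces a well-stratified flow bimodule or flow $2$-simplex, i.e., that the decomposition of boundary facets of the ambient $2$-simplex matches the structure required by the target data. In each case, the argument hinges on the vanishing of morphisms from $\ob(\bY)$ to $\ob(\bX)$ in $C(\bB)$ together with the unit property of the diagonal bimodules $s\bX$ and $s\bY$; for the last spot the gluing step requires the collared smoothing of Section~\ref{sec:kan-condition-flow} to check that the glued object admits a smooth representative in $d\Orb$.
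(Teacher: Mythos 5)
Your argument is correct in its essential ingredients, but it is organized quite differently from the paper's proof, and it is worth comparing the two. The paper proves exactness only at $[\bZ, C(\bB)]$ and declares the other two spots analogous; its key move is to introduce the intermediate condition that a bimodule $\bF \co \bZ \to C(\bB)$ be \emph{trivial on $\bX$} (empty morphisms to every object of $\ob(\bX)$), and to show that both ``$\bP \circ \bF$ is null'' and ``$\bF$ factors through $\bI$'' are equivalent to this condition, using only the fact that $s\bX$ and $s\bY$ are units in the homotopy category. You instead give direct arguments at all three spots by restricting and gluing the top stratum of the witnessing $2$-simplex along the partition $\ob(C(\bB)) = \ob(\bX) \sqcup \ob(\bY)$; the underlying geometric facts you invoke (vanishing of $C(\bB)$-morphisms from $\ob(\bY)$ to $\ob(\bX)$, and the unit property of the diagonal bimodules) are exactly the ones driving the paper's argument. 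Your extraction of $\bM'$ from $\bH(z,p)$, $p \in \ob(\bX)$, is clean and correct, as is the construction of $\bN$ at the $[\bZ,\bX]$ spot. The place where your write-up is thinner than it should be is the ``absorption'' step: the restriction $\bH(z,q)$, $q \in \ob(\bY)$, has composition facets of the form $\bM(z,y) \times T^\gamma s\bY(y,q)$ where the target $2$-simplex requires a single hypotenuse facet strongly equivalent to $\bM(z,q)$ (and the actual hypotenuse facet of $\bH$ is empty); converting one stratification pattern into the other is not a single gluing but a homotopy in $\Flow$, which is most efficiently obtained by citing the already-established fact that $s\bY$ is an idempotent equivalence and working in $\Ho(\Flow)$, rather than by an explicit collared construction. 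The paper's formulation avoids this entirely, which is what its economy buys; your version is an unpacked variant that would need this step spelled out to be complete.
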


\begin{proof}
The three exactness statements have completely analogous proofs; we
give the proof for exactness at $C(\bB)$.  The statement that
Equation~\eqref{eq:flow-LES} is exact at $C(\bB)$ is equivalent to the
assertion that if a flow bimodule $\bF \colon \bZ \to C(\bB)$
satisfies the property that the composite $\bP \circ \bF \colon \bZ
\to \bX$ is null-homotopic, then $\bF$ is homotopic to the composite
$\bI \circ \bG$ for some flow bimodule $\bG \colon \bZ \to \bY$. To
prove this, we show that both conditions are equivalent to the
condition that $\bF$ is homotopic to a flow bimodule $\bM \colon
\bZ \to C(\bB)$ which is trivial on $\bX$, in the sense that the
morphisms are the empty orbifold for any target $x \in \ob(\bX)$.

Thus, we consider the set of flow bimodules $\bM \colon \bZ \to
C(\cB)$ that are trivial on $\bX$, up to the equivalence relation
given by homotopy.  On the one hand, since $s\bY$ is a unit for $\bY$,
such a bimodule is clearly homotopic to the composite $\bI \circ \bG$
for some flow bimodule $\bZ \to \bY$.  And given a flow bimodule $\bZ
\to \bY$, the composition with $\bI$ has the specified property, i.e.,
is trivial on $\bX$ by definition.  On the other hand, assume that
$\bF$ is homotopic to a flow bimodule $\bM$ which is trivial on
$\bX$.  Then by definition, the composite $\bP \circ \bF$ is homotopic
to the trivial bimodule, since $\bP$ is specified as the weak unit
$s\bX$ on the objects of $\bX$ and is trivial on the objects of $\bY$.
Moreover, when the composite $\bP \circ \bF$ is homotopic to the
trivial bimodule, because $\bP$ specified by the weak unit $s\bY$,
there is an obvious homotopy connecting $\bF$ to a bimodule which is
trivial on $\bX$.

\end{proof}

\subsection{The structured case}
\label{sec:structured-case}

We complete this section by extending its results to structured flow
categories. Since the empty derived orbifold is tautologically framed
(with isomorphism $\bR^{\{q\}} \cong \bR^{\{1\}}$ induced by the map
of sets), we start with a discussion of the stable structure.

We define pseudofunctors $\Sigma$ and $\Sigma^{-1}$ of $d \Orb^{\cS}$
which act on $0$-cells by
\begin{align} \label{eq:suspend_vspace}
\Sigma (V_+, V_-) & \equiv (V_+ \oplus \bR, V_-)
\\ \label{eq:inverse_suspend_vspace} \Sigma^{-1} (V_+, V_-) & \equiv
(V_+, V_- \oplus \bR)
\end{align}
and on (relative) framings of $1$-cells by taking the direct sum of
all isomorphisms of vector bundles with the identity on $\bR$. It will
be convenient to refer to these copies of $\bR$ as \emph{suspension
coordinates}.  Note that these two pseudofunctors commute, and that we
have a pseudonatural transformation $\id \to \Sigma \circ \Sigma^{-1}
$ given by the point derived orbifold, equipped with the framing
arising from the direct sum of the identity on $V$ which the identity
on the copies of $\bR$ that have been added to the positive and
negative parts of $V$.  This pseudonatural transformation is clearly
an equivalence.

By composition, these shift functors induce endofunctors $\Sigma$ and
$\Sigma^{-1}$ of $\Flow^{\cS}$ defined as follows: given a vertex of
$\Flow^{\cS}$, i.e., a flow category, the induced map comes from the
shift in the framing on each morphism object; this specifies the map
$\Flow^{\cS}_0 \to \Flow^{\cS}_0$.  Next, applying the composition to
each flow category associated to a structured flow simplex produces a
new structured flow simplex; the shifts on morphism objects are
clearly compatible with the passage to the boundary.  Similarly, this
assembles into a semisimplicial map $\Flow^{\cS} \to \Flow^{\cS}$, as
the same observation implies that the shifts are compatible with the
face operators.  Finally, it is immediate from the discussion in
Section~\ref{sec:degen-struct-flow} that the shift strictly commutes
with the construction of the degeneracy $s^0$; i.e., $s^0 \Sigma =
\Sigma s^0$ and $s^0 \Sigma^{-1} = \Sigma^{-1} s^0$.
By~\cite{Steimle,Tanaka}, this suffices to show that up to coherent
homotopy $\Sigma$ and $\Sigma^{-1}$ define endofunctors of the
quasicategory ${\Flow^{\cS}}$.

Finally, we have a canonical natural transformation from the identity
functor of ${\Flow^{\cS}}$ to the composite $\Sigma \circ
\Sigma^{-1}$, which on every simplex $\bX$ is obtained from the
degenerate simplex $s_0 \bX$, equipped with the direct sum of its
framing with the identity on the copy of $\bR$ as above.

We now consider the construction of
Lemma~\ref{lem:suspension_unstructured}, which we claim lifts in the
structured case to maps  
\begin{align} \label{eq:suspension_map_structure}
 \Flow^{\cS}(\bX, {\Sigma} \bY) \leftarrow \Sigma_+ \Flow^{\cS}(\bX, \bY) \to    \Flow^{\cS}({\Sigma}^{-1}  \bX, \bY)
\end{align}
of semisimplicial sets.

This is clear from the definitions: the unstructured simplices
realizing the map in Equation~\eqref{eq:suspension_map} are obtained
by considering a $k$-simplex in the source as a $(k+1)$-simplex in the
target. Defining a relative framing on a $(k+1)$-simplex involves an
additional copy of $\bR$ according to Definition
\ref{def:framed_flow_simplex-elementary}, which we isomorphically map
to the copy of $\bR$ in the stable vector spaces associated to the
objects of $ {\Sigma}^{-1} \bX$ or $ {\Sigma} \bY$. We are ready to
consider the analogue of Lemma \ref{lem:equivalence_loop_space}:
\begin{lem}
The adjoint of the maps in
Equation~\eqref{eq:suspension_map_structure} are weak equivalences 
\begin{align} 
   \Omega_+ \Flow^{\cS}(\bX, {\Sigma} \bY)  \leftarrow  \Flow^{\cS}(\bX, \bY) \to \Omega_+ \Flow^{\cS}({\Sigma}^{-1} \bX, \bY).
\end{align}
\end{lem}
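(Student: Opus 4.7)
We mirror the proof of Lemma \ref{lem:equivalence_loop_space}, adapting it to the structured setting. Since the mapping spaces are Kan complexes, by Proposition \ref{prop:equivalence_infty_category} it suffices to check that the adjoint maps induce isomorphisms on all semisimplicial homotopy groups, computed combinatorially as equivalence classes of simplex configurations with null boundaries.

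For $\pi_0$: an element of $\pi_0(\Omega_+ \Flow^{\cS}(\bX, \Sigma \bY))$ is represented by a structured flow $2$-simplex $\bH$ with vertices $(\bX, \emptyset, \Sigma \bY)$ whose three edges are each the empty bimodule with its tautological framing. As in the unstructured proof, the content of $\bH$ is a collection of derived orbifolds $\bM(p,r)$ for $p \in \ob(\bX)$ and $r \in \ob(\bY) = \ob(\Sigma \bY)$ equipped with compatible bimodule action maps from $\bX$ and $\bY$. The relative framing prescribed by Definition \ref{def:framed_flow_simplex-elementary} takes the virtual form
\[
T\bM(p,r) \oplus V_r^{\Sigma \bY} \oplus \bR^{\{r\}} \oplus W \;\cong\; I(p,r) \oplus V_p \oplus \bR^{\{1,2\}} \oplus W.
\]
Using Equation \eqref{eq:suspend_vspace}, $V_r^{\Sigma \bY,+} = V_r^+ \oplus \bR$, and we canonically identify this extra $\bR$ summand with the generator $\bR^{\{2\}}$ on the right side. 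What remains is precisely the framing equation of a structured $1$-simplex bimodule from $\bX$ to $\bY$; conversely, such a bimodule defines an $\bH$ of the type above by reversing this identification. The same identification turns the equivalence relation arising from structured homotopies (structured $3$-simplices with appropriate degenerate faces) into the equivalence relation on structured bimodules, giving the bijection on $\pi_0$.

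For higher $\pi_n$, an analogous analysis applies. Elements of $\pi_n(\Omega_+ \Flow^{\cS}(\bX, \Sigma \bY))$ correspond, after prismatic decomposition, to equivalence classes of configurations of structured simplices with null boundaries whose relative framings carry one extra copy of $\bR$ in $U^+$ compared to the structured simplex configurations representing $\pi_n(\Flow^{\cS}(\bX, \bY))$. This extra $\bR$ is canonically absorbed into the suspension coordinate of $V^+$ on the objects of $\Sigma \bY$ exactly as above, producing the desired bijection. The argument for the map to $\Omega_+ \Flow^{\cS}(\Sigma^{-1} \bX, \bY)$ is strictly parallel: here the suspension coordinate of $V_p^{\Sigma^{-1}\bX,-} = V_p^- \oplus \bR$ appears on the left-hand side of the framing equation, and (after rewriting as a virtual equation) is absorbed into the extra $\bR^{\{n+1\}}$ summand on the right.

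The main obstacle is ensuring that these absorptions are compatible with the combinatorial and geometric data on both sides. Concretely, we must verify that the identifications of suspension coordinates with particular generators of $U^+$ commute with the face maps of the simplicial structure, and with the decompositions of $U^-$ across the codimension-$1$ strata required by item (3) of Definition \ref{def:framed_flow_simplex-elementary}. This amounts to tracing through the associativity diagrams of Section \ref{sec:struct-flow-categ}, and reduces to the observation that the absorption is performed on the coordinate $\bR^{\{\ell\}}$ indexed by the terminal vertex $\ell$ of the simplex, which is preserved by all face maps other than $\partial^{\ell}$; the compatibility of the construction with $\partial^\ell$ is, in turn, ensured by the fact that the null-boundary condition makes the corresponding face trivial.
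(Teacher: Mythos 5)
Your reduction to the unstructured argument is the right frame, but the step where you write ``we canonically identify this extra $\bR$ summand with the generator $\bR^{\{2\}}$ on the right side'' is exactly where the proof has a gap, and it is the one point where the structured case genuinely differs from Lemma \ref{lem:equivalence_loop_space}. The relative framing of a structured $2$-simplex with vertices $(\bX,\emptyset,\Sigma\bY)$ is an arbitrary stable isomorphism of virtual bundles; nothing in Definition \ref{def:framed_flow_simplex-elementary} forces it to be block-diagonal with respect to the decomposition that separates the suspension coordinate of $V_r^{\Sigma\bY,+}$ from the rest, nor to carry that coordinate onto a chosen generator $\bR^{\{i\}}$ of $U^+$. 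Without such a splitting you cannot strip off the two copies of $\bR$ to produce the framing equation of a $1$-simplex from $\bX$ to $\bY$: identifying the two abstract summands does not make the given isomorphism respect them. The paper's proof consists precisely of supplying this missing step: one stabilizes by a vector bundle $(W,W)$ and deforms the framing, using the high connectivity of the inclusions $O(N)\to O(N+1)$ (the stabilization lemma following Lemma \ref{lem:inherit_framings}), so that it does identify the suspension coordinate with the prescribed $\bR^{\{1\}}$; and this deformation must be carried out by induction on the codimension of the corner strata of the underlying derived orbifolds so that the modified framings remain compatible with the boundary decompositions required by Definition \ref{def:framed_flow_simplex-elementary}.

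Your closing paragraph correctly senses that compatibility across strata is delicate, but it locates the difficulty in the face maps of the simplicial structure rather than in the framing itself; the face-map compatibility is essentially automatic once the framings have been deformed stratum by stratum. To repair the argument, replace the claimed canonical identification with (i) a stabilization of each morphism space of the simplex, and (ii) an inductive (over codimension of the largest corner stratum) homotopy of the relative framing to one that splits off the suspension coordinate, after which the rest of your absorption argument and the reduction to the unstructured computation go through.
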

\begin{proof}
One additional argument is required, which we explain in
one of the two cases: the relative framing on an $(n+1)$-simplex of
$\Flow^{\cS}$ with vertices $(\bX, \emptyset, \ldots, \emptyset,
{\Sigma} \bY)$ need not satisfy the property that it maps the
distinguished copy of $\bR$ in the virtual vector space associated to
each object of $\bY$ to the copy of $\bR$ which in Definition
\ref{def:framed_flow_simplex-elementary} is labelled
$\bR^{\{1\}}$. However, this may be achieved by passing to a
stabilization, by induction on the codimension of the largest corner
stratum of the derived orbifolds underlying the simplex.
\end{proof}

Next, we may lift the construction of the cone object $C(\bB)$ to the
structured setting: noting again that the relative framing on flow
bimodules involves an additional copy of $\bR$ relative to those of
flow categories, we set the objects of $C(\bB)$ to be the union of
those in $\Sigma \bX$ and of $\bY$, and identify the suspension
coordinate on $\bX$ with this factor. The maps $\bI$ and $\bP$
obviously lift to maps
\begin{equation}
  \begin{tikzcd}
    \bY \ar{r}{\bI} & C(\bB) \ar{r}{\bP} & \Sigma \bX.
  \end{tikzcd}
\end{equation}

We now complete the proof of the main result of this paper:

\begin{proof}[Proof of Theorem \ref{thm:main-thm-Flow}]
It remains to verify that $\bI$, $\bB$, and $\bP$ induce long exact
sequences on morphism spaces.  The statement that the composite of
these maps vanishes is the lift of
Lemma~\ref{lem:null_homotopy_exact_sequence}.  Restricting our attention to
the composition $\bI \circ \bB$, we have to equip the $2$-simplex
$\bH$ witnessing the null-homotopy of this composite with a relative
framing.

Given an object $p$ in $\bX$ and $q$ in $C(\cB)$ that comes from
$\bY$, recall that the essential construction produces derived
orbifolds from the union $s_0\bB(p,q) \cup_{\bB(p,q)} s_1 \bB(p,q)$ of
degenerate flow bimodules along a common face.  Since the gluing takes
place at opposite ends of the interval, the relative framings are
consistent across the boundary which is glued, so that a choice of
collar determines a relative framing on the union.  It is now
immediate that this relative framing is compatible with the
definition of the flow $2$-simplex $\bH$.

This finishes the main part of the argument, because the proof of
Lemma~\ref{lem:exactness_at_B} goes through in this case without
modification.
\end{proof}

\section{Framed flow categories and spectra}
\label{sec:comp-with-sphere}

A foundational aspect of the theory we are constructing is that the
stable $\infty$-categories $\Flow^{\cS}$ can be described as
categories of modules over suitable bordism spectra, provided that the monoid $\Gamma$ is trivial. This assumption is required in order to conclude that coproducts are represented by disjoint union of flow categories, as the description of morphisms from a disjoint union is otherwise given by the colimit over the lower bound in Condition \eqref{eq:lower_bound_flow_simplex}. The same argument gives us a description of morphisms into a coproduct, so the fact that $\Flow^{\cS}$ is stable implies:

\begin{lem}  
If $\Gamma$ is the trivial monoid, the category $\Flow^{\cS}$ is
closed under (homotopy) colimits, and the unit is a compact object.
\end{lem}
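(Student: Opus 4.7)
The plan is to reduce both claims to the observation that disjoint union provides the coproduct in $\Flow^{\cS}$. First I would construct, for any family $\{\bX_i\}_{i \in I}$ of structured flow categories, their disjoint union $\coprod_i \bX_i$: this has object set $\coprod_i \ob(\bX_i)$, morphism objects given by $\bX_i(p,q)$ when $p$ and $q$ lie in the same summand and the empty derived orbifold otherwise, with composition and tangential structure inherited componentwise. The triviality hypothesis on $\Gamma$ enters essentially here: Condition \eqref{eq:lower_bound_flow_simplex} demands a uniform lower energy bound on morphisms with a fixed source, which is automatic when $\Gamma=0$ but would otherwise fail for an infinite disjoint union, since the infimum of the lower bounds across summands need not exist.

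Next I would verify that this construction really is the coproduct by checking that flow bimodules $\coprod_i \bX_i \to \bY$, and more generally flow $n$-simplices with first vertex $\coprod_i \bX_i$, decompose canonically into a product of data from each $\bX_i$: this is because the composition structure maps only couple objects within the same summand. This yields an equivalence of mapping spectra $\Flow^{\cS}(\coprod_i \bX_i, \bY) \htp \prod_i \Flow^{\cS}(\bX_i, \bY)$. Combined with the stability of $\Flow^{\cS}$ (Theorem \ref{thm:main-thm-Flow}), which provides all finite colimits, the existence of arbitrary coproducts yields closure under all homotopy colimits.

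For compactness of the unit $\ast$, I would analyse flow bimodules from $\ast$ into $\coprod_i \bY_i$. Since $\ast$ has a single object, such a bimodule assigns one derived orbifold to each target object $q \in \coprod_i \bY_i$; each such $q$ belongs to a unique $\bY_i$, and the right-module composition maps with $\bY_j(q,q')$ only see pairs in a common component. Thus every flow bimodule $\ast \to \coprod_i \bY_i$ splits as a disjoint union of flow bimodules $\ast \to \bY_i$, yielding
\begin{equation}
\Flow^{\cS}\left(\ast, \coprod_i \bY_i\right) \htp \bigoplus_i \Flow^{\cS}(\ast, \bY_i),
\end{equation}
which in a stable $\infty$-category is the defining property of a compact object.

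The main obstacle I anticipate is promoting the bijection on bimodules (the vertices of the mapping spaces) to an equivalence of full mapping spectra. Concretely, one must verify that the combinatorial construction of flow $n$-simplices with vertices involving $\coprod_i \bX_i$ or $\coprod_i \bY_i$ respects the componentwise decomposition at every stratum: the derived orbifolds attached to each simplex split along components, and the boundary identifications and tangential structures from Section~\ref{sec:semisimplicial-set} are compatible with this splitting. Once this compatibility is checked inductively on the dimension of the simplex and the codimension of its corner strata, both equivalences of mapping spectra follow.
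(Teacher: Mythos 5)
Your proposal is correct and follows essentially the same route as the paper, which gives only a one-sentence justification: coproducts are disjoint unions, the triviality of $\Gamma$ is exactly what prevents the mapping space out of a disjoint union from being a colimit over the uniform lower bound in Condition~\eqref{eq:lower_bound_flow_simplex}, and the same decomposition describes maps from the unit into a coproduct. Your write-up simply makes explicit the verifications the paper leaves implicit.
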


In the remainder of this section, we use these basic structural observations to show
that $\Flow^{\fr}$ is equivalent to the $\infty$-category of spectra,
i.e., modules over the sphere spectrum.

\subsection{$\Flow^{fr}$ is generated by the unit}
\label{sec:generation}

In this subsection we will prove that the unit object generates
$\Flow^{fr}$, in the sense that an object of $\Flow^{fr}$ is zero if
and only if maps from all shifts of the unit are zero.  Since
$\Flow^{fr}$ is stable, this is equivalent to $\Flow^{fr}$ being
generated under (homotopy) colimits by the unit.

Recall from the discussion of
Remark~\ref{rem:properness_flow_category} that the set of objects of
any flow category $\bX$ (with trivial action) can be equipped with a
partial order with $q < p$ if there is a morphism from $p$ to
$q$. This convention is designed to be compatible with the fact that
if the set of homotopy classes of maps $S^p \to S^q$ is nontrivial,
then $q \leq p$.  We map this partial order to a totally ordered set
$A$.

Given elements $\alpha$ and $\beta$ in $A$, let $\bX_{[\alpha,\beta]}$ 
denote the flow category obtained from $\bX$ by restricting the
objects to those which project to elements of $A$ lying between
$\alpha$ and $\beta$ (inclusive).  We write $\bX_{[\beta,\infty)}$ and
  $\bX_{(-\infty,\alpha]}$ for the categories obtained by imposing a
bound only on one side.

\begin{lem}\label{lem:flowcolimit}
There is a natural map $ \bX_{(-\infty, \alpha']} \to
  \bX_{(-\infty,\alpha]}$ whenever $\alpha' \leq \alpha$.  These maps
    are compatible in the sense that for each triple $\alpha '' <
    \alpha' < \alpha $, the diagram 
  \begin{equation}
    \begin{tikzcd}
      \bX_{(-\infty,\alpha'']} \ar[r] \ar[dr] &  \bX_{(-\infty,\alpha']} \ar[d] \\
      &  \bX_{(-\infty,\alpha]}
    \end{tikzcd}
  \end{equation}
  homotopy commutes.
\qed
\end{lem}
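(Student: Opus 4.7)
The plan is to construct each map $\bX_{(-\infty,\alpha']} \to \bX_{(-\infty,\alpha]}$ as a flow bimodule obtained by restricting the diagonal bimodule $s\bX$ of Section~\ref{sec:diagonal-bimodule}. Explicitly, for $p$ an object of $\bX_{(-\infty,\alpha']}$ and $q$ an object of $\bX_{(-\infty,\alpha]}$ (note the inclusion of object sets induced by $\alpha' \leq \alpha$), define
\begin{equation}
\bM_{\alpha',\alpha}(p,q) \equiv s\bX(p,q).
\end{equation}
The left and right bimodule structure maps are inherited from those of $s\bX$: the left action by $\bX_{(-\infty,\alpha']}$ uses the left action of $\bX$ on $s\bX$, and similarly on the right. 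Properness over $[0,\infty)$ is inherited from $s\bX$, and the stratification is inherited as well. This data assembles into a flow $1$-simplex, i.e., an edge in $\Flow$, which is our definition of the map.

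For the compatibility statement, given $\alpha'' \leq \alpha' \leq \alpha$, I would exhibit a flow $2$-simplex $\bH$ whose vertices are $\bX_{(-\infty,\alpha'']}$, $\bX_{(-\infty,\alpha']}$, and $\bX_{(-\infty,\alpha]}$, and whose three edges are $\bM_{\alpha'',\alpha'}$, $\bM_{\alpha',\alpha}$, and $\bM_{\alpha'',\alpha}$. The natural candidate is obtained by restriction from a degenerate simplex: take $s_0 s\bX$ (or equivalently $s_1 s\bX$) viewed as a flow $2$-simplex whose vertices are three copies of $\bX$, and whose top stratum encodes the equivalence between the composition $s\bX \circ s\bX$ and $s\bX$ witnessed by the degeneracy construction of Section~\ref{sec:fill-init-term}. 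Restricting the derived orbifold assigned to each pair $(p,q)$ with $p$ in $\bX_{(-\infty,\alpha'']}$ and $q$ in $\bX_{(-\infty,\alpha]}$ gives the desired filler, because the partial ordering ensures that intermediate objects appearing in the bimodule structure maps lie in $\bX_{(-\infty,\alpha']}$ whenever both endpoints do.

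In the structured case (framed or complex-oriented), the relative framings on $s\bX$ restrict to relative framings on the $\bM_{\alpha',\alpha}$, and the framing data on the degenerate simplex $s_0 s\bX$ likewise restricts to equip $\bH$ with a structure compatible with the edges. The main subtlety that I expect to need care will be checking that the restriction of $s_0 s\bX$ to the indicated objects genuinely defines a flow $2$-simplex in the sense of Definition~\ref{def:flow_simplex} (and its structured analogue), which amounts to verifying that all corner-stratum structure maps remain well-defined after restriction; this follows from the observation that the partial order on $A$ is compatible with the nonemptiness ordering on $\ob(\bX)$, so the intermediate objects appearing in iterated bimodule actions stay within the prescribed sublevel set.
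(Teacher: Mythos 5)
Your proposal is correct and matches the (implicit) intent of the paper, which states this lemma without proof: the maps are the "inclusions" realized by restricting the diagonal bimodule $s\bX$, and the homotopy-commutativity is witnessed by restricting the degenerate $2$-simplex $s_0 s\bX$ that encodes idempotence of the diagonal. The one point that genuinely needs checking — that the sublevel sets $\bX_{(-\infty,\alpha]}$ are downward closed under the nonemptiness partial order, so that every boundary stratum of the restricted derived orbifolds is still enumerated by an object of the appropriate sublevel category — is exactly the observation you make at the end, so the argument is complete.
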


We also have analogous maps $\bX_{(-\infty, \alpha)} \to \bX$ that are
compatible with Lemma~\ref{lem:flowcolimit} and so induce a natural
map
\begin{equation}
\hocolim_\alpha \bX_{(-\infty, \alpha)} \to \bX.
\end{equation}
In fact, this map is an equivalence.  We use in the following
result that the homotopy colimit can be constructed simply as the
strict colimit (union), since the maps are essentially inclusions.

\begin{lem} \label{colimit_bounded_above}
The homotopy colimit of the system
\begin{equation}
\ldots \to \bX_{(-\infty,\alpha]} \to \bX_{(-\infty, \alpha']} \to
  \bX_{(-\infty, \alpha'']} \to \ldots
\end{equation}
is represented by $\bX$.
\end{lem}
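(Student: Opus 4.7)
The plan is to exhibit the homotopy colimit as a strict union of sub-flow-categories and recognize that strict union as $\bX$ itself.

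First, I would observe that each map $\bX_{(-\infty, \alpha']} \to \bX_{(-\infty, \alpha]}$ (for $\alpha' \leq \alpha$) is the inclusion of a full sub-flow-category: the morphism spaces coincide as derived orbifolds on all pairs of common objects. The strict filtered colimit (union) in the category of non-unital categories enriched in derived orbifolds is therefore a flow category whose object set is $\ob(\bX)$ and whose morphism spaces stabilize to $\bX(p, q)$ as soon as $\alpha$ exceeds the images of $p$ and $q$ in $A$. Hence the strict union is canonically $\bX$, and the comparison map of the lemma is the universal cocone map of this strict colimit.

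Second, to promote this to a statement in the stable $\infty$-category $\Flow^{fr}$, I would exhibit, for each pair $\alpha' \leq \alpha$, an equivalence
\[
\bX_{(-\infty, \alpha]} \simeq C(\bB_{\alpha', \alpha})
\]
using the cone construction of Section~\ref{sec:dist-triangl}. Here $\bB_{\alpha', \alpha} \co \bX_{(\alpha', \alpha]} \to \bX_{(-\infty, \alpha']}$ is the flow bimodule whose component at $(p, q)$ (with $p \in (\alpha', \alpha]$ and $q \in (-\infty, \alpha']$) is the derived orbifold $\bX(p, q)$. The identification with the cone works precisely because the partial order on $\ob(\bX)$ from Remark~\ref{rem:properness_flow_category} forces $\bX(q, p) = \emptyset$ whenever $q \leq \alpha' < p$, matching the vanishing of morphisms ``against the grade'' that defines $C(\bB)$. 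This yields cofiber sequences
\[
\bX_{(\alpha', \alpha]} \to \bX_{(-\infty, \alpha']} \to \bX_{(-\infty, \alpha]}
\]
in $\Flow^{fr}$, exhibiting the given tower as a telescope in a stable $\infty$-category.

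Third, combining these cofiber sequences with the long exact sequences on morphism groups constructed in Section~\ref{sec:long-exact-sequence}, I would verify that the canonical map $\hocolim_\alpha \bX_{(-\infty, \alpha]} \to \bX$ induces isomorphisms on bordism groups into every test flow category; stability then upgrades this to an equivalence. A reduction to a cofinal countable subsequence, when $A$ is large, makes the telescope comparison concrete without affecting the colimit.

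The main obstacle is the second step: bridging between the naive set-theoretic union and the $\infty$-categorical colimit. Because $\Flow^{fr}$ is constructed from a semisimplicial set of flow simplices rather than from a model structure on enriched categories, strict colimits do not \emph{a priori} compute homotopy colimits. The cone identifications provide the crucial geometric input, realizing each inclusion as a morphism in $\Flow^{fr}$ with a computable cofiber and thereby turning the abstract homotopy colimit into an explicit telescope whose underlying flow category is $\bX$.
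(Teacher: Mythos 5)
Your proposal takes a genuinely different route from the paper, and the detour introduces a gap that the paper's direct argument avoids. The paper proves the lemma in one step by verifying the universal property of the colimit: a flow bimodule (or flow simplex) out of $\bX$ is \emph{literally} the same data as a compatible system of bimodules out of the truncations $\bX_{(-\infty,\alpha]}$, so for any test object $\bY$ the restriction maps exhibit $\Flow^{\cS}(\bX,\bY)$ as the strict inverse limit $\lim_{\alpha}\Flow^{\cS}(\bX_{(-\infty,\alpha]},\bY)$, and this strict limit computes the homotopy limit because the restriction maps are (split) surjective --- one extends any bimodule by the empty derived orbifold on the omitted objects. No cofiber sequences are needed.

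The gap in your argument is in the third step. To identify $\hocolim_\alpha \bX_{(-\infty,\alpha]}$ with $\bX$ you must compute mapping objects in or out of the homotopy colimit, and neither direction is supplied by the cofiber sequences. For maps out, $[\hocolim_\alpha \bX_{(-\infty,\alpha]},\bZ]$ sits in a Milnor sequence with a $\lim^1$ term, and the long exact sequences coming from your cofiber sequences $\bX_{(\alpha',\alpha]}\to\bX_{(-\infty,\alpha']}\to\bX_{(-\infty,\alpha]}$ do not give the Mittag--Leffler condition; the vanishing of $\lim^1$ here comes precisely from the split surjectivity of the restriction maps, i.e.\ from the paper's direct observation, so the telescope machinery does not discharge the essential point. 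For maps in, $[\bZ,\hocolim]$ does not commute with sequential colimits unless $\bZ$ is compact, which is not available (compactness and generation statements in this section are downstream of this lemma). There is also a secondary issue with the cone identification itself: in the structured setting $C(\bB)$ for $\bB\co\bA\to\bY$ has objects $\ob(\Sigma\bA)\amalg\ob(\bY)$ with the framing on the $\bA$-part shifted by the suspension coordinate, so $C(\bB_{\alpha',\alpha})$ is $\bX_{(-\infty,\alpha']}\cup\Sigma\bX_{(\alpha',\alpha]}$ rather than $\bX_{(-\infty,\alpha]}$; you would need to take $\bB$ to be a bimodule out of a desuspension and match framings, which you do not address. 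I recommend abandoning the telescope and arguing directly on mapping spaces as the paper does.
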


\begin{proof}
Given a fixed flow category $\bY$, the map of simplicial sets for
$\alpha < \alpha'$
\begin{equation}
\Flow^{\cS}(\bX_{(-\infty,\alpha']}, \bY) \to \Flow^{\cS}( \bX_{(-\infty,\alpha]}, \bY)    
\end{equation}
is represented by restricting the flow simplex to those objects of $
\bX_{(-\infty,\alpha']} $ which lie below $\alpha$.  Therefore, it is
  clear that  
\begin{equation}
\Flow^{\cS}(\bX_{(-\infty,\alpha]}, \bY ) \cong \lim_{\alpha' \leq
    \alpha}  \Flow^{\cS}(\bX_{(-\infty,\alpha']}, \bY).
\end{equation}
The result follows.
\end{proof}

Dually, the flow categories $\bX_{[\beta,\infty)}$ form an inverse
  system, but its homotopy limit may not be represented by $\bX$:
  
\begin{example}
Suppose that $\bX$ is a flow category with objects labelled by the natural
numbers and all morphisms the empty derived orbifold.  The space of
morphisms from $\ast$ to $\bX$ is identified, by compactness, with
direct sum of endomorphisms of $\ast$, indexed again by the natural
numbers. On the other hand, we have an inverse system
  \begin{equation}
   \cdots \to \ast \amalg \ast \amalg \ast \to \ast \amalg \ast \to
   \ast
  \end{equation}
  where the $k$\th bonding map is the diagonal bimodule on the first
  $k$ factors, and the empty bimodule on the last. The inverse limit
  of the space of morphisms from $\ast$ is then the direct product of
  endomorphisms of $\ast$. Even at the level of homotopy groups, the
  natural map between them is not an isomorphism whenever the
  endomorphisms of the unit are nontrivial.
\end{example}

We thus restrict attention to the situation where the $\infty$-category
of flow categories admits an additional structure which guarantees
that for a fixed object $p$ in a flow category $\bX$, the morphisms
$\bX(p,q)$ are eventually empty as the value of $q$ decreases in the
order on the object of $\bX$.

\begin{defin}
Let $\Flow^{\cS}$ denote the $\infty$-category of structured flow
categories.  The tangential structure is \emph{graded connective} if
there is an assignment for each structured flow category $\bX$ of an
integer $\dim(p)$ to each object $p \in \ob(\bX)$ satisfying the
following properties:
\begin{enumerate}
\item the morphism space $\bX(p,q)$ is empty whenever $\dim(p) \geq \dim (q)$,
\item  the value of $\dim$ is increased by $1$ under the natural identification of objects of $\bX$ and its suspension, and 
  \item  the dimension function on the mapping cone of a map $\bX \to \bY$ agrees with the dimension functions on $\bY$ and the suspension of $\bX$.
\end{enumerate}
\end{defin}

Since $\bX(p,q)$ is empty when $\dim(p) = \dim(q)$, as $q$ decreases
in the order on the objects, we have that $\dim(q) \to -\infty$.  To
apply this, we choose the set indexing the filtration to be the integers:  

\begin{lem} \label{limit_bounded_below}
If the tangential structure is graded connective, the homotopy limit
of the flow categories $\bX_{[\beta,\infty)}$ is equivalent to $\bX$.
\end{lem}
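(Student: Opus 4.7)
The plan is to check the equivalence by verifying that for every structured flow category $\bY$, the induced map
\[
\Flow^{\cS}(\bY, \bX) \to \operatorname{holim}_\beta \Flow^{\cS}(\bY, \bX_{[\beta,\infty)})
\]
of mapping spectra is a weak equivalence. Since $\Flow^{\cS}$ is stable, it suffices (after invoking the shift equivalence) to establish bijectivity of the induced map $[\bY, \bX] \to \lim_\beta [\bY, \bX_{[\beta,\infty)}]$ on homotopy categories together with vanishing of the $\lim^1$ term in the Milnor sequence for the tower of mapping spectra.

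The essential geometric input is that graded connectivity forces $\bX_{[\beta,\infty)}$ to be closed under composing with further morphisms in $\bX$: whenever $\dim(p) \geq \beta$ and $\bX(p,q)$ is non-empty, the inequality $\dim(q) > \dim(p) \geq \beta$ also places $q$ in $\bX_{[\beta,\infty)}$. The natural restriction map $\bX \to \bX_{[\beta,\infty)}$ is thus represented by the bimodule which is the diagonal on pairs of objects in $\bX_{[\beta,\infty)}$ and empty otherwise, and composition with this restriction sends a flow bimodule $\bB \co \bY \to \bX$ to the compatible system $\bB_\beta \co \bY \to \bX_{[\beta,\infty)}$ with $\bB_\beta(y,x) = \bB(y,x)$ when $\dim(x) \geq \beta$ and empty otherwise. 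Conversely, any compatible system $\{\bB_\beta\}$ reassembles into a bimodule via $\bB(y,x) \equiv \bB_{\dim(x)}(y,x)$; the bimodule axioms are satisfied because every value appearing in the left or right action has target $x'$ with $\dim(x') \geq \dim(x)$, and by tower-compatibility coincides with $\bB_{\dim(x)}(y,x')$. The same argument with $\Sigma^n \bY$ in place of $\bY$ handles all higher homotopy groups, and Mittag--Leffler holds because the transition maps are surjective: any $\bB_\beta$ extends to $\bB_{\beta'}$ for $\beta' < \beta$ by assigning the empty derived orbifold on objects of dimension in $[\beta', \beta)$.

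The principal technical obstacle is to promote this pointwise bijection on bordism classes to the full simplicial structure of the mapping spectra, ensuring that every higher-dimensional simplex of $\Flow^{\cS}_\bullet(\bY, \bX)$ arises uniquely from a compatible system of simplices over the tower. This follows by applying the same restriction-and-reassembly procedure to the derived orbifold data indexing each simplex, since graded connectivity uniformly restricts the range of objects on which any given simplex can be non-trivial, and the combinatorial enumeration of boundary strata is unaffected by the restriction.
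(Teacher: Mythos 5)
Your overall strategy---testing against mapping spaces out of an arbitrary $\bY$ and reassembling a compatible system $\{\bB_\beta\}$ into a single bimodule---is the same as the paper's, and your Mittag--Leffler observation addresses a point ($\lim^1$) that the paper leaves implicit. But there is a genuine gap at the crux: you never verify that the reassembled bimodule $\bB(y,x) \equiv \bB_{\dim(x)}(y,x)$ is \emph{proper}, i.e.\ that $\coprod_x \bB(y,x)$ has compact zero locus for each fixed $y$. Each $\bB_\beta$ is proper, but properness is not inherited by this kind of union: that is exactly the failure mode in the Example immediately preceding the lemma (objects $\bN$, all morphisms empty), where a compatible system of modules over the truncations can have support growing without bound as $\beta \to -\infty$, so the ``reassembled'' object is an infinite disjoint union and not a morphism of $\Flow^{\cS}$ at all. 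Note that in that example condition (1) of graded connectivity is vacuous and your stated ``essential geometric input'' (closure of $\bX_{[\beta,\infty)}$ under composition) holds for trivial reasons; neither can therefore be what makes the lemma true.

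What is actually needed---and what the paper's one-line proof pivots on (``the collection $\coprod_q \bB_\beta(p,q)$ is eventually constant in $\beta$'')---is condition (3) of the graded-connective definition: the dimension function is compatible with mapping cones, so applying condition (1) to $C(\bB_\beta)$ bounds $\dim(x)$ for every $x$ with $\bB_\beta(y,x) \neq \emptyset$ in terms of $\dim(y)$, uniformly in $\beta$. This places the support of $\bB(y,-)$ inside a single truncation $\bX_{[\beta_0(y),\infty)}$, which simultaneously gives the eventual constancy of the tower and the properness of the reassembled bimodule. Your argument only ever invokes condition (1) applied to $\bX$ itself, which constrains the morphism spaces of $\bX$ but says nothing about the values $\bB(y,x)$ of a bimodule $\bY \to \bX$, so the surjectivity step does not go through as written. (A secondary issue, which the paper sidesteps by working with the strict limit of the simplicial mapping spaces: reassembly needs strictly compatible representatives, whereas an element of $\lim_\beta [\bY, \bX_{[\beta,\infty)}]$ only supplies representatives compatible up to homotopy, and likewise injectivity requires reassembling null-homotopies rather than merely knowing one exists at each stage.)
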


\begin{proof}
We will prove that the natural map
\begin{equation}
\Flow^{\cS}(\bY, \bX)  \to  \lim_{\beta} \Flow^{\cS}(\bY,  \bX_{[\beta,\infty)} )
\end{equation}
is an equivalence for any flow category $\bY$.  The key point is to
show surjectivity, i.e., to reconstruct a flow simplex $\bB$ in $
\Flow(\bY, \bX)$ from the datum of an inverse system $\bB_{\beta}$ of
flow simplices. Such a flow bimodule is determined by its value on any
object $p$ of $\bY$, but the connectivity assumption implies that the
collection $\coprod_{q \in \bX_{[\beta,\infty)}} \bB_{\beta}(p,q)$ is
  eventually constant in $\beta$, thus determining a flow simplex
  $\bB$ as desired.
\end{proof}

Now, since the subcategory generated by a given object is always
closed under limits and closed under colimits when the object is
compact, the fact that a discrete flow category is a coproduct of
copies of the unit allows us to conclude the following:

\begin{prop} \label{prop:generation}
The category of flow categories for a graded connective tangential
structure is generated by the unit. \qed 
\end{prop}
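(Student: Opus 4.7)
The plan is to show that every $\bX \in \Flow^{\cS}$ lies in the full subcategory generated by the unit under homotopy colimits. Combining Lemmas \ref{colimit_bounded_above} and \ref{limit_bounded_below}, $\bX = \colim_\alpha \lim_\beta \bX_{[\beta,\alpha]}$, reducing the claim to flow categories whose objects are confined to a bounded dimension interval $[\beta,\alpha]$; compactness of the unit is what allows these specific limits and colimits of objects in the subcategory to remain in the subcategory.

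I would then proceed by induction on the width $\alpha - \beta + 1$ of the dimension interval. The base case is a single dimension: graded connectivity forces all morphism spaces between distinct objects to be empty, so the flow category is a (possibly infinite) coproduct of shifts of the unit and therefore visibly a homotopy colimit of unit shifts. For the inductive step, let $\alpha$ be the maximal dimension and split $\bX_{[\beta, \alpha]}$ into the sub-flow-categories $\bX_{\{\alpha\}}$ on the top-dimensional objects and $\bX_{[\beta, \alpha - 1]}$ on the rest. Graded connectivity ensures that the only possibly nontrivial off-diagonal morphism spaces run from the lower to the upper stratum, and they assemble into a flow bimodule $\bB \colon \Sigma^{-1} \bX_{[\beta, \alpha - 1]} \to \bX_{\{\alpha\}}$, where the suspension shift is arranged so that the dimensions in the mapping cone match those of $\bX_{[\beta,\alpha]}$. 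A direct comparison with the cone construction of Section \ref{sec:dist-triangl} then identifies $\bX_{[\beta, \alpha]}$ with $C(\bB)$; since $\bX_{\{\alpha\}}$ is covered by the base case and $\bX_{[\beta, \alpha - 1]}$ by the inductive hypothesis, the cone $C(\bB) \simeq \bX_{[\beta,\alpha]}$ lies in the subcategory generated by the unit.

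The main obstacle I anticipate is the identification $\bX_{[\beta,\alpha]} \simeq C(\bB)$ in the structured setting. The underlying unstructured flow categories match by a direct unwinding of morphism spaces and composition maps (both have empty spaces on one off-diagonal block and $\bB$ on the other, with no internal morphisms within $\bX_{\{\alpha\}}$), but checking that the tangential structures also agree requires tracing through the definitions of $\Sigma^{-1}$ and of the cone in Section \ref{sec:structured-case}. In particular, one must verify that the identification of the suspension coordinate in $\Sigma^{-1} \bX_{[\beta, \alpha -1]}$ with the normal direction in the cone intertwines the two relative framings. A secondary subtlety is justifying that the subcategory generated by the unit is stable under the specific cofiltered limits of Lemma \ref{limit_bounded_below}; this is not an abstract property of localizing subcategories but follows from the sequential structure of the tower of restriction maps together with compactness of the unit.
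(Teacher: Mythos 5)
Your proof is correct and takes essentially the same route as the paper's (which compresses the whole argument into one sentence): exhaust $\bX$ by the double filtration of Lemmas~\ref{colimit_bounded_above} and~\ref{limit_bounded_below}, reduce to discrete flow categories, which are coproducts of shifts of the unit, and invoke closure of the subcategory generated by the compact unit under the relevant limits and colimits. Your explicit induction on the width of the dimension interval via the cone construction of Section~\ref{sec:dist-triangl}, together with your flagging of the structured-suspension bookkeeping and of closure under the sequential limits, simply makes precise what the paper leaves implicit.
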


We will apply this below by endowing $\infty$-category $\Flow^{fr}$ with 
a graded connective tangential structure induced by the dimension of a
manifold.

\subsection{Framed bordism of a point}
\label{sec:framed-bordism-point}

We begin by quickly reviewing the explicit construction of mapping
spectra in the setting of $\infty$-categories with suspension.  That
is, we assume we have a pointed quasicategory $\cC$ equipped with a
functor $\Sigma \colon \cC \to \cC$ such that there is a natural
equivalence of mapping spaces $\cC(\Sigma x, y) \htp \Omega \cC(x,
y)$.  Given objects $x,y \in \ob(\cC)$, we can form a mapping spectrum
$\underline{\cC}(x,y)$ that has $k$th space the mapping space $\Map_{\cC}(x,
\Sigma^k y)$, where $\Sigma^k$ denotes the $k$-fold iteration of the
suspension functor.  The structure map is induced from the adjoint of
the composite
\begin{equation}
\cC(x, y) \to \cC(\Sigma x, \Sigma y) \to \Omega \cC(x, \Sigma y).
\end{equation}
In general, $\Omega^\infty \underline{\cC}(x,y)$ is not equivalent to $\cC(x,y)$; passing to mapping spectra is a kind of stabilization of $\cC$.  (See for example~\cite{Blumberg2020} for a detailed study of this kind of stabilization.)
A functor $F \colon \cC \to \cD$ between pointed quasicategories with suspension which
preserves the point and commutes with the suspension in the sense that
there is a natural transformation $F \Sigma \to \Sigma F$ induces maps
on mapping spectra $\underline{\cC}(x,y) \to \underline{\cD}(Fx, Fy)$.

We now turn to the identification of the endomorphism spectrum of the
unit in $\Flow^{fr}$.  Given a finite-dimensional inner product space
$V$, let $D(V)$ denote the unit ball in $V$ and $\partial D(V)$ denote
its boundary.  In the following, in mild abuse of notation we write
$(X,A) \times I^k$ to denote the product of pairs $(X,A) \times (I^k,
\emptyset)$.  To control set-theoretic issues, we tacitly assume that
all of the finite dimensional inner product spaces we consider are
subspaces of some fixed universe $U$, i.e., a countably infinite
dimensional inner product space.

\begin{defin}
We define $\Omega^{\infty} (D,\partial D)^{\infty}$ to be the simplicial set
whose vertices are finite-dimensional inner product spaces, and where
an $n$-simplex with vertices $(V_0, \ldots, V_{n})$ is given by a
collection of maps of pairs 
\begin{equation}
h_{i,k} \colon (D(V_i), \partial D(V_i)) \times [0,1]^{\{i+1, \ldots, k-1 \}}\to (D(V_k), \partial D(V_k))   
\end{equation}
for each pair $i < k$, whose restriction to the face given by the
vanishing of the coordinate labelled by $j \in \{i+1, \ldots, k-1 \}$
is given by the composition 
\begin{align*}
(D(V_i), \partial D(V_i)) \times [0,1]^{\{i+1, \ldots,j-1, j+1 ,
    \ldots  k-1 \}} &\to (D(V_j), \partial D(V_j))    \times
  [0,1]^{\{j+1 , \ldots  k-1 \}} \\
  &\to (D(V_k), \partial D(V_k)).
\end{align*}
The face maps are defined by the evident restrictions and
compositions, and the degeneracies are defined by repeating vector
spaces and inserting the identity map and constant homotopy.
\end{defin}

The simplicial set $\Omega^{\infty} (D, \partial D)^{\infty}$ is
evidently a quasicategory.  Specifically, it provides a convenient
model of the homotopy coherent nerve of the simplicial category $\cS$
with objects representation spheres $S^V$ and morphism spaces
continuous maps $S^V \to S^{V'}$.  Although it is does not have enough
objects to be a stable $\infty$-category, there is the evident
suspension functor $\Sigma \colon \Omega^{\infty} (D, \partial
D)^{\infty} \to \Omega^{\infty} (D, \partial D)^{\infty}$ given by
smashing with $S^1$ and using the identity map and constant homotopy
on the suspension variable.
  
We now turn to a precise identification of $\Omega^{\infty} (D,
\partial D)^{\infty}$ as follows.  Denote by $\Omega^{\infty} (D,\partial
D)^{\infty}_{\strict}$ the category with objects
finite-dimensional inner product spaces and morphisms continuous maps
of pairs
\begin{equation}
(D(V), \partial D(V)) \to (D(V'), \partial D(V')).
\end{equation}
Passing to the nerve and inverting maps which induce weak equivalences
of pairs yields a pointed quasicategory with a suspension functor, and
the standard rectification arguments show that the evident inclusion 
\begin{equation}\label{eq:strictifyspherecategory}
N_\bullet \Omega^{\infty} (D,\partial D)^{\infty}_{\strict} \to
\Omega^{\infty} (D,\partial D)^{\infty}
\end{equation}
induces a categorical equivalence of quasicategories after inverting
the weak equivalences in $\Omega^{\infty} (D,\partial
D)^{\infty}_{\strict}$.  Moreover, the inclusion evidently commutes
with the suspension functor and therefore there is an induced
equivalence of mapping spectra.

On the other hand, the canonical isomorphisms 
\begin{equation}
D(V) / \partial D(V) \cong S^V
\end{equation}
yields a comparison between $\Omega^\infty (D,\partial
D)^{\infty}_{\strict}$ and the nerve of the category $\cS$ with
objects the representation spheres $S^V$ and morphism spaces the
continuous maps: there is a categorical equivalence
\begin{equation}\label{eq:diskstospheres}
N_\bullet \Omega^{\infty} (D,\partial D)^{\infty}_{\strict} \to
N_\bullet \cS
\end{equation}
after inverting weak equivalences on both sides.  Again, this is an
equivalence of $\infty$-categories with suspension and therefore
induces an equivalence of mapping spectra.

\begin{rem}
Although we do not use this here, note that the direct sum of inner
product spaces and Cartesian product of disks induces an
$E_\infty$-space structure (i.e., symmetric monoidal $\infty$-category
structure) on $\Omega^{\infty} (D, \partial D)^{\infty}$; homotopies
are determined by the obvious product homotopy.
\end{rem}

The zigzag described above in
equations~\eqref{eq:strictifyspherecategory}
and~\eqref{eq:diskstospheres} induces an equivalence of spectra
between $\bS$ and the endomorphism spectrum of the point in
$\Omega^{\infty} (D, \partial D)^{\infty}$.

Now consider the subsimplicial set $\Bord^{fr}$ of $\Omega^{\infty}
(D,\partial D)^{\infty}$ which has the same vertices and whose higher
simplices consist of maps which are smooth near the inverse image of
the origin, and transverse to it.  Composition again yields a
quasicategory structure on this simplicial set.  Note that the
defining condition is preserved by the direct sum operation, and so
$\Bord^{fr}$ inherits the structure of an $E_\infty$ space.  Moreover,
the suspension functor descends to $\Bord^{fr}$.  Standard
transversality theory combined with the discussion above now implies:

\begin{lem}
The inclusion $\Bord^{fr} \subset \Omega^{\infty} (D,\partial
D)^{\infty}$ is a categorical equivalence of quasicategories and
induces an equivalence of mapping spectra between $\bS$ and the
endomorphisms of the point in $\Bord^{fr}$.
\end{lem}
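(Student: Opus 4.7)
The plan is to deduce the lemma from two complementary ingredients: a transversality/smooth approximation argument showing that the inclusion is a categorical equivalence, and the identification of endomorphism spectra that was already set up for $\Omega^{\infty}(D,\partial D)^{\infty}$.

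First, to show the inclusion $\Bord^{fr} \subset \Omega^{\infty}(D,\partial D)^{\infty}$ is a categorical equivalence, I would verify Joyal's criterion by showing that any inner horn filling problem for a map $\Lambda^n_k \to \Omega^{\infty}(D,\partial D)^{\infty}$ whose restriction factors through $\Bord^{fr}$ admits a filler in $\Bord^{fr}$, and that any two such fillers are connected by a higher horn filler. Concretely, given a compatible family of maps $h_{i,k}$ on the horn, one extends to $\Delta^n$ using standard extension properties of continuous maps of pairs, and then applies smooth approximation and Thom's transversality theorem relative to the boundary where the maps are already smooth and transverse to $0$. Since transversality to a point in a codimension-$\dim V_k$ target is an open and dense condition on the space of continuous maps $(D(V_i),\partial D(V_i)) \times I^{k-i-1} \to (D(V_k),\partial D(V_k))$, and we can ensure the approximation is the identity on the closed set where transversality already holds (using a collar on the boundary of the horn and a suitable bump function), we obtain a filler in $\Bord^{fr}$. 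The same argument applied one dimension up shows the uniqueness of fillers up to homotopy, giving the equivalence.

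Second, the inclusion clearly commutes with the suspension functor on both sides (smash product with $S^1$ preserves smoothness and transversality near the preimage of the origin), and it sends the basepoint (the zero-dimensional inner product space) to itself. Consequently the construction of mapping spectra via $\Sigma^k$ commutes with the inclusion, so the categorical equivalence induces an equivalence on endomorphism spectra of the point. Combined with the zigzag of equivalences $N_\bullet \Omega^{\infty}(D,\partial D)^{\infty}_{\strict} \to \Omega^{\infty}(D,\partial D)^{\infty}$ and $N_\bullet \Omega^{\infty}(D,\partial D)^{\infty}_{\strict} \to N_\bullet \cS$ already exhibited — which identify the endomorphism spectrum of the point in $\Omega^{\infty}(D,\partial D)^{\infty}$ with the sphere spectrum $\bS$ via the canonical equivalence $D(V)/\partial D(V) \cong S^V$ — we obtain the desired equivalence $\bS \simeq \underline{\Bord^{fr}}(\pt,\pt)$.

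The main obstacle will be the relative transversality step: one must approximate the continuous maps $h_{i,k}$ by maps that are smooth near $h_{i,k}^{-1}(0)$ and transverse there, while keeping the approximation fixed on the already-transverse faces of the horn. This is a relative version of the Thom transversality theorem for maps of pairs, and requires some care because the maps are only constrained to send boundaries to boundaries rather than being smooth everywhere. The key technical point is that smoothness is only required in a neighborhood of the preimage of $0 \in D(V_k)$, which is automatically disjoint from $\partial D(V_i)$, so standard relative smooth approximation on the interior of $D(V_i)$ applies and does not interfere with the boundary condition. Once this is set up, the argument reduces to a by-now standard inductive application of Whitney's approximation theorem relative to a closed subcomplex.
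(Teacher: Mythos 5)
Your proposal is correct and follows essentially the same route as the paper, which simply asserts that the lemma follows from ``standard transversality theory combined with the discussion above''; your write-up supplies the relative transversality and suspension-compatibility details that the paper leaves implicit. The only point to tighten is the quasicategorical bookkeeping: since the inclusion is bijective on objects, the cleanest formulation is that relative smooth approximation and transversality give weak equivalences on all mapping spaces, which is what your horn-filling argument amounts to.
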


Passing to the zero locus $X$ of a simplex in $\Bord^{fr}$,
transversality determines a framing 
\begin{equation} \label{eq:framing-zero-locus}
  T X \oplus V' \cong V \oplus \bR^{ \{1, \ldots, k-1 \}} 
\end{equation}
which is canonical up to contractible choice. We take the direct sum
of this framing with the identity map $\bR^{\ast} \cong \bR^{\{k\}}$
in order to obtain a framing in the sense of Definition
\ref{def:framed_flow_simplex}. Choosing the framings in Equation
\eqref{eq:framing-zero-locus} to be compatible with the product
decomposition at the boundary, we obtain a simplex in $\Flow^{fr}$,
so that inductive choices yield a map of simplicial sets
\begin{equation} \label{eq:bordism_to_flow_categories}
\Bord^{fr}  \to \Flow^{fr}
\end{equation}
and hence a functor of quasicategories.  Furthermore, this functor is
clearly compatible with suspension; the suspension functor adds a copy
of $\bR$ to the framing.

We will now use this comparison to identify the endomorphisms of the
unit in $\Flow^{\fr}$.  Since the comparison $\Bord^{fr} \to
\Flow^{\fr}$ commutes with the suspension functor, it induces a map of
mapping spectra and in particular induces a comparison of
endomorphism spectra of the unit.  The following result shows that
this map is a stable equivalence.

\begin{lem} \label{lem:bordism_is_flow-in-limit}
Let $V$ and $V'$ be finite dimensional inner product spaces.  The
induced map $\Bord^{fr}(V, V') \to \Flow^{fr}(V,V')$ is $(\dim
V)$-connected.
\end{lem}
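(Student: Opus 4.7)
The plan is to show the induced map is an isomorphism on $\pi_i$ for $i < \dim V$ and a surjection on $\pi_{\dim V}$, by identifying both sides with appropriate framed bordism groups via Pontryagin–Thom style constructions. The connectivity bound will then be governed by the available room for embeddings in $D(V)$.

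For $\pi_i \Bord^{fr}(V, V')$, an element is represented by a map of pairs
\begin{equation}
(D(V), \partial D(V)) \times (I^i/\partial I^i) \to (D(V'), \partial D(V'))
\end{equation}
that is transverse to the origin; by transversality, its preimage is a framed submanifold of $D(V) \times \mathring{I}^i$ of dimension $\dim V - \dim V' + i$ whose normal bundle is identified with $V'$. On the other hand, an element of $\pi_i \Flow^{fr}(V, V')$ is a structured flow $(i+1)$-simplex whose intermediate vertices are the empty flow category, and whose endpoints are the one-object unit flow categories labelled by $V$ and $V'$. Unpacking Definition~\ref{def:framed_flow_simplex-elementary}, this reduces to a single stably framed manifold-with-corners $X$ of the same dimension, equipped with an equivalence $TX \oplus V' \oplus W \cong V \oplus \bR^{\{1,\ldots,i+1\}} \oplus W$ for some auxiliary bundle $W$, modulo an analogous bordism relation. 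The comparison map sends the transverse family to its zero locus with the canonical induced framing, and then forgets the embedding into $D(V) \times I^i$.

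To establish surjectivity and injectivity in the claimed range, I would argue as follows. Given an abstract stably framed manifold of dimension $\dim V - \dim V' + i$, the framing of its stable normal bundle by $V'$ together with general position allows one to embed it in $D(V) \times I^i$ once the dimensional count permits; the Pontryagin–Thom collapse on a tubular neighborhood then produces a map of pairs lying in $\Bord^{fr}$ whose zero locus recovers the given framed manifold up to the chosen identifications. The dimensional bound $i < \dim V$ ensures that both the embedding and its isotopy class (for injectivity, applied to bordisms in $D(V) \times I^{i+1}$) are controlled by general position. This is entirely analogous to the classical Pontryagin–Thom identification of framed bordism with stable cohomotopy, with the Freudenthal-type range of validity replaced here by the explicit quantitative bound coming from $\dim V$.

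The main obstacle will be handling the corner and boundary structure coherently throughout. Specifically, the embedded representative must be compatible with the corner stratification of $X$ inherited from the flow $(i+1)$-simplex structure, and the Pontryagin–Thom collapse must produce a map satisfying the transversality conditions stratum by stratum. A clean way to handle this is to induct on the codimension of corner strata, choosing collars, framings, and embeddings compatibly at each step in the spirit of the horn-filling arguments of Section~\ref{sec:kan-condition-flow}; this is also where the dependence on $\dim V$ enters precisely, as each extension step requires enough ambient room in $D(V) \times I^i$ for the required embedding or isotopy.
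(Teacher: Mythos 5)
Your overall strategy---identify both sides with framed bordism via a Pontryagin--Thom comparison and control the comparison by a dimension count---is the same as the paper's, but two of the steps you treat as routine are exactly where the content of the paper's proof lies, and one of them is a genuine gap as written.

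First, an element of $\pi_i\Flow^{fr}(V,V')$ does not ``reduce to a single stably framed manifold'': the morphism spaces of a framed flow category are \emph{derived} manifolds, i.e.\ manifolds equipped with an obstruction bundle and a section, and the bordism relation is the derived one. Before any Pontryagin--Thom argument can start, you must replace the representative by one whose obstruction bundle is trivial; the paper does this by first perturbing the defining section to be transverse to the zero section and then degenerating to the normal cone of the zero locus. Your proposal never mentions the obstruction bundle, so this reduction is missing. (By contrast, the corner issues you single out as the ``main obstacle'' are vacuous here: a class in $\pi_i$ is represented by a simplex all of whose boundary faces are the empty bimodule, so the resulting manifold is closed.)

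Second, your source for the bound $i<\dim V$ is not the right one. General position for embedding the $(i+\dim V-\dim V')$-manifold $X$ into $D(V)\times I^i$, and for isotopies of such embeddings, is governed by the codimension, which is $\dim V'$, not $\dim V$. In the paper the bound instead comes from the framing data: after the reduction above one has only a \emph{stable} isomorphism $TX\oplus V'\oplus\bR\oplus W\cong V\oplus\bR^{\{1,\dots,k\}}\oplus W$, whereas a simplex in the image of $\Bord^{fr}$ carries the \emph{unstable} isomorphism of Equation~\eqref{eq:framing-zero-locus} summed with the identity on the remaining factors; deforming the former into the latter is where the connectivity of $O(V)\to O(V\oplus\bR^\infty)$, hence the quantity $\dim V$, enters. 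You would need to add this destabilization step, and, if you retain the embedding discussion, separate its (codimension-governed) range of validity from the bound actually claimed in the lemma.
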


\begin{proof}
It suffices to prove that the map on homotopy groups is an isomorphism
in the desired range. The key point is to prove surjectivity, so we
consider a representative for a homotopy class in $\Flow^{fr}(V,V') $
given by an $n$-simplex with empty boundary, and our goal is to show
that it is equivalent, relative the boundary condition, to a simplex
in the image of $\Bord^{fr}(V, V') $. The first step is to appeal to
the standard transversality argument to assume that the section of the
derived manifold that we are considering is transverse to the
$0$-section. Next, by a degeneration to the normal cone of this zero
locus, we may assume that the obstruction bundle is trivial. We are
thus in the situation of a closed manifold $X$ of dimension $n + \dim
V - \dim V'$, equipped with a stable framing
 \begin{equation}
  T X \oplus V' \oplus \bR^{\ast} \oplus W \cong V \oplus \bR^{ \{1,
    \ldots, k \}} \oplus W.
\end{equation}
Our goal is to deform this framing so that it is given by the direct
sum of a framing as in Equation~\eqref{eq:framing-zero-locus}, with
the identity on
\begin{equation}
  \bR^{\ast} \oplus W \cong \bR^{ \{ k \}} \oplus W.
\end{equation}
This can be achieved under our assumption by the connectivity of the
map $O(V) \to O(V \oplus \bR^{\infty})$.
\end{proof}

Putting it all together we conclude the following corollary.

\begin{cor}\label{cor:endospectrum}
The endomorphism spectrum of the unit in $\Flow^{\fr}$ is equivalent
to the sphere spectrum.
\end{cor}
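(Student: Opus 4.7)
The plan is to show that the suspension-preserving functor $\Bord^{fr} \to \Flow^{fr}$ from Equation~\eqref{eq:bordism_to_flow_categories} induces an equivalence on endomorphism spectra of the unit. Since this functor sends the $0$ vector space to the unit flow category and commutes with the suspension functors on both sides (each of which adds a copy of $\bR$), it determines a canonical map
\[
\bS \;\htp\; \underline{\Bord^{fr}}(0,0) \;\longrightarrow\; \underline{\Flow^{fr}}(\mathrm{unit}, \mathrm{unit}),
\]
where the left-hand identification is the one recorded immediately before Lemma~\ref{lem:bordism_is_flow-in-limit}. The task is to verify that this map is an isomorphism on $\pi_n$ for every integer $n$.

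First I would use the semistability of $\Flow^{fr}$ established in Section~\ref{sec:semistable}, which makes $\Sigma$ an auto-equivalence, to rewrite the homotopy groups of the mapping spectrum. For any integer $n$ and any integer $m$ with $n+m \geq 0$, semistability yields
\[
\pi_n \underline{\Flow^{fr}}(\mathrm{unit}, \mathrm{unit}) \;\cong\; \pi_0 \Flow^{fr}(\Sigma^{n+m}\mathrm{unit}, \Sigma^m \mathrm{unit}),
\]
and an analogous identification holds in $\Bord^{fr}$. Under these identifications, the induced map on $\pi_n$ becomes the colimit over $m$ of the maps $\pi_0 \Bord^{fr}(\bR^{n+m}, \bR^m) \to \pi_0 \Flow^{fr}(\Sigma^{n+m}\mathrm{unit}, \Sigma^m \mathrm{unit})$.

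Next I would apply Lemma~\ref{lem:bordism_is_flow-in-limit} with source $V = \bR^{n+m}$ and target $V' = \bR^m$: the comparison is $(n+m)$-connected, so for any fixed $n$ the map is an isomorphism on $\pi_0$ for all $m$ large enough to ensure $n+m \geq 1$. On the $\Bord^{fr}$ side, the tautological identification $\pi_0 \Bord^{fr}(\bR^{n+m}, \bR^m) = \pi_{n+m}(S^m)$, together with Freudenthal stabilization as $m \to \infty$, recovers the stable homotopy group $\pi_n(\bS)$. Assembling these identifications in the limit produces the desired isomorphism $\pi_n(\bS) \cong \pi_n \underline{\Flow^{fr}}(\mathrm{unit}, \mathrm{unit})$.

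The main delicate point will be ensuring that the semistability identifications of the second step are natural enough in the functor $\Bord^{fr} \to \Flow^{fr}$ that the resulting isomorphism on $\pi_n$ is the one induced by the spectrum map. This reduces to checking that the loop-space equivalences $\Flow^{fr}(\bX, \bY) \htp \Omega_+ \Flow^{fr}(\bX, \Sigma \bY)$ constructed in Lemma~\ref{lem:suspension_unstructured} (and the analogous one in $\Bord^{fr}$) are intertwined by the functor, which in turn follows from the fact that the relative framings produced by transversality in Equation~\eqref{eq:framing-zero-locus} are strictly compatible with the $\bR$-stabilization by which $\Sigma$ is defined on both $\Bord^{fr}$ and $\Flow^{fr}$.
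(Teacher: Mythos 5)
Your proposal is correct and follows essentially the same route as the paper: the paper deduces the corollary directly from the suspension-compatible comparison functor $\Bord^{fr} \to \Flow^{fr}$, the identification of the endomorphism spectrum of the point in $\Bord^{fr}$ with $\bS$, and the $(\dim V)$-connectivity statement of Lemma~\ref{lem:bordism_is_flow-in-limit}, which becomes an equivalence after stabilizing. You have simply spelled out the colimit-over-suspensions bookkeeping that the paper compresses into ``putting it all together.''
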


Finally, we can deduce the desired identification of $\Flow^{\fr}$
as the stable category.

\begin{proof}[Proof of Proposition~\ref{prop:framed_flow=spectra}]
Proposition~\ref{prop:generation} implies that $\Flow^{\fr}$ is
generated by the unit, so it suffices to show the assertion that the
endomorphism spectrum of the unit is equivalent to the sphere spectrum
as associative ring spectra.  Corollary~\ref{cor:endospectrum} shows
that there is an equivalence of spectra, and the result then follows
from the fact that the sphere spectrum admits a unique $A_\infty$ ring
structure.
\end{proof}

\appendix
\section{A review of orbifolds and orbibundles}
\label{sec:orbif-orbib}

The purpose of this section is to review the elementary theory of
effective orbifolds and vector bundles on orbifolds.  Our goal is to
provide a minimal treatment that supports the definition of the
symmetric monoidal category of derived orbifold charts.  The
discussion of Proposition~\ref{prop:horn_filling} (in
Section~\ref{sec:enough-orbibundles}) requires the more elaborate
theory of orbifolds as stacks, and we refer the reader to
Pardon~\cite{Pardon2019}.

\subsection{``Classical'' orbifolds}
\label{sec:classical-orbifolds}

We give a terse review of the definition of an orbifold in terms of
charts and atlases on a suitable topological space,
c.f.~\cite{Chen2002}.  Let $X$ be a second countable Hausdorff space.

\begin{defin}\label{def:orbifold-chart}
An orbifold chart centered at a point $x \in X$ is a triple $(G, U,
\phi)$ where $G$ is a finite group that acts smoothly and effectively
on $\bR^n$,  $U \subset \bR^n$ is an open $G$-invariant neighborhood
of $0$, and $\phi \colon U \to X$ is a $G$-invariant map such that
$\phi(0) = x$, which induces a homeomorphism of $U / G$ onto its
image.
\end{defin}

An {\em embedding} of orbifold charts $(G, U, \phi) \rightarrow (G',
U', \phi')$ is a smooth embedding $i \colon U \to U'$ such that $\phi
= \phi' \circ i$.  Such an embedding gives rise to a unique
homomorphism $G \to G'$.

Two orbifold charts $(G, U, \phi)$ and $(G', U', \phi')$ are {\em
  compatible} if for every $z \in U \cap U'$, there exists a chart
$(G'', U'', \phi'')$ centered at $z$ such that there exist embeddings
$(G'', U'', \phi'') \to (G, U, \phi)$ and $(G', U', \phi') \to (G, U,
\phi)$.

\begin{defin}\label{def:orbifold-atlas}
An orbifold atlas for $X$ is a collection of compatible orbifold
charts $\{(G_{\alpha}, U_{\alpha}, \phi_{\alpha})\}$ whose images cover $X$.
\end{defin}

We shall need the generalisation of these notions to allow for
boundaries and corners; the definition that we are about to give is
not the most general possible, as we shall require the action to be
trivial in the normal direction of each corner stratum:

\begin{defin}\label{def:orbifold-chart-with-corners}
An orbifold chart with corners for $X$ centered at $x \in X$ is a
triple $(G, U, \phi)$ where $G$ is a finite group acting smoothly and
effectively on $\bR^{n-k}$, $U \subset \bR^k_+ \times \bR^{n-k}$ is an
open $G$-invariant neighborhood of $0$, and $\phi \colon U \to X$ is a
$G$-invariant map such that $\phi(0) = x$, which induces a
homeomorphism of $U / G$ onto its image.
\end{defin}

An embedding of orbifold charts with corners is specified by an
embedding of orbifold charts that is compatible with the corner
structure.  The notion of an atlas is then analogous to
Definition~\ref{def:orbifold-atlas}.

\begin{defin}\label{def:stratified-orbifold-atlas}
A orbifold atlas with corners for $X$ is a collection of
compatible orbifold charts with corners $\{(G_{\alpha}, U_{\alpha},
\phi_{\alpha})\}$, whose images cover $X$. 
\end{defin}

An atlas $A'$ {\em refines} an atlas $A$ if every chart in $A$ embeds into a
chart in $A'$.  We say that two atlases are {\em equivalent} if there exists
a common refinement.  In particular, any atlas is contained in a
maximal atlas and two atlases are equivalent if and only if they are
contained in the same maximal atlas.

\begin{defin}\label{defn:effective-orbifold}
An effective orbifold (with corners) is a second countable Hausdorff
space $X$ equipped with a maximal orbifold atlas, or equivalently an
equivalence class of orbifold atlases.
\end{defin}

The most basic example of an effective orbifold is the global quotient
corresponding to a group acting on a manifold.

\begin{example} 
An {\em effective global quotient orbifold} (of Lie type) is the
orbifold corresponding to a smooth, effective, and almost free action
of a compact Lie group $G$ on a smooth manfold $M$.  We will write $M
/ G$ to denote a global quotient orbifold.  Here the orbifolds charts
are constructed from the charts of $M$ and the slices of the
action.
\end{example}

We have a natural variant of this in the context of an orbifold with
corners.

\begin{example}
An {\em effective global quotient orbifold with corners} (of Lie type)
is the orbifold with corners corresponding to a smooth, effective, and
almost free action of a compact Lie group $G$ on a smooth manifold
with corners $M$ by smooth maps of manifolds with corners.  We again
write $M / G$ to denote the global quotient orbifold with corners.
\end{example}

There are many different group actions that can present a given
orbifold.  An immediate consequence of the description of the atlas
for the orbifold $M / G$ is the following comparison result.

\begin{prop}
Suppose that we are given a subgroup $G_1 \subset G_2$ such that $G_1$
acts smoothly, effectively, and almost freely on a manifold $M$.  Then
there is an isomorphism of orbifolds
\begin{equation}
M / G_1 \cong (M \times_{G_1} G_2) / G_2.
\end{equation} \qed
\end{prop}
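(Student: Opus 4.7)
The plan is to realize both sides as smooth global quotient orbifolds via a single underlying space, and then match their atlases chart by chart. First, observe that since $G_1$ is a closed subgroup of the compact Lie group $G_2$, the left translation action of $G_1$ on $G_2$ is free and proper, and the diagonal action $g_1 \cdot (m, g_2) = (g_1 m, g_1 g_2)$ of $G_1$ on $M \times G_2$ is therefore itself free and proper. Hence the quotient $M \times_{G_1} G_2$ is a smooth manifold (a fiber bundle over $G_2/G_1$ with fiber $M$), and the residual right-translation action $h \cdot [m, g_2] = [m, g_2 h^{-1}]$ of $G_2$ on this manifold is smooth.

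Next I would verify that this $G_2$-action is effective and almost free. For effectiveness, if $h$ acts trivially then evaluating at $[m, e]$ for arbitrary $m$ gives $[m, h^{-1}] = [m, e]$, which forces $h^{-1} \in G_1$ and $h^{-1} m = m$ for every $m$; effectiveness of $G_1$ on $M$ then yields $h = e$. For almost freeness, a direct computation shows that the stabilizer $\Stab_{G_2}([m,e])$ consists of those $h$ such that $(m, h^{-1}) = (g_1 m, g_1)$ for some $g_1 \in G_1$, which forces $h^{-1} = g_1 \in \Stab_{G_1}(m)$; thus $\Stab_{G_2}([m,e]) \cong \Stab_{G_1}(m)$, which is finite. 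So $(M \times_{G_1} G_2)/G_2$ is a bona fide effective global quotient orbifold.

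The comparison map is induced by the $G_1$-equivariant inclusion $M \to M \times_{G_1} G_2$ given by $m \mapsto [m, e]$. This is $G_1$-invariant modulo the $G_2$-action (since $[g_1 m, e] = [m, g_1^{-1}] = g_1 \cdot [m, e]$), so it descends to a continuous map $\varphi \co M/G_1 \to (M \times_{G_1} G_2)/G_2$. Bijectivity of $\varphi$ is immediate from the orbit computation above: any $[m, g_2]$ is $G_2$-equivalent to $[m, e]$, and $[m, e]$ and $[m', e]$ are $G_2$-equivalent iff $m$ and $m'$ are $G_1$-equivalent.

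To upgrade $\varphi$ to an isomorphism of orbifolds, I would match the maximal atlases of Definition~\ref{defn:effective-orbifold} chart by chart. Given $m \in M$, choose a $\Stab_{G_1}(m)$-invariant slice $U \subset M$ at $m$ whose $G_1$-translates meeting $U$ come only from $\Stab_{G_1}(m)$; this yields an orbifold chart $(\Stab_{G_1}(m), U, \phi)$ for $M/G_1$. The image $\{[u, e] : u \in U\} \subset M \times_{G_1} G_2$ is a local slice for the $G_2$-action at $[m, e]$, with stabilizer $\Stab_{G_2}([m,e]) = \Stab_{G_1}(m)$ as computed above, yielding an orbifold chart $(\Stab_{G_1}(m), U, \phi')$ for $(M \times_{G_1} G_2)/G_2$. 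Since $\varphi$ intertwines these charts by the identity on $U$ with the identity on the group, it is an isomorphism of orbifolds. The main (minor) obstacle is bookkeeping the various left/right conventions for the two commuting actions of $G_1$ on $G_2$, but no serious geometric input beyond the slice theorem is needed.
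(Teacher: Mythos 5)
Your proof is correct and is exactly the argument the paper has in mind: the paper simply asserts the result as an ``immediate consequence of the description of the atlas'' for a global quotient orbifold, and your chart-by-chart comparison via slices (together with the stabilizer computation $\Stab_{G_2}([m,e]) \cong \Stab_{G_1}(m)$) is precisely what filling in that assertion amounts to. The only implicit hypothesis worth flagging is that $G_1$ must be a closed subgroup of the compact group $G_2$ so that $M \times_{G_1} G_2$ is a smooth manifold, which is automatic in the paper's setting.
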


Defining maps of orbifolds in the context of charts and atlases is in
general subtle; the simplest notion is a smooth map of orbifolds,
but as we discuss below this is not adequate for most purposes.

\begin{defin}
A (smooth) map of orbifolds $(X, A) \to (Y, A')$ is a continuous map
$f \colon X \to Y$ that for each point $x \in X$ there are charts
$(G_x, U_x, \phi_x)$ and $(G_{f(x)}, U_{f(x)}, \phi_{f(x)})$ around
$x$ and $f(x)$ respectively such that $f(\phi(U_x)) \subset
\phi_{f(x)}(U_{f(x)})$ and $f$ lifts to a (smooth) map $U_x \to
U_{f(x)}$.  Similarly, a (smooth) map of orbifolds with corners is a
(smooth) map of orbifolds such that the lifts are to smooth maps $U_x
\to U_{f(x)}$ of manifolds with corners.
\end{defin}

The natural notion of stabilization for a derived orbispace involves a
vector bundle; as in Definition~\ref{def:orbifold-chart} and
Definition~\ref{def:orbifold-chart-with-corners}; this is specified in
terms of charts.

\begin{defin} \label{def:vect-bundl-orbif}
A vector bundle on an effective orbifold (with corners) $X$ with atlas $A$ is
specified by giving a map of orbifolds $E \to X$ along with a maximal
atlas of bundle charts, where a bundle chart is specified by a chart
of $(G_i, U_i, \phi_i)$ of $X$ and a $G_i$-equivariant vector bundle
$\zeta_i \colon E_i \to U_i$ compatible with the structure map $E \to
X$.
\end{defin}

Vector bundles on effective orbifolds do not pull back under smooth
maps of orbifolds; this is the reason for the introduction of good
maps in the sense of Chen and Ruan~\cite[4.4.1]{Chen2002}.  This class
of maps admits well-behaved pullback functors on categories of vector
bundles on orbifolds, as we discuss below.

\begin{defin}\label{defn:good-map-of-orbifolds}
A {\em good map} of orbifolds $(X,A) \to (Y,A')$ is specified by a
smooth map of orbifolds $f$ such that there exist covers $\{U_i\}$ of
$X$ and $\{U'_i\}$ of $Y$ with the property that the indices specify a bijection
between the elements of the covers so that:
\begin{enumerate}
\item the bijection is compatible with $f$ in the sense that $f(U_i)
  \subset U'_i$,
\item the bijection is a map of posets (for the partial order by
  inclusion), and
\item there exists a collection of smooth lifts of $f$ to each chart
  $U_i$ such that there are corresponding injections of charts that
  are compatible with the lifts.
\end{enumerate}
This definition carries over without modification to the setting of orbifolds with corners.
\end{defin}

\begin{prop} \label{prop:pullback_good_maps}
Let $f\colon (X,A) \to (X',A')$ be a good map of effective orbifolds
(with corners).  Then for each vector bundle $E$ on $X'$, there is a
pullback bundle $f^* E$ on $X'$ and this assignment specifies a
functor.\qed
\end{prop}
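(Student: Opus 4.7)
The plan is to construct $f^{*}E$ chart-by-chart using the data packaged into the definition of a good map, and then verify that different choices of such data produce canonically isomorphic bundles, which yields functoriality.

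First I would unpack the data of a good map $f\colon (X,A)\to(X',A')$. By Definition \ref{defn:good-map-of-orbifolds}, we have covers $\{U_i\}$ and $\{U'_i\}$ in bijection, smooth lifts $\tilde{f}_i\colon U_i\to U'_i$, and chart injections $\iota_{ij}\colon U_i\hookrightarrow U_j$ (with group homomorphisms $G_i\to G_j$) whose images under $f$ and the corresponding injections $\iota'_{ij}\colon U'_i\hookrightarrow U'_j$ are compatible with the lifts $\tilde{f}_j\circ\iota_{ij}=\iota'_{ij}\circ\tilde{f}_i$. Given a vector bundle $E$ on $X'$ presented by $G'_i$-equivariant bundles $E'_i\to U'_i$ together with equivariant isomorphisms $\iota'^{*}_{ij}E'_j\cong E'_i$ satisfying the cocycle condition, I would then set
\begin{equation}
(f^{*}E)_i\equiv \tilde{f}_i^{*}E'_i,
\end{equation}
which is naturally a $G_i$-equivariant vector bundle on $U_i$ via the homomorphism $G_i\to G'_i$ induced by the chart embedding. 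The compatibility identity $\tilde{f}_j\circ\iota_{ij}=\iota'_{ij}\circ\tilde{f}_i$ then pulls the gluing data for $E$ back to equivariant isomorphisms $\iota^{*}_{ij}(f^{*}E)_j\cong (f^{*}E)_i$ satisfying the cocycle condition, so the local pieces assemble into a bundle $f^{*}E$ on $X$ in the sense of Definition \ref{def:vect-bundl-orbif}.

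The next step is to check independence of the choices made. A refinement of the covers, a different choice of lifts $\tilde{f}_i$, or compositions of chart injections all produce equivariant isomorphisms between the resulting bundles, and these isomorphisms satisfy a cocycle condition coming from the cocycle condition on $E$; in particular, any two valid presentations of the good map data give canonically isomorphic bundles $f^{*}E$, so $f^{*}E$ is well-defined up to canonical isomorphism (equivalently, a genuine object in the maximal atlas). A morphism $E\to F$ of vector bundles on $X'$ is specified by compatible equivariant bundle maps $E'_i\to F'_i$, and pulling these back along $\tilde{f}_i$ yields compatible equivariant maps $(f^{*}E)_i\to (f^{*}F)_i$, defining $f^{*}$ on morphisms.

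Finally, for functoriality, given another good map $g\colon(X',A')\to(X'',A'')$, I would compose the chosen covers and lifts, using the fact that compositions of good maps are good (after passing to a common refinement) and that pullback of ordinary equivariant vector bundles along smooth maps of $G$-manifolds is strictly functorial on each chart. The canonical isomorphism $\tilde{(g\circ f)_i}^{*}E''_i\cong \tilde{f}_i^{*}\tilde{g}_i^{*}E''_i$ on each $U_i$ assembles to a canonical isomorphism $(g\circ f)^{*}E\cong f^{*}g^{*}E$, and one verifies the standard coherence (the triangle identity for three composable good maps) by checking it chart-locally, where it reduces to the corresponding identity for smooth maps of manifolds.

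The main obstacle is bookkeeping rather than any conceptual difficulty: showing that the pullback is well-defined requires handling the various choices inherent in the definition of a good map (covers, lifts, and chart injections) and verifying that the resulting identifications are coherent. This is precisely why the class of good maps was isolated in \cite{Chen2002}: the combinatorial compatibility built into Definition \ref{defn:good-map-of-orbifolds} is exactly what makes this bookkeeping tractable.
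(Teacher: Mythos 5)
Your construction is correct and is exactly the standard argument the paper is implicitly invoking: the proposition is stated with no proof, as a known fact about good maps from Chen--Ruan \cite[4.4.1]{Chen2002}, and your chart-by-chart pullback along the lifts $\tilde f_i$ together with the compatibility of chart injections is precisely that argument. The only small imprecision is that the homomorphism $G_i \to G'_i$ is part of the lift data of the good map (not induced by a chart embedding of $X$), but this does not affect the proof.
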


Diffeomorphisms of orbifolds provide examples of good maps.

\begin{defin}
A diffeomorphism of orbifolds (with corners) from $(X, A)$ and $(Y,
A')$ is specified by a pair of smooth maps of orbifolds $f \colon
(X,A) \to (Y,A')$ and $g \colon (Y,A') \to (X,A)$ such that $f \circ g
= \id_Y$ and $g \circ f = \id_X$.
\end{defin}

The following basic result allows us to understand effective orbifolds
by studying (effective) global quotient orbifolds.

\begin{prop}
Any effective orbifold is diffeomorphic to an effective global
quotient orbifold.  Any effective orbifold with corners is
diffeomorphic to an effective global quotient orbifold with
corners. \qed
\end{prop}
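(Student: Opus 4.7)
The plan is to construct, for an effective orbifold $X$ of dimension $n$, a smooth manifold $M$ carrying a smooth, effective, almost free $O(n)$-action whose quotient orbifold is diffeomorphic to $X$; the natural candidate is $M = \mathrm{Fr}(X)$, the orthonormal frame bundle of the tangent bundle of $X$ with respect to a chosen Riemannian metric.

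First I would pick a Riemannian metric on $X$ via a partition-of-unity argument, averaging locally over the finite isotropy groups so the metric descends. For each chart $(G_i,U_i,\phi_i)$ with $U_i\subset \bR^n$, form the orthonormal frame bundle $\mathrm{Fr}(U_i)\to U_i$, a smooth principal $O(n)$-bundle on which $G_i$ acts smoothly via its derivative, commuting with the $O(n)$-action. The key observation is that this lifted $G_i$-action is \emph{free}: if $g\in G_i$ fixes some frame at a point $u\in U_i$, then $g$ fixes $u$ and has identity derivative at $u$; by Bochner's linearization theorem, $g$ is conjugate near $u$ to its derivative and so acts trivially in a neighborhood, and effectivity of the $G_i$-action then forces $g=e$.

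Second, I would show that chart embeddings $(G_i,U_i,\phi_i)\to(G_j,U_j,\phi_j)$ lift uniquely to smooth open embeddings $\mathrm{Fr}(U_i)\to\mathrm{Fr}(U_j)$ that are equivariant for the induced homomorphism $G_i\to G_j$ and for the $O(n)$-action. These glue to a smooth Hausdorff manifold $M$ carrying a smooth, almost free $O(n)$-action. The quotient is chart-locally identified with $X$: the canonical isomorphisms $\mathrm{Fr}(U_i)/O(n)\cong U_i$ are $G_i$-equivariant, so $\mathrm{Fr}(U_i)/(G_i\times O(n))\cong U_i/G_i$, and these agree on overlaps by construction. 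Hence $M/O(n)\cong X$ as effective orbifolds.

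For the corners case, the same construction applies essentially verbatim: Definition~\ref{def:orbifold-chart-with-corners} forces the isotropy action to be trivial on normal directions to each corner stratum, so the tangent bundle and the orthonormal frame bundle are well-defined manifolds with corners, the $O(n)$-action preserves the corner stratification, and the gluing argument proceeds as before. The main obstacle is the gluing step, because assembling local frame bundles into a single Hausdorff smooth manifold requires one to verify compatibility of the lifts under overlapping embeddings; the cleanest way to handle this (appealing to the stack-theoretic perspective mentioned in the appendix) is to present $X$ by an étale proper groupoid, take the frame bundle of its object manifold, observe that arrows lift to act freely on frames by the effectivity argument above, and then realize $M$ as the orbit manifold of this lifted groupoid action. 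The resulting $M$ is then manifestly a smooth manifold (with corners, in the stratified case), carrying a free lift of the $O(n)$-action.
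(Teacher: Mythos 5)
The paper states this proposition as a known classical fact and provides no proof of its own (note the immediate \qed), so there is no argument in the text to compare against; your frame-bundle construction is precisely the standard proof of this result (an effective orbifold is the quotient of its orthonormal frame bundle by the almost free $O(n)$-action), and it is correct, including the Bochner-linearization step establishing freeness on frames and the observation that the corner case goes through because the isotropy acts trivially on normal directions.
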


\section{Hamiltonian Floer theory}
\label{sec:hamilt-floer-theory-1}

We now explain how the work of Bai-Xu \cite{BaiXu2022} and Rezchikov \cite{Rezchikov2022}, extending the results of \cite{Abouzaid2021b} from Gromov-Witten to Hamiltonian Floer theory, combined with some techniques from \cite{Abouzaid2021a}, implies the following result, where we use $\Flow^U$ to denote the category of complex oriented derived orbifold flow categories, and $\Flow^U_+$ for its subcategory consisting of bimodules on which the action is non-negative.
\begin{prop} \label{prop:Hamiltonian_Flow_category}
  Associated to a Hamiltonian function with non-degenerate time-$1$ orbits on a closed symplectic manifold $M$ is a complex oriented flow category satisfying the following properties:
  \begin{enumerate}
  \item the equivalence class in  $\Flow^U_+$ only depends on $H$,
  \item  each smooth path of Hamiltonians parametrised by an interval, with the property that the endpoint Hamiltonians are non-degenerate, can be lifted to a morphism $\Flow^U $, in such a way that the constant family lifts to an equivalence in  $\Flow^U_+$, and
    \item   A homotopy between a concatenation of paths and a smooth path determines a $2$-simplex in $\Flow^U$. 
  \end{enumerate}
\end{prop}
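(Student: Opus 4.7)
The plan is to follow the global Kuranishi chart approach of Bai-Xu and Rezchikov to realize the moduli spaces of Floer trajectories as derived orbifolds, and then to organize them into the flow category and flow simplex structures defined in Sections \ref{sec:flow-categories} and \ref{sec:quas-struct-floe}.

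First I would define $\bX(p,q)$, for each pair of non-degenerate time-$1$ orbits, as the compactified moduli space of Floer trajectories connecting $p$ to $q$. Following \cite{BaiXu2022,Rezchikov2022}, this moduli space admits a presentation as a global Kuranishi chart: an equivariant section of an equivariant vector bundle over a smooth manifold with corners, equipped with a compact Lie group action that is free off a finite isotropy locus. Passing to the quotient yields a derived orbifold with corners, and the stratification by broken trajectories realizes it as a $\cP^{\Gamma}$-stratified derived orbifold, where $\cP$ is the set of non-degenerate orbits and $\Gamma = [0,\infty)$ records the energy. Composition is induced by concatenation of Floer trajectories, and the properness condition in Equation~\eqref{eq:lower_bound_flow_simplex} follows from Gromov-Floer compactness.

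Next I would lift this flow category to the complex-oriented setting following the methods of \cite{Abouzaid2021a}. The tangent bundle of the moduli space is virtually the index of the linearized Floer operator, which is a compact perturbation of a complex-linear Cauchy-Riemann operator; this endows the virtual index bundle with a canonical complex structure. A choice of Conley-Zehnder trivialization at each orbit $p$ determines the virtual vector spaces $V_p$, and the standard index gluing formula produces the isomorphisms required by Definition \ref{def:framed_flow_category}, yielding the desired complex-oriented flow category. For parts (2) and (3), I would consider parametrized Floer equations: a smooth path of Hamiltonians, or a change of auxiliary data for fixed $H$, yields parametrized moduli spaces that assemble into a flow bimodule between the flow categories at the endpoints. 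When the path is constant in $H$, a comparison with the continuation map construction identifies this bimodule with the diagonal bimodule of Section \ref{sec:diagonal-bimodule}, and hence as an equivalence in $\Flow^U_+$. For part (4), a homotopy parametrized by a $2$-simplex between a concatenation of paths and a smooth path yields the parametrized moduli spaces that directly realize the data of a flow $2$-simplex in the sense of Definition \ref{def:flow_simplex}.

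The main obstacle will be the coherent construction of complex orientations and the verification of the compatibility between the global Kuranishi chart presentations and the $2$-categorical formalism of $d\Orb^U$. In particular, one must track the index bundles through gluing at all the boundary strata of the parameter spaces and verify that the decomposition $E \cong (W,W) \oplus I \oplus U$ required by $d\Orb^U$ can be produced compatibly with all composition and boundary maps, including the distinction between codimension-$1$ strata coming from breaking of Floer trajectories (where $Q_f^- = \{q\}$) and those coming from parameter-space boundaries (where $Q_f^+$ records the boundary coordinate). The rigidity of the global chart approach of \cite{BaiXu2022,Rezchikov2022} is essential: it makes these compatibilities hold on the nose rather than up to homotopy, so the output plugs directly into the framework of Section \ref{sec:quas-struct-floe} without requiring additional rectification.
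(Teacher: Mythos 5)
Your outline follows the same overall route as the paper (global Kuranishi charts in the style of Bai--Xu and Rezchikov, index-theoretic complex orientations, continuation bimodules for invariance), but it elides several steps that are not merely technical bookkeeping, and one of your claims is the opposite of what actually happens. The most serious issue is your assertion that the rigidity of the global chart approach makes the compatibilities ``hold on the nose rather than up to homotopy.'' They do not. The smooth structures produced by Lashof's equivariant smoothing theory are canonical only up to concordance, and the smooth structure induced on a boundary stratum $\partial^{(d_1,d_2)}_q X^{(d)}(p,r)$ from $X^{(d)}(p,r)$ need not agree with the one it carries as (a vector bundle over) $X^{(d_1)}(p,q)\times X^{(d_2)}(q,r)$. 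The paper resolves this by passing to \emph{collared completions} $\hat X^{(d)}(p,r)$, interpolating between the two smooth structures on a collar; without this (or an equivalent rectification) your flow category does not exist as a strictly enriched object, and the output does not plug into Section~\ref{sec:quas-struct-floe}. Similarly, the structure maps are only strong equivalences, not isomorphisms, which is exactly why the abstract framework of Section~\ref{sec:preliminaries} is set up the way it is.

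The second gap concerns what the global chart construction actually requires as input, and this affects all three parts of the statement. To realize the domain of a Floer trajectory as a degree-$d$ curve in $\bC\bP^d$ one must first make the energy integer-valued, which is done by choosing an integral $2$-form $\Omega_H$ (vanishing near the orbits) approximating a large multiple of $\omega$; the flow category therefore depends a priori on $\Omega_H$ and on the consistent system of inhomogeneous terms $\scrV^{(d)}(p,r)$ over the stacks $\bar B_d$ of framed curves. Proving item (1) — that the equivalence class depends only on $H$ — and item (2) then forces you to compare flow categories built from \emph{non-cohomologous} forms $\Omega_-$ and $\Omega_+$, which requires doubly framed curves (stable maps to $\bC\bP^{d_-}\times\bC\bP^{d_+}$ with a $U(d_-)\times U(d_+)$ action), and item (3) requires triply framed curves; none of this appears in your sketch. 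Finally, on the complex orientation: the tangent space of the thickening splits as $T\bar B_d\oplus T^\pi$, and the ``compact perturbation of a complex-linear operator'' heuristic only handles the fibre direction (and even there one must run an explicit family of operators interpolating between $D_p\# D_u$ and $D_u^{\bC}\# D_r$, stabilized by auxiliary spaces $W^{(d)}(p,r)$ chosen coherently at all strata). You also need a separate stable complex structure on the base $\bar B_d$ itself, constructed from its description as an oriented real blowup, together with the compatibility of that structure with the boundary maps $\bar B_{d_1}\times\bar B_{d_2}\to\bar B_d$; this is where the normal bundles $\mathfrak{u}_d/(\mathfrak{u}_{d_1}\oplus\mathfrak{u}_{d_2})$ and the $U$-structure of Definition~\ref{def:framed_flow_category} actually get matched up.
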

Since the space of Hamiltonians is path connected, we conclude:
\begin{cor}
 Hamiltonian Floer theory associates an object of $\Flow^U$ to each closed symplectic manifold, whose equivalence class is independent of the choice of Hamiltonian.
\end{cor}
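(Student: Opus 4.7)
The plan is to reduce the statement to the three properties listed in Proposition \ref{prop:Hamiltonian_Flow_category}, together with the elementary fact that the space $\cH(M)$ of smooth time-dependent Hamiltonians $H \co M \times S^1 \to \bR$ is an affine (hence contractible) subspace of $C^\infty(M \times S^1, \bR)$. In particular, any two non-degenerate Hamiltonians on $M$ can be joined by a smooth path of Hamiltonians whose endpoints are the given non-degenerate ones, and any two such paths are smoothly homotopic rel endpoints through such paths.

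First I would fix any non-degenerate Hamiltonian $H$ on the closed symplectic manifold $M$ (such Hamiltonians are generic) and use property $(1)$ of the proposition to attach to $M$ the well-defined equivalence class $[\bX_H] \in \Ho(\Flow^U)$. Given any other non-degenerate Hamiltonian $H'$, I would choose a smooth path $\gamma \co [0,1] \to \cH(M)$ from $H$ to $H'$; by property $(2)$ this lifts to a morphism $\bX_\gamma \co \bX_H \to \bX_{H'}$ in $\Flow^U$, and its reverse $\bar\gamma(t) = \gamma(1-t)$ produces a morphism $\bX_{\bar\gamma} \co \bX_{H'} \to \bX_H$ in the opposite direction.

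The key step is to check that $\bX_\gamma$ is an equivalence in $\Flow^U$. For this I would observe that the concatenation $\gamma \cdot \bar\gamma$ is a loop based at $H$ which is smoothly homotopic rel endpoints to the constant path at $H$ (note that all paths in the homotopy have both endpoints equal to the non-degenerate Hamiltonian $H$, so property $(3)$ applies). Applying $(3)$ to this homotopy yields a $2$-simplex in $\Flow^U$ whose edges are $\bX_\gamma$, $\bX_{\bar\gamma}$, and the morphism attached by $(2)$ to the constant path at $H$, which is an equivalence by the second half of $(2)$. Hence $\bX_{\bar\gamma} \circ \bX_\gamma$ is an equivalence in the homotopy category, and the symmetric argument applied to $\bar\gamma \cdot \gamma$ shows that $\bX_\gamma \circ \bX_{\bar\gamma}$ is an equivalence. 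The two-out-of-six property of equivalences in the quasicategory $\Flow^U$ then forces $\bX_\gamma$ itself to be an equivalence, so $[\bX_H] = [\bX_{H'}]$.

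The main obstacle is of course entirely absorbed into the statement of Proposition \ref{prop:Hamiltonian_Flow_category}: property $(3)$, which produces a $2$-simplex in $\Flow^U$ from a smooth homotopy of paths of Hamiltonians, requires the construction and analysis of complex-oriented derived orbifold structures on moduli spaces of Floer solutions parametrised by a $2$-dimensional family of equations, and it is precisely the purpose of the appendix to establish this. Once this is granted, the proof of the corollary is the purely formal $\infty$-categorical manipulation above and needs no further geometric input.
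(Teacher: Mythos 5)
Your proof is correct and takes essentially the same route as the paper: the paper's own justification is the one-line observation that the space of Hamiltonians is path connected, and your argument simply spells out how that fact combines with properties (2) and (3) of Proposition \ref{prop:Hamiltonian_Flow_category} (reversed path, homotopy of the concatenation to the constant path, and the resulting $2$-simplices forcing $\bX_\gamma$ to be invertible in the homotopy category). This matches the mechanism the appendix actually implements, where the constant continuation map gives the diagonal bimodule and Lemma \ref{lem:invariance_continuation} supplies the witnessing $2$-simplex.
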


 As discussed in the introduction, and proved in Section \ref{sec:fram-flow-categ} below, when the target symplectic manifold is symplectically aspherical (in particular, there are no $J$-holomorphic spheres), a lift to the orthogonal group of the monodromy map  $\Omega M \to U$ on the tangent space of $M$ determines a lift to a framed flow category.

To prove Proposition \ref{prop:Hamiltonian_Flow_category}, we begin by fixing a Hamiltonian
\begin{equation}
 H \co M \times S^1 \to \bR 
\end{equation}
and an almost complex structure  $J$ on $M$, requiring the non-degeneracy condition that the return map associated to any closed time-$1$ Hamiltonian orbit of $H$ not have $1$ as an eigenvalue. This implies that the set of such Hamiltonian orbits is isolated, hence finite.

The essential construction in Floer theory is that of the moduli spaces $\Mbar(p,r)$ of stable solutions to Floer's equation associated to these data. These are equivalence classes of finite-automorphism maps
\begin{equation}
  u \co \Sigma \to M  
\end{equation}
whose domain is the complement of two marked points and the nodes separating them in a pre-stable Riemann surface, and so that all the components lying in the chain connecting these marked points are solutions to Floer's equation 
\begin{equation}
  (du - X_H \otimes dt)^{0,1} = 0 
  \end{equation}
  with asymptotic conditions given by time-$1$ periodic orbits of the Hamiltonian vector field $X_H$, which at the two marked points are $p$ and $r$, while the components not lying on this chain are $J$-holomorphic curves.

  These are the morphism spaces of a (non-unital) topologically enriched category; our goal is to realise these morphism spaces as (the coarse spaces) of zero-loci of morphism spaces in a flow category, and equip this flow category with a stable complex structure. The unstructed lift is already part of the results of \cite{Rezchikov2022} and \cite{BaiXu2022} so we begin by presenting a minor variant of their construction. Afterwards, we adapt the construction of stable complex structures from \cite{Abouzaid2021a} to the setting of global charts.

\subsection{Preliminary constructions}
\label{sec:prel-constr}

\subsubsection{Framed curves in projective space}
\label{sec:fram-curv-proj-1}

A key issue in Floer-theoretic constructions is that the domains of curves representing elements of moduli spaces such as $\Mbar(p,r)$ have nontrivial automorphisms; any geometric construction on the moduli spaces then entails breaking these symmetries. The classical approach, following \cite{Fukaya1999}, is to break these symmetries using choices of hypersurfaces in the target $M$, which are assumed to be transverse to the chosen curve; this is only possible locally in the moduli space.

One of the fundamental insights of \cite{Abouzaid2021b} is that any such domain is realised as the domain of a curve with target a complex projective space, and that the moduli space of such curves can serve, when the degree is large enough, as a parameter space for the space of maps with target any given symplectic manifold, as long as redundancies are appropriately accounted for. The starting point is the classical description of the space of holomorphic maps from $\bC \bP^1$ to $\bC \bP^d$ of degree $d$: the natural action of $PGL_{d+1}(\bC)$ on this space is transitive, and the stabiliser of any point is a copy of $PGL_2(\bC)$.

Let $* = [0, \ldots, 1]$, and define  $\Fbar_2(d,*)$ to be the open subset of the moduli space of stable maps to $\bC \bP^d$  of degree $d$ and genus $0$ with $2$ marked points (labelled $z_\pm$) so that $z_-$ maps to $*$, consisting of curves whose image is not contained in any proper linear subspace. As in \cite[Lemma 6.4]{Abouzaid2021b}, this is a smooth manifold, carrying an action of the unitary group $U(d)$, embedded as a subgroup of $PGL_{d+1}(\bC)$ as the image of those unitary matrices fixing the last basis vector of $\bC^{d+1}$. We write $\cF_2(d,*)$ for the open submanifold consisting of curves with the property that every node separates the two marked points, carrying the restricted $U(d)$ action; the interior of $ \cF_{2}(d,*)$ is diffeomorphic to the quotient of $PGL_{d+1}(\bC) $ by $\bC^*$.

Let $\Fbar^{\bR}_2(d,*)$ be the $U(d)$-manifold consisting of a curve in  $\Fbar_2(d,*)$, a choice of real line in the tangent space of the point labelled $z_+$, and a choice of a real line in $T_z \Sigma_+ \otimes_{\bC} T_z \Sigma_-$ for each node $z$ which separates the marked points $z_\pm$.  We denote by $ \cF^{\bR}_2(d,*)$ the open subset of  $\Fbar^{\bR}_2(d,*)$ lying over $ \cF_2(d,*) $.

\subsubsection{Deforming the energy to be integral}
\label{sec:deforming-energy-be}

The energy defines a (locally constant) proper and bounded below map
\begin{align}
  E \co \Mbar(p,r) & \to (0,\infty) \\
  u & \mapsto \int u^* \omega + \partial_s H(u(s,t)) ds \wedge dt.
\end{align}
We write $\Mbar(p,r)^{E}$ for the union of components of $\Mbar(p,r)$ consisting of elements of energy $E$. For geometric applications, this is the fundamental quantity that is associated to each moduli space. However, our method for lifting $ \Mbar(p,r) $ to a derived orbifold require us to deform $E$ to an integrally valued function.

By considering a $2$-form $\Omega$ representing a large integral multiple of a rational approximation to $\omega$, we can assign an integral energy map
\begin{equation}
  u \mapsto \int u^* \Omega \in \bN  
\end{equation}
to each closed $J$-holomorphic curve, which vanishes only for constant curves. This is the starting point of \cite{Abouzaid2021b}, where the form $\Omega$ is assumed to tame $J$, and is interpreted as the curvature of a complex line bundle on $M$ equipped with a Hermitian connection. For the setting of Hamiltonian Floer theory, we follow \cite[Section 3.1]{Rezchikov2022}, and use a cutoff function to ensure that $\Omega$ vanishes near the orbits. In this way, by also choosing a deformation $\bar{H}$ of a large integral multiple of $H$ which is supported near the orbits, we obtain a quantized energy
\begin{align}
  \bar{E} \co \Mbar(p,r) & \to \bN \setminus \{0\} \\
  u & \mapsto \int \tilde{u}^* \Omega_{H} 
\end{align}
where $ \tilde{u}^* \Omega_{H}  = u^* \Omega + \partial_s \bar{H}(u(s,t)) ds \wedge dt$. 
The key result we need is: 
\begin{lem}[Lemma 19 of  \cite{Rezchikov2022}]
  The deformed energy $\bar{E}$ is proper on $\Mbar(p,r)$, and is additive under concatenation of Floer solutions, and bubbling of spheres. \qed
\end{lem}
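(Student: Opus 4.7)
The plan is to unpack the construction of $\Omega_H$ sufficiently to see all three properties. Recall that $\Omega$ is a closed $2$-form representing a rational (up to scaling) approximation to $N\omega$ for some large integer $N$, modified by a cutoff to vanish in tubular neighborhoods $\cU_p$ of each time-$1$ orbit $p$, while $\bar{H}$ is a perturbation of $N\cdot H$ supported in these neighborhoods such that the resulting form $\Omega_H = \Omega + d\bar H \wedge dt$ on $M \times S^1$ (or an equivalent global expression) is closed and integral on every $2$-cycle, including the relative cycles obtained by capping off cylinders at orbits with local primitives. The first step is therefore to verify that this normalization can be achieved by a suitable choice of cutoff functions and that the resulting pairing $\int \tilde{u}^* \Omega_H$ is a well-defined integer for every stable Floer cylinder.

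For properness, I would compare $\bar{E}$ to the geometric energy $E$. Since $\Omega - N\omega$ and $d\bar H - d(NH)$ are bounded (and vanish outside a compact neighborhood of the orbits), standard estimates for solutions of Floer's equation give a pointwise $C^0$ bound on the integrand showing $|\bar{E}(u) - N E(u)| \le C(p,r)$ for a constant depending only on the endpoints. Combined with Floer's Gromov-compactness theorem, which ensures that sublevel sets $\{E \le E_0\}$ are compact in $\Mbar(p,r)$, this yields properness of $\bar{E}$ on each component.

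The additivity claims are local in nature and reduce to checking behavior near the bubbling/breaking loci. For concatenation at an orbit $q$, the key observation is that both $\Omega$ and $d\bar H \wedge dt$ have controlled behavior on $\cU_q$: by construction the $\Omega$-term vanishes there and the Hamiltonian correction $\bar{H}$ integrates to a well-defined primitive contribution depending only on the asymptotic orbit, so the formal sum $\int \tilde u_1^* \Omega_H + \int \tilde u_2^* \Omega_H$ equals the integral over the concatenated cylinder. For sphere bubbling, a bubble $v: S^2 \to M$ is genuinely $J$-holomorphic (no Hamiltonian term), so its contribution is $\int v^* \Omega$; this is an integer by the integrality of $[\Omega]$, and it adds to the energy of the principal component by the standard decomposition of stable maps.

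The main obstacle is the careful arrangement of the cutoffs so that $\Omega_H$ simultaneously (i) tames $J$ (needed for the energy bound above to control all the usual geometric quantities), (ii) is integral on the relevant relative cycles, and (iii) produces additive contributions under both types of degenerations. This is essentially the content of \cite[Section 3.1]{Rezchikov2022}; the statement then follows by invoking his construction, and the present lemma is a direct transcription of his Lemma 19 into our notation, so no further work is required beyond verifying that our cutoff conventions agree with his.
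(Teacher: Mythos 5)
The paper offers no proof of this lemma beyond the citation to Rezchikov's Lemma 19, so your proposal — which sketches the comparison of $\bar{E}$ with the geometric energy for properness, the local analysis of the cutoffs near orbits for additivity under breaking, and the integrality of $[\Omega]$ on sphere bubbles, before ultimately deferring to \cite[Section 3.1]{Rezchikov2022} — is consistent with and strictly more detailed than what the paper does. Your outline is correct in substance; the only loose point is the claimed ``pointwise $C^0$ bound'' relating the two energy integrands, which really rests on the cohomological proximity of $\Omega$ to $N\omega$ (the difference of the integrals depends only on the relative homotopy class and the endpoints) rather than on a pointwise estimate, but this does not affect the conclusion.
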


\begin{rem}
  The Floer complex constructed in \cite{Rezchikov2022} is generated by \emph{contractible periodic orbits,} In this setting, one can assign to each generator a set of action values $\{ \cA(x) \} \subset \bR$ which is a torsor over the periods of $\Omega$ on spheres mapping to $M$, with the property that the energy of any solution to Floer's equation lies in the set of difference between the  possible action values of its asymptotic conditions. Since we do not restrict our attention to contractible orbits, it makes less sense from our point of view to introduce the action.
\end{rem}

We shall write $\Mbar^{(d)}(p,r)$ for the subset of the moduli space $ \Mbar(p,r)$ consisting of those curves whose deformed energy equals $d$.

\subsubsection{Framed stable Floer solutions}
\label{sec:framed-stable-floer}

We now combine the discussions of the previous two sections, by constructing, for each pair of Hamiltonian orbits and each fixed energy level $d$, a topological space equipped with a group action (with finite isotropy), so that the resulting quotient space is $ \Mbar^{(d)}(p,r)$. In other words, we present the orbispace associated to $ \Mbar^{(d)}(p,r)$ as a global quotient.

The relationship between the two constructions is that, given a punctured Riemann surface $\Sigma$ which is the domain of an element of $\Fbar^{\bR}_2(d,*)$, and a map  $u \co \Sigma \to M$ representing a point in $\Mbar(p,r)$, there are two ways to produce a holomorphic line bundle on $\bar{\Sigma}$:
\begin{enumerate}
\item the pullback of $\cO(1)$ on projective space, and
  \item the line bundle with connection $2$-form the pullback of $ -2 \pi  \Omega_H$. 
\end{enumerate}

If, on each component of $\Sigma$, the degree of the map to $\bC \bP^d$ associated to the element in $\Fbar^{\bR}_2(d,*)$ agrees with the energy, these two line bundles are isomorphic; the space of such isomorphism inherits a free transitive action of $\bC^*$, arising from dilating either line bundle.

\begin{defin}
  A \emph{unitarily framed stable Floer solution} consists of  (i) a curve $\Sigma$ representing a point in $ \Fbar^{\bR}_2(d,*)$, and (ii) a stable solution $u \co \Sigma \to M$ to Floer's equation so that the quantised energy of each component agrees with the degree of the corresponding map to $\bC \bP^d $, and such that, under any isomorphism between $\cO(1)$ and the line bundle associated to $\Omega_H$, the standard basis of $\cO(1)$ is projectively unitary with respect the pairing defined by $\Omega_H$.\end{defin}

\begin{lem}[Section 3.2.3 of \cite{Rezchikov2022} or Section 5.3.1 of \cite{BaiXu2022}] \label{lem:framed-curves-quotient}
  The moduli space $\Mbar^{(d)}(p,r)$ is homeomorphic to the quotient,  by the action of $ U(d) $,  of the space of unitarily framed stable Floer solutions of deformed energy $d$. The stabiliser of any point is finite, and is naturally isomorphic to the isotropy of the corresponding stable Floer solution. \qed
\end{lem}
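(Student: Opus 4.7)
The plan is to construct a forgetful map from unitarily framed stable Floer solutions of quantised energy $d$ to $\Mbar^{(d)}(p,r)$, show that it is $U(d)$-invariant, and verify that the induced map on the quotient is a homeomorphism with the claimed isotropy. The forgetful map simply drops the data of the holomorphic map to $\bC \bP^d$ and the projective real framings at the marked point $z_+$ and at the separating nodes, retaining only the underlying stable Floer solution $u\co \Sigma \to M$; that this is continuous and $U(d)$-invariant is immediate from the definitions, since the $U(d)$-action on $\Fbar^{\bR}_2(d,*)$ preserves the domain of $u$.

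To prove surjectivity of the induced map on quotients, I would start from a stable Floer solution $u\co \Sigma \to M$ with $\bar E(u) = d$, and exhibit a unitarily framed lift. The form $-2\pi \Omega_H$ is the curvature of a Hermitian line bundle on each component of $\bar\Sigma$; the integrality and positivity of $\Omega_H$ (away from the cutoff region where $\Omega_H$ vanishes) combined with the matching of degree and quantised energy on each component give a holomorphic line bundle $\cL$ on $\bar\Sigma$ of total degree $d$. For $d$ sufficiently large (which one ensures once and for all by picking $\Omega$ a large enough integral multiple of an approximation to $\omega$), $\cL$ is very ample on each non-constant component and globally generated, and the space of global sections has dimension $d+1$ after using the asymptotic conditions at $z_\pm$ and the balancing across nodes; a choice of projective basis thus gives a degree-$d$ map $\Sigma \to \bC \bP^d$ whose image is non-degenerate linearly, i.e., defines a point of $\Fbar^{\bR}_2(d,\ast)$ after also choosing real framings at $z_+$ and at separating nodes. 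Imposing the projective unitarity of the standard basis with respect to the Hermitian pairing induced by $\Omega_H$ picks out a $U(d)$-torsor of such framings (plus the discrete choice of real framings), establishing that every point of $\Mbar^{(d)}(p,r)$ is hit and that the fibre of the forgetful map consists of a single $U(d)$-orbit up to the finite automorphism group of $u$.

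Injectivity of the induced map on quotients follows from the same torsor description: any two unitarily framed lifts of the same $u$ differ by an element of $PGL_{d+1}(\bC)$ fixing the image and the point $\ast$; the projective unitarity constraint cuts this down to $U(d)$, and the real framings rigidify the residual $S^1$ ambiguity at each marked or separating nodal point. To identify stabilisers, note that a non-trivial stabiliser of a unitarily framed stable Floer solution under $U(d)$ acts as a biholomorphism of $\bar\Sigma$ fixing the marked points and the real framings, and the compatibility with $u$ means it realises an automorphism of the stable Floer solution in the classical sense; conversely, every such automorphism lifts canonically to the framing data, giving the asserted natural isomorphism.

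Finally, to upgrade the continuous bijection on quotients to a homeomorphism, I would invoke properness: the Gromov-type compactness theorem for stable Floer solutions shows that $\Mbar^{(d)}(p,r)$ is compact, and a parallel Gromov-type compactness for unitarily framed solutions (the additional framing data lives in a compact fibre bundle over the Floer moduli space) shows that the space of unitarily framed solutions mod $U(d)$ is compact Hausdorff. A continuous bijection between compact Hausdorff spaces is a homeomorphism. The main obstacle I anticipate is the surjectivity step, specifically verifying that $d$ is large enough that $\cL$ admits the required $(d+1)$-dimensional space of sections on every nodal configuration that can occur in $\Mbar^{(d)}(p,r)$; this is exactly where one follows the arguments of \cite{BaiXu2022,Rezchikov2022} in choosing $\Omega$ and the cutoff $\bar H$ carefully, using positivity of $\Omega_H$ on non-constant components and the vanishing of higher cohomology in high degree.
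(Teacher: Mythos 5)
The paper does not actually prove this lemma --- it is imported from Rezchikov and Bai--Xu --- and your overall strategy (forgetful map, surjectivity via a basis of sections of the line bundle attached to $\Omega_H$, the $U(d)$-torsor description of unitary framings, and the compact-Hausdorff upgrade from continuous bijection to homeomorphism) is exactly the intended one. There is, however, a genuine gap in your treatment of the real framing data, and it sits precisely where the homeomorphism claim could fail.

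A point of $\Fbar^{\bR}_2(d,*)$ carries, besides the map to $\bC \bP^d$, a real line in the tangent space at $z_+$ and a real line in $T_z\Sigma_+\otimes_{\bC}T_z\Sigma_-$ at each separating node. You describe these as ``the discrete choice of real framings''; they are not discrete --- each is a point of $\bR\bP^1\cong S^1$. If they were genuinely free parameters of a unitarily framed solution, the fibre of your forgetful map over a point of $\Mbar^{(d)}(p,r)$ would be a $U(d)$-torsor times a torus, the $U(d)$-quotient would fibre over $\Mbar^{(d)}(p,r)$ with positive-dimensional fibres, and the lemma would be false; your later remark that the real framings ``rigidify the residual $S^1$ ambiguity'' is in tension with calling them discrete and does not explain why the fibre collapses to a single orbit. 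The missing point is that these real lines are pinned down by the Floer solution itself: writing $(du-X_H\otimes dt)^{0,1}=0$ on a punctured component requires cylindrical coordinates $(s,t)$, canonical only up to rotation and translation, and the rotation is fixed by the requirement that $u$ converge to the \emph{parametrised} orbits at the ends and at each breaking; the asymptotic markers so determined are exactly the real lines at $z_+$ and at the separating nodes (equivalently, this data is already part of a point of $\Mbar^{(d)}(p,r)$ and is not forgotten). You need to say this for both surjectivity and injectivity to close up. A secondary imprecision: the appeal to very ampleness ``for $d$ sufficiently large'' is off target, since $d$ is whatever the quantised energy happens to be and cannot be arranged to be large; what one actually uses is that a line bundle of nonnegative degree on every component of a genus-zero nodal curve automatically satisfies $h^1=0$ and $h^0=d+1$ and is base-point free, the largeness of $\Omega$ serving only to guarantee nonnegativity (and positivity on nonconstant sphere components) of those componentwise degrees.
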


Note that we cannot a priori determine the image of the projection map from the space of unitarily framed stable Floer solutions to $\Fbar^{\bR}_2(d)$, so it is convenient to introduce the space of  \emph{framed stable Floer solutions}, in which the unitary condition on the pullback of the standard basis is dropped. Because we assume that the basepoint $z_-$ maps to $[0, \ldots, 0, 1]$, the space of such framings admits a natural action of the subgroup of $GL_{d+1}(\bC)$ consisting of matrices whose last column is $(0, \ldots, 1)$. The maximal compact subgroup of this Lie group is $U(d)$,  so that the failure of the basis to be unitary is measured by a map to its normal bundle
\begin{equation} \label{eq:normal-b-spaces}
  N_{d} =   i\mathfrak{u}_{d} \oplus \bC^d.
\end{equation}

\subsubsection{The thickening}
\label{sec:thickening}

For the next construction, we consider a complex finite dimensional $U(d)$ representation $\scrV^{(d)}(p,r)$, equipped with an invariant inner product, together with an equivariant map $\lambda$ from $\scrV^{(d)}(p,r)$ to the (infinite dimensional) vector space whose elements are compactly supported smooth families of $(0,1)$-forms on the Riemann surfaces representing elements of $\Fbar^{\bR}_2(d,*)$, valued in the space of vector fields on $M$ which vanish in the neighbourhood of the Hamiltonian orbits where the $2$-form $\Omega$ vanishes.

\begin{defin} \label{def:thickening}
The \emph{thickening} $\cT^{(d)}(p,r)$ is the space of pairs $(u,v)$, with $u$ a map to $M$ with domain a curve $\Sigma$ representing a point in $ \Fbar^{\bR}_2(d,*)$, and $v$ is an element of $\scrV^{(d)}(p,r)$ so that the following conditions hold:
\begin{enumerate}
\item $u$ represents a stable finite energy solution to the pertubed Floer equation
\begin{equation} \label{eq:perturbed_equation}
(du - X_H \otimes dt)^{(0,1)}  - \lambda(v)  = 0,
\end{equation}
\item the energy of each component of $\Sigma$ agrees with their degree in projective space,
\item the matrix $\int \langle x_i, x_j \rangle_{\Omega_H} \tilde{u}^* \Omega_{H} $ of pairings between the elements of the standard basis of $\cO(1)$ is positive definite, 
\item the linearisation of Equation \eqref{eq:perturbed_equation} is surjective, and
\item the asymptotic conditions along the ends of $\Sigma$ are the orbits $p$ at $z_-$ and $r$ at $z_+$.
\end{enumerate}
\end{defin}

Note that the thickening depends on the choice of the map $\lambda$ 
which is not included in the notation. The action of $U(d) $ on $\Fbar^{\bR}_2(d,*) $ lifts to this space, and the stabiliser of any map is therefore constant. The following result is a consequence of standard transversality and gluing results, and is a variant of the analogous result in \cite{Abouzaid2021b} for the case of genus $0$ Gromov-Witten theory.
\begin{prop}[Proposition 3 of  \cite{Rezchikov2022} or Proposition 6.22 of \cite{BaiXu2022}]
The thickening $\cT^{(d)}(p,r)$ is a topological manifold with boundary, which, assuming that the image of $\lambda$ is sufficiently large, contains the space of unitarily framed Floer solutions, and satisfies the property that the projection to $ \Fbar^{\bR}_2(d,*)$ is a $U(d)$-equivariant topological submersion which is naturally equipped with an equivariant fibrewise smooth structure. \qed
\end{prop}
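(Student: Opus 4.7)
The plan is to adapt the arguments of \cite{Abouzaid2021b} for Gromov--Witten theory to the Hamiltonian setting, exactly in the manner of \cite{BaiXu2022,Rezchikov2022}. I will proceed in four steps.

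First, I would establish smoothness of the fibres over the interior of $\Fbar^{\bR}_2(d,*)$. Fix a smooth domain $\Sigma$ (no nodes) representing an interior point, and work in an appropriate Sobolev completion of the space of maps $\Sigma \to M$ with prescribed asymptotic conditions. The perturbed Floer operator associated to $\lambda$ is a smooth Fredholm section of a Banach bundle, and the surjectivity condition in Definition \ref{def:thickening} together with the implicit function theorem realises the fibre $\cT^{(d)}(p,r)|_{\Sigma}$ as a smooth finite-dimensional manifold. Elliptic regularity ensures that solutions are smooth maps, and the positive-definiteness condition on the matrix of $\Omega_H$-pairings is open, so cuts out an open subset.

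Second, I would upgrade this to a parametric statement over a chart $U \subset \Fbar^{\bR}_2(d,*)$ avoiding the nodal locus. The domains vary smoothly in $U$, and one can pull back all the analytic data (Banach bundles, Floer operator, perturbations) to $U$, yielding a smooth submersion of the restricted thickening to $U$. This gives the desired fibrewise smooth structure over the interior, and equivariance for the $U(d)$-action is preserved because $\lambda$ is equivariant by hypothesis.

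Third, and this is the main obstacle, I need to handle the nodal strata and the transition between them. The standard gluing construction (following \cite{Abouzaid2021b}, Section 6) produces for each nodal configuration a continuous family of smooth manifolds parametrised by a gluing parameter in a half-disc, but the resulting structure on the total space is only topological: the gluing construction is smooth in the fibre directions but only continuous (or H\"older) in the transverse gluing direction. This is precisely what the statement asserts, a topological submersion with a fibrewise smooth structure. I would quote the gluing theorem from \cite{BaiXu2022} (Proposition 6.22), verifying only that our slightly different conventions regarding the choice of framings in $\Fbar^{\bR}_2(d,*)$ and the perturbation space $\scrV^{(d)}(p,r)$ are compatible: the real framings at $z_+$ and at the separating nodes commute with the gluing parameter, and the perturbation $\lambda(v)$ can be extended across the glued neck by cut-off. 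The boundary of the topological manifold arises exclusively from the gluing at the outgoing marked point $z_+$, as the nodes separating the two marked points contribute higher-codimension corner strata.

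Finally, I would verify that $\cT^{(d)}(p,r)$ contains the unitarily framed Floer solutions for $\scrV^{(d)}(p,r)$ sufficiently large. This reduces to a transversality statement: by compactness of the space of unitarily framed solutions of bounded energy (Lemma \ref{lem:framed-curves-quotient}), one can choose $\lambda$ with image so large that, at every such solution (where $v=0$ automatically satisfies Equation~\eqref{eq:perturbed_equation}), the linearisation of the full map is surjective. A standard Sard--Smale-type argument combined with a finite cover of the compact moduli space allows a finite-dimensional $\scrV^{(d)}(p,r)$ to suffice. The remaining conditions in Definition \ref{def:thickening} (the degree-energy matching, the positive-definiteness, and the asymptotic conditions) hold tautologically at unitarily framed solutions. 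This completes the argument.
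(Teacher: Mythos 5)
The paper itself offers no proof of this statement: it is quoted directly from Proposition 3 of Rezchikov and Proposition 6.22 of Bai--Xu, with the remark that it follows from ``standard transversality and gluing results'' adapted from the genus-$0$ Gromov--Witten case of \cite{Abouzaid2021b}. Your outline (interior transversality via the implicit function theorem, a parametric version over charts, gluing at the nodal strata, and a compactness argument to extract a finite-dimensional $\scrV^{(d)}(p,r)$ with surjective linearisation at every unitarily framed solution) is exactly the standard route taken in those references, and Steps 1, 2 and 4 are fine as sketched.

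There are, however, two points in Step 3 that are wrong or missing. First, the claim that ``the boundary of the topological manifold arises exclusively from the gluing at the outgoing marked point $z_+$'' inverts the actual geometry: the marked point $z_+$ carries an asymptotic condition and a choice of real tangent line (an $S^1$-factor in $\Fbar^{\bR}_2(d,*)$), and contributes no boundary at all. The boundary of $\cT^{(d)}(p,r)$ comes precisely from the separating nodes: $\Fbar^{\bR}_2(d,*)$ is the oriented real blowup of $\Fbar_2(d,*)$ along the divisors of curves with a node separating $z_-$ from $z_+$, so a configuration with $k$ separating nodes lies in a codimension-$k$ corner (topologically, part of the boundary). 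These strata are where breaking of Floer trajectories occurs and are what later furnish the composition maps of the flow category. Second, your gluing discussion addresses only the separating nodes and ignores the sphere-bubble (non-separating) strata, which sit in the \emph{interior} of $\Fbar^{\bR}_2(d,*)$ in even codimension and are not blown up. These interior strata are the genuine source of the difficulty: it is there that one can only produce a topological submersion with a fibrewise smooth structure rather than a global smooth one (this is why the paper must subsequently invoke Lashof's equivariant smoothing theory in Corollary \ref{cor:smooth_uniqut_up_to_concordance}). Since you are in any case deferring to the cited gluing theorem, neither point is fatal, but as written your account of where the boundary lives and of what the gluing must handle does not match the statement you are proving.
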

Using Lashof's equivariant extension of smoothing theory \cite{Lashof1979}, which can be extended to manifolds with corners by a direct argument \cite[Appendix B]{BaiXu2022} or by applying a generalisation of the doubling procedure for manifolds with boundary \cite[Section 4.2]{Rezchikov2022}, we conclude:
\begin{cor} \label{cor:smooth_uniqut_up_to_concordance}
  Up to replacing $\scrV^{(d)}(p,r)$ by a larger representation, the thickening $\cT^{(d)}(p,r)$ admits a $U(d)$-equivariant smooth structure which is canonical up to concordance. \qed
\end{cor}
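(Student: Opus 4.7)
The plan is to reduce the construction of the equivariant smooth structure to an obstruction theory problem that is trivialized by stabilizing the obstruction bundle $\scrV^{(d)}(p,r)$. The setup is that the projection $\cT^{(d)}(p,r) \to \Fbar^{\bR}_2(d,*)$ is a $U(d)$-equivariant topological submersion between topological manifolds with corners, whose base is smooth as a $U(d)$-manifold and whose fibers are equipped with a coherent $U(d)$-equivariant fiberwise smooth structure. The task is to upgrade these fiberwise data to a global $U(d)$-equivariant smooth structure on the total space.

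First, I would invoke the version of $G$-smoothing theory needed in this setting. Lashof's equivariant smoothing theorem for $U(d)$-manifolds without corners, together with the extension to manifolds with corners discussed in \cite[Appendix B]{BaiXu2022} (or, alternatively, obtained by the equivariant doubling trick of \cite[Section 4.2]{Rezchikov2022}), asserts that the space of $U(d)$-equivariant smoothings of a topological $U(d)$-manifold compatible with a given fiberwise smooth structure over a smooth base is non-empty and connected provided that appropriate obstructions, which live in equivariant cohomology groups with coefficients in the homotopy groups of the fiber $\Top/O$ at each orbit type, vanish. Concordance classes of such smoothings are then classified by the same obstruction theory one dimension down. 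The orbits in $\Fbar^{\bR}_2(d,*)$ have finite isotropy, so these groups are the ordinary (rather than genuinely equivariant) obstruction groups at each isotropy type.

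The second step is to deal with the obstructions by stabilization. Enlarging the representation $\scrV^{(d)}(p,r)$ by direct summing with a complex $U(d)$-representation $W$ has the effect of replacing $\cT^{(d)}(p,r)$ by a topological vector bundle (with a chosen fiberwise smoothing) over itself, which simultaneously raises the fiber dimension of the submersion over $\Fbar^{\bR}_2(d,*)$. Since the connectivity of the pair $(\Top(n), O(n))$ and its $G$-equivariant refinements tends to infinity with $n$, any fixed obstruction class in finite-dimensional equivariant cohomology of the base (with values in the homotopy groups of these pairs) is killed once the stabilization is large enough. Crucially, the base $\Fbar^{\bR}_2(d,*)$ has only finitely many orbit types and finite cohomological dimension (it is a finite-dimensional $U(d)$-manifold with corners), so finitely many stabilizations suffice to kill both the existence and uniqueness obstructions. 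The main obstacle here is verifying that the stabilization procedure is compatible with all the structure we have imposed in Definition \ref{def:thickening}: most importantly, the enlargement must preserve surjectivity of the linearization and the positivity condition, which is automatic if we stabilize by taking the direct sum with $W$ and setting $\lambda$ to be zero on the added summand.

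Finally, I would check canonicity up to concordance. Once we have enlarged $\scrV^{(d)}(p,r)$ enough that both the existence and uniqueness obstructions vanish, the space of $U(d)$-equivariant smooth structures compatible with the fiberwise data is non-empty and path connected, hence any two choices are related by a concordance. To make the statement canonical with respect to further enlargement, one uses the observation that stabilizing a chosen smooth structure by the identity on the trivial bundle associated to $W$ yields a preferred smoothing on the enlarged thickening, and any two smooth structures on the enlargement become concordant after one further stabilization by the same obstruction-theoretic argument. This yields a well-defined concordance class in the direct limit over enlargements of $\scrV^{(d)}(p,r)$, which is what the corollary asserts.
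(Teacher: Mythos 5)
Your proposal is correct and follows essentially the same route as the paper, which proves this corollary simply by citing Lashof's equivariant smoothing theory together with its extension to manifolds with corners (via \cite[Appendix B]{BaiXu2022} or the doubling procedure of \cite[Section 4.2]{Rezchikov2022}) and absorbing the obstructions by stabilizing $\scrV^{(d)}(p,r)$. Your elaboration of the obstruction-theoretic mechanism, the effect of stabilization (product with a trivial $U(d)$-bundle), and the well-definedness of the concordance class in the limit is a faithful unpacking of that one-sentence argument.
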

We write $X^{(d)}(p,r)$ for the orbifold quotient of  $\cT^{(d)}(p,r)$ by $U(d)$.

\subsubsection{The global chart}
\label{sec:global-chart}

The direct sum of the  $U(d)$-representations
\begin{equation} \label{eq:obstruction-bundle}
\scrV^{(d)}(p,r) \oplus N_{d}
\end{equation}
defines a  $U(d)$-equivariant vector bundle over  $\cT^{(d)}(p,r)$, and hence a vector bundle over $X^{(d)}(p,r)$.
\begin{defin}[Definition 12 of \cite{Rezchikov2022} or Definition 5.33 of \cite{BaiXu2022}]
  The global chart $\bX^{(d)}(p,r)$ of stable Floer cylinders of energy $d$ is the derived orbifold with total space $X^{(d)}(p,r) $, obstruction bundle given by Equation \eqref{eq:obstruction-bundle}, and defining section which is  the direct sum of the projection to $v$ with the map which assigns to each framed curve the corresponding element of $  N_{d} $ measuring the failure of the framing to be unitary.
\end{defin}

Lemma \ref{lem:framed-curves-quotient} implies that this is global chart for the moduli space of stable Floer cylinders, with asymptotic conditions $p$ and $r$ and (quantised) energy $d$, in the sense that the coarse space of the zero locus of the defining section is $\Mbar^{(d)}(p,r)$.

\subsection{Unstructured flow category}
\label{sec:unstr-flow-categ-1}

The moduli space of stable Floer cylinders has a stratification with the codimension $k$ strata of $\Mbar^{(d)}(p,r)$ given by the images of natural maps
\begin{equation}
   \Mbar^{(d_1)}(p,q_1) \times    \Mbar^{(d_2)}(q_1,q_2) \times \cdots \times  \Mbar^{(d_k)}(q_{k-1},r)
\end{equation}
with $\sum d_i = d$. The thickenings constructed in Section \ref{sec:global-chart} do not have such a stratification because the representations $\scrV^{(d)}(p,r)$ are chosen independently.

\subsubsection{Stratification of framed curves}
\label{sec:strat-fram-curv}

Define $\bar{B}_d$ to be the smooth stack quotient of the $U(d)$ action on  $\Fbar^{\bR}_2(d,*)$. One begins by constructing a map
\begin{equation} \label{eq:map_bases}
   \bar{B}_{d_1} \times \bar{B}_{d_2} \to \bar{B}_d 
\end{equation}
as follows: given a curve representing an element of $\bar{B}_{d_2}$, use parallel transport (along the line distinguished by the framing at the output marked point), to induce a trivialisation of the line over the input marked point, from the standard trivialisation of the last factor of $\bC^{d_2 +1}$, then define a linear map of projective spaces
\begin{equation}
  \bC \bP^{d_1} \to   \bC \bP^{d}
\end{equation}
which at the level of affine spaces is given by the inclusion of the first $d_1$ coordinates, and the map which takes the last factor of $\bC^{d_1 + 1}$ to the above line (thought of as a subspace of $\bC^{d+1}$ by the inclusion of $\bC^{d_2 +1}$ as the last coordinates). This induces a map
\begin{equation}
   \Fbar^{\bR}_2(d_1,*) \times  \Fbar^{\bR}_2(d_2,*) \to    \Fbar^{\bR}_2(d,*) 
\end{equation}
since, by construction, the output marked point in the first curve and the input marked point in the second curve map to the same point in $  \bC \bP^{d} $, and the construction descends to the quotient by unitary groups since it is equivariant with respect to the inclusion homomorphism
\begin{equation}
  U(d_1) \times U(d_2) \to U(d)
\end{equation}
of block matrices.

This map takes value in the codimension-$1$ boundary stratum of the moduli spaces $ \Fbar^{\bR}_2(d,*)$, which is obtained by oriented real blowup of the divisor of  $\Fbar_2(d,*)$ associated to a single node that separate the two marked points, with the property that the degrees of the two components of the domain which are separated by the node are given by $(d_1,d_2)$. 
This divisor is a $\bC^{d_1 \times d_2}$ bundle over the locus where the two subspaces are orthogonal; this subspace consists of a single orbit of the $U(d)$ action, and the inclusion of Equation \eqref{eq:map_bases} identifies it diffeomorphically with
\begin{equation}
     \left(\bar{B}_{d_1} \times \bar{B}_{d_2}\right) \times_{U(d_1) \times U(d_2)} U(d). 
\end{equation}

We conclude that the boundary stratum associated to splitting into two components is the total space of a complex vector bundle over $\bar{B}_{d_1} \times \bar{B}_{d_2}$. Given a triple $d_1 + d_2 + d_3 = d$, it is then straightforward to see that the diagram
\begin{equation}
  \begin{tikzcd}
    \bar{B}_{d_1} \times \bar{B}_{d_2} \times \bar{B}_{d_3} \ar[r] \ar[d] &  \bar{B}_{d_1 + d_2} \times \bar{B}_{d_3} \ar[d] \\
     \bar{B}_{d_1} \times \bar{B}_{d_2 + d_3} \ar[r] &  \bar{B}_{d} 
  \end{tikzcd}
\end{equation}
commutes. Writing $\partial^{d_1,d_2,d_3} B_{d}$ for the corner stratum of $B_{d}$ associated to such a partition, we find that the projection maps
\begin{equation}
    \partial^{d_1,d_2,d_3} \bar{B}_{d} \to \bar{B}_{d_1} \times \bar{B}_{d_2} \times \bar{B}_{d_3} 
\end{equation}
obtained from the two factorisations agree, as do the associated complex vector bundle structures. Proceeding inductively, we conclude:
\begin{lem}[Section 5.2.5 of \cite{BaiXu2022}]
  The assignment $d \mapsto \bar{B}_d$ extends to a lax monoidal functor from the discrete symmetric monoidal category on the natural numbers to the category of smooth stacks. The corner stratum $\partial^{d_1, \ldots, d_k} \bar{B}_d $  labelled by a paritition $d_1 + \cdots + d_k = d$ is the vector bundle on $   \bar{B}_{d_1} \times \cdots \times \bar{B}_{d_k}$ associated to the representation  $\mathfrak{u}_{d}/ \bigoplus_{i} \mathfrak{u}_{d_i}$ of $U(d_1) \times \cdots \times U(d_n)$.  The $0$-section of this vector bundle is given by the image of the associated map
  \begin{equation}
        \bar{B}_{d_1} \times \cdots \times \bar{B}_{d_k} \to \bar{B}_d.
  \end{equation}  \qed
\end{lem}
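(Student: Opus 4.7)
The plan is to establish the lemma in three stages: (i) extend the binary construction preceding the lemma to an iterated $k$-fold concatenation, verifying the coherences of a lax monoidal functor; (ii) identify the corner stratum $\partial^{d_1,\ldots,d_k}\bar{B}_d$ with the claimed vector bundle over the product; (iii) identify the normal representation explicitly as $\mathfrak{u}_d/\bigoplus_i \mathfrak{u}_{d_i}$.

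For (i), given framed curves $C_1,\ldots,C_k$ representing elements of $\bar{B}_{d_1},\ldots,\bar{B}_{d_k}$, I would iterate the construction of Equation~\eqref{eq:map_bases}: inductively define a linear embedding $\bC\bP^{d_1+\cdots+d_i}\hookrightarrow \bC\bP^{d_1+\cdots+d_{i+1}}$ using the line in $\bC^{d_1+\cdots+d_{i+1}+1}$ obtained by parallel transport along $C_{i+1}$ of the trivialisation at its output marked point. Because parallel transport is compatible with composition of paths and because the embedded projective subspaces obtained from two adjacent factorisations are manifestly equal (they arise from the same sequence of transported lines), the two factorisations of a triple partition give equal maps to $\bar{B}_d$ and equal identifications of complex vector bundles. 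This yields the lax-monoidal coherences.

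For (ii), the codimension-$(k-1)$ divisor in $\Fbar_2(d,\ast)$ corresponding to a chain of $k$ components joined by $k-1$ nodes, each separating the two marked points, is isomorphic to the bundle over $\prod_i \Fbar_2(d_i,\ast)$ whose fibre records the $(k-1)$-step flag of subspaces of $\bC^{d+1}$ and the requisite gluing data. Passing to the real-oriented blowup along the $k-1$ divisors (using that the framings in $\Fbar^{\bR}_2$ include precisely a real line in $T_z\Sigma_+\otimes T_z\Sigma_-$ for each such node) produces the corner stratum $\partial^{d_1,\ldots,d_k}\Fbar^{\bR}_2(d,\ast)$; the $(k-1)$ real rays are absorbed into the corner structure. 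Taking the stack quotient by $U(d)$, the zero section (i.e.\ the locus where the subspaces are pairwise orthogonal and the gluing data is trivial) is a single $U(d)$-orbit, which under the embedding $\prod_i U(d_i)\hookrightarrow U(d)$ becomes the image of the iterated concatenation map.

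For (iii), the fibre of the normal bundle at a basepoint of the zero section is parametrised by $k$-tuples of complements to the nested flag $\bC^{d_1+1}\subset \bC^{d_1+d_2+1}\subset\cdots\subset\bC^{d+1}$, all containing the reference line spanned by the last basis vector; equivalently, by choices in $\bigoplus_{i<j}\mathrm{Hom}_\bC(\bC^{d_i},\bC^{d_j})$. Under the identification of the off-block-diagonal part of a skew-Hermitian matrix with such a tuple of linear maps, this complex vector space is $U(d_1)\times\cdots\times U(d_k)$-equivariantly isomorphic to $\mathfrak{u}_d/\bigoplus_i\mathfrak{u}_{d_i}$, which has the expected real dimension $d^2-\sum_i d_i^2=2\sum_{i<j}d_id_j$. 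I would then proceed by induction on $k$, using the commutativity of the square from step (i) to check that the two vector bundle structures on a refined corner stratum induced by different factorisations of the partition coincide; this follows from the fact that both descriptions recover the same off-block entries of $\mathfrak{u}_d$.

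The main obstacle I anticipate is the compatibility of the complex vector bundle structures under refinement of partitions, i.e.\ showing that the normal bundle identifications from two factorisations $(d_1+d_2)+d_3 \to d$ and $d_1+(d_2+d_3)\to d$ agree after including into $\partial^{d_1,d_2,d_3}\bar{B}_d$. This reduces to a linear-algebraic check that the two ways of decomposing $\mathfrak{u}_d/\bigoplus_i\mathfrak{u}_{d_i}$ via $\mathfrak{u}_d/(\mathfrak{u}_{d_1+d_2}\oplus\mathfrak{u}_{d_3})\oplus\mathfrak{u}_{d_1+d_2}/(\mathfrak{u}_{d_1}\oplus\mathfrak{u}_{d_2})$, and symmetrically, give rise to the same sub-bundle filtrations; since both filtrations are determined by the nested flag of subspaces, they agree.
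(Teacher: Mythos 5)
Your proposal is correct and follows essentially the same route as the paper: the binary concatenation map via parallel transport of the framing line, the identification of the nodal divisor as a vector bundle over the single $U(d)$-orbit of orthogonal configurations induced from $\prod_i U(d_i)$, the identification of the fibre with the off-block-diagonal part $\mathfrak{u}_d/\bigoplus_i\mathfrak{u}_{d_i}$ (the tangent space of the relevant flag/Grassmannian), and induction on $k$ using the commuting square for triple partitions. The linear-algebraic compatibility check you flag as the main obstacle is exactly the point the paper also reduces to, and your resolution via the common nested flag is the intended one.
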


We also consider the spaces $B_d \subset \bar{B}_d$ obtained by restricting to the submanifold  $ \cF^{\bR}_2(d,*)$ consisting of chains of rational curves. Since all points with nontrivial isotropy lie away from this submanifold, this quotient is smooth, and, following from the discussion in Section \ref{sec:fram-curv-proj-1} is moreover contractible. This implies that each corner stratum of $B_d$ is again contractible.

\subsubsection{Consistent choice of inhomogeneous terms}
\label{sec:cons-choice-inhom}

The representations $\scrV^{(d)}(p,r)$ that enter in the definition of the thickening in Section \ref{sec:thickening} can be interpreted as a vector bundle over $\bar{B}_d$, for which we will use the same notation. Since every representation of $U(d)$ appears as a subrepresentation of a representation restricted from $U(d')$, we can therefore arrange, by induction on $d$, to have an inclusion of complex vector bundles
\begin{equation}
\scrV^{(d_1)}(p,q) \oplus \scrV^{(d_2)} (q,r) \to \scrV^{(d_1+d_2)}(p,r)|_{\bar{B}_{d_1} \times \bar{B}_{d_2}},
\end{equation}
so that the following diagram commutes for every quadruple of orbits:
\begin{equation} \label{eq:consitence_inhomogeneous_term}
  \begin{tikzcd}
  \scrV^{(d_1)} (p,q) \oplus \scrV^{(d_2)}(q,q') \oplus   \scrV^{(d_2)}(q',r)\ar[r] \ar[d] &   \scrV^{(d_1+d_2)}(p,q')|_{\bar{B}_{d_1} \times \bar{B}_{d_2} }  \oplus   \scrV^{(d_2)}(q',r)  \ar[d] \\
   \scrV^{(d_1)} (p,q)\oplus \scrV^{(d_2+d_3)}(q,r)|_{ \bar{B}_{d_2} \times \bar{B}_{d_3}  }  \ar[r] &   \scrV^{(d_1+d_2+d_3)}(p,r)|_{ \bar{B}_{d_1} \times \bar{B}_{d_2} \times \bar{B}_{d_3}} .
  \end{tikzcd}
\end{equation}

Moreover, the choices of inhomogeneous terms $\lambda$ give rise to maps from these vector bundles to an infinite dimensional stack over $\bar{B}_d$ whose values at a point are $(0,1)$-forms on the associated Riemann surface (this is not quite an infinite dimensional bundle because the topology of the Riemann surfaces changes). It is then clear that we can restrict the map $\lambda$ on $  \scrV^{(d_1+d_2)}(p,r)$ to $\bar{B}_{d_1} \times \bar{B}_{d_2} $.
\begin{defin}
A \emph{consistent choice} of inhomogeneous terms is a choice of maps $\lambda$ for all triples $(d,p,r)$, so that the restriction of the inhomogeneous term on $\scrV^{(d)}(p,r)$ to $\bar{B}_{d_1} \times \bar{B}_{d_2} $ for all curves with asymptotic conditions $q$ at the node separating the two marked points, vanishes on the orthogonal complement of $\scrV^{(d_1)}(p,q) \oplus \scrV^{(d_2)}(q,r)$. 
\end{defin}
The existence of such consistent choices is proved in \cite[Lemma 28]{Rezchikov2022}. 

\subsubsection{Topological flow category}
\label{sec:topol-flow-categ}

Given a consistent choice of inhomogeneous terms, the boundary stratum $\partial^{(d_1,d_2)}_{q}  X^{(d)}(p,r) $  associated to an orbit $q$ and a partition $d_1 + d_2 = d$ consists of solutions to the same equation (on the same domains) as elements of the product of orbifolds
\begin{equation}
     X^{(d_1)}(p,q) \times  X^{(d_2)}(q,r).
\end{equation}
Identifying $   \scrV^{(d)}(p,r)|_{\partial^{(d_1, d_2)} \bar{B}_{d}} $ as a vector bundle over $ \bar{B}_{d_1} \times \bar{B}_{d_2}$ (with fibre the direct sum of $ \scrV^{(d)}(p,r)$  with the normal bundle of  $ \bar{B}_{d_1} \times  \bar{B}_{d_2}$ in $\partial^{(d_1, d_2)} \bar{B}_{d}$), we obtain a surjection of complex of vector bundles
\begin{equation}
   \scrV^{(d)}(p,r)|_{\partial^{(d_1, d_2)} \bar{B}_{d}}  \to \scrV^{(d_1)}(p,q) \oplus   \scrV^{(d_2)}(q,r)
 \end{equation}
 over $ \bar{B}_{d_1} \times \bar{B}_{d_2}$. From this, we conclude:
 \begin{lem}
   The stratum $  \partial^{(d_1,d_2)}_{q} X^{(d)}(p,r)$ is the total space of a complex vector bundle over $   X^{(d_1)}(p,q) \times   X^{(d_2)}(q,r)$. Moreover, given a pair of orbits $(q,q')$ and a partition $d_1 + d_2 + d_3 = d$, the two induced complex vector bundles structures on codimension $2$ strata
   \begin{equation} \label{eq:compatibility_codimension_2_strata}
     \begin{tikzcd}
   \partial^{(d_1,d_2,d_3)}_{q,q'}     X^{(d)}(p, r)  \ar[r] \ar[d] &   X^{(d_1)}(p,q) \times \partial^{(d_2,d_3)}_{q'}   X^{(d_2+d_3)} (q, r)\ar[d]  \\
   \partial^{(d_1,d_2)}_{q}       X^{(d_1+d_2)} (p,q')\times   X^{(d_3)}(q',r) \ar[r] &    X^{(d_1)}(p,q) \times   X^{(d_2)}(q,q') \times  X^{(d_3)}(q,r)
     \end{tikzcd}
   \end{equation}
agree.
   \qed
\end{lem}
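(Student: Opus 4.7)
The plan is to reduce the compatibility statement to an equality of two natural decompositions of a common complex vector bundle on the triple product $X^{(d_1)}(p,q) \times X^{(d_2)}(q,q') \times X^{(d_3)}(q',r)$, with the target of both paths through Diagram~\eqref{eq:compatibility_codimension_2_strata} identified as the total space of this bundle. First I would describe the fibre explicitly: at a point of the triple product, the stratum $\partial^{(d_1,d_2,d_3)}_{q,q'} X^{(d)}(p,r)$ consists of triples of Floer solutions whose gluing data at the two separating nodes records (i) an element of the orthogonal complement of $\scrV^{(d_1)}(p,q) \oplus \scrV^{(d_2)}(q,q') \oplus \scrV^{(d_3)}(q',r)$ inside $\scrV^{(d)}(p,r)$, interpreted via the consistency condition in Diagram~\eqref{eq:consitence_inhomogeneous_term}, together with (ii) the normal directions to $\bar{B}_{d_1} \times \bar{B}_{d_2} \times \bar{B}_{d_3}$ inside $\bar{B}_d$, which are identified with the representation $\mathfrak{u}_d / (\mathfrak{u}_{d_1} \oplus \mathfrak{u}_{d_2} \oplus \mathfrak{u}_{d_3})$.

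Next I would analyze the two paths in the diagram separately. Going through $\partial^{(d_1,d_2)}_q X^{(d_1+d_2)}(p,q') \times X^{(d_3)}(q',r)$, the complex vector bundle structure on the top arrow is assembled from the orthogonal complement of $\scrV^{(d_1)}(p,q) \oplus \scrV^{(d_2)}(q,q')$ inside $\scrV^{(d_1+d_2)}(p,q')|_{\bar B_{d_1} \times \bar B_{d_2}}$ together with the normal bundle $\mathfrak{u}_{d_1+d_2}/(\mathfrak{u}_{d_1} \oplus \mathfrak{u}_{d_2})$; the second step adds the orthogonal complement of $\scrV^{(d_1+d_2)}(p,q') \oplus \scrV^{(d_3)}(q',r)$ inside $\scrV^{(d)}(p,r)|_{\bar B_{d_1+d_2} \times \bar B_{d_3}}$ and the normal bundle $\mathfrak{u}_d / (\mathfrak{u}_{d_1+d_2} \oplus \mathfrak{u}_{d_3})$. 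The analogous analysis for the other path gives decompositions involving $\mathfrak{u}_{d_2+d_3}/(\mathfrak{u}_{d_2} \oplus \mathfrak{u}_{d_3})$ and the nested orthogonal complements of $\scrV$'s.

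The key step is to verify that both decompositions coincide as subbundles of the common fibre. For the $\mathfrak{u}$-part, this is the standard fact that the two short exact sequences of Lie algebra quotients determined by the chain of subgroups $U(d_1) \times U(d_2) \times U(d_3) \subset U(d_1 + d_2) \times U(d_3), U(d_1) \times U(d_2+d_3) \subset U(d)$ induce the same filtration on $\mathfrak{u}_d / (\mathfrak{u}_{d_1} \oplus \mathfrak{u}_{d_2} \oplus \mathfrak{u}_{d_3})$, which is essentially the pentagon coherence for the lax monoidal structure on $d \mapsto \bar{B}_d$. For the $\scrV$-part, the equality follows directly from the commutativity of Diagram~\eqref{eq:consitence_inhomogeneous_term}: the iterated orthogonal complements obtained by either route of the pentagon agree by the consistency hypothesis on the choice of inhomogeneous terms. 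Combining these two coherences and taking direct sums, the two complex vector bundle structures are identified by a canonical isomorphism that is the identity on the underlying real vector bundle.

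The main obstacle I expect is not conceptual but notational: carefully setting up the direct sum decompositions so that the identifications of normal directions in $\bar{B}_d$ with those of $\bar{B}_{d_1+d_2}$ (and similarly for $\bar{B}_{d_2+d_3}$) are compatible with the block inclusions of unitary groups. This coherence is built into the lax monoidal structure on $d \mapsto \bar{B}_d$ established in the preceding lemma, so the argument ultimately reduces to the associativity of that lax monoidal structure together with the associativity of the consistency diagram for $\scrV^{(d)}(p,r)$.
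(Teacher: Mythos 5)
Your proposal is correct and follows essentially the same route the paper takes: the stratum is identified fibrewise as the kernel of the surjection $\scrV^{(d)}(p,r)|_{\partial^{(d_1,d_2)}\bar{B}_d} \to \scrV^{(d_1)}(p,q)\oplus\scrV^{(d_2)}(q,r)$ (the orthogonal complement of the embedded sub-representation together with the normal directions $\mathfrak{u}_d/(\mathfrak{u}_{d_1}\oplus\mathfrak{u}_{d_2})$ in the base), and the codimension-two compatibility reduces exactly to the commutativity of Diagram~\eqref{eq:consitence_inhomogeneous_term} for the $\scrV$-part and to the associativity of the lax monoidal structure on $d\mapsto\bar{B}_d$ for the Lie-algebra part. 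This is precisely the argument the paper leaves implicit behind the \qed.
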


From the above, we obtain, by associativity, that all the strata of $ X(p,r)$ are total spaces of vector bundles over the product of orbifolds for the given label, and that the inclusions of adjacent strata give rise to inclusions of vector bundles. These vector bundles are naturally isomorphic to the quotient of the obstruction bundles, yielding a commutative diagram in the category of derived orbifolds.

\subsubsection{Collared completions}
\label{sec:collared-completions}

The smooth structures arising from Corollary \ref{cor:smooth_uniqut_up_to_concordance} are not necessarily compatible with restriction to boundary strata, in the sense that the smooth structure on $\partial_{q}^{(d_1, d_{2})}  X^{(d)}(p,r)$ inherited from $X^{(d)}(p,r)$ may not agree with its smooth structure arising from its description as the total space of a vector bundle over $   X^{(d_1)} (p,q)\times   X^{(d_2)}(q,r)$. However, the fact that the vector bundle lifts of the tangent microbundle that give rise to these two smooth structure are the same, implies that there is a smooth structure on the product of  $ \partial_{q}^{(d_1, d_{2})}  X^{(d)}(p,r) $ with an interval which restricts to these two structures at the endpoints. This implies that the union of $X^{(d)}(p,r) $ with a collar of $ \partial_{q}^{(d_1, d_{2})}  X^{(d)}(p,r) $
has a smooth structure which restricts, on the new boundary stratum, to its given smooth structure as the total space of a vector bundle over $X(p,q)^{(d_1)} \times   X(q,r)^{(d_2)}$.

This process can be implemented, inductively over $d$, for all the moduli spaces, by attaching to $ X^{(d)}(p,r)$ the product of each codimension $k$ stratum with $[0,1]^k$; this is the \emph{collared completion} $\hat{X}^{(d)}(p,r)$.
\begin{prop}[Section 4.4 of \cite{Rezchikov2022} and Section 6.4 of \cite{BaiXu2022}]
  There are structures of  smooth orbifolds with corners on the collared completions $\hat{X}^{(d)}(p,r)$, with codimension $1$ boundary strata labelled by an orbit $q$ and a decomposition $d = d_1 + d_2$, so that these strata are the total spaces of smooth complex vector bundles
  \begin{equation}
    \partial^{(d_1,d_2)}_{q} \hat{X}(p, r)^{(d)}  \to      \hat{X}^{(d_1)}(p,q) \times   \hat{X}^{(d_2)}(q,r)
      \end{equation}
      These maps are associative in the sense that the resulting squares for codimension $2$ strata commute.
\end{prop}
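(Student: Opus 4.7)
The plan is to prove the proposition by induction on the deformed energy $d$, at each stage constructing a smooth structure on $\hat{X}^{(d)}(p,r)$ so that the codimension-$1$ boundary inclusions are strictly smooth maps identifying the boundary strata with total spaces of smooth complex vector bundles over products of previously constructed collared completions. The inductive hypothesis assumes that for all $d' < d$ and all pairs $(p',q')$ of orbits, the collared completions $\hat{X}^{(d')}(p',q')$ have been endowed with compatible smooth structures, and in particular that all products $\hat{X}^{(d_1)}(p,q) \times \hat{X}^{(d_2)}(q,r)$ appearing as bases of the vector bundles $\partial^{(d_1,d_2)}_{q} \hat{X}^{(d)}(p,r)$ carry unambiguous smooth orbifold structures.

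At level $d$, I would begin by invoking Corollary \ref{cor:smooth_uniqut_up_to_concordance} to equip $X^{(d)}(p,r)$ with a $U(d)$-equivariant smooth structure, which descends to a smooth structure on the orbifold quotient. For each codimension-$1$ boundary stratum $\partial^{(d_1,d_2)}_{q} X^{(d)}(p,r)$, the inherited smooth structure need not agree with the smooth structure arising from its description as the total space of a complex vector bundle over the smooth product $\hat{X}^{(d_1)}(p,q) \times \hat{X}^{(d_2)}(q,r)$ provided by the induction. However, both smooth structures give rise to lifts of the same underlying tangent microbundle, since the short exact sequence of tangent spaces (base plus vertical vector bundle fibre) is canonical. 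Equivariant smoothing theory then provides a concordance between them, i.e.\ a smooth structure on $\partial^{(d_1,d_2)}_{q} X^{(d)}(p,r) \times [0,1]$ restricting to the two given structures at the endpoints.

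Using these concordances, I would construct $\hat{X}^{(d)}(p,r)$ by gluing: for each corner stratum of codimension $k$, attach a product of that stratum with $[0,1]^k$, where the $i$-th interval parametrises the concordance associated to the $i$-th codimension-$1$ stratum containing it. The result is an orbifold with corners whose new codimension-$1$ strata are, by construction, diffeomorphic to the smooth vector bundles $\hat{X}^{(d_1)}(p,q) \times \hat{X}^{(d_2)}(q,r)$ appearing in the statement, with boundary inclusion a smooth map.

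The main obstacle will be ensuring that the concordances chosen for different codimension-$1$ boundary strata agree coherently on higher-codimension corners, so that the attached cubes actually assemble into a smooth structure on $\hat{X}^{(d)}(p,r)$ rather than merely smooth charts near each codimension-$1$ face. Concretely, along a codimension-$2$ corner $\partial^{(d_1,d_2,d_3)}_{q,q'} X^{(d)}(p,r)$, the diagram of Equation \eqref{eq:compatibility_codimension_2_strata} forces a compatibility between the two independently chosen concordances, which generically fails. I would address this by strengthening the induction: rather than inducting only on $d$, I would induct simultaneously on $d$ and downward on the codimension of the corner stratum being treated, at each stage choosing the concordance on a lower-codimension stratum as an extension of the already-fixed data on higher-codimension boundary pieces. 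Contractibility of the space of vector-bundle lifts of a tangent microbundle (applied relative to fixed boundary data) guarantees that such an extension always exists; the homotopy-extension property for $G$-cofibrations used in Lemma \ref{lem:extend_deformation} is the technical input that lets us perform the extension equivariantly. Associativity of the resulting vector bundle structures on codimension-$2$ strata then follows automatically from the commutativity of Equation \eqref{eq:compatibility_codimension_2_strata} at the level of underlying topological data, combined with the coherence built into the inductive construction.
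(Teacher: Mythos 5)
Your proposal is correct and follows essentially the same route as the paper: a concordance between the inherited smooth structure and the vector-bundle smooth structure on each codimension-$1$ stratum (justified by the agreement of the vector-bundle lifts of the tangent microbundle), followed by attaching $[0,1]^k$-collars to codimension-$k$ corners, all organized by induction on $d$. The paper itself only sketches this and defers the details to the cited references; your explicit downward induction on corner codimension to make the concordances coherent is a reasonable filling-in of exactly the step the paper leaves implicit.
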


It is straightforward to extend the vector bundle $T^- \bX^{(d)}(p,r)$, as well as the defining section of the derived orbifold $ \bX^{(d)}(p,r)$  to $ \hat{X}^{(d)}(p,r)$, so we obtain a derived orbifold $\hat{\bX}^{(d)}(p,r)$, whose zero-locus is a completion of $\Mbar^{(d)}(p,r)$. This completion is compatible with the structure maps, so we conclude:

\begin{prop}[Theorem 4.8 of \cite{BaiXu2022} and Lemma 40 of \cite{Rezchikov2022}] \label{prop:unstructured_flow_category}
  The collection of derived orbifolds  $\{ \hat{\bX}^{(d)}(p,r)\}_{0 < d}$ are the underlying morphisms of a flow category  with object the time-$1$ orbits of $H$. \qed
\end{prop}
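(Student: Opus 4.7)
The plan is to verify each item in Definition \ref{def:flow_category} using the geometric input already assembled in Sections \ref{sec:prel-constr}--\ref{sec:collared-completions}. I would take the monoid $\Gamma$ to be the image of the deformed energy $\bar{E}$, i.e. the natural numbers $\bN$ (with the action map to $[0,\infty)$ given by the inclusion), and declare $\hat{\bX}^{(d)}(p,r)$ to be the $\bN$-graded morphism space of energy $d$ between the time-$1$ orbits $p$ and $r$. The stratification of $\hat{\bX}^{(d)}(p,r)$ by $\cP^{d}(p,r)$ is then read off from the description of its corner stratification: a codimension-$k$ stratum is labelled by a sequence of orbits $p = q_0, q_1, \ldots, q_k = r$ together with a partition $d = d_1 + \cdots + d_k$ with each $d_i \in \bN_{>0}$, which is exactly the data of an object in $\cP^{d}(p,r)$ (Definition \ref{def:poset-flow-categories}).

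The composition maps in $d\Orb$ would be extracted from the codimension-$1$ boundary identifications constructed in Section \ref{sec:collared-completions}: the boundary stratum maps
\begin{equation}
\partial^{(d_1,d_2)}_{q} \hat{\bX}^{(d_1+d_2)}(p,r) \to \hat{\bX}^{(d_1)}(p,q) \times \hat{\bX}^{(d_2)}(q,r)
\end{equation}
are total spaces of complex vector bundles (with zero-section providing the strong equivalence on the underlying orbifold), and the obstruction bundle of the boundary stratum decomposes as the direct sum of the pullback of the product obstruction bundle with this normal bundle. This is precisely the structure of a strong equivalence (Definition \ref{def:strong_equivalence}) in the opposite direction, so dualizing gives the required morphism $\hat{\bX}^{(d_1)}(p,q) \times \hat{\bX}^{(d_2)}(q,r) \to \partial^{(d_1,d_2)}_{q} \hat{\bX}^{(d_1+d_2)}(p,r)$ in $d\Orb$, which in turn defines the composition map as a morphism of derived orbifolds lifting the stratifying inclusion $\iota_q \co \cP^{d_1}(p,q) \times \cP^{d_2}(q,r) \to \cP^{d_1+d_2}(p,r)$.

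Associativity of composition at the level of derived orbifolds would follow directly from the commutativity of Diagram \eqref{eq:compatibility_codimension_2_strata}, which asserts the compatibility of the two vector bundle structures on each codimension-$2$ stratum $\partial^{(d_1,d_2,d_3)}_{q,q'}\hat{\bX}^{(d)}(p,r)$; iterating gives the coherence for all higher-codimension strata, so that the composition is strictly associative. Properness is immediate: for a fixed orbit $p$ and a finite interval $[0,E]$, the zero loci of $\coprod_{q,\,d \leq E} \hat{\bX}^{(d)}(p,q)$ recover the collared completion of $\coprod_{q,\,d \leq E} \Mbar^{(d)}(p,q)$, which is compact by Gromov compactness together with the fact that $\bar{E}$ is proper with integral values (so only finitely many $d \leq E$ contribute).

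The main potential obstacle is bookkeeping rather than mathematics: one must check that the stratification labels produced geometrically really match the categorical stratification $\cP^{d}(p,r)$ of Definition \ref{def:poset-flow-categories}, including the identification of morphisms in the face poset with the inclusions of higher-codimension corner strata, and that the smooth collaring chosen in Section \ref{sec:collared-completions} is compatible with the vector bundle structures in a way that makes the composition maps strictly associative rather than merely associative up to a choice of collar. Given that the collars are chosen inductively in $d$ precisely so that each boundary identification becomes a genuine smooth vector bundle, this amounts to checking that a single coherent system of collars can be fixed globally, which is a straightforward inductive argument on energy.
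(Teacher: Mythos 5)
Your proposal is correct and follows essentially the same route as the paper, which distributes the proof over the preceding subsections (quantized energy, consistent inhomogeneous terms, the vector-bundle description of boundary strata with the codimension-$2$ compatibility diagram, and the collared completions) and states the proposition as a summary. In particular you correctly identify that the composition maps are the zero-section inclusions into the normal bundles of the boundary strata (which is exactly the notion of strong equivalence in $d\Orb$), that associativity is the content of Diagram~\eqref{eq:compatibility_codimension_2_strata}, and that the collaring is precisely what reconciles the two smooth structures so that these become honest morphisms of smooth derived orbifolds.
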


Having constructed a flow category, we recall that the deformed energy used in the definition was an artificial choice, which is unrelated to geometric quantities that are of relevance in symplectic topology. This leads us to define $ X(p,r)^{E}$ to be the union of components of $\coprod_{d} X^{(d)}(p,r)$  consisting of curves with topological energy $E$, and $ \bX(p,r)^{E} $ to be the corresponding derived orbifold.
\begin{cor}
   The collection of derived orbifolds  $\{ \hat{\bX}(p,r)^{E}\}_{0 < E}$ are the underlying morphisms of a flow category with object the time-$1$ orbits of $H$, depending on the choice of the form $\Omega_H$ and the inhomogeneous terms $\lambda$. \qed
\end{cor}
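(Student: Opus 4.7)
The plan is to deduce the corollary from Proposition \ref{prop:unstructured_flow_category} by a straightforward re-indexing of the morphism spaces: the underlying derived orbifolds, obstruction bundles, defining sections, and composition maps are unchanged, and only the grading monoid is replaced, passing from the integral grading by the quantised energy $d$ (with $\Gamma = \bN$) to the real grading by the topological energy $E$ (with $\Gamma = [0,\infty)$ and $\cA$ the identity). The task is therefore to verify the three axioms of a flow category (Definition \ref{def:flow_category}) in the re-graded setting.

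First, I would check that the decomposition by topological energy is well-defined. Since $E$ is computed by integrating $u^*\omega + \partial_s H \, ds \wedge dt$ and the forms $\omega$ and $H$ are fixed, the value of $E$ is locally constant on each moduli space $\Mbar(p,r)$, and hence on each global chart $\hat{X}^{(d)}(p,r)$. Consequently $\hat{\bX}(p,r)^E$ is a union of connected components of $\coprod_d \hat{\bX}^{(d)}(p,r)$ and inherits the structure of a stratified derived orbifold with corners. Second, I would verify that composition respects the $E$-grading: this is precisely the additivity of the topological energy under concatenation of Floer cylinders, which, together with additivity of $d$, is recorded in Lemma 19 of \cite{Rezchikov2022}. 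The composition maps, associativity, and boundary-stratum identifications constructed in Proposition \ref{prop:unstructured_flow_category} restrict to give
\begin{equation}
  \hat{\bX}(p,q)^{E_1} \times \hat{\bX}(q,r)^{E_2} \to \hat{\bX}(p,r)^{E_1 + E_2},
\end{equation}
with no new verifications required.

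The only non-trivial point is the properness axiom, i.e., that for every orbit $p$ and every $E_0 \in [0,\infty)$ the zero-locus of
\begin{equation}
  \coprod_r \coprod_{0 < E \leq E_0} \hat{\bX}(p,r)^E
\end{equation}
is compact. The quantised energy $d$ takes integer values on each component and is related to $E$ by the identity $d = N \cdot E + c(p,r)$ where $N$ is the rational multiplier used to define $\Omega$ (up to the exact and compactly supported corrections absorbed into $\Omega_H$) and $c(p,r)$ is a bounded quantity determined by the asymptotic conditions and the cutoffs near the orbits. Since there are only finitely many orbits $r$ of $H$, the correction $c(p,r)$ is uniformly bounded, so the constraint $E \leq E_0$ forces $d \leq d_0$ for some explicit $d_0 = d_0(p, E_0)$. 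The desired compactness then follows directly from the properness of the $d$-graded flow category of Proposition \ref{prop:unstructured_flow_category}, applied to the finite window $\{d \leq d_0\}$.

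The main obstacle, and really the only one, is the bookkeeping in the third step relating $E$ to $d$ precisely enough to guarantee uniformity in $r$ and to secure the uniform lower bound on energy required by Condition \eqref{eq:lower_bound_flow_simplex}. This is handled by unpacking the explicit construction of $\Omega_H$ from $\omega$ and $H$ in Section \ref{sec:deforming-energy-be} and using the finiteness of the orbit set to uniformize the correction $c(p,r)$; once this is done, the remaining verifications are formal and reduce to the analogous statements in Proposition \ref{prop:unstructured_flow_category}.
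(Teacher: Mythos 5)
Your overall strategy -- regrouping the components of the $d$-graded flow category of Proposition \ref{prop:unstructured_flow_category} according to the locally constant topological energy, and checking that composition respects the new grading by additivity of $E$ under breaking -- is exactly what the paper intends; the corollary carries no proof there beyond the remark that one relabels components by topological energy, so your first two steps match the paper and are fine.

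The one place where your argument goes wrong is the comparison $d = N\cdot E + c(p,r)$ with $c(p,r)$ ``determined by the asymptotic conditions and the cutoffs near the orbits.'' The form $\Omega$ represents a large integral multiple of a \emph{rational approximation} to $\omega$, further modified by cutoffs; consequently $\int u^*\Omega - N\int u^*\omega$ is controlled by the pairing of $\Omega - N\omega$ with the relative homology class of $u$, and this is not a function of the pair $(p,r)$ alone -- it varies with the class and is not uniformly bounded over all classes with fixed asymptotics. So the claimed affine identity, and hence the asserted implication ``$E\le E_0$ forces $d\le d_0$ for all components,'' does not hold as stated. The repair is to note that the detour through $d$ is unnecessary: the properness axiom of Definition \ref{def:flow_category} concerns only the zero locus of the defining section, and the zero locus of $\coprod_r\coprod_{0<E\le E_0}\hat{\bX}(p,r)^E$ is the collared completion of the set of stable Floer solutions of topological energy at most $E_0$, which is compact by the properness of the topological energy map $E\co\Mbar(p,r)\to(0,\infty)$ asserted at the start of Section \ref{sec:deforming-energy-be} (Gromov--Floer compactness). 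In particular only finitely many values of $d$ occur among components with nonempty zero locus and $E\le E_0$, which is all that is needed; no uniform affine relation between $d$ and $E$ is required.
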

To be completely precise, the flow category also depends on our appeal to Lashof's work to produce a smooth structure and a concordance; we will not forget to address this below.

\subsubsection{Continuation equations}
\label{sec:cont-equat}
With the goal of proving the unstructured version of Proposition \ref{prop:Hamiltonian_Flow_category}, consider a pair $H_-$ and $H_+$ of non-degenerate Hamiltonian functions, as well as  almost complex structures  $J_-$ and $J_+$  used to define associated flow categories $\hat{\bX}_{-}$ and $ \hat{\bX}_{+}$. Choose as well a  smooth path $J_{+-}$ defined on $\bR$, interpolating between $J_-$ near $-\infty$ and $ J_+$ near $+\infty$.

We choose an arbitrary path $H_{+-}$ of time-dependent Hamiltonians interpolating between $H_-$ for $s \ll 0$  and $H_+$ for $0 \ll s$. The pair $(H_{+-}, J_{+-})$  determines moduli spaces $\Mbar_{-+}(p_-,r_+)$ of solutions to the continuation equation for each Hamiltonian orbit $p_-$ of $H_-$ and $r_+$ of $H_+$, equipped with associative structure maps
\begin{equation}
    \Mbar_-(p_-,q_-) \times   \Mbar_{-+}(q_-,r_+) \to \Mbar_{-+}(p_-,r_+) \leftarrow    \Mbar_{-+}(p_-,q_+) \times   \Mbar_+(q_+,r_+).
  \end{equation}
There is a natural energy map 
\begin{equation}
  \Mbar_{-+}(p_-,r_+) \to \bR
\end{equation}
which is proper and bounded below and is compatible with structure maps in the sense that, if we write $\Mbar^{E}_{-+}(p_-,r_+)  $  for the subset of the moduli space of energy $E$, the structure maps factor as:
\begin{equation} \label{eq:bimodule_structure_map_solutions}
    \Mbar_{-}(p_-,q_-)^{E_1} \times   \Mbar_{-+}^{E_2}(q_-,r_+) \to \Mbar^{E_1 + E_2}_{-+}(p_-,r_+) \leftarrow    \Mbar_{-+}(p_-,q_+)^{E_1} \times   \Mbar_+(q_+,r_+)^{E_2}.
\end{equation}

If we were given flow categories associated to $(J_+,H_+,\Omega_+)$ and $(J_-, H_-, \Omega_-)$, with the property that $\Omega_-$ and $\Omega_+$ are cohomologous, we could use these forms to define a quantized energy on the moduli spaces of continuation maps. As the next step in the construction of the thickening, we would then introduce the space $\Fbar^{\bR}_{2}(d,*,1)$, consisting of an element of $\Fbar^{\bR}_{2}(d,*)$ and a lift to a stable map with target $\bC \bP^d \times \bC \bP^1$, with the property that the composition with the projection maps $z_-$ to $0 \in  \bC \bP^1 $ and $z_+$  to $\infty$, the resulting rational curve in $ \bC \bP^1$ has degree $1$, and the distinguished real line connecting the marked points $z_\pm$ maps to the positive real axis. The fibre of the projection map $\Fbar^{\bR}_{2}(d,*,1) \to \Fbar^{\bR}_{2}(d,*) $ is homeomorphic to an interval, and an analysis of the boundary strata implies:
\begin{lem} \label{lem:additional_marked_point_conic}
  The space $\Fbar^{\bR}_{2}(d,*,1) $ is the total space of the conic degeneration over $ \Fbar^{\bR}_{2}(d,*)$ with discriminant $\partial   \Fbar^{\bR}_{2}(d,*)$. \qed
\end{lem}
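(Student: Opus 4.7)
I would analyze the fiber of the forgetful map $\pi\colon \Fbar^{\bR}_{2}(d,*,1)\to\Fbar^{\bR}_{2}(d,*)$ over each stratum of the base and match the local structure with the defining equations of $\cD_{k}$.

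First, I would examine the fiber of $\pi$ over a point whose underlying chain from $z_-$ to $z_+$ has $k$ components $C_1,\ldots,C_k$. Since the composition with the projection to $\bC\bP^1$ has total degree $1$ and sends $z_-\mapsto 0$ and $z_+\mapsto\infty$, the degree is concentrated on a single rational component, which is either one of the existing $C_i$ or a new sphere bubbled off at $z_-$, $z_+$, or one of the $k-1$ separating nodes. In each of the $k$ cases where the degree sits on $C_i$, the degree-$1$ map to $\bC\bP^1$ is determined up to the $\bC^*$-automorphisms of $\bC\bP^1$ fixing $0$ and $\infty$; the condition that the distinguished real line on $C_i$ map to the positive real axis reduces this $\bC^*$-freedom to $\bR_{>0}$. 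In each of the $k+1$ bubble cases the configuration is discrete. These data assemble into a chain of $k$ open intervals joined at $k-1$ interior points (the node bubbles), with two boundary endpoints (the $z_\pm$ bubbles), matching the fiber of $\cD_k$ over the origin of $[0,\infty)^{k-1}$.

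Next, I would upgrade this set-theoretic match to a fibrewise diffeomorphism. Near a point of $\Fbar^{\bR}_2(d,*)$ whose underlying chain has $k-1$ separating nodes, the paper's description of the boundary of $\Fbar^{\bR}_2(d,*)$ gives local coordinates in which the $i$-th smoothing parameter is $t_i\in[0,\infty)$, together with transverse coordinates fixing the behavior away from the nodes. Using the real trivializations at $z_\pm$ and at the nodes to fix charts on each $\bC\bP^1$, the degree-$1$ map on $C_i$ is recorded by a pair $(x_{i-1},y_i)\in\bT\bP^1$ giving the images of the two nodal points (or $z_\pm$) in fixed affine charts up to overall $\bR_{>0}$-rescaling. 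The classical node-smoothing formula for a degree-$1$ map on a two-component chain extending to the smoothed curve is precisely $x_{i-1}\cdot y_i=t_i$, so collecting these relations over all $k-1$ nodes realizes the fiber over the chart as $\cD_k$.

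Finally, I would check that these local identifications glue: when a single $t_i$ becomes non-zero, the above presentation restricts along the embedding $\cD_i\times[0,\infty)^m\to\cD_{k}$ of Lemma \ref{lem:degeneration_map_boundary_strata_model}, which matches the geometric statement that smoothing the $i$-th node reduces the chain from $k$ to $k-1$ components and correspondingly merges the two adjacent intervals into one. The main obstacle is controlling the smooth structure near the boundary endpoints $[1,0]$ and $[0,1]$ of each $\bT\bP^1$ factor, where the parametrisation of the degree-$1$ map degenerates to a bubble: here one must verify that the compactification of the $\bR_{>0}$-family by the bubbled configurations is smooth and agrees with the smooth structure on $\bT\bP^1$ dictated by the conic equation. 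This amounts to invoking the standard deformation-theoretic description of the universal family of stable degree-$1$ maps to $\bC\bP^1$ near a node, and checking that the real-positivity condition cuts out the totally real locus $\bT\bP^1\subset\bC\bP^1$ within it.
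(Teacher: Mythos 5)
The paper offers no proof of this lemma beyond the remark that it follows from ``an analysis of the boundary strata,'' and your fibrewise analysis --- identifying, over a $k$-component stable map, the chain of $k$ closed intervals (the $\bR_{>0}$-families of degree-one maps on each component, compactified by the bubble configurations at $z_\pm$ and at the separating nodes) with the fibre of $\cD_k$ over the origin of $[0,\infty)^{k-1}$, and matching the node-smoothing relation with the conic equation $x_{i-1}\cdot y_i=t_i$ --- is exactly the intended argument, correctly carried out. The one point you flag as delicate, namely that the compactification of each $\bR_{>0}$-family by the bubbled configurations carries the smooth structure of $\bT\bP^1$ dictated by the conic equations, is indeed the only substantive verification, and deferring it to the standard local model for the universal family of degree-one maps near a node is consistent with the level of detail at which the paper itself treats this statement.
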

The component projecting nontrivial to $\bC \bP^1$ is then canonically identified with $\bR \times S^1$, and hence can be equipped with the continuation map equation (which is not translation invariant). We leave the remainder of the construction of the thickening of the continuation maps, in this special case, to the reader, as we shall consider a more general construction in the next section.

 Given a representation of $U(d)$, mapping equivariantly to the space of inhomegenous terms for the Cauchy-Riemann operator with target $M$, so that it surjects onto the cokernel of the linearisation of the continuation map equation for all elements of $\Mbar^{(d)}_{-+}(p_-,r_+)$, we obtain the thickening $X^{(d)}_{-+}(p_-,r_+)$ as the orbifold quotient of the space of stable continuation maps, with domains given by elements of  $\Fbar^{\bR}_{2}(d,*,1)$, satisfying the positivity, transversality, and asymptotic conditions from Definition \ref{def:thickening}. This space is equipped with a obstruction bundles as in Equation \eqref{eq:obstruction-bundle}, and we denote by $\bX^{(d)}_{-+}(p_-,r_+)$ the resulting derived orbifold, where we assume that the representation $\scrV^{(d)}_{-+}(p_-,r_+)$  is chosen inductively, together with the data of $U(d_1) \times U(d_2)$ equivariant embeddings
\begin{equation}
  \scrV^{(d_1)}_{-}(p_-,q_-) \oplus \scrV^{(d_2)}_{-+}(q_-,r_+)  \to \scrV^{(d)}_{-+}(p_-,r_+) \leftarrow  \scrV^{(d_1)}_{-+}(p_-,q_+) \oplus \scrV^{(d_2)}_+(q_+,r_+)
\end{equation}
whenever $d_1 + d_2 = d$, and $q_\pm$ is a time-$1$ Hamiltonian orbit of $H_\pm$. These embeddings are chosen so that the (three) analogues of Diagram \eqref{eq:consitence_inhomogeneous_term} commute, and so that the choices of inhomogenous terms which they parametrise agree.

In this way, we obtain natural equivalences of derived orbifolds
\begin{align}
  \bX^{(d_1)}_{-}(p_-,q_-) \times   \bX^{(d_2)}(q_-,r_+) & \to \partial^{(d_1,d_2)}_{q_-}  \bX_{-+}^{(d_1 + d_2)}(p_-,r_+) \\
  \bX^{(d_1)}_{-+}(p_-,q_+) \times   \bX^{(d_2)} _+(q_+,r_+)& \to \partial^{(d_1,d_2)}_{q_+}  \bX^{(d_1 + d_2)}_{-+}(p_-,r_+) .
\end{align}
The maps satisfy the commutativity properties required for $\bX_{-+}$ to define a bimodule over $\bX_+$ and $\bX_-$ in the category of (topological) derived orbifolds. Since the above maps enumerate the codimension $1$ boundary strata of $ \bX_{-+}^{(d)}(p_-,r_+) $, passing to the collared completion, as in Section \ref{sec:collared-completions} yields the flow bimodule $\hat{\bX}_{-+}$, after relabelling all components by their topological energy (rather than the quantized energy used in the construction).

The construction which we just presented will be generalised below to the case where the forms $\Omega_+$ and $\Omega_-$ are no cohomologous. In addition to the expositional justification, the main reason that we introduced this special case is that it includes the case of the constant continuation map: if the paths $(H_{-+}, J_{-+})$ are both constants, the two forms $\Omega_-$ and $\Omega_+$ agree, and all inhomogeneous terms for the continuation equation may be chosen to be those used in defining the Floer data.  
The resulting thickenings $X_{-+}(p,r)$ of the space of continuation maps with endpoints $p$ and $r$, is the same as an element of $X_-(p,r)$, together with a choice of lift of the domain from $ \Fbar^{\bR}_{2}(d,*)$ to $\Fbar^{\bR}_{2}(d,*,1)$. Using Lemma \ref{lem:additional_marked_point_conic}, we conclude:
\begin{lem} \label{lem:geometric_diagonal}
Assuming that all the data for a continuation map from $H$ to itself are given by the data for the Floer equation, the resulting  flow bimodule is the diagonal bimodule. \qed 
\end{lem}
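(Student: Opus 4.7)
The plan is to exhibit a direct identification of the continuation-map bimodule $\hat{\bX}_{-+}$ with the diagonal bimodule $s\hat{\bX}$ by comparing their underlying derived orbifolds and their bimodule structure maps separately, using the conic degeneration model.

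First, I would analyze the thickening in the trivial-continuation case. When the Hamiltonian homotopy $H_{-+}$ and the almost complex structure path $J_{-+}$ are both constant equal to $(H,J)$, the perturbed continuation equation on a cylinder coincides with Floer's equation for $H$, and the inhomogeneous-term data on $\scrV^{(d)}_{-+}(p,r)$ may be taken to equal those on $\scrV^{(d)}(p,r)$. Consequently a framed stable continuation map is precisely a framed stable Floer cylinder (in the sense of Section \ref{sec:framed-stable-floer}) together with the extra datum of a lift of its domain from $\Fbar^{\bR}_2(d,\ast)$ to $\Fbar^{\bR}_2(d,\ast,1)$, and the obstruction bundle (together with its defining section) is pulled back from $\bX^{(d)}(p,r)$ along the projection that forgets this lift. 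At the level of derived orbifolds, this identifies
\begin{equation}
\bX^{(d)}_{-+}(p,r) \cong \Fbar^{\bR}_2(d,\ast,1) \times_{\Fbar^{\bR}_2(d,\ast)} \bX^{(d)}(p,r).
\end{equation}
Invoking Lemma \ref{lem:additional_marked_point_conic}, the top arrow of this pullback is the conic degeneration with discriminant the boundary, so pullback yields a canonical identification $\bX^{(d)}_{-+}(p,r) \cong \cD \bX^{(d)}(p,r)$, with the discriminant ordering inherited from the decomposition $d = d_1 + d_2$ by the energy of the first factor.

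Next, I would match the bimodule structure maps. The codimension-$1$ strata of $\bX^{(d)}_{-+}(p,r)$ are enumerated by a choice of orbit $q$ and a splitting $d = d_1 + d_2$, and the structure maps are given by Equation~\eqref{eq:bimodule_structure_map_solutions} lifted to the thickening. On the other hand, by construction of $s\bX$ in Section \ref{sec:diagonal-bimodule}, the codimension-$1$ strata of $\cD\bX^{(d)}(p,r)$ are enumerated by exactly the same data, and the associated maps are those provided by Lemma \ref{lem:degeneration_map_boundary_strata_model}. The identification of the previous paragraph intertwines these two enumerations precisely because the conic degeneration of $\Fbar^{\bR}_2(d,\ast)$ degenerates over the boundary stratum labelled by $(d_1,d_2)$ into the concatenation of the conic degenerations over $\Fbar^{\bR}_2(d_1,\ast)$ and $\Fbar^{\bR}_2(d_2,\ast)$; this is exactly the content of Lemma \ref{lem:degeneration_map_boundary_strata_model}, transported across the pullback.

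Finally, I would address compatibility with the collared completion and the relabelling of components by topological energy. Since the collaring procedure of Section~\ref{sec:collared-completions} depends only on the local structure of strata and their vector-bundle nature, and both sides inherit the same strata and the same tangential lifts, the collared completions are canonically diffeomorphic. The relabelling by topological energy on the continuation side then agrees with the $\Gamma$-grading of the diagonal bimodule, because for trivial data the topological energy of a Floer cylinder equals the energy recorded by $s\hat{\bX}$. The main obstacle is purely bookkeeping: ensuring that the identification with the conic degeneration is compatible (at the boundary) with the inductive gluing maps for the representations $\scrV^{(d)}$ and with the partial order on discriminant strata used to define $\cD$. This reduces to the observation that both sides use the same energy-splitting to order codimension-$1$ strata, and that the consistency diagram \eqref{eq:consitence_inhomogeneous_term}, restricted to trivial data, reduces to the direct sum decomposition compatible with the pullback above.
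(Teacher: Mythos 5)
Your argument is correct and is essentially the paper's own: the paper treats this lemma as immediate from the observation that, for constant continuation data with inhomogeneous terms pulled back from the Floer data, the continuation thickening is the Floer thickening together with a lift of the domain to $\Fbar^{\bR}_2(d,*,1)$, and Lemma~\ref{lem:additional_marked_point_conic} identifies that lift space as the conic degeneration, which is the definition of the diagonal bimodule. Your additional verifications (matching the boundary enumeration via Lemma~\ref{lem:degeneration_map_boundary_strata_model}, compatibility with collaring and the energy relabelling) are exactly the bookkeeping the paper leaves implicit.
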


\subsubsection{Doubly framed curves}
\label{sec:doubly-framed-curves}
In general, we have to vary the cohomology classes of the $2$-forms used to define the flow categories. The construction which we shall implement relies on an additional assumption:
\begin{lem} \label{lem:bimodule_pair_of_forms}
  Let $(H_{-+}, J_{-+})$ be continuation data connecting a pair $(H_-, J_-)$ and $(H_+,J_+)$ of Floer data. If $(\Omega_-, \Omega_+)$ are a pair of integral $2$-forms whose cohomology classes are sufficiently close to multiples of $\omega$, then, for any pair $\hat{\bX}_- $ and $\hat{\bX}_+ $ of flow categories associated to the triples $(H_-, J_-, \Omega_-)$ and $(H_+,J_+, \Omega_+)$, the moduli space of continuation maps lifts to a flow bimodule $\hat{\bX}_{-+}$ over  $\hat{\bX}_- $ and $\hat{\bX}_+ $. \qed
\end{lem}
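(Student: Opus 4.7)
The plan is to adapt the construction of Section \ref{sec:cont-equat} to the general case by equipping the domains of continuation solutions with \emph{doubly framed} structures, one framing arising from $\Omega_-$ and the other from $\Omega_+$. Concretely, I would introduce a moduli space $\Fbar^{\bR}_2(d_-,d_+,*,1)$ parametrizing stable pre-stable rational curves $\Sigma$ with two marked points, together with a stable map to $\bC\bP^{d_-}\times\bC\bP^{d_+}\times\bC\bP^1$ of tridegree $(d_-,d_+,1)$, whose composition with the projection to $\bC\bP^1$ sends $z_\pm$ to $0,\infty$ and whose distinguished real line maps to the positive real axis; one also demands that on components lying over an end near $-\infty$ the projection to $\bC\bP^{d_+}$ is a constant map to a distinguished basepoint (and symmetrically near $+\infty$ with the roles reversed). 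As in Lemma \ref{lem:additional_marked_point_conic}, the projection to $\Fbar^{\bR}_2(d_-,*)\times \Fbar^{\bR}_2(d_+,*)$ will exhibit this space as (a slight enlargement of) the total space of a conic degeneration along its boundary.

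Second, I would construct an interpolating $2$-form $\Omega_{-+}$ on $M\times \bR\times S^1$ which agrees with $\Omega_-$ at $-\infty$ and $\Omega_+$ at $+\infty$, is supported away from the Hamiltonian orbits of $H_\pm$, and is sufficiently close to a fixed positive multiple of $\omega$ so that its integral over any finite-energy continuation solution is bounded above by the geometric energy plus an arbitrarily small error. Together with the cutoff terms associated to $H_-$ at $-\infty$ and $H_+$ at $+\infty$, this defines a quantized energy $(d_-,d_+)$ on continuation moduli spaces, where $d_\pm$ agrees with the $\Omega_\pm$-quantized energy of any component of $\Sigma$ contained in the corresponding end of the cylinder; here the \emph{closeness} hypothesis is exactly what is needed to ensure that the function taking a continuation solution to this bidegree is locally constant and proper on each stratum.

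Third, I would introduce thickenings $\cT^{(d_-,d_+)}_{-+}(p_-,r_+)$ by choosing $U(d_-)\times U(d_+)$-equivariant inhomogeneous terms coming from representations $\scrV^{(d_-,d_+)}_{-+}(p_-,r_+)$ over $\bar{B}_{d_-}\times \bar{B}_{d_+}$ so that the transversality, positivity, and quantized-degree conditions from Definition \ref{def:thickening} hold. The collection is chosen inductively with the bimodule-compatibility property that, along each codimension-$1$ boundary stratum of $\Fbar^{\bR}_2(d_-,d_+,*,1)$ associated to bubbling off a Floer or continuation piece, the representation decomposes compatibly as
\begin{equation}
\scrV^{(d_-^1)}_{-}(p_-,q_-)\oplus\scrV^{(d_-^2,d_+)}_{-+}(q_-,r_+)\ \text{or}\ \scrV^{(d_-,d_+^1)}_{-+}(p_-,q_+)\oplus\scrV^{(d_+^2)}_{+}(q_+,r_+),
\end{equation}
with $d_-=d_-^1+d_-^2$ in the first case and $d_+=d_+^1+d_+^2$ in the second, and so that the associated inhomogeneous perturbations restrict correctly (the three-fold analogues of Diagram \eqref{eq:consitence_inhomogeneous_term} should commute). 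Passing to the orbifold quotient by $U(d_-)\times U(d_+)$ and adding a stabilizing normal-bundle summand as in Equation \eqref{eq:normal-b-spaces} then defines derived orbifolds $\bX^{(d_-,d_+)}_{-+}(p_-,r_+)$ whose codimension-$1$ boundary strata are total spaces of complex vector bundles over the appropriate products involving $\bX^{(d_-)}_-$, $\bX^{(d_+)}_+$, or $\bX^{(d_-,d_+)}_{-+}$ itself.

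Finally, one applies the collared-completion construction of Section \ref{sec:collared-completions} to obtain smooth $\hat{\bX}^{(d_-,d_+)}_{-+}(p_-,r_+)$, relabels by the topological energy, and reads off the bimodule structure from the two classes of boundary strata. The main obstacle is the third step: producing inductively consistent choices of $\scrV^{(d_-,d_+)}_{-+}$ and inhomogeneous terms that simultaneously achieve transversality, are compatible at \emph{all} boundary strata (those from Floer breaking at $H_-$, Floer breaking at $H_+$, and continuation breaking in the interior), and respect the doubly-indexed grading -- this requires a careful double induction on the pair $(d_-,d_+)$, and it is here that the closeness hypothesis on $[\Omega_\pm]$ is used decisively to guarantee that the set of pairs $(d_-,d_+)$ realized by curves of bounded geometric energy is finite, so the induction terminates and the analogue of the Rezchikov--Bai--Xu consistency argument \cite{Rezchikov2022,BaiXu2022} applies.
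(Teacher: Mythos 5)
Your overall architecture (doubly framed curves mapping to $\bC\bP^{d_-}\times\bC\bP^{d_+}\times\bC\bP^1$, inductively compatible representations parametrizing inhomogeneous terms, collared completion, relabelling by topological energy) matches the paper's proof in Section~\ref{sec:doubly-framed-curves}. However, there is a genuine gap in your definition of the moduli space of domains and of the bigrading. You demand that components lying over the $-\infty$ end project \emph{constantly} to $\bC\bP^{d_+}$ (and symmetrically at $+\infty$). This is incompatible with the framed-curve formalism: the map to $\bC\bP^{d_+}$ is required to be induced by a (projectively unitary) basis of sections of the line bundle associated to $\Omega_+$, and condition (2) of Definition~\ref{def:thickening} forces the degree of the projection on \emph{each} component to equal the $\Omega_+$-quantized energy of that component. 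A broken-off $H_-$-Floer trajectory has strictly positive $\Omega_+$-energy (this is exactly what the closeness of $[\Omega_+]$ to a multiple of $\omega$ guarantees), so its projection to $\bC\bP^{d_+}$ cannot be constant; with your condition the relevant boundary strata would be empty, or else the unitarity/positivity condition and the $U(d_+)$-quotient presentation would have to be abandoned there. Relatedly, your quantized energy is not well defined: a single interpolating form $\Omega_{-+}$ produces one integer, and saying that ``$d_\pm$ agrees with the $\Omega_\pm$-energy of components in the corresponding end'' leaves the contribution of the main continuation component to the pair $(d_-,d_+)$ unspecified.

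The paper resolves this differently: it applies the quantization procedure of Section~\ref{sec:deforming-energy-be} to \emph{both} forms on \emph{both} Floer moduli spaces, producing maps to $\bN\times\bN$ as in Equation~\eqref{eq:energy_map_two_coordinates}, so that every component --- including a pure $H_-$-Floer piece --- carries a bidegree $(d'_-,d'_+)$ and both coordinates decompose additively under breaking. The boundary stratum of the bimodule chart is then a vector bundle over $\bX^{(d'_-,d'_+)}_{-}(p_-,q_-)\times\bX^{(d''_-,d''_+)}_{-+}(q_-,r_+)$, where $\bX^{(d'_-,d'_+)}_{-}$ denotes the union of components of $\bX^{(d'_-)}_{-}$ of $\Omega_+$-energy $d'_+$; the redundant $U(d'_+)$-framing data on the Floer piece is harmless because, by Equation~\eqref{eq:free_quotient-double-framed}, forgetting it is a composition of a free quotient and a bundle with contractible fibres, and it is absorbed into the vector bundle structure of the boundary stratum. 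To repair your argument you should drop the constancy condition, measure both quantized energies on the whole curve, and account for the resulting refinement of the morphism spaces of $\hat{\bX}_{\pm}$ by the opposite form's energy when identifying boundary strata.
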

The starting point is to use the proximity of  $(\Omega_-, \Omega_+)$  to multiples of $\omega$ to apply the procedure of Section \ref{sec:deforming-energy-be} to the moduli spaces $  \Mbar_-(p_-,q_-)  $ and $ \Mbar_+(q_+,r_+) $  for both of these forms, yielding maps
\begin{equation} \label{eq:energy_map_two_coordinates}
   \Mbar_-(p_-,q_-) \to \bN  \times \bN  \leftarrow  \Mbar_+(q_+,r_+) 
\end{equation}
whose two components are respectively associated to $\Omega_-$ and $\Omega_+$. This procedure can then be adapted to define a quantized energy map 
\begin{equation} \label{eq:pair-energy-continuation}
  \Mbar_{-+}(p_-,r_+) \to \bN \times \bN
\end{equation}
which is again proper, and is,  similarly to Equation \eqref{eq:bimodule_structure_map_solutions},  compatible with the pairs of quantized energies in Equation \eqref{eq:energy_map_two_coordinates} for the Floer trajectories of $H_{+}$ and $H_-$ under breaking. As before, we write $  \Mbar^{(d_-,d_+)}_{-+}(p_-,r_+)$ for those moduli spaces of quantized energies $(d_-,d_+)$. The key point in achieving positivity of this map is a standard fact in Floer theory that the energy of continuation maps is uniformly bounded below (once the path $H_{-+}$ is fixed), so we may choose a compactly supported $2$-form on $\bR \times S^1$, and add its integral over the component carrying the continuation equation to the energy.

To proceed further, we need the notion of doubly framed curves from \cite{Abouzaid2021b}: define  $\Fbar_{2}(d_-,d_+,*)$ to be the open subset of the moduli space of stable maps to $\bC \bP^{d_-} \times \bC \bP^{d_+}$  of genus $0$ curves of bidegree $(d_-,d_+)$, with $2$ marked points (labelled $z_\pm$ as before) so that $z_-$ maps to a distinguished point, satisfying the property that the projection of the image in either factor is not contained in any proper linear subspace. As before, this is a smooth manifold, carrying an action of the product $U(d_-) \times U(d_+)$.  We have equivariant projection maps
\begin{equation} \label{eq:free_quotient-double-framed}
 \Fbar_2(d_-,*) \leftarrow    \Fbar_2(d_-,d_+,*) \to  \Fbar_2(d_+,*) 
\end{equation}
which are respectively compositions of free $U(d_-)$ and $U(d_+)$ quotients, and smooth fibre bundles with contractible fibres (with fibres $\mathfrak{u}_{d'}$ and $\mathfrak{u}_{d}$). 

Let $\Fbar^{\bR}_2(d_-,d_+,*)$ be the $U(d_-) \times U(d_+) $-manifold consisting of a curve in  $\Fbar_2(d_-,d_+,*)$, a choice of real line in the tangent space of the point labelled $z_+$, and a choice of a real line in $T_z \Sigma_+ \otimes_{\bC} T_z \Sigma_-$ for each node $z$ which separates the marked points $z_\pm$. We further define $ \Fbar^{\bR}_{2}(d_-,d_+,*,1)$ to be the space consisting of an element of $\Fbar_2(d_-,d_+,*)$ together with a degree $1$ map from the domain to $\bC \bP^1$, mapping $z_-$ to $0$ and $z_+$ to $\infty$, and the real line between them to the positive real axis.

Given a pair $(p_-,r_+)$ consisting of Hamiltonian orbits of $H_-$ and $H_+$, consider a choice $\scrV^{(d_-,d_+)}_{-+}(p,r)$ of a $U(d_-) \times U(d_+)$ representation together with an equivariant map to the space of inhomogeneous perturbations of the constant continuation map equation on the domains of curves represented by elements of  $\Fbar^{\bR}_2(d_-,d_+,*,1)$, with the property that the image of this map surjects onto the cokernel of the linearisation of the continuation map at every solution. By extending Definition \ref{def:thickening}, we obtain a thickening $\cT_{-+}^{(d_-,d_+)}(p_-,r_+)$ of the moduli space of solutions to the constant continuation map equation, and hence, as in Section \eqref{sec:global-chart},  a global chart which we denote $ \bX_{-+}^{(d_-,d_+)}(p_-,r_+)$ .

In order for this global chart to give rise to a flow bimodule, we need to choose the representation  $\scrV^{(d_-,d_+)}_{-+}(p_-,r_+)$ consistently with the choices made to construct flow categories associated to the pairs $(H_-,J_-)$ and $(H_+,J_+)$. To state the compatibility, it is convenient as before to pass to the stack quotient, and write $\bar{B}_{d_-,d_+}(1)$ for the quotient of the action of  $U(d_-) \times U(d_+) $ on $ \Fbar^{\bR}_{2}(d_-,d_+,*,1)$: this has two types of codimension $1$-boundary strata strata, corresponding respectively to the cases where a component whose projection map has degree $0$ lies over $0$ or $\infty$ in $\bC \bP^1$. The fact that the two maps in Equation \eqref{eq:free_quotient-double-framed} are free quotients can readily be used to see that these two strata are each determined by choices of decompositions $d_- = d'_- + d''_-$ and $d_+ = d'_+ + d''_+$, and are respectively total spaces of complex vector bundles over
\begin{align}
&  \bar{B}_{d'_-} \times  \bar{B}_{d''_-,d''_+}(1)  \\
&  \bar{B}_{d'_-,d'_+}(1) \times \bar{B}_{d''_+}. 
\end{align}
This means in particular that we can impose the condition that the restriction of $\scrV^{(d_-,d_+)}_{1}(p_-,r_+) $, considered as a vector bundle, to these strata respectively admit an embedding of the products
\begin{align}
& \scrV^{(d'_-)}_{+}(p_-,q_-) \oplus  \scrV^{(d''_-,d''_+)}_{-+}(q_-,r_+)  \\
& \scrV^{(d'_-,d'_+)}_{-+}(p_-,q_+) \oplus \scrV^{(d''_+)}_{+}(q_+,r_+)  . 
\end{align}
We thus inductively choose such embeddings, satisfying the usual associativity diagrams for quadruples of orbits (there are three such diagrams, depending on whether the intermediate orbits are both associated to $H_-$, both to $H_+$, or one each). With this choice, we obtain equivalences
\begin{align}
\bX^{(d'_-, d'_+)}_{-}(p_-,q_-) \times  \bX^{(d''_-,d''_+)}_{-+}(q_-,r_+)  \to  & \partial^{ ((d'_-, d'_+), (d''_-,d''_+) )}_{q_-} \bX_{-+}^{(d_-,d_+)}(p_-,r_+)  \\
 \bX^{(d'_-,d'_+)}_{-+}(p_-,q_+) \oplus \bX^{(d''_-, d''_+)}_{+}(q_+,r_+)   \to  & \partial^{ ((d'_-, d'_+), (d''_-,d''_+) )}_{q_+} \bX_{-+}^{(d_-,d_+)}(p_-,r_+)  ,
\end{align}
where $\bX^{(d'_-, d'_+)}_{-}(p_-,q_-) $ is the derived orbifold obtained by restricting to the components of $ \bX^{(d'_-)}_{-}(p_-,q_-) $ with the given quantized energy with respect to $\Omega_+$.

Stabilising by a sufficiently large representation, and applying the collaring construction, as before, to obtain compatible smoothings of (stabilisations) of these derived orbifolds, then pass to a completion $\hat{\bX}_{-+}^{(d_-,d_+)}(p_-,r_+)  $. Relabelling the components by their energy then completes the proof of Lemma \ref{lem:bimodule_pair_of_forms}.

\subsubsection{Composition of continuation maps}
\label{sec:comp-cont-maps}

In the statement of the next result, we have flow categories $\hat{\bX}_-$ and $\hat{\bX}_+$ associated to pairs $(H_-, J_-, \Omega_-)$ and $(H_+,J_+, \Omega_+)$ of Floer data, and flow bimodules $\hat{\bX}_{-+}$ and $\hat{\bX}_{+-}$  between them associated to continuation data $(H_{-+}, J_{-+}, (\Omega_-, \Omega_+))$ and to the inverse path, arising from the construction of Lemma \ref{lem:bimodule_pair_of_forms}. One small complicating factor that we encounter at this stage is that we artificially added a term to the energy of continuation solutions to $H_{-+}$ to achieve positivity of the quantized energies in Equation \eqref{eq:pair-energy-continuation}, and separately for $H_{+-}$ when considering continuation in the opposite direction.

\begin{lem} \label{lem:invariance_continuation}
  The composition of $\hat{\bX}_{-+}$ and $\hat{\bX}_{+-}$ is represented by the diagonal bimodule of $\hat{\bX}_-$.

\end{lem}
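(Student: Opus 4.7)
The plan is to construct a flow $2$-simplex in $\Flow^U$ with vertices $\hat{\bX}_-$, $\hat{\bX}_+$, $\hat{\bX}_-$, edges $\hat{\bX}_{-+}$, $\hat{\bX}_{+-}$, and the diagonal bimodule $s\hat{\bX}_-$, whose top face witnesses the desired equivalence. The top face will arise from a parametrized moduli space of continuation-type solutions for a $1$-parameter family of Floer data interpolating between the constant path at $H_-$ (yielding $s\hat{\bX}_-$ by Lemma~\ref{lem:geometric_diagonal}) and a sufficiently stretched concatenation of the paths $H_{-+}$ and $H_{+-}$ (whose limit, by a standard neck-stretching/compactness argument, is the fibered gluing computing $\hat{\bX}_{+-} \circ \hat{\bX}_{-+}$ as in Section~\ref{sec:diagonal-bimodule}).

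First, I would fix a $1$-parameter family $(H^R, J^R)_{R \in [0,\infty]}$ of continuation data from $H_-$ to itself. For $R \gg 0$, the path equals $H_{-+}$ near $s = -R$, is constantly $H_+$ on $[-R,R]$, and equals $H_{+-}$ near $s = R$, so that as $R \to \infty$ the two halves decouple and the limit moduli space is the union, over intermediate orbits $q_+$ of $H_+$, of products of elements of $\hat{\bX}_{-+}(p,q_+)$ and $\hat{\bX}_{+-}(q_+,r)$. For $R = 0$, I deform within the space of continuation data from $H_-$ to $H_-$ to the constant path at $H_-$, so that the $R=0$ slice is exactly the data of Lemma~\ref{lem:geometric_diagonal}.

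Second, I would extend the constructions of Sections~\ref{sec:thickening}--\ref{sec:cont-equat} and~\ref{sec:doubly-framed-curves} to this parametrized setting, using doubly framed curves (accommodating the two cap components meeting the central $H_+$-cylinder) to record the quantized energies with respect to both $\Omega_-$ and $\Omega_+$. Stabilising inductively, I choose $U(d_-) \times U(d_+)$-representations $\scrV^{(d_-,d_+)}_{-+-}(p,r)$ over the parameter space, together with equivariant maps $\lambda$ to inhomogeneous terms, such that the restriction to $R = \infty$ admits the embeddings
\[
\scrV_{-+}^{(d_-,d_+')}(p,q_+) \oplus \scrV_{+-}^{(d_-',d_+)}(q_+,r) \hookrightarrow \scrV^{(d_-,d_+)}_{-+-}(p,r)\big|_{R=\infty}
\]
identifying the composition bimodule, and the restriction to $R = 0$ agrees with the data defining $s\hat{\bX}_-$. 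Applying the thickening, global chart, and collared completion procedures yields a derived orbifold $\hat{\bX}_{-+-}(p,r)$ of one higher dimension whose codimension-$1$ boundary strata are enumerated by (i) the $R = 0$ diagonal face, (ii) the $R = \infty$ face realizing $\hat{\bX}_{+-} \circ \hat{\bX}_{-+}$, and (iii) the Floer-breaking strata recording the $\hat{\bX}_-$-bimodule action on both sides.

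Third, I would equip $\hat{\bX}_{-+-}$ with the stable complex structure needed for $\Flow^U$: the index bundle is a direct sum of the complex index of the continuation operator and the real $R$-direction, and together with the already chosen complex orientations on $\hat{\bX}_{-+}$, $\hat{\bX}_{+-}$, and $s\hat{\bX}_-$, this assembles into a $2$-simplex in $\Flow^U$ with the prescribed edges. The hard part will be the coherence of all choices: the stabilising representations $\scrV$ and the inhomogeneous maps $\lambda$ over the family must agree strictly with those used for each of the three already-constructed edges, and the smooth structures produced by the Lashof-type argument must be made compatible across the gluings at $R = 0$ and $R = \infty$. These are instances of the same inductive compatibility problem addressed in Sections~\ref{sec:cons-choice-inhom} and~\ref{sec:collared-completions}, now solved by carrying along one extra $R$-parameter and enlarging representations as needed; the artificial energy shifts used to quantize the continuation energies are reconciled in the final relabeling by topological energy.
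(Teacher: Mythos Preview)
Your strategy—build the $2$-simplex $\hat{\bX}_{-+-}$ from a $1$-parameter family of continuation data interpolating between the concatenation of the two paths and the constant path at $(H_-,J_-)$, and identify the constant face with $s\hat{\bX}_-$ via Lemma~\ref{lem:geometric_diagonal}—is exactly the paper's. The difference, and the gap in your proposal, is in the framing apparatus for the thickening.

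The paper works over a base $\bar{B}_{d_1,d_2,d_3}(2)$ of \emph{triply} framed curves carrying \emph{two} interior marked points $z_{-+},z_{+-}$ on the real axis, with a $U(d_1)\times U(d_2)\times U(d_3)$ action and representations $\scrV^{(d_1,d_2,d_3)}_{-+-}$. The three quantised energies are genuinely distinct: $d_1$ is the bare $\Omega_{-,H_-}$-integral, matching the single framing used for $s\hat{\bX}_-$, while $(d_2,d_3)$ absorb the separate positivity corrections that were already built into the doubly framed $\hat{\bX}_{-+}$ and $\hat{\bX}_{+-}$. The mechanism that makes each boundary face a strong equivalence to the prescribed edge is that the inhomogeneous term is arranged to depend only on a $U(d_1)$-subrepresentation identified with $\scrV^{(d_1)}_-$ along the constant-continuation face, and only on a $U(d_2)\times U(d_3)$-subrepresentation along the concatenation face. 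With your double framing there is no single $\Omega_-$-type integer on the $2$-simplex that simultaneously restricts to the uncorrected index of $s\hat{\bX}_-$ at $R=0$ and to the corrected first indices of $\hat{\bX}_{-+},\hat{\bX}_{+-}$ at $R=\infty$; your own displayed embedding, with four indices $(d_-,d_+')$, $(d_-',d_+)$ in play, is already signalling this mismatch. Adding the third framing, and replacing the ad hoc parameter $R$ by the pair of marked points so that the corner stratification is inherited from the moduli of framed curves (as in the one-marked-point case of Lemma~\ref{lem:additional_marked_point_conic}), is what closes the gap.
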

\begin{proof}

  We need to construct a $2$-simplex $\hat{\bX}_{-+-}$ with boundary edges $\hat{\bX}_{-+}$, $\hat{\bX}_{+-}$, and the diagonal  bimodule, which, according to Lemma \ref{lem:invariance_continuation} is represented by the constant continuation map with all data pulled back from the projection to the space of Floer trajectories.

At the level of Floer equations, we consider a $1$-parameter family of continuation maps interpolating between the concatenation of  $(H_{+-},J_{+-})$ with  $(H_{-+},J_{-+})$ and the constant continuation map on $(H_{+}, J_{+})$. This determines moduli spaces $\Mbar_{-+-}(p_-,r_-)$ for each pair $(p_-,r_-)$ of Hamiltonian orbits of $H_-$, and these have two additional boundary strata in addition to the ones corresponding to Diagram \eqref{eq:bimodule_structure_map_solutions}: 
\begin{equation}
   \Mbar_{-+}(p_-,q_+) \times   \Mbar_{+-}(q_+,r_-) \to \Mbar_{-+-}(p_-,r_-) \leftarrow    \Mbar_{--}(p_-,r_-),
\end{equation}
where we have assumed that we choose a family of data to make the energy integral, extending the choices for the continuation maps the Floer equations. We now consider a triple of maps
\begin{equation}
    \Mbar_{-+-}(p_-,r_-)  \to \bN \times \bN \times \bN,
\end{equation}
with the first given by the integral of $\Omega_{-,H_-}$, and second two by adding to the integrals of $\Omega_{-,H_-}$ and $\Omega_{+,H_-}$ the areas of the pairs of $2$-forms used to achieve positivity of the pairs of quantised energies for the continuation maps $H_{-+}$ and $H_{+-}$.

At the level of framed curves, we thus introduce the moduli space consisting of triply framed curves together with two marked point, labelled $z_{-+}$ and $z_{+-}$ along the distinguished real line connecting the input from the output, satisfying the constraint that the marked point $z_{-+}$ lies closer to the input than $z_{+-}$. The triple framing corresponds to an action of $U(d_1) \times U(d_2) \times U(d_3)$ on this space, whose stack quotient we denote $\Bar{B}_{d_1,d_2,d_3}(2)$. We again inductively choose representations $\scrV^{(d_1,d_2,d_3)}_{-+-}$ of $U(d_1) \times U(d_2) \times U(d_3)$ and inhomogeneous terms which they parametrise, in order to define a derived orbifold $\hat{\bX}^{(d_1,d_2,d_3)}_{-+-}(p_-,r_-)$ for each triple of natural numbers $(d_1,d_2,d_2)$ and pair $(p_-,r_-)$ of Hamiltonian orbits of $H_-$, and lift the above map of moduli spaces of solutions to strong isomorphisms of derived orbifolds between its codimension $1$ boundary strata and one of the following previously defined derived orbifolds, for some decomposition $d'_i + d''_i = d_i$, and some choice of orbit $q_-$ of $H_-$ or $q_+$ of $H_+$:
\begin{align}
  &  \hat{\bX}^{(d'_1,d'_2,d'_3)}_{-+-}(p_-,q_-) \times  \hat{\bX}^{d''_1, d''_2, d''_3}_{-}(q_-,r_-) \\
  &  \hat{\bX}^{(d'_1,d'_2,d'_3)}_{-}(p_-,q_-) \times  \hat{\bX}^{d''_1, d''_2,d''_3}_{-+-}(q_-,r_-) \\
  &   \left( \hat{\bX}^{(d'_2,d'_3)}_{-+}(p_-,q_+) \times  \hat{\bX}^{(d''_2,d''_3)}_{+-}(q_+,r_-) \right)^{(d_1)} \\
   &  \hat{\bX}^{(d_1,d_2,d_3)}_{--}(p_-,r_-),
\end{align}
where, as before, we take components of the previous defined derived orbifolds according to the additional quantized energy maps which we introduced. The key point in order to achieve this identification of boundary strata  is to choose the inhomogeneous term, which parametrised by the representation $\scrV^{(d_1,d_2,d_3)}_{-+-}$ to depend, along the boundary stratum associated to the constant continuation map, only on a subrepresentation of $U(d_1)$ which is identified with $\scrV^{(d_1)}_{-}$, and to be given by the inhomogenous terms used to define the flow category $\hat{\bX}_-$ along this stratum. We impose a similar constraint, in terms of the group $U(d_2) \times U(d_3)$ along the boundary stratum where the continuation map breaks.

Rearranging as before the components of these derived orbifolds according to their topological energy, we obtain the desired $2$-simplex in $\Flow$, establishing the equivalence.

\end{proof}

\begin{cor}
  All Hamiltonian flow categories associated to a closed symplectic manifold are equivalent.
\end{cor}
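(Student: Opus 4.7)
The plan is to deduce this from Lemma \ref{lem:invariance_continuation} by a standard argument that the continuation bimodules realize inverse equivalences. Given two sets of Hamiltonian Floer data $(H_-, J_-, \Omega_-)$ and $(H_+, J_+, \Omega_+)$ on the same closed symplectic manifold $M$, I first reduce to the case in which both forms $\Omega_\pm$ are cohomologous to the same large integer multiple of a rational approximation of $\omega$. This uses that, by the discussion in Section \ref{sec:deforming-energy-be}, the forms involved can be chosen to lie in an arbitrarily small neighbourhood of such multiples of $\omega$, so by interpolating through finitely many intermediate triples we may assume the cohomological hypothesis of Lemma \ref{lem:bimodule_pair_of_forms} is satisfied for the pair $(\Omega_-, \Omega_+)$; the equivalence we seek is then obtained as a composition of the equivalences produced between consecutive intermediate flow categories.

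Under this cohomological assumption, choose continuation data $(H_{-+}, J_{-+})$ and $(H_{+-}, J_{+-})$ running in opposite directions between $(H_-, J_-)$ and $(H_+, J_+)$; such data exist because the space of time-dependent Hamiltonians is path-connected and non-degeneracy is generic. Lemma \ref{lem:bimodule_pair_of_forms} then produces flow bimodules $\hat{\bX}_{-+} \co \hat{\bX}_- \to \hat{\bX}_+$ and $\hat{\bX}_{+-} \co \hat{\bX}_+ \to \hat{\bX}_-$ in $\Flow^U$. Applying Lemma \ref{lem:invariance_continuation} to the composite of these two bimodules gives an equivalence between $\hat{\bX}_{+-} \circ \hat{\bX}_{-+}$ and the diagonal bimodule $s\hat{\bX}_-$, and applying the same lemma with the roles of $+$ and $-$ reversed gives the analogous statement for $\hat{\bX}_{-+} \circ \hat{\bX}_{+-}$ and $s\hat{\bX}_+$. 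Since the diagonal bimodule represents the identity morphism in the quasicategory $\Flow^U$, by Lemma \ref{lem:diagonal_bimodule_is_morphism} and the construction of degeneracies in Sections \ref{sec:fill-init-term} and \ref{sec:degen-struct-flow}, this shows that $\hat{\bX}_{-+}$ is an equivalence in $\Flow^U$ with inverse $\hat{\bX}_{+-}$.

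The main subtlety I anticipate is ensuring that the equivalences produced at each step really lift to the complex-oriented category $\Flow^U$, rather than merely to the unstructured $\Flow$. This follows from the fact that the obstruction bundles and index bundles constructed in Section \ref{sec:doubly-framed-curves} are complex, that the inhomogeneous perturbations are chosen compatibly with the complex linear Cauchy--Riemann operators, and that the complex orientations on boundary strata are matched in the construction of the collared completions; the arguments of Lemmas \ref{lem:bimodule_pair_of_forms} and \ref{lem:invariance_continuation} were set up precisely to preserve these complex-oriented structures. A secondary bookkeeping concern is the relabelling of components by topological energy once the auxiliary $2$-forms used to enforce positivity of the quantized energy have been introduced in the continuation setup; this relabelling is inherent in the statements of the two invoked lemmas, so there is no additional work to do beyond tracking it through the chain of intermediate equivalences.
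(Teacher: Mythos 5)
Your proposal is correct and follows essentially the same route as the paper: interpolate between any two choices of Floer data by a finite chain so that consecutive pairs satisfy the hypothesis of Lemma~\ref{lem:bimodule_pair_of_forms}, then use Lemma~\ref{lem:invariance_continuation} in both directions to see that the continuation bimodules are mutually inverse equivalences, and concatenate. The only superfluous element is your discussion of lifting to $\Flow^U$ -- this corollary sits in the unstructured section, and the structured lift is deferred to the proof of Proposition~\ref{prop:Hamiltonian_Flow_category} -- but that does no harm.
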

\begin{proof}
We can interpolate between any two pairs $(H_-,J_-)$ and $(H_+,J_+)$ by a finite sequence $\{(H_i,J_i)\}_{i=0}^{n}$, with $(H_-,J_-) = (H_0,J_0)$ and $(H_+,J_+) = (H_n, J_n)$ so that there are integral $2$-forms $\Omega_{ij}$ which tame $J_i$ and $J_j$. The result then follows by concatenating the equivalences between the successive flow categories.
\end{proof}

\subsection{Structured flow categories}
\label{sec:struct-flow-categ-1}

Let $\hat{\bX}$ be a Hamiltonian flow category obtained by applying the construction of Section \ref{sec:collared-completions}. The smooth structure on the orbifolds $\hat{X}^{(d)} (p,r)$ has the property that the tangent bundle of $\hat{X}^{(d)} (p,r)$ is naturally isomorphic to the direct sum
\begin{equation}
    T \bar{B}_d \oplus T^{\pi}  \hat{X}^{(d)}(p,r)
\end{equation}
where the second component is the tangent space of the fibre of the projection map to $\bar{B}_d$. In order to lift $\hat{\bX}$ to a structured flow category, we shall separately analyse the orientability of these two vector bundles; this is possible because the above decomposition is compatible with restriction to boundary strata, in the sense that we have a commutative diagram:
\begin{equation}
  \begin{tikzcd}
    T \bar{B}_{d_1} \oplus   T^{\pi}  \hat{X}^{(d_1)}(p,q) \oplus T \bar{B}_{d_2} \oplus  T^{\pi}  \hat{X}^{(d_2)}(q,r)
    \ar[r] \ar[d] &   T  \hat{X}^{(d_1)}(p,q) \oplus  T \hat{X}^{(d_2)}(q,r) \ar[d] \\
     T \bar{B}_d \oplus T^{\pi}  \hat{X}^{(d)}(p,r)  \ar[r] & T  \hat{X}^{(d)}(p,r).
  \end{tikzcd}
\end{equation}

\begin{rem}
  The reader who is concerned about the notion of \emph{tangent space} of $\bar{B}_d$ due to the fact that it is not an orbifold can replace all our argument below by arguments taking place at the level of equivariant vector bundles, at the cost of having to repeatedy have to use the comparison between the tangent space of an $H$ manifold $M$ and that of the $G$ manifolds $M \times_{H} G$ associated to an injection $H \to G$ of compact Lie groups.
\end{rem}

\subsubsection{The tangent space of the base}
\label{sec:orient-tang-space}

Recall that $ \bar{B}_d $ is defined as the $U(d)$ quotient of the manifold $ \Fbar^{\bR}_2(d,*)$, which is the oriented real blowup of the unit sphere bundle obtained of the holomorphic line bundle given by the tangent space at the marked point $z_+$ over the complex $U(d)$-manifold $\Fbar_2(d,*)$. The tangent space of an oriented real blowup along a divisor is equipped with an isomorphism to the pullback of the tangent space of the base, determined up to contractible choice, which extends the tautological identification at the boundary that takes a normal vector to the boundary to its image in the base and the tangent to the circle fibre to its image under the complex structure, so we conclude that the direct sum of $T  \Fbar^{\bR}_2(d,*)$ with a copy of $\bR$ has an (equivariant) complex structure which is canonical up to contractible choice.

Passing to the quotient by the $U(d)$ action implies that $ \bar{B}_d$ has a stable complex structure relative the Lie algebra $\mathfrak{u}_{d}$, i.e., we have a complex vector bundle $I^B_d$ over $ \bar{B}_d $ and an isomorphism of vector bundles
\begin{equation} \label{eq:stable_complex_base}
  T  \bar{B}_d \oplus \mathfrak{u}_{d} \oplus \bR \cong  I^B_d,
\end{equation}
where we abuse notation and write $ \mathfrak{u}_{d} $ for the vector bundle over $\bar{B}_d$ induced by its description as a quotient, and we have fixed the standard trivialisation of the normal bundle of  $ \Fbar^{\bR}_2(d,*)$ in the total space of the tangent line at $z_+$, considered as a holomorphic bundle over $\Fbar_2(d,*)$.

This construction is compatible with compositions as follows: the normal bundle of $\bar{B}_{d_1} \times \bar{B}_{d_2}$ in the boundary stratum $\partial^{d_1, d_2} \bar{B}_d$ is identified with the vector bundle associated to $\mathfrak{u}_{d}/\left( \mathfrak{u}_{d_1} \oplus \mathfrak{u}_{d_2} \right)$,  which has a canonical complex stucture, since it is the tangent space of the Grassmannian of complex $d_1$-planes in $\bC^d$.  If we write $\bR_{d}$ for the copy of the real line appearing in Equation \eqref{eq:stable_complex_base}, the tautological isomorphism $ \bR_{d} \cong \bR_{d_2}$ respects the identification with the normal direction of the unit circle bundle, since the marked point $z_+$ lies in the second component. Using the identification of $\bR_{d_1}$ with the normal line of $ \partial^{d_1, d_2} \bar{B}_d $ in $ \bar{B}_d $, we then obtain a commutative diagram:
\begin{equation} \label{eq:compatibility_stable_complex_base}
  \begin{tikzcd}
    T  \bar{B}_{d_1}\oplus \bR_{d_1} \oplus   \mathfrak{u}_{d_1} \oplus T  \bar{B}_{d_2} \oplus \mathfrak{u}_{d_2} \oplus \bR_{d_2}  \oplus \frac{\mathfrak{u}_{d}}{\mathfrak{u}_{d_1} \oplus \mathfrak{u}_{d_2}} \ar[d] \ar[r] &   I^B_{d_1}  \oplus  I^B_{d_2} \oplus \frac{\mathfrak{u}_{d}}{\mathfrak{u}_{d_1} \oplus \mathfrak{u}_{d_2}} \ar[d] \\
    T  \bar{B}_d \oplus \mathfrak{u}_d \oplus \bR_{d}  \ar[r] &    I^B_d.
  \end{tikzcd}
\end{equation}

It is straightforward to check that the identifications we have are consistent for higher codimension strata. One way to encode the coherence data is to consider the derived stack consisting of the vector bundle on $\bar{B}_d $ induced by $ i \mathfrak{u}_d $, equipped with the trivial section. Equation \eqref{eq:stable_complex_base} gives rise to an isomorphism
\begin{equation} \label{eq:stable_complex_base-complex}
  T  \bar{B}_d \oplus \mathfrak{gl}_{d,\bC} \oplus \bR \cong  I^B_d \oplus   i \mathfrak{u}_d,
\end{equation}
which we interpret as a lift to a complex oriented derived stack in the sense of Section \ref{sec:stably-framed-stably}, relative the trivial vector space, i.e. an object of $d\Orb^{U}(0,0) $. We can then build a category with objects the natural numbers, with morphisms from $j$ to $k$ given by this enrichment of $(\bar{B}_{k-j},i \mathfrak{u}_{k-j} ,0)$, and compositions given by the result of taking the direct sum of each corner of Diagram \eqref{eq:compatibility_stable_complex_base} with $  i \mathfrak{u}_{k-j} \oplus i \mathfrak{u}_{\ell-k} \cong i \mathfrak{u}_{\ell-j}$.
\begin{lem}
  The complex oriented derived stacks $(\bar{B}_{k-j},i \mathfrak{u}_{k-j} ,0)$ are the morphism spaces of a non-proper flow category with objects the natural numbers.  \qed
\end{lem}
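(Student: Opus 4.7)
The plan is to assemble the data already constructed in this section into a category enriched in the bicategory $d\Orb^U$ with objects the natural numbers $\bN$. Most of the ingredients are present; what remains is to identify them with the pieces of Definition \ref{def:framed_flow_category} and to verify the requisite coherences.

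First I would package morphism objects. For $j\leq k$, set $\bB(j,k) \equiv (\bar B_{k-j}, i\mathfrak{u}_{k-j}, 0)$, regarded as an object of $d\Orb^U(0,0)$ via Equation~\eqref{eq:stable_complex_base-complex}, with stratification by $\cP^{\Gamma}(j,k)$ given by the partitions of $k-j$ (the underlying stratification of $\bar B_{k-j}$ by partitions was recalled just above the statement). The virtual vector space data prescribed by Definition~\ref{def:framed_flow_category} is obtained by declaring $V_k \equiv 0$ for every object and noting that the relative framing of Equation~\eqref{eq:stable_complex_base-complex} is of the form required (with $W_{\bB(j,k)}=0$, $U^{+}_{\bB(j,k)} = \bR \oplus \mathfrak{gl}_{k-j,\bC}$, $U^{-}_{\bB(j,k)} = i\mathfrak{u}_{k-j}$, and $I^B_{k-j}$ as complex index bundle); this requires a small adjustment, namely a symmetric stabilisation by $i\mathfrak{u}_{k-j}$ on both sides so that $U^-$ is of the shape $\bR^{\{k\}}$ demanded by Definition~\ref{def:framed_flow_category}. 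I would absorb the bundles $\mathfrak{u}_\bullet$ and $i\mathfrak{u}_\bullet$ into the $U^{\pm}$ factors.

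Second, I would construct composition. For $j\leq k\leq\ell$, the map $\bar B_{d_1}\times\bar B_{d_2}\to\partial^{d_1,d_2}\bar B_{d_1+d_2}$ of Equation~\eqref{eq:map_bases} is the underlying composition. Its lift to a morphism in $d\Orb^{U}(0,0)$ is exactly the content of Diagram~\eqref{eq:compatibility_stable_complex_base}: the normal factor $\mathfrak{u}_d/(\mathfrak{u}_{d_1}\oplus\mathfrak{u}_{d_2})$ plays the role of the complex index contribution $I_f$ (and its imaginary translate the role of $E_f^-$), while the real line $\bR_{d_1}$ plays the role of the normal direction $\bR^{Q^-_{\iota_k}}=\bR^{\{k\}}$ required by Definition~\ref{def:framed_flow_category}. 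I would verify directly that the conditions imposed in Definition~\ref{def:framed_flow_category}(1)--(3) on $Q^{\pm}_{\iota_k}$, on $U^-$, and on the identification~\eqref{eq:isomorphism-product-compatible-decomposition} are met with the identifications just specified.

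Third, I would check associativity. The key observation is that the induced stable complex structure on $\bar B_d$ coming from the blow-up and the quotient is canonical up to contractible choice (as noted below Equation~\eqref{eq:stable_complex_base}), so any coherence on triple compositions reduces to a formal identity once the choices are fixed. Concretely, for each partition $d_1+d_2+d_3=d$ the two ways of building up Diagram~\eqref{eq:compatibility_stable_complex_base} produce the same isomorphism into $I^B_d\oplus i\mathfrak{u}_d$ because the normal bundles $\mathfrak{u}_d/(\mathfrak{u}_{d_1}\oplus\mathfrak{u}_{d_2}\oplus\mathfrak{u}_{d_3})$ assemble canonically as complex Grassmannian tangent spaces, and the real line $\bR_d$ depends only on the position of $z_+$, which lies in the last component of any iterated splitting. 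Finally, I would point out that the resulting flow category is \emph{non-proper} because the defining section of $(\bar B_d, i\mathfrak{u}_d, 0)$ is zero, so its zero locus is all of $\bar B_d$, which is non-compact; this is consistent with the terminology introduced at the end of Definition~\ref{def:flow_category}.

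The main obstacle is purely bookkeeping: matching the complex data $(W, I, U^\pm, Q^\pm)$ provided by Equation~\eqref{eq:stable_complex_base-complex} against the specific normalisation required by Definition~\ref{def:framed_flow_category}, and then tracing Diagram~\eqref{eq:compatibility_stable_complex_base} carefully so that the decomposition $Q_{\iota_k}^-=\{k\}$, $Q_{\iota_k}^+=\emptyset$ is respected. No new geometric input is needed beyond what has already been recorded in Equations~\eqref{eq:stable_complex_base}--\eqref{eq:compatibility_stable_complex_base}.
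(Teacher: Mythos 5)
Your overall route is the same as the paper's: the lemma carries a \qed precisely because the preceding paragraph already contains the proof — Equation~\eqref{eq:stable_complex_base-complex} defines the object of $d\Orb^U(0,0)$, Diagram~\eqref{eq:compatibility_stable_complex_base} (summed with $i\mathfrak{u}_{k-j}\oplus i\mathfrak{u}_{\ell-k}\cong i\mathfrak{u}_{\ell-j}$) gives composition, and coherence in higher codimension is the "straightforward to check" consistency of the normal-direction and blow-up identifications. Your points about the normal bundle $\mathfrak{u}_d/(\mathfrak{u}_{d_1}\oplus\mathfrak{u}_{d_2})$ being the Grassmannian tangent space, about $\bR_d$ tracking the position of $z_+$, and about non-properness coming from the vanishing section are all correct and are exactly what the paper relies on.

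However, your identification of the structured data $(W, I, U^{\pm})$ is wrong, and as written it would not verify Definition~\ref{def:framed_flow_category}. Unwinding Equation~\eqref{eq:stable_complex_base-complex} against the convention $T^+\bX\oplus E^-_{\bX}\cong T^-\bX\oplus E^+_{\bX}$ (with $V_0=V_1=0$ and $T^-\bX=i\mathfrak{u}_{d}$ the obstruction bundle of the derived stack), one gets $E^+_{\bX}=I^B_{d}$ and $E^-_{\bX}=\mathfrak{gl}_{d,\bC}\oplus\bR$. Hence the correct decomposition is $W=0$, $I_{\bX}=(I^B_{d},\mathfrak{gl}_{d,\bC})$ as a virtual \emph{complex} bundle, and $U_{\bX}=(0,\bR)$ — which already matches the required $(0,\bR^{\{q\}})$ on the nose, so no "symmetric stabilisation by $i\mathfrak{u}_{k-j}$" is needed or permitted. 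Your assignment puts $\bR$ and $\mathfrak{gl}_{k-j,\bC}$ into $U^+$ and $i\mathfrak{u}_{k-j}$ into $U^-$; this fails for two reasons. First, $U_{\bX}$ must be a virtual vector \emph{space}, whereas $\mathfrak{gl}_{d,\bC}$, $\mathfrak{u}_d$ and $i\mathfrak{u}_d$ are genuine vector bundles on the stack $\bar B_d$ (associated bundles of the quotient presentation); they can only live in $I$ or $W$. Second, $i\mathfrak{u}_{k-j}$ is the obstruction bundle $T^-\bX$ of the derived stack $(\bar B_{k-j}, i\mathfrak{u}_{k-j},0)$ — that is the entire reason the paper introduces this derived stack rather than the bare orbifold $\bar B_{k-j}$ — so it is not a summand of $E_{\bX}$ at all, and moving it there breaks the equivalence of virtual bundles you are supposed to be exhibiting (your assignment would require $T\bar B_d\cong I^B_d\oplus\bR\oplus\mathfrak{gl}_{d,\bC}$, which contradicts Equation~\eqref{eq:stable_complex_base}). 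Once these roles are corrected, the rest of your plan (composition via Equation~\eqref{eq:map_bases} lifted by Diagram~\eqref{eq:compatibility_stable_complex_base}, with $Q^-_{\iota_k}=\{k\}$ the line $\bR_{d_1}$, and associativity from the canonical complex structure on the Grassmannian normal bundles) goes through as in the paper.
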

For our application, we need to shift the obstruction bundle so that it matches the part of the obstruction bundle of the moduli spaces of maps which comes from the choice of framing data. In Equation \eqref{eq:normal-b-spaces}, we exactly identified this as the direct sum $N_d$ of $ i \mathfrak{u}_{d}$ with $\bC^d$. Since the latter summands are a collection of representations which are additive under $d$ (i.e. the restriction of the $U(d_1 + d_2)$ representation $\bC^{d_1 + d_2}$ under the homomorphism $U(d_1) \times U(d_2) \to U(d_1 + d_2)$ is the direct sum of $\bC^{d_1}$ with $\bC^{d_2}$), we conclude:
\begin{cor}
  The complex oriented derived stacks $(\bar{B}_{k-j},N_{k-j} ,0)$ are the morphism spaces of a non-proper flow category with objects the natural numbers.  \qed 
\end{cor}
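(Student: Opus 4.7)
The plan is to deduce this corollary directly from the preceding lemma by stabilising the obstruction bundle from $i\mathfrak{u}_{k-j}$ to $N_{k-j} = i\mathfrak{u}_{k-j} \oplus \bC^{k-j}$, while preserving all of the structure established there. The first step is to take the direct sum of each $1$-cell $(\bar{B}_d, i\mathfrak{u}_d, 0)$ with the vector bundle on $\bar{B}_d$ induced by the representation $\bC^d$, equipped with the zero section. The tangential lift provided by Equation \eqref{eq:stable_complex_base-complex} is augmented by the identity on $\bC^d$ on both sides of the isomorphism; since the identity map respects complex structure, the resulting object remains an object of $d\Orb^U(0,0)$, and the enlarged obstruction bundle is exactly $N_d$.

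The second step is to verify that the composition maps of the preceding flow category lift through this stabilisation. The crucial observation, highlighted just before the statement of the corollary, is that the standard representations form an additive collection: pulling back $\bC^{d_1+d_2}$ along the block-diagonal inclusion $U(d_1)\times U(d_2)\to U(d_1+d_2)$ yields the canonical decomposition $\bC^{d_1}\oplus \bC^{d_2}$. Consequently, each entry of Diagram \eqref{eq:compatibility_stable_complex_base} can be enlarged by the corresponding direct sum of $\bC^{d_i}$ factors, and the resulting diagram commutes because it is the direct sum of the original (commuting) diagram with a diagram of canonical isomorphisms of standard representations. This upgrades the composition map for the $i\mathfrak{u}_d$ system to a composition map for the $N_d$ system in $d\Orb^U$.

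The third and final step is to verify the coherence of these composition maps for higher codimension strata. This reduces to the observation that, for a decomposition $d_1+d_2+d_3 = d$, the two canonical isomorphisms of $\bC^{d_1+d_2+d_3}$ with $\bC^{d_1}\oplus \bC^{d_2}\oplus \bC^{d_3}$ arising from the nested block-diagonal factorisations through $U(d_1+d_2)\times U(d_3)$ and $U(d_1)\times U(d_2+d_3)$ agree strictly, since both are the tautological splitting by coordinates. The preceding lemma already supplied the full coherence data for the $i\mathfrak{u}_d$ summands, and direct summing with these canonical identifications on the $\bC^{d_i}$ factors preserves the coherence. I do not anticipate a substantive obstacle in carrying this out: the argument is essentially the formal remark that multiplicative families of representations (in the sense that $\bC^{d_1+d_2}$ restricts to $\bC^{d_1}\oplus\bC^{d_2}$, with associative identifications) can be added to the obstruction bundle of any flow category whose objects are indexed by the same family of groups without disturbing the $2$-categorical structure, and this is exactly the setting of the preceding lemma.
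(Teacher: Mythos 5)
Your proposal is correct and follows the paper's own (very terse) argument exactly: the paper justifies this corollary in the single sentence preceding it, observing that the standard representations $\bC^d$ are additive under the block-diagonal inclusions $U(d_1)\times U(d_2)\to U(d_1+d_2)$, so that direct-summing them onto the obstruction bundles $i\mathfrak{u}_d$ preserves all the structure of the preceding lemma. Your three steps are a faithful (and more detailed) unpacking of that one-line justification, so there is nothing to add.
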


Let us now restrict attention to the open subset consisting of chains of curves, which is relevant to the study of Floer theory in the absence of sphere bubbling:
\begin{lem}
  The action of $U(d)$ on  $\cF^{\bR}_2(d,*)$ is free, and the quotient is contractible. \qed
\end{lem}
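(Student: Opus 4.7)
The plan is to establish the two claims—freeness of the $U(d)$-action and contractibility of the quotient—by first analyzing the interior of $\cF^{\bR}_2(d,*)$ (parametrising smooth, i.e.\ irreducible, curves) and then extending inductively across the chain-type strata. On the interior, the identification $\cF_2(d,*)^{\circ} \cong PGL_{d+1}(\bC)/\bC^*$ recalled in Section~\ref{sec:fram-curv-proj-1} makes the $U(d)$-action amenable to explicit Lie-theoretic analysis, and the extension to the boundary strata can leverage the description (derived in Section~\ref{sec:strat-fram-curv}) of those strata as complex vector bundles over products of lower-dimensional $\bar{B}_{d_i}$'s.

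For freeness, I would analyse the $U(d)$-stabiliser of a framed curve $(u,l)$. On each irreducible component, a stabilising element $g$ must arise as $g = u_*\phi$ where $\phi$ is a reparametrisation of that component fixing its two distinguished points (marked points or adjacent nodes) and compatible with the real-line framings. Choosing local coordinates in which a non-degenerate degree-$d_i$ map takes Veronese form $z \mapsto [z^{d_i}:\cdots:z:1]$, the matrix $u_*\phi$ is diagonal with entries $\phi^{d_i},\phi^{d_i-1},\ldots,\phi,1$. Demanding this lie in the compact group $U(d_i)$ forces $|\phi|=1$, and the framing constraint—together with the orientation convention implicit in the intended reading of ``real line'' in the Floer-theoretic context—pins $\phi$ to the identity. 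Nondegeneracy then implies $g = 1$, and the argument glues across nodes using the real line framing in the Segre tensor $T_z\Sigma_+\otimes_{\bC} T_z\Sigma_-$ which rigidifies the relative scaling between adjacent components.

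For contractibility, the cleanest route is to exhibit a canonical slice of the $U(d)$-action on the interior. Given a map $u$, the standard Hermitian metric on $\cO_{\bC\bP^d}(1)$ pulls back (using a fixed volume form on $\bC\bP^1$) to an inner product on $H^0(u^*\cO(1))$, and Gram--Schmidt orthonormalisation of the first $d$ basis sections produces a canonical representative of each $U(d)$-orbit. The resulting slice manifold deformation retracts onto the standard Veronese model, proving contractibility of $\cF_2(d,*)^{\circ}/U(d)$ (the real framing at $z_+$ contributes a contractible $\bR\bP^1$- or $S^1$-factor that is preserved by the retract). To extend across the chain strata, I would proceed by induction on the number of nodes, using the description of the codimension-$k$ stratum as a complex vector bundle over $\bar B_{d_1}\times\cdots\times \bar B_{d_k}$: by induction each factor quotient is contractible, the vector bundle structure preserves contractibility, and the collar neighbourhood of the stratum (as in Section~\ref{sec:collared-completions}) allows the contractions to be assembled into a global one.

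The main obstacle will be the coherent compatibility of the Gram--Schmidt slices across the stratification: the canonical representative chosen on the interior does not, a priori, restrict to the product of canonical representatives on a nodal stratum, because the pullback Hermitian inner products on a smoothed versus nodal domain differ. I expect to resolve this by constructing the contracting homotopies inductively—extending from each boundary stratum inward, which is possible since each stratum is, by the induction hypothesis, contractible—or, alternatively, by working with collared completions where the inner product can be kept locally constant in a neighbourhood of each stratum. Once this coherence is set up, contractibility of the quotient follows from contractibility of the interior together with the collar structure, completing the proof.
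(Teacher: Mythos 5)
Your freeness argument is essentially the right one, and the hedge you flag about the orientation of the framing is not a technicality but the crux. With the framing read literally as an unoriented real line, the lemma fails: the element $\mathrm{diag}((-1)^d,\ldots,-1,1)\in U(d)$ stabilises the standard framed Veronese configuration, since it corresponds to the reparametrisation $z\mapsto -z$, whose derivative at $z_+$ preserves any unoriented line (and, without any framing at all, the stabiliser of that configuration is a whole circle $\{\mathrm{diag}(\lambda^d,\ldots,\lambda,1):|\lambda|=1\}$). Freeness therefore requires the framings at $z_+$ and at the nodes to be rays (oriented lines), as in the sources the paper builds on; the paper itself offers no proof of this lemma beyond the surrounding remark that all isotropy lies away from the chain locus, so you have correctly isolated the one place where the statement could break.

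The contractibility half of your proposal has a genuine gap. The parenthetical claim that ``the real framing at $z_+$ contributes a contractible $\bR\bP^1$- or $S^1$-factor'' is false ($S^1$ is not contractible), and it conceals the actual role of the framing: the $U(d)$-action on the \emph{unframed} interior $\cF_2(d,*)^{\circ}$ is not free (circle stabilisers at the Veronese locus, as above), so $\cF_2(d,*)^{\circ}/U(d)$ is only an orbifold, the framed quotient is not a product of it with a circle, and the framing circle is precisely what the residual stabiliser acts on and is absorbed in the quotient. The clean argument identifies the framed interior as a homogeneous space for the subgroup $\tilde P\subset GL_{d+1}(\bC)$ of matrices with last column $(0,\ldots,0,1)$ (which the paper introduces when discussing non-unitary framings), with stabiliser the $\bR_{>0}$ of positive dilations of the domain; since $U(d)$ is the maximal compact of $\tilde P$, the quotient is contractible by the Iwasawa/Cartan decomposition, of which your Gram--Schmidt slice is exactly the concrete form -- but it must be applied to the framed datum, not to the curve alone. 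Finally, the coherence of slices across strata that you identify as the main obstacle is not needed for this lemma: a manifold with corners deformation retracts onto its interior via collars, so contractibility of the open stratum suffices; the stratum-by-stratum induction via the vector-bundle description of the boundary is only required for the separate assertion, used later in the paper, that every corner stratum of $B_d$ is contractible.
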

The contractibility implies that we can choose a global identification
\begin{equation} \label{eq:global_framing}
       T B_d \oplus \bR \cong i \mathfrak{u}_d. 
\end{equation}
We would like to check that these identifications can be chosen consistently. To this end, we have the following result, which follows from the fact that all the corner strata of $B_d$ are contractible, and the poset of such boundary strata has a unique maximal element:
\begin{lem}
  The manifold $B_d$ deformation retracts to its highest depth corner stratum $\partial^{1, \ldots, 1} B_d$. \qed 
\end{lem}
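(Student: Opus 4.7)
The plan is to deduce the deformation retraction from the observation that both $B_d$ and $\partial^{1,\ldots,1} B_d$ are contractible, combined with the fact that the inclusion is a closed cofibration arising from the manifold-with-corners collar structure. The result will then follow from the classical theorem that a closed cofibration which is a homotopy equivalence is a strong deformation retract.

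The contractibility of $\partial^{1,\ldots,1} B_d$ follows from the preceding discussion: each corner stratum $\partial^{d_1,\ldots,d_k} \bar{B}_d$ is the total space of a complex vector bundle over $\bar{B}_{d_1} \times \cdots \times \bar{B}_{d_k}$, and restricting to the chain locus preserves this structure on $B_d$. When all $d_i = 1$, the base $B_1 \times \cdots \times B_1$ is a single point, since $B_1$ is the $U(1)$-quotient of a single free orbit of framed degree-one maps. Hence $\partial^{1,\ldots,1} B_d$ is a real vector space, in particular contractible.

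Next, I would verify that the inclusion is a closed cofibration by exhibiting a collar neighborhood of $\partial^{1,\ldots,1} B_d$ in $B_d$. Since $B_d$ is a smooth manifold with corners and $\partial^{1,\ldots,1} B_d$ is its deepest stratum of codimension $d-1$, iteratively applying the codimension-one collar theorem — equivalently, using the iterated vector bundle structure of the intermediate corner strata — produces a neighborhood diffeomorphic to $\partial^{1,\ldots,1} B_d \times [0,\epsilon)^{d-1}$. This collar exhibits the inclusion as a neighborhood deformation retract, hence a closed cofibration. Combining with the contractibility of both spaces (and noting that smooth manifolds with corners have the homotopy type of CW complexes), the inclusion is a homotopy equivalence, so Str\o m's theorem yields the desired strong deformation retraction.

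The main potential obstacle is the rigorous construction of the iterated collar near the deepest corner, which requires compatible choices at each depth. A more hands-on alternative would proceed by induction on $d$: each codimension-one stratum is a vector bundle over a product $B_{d_1} \times B_{d_2}$ with $d_i < d$, so by inductive hypothesis each factor retracts onto its own deepest corner, which sits inside $\partial^{1,\ldots,1} B_d$. Iterating through the stratification using the associativity of Diagram~\eqref{eq:compatibility_stable_complex_base} to ensure compatibility at higher codimension would yield an explicit deformation retraction, avoiding the abstract appeal to Str\o m's theorem.
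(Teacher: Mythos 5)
Your overall strategy is sound and matches what the paper intends: the lemma is justified there by a single sentence (all corner strata of $B_d$ are contractible and the poset of boundary strata has a unique maximal element), and your ``contractible closed cofibration into a contractible space is a strong deformation retract'' argument, together with the collar structure near the deepest corner, is the natural way to flesh that out; your inductive alternative is essentially the paper's implicit argument. One factual error, though not a fatal one: $B_1$ is \emph{not} a point. The space $\cF^{\bR}_2(1,*)$ is positive-dimensional (a degree-one two-pointed curve with $z_-\mapsto *$ still has the image of $z_+$ and the real tangent line as moduli), and its free $U(1)$-quotient $B_1$ is a positive-dimensional contractible manifold, not a single free orbit. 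The contractibility of $\partial^{1,\ldots,1}B_d$ that you need should instead be deduced from the contractibility of each $B_1$ factor (established in the lemma immediately preceding this one) together with the vector bundle structure of the stratum over $B_1\times\cdots\times B_1$; the stratum is then a vector bundle over a contractible base rather than a vector space, but this changes nothing in the rest of your argument.
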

Proceeding inductively on the dimension $d$, we can therefore choose the framing in Equation \eqref{eq:global_framing} so that the following diagram commutes:
\begin{equation} \label{eq:compatibility_stable_framing_base}
  \begin{tikzcd}
    T  B_{d_1}\oplus \bR_{d_1} \oplus T  B_{d_2}  \oplus \bR_{d_2}  \oplus \frac{\mathfrak{u}_{d}}{\mathfrak{u}_{d_1} \oplus \mathfrak{u}_{d_2}} \ar[d] \ar[r] &   i \mathfrak{u}_{d_1} \oplus i \mathfrak{u}_{d_2}  \oplus i  \frac{\mathfrak{u}_{d}}{\mathfrak{u}_{d_1} \oplus \mathfrak{u}_{d_2}} \ar[d] \\
    T  B_d  \oplus \bR_{d}  \ar[r] &    i \mathfrak{u}_d .
  \end{tikzcd}
\end{equation}
We can encode the coherence of this data as follows:
\begin{lem}
  The framed derived manifolds $(B_{k-j},N_{k-j} ,0)$ are the morphism spaces of a non-proper flow category with objects the natural numbers.  \qed
\end{lem}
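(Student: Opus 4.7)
The strategy is to parallel the proof of the already-established complex oriented analog, with the complex oriented structure of Equation \eqref{eq:stable_complex_base-complex} replaced everywhere by the actual framing of Equation \eqref{eq:global_framing}, made possible by the fact that over $B_d$ the $U(d)$-action is free and the quotient is contractible, so that the index bundle $I^B_d$ can be globally trivialized. Accordingly, the main content is not new geometric input but a bookkeeping check that the data assembled in the preceding discussion packages correctly as a category enriched in $d\Orb^{fr}(0,0)$ satisfying the extra conditions of Definition \ref{def:framed_flow_category}.

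First, I will lift each derived manifold $(B_d, N_d, 0)$ to a $1$-cell in $d\Orb^{fr}(0,0)$. Setting $V_j = 0$ for every object $j \in \bN$, Definition \ref{def:framed_flow_category} requires an isomorphism
\begin{equation}
  T B_d \oplus \bR^{\{k\}} \oplus W \cong N_d \oplus W
\end{equation}
with $I$-part trivial, where $k$ is the target. Taking $W = \bC^d$ (viewed as a real bundle), splitting $N_d = i\mathfrak{u}_d \oplus \bC^d$, identifying the distinguished copy of $\bR$ in \eqref{eq:global_framing} with $\bR^{\{k\}}$, and using the identity on $\bC^d$ produces such an isomorphism directly from \eqref{eq:global_framing}. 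The decomposition $E \cong (W,W) \oplus I \oplus U$ with $U = (0, \bR^{\{k\}})$, $I = 0$, and $W = \bC^d$ then exhibits the framed structure required in Section \ref{sec:stably-framed-stably}.

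Next, I will verify the composition compatibility. The codimension-$1$ boundary stratum of $B_d$ associated to a decomposition $d = d_1 + d_2$ is the image of $B_{d_1} \times B_{d_2}$ together with the normal direction given by $\bR_{d_1}$, and Diagram \eqref{eq:compatibility_stable_framing_base} asserts precisely the commutativity of the framings of $TB_d \oplus \bR \cong i\mathfrak{u}_d$ at such a stratum. Taking the direct sum with the identity on $\bC^{d_1+d_2} \cong \bC^{d_1} \oplus \bC^{d_2}$, which is the restriction of the $U(d)$-representation $\bC^d$ under the inclusion $U(d_1) \times U(d_2) \to U(d)$, upgrades this to compatibility of the full framings of the obstruction bundles $N_d$. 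This yields the $2$-cells expressing composition in $d\Orb^{fr}(0,0)$, and the required associativity for triples reduces to the naturality of the splittings $\mathfrak{u}_{d_1+d_2+d_3}/(\mathfrak{u}_{d_1}\oplus\mathfrak{u}_{d_2}\oplus\mathfrak{u}_{d_3})$ and $\bC^{d_1+d_2+d_3} \cong \bigoplus_i \bC^{d_i}$, exactly as in the complex oriented case.

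Finally, the specific normalizations of Definition \ref{def:framed_flow_category} are verified as follows: the normal direction to the stratum $B_{d_1} \times B_{d_2} \subset \partial B_d$ is the inward-pointing generator of $\bR_{d_1}$, which under our identifications is the $\bR^{\{k\}}$ factor of $U^-$ at the intermediate object $k$; this gives $Q^-_{\iota_k} = \{k\}$ and $Q^+_{\iota_k} = \emptyset$. The identification $U^-_{\bX(j,\ell)} \oplus \bR^{\{k\}} \cong U^-_{\bX(j,k)} \oplus U^-_{\bX(k,\ell)}$ is the tautological equality $\bR^{\{\ell\}} \oplus \bR^{\{k\}} \cong \bR^{\{k,\ell\}}$. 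The main, and essentially only, obstacle is ensuring that the framings of \eqref{eq:global_framing} may be chosen \emph{strictly} compatibly in the sense of Diagram \eqref{eq:compatibility_stable_framing_base}; this has already been handled in the discussion preceding the lemma by inductively propagating the framing from the deepest corner stratum outward using the contractibility of each corner stratum of $B_d$.
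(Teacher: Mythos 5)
Your proof takes essentially the same route the paper intends: the lemma carries a \qed precisely because it is a summary of the discussion that precedes it, and your three steps (lifting each $(B_d,N_d,0)$ to a $1$-cell of $d\Orb^{fr}(0,0)$ via the global framing, verifying composition through Diagram \eqref{eq:compatibility_stable_framing_base} together with the additivity of the $\bC^d$ summands under $U(d_1)\times U(d_2)\to U(d)$, and checking the normalizations of Definition \ref{def:framed_flow_category}) are exactly that discussion made explicit, with the only genuine content being the inductive choice of strictly compatible framings from the contractibility of the corner strata. One bookkeeping caution: with $W=\bC^d$, the required isomorphism $TB_d\oplus\bR^{\{k\}}\oplus W\cong N_d\oplus W$ does not reduce to Equation \eqref{eq:global_framing} after cancelling copies of $\bC^d$ — a rank count (e.g.\ at $d=1$, where $B_1$ is a surface) shows one still needs $TB_d\oplus\bR\cong i\mathfrak{u}_d\oplus\bC^d=N_d$, so the $\bC^d$ summand of $N_d$ must be absorbed into the framing of the base rather than matched against $W$; this costs nothing, since $B_d$ is contractible and the inductive compatibility argument applies verbatim, but it should be stated as the corrected form of Equation \eqref{eq:global_framing} rather than derived from it.
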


\subsubsection{Stable fibrewise complex structure}
\label{sec:stable-fibr-compl}

We now turn our attention to the tangent space  $T^{\pi}  \hat{X}(p,r)^{(d)}$  of the fibre of the projection map to $  \bar{B}_d$. To construct a stable complex structure on this vector bundle it is convenient to make the auxiliary choice of a complex-linear connection on $TM$, whose pullback under any time-$1$ periodic orbit of $H$ we assume to have trivial monodromy. Given such a non-degenerate time-$1$ periodic orbit $p$, we obtain a trivialisation of $p^* TM$, as a complex vector bundle, which we can use to write the linearisation of the Floer equation as an equation on the space of sections, over the pullback ot $TM$ to $\bR_{s} \times S^1_{t}$, satisfying an inhomogeneous equation whose leading order term is the equation for holomorphic sections. 

As in \cite[Section 11.3.1]{Abouzaid2021a}, we cutoff the inhomogeneous term by a function $\chi(s)$ which vanishes in the region $s \ll 0$ and is the identity when $0 \ll s$. 
This determines a Fredholm operator $D_p$ associated to a Cauchy-Riemann problem on the cylinder, which agrees with the linearised Floer operator along the positive end, and which agrees with the trivial complex linear operator with values in $T_{p(0)}(X)$ along the negative end (using the complex-linear connection to perform this identification). Filling in the puncture at the negative end, we can thus describe $D_u$ as an operator on the complex plane.  Denote by $(V^+_p, V^{-}_p)$ the kernel and cokernel of this operator (restricted to sections with vanish at the origin), and fix a splitting of the projection map onto the cokernel, with image consisting of compactly supported $(0,1)$-forms, so that we can associate to each $V^{-}_p$ an inhomogeneous equation on the cylinder.

As in \cite[Section 11.3.2]{Abouzaid2021a}, given a pair $(p,r)$ of orbits, we denote by $D_u$ the linearization of the Cauchy-Riemann operator associated to an element $u$ of the thickening $\cT(p,r)$, with domain $\Sigma$, which we can write again as an inhomegenous equation on sections of $u^* TM$, with leading order term given by holomorphicity with respect to the pullback connection. Assuming for simplicity that $\Sigma$ is a cylinder $\bR_{s} \times S^1_{t}$, we obtain a $1$-parameter family of equations by choosing the cutoff the inhomogeneous term (as displayed in Figure \ref{fig:homotopy_operators_gluing}), with respect to the family of functions $\chi(s + R)$ associated to a real number $R$. In the limit $R \to -\infty$, this family of operators converges to $D_p \# D_u$, while in the limit $R \to +\infty $, it converges to $D_{u}^{\bC} \# D_q$, where $D_u^{\bC}$ is the (complex-linear) operator of holomorphic sections of $u^* TM$.

\begin{figure}[h]

  \centering
  \begin{tikzpicture}
    \begin{scope}[shift={(-2,0)}, xscale=2]
         \begin{scope}[shift={(0,2.5)}]
           \coordinate[label= center:$p(0)$] (p) at (0,-2.25);
     
          \draw  (0,-2.5) .. controls (-.5,-2.5) and  (-.5,-4) .. (-.5,-4.5) ;
         \draw  (0,-2.5)  .. controls (.5,-2.5) and  (.5,-4) .. (.5,-4.5) ;
         \begin{scope}[xscale=.5]
           \draw [fill,color=black] (0,-2.5) circle (.05);
           \draw [fill,color=black] (0,-3.5) circle (.05);
          \end{scope}
     
         \draw [dashed]  (.5,-4.5) arc (180:0: -.5 and .1);
         \draw  (.5,-4.5) arc (180:360:-.5 and .1);

         \draw  (.5,-4.5) arc (180:360:-.5 and .1);
         \coordinate[label= center:$p$] (p) at (0,-4.75);
         \end{scope}
         
 \draw (.5,-2.5) arc (180:0: -.5 and .1);
         \draw  (.5,-2.5) arc (180:360:-.5 and .1);
         \draw  (-.5,-2.5) -- (-.5,-4.5) ;
         \draw  (.5,-2.5) -- (.5,-4.5) ;
          \coordinate[label= center:$D_u$] (u) at (0,-3.5);
         \draw [dashed]  (.5,-4.5) arc (180:0: -.5 and .1);
         \draw  (.5,-4.5) arc (180:360:-.5 and .1);
          \draw [dashed]  (.5,-2.5) arc (180:0: -.5 and .1);
         \draw  (.5,-4.5) arc (180:360:-.5 and .1);
         \coordinate[label= center:$r$] (r) at (0,-4.75);
             \end{scope}

             \begin{scope}[shift={(1,0)}, xscale=2]
               \begin{scope}[shift={(0,2.5)}]
                 \coordinate[label= center:$p(0)$] (p) at (0,-2.25);
         
          \draw  (0,-2.5) .. controls (-.5,-2.5) and  (-.5,-4) .. (-.5,-4.5) ;
         \draw  (0,-2.5)  .. controls (.5,-2.5) and  (.5,-4) .. (.5,-4.5) ;
       \end{scope}

                 \draw  (.5,-4.5) -- (.5,-2) ;
                 \draw  (-.5,-4.5) -- (-.5,-2);
            \begin{scope}[xscale=.5]
              \draw [fill,color=black] (0,0) circle (.05);
              \draw [fill,color=black] (0,-2.5) circle (.05);
          \end{scope}

          \draw [dashed]  (.5,-4.5) arc (180:0: -.5 and .1);
         \draw  (.5,-4.5) arc (180:360:-.5 and .1);
         \coordinate[label= center:$r$] (r) at (0,-4.75);
       \end{scope}

  \begin{scope}[shift={(4,0)}, xscale=2]
    
         \begin{scope}[shift={(0,2.5)}]
           \coordinate[label= center:$p(0)$] (p) at (0,-2.25);
    
          \draw  (0,-2.5) .. controls (-.5,-2.5) and  (-.5,-3.5) .. (-.5,-3.75) .. controls (-.5,-4) and  (-.5,-5) .. (0,-5) ;
         \draw  (0,-2.5)  .. controls (.5,-2.5) and  (.5,-3.5) .. (.5,-3.75) .. controls (.5,-4) and  (.5,-5) .. (0,-5) ;
   \draw [dashed]  (.5,-3.75) arc (180:0: -.5 and .1); 
         \draw  (.5,-3.75) arc (180:360:-.5 and .1);
       \coordinate[label= center:$D_{u}^{\bC}$] (u) at (0,-3.5);
         \coordinate[label= center:$r(0)$] (r) at (0,-4.75);
         \end{scope}
          \draw  (0,-2.5) .. controls (-.5,-2.5) and  (-.5,-4) .. (-.5,-4.5) ;
         \draw  (0,-2.5)  .. controls (.5,-2.5) and  (.5,-4) .. (.5,-4.5) ;
         \begin{scope}[xscale=.5]
           \draw [fill,color=black] (0,0) circle (.05);
             \draw [fill,color=black] (0,-2.5) circle (.05);
             \draw [fill,color=black] (0,-3.5) circle (.05);
          \end{scope}

   \draw [dashed]  (.5,-4.5) arc (180:0: -.5 and .1); 
         \draw  (.5,-4.5) arc (180:360:-.5 and .1);

         \draw  (.5,-4.5) arc (180:360:-.5 and .1);
         \coordinate[label= center:$r$] (r) at (0,-4.75);

       \end{scope}

  \end{tikzpicture}
  \caption{The homotopy between  $D_p \# D_u$  and $D^{\bC}_{u} \# D_r$; the marked point indicates the region where the cutoff takes place.}
    \label{fig:homotopy_operators_gluing}
\end{figure}
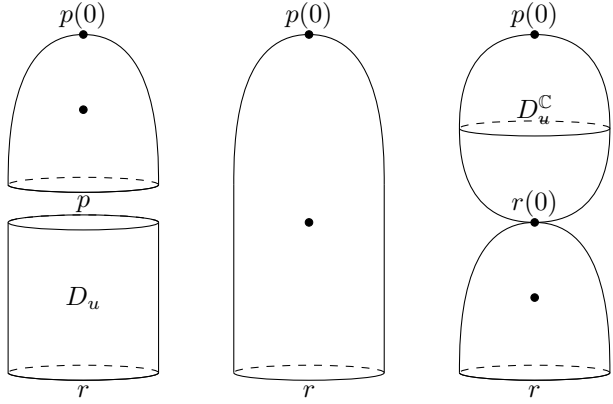

This procedure still makes sense for arbitrary elements of  $\cT(p,r)$: as discussed in  \cite[Section 11.3.2]{Abouzaid2021a}, we identify the interval of parametrisation with a choice of stable map to $M \times \bP^1$, which has degree $1$, and which maps the marked points labelled by $p$ and $r$ to $0$ and $\infty$. Pulling back a fixed cutoff function from $\bP^1$ yields a family of Cauchy-Riemann operators realising this interpolation. Note that this family is naturally equivariant under the action of $U(d)$ on $\cT(p,r)$, since none of our choices break this symmetry:

\begin{lem} \label{lem:parametrised_path_family_operators}
There is a $U(d)$-equivariant family of Fredholm operators, parametrised by the product of $\cT(p,r)^{(d)}$ with the interval $[0,1]$ and equivariant with respect to the action of $U(d)$ on $\cT(p,r)^{(d)}$, which over $(u,0)$ agree with the direct sum of $D_p$ with $D_u $, and over $(u,1)$ agree with the direct sum of a complex linear operator with $D_r$. \qed
\end{lem}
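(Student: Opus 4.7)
The plan is to build the family by parametrising the position of the cutoff applied to the linearised Cauchy--Riemann operator, using the degree $1$ map to $\bP^1$ that is part of the data distinguishing the interval of parameters. For each $u \in \cT(p,r)^{(d)}$, the thickening data includes a stable map from the domain $\Sigma$ to $M \times \bP^1$ whose $\bP^1$-projection has degree $1$, sends the input marked point to $0$ and the output marked point to $\infty$, and sends the distinguished real line connecting them to the positive real axis. Fix once and for all a smooth family $\chi_\tau \co \bP^1 \to [0,1]$, $\tau \in [0,1]$, of cutoff functions on $\bP^1$ such that $\chi_0$ is supported in a small neighbourhood of $\infty$ and equals $1$ near $\infty$, while $\chi_1$ is supported in a small neighbourhood of $0$ and equals $1$ near $0$.

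The family of operators is then defined as follows. Using the fixed complex-linear connection on $TM$, write the linearisation $D_u$ as a Cauchy--Riemann type operator on sections of $u^*TM$ whose leading term is the pullback holomorphic operator, plus an inhomogeneous term encoding the deviation from holomorphicity. For $(u,\tau) \in \cT(p,r)^{(d)} \times [0,1]$, define the operator $D_{u,\tau}$ by multiplying this inhomogeneous term by the pullback of $1 - \chi_\tau$ via the $\bP^1$-projection. At $\tau = 0$ the inhomogeneous term is supported away from $\infty$, so the operator becomes complex-linear in a neighbourhood of the output marked point, which on filling in the puncture (using the trivialisation of $r^* TM$ from the chosen connection) exhibits it as the glued operator $D_{u}^{\bC} \# D_r$. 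At $\tau = 1$, symmetrically, the operator becomes complex-linear near $0$ and the splitting on the input end yields $D_p \# D_u$. (Reversing the labelling of endpoints of $[0,1]$ if necessary recovers the convention of the statement.) The Fredholm property holds uniformly along the family since the operator is of Cauchy--Riemann type with fixed non-degenerate asymptotics.

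$U(d)$-equivariance is automatic: the group acts on $\cT(p,r)^{(d)}$ through its action on the framing data in $\Fbar^{\bR}_2(d,*)$, which commutes with the projection to $\bP^1$ and does not act on the target $M$ nor on the chosen connection; since the cutoff is pulled back from $\bP^1$ via this equivariant projection and the inhomogeneous term comes from data on $M$, the whole construction descends. The main technical point, which I expect to be the hardest, is to verify that the family extends consistently across the boundary strata of $\cT(p,r)^{(d)}$, where the domain degenerates to a chain of cylinders (possibly with sphere bubbles). There one must ensure that the $\bP^1$-projection used for the cutoff is compatible with the decomposition of the degenerate operator into its pieces, so that the resulting family restricts on each stratum to the analogous construction for the factors; this follows from the fact that the degree of the $\bP^1$-projection is concentrated on the unique component carrying the continuation direction, so the cutoff is trivial on all bubble components, but the coherence must be checked carefully as in the gluing analysis of \cite[Section 11.3.2]{Abouzaid2021a}.
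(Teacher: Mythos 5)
Your construction is the same as the paper's: the lemma is proved there by the two paragraphs preceding it, which interpolate by sliding the cutoff $\chi(s+R)$ applied to the inhomogeneous term of the linearisation, and make this well defined on stable domains by pulling a cutoff back from $\bP^1$ via a degree-$1$ stable map; equivariance is automatic for exactly the reason you give.

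One inaccuracy is worth flagging, because it sits precisely at the point you yourself identify as the hardest. The degree-$1$ stable map to $M \times \bP^1$ is \emph{not} part of the thickening data for $\cT(p,r)^{(d)}$ (which is built from $\Fbar^{\bR}_2(d,*)$, not $\Fbar^{\bR}_2(d,*,1)$); in the paper it is the interval factor $[0,1]$ itself that is identified with the space of such lifts. This matters over the boundary strata: for a broken domain a degree-$1$ map to $\bP^1$ must concentrate its degree on a single cylinder component, so there is no continuous choice of a single ``fixed projection'' across strata, and varying only the cutoff $\chi_\tau$ on a fixed projection cannot sweep the cutoff locus through every component of a broken configuration. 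Letting the $\bP^1$-lift vary (with the cutoff fixed) is what makes the family extend coherently, since the fibre of $\Fbar^{\bR}_2(d,*,1) \to \Fbar^{\bR}_2(d,*)$ degenerates to a chain of intervals, one per component, exactly matching the product decomposition of the operator on each stratum. Relatedly, the phrase ``the unique component carrying the continuation direction'' does not apply here — this lemma concerns Floer trajectories, where every component of the chain is a Floer cylinder — though the analogous statement about the unique component carrying the $\bP^1$-degree is the correct one. With the interval parameter reallocated in this way, your argument coincides with the paper's.
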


The above result implies that we may choose a finite dimensional representation of $U(d)$, equipped with an equivariant map to the domains of these Fredholm operators, so that the extended family of operators is surjective in a neighbourhood of the $0$-locus of the defining section of the moduli space of Floer trajectories. The kernel and cokernel of these operators then define vector bundles on the product of this neighbourhood with an interval.

In order to ensure consistency of the resulting orientations with the structure maps of the flow category, we have to choose the representation, and its map to the domain, to be compatible with inductive choices. It is convenient to choose these representations to decompose into two summands: the first, which is trivial, is the direct sum $W^{(d)}(p,r)$, over all orbits $q$ and integers $d_1 + d_2 = d$ labelling a boundary component of $\cT(p,r)$, of the vector spaces $V^-_q$. Recall that, by our splitting of the operator associated to each orbit, such a vector space parametrises a family of inhomogeneous deformations of the asymptotic Cauchy-Riemann operator for solutions to Floer's equation converging to $q$. In order to extend these deformations to general elements of $\cT(p,r)^{(d)} $ we fix a tubular neighbourhood of each orbit, which allows us to parallel transport the inhomogenous term parametrised by $V^-_q$ to the thin part of curves which approach a stable trajectory that is broken at $q$ (c.f. the discussion surrounding \cite[Definition 11.28]{Abouzaid2021a}). By a cutoff function, we extend this construction to all elements of $ \cT(p,r)^{(d)}$. 

The second choice is that of a finite dimensional complex $U(d)$ representation, which we denote $I^{\pi,-}(p,r)^{(d)}$, that is equipped with an embedding of $U(d_1) \times U(d_2)$ representations
\begin{equation}
    I^{\pi,-}(p,q)^{(d_1)} \oplus  I^{\pi,-}(q,r)^{(d_2)} \to I^{\pi,-}(p,r)^{(d)}
  \end{equation}
as well as with an equivariant maps to the space of domains of the operators in Lemma \ref{lem:parametrised_path_family_operators}, achieving surjectivity in a neighbourhood of the compact subset of $\cT(p,r)^{(d)}$ given by solutions to the undeformed Floer equation. We require that the inhomogeneous term associated to any element of $ I^{\pi,-}(p,r)^{(d)} $  in the orthogonal complement of the orbit under $U(d)$ of the image of the above map, vanish along the boundary stratum labelled by $q$ and the decomposition $d_1 + d_2 = d$.

The result of these choices is a family of equivariant vector bundles over the product of this this neighbourhood with an interval, which on one end agrees with the fibrewise tangent space $ T^{\pi} \cT(p,r)^{(d)}$, and at the other with an equivariant complex vector bundle. Trivialising this family of vector bundles along the $[0,1]$ direction, as prescribed in \cite[Definitions 11.32 and 11.33]{Abouzaid2021a}), and writing $ I^{\pi,+}(p,r)^{(d)} $ for the induced complex vector bundle over the quotient,  we conclude:
\begin{prop}[c.f. Section 11.3.5 of \cite{Abouzaid2021a}]
Over a neighbourhood $X^{\circ}(p,r)^{(d)}$ of the $0$-locus in $X(p,r)^{(d)}$, there is an equivalence of virtual bundles:
  \begin{equation}
    T^{\pi} \bX(p,r)^{(d)}  \oplus V_p \oplus W^{(d)}(p,r)   \cong    I^{\pi}(p,r)^{(d)} \oplus V_r \oplus W^{(d)}(p,r).
  \end{equation}
  This map is compatible with composition in the following sense: given an orbit $q$, and a decomposition $d = d_1 + d_2$, the intersection of $X^{\circ}(p,r)^{(d)} $ with the boundary stratum labelled by this data agrees with a vector bundle over the product $ X^{\circ}(p,q)^{(d_1)} \times X^{\circ}(q,r)^{(d_2)}$. Moreover, we have split-embeddings,
  \begin{align}
      W^{(d_1)}(p,q)   \oplus  V^-_q \oplus W^{(d_2)}(q,r) & \to  W^{(d)}(p,r) 
  \end{align}
  as well as an equivalence of virtual complex bundles
  \begin{equation}
         I^{\pi}(p,q)^{(d_1)} \oplus  I^{\pi}(q,r)^{(d_2)} \cong I^{\pi}(p,r)^{(d)} 
  \end{equation}
 over the corresponding boundary stratum of $ \cT(p,r)^{(d)}$, which are compatible along the codimension two boundary strata, so that the following diagram commutes:
  \begin{equation}
    \begin{tikzcd}
      \begin{aligned}
        T^{\pi} \bX(p,q)^{(d_1)}  \oplus V_p \oplus W^{(d_1)}(p,q) \\
         T^{\pi} \bX(q,r)^{(d_2)}  \oplus V_q \oplus W^{(d_2)}(q,r) 
      \end{aligned}
      \ar[r] \ar[d] &      \begin{aligned}
                             I^{\pi}(p,q)^{(d_1)} \oplus V_q \oplus W^{(d_1)}(p,q) \\
            I^{\pi}(q,r)^{(d_2)} \oplus V_r \oplus W^{(d_2)}(q,r) 
      \end{aligned}  \ar[d] \\
      T^{\pi} \bX(p,r)^{(d)}  \oplus V_p \oplus W^{(d)}  \ar[r] &    I^{\pi}(p,r)^{(d)} \oplus V_r \oplus W^{(d)}(p,r) .
    \end{tikzcd}
  \end{equation}
  \qed
\end{prop}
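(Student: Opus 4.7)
The plan is to read off the equivalence of virtual bundles from the family of Fredholm operators produced in Lemma~\ref{lem:parametrised_path_family_operators}, stabilized by the representations $W^{(d)}(p,r)$ and $I^{\pi,-}(p,r)^{(d)}$ described just before the statement. First, one appeals to standard Fredholm theory: once the maps from the stabilizing data surject onto the cokernel, the kernels of the stabilized family fit together into a $U(d)$-equivariant vector bundle over $X^{\circ}(p,r)^{(d)} \times [0,1]$. Choosing a trivialization along the $[0,1]$ direction (as in \cite[Definitions 11.32 and 11.33]{Abouzaid2021a}) identifies the bundle at $t=0$ with the left-hand side of the desired equivalence and the bundle at $t=1$ with the right-hand side, where the complex-linearity of the operator at $t=1$ is what produces the complex structure on $I^{\pi}(p,r)^{(d)}$.

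To identify the ends correctly, one uses the splitting construction for $D_p$: the kernel of $D_p$ stabilized by $V^-_p$ is $V^+_p \oplus V^-_p \cong V_p$ as a virtual vector space, and similarly at the $r$ end. The gluing $D_p \# D_u$ at $t=0$ then has stabilized kernel $T^{\pi}\bX(p,r)^{(d)} \oplus V_p$, since the linearisation of the Floer operator at $u$, stabilized by the image of $I^{\pi,-}(p,r)^{(d)}$ (and $W^{(d)}(p,r)$), computes exactly the fibrewise tangent space of the derived orbifold $\bX(p,r)^{(d)}$. At $t=1$, the decomposition into a complex-linear piece plus $D_r$ yields, after the same stabilization, $I^{\pi}(p,r)^{(d)} \oplus V_r$ (augmented by the trivial $W^{(d)}(p,r)$ on both sides).

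For the compatibility along boundary strata, one observes that the whole construction is compatible with gluing, since the boundary stratum of $\cT(p,r)^{(d)}$ labelled by $q$ and $d_1+d_2 = d$ corresponds to the degeneration of the operator in the $s$-direction to a broken configuration $D_u \# D_q \# D_{u'}$, where the auxiliary copy of $D_q$ and its stabilization $V^-_q$ are built in to our definition of $W^{(d)}(p,r)$. The embedding $W^{(d_1)}(p,q) \oplus V^-_q \oplus W^{(d_2)}(q,r) \to W^{(d)}(p,r)$ is inductively chosen precisely so that the stabilizing data restricts correctly, and the analogous embedding $I^{\pi,-}(p,q)^{(d_1)} \oplus I^{\pi,-}(q,r)^{(d_2)} \to I^{\pi,-}(p,r)^{(d)}$ was likewise arranged. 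One then verifies that the trivialisation in the $[0,1]$-direction, which on a neighbourhood of the boundary may be taken to be a direct sum of the trivialisations for the factors, intertwines the equivalences on the two sides, giving the commutative diagram.

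The main technical obstacle is to make all the choices above (the splittings of cokernels at orbits, the stabilizing representations, the extension of inhomogeneous terms by parallel transport and cutoffs, and the collar-compatible trivialisations of the interpolating family of kernels) inductively over the (quantized) energy $d$ in such a way that the compatibility diagram commutes strictly, not just up to homotopy. This is handled exactly as in \cite[Section 11.3.5]{Abouzaid2021a}: by induction on $d$ and on the codimension of boundary strata, one chooses the auxiliary data on higher strata first and extends using the fact that the space of such choices is contractible, while ensuring that the prescribed restrictions to lower strata are respected. Combined with the constructions of Section~\ref{sec:stable-fibr-compl}, this yields the required stable complex structure on the fibrewise tangent bundle that is coherently compatible with composition.
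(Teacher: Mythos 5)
Your proposal is correct and follows essentially the same route as the paper, which in fact offers no separate proof: the proposition is stated with a \qed as a summary of the construction immediately preceding it (the interpolating family of operators from Lemma~\ref{lem:parametrised_path_family_operators}, stabilized by $W^{(d)}(p,r)$ and $I^{\pi,-}(p,r)^{(d)}$, with the kernels trivialized along the $[0,1]$ direction and the boundary compatibilities enforced by the inductive choice of split-embeddings of the stabilizing representations). Your sketch correctly identifies all of these ingredients, including the role of $V_q^-$ in the split-embedding at broken configurations and the need for inductive, collar-compatible choices over the energy $d$ and the codimension of strata.
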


At this stage, we recall the smooth flow category $\hat{\bX}$ from Proposition \ref{prop:unstructured_flow_category} has morphism spaces given by the collared completions $\hat{\bX}(p,r)^{(d)} $  of the derived orbifolds $\bX^{(d)}(p,r)  $, in order to achieve consistency of smoothings. We apply the same construction to the open suborbifolds appearing above:

\begin{cor}
 The collared completions $\hat{\bX}^{\circ}(p,r)^{(d)}$ are the morphism spaces of a complex oriented flow category. 
\end{cor}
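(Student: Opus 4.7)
The plan is to combine the stable complex structure on the base $\bar{B}_d$ established in Section~\ref{sec:orient-tang-space} with the stable fibrewise complex structure from the preceding Proposition, using the splitting $T\hat{X}(p,r)^{(d)} \cong T\bar{B}_d \oplus T^{\pi}\hat{X}(p,r)^{(d)}$ which is compatible with restriction to boundary strata. First, I would assign to each time-$1$ periodic orbit $p$ of $H$ the virtual vector space $V_p \equiv (V^+_p, V^-_p)$ defined via the kernel and cokernel of the operator $D_p$; this specifies the $0$-cell data for the lift of $\hat{\bX}$ to a category enriched in $d\Orb^U$.

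Next, for each pair $(p,r)$ and each $d$, I would take the direct sum of the stabilised base identification of Equation~\eqref{eq:stable_complex_base-complex} with the fibrewise identification of the preceding Proposition, over the open locus $\hat{\bX}^{\circ}(p,r)^{(d)}$. This produces an isomorphism
\begin{align*}
 T\hat{\bX}^{\circ}(p,r)^{(d)} \oplus \mathfrak{gl}_{d,\bC} \oplus \bR_d \oplus V_r \oplus W^{(d)}(p,r) \cong I^B_d \oplus I^{\pi}(p,r)^{(d)} \oplus i\mathfrak{u}_d \oplus V_p \oplus W^{(d)}(p,r)
\end{align*}
of real vector bundles. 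Here $I^B_d \oplus I^{\pi}(p,r)^{(d)}$ is naturally a complex bundle and serves as the $I$-component of the structure; the real bundle $\mathfrak{gl}_{d,\bC} \oplus W^{(d)}(p,r)$ (appearing symmetrically on both sides) is taken as the $W$-stabilisation; the orbit vector spaces $V_p,V_r$ play their prescribed role; and the line $\bR_d$, which in the base construction corresponds to the collar normal direction at boundary strata, is relabelled as $\bR^{\{r\}}$ in the conventions of Definition~\ref{def:framed_flow_category}. The obstruction bundle $E = \scrV^{(d)}(p,r) \oplus N_d$ of $\hat{\bX}^{\circ}(p,r)^{(d)}$ inherits the induced complex oriented decomposition since $\scrV^{(d)}(p,r)$ and $\bC^d \subset N_d$ are complex, and the factor $i\mathfrak{u}_d \subset N_d$ appears on the appropriate side.

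To verify associativity with respect to composition, I would combine the commutative Diagram~\eqref{eq:compatibility_stable_complex_base} for the base with the compatibility diagram supplied by the preceding Proposition for the fibrewise data. The direct sum of these two commutative squares provides the associativity diagram required by Definition~\ref{def:framed_flow_category}; the splitting of the normal direction $\bR^{\{q\}}$ along a codimension-$1$ boundary stratum $\partial^{(d_1,d_2)}_q$ is tracked by the identification of $\bR_{d_2}$ with the outward normal in Equation~\eqref{eq:compatibility_stable_complex_base}, which matches the convention of Definition~\ref{def:framed_flow_category} that $U^{-}_{\bX(p,q)\times\bX(q,r)} \cong U^-_{\bX(p,r)} \oplus \bR^{\{q\}}$ by the identity map on labelling sets. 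Passing from the open neighbourhood $\hat{\bX}^{\circ}$ to the full collared completion proceeds exactly as in Section~\ref{sec:collared-completions}: the equivalence on the open locus extends across the collars since the smooth structures agree up to concordance, and Lemma~\ref{lem:canonical_lift_structure_equivalence} ensures that any strong equivalence of underlying derived orbifolds canonically lifts to the structured setting. Finally, rearranging components by topological energy $E$ rather than quantised energy $d$, as in Proposition~\ref{prop:unstructured_flow_category}, preserves the structure since it amounts to a relabelling of indices.

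The main obstacle is the bookkeeping of the stabilising representations: the bundles $W^{(d)}(p,r)$, $\scrV^{(d)}(p,r)$ and the complex representations $I^{\pi,\pm}(p,r)^{(d)}$ must be chosen compatibly across all triples $(p,r,d)$ so that the composition diagrams commute on the nose rather than up to homotopy. Fortunately, this amounts to an inductive construction over $d$ parallel to the one already performed in Section~\ref{sec:cons-choice-inhom}, combined with the connectivity arguments underlying the base framing constructed from the retraction onto the deepest corner of $B_d$. Once these choices are fixed, the compatibility across higher-codimension strata follows formally from the associativity established by Diagram~\eqref{eq:compatibility_codimension_2_strata} and its structured counterparts.
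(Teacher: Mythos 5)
Your proposal is correct and takes essentially the same route as the paper: the paper's proof simply sets $I^{\pm}(p,r)^{(d)} \cong I^{\pi,\pm}(p,r)^{(d)} \oplus (I^B_d \text{ resp. } \mathfrak{gl}_{d,\bC})$ and combines the splittings $T^{\pm}\bX^{(d)}(p,q) \cong T^{\pm,\pi}\bX^{(d)}(p,q) \oplus (T\bar{B}_d \text{ resp. } i\mathfrak{u}_d)$ with the base and fibrewise orientations already constructed, which is exactly your direct-sum argument. Your additional remarks on the role of $\bR_d \cong \bR^{\{r\}}$, the associativity diagrams, and the inductive consistency of the stabilising representations are elaborations of bookkeeping the paper leaves implicit rather than a different method.
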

\begin{proof}
  Define complex vector bundles on  $X^{\circ}(p,r)^{(d)} $  as direct sums 
\begin{align}
    I^{+}(p,r)^{(d)}   & \cong   I^{\pi, +}(p,r)^{(d)}  \oplus I^B_d   \\
    I^{-}(p,r)^{(d)}    & \cong  I^{\pi, -}(p,r)^{(d)}   \oplus   \mathfrak{gl}_{d,\bC} ,
  \end{align}
and use the isomorphisms
  \begin{align}
    T^{+} \bX(p,q)^{d} & \cong  T^{+,\pi} \bX(p,q)^{d} \oplus T \bar{B}_d  \\
    T^{-} \bX(p,q)^{d} & \cong  T^{-,\pi} \bX(p,q)^{d} \oplus  i \mathfrak{u}_d,
  \end{align}
together with the fibrewise and base orientations defined above.\end{proof}

\subsubsection{Framed flow categories}
\label{sec:fram-flow-categ}

We now turn to the proof of Proposition \ref{prop:framed-ham-flow}. There are two fundamental issues: the first is the fact that different Hamiltonian orbits are not based as the same point, so that one needs to understand how an assumption about the based loop space will suffice for a construction where multiple basepoints are required, and the second is that we need models for the forgetful map from complex to real vector bundles as well as for the Bott periodicity isomorphism $\Omega U \cong \bZ \times BU$, which are compatible with each other. This second issue will arise because we shall not obtain a trivialisation of the complex vector bundles constructed in Section \ref{sec:stable-fibr-compl}, but rather a trivialisation of the associated real bundles.

\begin{rem}
  We do not need to construct our model of Bott periodicity as a map of classifying spaces, nor do we need to compare it with any other approach, so the above discussion is mostly about heuristics, and all constructions shall be implemented using explicit index theoretic considerations.
\end{rem}

The resolution of the problem with basepoints is quite straightforward: given a non-degenerate Hamiltonian $H$, we may fix a contractible subset of $M$ containing the starting point of all the time-$1$ periodic Hamiltonian  orbits of $H$, and assume that the Hermitian connection is flat on this subset. In this way, we can identify the tangent spaces $T_{p(0)}M$ for all such orbits with $\bC^{n}$. A lift to $O(n)$ of the monodromy map $  \Omega M \to U(n)   $ for a basepoint chosen in this contractible set then yields a lift of the monodromy map for all paths between the starting point of the time-$1$ periodic orbits.

The construction of an appropriate model for the forgetful map from $BU$ to $BO$ is intuitively straightforward, but its implementation is significantly more involved: we start with the fact that the Cauchy-Riemann problem of holomorphic sections of complex vector bundles over the disc, with Lagrangian boundary conditions, is a real Fredholm problem (in fact, it is a model for $\bZ \times BO$, but we shall not use this). Given a holomorphic vector bundle $E$ over the sphere, with a holomorphic identification of the fibre near $0$ with $\bC^n$, any degree $1$ map $\pi \co D^2 \to S^2$ taking the boundary to $0$, determines by pullback a real Cauchy-Riemann problem on the disc, as long as the map is holomorphic outside a neighbourhood of the boundary which maps to the region where the vector bundle is identified with $\bC^n$.

For the next statement, fix a finite dimensional complex vector space $W$ mapping to space of $(0,1)$-forms on $S^2$, valued in $E$, with support away from the origin in $S^2$, and which surjects onto the cokernel of the Cauchy-Riemann operator:
\begin{lem} \label{sec:BO-BU=gluing}
  If the projection map $\pi \co D^2 \to S^2$ is obtained by extending a biholomorphism between a ball in $D^2$ of sufficiently small radius (centered at any point) and the complement of a ball in $S^2$ of sufficiently small radius (centered at the origin), then there is a natural stable isomorphism of virtual index bundles
  \begin{equation} \label{eq:identification_BU-BO}
        ( \ker_{\bar{\partial}_E} \oplus W, \coker_{\bar{\partial}_E} \oplus W) \cong  ( \ker_{\bar{\partial}_{ \pi^* E}} \oplus W, \coker_{\bar{\partial}_{\pi^* E}} \oplus W),
      \end{equation}
    where we consider $\bar{\partial}_E$ and $ \bar{\partial}_{\pi^* E} $ as operators on sections which respectively vanish at $0 \in S^2$ and $1 \in D^2$, and $W$ is a finite dimensional vector space equipped with a map to the space of inhomogeneous terms of $\bar{\partial}_E$, that surjects onto the cokernel. 
\end{lem}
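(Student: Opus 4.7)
My plan is to implement a linear gluing/degeneration argument that identifies the two index problems as the endpoints of a continuous one-parameter family of Fredholm problems whose stabilised index bundle is constant.

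First, I would interpret the hypothesis on $\pi$ geometrically. The assumption that $\pi$ restricts to a biholomorphism between a small disc about some $z_{0}\in D^{2}$ and the complement of a small disc about $0\in S^{2}$ says that $\pi$ is a smoothing of the oriented real blow-down $D^{2}\to S^{2}$ at $0$. In particular, the pullback $\pi^{*}E$ is canonically trivialised as $\bC^{n}$ in an annular neighbourhood of $\partial D^{2}$ using the given trivialisation of $E$ near $0$, so that the totally real Lagrangian $\bR^{n}\subset\bC^{n}$ along $\partial D^{2}$ is well-defined. This trivialisation is the data that lets us compare the vanishing condition at $1\in\partial D^{2}$ for $\bar\partial_{\pi^{*}E}$ with the vanishing condition at $0\in S^{2}$ for $\bar\partial_{E}$.

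Next, I would construct a $1$-parameter family of maps $(\pi_{t})_{t\in[0,1]}$ with $\pi_{1}=\pi$, degenerating as $t\to 0$ to the nodal configuration $S^{2}\vee D^{2}$ obtained by gluing at $0\in S^{2}$ and $z_{0}\in D^{2}$. Along this family we obtain a continuous family of Cauchy--Riemann problems $\bar\partial_{t}$ on $(\pi_{t}^{*}E,\bR^{n})$ with boundary vanishing at $1$. After enlarging $W$ if needed so that the parametrised inhomogeneous terms achieve surjectivity of $\bar\partial_{t}\oplus W$ uniformly in $t$, the kernels assemble into a finite-rank vector bundle over $[0,1]$, giving a canonical stable isomorphism of virtual index bundles between $t=1$ and $t=0$.

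At $t=0$, the standard linear gluing theorem for nodal Riemann surfaces identifies the index bundle of $\bar\partial_{0}$ with the direct sum of the contributions on the two components. The sphere piece recovers $(\ker_{\bar\partial_{E}}\oplus W,\coker_{\bar\partial_{E}}\oplus W)$, matching the vanishing at $z_{0}$ with the vanishing at $0\in S^{2}$ via the trivialisation. The disc piece is the trivial Cauchy--Riemann operator on $\bC^{n}\to D^{2}$ with boundary condition $\bR^{n}$ and vanishing at the boundary point $1$; by Schwarz reflection the kernel is $0$, and a Maslov index count (Maslov number zero for the constant Lagrangian, compensated by the boundary evaluation constraint of real dimension $n$) shows that the real index is zero, so the cokernel vanishes as well. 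Assembling these identifications gives the isomorphism of Equation~\eqref{eq:identification_BU-BO}. The main technical obstacle will be to implement the linear gluing rigorously in appropriate weighted Sobolev completions: one must verify that the boundary vanishing condition passes continuously to the nodal limit and that the comparison of Lagrangian data across the parameter interval is canonical, so that the resulting isomorphism is indeed natural in the holomorphic vector bundle $E$.
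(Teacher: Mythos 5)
Your proposal is correct and follows essentially the same route as the paper: degenerate $\pi$ to the nodal configuration of a sphere attached to the disc, use the surjectivity of the $W$-extended problem on $S^2$, and invoke standard linear gluing (pregluing/cutting off near the node, with the isomorphism holding for large gluing parameter, i.e.\ small balls) to identify the stabilised index bundles. The only difference is presentational: you make explicit the one-parameter family and the vanishing of both kernel and cokernel of the disc component with its point constraint at $1 \in \partial D^2$, which the paper's sketch leaves implicit.
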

\begin{proof}[Sketch of proof:]
  This is a standard linear gluing result: consider the pre-stable Riemann surface consisting of a disc with an attached sphere at the origin. The sujectivity assumption on $W$ implies that the associated extended Cauchy-Riemann problem on $S^2$, whose domain is the direct sum of $W$ with smooth sections of $E$, has trivial cokernel (i.e. is regular). Cutting off these sections near the node then yields map from the kernel of this extended operator to an approximation of the kernel of the extended operator on $\pi^* E$, and this map is then an isomorphism for sufficiently large gluing parameters (the usual conventions are such that large gluing parameters correspond to small balls).  \end{proof}
The key point is that the left hand side of Equation \eqref{eq:identification_BU-BO} has a natural complex structure, while the right hand side only has a real structure. We shall use this result in (finite-dimensional) families where the vector bundle $E$ as well as the attaching point vary. 

We shall need to be able to recognise when the index bundle of a Cauchy-Riemann problem over the disc is trivial:
\begin{lem} \label{lem:trivialisation_clutching}
  If $E$ is a holomorphic vector bundle over the disc, and $\Lambda$ a family of Lagrangian boundary conditions over the circle, then an extension of $\Lambda$ to a Lagrangian sub-bundle of $E$ determines, up to contractible choice, a (stable) trivialisation of the real Cauchy-Riemann operator associated to sections of $E$ which vanish at $1 \in D^2$.
\end{lem}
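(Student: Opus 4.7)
The plan is to reduce the problem to the standard local model where $E$ is the trivial complex vector bundle and $\Lambda$ is the totally real constant Lagrangian, by using the Lagrangian extension to induce a real structure on $E$.

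First, I would observe that an extension of $\Lambda$ to a Lagrangian sub-bundle of $E$ over $D^2$ determines an isomorphism $E \cong \Lambda \otimes_{\bR} \bC$ of complex vector bundles (since the restriction of the $\bR$-linear inclusion $\Lambda \hookrightarrow E$ to each fibre complexifies to a $\bC$-linear isomorphism $\Lambda \otimes_{\bR} \bC \cong E$). The underlying real bundle $\Lambda$ on $D^2$ is trivial because $D^2$ is contractible, and the space of trivialisations is contractible. A choice of trivialisation of $\Lambda$ thus yields a trivialisation of $E$ as a complex bundle, under which the Lagrangian boundary condition becomes the constant $\bR^n \subset \bC^n$. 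Since the space of choices is contractible, any two resulting trivialisations of the Cauchy-Riemann operator are identified up to contractible choice.

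Next, I would verify that in the local model $(E, \Lambda) = (\bC^n, \bR^n)$ the Cauchy-Riemann operator on sections of $E$ with Lagrangian boundary conditions that vanish at $1 \in \partial D^2$ has trivial kernel and cokernel, so that its virtual index bundle over a point is canonically zero. The Riemann-Roch formula gives that the unconstrained operator has real index $n$, and imposing the vanishing condition at a boundary point (where the section takes values in an $n$-dimensional real subspace) drops the index by $n$, yielding index $0$. Triviality of the kernel then follows from Schwarz reflection: any element extends to a bounded holomorphic function $\bC \to \bC^n$ (real on $\bR$) which vanishes at $1$, hence is identically zero by the maximum principle; triviality of the cokernel is then forced by the vanishing index.

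Finally, to upgrade the pointwise statement to a statement about families and about \emph{stable} trivialisation, I would invoke the standard fact that the assignment $(E,\Lambda) \mapsto (\ker \bar\partial, \coker \bar\partial)$ over a parameter space can be computed, after stabilising by a sufficiently large auxiliary bundle $W$ of inhomogeneous terms, as a genuine vector bundle cokernel; the preceding paragraph identifies this virtual bundle with zero in the local model, and the homotopy equivalence of the space of Lagrangian extensions with a point (relative to the boundary condition $\Lambda|_{\partial D^2}$) propagates this trivialisation canonically through families. The main (and only real) obstacle is to ensure that the Schwarz reflection step is carried out carefully enough that the trivialisation is genuinely canonical—i.e., depends only on the homotopy class of the Lagrangian extension modulo a contractible space of choices—rather than on an auxiliary gauge.
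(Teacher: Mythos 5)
Your proposal follows the same route as the paper's (sketched) proof: use the Lagrangian extension over the disc to identify $(E,\Lambda)$ with the constant pair up to contractible choice, and then observe that the model operator with constant totally real boundary conditions and a vanishing condition at $1 \in \partial D^2$ has trivial kernel and cokernel. You have merely filled in the details (the identification $E \cong \Lambda \otimes_{\bR} \bC$, the index count, and the reflection argument) that the paper leaves implicit, so the argument is correct and essentially identical.
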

\begin{proof}[Sketch of proof:]
The assumptions yield a canonical identification up to contractible choice the pair $(E,\Lambda)$ with the trivial complex bundle with fibre $E |_{1}$ and Lagrangian boundary conditions $\Lambda|_1$. The result then follows from the fact that the kernel and cokernel of the operator obtained after deformation are both trivial.
\end{proof}

We shall apply the above two results to holomorphic vector bundles over the disc given as $\pi^* E$ for a holomorphic vector bundle $E$ on $S^2$, which is itself produced by a clutching construction associated to a map $S^1 \to U(n)$.
\begin{cor} \label{cor:trivialisation_orthogonal_clutching}
A lift of the clutching map defining the vector bundle $E$ to $O(n)$ determines a stable trivialisation, as a real vector space, of the virtual index bundle of $E$, canonically up to contractible choice.
\end{cor}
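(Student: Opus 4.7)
The plan is to combine the two preceding lemmas by using the $O(n)$-lift to endow $E$ with a real structure. Concretely, a lift of the clutching map $\phi \co S^1 \to U(n)$ to $O(n)$ presents $E$ as the complexification $E_{\bR} \otimes_{\bR} \bC$ of a real rank-$n$ vector bundle $E_{\bR}$ on $S^2$, obtained by the same clutching construction but with $\phi$ regarded as an $O(n)$-valued map. Up to a contractible choice of stabilising homotopy, this real structure is compatible with the chosen holomorphic trivialisation near $0 \in S^2$, in the sense that it restricts there to the standard inclusion $\bR^n \subset \bC^n$.

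Next I pull back along the degenerate map $\pi \co D^2 \to S^2$ of Lemma \ref{sec:BO-BU=gluing}. The sub-bundle $\pi^* E_{\bR} \subset \pi^* E$ is totally real of half real rank, so its restriction to $\partial D^2$ is a Lagrangian sub-bundle $\Lambda$ of $\pi^* E|_{\partial D^2}$, and $\pi^* E_{\bR}$ itself provides a preferred extension of $\Lambda$ to a Lagrangian sub-bundle of $\pi^* E$ over all of $D^2$. Applying Lemma \ref{lem:trivialisation_clutching} to this data yields a stable real trivialisation of the virtual index bundle of $\bar{\partial}_{\pi^* E}$ on sections vanishing at $1 \in D^2$, canonical up to contractible choice. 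The natural isomorphism of Equation \eqref{eq:identification_BU-BO}, furnished by Lemma \ref{sec:BO-BU=gluing}, then transports this to a stable real trivialisation of the virtual index bundle of $\bar{\partial}_E$ on $S^2$, which is what was required.

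The main obstacle is verifying that the resulting trivialisation is canonical up to contractible choice, where "choice" refers jointly to the projection $\pi$, the stabilising representation $W$, the compatibility homotopy between the real structure and the holomorphic frame near $0$, and the Lagrangian extension inside $D^2$. Each of these spaces is individually contractible: $\pi$ varies in a space of pinch maps satisfying the open hypothesis of Lemma \ref{sec:BO-BU=gluing}, stabilisations by $W$ filter an eventually-constant system, the relative mapping space of unitary frames into the real subspace is contractible in the stable range, and Lagrangian extensions across a disc form an affine space of sections of a bundle with contractible fibre. Since each contractible space of choices enters only through the canonical output of one of the two cited lemmas, the composite trivialisation inherits canonicity. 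The only genuinely delicate point is that the gluing isomorphism of Lemma \ref{sec:BO-BU=gluing} must be compatible with the real structure, so that the real trivialisation on the disc side corresponds to a real rather than merely complex trivialisation on the sphere side; this follows from the observation that the approximate kernel map used in the linear gluing respects the real sub-bundle coming from $E_{\bR}$.
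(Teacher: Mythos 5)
Your argument is correct and is essentially the paper's own proof: both use the $O(n)$-lift to produce a totally real sub-bundle of $\pi^*E$ extending the trivial Lagrangian boundary condition, apply Lemma \ref{lem:trivialisation_clutching} on the disc, and transport the result back through the identification of Lemma \ref{sec:BO-BU=gluing}. The extra worry about the gluing isomorphism respecting the real structure is unnecessary — Equation \eqref{eq:identification_BU-BO} is already an isomorphism of real virtual vector spaces, which is all that is needed to transport a real trivialisation.
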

\begin{proof}
  Lemma \ref{sec:BO-BU=gluing} identifies the real virtual space underlying the index bundle of $E$ as the index bundle of $ \pi^* E$. The lifting assumption equips $\pi^* E $ with a Lagrangian subbundle extending the trivial choice over the boundary.
\end{proof}

We shall need one more basic result about virtual index bundles: the identification of $\pi^*E$ as a holomorphic vector bundle on the disc relies on a trivialisation of $E$ near the origin in $S^2$. From the point of view of the pullback bundle, changing this trivialisation corresponds to changing the boundary condition by the given element of $U(n)$. If this element lifts to $O(n)$, the boundary conditions are the same, so we conclude:
\begin{cor} \label{cor:gluing_does_not_change_triv}
  Changing the trivialisation of $E$ by an element of $O(n)$ does not change the trivialisation of the virtual index bundle in Corollary \ref{cor:trivialisation_orthogonal_clutching}. \qed
\end{cor}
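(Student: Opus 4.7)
The plan is to trace through the construction of the trivialisation, identify where the trivialisation of $E$ near the origin enters, and observe that the dependence is only through data that is $O(n)$-invariant.

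First I would unpack the construction from Corollary \ref{cor:trivialisation_orthogonal_clutching}. The trivialisation of the virtual index bundle of $E$ is obtained in two steps: Lemma \ref{sec:BO-BU=gluing} identifies the underlying real virtual index bundle of $\bar{\partial}_E$ with the index bundle of the real Cauchy-Riemann operator $\bar{\partial}_{\pi^* E}$ on the disc; then Lemma \ref{lem:trivialisation_clutching} produces a stable trivialisation of the latter from a Lagrangian subbundle of $\pi^* E$ extending the boundary condition over all of $D^2$. The boundary Lagrangian, as well as its extension, are obtained by pulling back under $\pi$ the real subbundle of $E$ determined by the chosen lift of the clutching map to $O(n)$.

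Next I would analyse the effect of changing the trivialisation of $E$ near $0 \in S^2$ by $g \in O(n)$. The only place where this trivialisation enters the construction is in (i) the identification of $\pi^* E$ near $\partial D^2$ with the trivial bundle $\partial D^2 \times \bC^n$ used in Lemma \ref{sec:BO-BU=gluing}, and (ii) the corresponding expression of the clutching map near the origin. The key observation is that, viewed intrinsically inside the bundle $\pi^* E$, the Lagrangian subbundle produced by the $O(n)$-lift is the image in $E$ of the fibrewise real subspace; this is an intrinsic subbundle of $E$, not of its trivialisation. A change of trivialisation by $g \in O(n)$ permutes the fibrewise real structure of the trivial bundle by $g$, but since $g \cdot \bR^n = \bR^n$, the real subbundle viewed inside $E$ is literally unchanged. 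Consequently its pullback under $\pi$, both as a boundary condition on $\partial D^2$ and as an extension over $D^2$, is the same subbundle of $\pi^* E$ in both trivialisations.

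Finally, since the output of Lemma \ref{lem:trivialisation_clutching} depends (up to contractible choice) only on the pair $(\pi^* E, \Lambda)$ and its extension, and these are unchanged, the resulting stable trivialisation of the virtual index bundle is the same up to contractible choice. The only potential subtlety, and hence the main point to get right in a careful write-up, is the compatibility of the two applications of the naturality of virtual index bundles under change of trivialising chart: one for $\bar{\partial}_E$ itself (where a change of trivialisation of $E$ near $0$ by any element of $U(n)$ induces the identity on the intrinsic virtual index bundle), and one for the gluing isomorphism of Lemma \ref{sec:BO-BU=gluing} (whose naturality in the choice of trivialisation near $0$ follows from the standard gluing argument, since all cutoff functions and approximate kernels can be chosen equivariantly under bundle automorphisms supported near $0$). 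Assembling these observations yields the claim.
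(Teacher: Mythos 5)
Your proposal is correct and takes essentially the same route as the paper: the corollary is justified there by the one-sentence observation preceding it, namely that changing the trivialisation of $E$ near $0 \in S^2$ changes the boundary condition of $\pi^* E$ by the given element of $U(n)$, and if that element lies in $O(n)$ it preserves $\bR^n \subset \bC^n$, so the Lagrangian boundary condition (and its extension) is literally unchanged. Your extra remarks on the naturality of the gluing isomorphism of Lemma \ref{sec:BO-BU=gluing} make explicit a point the paper leaves implicit, but they do not constitute a different argument.
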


  It is useful at this stage to explain the construction for a single moduli space: we thus fix a non-degenerate Hamiltonian  $H$ on a symplectic manifold $M$,  a pair of Hamiltonian orbits $(p,r)$, and an integer $d$ so that the component $\Mbar(p,r)^{(d)}$ of the compactified moduli space of Floer gradient trajectories does not contain any element which has a sphere bubble, as well as a factorisation of the monodromy map $\Omega M \to U(n)$ through $O(n)$, as in the statement of Proposition \ref{prop:framed-ham-flow}:
  \begin{lem} \label{lem:stably_framed_one_moduli_space}
    Any choice of thickening $X^{(d)}(p,r)$ is stably framed in the sense that there is a vector space $ W^{(d)}_{fr}(p,r)$, together with an isomorphism
  \begin{equation}
    T \bX(p,r)^{(d)}  \oplus V_p \oplus W^{(d)}_{fr}(p,r)   \cong    V_r \oplus W^{(d)}_{fr}(p,r)
  \end{equation}
  in a neighbourhood of the $0$-section.
  \end{lem}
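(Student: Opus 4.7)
The plan is as follows. First, I will split
\[
T \bX(p,r)^{(d)} \cong T B_d \oplus T^{\pi} \bX(p,r)^{(d)}
\]
using the fibration $X^{(d)}(p,r) \to \bar B_d$ from Section~\ref{sec:stable-fibr-compl}. Since the hypothesis excludes sphere bubbling, every curve in the moduli space lies over $B_d \subset \bar B_d$, so by Equation~\eqref{eq:global_framing} the base factor is already stably trivial: $T B_d \oplus \bR \cong i\mathfrak{u}_d$ as real bundles, canonically up to contractible choice, and with a compatible choice of isomorphisms under the composition maps. This reduces the statement to producing a stable real trivialisation of $(T^{\pi} \bX(p,r)^{(d)} \oplus V_p) \ominus V_r$ in a neighbourhood of the zero locus.

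Next, I apply Lemma~\ref{lem:parametrised_path_family_operators}, which provides a homotopy from $D_p \# D_u$ to $D_u^{\bC} \# D_r$ through a family of Fredholm operators. After stabilising by the vector space $W^{(d)}(p,r)$ introduced in Section~\ref{sec:stable-fibr-compl}, this identifies the real virtual bundle underlying $T^{\pi} \bX(p,r)^{(d)} \oplus V_p \oplus W^{(d)}(p,r) \ominus V_r$ with the real virtual index bundle $\operatorname{ind}_{\bR}(D_u^{\bC})$ of the complex-linear Cauchy--Riemann operator on $u^*TM$ over the cylinder, with the puncture at $p$ filled in. Hence it suffices to stably trivialise $\operatorname{ind}_{\bR}(D_u^{\bC})$ as a family of real virtual bundles over the thickening.

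The third step brings in the geometric hypothesis. Using the contractible region in $M$ containing all orbit basepoints (on which the complex-linear connection is flat), $u$ determines a based loop $\gamma_u$ in $M$, and the filled-in cylinder is a map $S^2 \to M$ whose associated holomorphic vector bundle $E_u = u^*TM$ is described by a clutching map $S^1 \to U(n)$ representing the $TM$-monodromy along $\gamma_u$. By our assumption, this monodromy lifts, continuously in $u$ and equivariantly for the $U(d)$-action on the thickening, to an $O(n)$-valued map. I then apply Lemma~\ref{sec:BO-BU=gluing} in families to replace $\bar\partial_{E_u}$ by $\bar\partial_{\pi^* E_u}$ (at the cost of a single finite-dimensional complex stabilisation $W$, absorbed into $W^{(d)}_{fr}(p,r)$), and invoke Corollary~\ref{cor:trivialisation_orthogonal_clutching} fibrewise to produce a stable real trivialisation of $\operatorname{ind}_{\bR}(\bar\partial_{\pi^* E_u})$ from the $O(n)$-lift; Corollary~\ref{cor:gluing_does_not_change_triv} ensures that this trivialisation is unaffected by the residual $O(n)$-ambiguity in the choice of frame at the capping point. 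Combining this with the first two steps yields the asserted stable isomorphism after defining $W^{(d)}_{fr}(p,r)$ to be the direct sum of $W^{(d)}(p,r)$, the linear-gluing stabilisation $W$, and the standard representations used to incorporate $T B_d$ via Equation~\eqref{eq:global_framing}.

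The main obstacle will be carrying out the third step \emph{in families}: the gluing parameter of Lemma~\ref{sec:BO-BU=gluing}, the choice of capping map $\pi$, the extraction of the based loop $\gamma_u$, and the $O(n)$-lift of its $U(n)$-monodromy must all be selected continuously and $U(d)$-equivariantly over $X^{(d)}(p,r)$, and the resulting stabilising vector space must remain finite-dimensional. Since the present lemma concerns a single thickening, with no compatibility required with composition maps or boundary strata, this coherence is manageable by a standard partition-of-unity argument combined with the high connectivity of the spaces of choices; the delicate compatibility with the flow category structure is deferred to the consistency checks that would upgrade this pointwise result to a framing of the full flow category.
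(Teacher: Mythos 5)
Your proposal follows essentially the same route as the paper's proof: splitting off the base direction via Equation~\eqref{eq:global_framing} (valid since the absence of sphere bubbles keeps a neighbourhood of the zero locus over $B_d$), using the operator homotopy of Lemma~\ref{lem:parametrised_path_family_operators} to reduce to the realification of $\operatorname{ind}(D_u^{\bC})$, and trivialising that via the clutching description, the $O(n)$-lift of the monodromy, and Lemma~\ref{sec:BO-BU=gluing} together with Lemma~\ref{lem:trivialisation_clutching}, with $W^{(d)}_{fr}(p,r)$ assembled from the same constituents. The proposal is correct.
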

  \begin{proof}
    The vector space $W^{(d)}_{fr}(p,r)$ is the direct sum the vector space $W^{(d)}(p,r)$ from the construction of the complex structure with the Lie algebra $\mathfrak{u}_d $, and with an additional vector space admitting an embedding of $     I^{\pi,-}(p,q)^{d} $, which is required to achieve surjectivity of the family of Cauchy-Riemann operators which we presently describe; the reader may want to go back to Figure \ref{fig:homotopy_operators_gluing}, and consult Figure \ref{fig:homotopy_operators_gluing_framing}.

\begin{figure}[h]
  \centering
  \begin{tikzpicture}
       
  \begin{scope}[shift={(-4,0)}, xscale=2]

    \begin{scope}[shift={(0,-2.5)}, yscale=-1]
              \draw  (0,-2.5) .. controls (-.5,-2.5) and  (-.5,-4) .. (-.5,-4.5) ;
         \draw  (0,-2.5)  .. controls (.5,-2.5) and  (.5,-4) .. (.5,-4.5) ;

\draw [thick]  (.5,-4.5) arc (180:0: -.5 and .1); 
         \draw[thick]  (.5,-4.5) arc (180:360:-.5 and .1);
        
         \coordinate[label= center:$\bR^n \subset  \bC^n \cong T_{p(0)} M $] (Tp) at (0,-4.75);
    \end{scope}

    \begin{scope}[shift={(0,2.5)}]
           \coordinate[label= center:$ T_{p(0)} M $] (p) at (0,-2.25);
         
          \draw  (0,-2.5) .. controls (-.5,-2.5) and  (-.5,-3.5) .. (-.5,-3.75) .. controls (-.5,-4) and  (-.5,-5) .. (0,-5) ;
         \draw  (0,-2.5)  .. controls (.5,-2.5) and  (.5,-3.5) .. (.5,-3.75) .. controls (.5,-4) and  (.5,-5) .. (0,-5) ;
   \draw [dashed]  (.5,-3.75) arc (180:0: -.5 and .1); 
         \draw  (.5,-3.75) arc (180:360:-.5 and .1);

         \coordinate[label= center:$u^{*}(TM)$] (u) at (0,-3.5);
         \coordinate[label= center:$  \bC^n \cong  T_{r(0)} M $] (r) at (0,-5.25);
         \end{scope}

         \begin{scope}[xscale=.5]
           \draw [fill,color=black] (0,0) circle (.05);
             \draw [fill,color=black] (0,-2.5) circle (.05);
          \end{scope}

       \end{scope}

       \begin{scope}[shift={(-1,0)}, xscale=2]

    \begin{scope}[shift={(0,-5)}, yscale=-1]
              \draw  (0,-2.5) .. controls (-.5,-2.5) and  (-.5,-4) .. (-.5,-4.5) --   (-.5,-7);
              \draw  (0,-2.5)  .. controls (.5,-2.5) and  (.5,-4) .. (.5,-4.5)  --  (.5,-7);
             \coordinate[label= center:$\phi_{u} $] (u) at (0,-4.25); 
\draw  (.5,-4.5) arc (180:0: -.5 and .1); 
\draw  [dashed] (.5,-4.5) arc (180:360:-.5 and .1);

\draw [thick]  (.5,-7) arc (180:0: -.5 and .1); 
         \draw[thick]  (.5,-7) arc (180:360:-.5 and .1);
          
         \coordinate[label= center:$\bR^n \subset  \bC^n $] (Tp) at (0,-7.25);
    \end{scope}

       \end{scope}

          \begin{scope}[shift={(2,0)}, xscale=2]

    \begin{scope}[shift={(0,-2.5)}, yscale=-1]
            \draw  (0,-2.5) .. controls (-.5,-2.5) and  (-.5,-4) .. (-.5,-4.5) ;
         \draw  (0,-2.5)  .. controls (.5,-2.5) and  (.5,-4) .. (.5,-4.5) ;

\draw [thick]  (.5,-4.5) arc (180:0: -.5 and .1); 
         \draw[thick]  (.5,-4.5) arc (180:360:-.5 and .1);
        
         \coordinate[label= center:$\phi_{u} \bR^n  $] (Tp) at (0,-4.75);
         
    \end{scope}

       \end{scope}

  \end{tikzpicture}
  \caption{The homotopy between the real operator associated to $D^{\bC}_{u} $ and an operator with boundary conditions the image of $\bR^n$ under a loop of unitary matrices.}
  
  \label{fig:homotopy_operators_gluing_framing}
\end{figure}
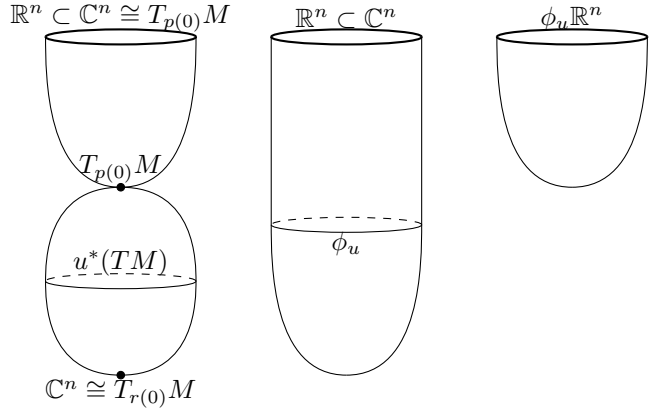

    In the construction of the stable complex structure, we obtained a virtual vector space over $X(p,r)^{(d)} $ as the index bundle of a family of complex-linear operators which we denoted $D_{u}^{\bC}$, defined on the vector bundle over $S^2$, which away from $0$ and infinity is given by $u^*(TM)$, and which is respectively extended to these two points by the trivialisations of $p^*TM$ and $r^* TM$ determined by the restriction of the connection to the Hamiltonian orbits. Using the connection to parallel transport the fibre over points near the positive end to those near the negative end, we obtain a description of this vector bundle as a clutching construction associated to a map $\phi_u \co S^1 \to U(n)$. Our assumption give a factorisation of this map, in families over $X(p,r)^{(d)}  $, through the orthogonal group. Gluing this bundle at the origin to the constant vector bundle over the disc with fibre $T_{p(0)}X \cong \bC^n$, and applying Lemma \ref{lem:trivialisation_clutching} yields the desired result, using the compactness of the zero-locus of the defining section to choose a single vector space which surjects onto the cokernel of the family of operators parametrised by the product of $[0,1]$ with the moduli space of stable Floer trajectories.
  \end{proof}

  In order to show that this construction yields a framed flow category, we need to be able to perform the operation of gluing spheres to discs at multiple points. Before we explain the general construction, we consider the case of codimension $1$ strata:

  \begin{lem} \label{lem:homotopy_codim_1}
    There is a natural homotopy between the restriction of the framing constructed in Lemma \ref{sec:BO-BU=gluing} to the boundary of $ X(p,r)^{(d)}$, and the product framing on $X(p,q)^{(d_1)} \times X(q,r)^{(d_2)}$. 
  \end{lem}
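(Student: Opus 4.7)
The plan is to exhibit both framings as stable trivialisations of real index bundles of families of Cauchy–Riemann operators, and to connect them via a one–parameter degeneration of the capping sphere to a nodal curve. At a boundary point corresponding to a broken trajectory $(u_1,u_2) \in X(p,q)^{(d_1)} \times X(q,r)^{(d_2)}$, the clutching loop $\phi_u$ used to construct the framing of $X(p,r)^{(d)}$ factors as the concatenation $\phi_{u_2}\cdot\phi_{u_1}$: this is because the Hermitian connection was chosen flat on a contractible neighbourhood containing the starting points of all time-$1$ orbits, so parallel transport through $q(0)$ canonically identifies the fibres used for $u_1$ and $u_2$. The hypothesis that the monodromy map $\Omega M \to U(n)$ lifts through $O(n)$ then implies that the concatenation $\phi_{u_2}\cdot\phi_{u_1}$ lifts to a concatenation of loops in $O(n)$, yielding a Lagrangian sub-bundle of the clutching bundle over $S^2$ whose restriction to each hemisphere agrees with the Lagrangian sub-bundles used in the construction of the framings of $X(p,q)^{(d_1)}$ and $X(q,r)^{(d_2)}$.

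First, I would construct a holomorphic family of pre-stable Riemann surfaces over an interval, interpolating between a smooth $S^2$ and a nodal curve consisting of two $S^2$ components meeting at a point corresponding to $q$. Over this family the clutching construction extends to a holomorphic family of complex vector bundles, whose restriction at the nodal end on each component is the clutching bundle associated to the loop $\phi_{u_i}$. Attaching a disc at the basepoint of $p$ and pulling back via a fixed degree-$1$ map $\pi\colon D^2 \to S^2$ (as in Lemma~\ref{sec:BO-BU=gluing}), I obtain a family of real Cauchy–Riemann operators on the disc whose virtual index bundle is identified, via the linear gluing theorem, with the virtual index of the family on $S^2$ together with the contribution of the holomorphic sections over the $S^2$ factor. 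This gives the required homotopy at the level of virtual bundles.

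Second, I would check that at the nodal end of the degeneration the resulting trivialisation agrees with the product trivialisation. The virtual index of the Cauchy–Riemann operator on the nodal curve splits as a direct sum of the index bundles of its two components (fixing the evaluations at the node), and the Lagrangian sub-bundle provided by the lift to $O(n)$ splits accordingly into the Lagrangian sub-bundles of $\phi_{u_1}^*E_1$ and $\phi_{u_2}^*E_2$ used in Lemma~\ref{lem:stably_framed_one_moduli_space} for each factor. The trivialisation produced by Corollary~\ref{cor:trivialisation_orthogonal_clutching} is therefore compatible with this splitting, and Corollary~\ref{cor:gluing_does_not_change_triv} ensures that the residual ambiguity coming from the identification of the fibre at $q(0)$ (which lies in $O(n)$ by hypothesis) does not affect the homotopy class of the resulting framing. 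Consequently, the one-parameter family of operators constructed above provides the desired homotopy between the restriction of the framing on $X(p,r)^{(d)}$ and the product framing.

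The main obstacle will be carrying out the linear gluing construction in families, compatibly with the surjectivity of the extended Cauchy–Riemann operator along the boundary stratum. To address this, I would choose the auxiliary vector space $W^{(d)}_{fr}(p,r)$ of Lemma~\ref{lem:stably_framed_one_moduli_space} to contain $W^{(d_1)}_{fr}(p,q) \oplus V^{-}_q \oplus W^{(d_2)}_{fr}(q,r)$ as a split summand, with the inhomogeneous terms parametrised by the $V^{-}_q$ summand supported near the neck of the degenerating family (via the tubular neighbourhood of the orbit $q$ used in Section~\ref{sec:stable-fibr-compl}). This guarantees surjectivity of the family of operators uniformly along the degeneration, and hence that the kernel bundles glue smoothly to form a vector bundle over the product $X(p,q)^{(d_1)} \times X(q,r)^{(d_2)} \times [0,1]$ whose restrictions to the two ends recover the two framings in question.
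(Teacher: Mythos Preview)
Your overall strategy---degenerate the capping sphere into a nodal curve and compare index bundles at the nodal end---is reasonable and uses the right ingredients, but it differs from the paper's argument and has a genuine gap at the step you flag as ``checking that at the nodal end the resulting trivialisation agrees with the product trivialisation.''

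The issue is this: at the nodal end of your degeneration you have \emph{one} disc attached (at the $p(0)$ side) to a chain of two spheres. The product framing, by contrast, is built from \emph{two} separate discs, one attached to each sphere. You assert that the index ``splits as a direct sum'' and that the Lagrangian sub-bundle ``splits accordingly,'' but the Lagrangian sub-bundle lives only on your single disc; there is no disc attached to the second sphere in your configuration, so there is nothing for it to split into. What you actually need is an additional argument that the trivialisation of the $u_2$ contribution obtained by gluing $S^2_2$ to $S^2_1$ (which is in turn glued to the disc) agrees with the trivialisation obtained by gluing $S^2_2$ directly to its own disc at $q(0)$. This requires a second application of Lemma~\ref{sec:BO-BU=gluing} at the $q$-node, after which the boundary condition on the resulting disc is twisted by $\phi_{u_1}$; only then does Corollary~\ref{cor:gluing_does_not_change_triv} finish the job. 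Your invocation of that corollary is for a different purpose (the ambiguity in identifying fibres at $q(0)$), not for this one-disc-versus-two-discs comparison.

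The paper takes a rather different route: rather than degenerating the sphere, it works entirely on the disc side. It starts from the product configuration (two discs, each with a sphere bubble), glues the two discs along their boundaries to obtain a single disc carrying two sphere bubbles at distinct points, and then slides the attaching points together until the spheres are stacked (disc~$\to S^2_2 \to S^2_1$). This stacked configuration is what one sees at the boundary of $X(p,r)^{(d)}$, except that the gluing of $S^2_1$ to $S^2_2$ uses $\phi_{u_2}(1)$ rather than the identity; the paper then observes that although the two vector bundles are not isomorphic, their index bundles are canonically identified (both are direct sums of the component indices), and Corollary~\ref{cor:gluing_does_not_change_triv} shows the trivialisations agree. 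The advantage of the paper's approach is that it makes the passage from two discs to one disc explicit via boundary gluing, which is exactly the step your argument leaves implicit.
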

  \begin{proof}
    Let $u_1$ and $u_2$ respectively be elements of   $X(p,q)^{(d_1)} $ and $ X(q,r)^{(d_2)}$.  The product framing is associated to a pair of discs carrying sphere bubbles equipped with the bundles $u_1^* TM$ and $u_2^* TM$, and with the trivialisation of the corresponding real Cauchy-Riemann operator  obtained by gluing at the node and trivialising the bundles over the disc (see Figure \ref{fig:homotopy_operators_gluing_framing_product}). We identify the direct sum using boundary gluing with the operator on a disc with two sphere bubbles, and then consider the family of operators obtained by moving the attaching points of the spheres until they collide, which is the endpoint of another family of operators associated to gluing the two sphere at a point along the negative real line from $0$ to $\infty$. To define this second family, we have to choose an identification of the fibres at the two points being attached; we use parallel transport for the connection along the path from $0$ to the node. The end point of this family is an operator associated to a disc with two successively attached sphere, carrying the bundles $u_1^* TM$ and $u_2^* TM$, where gluing between these bundles is given by the map $\phi_{u_2}(1)$. 
\begin{figure}[h]
  \centering
  \begin{tikzpicture}

\begin{scope}[xscale=.5, yscale=.5]
      
  \begin{scope}[shift={(8,0)}, xscale=2]

    \begin{scope}[shift={(0,-2.5)}, yscale=-1]
              \draw  (0,-2.5) .. controls (-.5,-2.5) and  (-.5,-4) .. (-.5,-4.5) ;
         \draw  (0,-2.5)  .. controls (.5,-2.5) and  (.5,-4) .. (.5,-4.5) ;

\draw [thick]  (.5,-4.5) arc (180:0: -.5 and .1); 
         \draw[thick]  (.5,-4.5) arc (180:360:-.5 and .1);

    \end{scope}

    \begin{scope}[shift={(0,2.5)}, xscale=.75]
                  \draw  (0,-2.5) .. controls (-.5,-2.5) and  (-.5,-3.5) .. (-.5,-3.75) .. controls (-.5,-4) and  (-.5,-5) .. (0,-5) ;
         \draw  (0,-2.5)  .. controls (.5,-2.5) and  (.5,-3.5) .. (.5,-3.75) .. controls (.5,-4) and  (.5,-5) .. (0,-5) ;
   \draw [dashed]  (.5,-3.75) arc (180:0: -.5 and .1); 
         \draw  (.5,-3.75) arc (180:360:-.5 and .1);

         \coordinate[label= center:$u_2$] (u) at (0,-3.5);
        \coordinate[label= center:$  \phi_{u_2} $] (r) at (.75,-5);
         \end{scope}

         \begin{scope}[xscale=.5]
           \draw [fill,color=black] (0,0) circle (.05);
           \draw [fill,color=black] (0,-2.5) circle (.05);
            \draw [fill,color=black] (0,-5) circle (.05);
           \end{scope}

          \begin{scope}[shift={(0,0)}, xscale=.75]
          \draw  (0,-2.5) .. controls (-.5,-2.5) and  (-.5,-3.5) .. (-.5,-3.75) .. controls (-.5,-4) and  (-.5,-5) .. (0,-5) ;
         \draw  (0,-2.5)  .. controls (.5,-2.5) and  (.5,-3.5) .. (.5,-3.75) .. controls (.5,-4) and  (.5,-5) .. (0,-5) ;
   \draw [dashed]  (.5,-3.75) arc (180:0: -.5 and .1); 
         \draw  (.5,-3.75) arc (180:360:-.5 and .1);

         \coordinate[label= center:$u_1$] (u) at (0,-3.5);
        
         \end{scope}   

       \end{scope}

\begin{scope}[shift={(4,0)}, xscale=2]

    \begin{scope}[shift={(0,-2.5)}, yscale=-1]
              \draw  (0,-2.5) .. controls (-.5,-2.5) and  (-.5,-4) .. (-.5,-4.5) ;
         \draw  (0,-2.5)  .. controls (.5,-2.5) and  (.5,-4) .. (.5,-4.5) ;

\draw [thick]  (.5,-4.5) arc (180:0: -.5 and .1); 
         \draw[thick]  (.5,-4.5) arc (180:360:-.5 and .1);

    \end{scope}

    \begin{scope}[shift={(0,2.5)}, xscale=.75]
          \draw  (0,-2.5) .. controls (-.5,-2.5) and  (-.5,-3.5) .. (-.5,-3.75) .. controls (-.5,-4) and  (-.5,-5) .. (0,-5) ;
         \draw  (0,-2.5)  .. controls (.5,-2.5) and  (.5,-3.5) .. (.5,-3.75) .. controls (.5,-4) and  (.5,-5) .. (0,-5) ;
   \draw [dashed]  (.5,-3.75) arc (180:0: -.5 and .1); 
         \draw  (.5,-3.75) arc (180:360:-.5 and .1);

         \coordinate[label= center:$u_2$] (u) at (0,-3.5);
     
         \end{scope}

         \begin{scope}[xscale=.5]
           \draw [fill,color=black] (0,0) circle (.05);
           \draw [fill,color=black] (0,-2.5) circle (.05);
           \draw [fill,color=black] (-.75,-1.25) circle (.05);
           
           \end{scope}
          
           \begin{scope}[ shift={(-.4,1.75)},yscale=4/3, xscale=.5]
             \begin{scope}[rotate around={-90: (0.125,-2.375)}]
                \draw  (0,-2.5) .. controls (-.5,-2.5) and  (-.5,-3.5) .. (-.5,-3.75) .. controls (-.5,-4) and  (-.5,-5) .. (0,-5) ;
         \draw  (0,-2.5)  .. controls (.5,-2.5) and  (.5,-3.5) .. (.5,-3.75) .. controls (.5,-4) and  (.5,-5) .. (0,-5) ;
   \draw [dashed]  (.5,-3.75) arc (180:0: -.5 and .1); 
         \draw  (.5,-3.75) arc (180:360:-.5 and .1);

         \coordinate[label= center:$u_1$] (u) at (0,-3.5);
             \end{scope}
         \end{scope}   

       \end{scope}

          \begin{scope}[shift={(-3,0)}, xscale=2]

    \begin{scope}[shift={(0,-2.5)}, yscale=-1]
              \draw  (0,-2.5) .. controls (-.5,-2.5) and  (-.5,-4) .. (-.5,-4.5) ;
         \draw  (0,-2.5)  .. controls (.5,-2.5) and  (.5,-4) .. (.5,-4.5) ;

\draw [thick]  (.5,-4.5) arc (180:0: -.5 and .1); 
         \draw[thick]  (.5,-4.5) arc (180:360:-.5 and .1);

    \end{scope}

    \begin{scope}[shift={(-.8,1.4)}]
      \begin{scope}[xscale=.5, rotate=60]
        \begin{scope}[xscale=1.5]
          \draw  (0,-2.5) .. controls (-.5,-2.5) and  (-.5,-3.5) .. (-.5,-3.75) .. controls (-.5,-4) and  (-.5,-5) .. (0,-5) ;
         \draw  (0,-2.5)  .. controls (.5,-2.5) and  (.5,-3.5) .. (.5,-3.75) .. controls (.5,-4) and  (.5,-5) .. (0,-5) ;
   \draw [dashed]  (.5,-3.75) arc (180:0: -.5 and .1); 
         \draw  (.5,-3.75) arc (180:360:-.5 and .1);

         \coordinate[label= center:$u_2$] (u) at (0,-3.5);
      
        \end{scope}
      \end{scope}
   \end{scope}

    \begin{scope}[shift={(.8,1.4)}]
      \begin{scope}[xscale=.5, rotate=-60]
        \begin{scope}[xscale=1.5]
          \draw  (0,-2.5) .. controls (-.5,-2.5) and  (-.5,-3.5) .. (-.5,-3.75) .. controls (-.5,-4) and  (-.5,-5) .. (0,-5) ;
         \draw  (0,-2.5)  .. controls (.5,-2.5) and  (.5,-3.5) .. (.5,-3.75) .. controls (.5,-4) and  (.5,-5) .. (0,-5) ;
   \draw [dashed]  (.5,-3.75) arc (180:0: -.5 and .1); 
         \draw  (.5,-3.75) arc (180:360:-.5 and .1);

         \coordinate[label= center:$u_1$] (u) at (0,-3.5);
      
        \end{scope}
      \end{scope}
   \end{scope}

         \begin{scope}[xscale=.5]
           \draw [fill,color=black] (0.65,.3) circle (.05);
             \draw [fill,color=black] (-0.65,.3) circle (.05);
          \end{scope}

      \end{scope}

          \begin{scope}[shift={(-8,0)}, xscale=2]

    \begin{scope}[shift={(0,-2.5)}, yscale=-1]
              \draw  (0,-2.5) .. controls (-.5,-2.5) and  (-.5,-4) .. (-.5,-4.5) ;
         \draw  (0,-2.5)  .. controls (.5,-2.5) and  (.5,-4) .. (.5,-4.5) ;

\draw [thick]  (.5,-4.5) arc (180:0: -.5 and .1); 
         \draw[thick]  (.5,-4.5) arc (180:360:-.5 and .1);

    \end{scope}

    \begin{scope}[shift={(0,2.5)}, xscale=.75]
          \draw  (0,-2.5) .. controls (-.5,-2.5) and  (-.5,-3.5) .. (-.5,-3.75) .. controls (-.5,-4) and  (-.5,-5) .. (0,-5) ;
         \draw  (0,-2.5)  .. controls (.5,-2.5) and  (.5,-3.5) .. (.5,-3.75) .. controls (.5,-4) and  (.5,-5) .. (0,-5) ;
   \draw [dashed]  (.5,-3.75) arc (180:0: -.5 and .1); 
         \draw  (.5,-3.75) arc (180:360:-.5 and .1);

         \coordinate[label= center:$u_2$] (u) at (0,-3.5);

         \end{scope}

         \begin{scope}[xscale=.5]
           \draw [fill,color=black] (0,0) circle (.05);
             \draw [fill,color=black] (0,-2.5) circle (.05);
          \end{scope}

          \begin{scope}[shift={(-1,0)}]

    \begin{scope}[shift={(0,-2.5)}, yscale=-1]
              \draw  (0,-2.5) .. controls (-.5,-2.5) and  (-.5,-4) .. (-.5,-4.5) ;
         \draw  (0,-2.5)  .. controls (.5,-2.5) and  (.5,-4) .. (.5,-4.5) ;

\draw [thick]  (.5,-4.5) arc (180:0: -.5 and .1); 
         \draw[thick]  (.5,-4.5) arc (180:360:-.5 and .1);

    \end{scope}

    \begin{scope}[shift={(0,2.5)}, xscale=.75]
          \draw  (0,-2.5) .. controls (-.5,-2.5) and  (-.5,-3.5) .. (-.5,-3.75) .. controls (-.5,-4) and  (-.5,-5) .. (0,-5) ;
         \draw  (0,-2.5)  .. controls (.5,-2.5) and  (.5,-3.5) .. (.5,-3.75) .. controls (.5,-4) and  (.5,-5) .. (0,-5) ;
   \draw [dashed]  (.5,-3.75) arc (180:0: -.5 and .1); 
         \draw  (.5,-3.75) arc (180:360:-.5 and .1);

        \coordinate[label= center:$u_1$] (u) at (0,-3.5);

         \end{scope}

         \begin{scope}[xscale=.5]
           \draw [fill,color=black] (0,0) circle (.05);
             \draw [fill,color=black] (0,-2.5) circle (.05);
          \end{scope}

        \end{scope}
      \end{scope}

\end{scope}

  \end{tikzpicture}
  \caption{The endpoints as well as two representatives of the homotopy between the direct sum of real operators associated to $D^{\bC}_{u_1} $ and $D^{\bC}_{u_2} $ and the real operator associated to  $D^{\bC}_{u_1}  \# D^{\bC}_{u_2} $.}
  \label{fig:homotopy_operators_gluing_framing_product}
\end{figure}
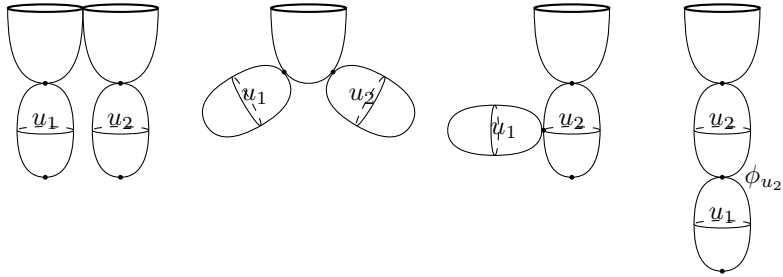

   The key point is that the index bundle associated to the pair $(u_1,u_2)$ is canonically isomorphic to the index bundle associated to the endpoint of this homotopy, even though the two vector bundles are not isomorphic; indeed, both are the direct sums of the index bundles on the two components (using again the convention that we consider sections which vanish at the origin). Moreover, using the given lift up to homotopy of  $\phi_{u_2}$ to a real matrix, the trivialisation of the associated real operator also agrees up to prescribed homotopy, as can be seen by using the clutching construction and applying Corollary \ref{cor:gluing_does_not_change_triv}. It then suffices to show that our construction of the trivialisation also extends to the family of operators constructed above; which is clear from gluing at the nodes to obtain a family of operators on the discs, which can be explicitly described using the clutching construction.
  \end{proof}

In order to generalise this construction to higher codimension, we need a method to describe the domains of the families of Cauchy-Riemann operators that appear when interpolating between different ways of gluing spheres to a disc.  While there are many ways to do this, we shall use a doubling procedured based on the geometry of moduli spaces of discs with boundary marked points: let $\Rbar_{2n+2}$ denote the Deligne-Mumford moduli space of discs with $2n +2$ boundary marked points, which we label $(z_+, z_1, \ldots, z_{2n}, z_-)$ clockwise along the boundary of the disc. The complement of the marked points $(z_+,z_-)$ is identified with  $ \bR \times [0,1]$  canonically up to the action of $\bR$ by translations, and the remaining marked points then lie on the interval $\bR \times \{0\} $. We thus describe the stable disc components connecting $(z_+,z_-)$ as strips, and all other components as disc bubbles. Let $\Sbar_{n}$ denote the subset of marked discs satisfying the following constraints:
  \begin{enumerate}
  \item Each marked point $z_i$ lies on a disc bubble.
  \item No disc bubble contains more than two marked points, in which case the points are $(z_{2i-i}, z_{2i}) $ for some integer $i$.
    \item  The node connecting the disc bubble carrying $z_{2i-1}$ to the strip is the one following $z_{2i-1}$ counterclockwise along the boundary.  
  \end{enumerate}
It is clear from these conditions that the composition map $\Rbar_{2n+2} \times \Rbar_{2m+2} \to  \Rbar_{2(n+m)+2}$ defined by gluing the marked point  $z_+$ on the first factor to the marked point $z_-$ on the second restricts to a map
    \begin{equation} \label{eq:gluing_discs_boundary_marked_points}
        \Sbar_{n} \times     \Sbar_{m} \to \Sbar_{n+m}.
    \end{equation}
  \begin{lem} \label{lem:basic_space_contractible}
    Each space $\Sbar_{n}$ is contractible.
  \end{lem}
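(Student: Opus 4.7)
The plan is to construct an explicit deformation retract of $\Sbar_n$ onto the single configuration in which the strip has broken $n-1$ times into $n$ sub-strips, each carrying exactly one disc bubble with the pair $(z_{2i-1}, z_{2i})$. To do so, I will realise $\Sbar_n$ as the stratified space $(0,\infty]^{n-1}$, parametrised by the successive gaps between consecutive disc bubbles.

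First, I will parametrise the open top stratum of $\Sbar_n$, consisting of configurations with an unbroken main strip carrying $n$ attached disc bubbles. Constraints~(2) and~(3) in the definition force each disc bubble to be rigid, carrying the three boundary special points $(z_{2i-1}, \text{node}, z_{2i})$ in counterclockwise order, so the whole top-stratum configuration is determined, modulo the $\bR$-action translating the strip, by the $n-1$ successive gaps $d_i > 0$ between the attaching nodes along $\bR\times\{0\}$. This identifies the top stratum with $(0,\infty)^{n-1}$. Next I will identify the lower strata with iterated strip-breakings: sending a gap $d_i\to\infty$ breaks the strip between the $i$-th and $(i+1)$-th disc bubble and lands in the image of the composition map $\Sbar_i\times\Sbar_{n-i}\to\Sbar_n$ from Equation~\eqref{eq:gluing_discs_boundary_marked_points}, while the opposite limit $d_i\to 0$ is excluded by constraints~(2) and~(3), which together forbid two adjacent disc bubbles from colliding either into a single four-marked-point bubble or into a configuration reattached to the strip through a connector bubble. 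Running this analysis inductively gives the desired identification $\Sbar_n\cong(0,\infty]^{n-1}$, with the faces $\{d_i=\infty\}$ recursively parametrised by the $\Sbar_i\times\Sbar_{n-i}$ factors.

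With this in hand, the contraction is explicit: the homotopy
\[
h_s(d_1,\ldots,d_{n-1}) \equiv \bigl(d_1/(1-s),\,\ldots,\,d_{n-1}/(1-s)\bigr), \qquad s\in[0,1),
\]
extended by $h_1\equiv(\infty,\ldots,\infty)$, stretches every finite gap to infinity and retracts $\Sbar_n$ onto the fully broken point; this is manifestly well-defined on every stratum since on a stratum where some $d_j$ is already infinite the flow merely scales the remaining finite coordinates. The hard part will be the middle step: verifying that the gap coordinates $d_i$ truly give a homeomorphism with $(0,\infty]^{n-1}$ near the Deligne--Mumford boundary, which amounts to matching $d_i\to\infty$ with the standard neck-length gluing parameter for strip breaking (expected to agree with $e^{-d_i}$ up to a smooth positive factor), and to a careful enumeration of the combinatorial types allowed in $\Sbar_n$ in order to exclude spurious strata arising from further bubbling of disc bubbles into chains of connector bubbles. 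Once constraints~(1)--(3) are shown to rule out such degenerations, contractibility of $\Sbar_n$ is immediate from contractibility of the cube.
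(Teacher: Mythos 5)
Your proof rests on the identification $\Sbar_n \cong (0,\infty]^{n-1}$, and this identification is false; the step you flag as ``the hard part'' is where the argument breaks. The first error is the claim that constraints (2) and (3) force every disc bubble to be rigid with special points $(z_{2i-1}, \mathrm{node}, z_{2i})$. Constraint (2) is only an upper bound: bubbles carrying zero or one marked point are permitted, and they genuinely occur. Indeed the basepoint of $\Sbar_n$ used in the paper is a chain of $2n-1$ disc bubbles, all but the last carrying a \emph{single} marked point, and the strata $\Sbar_P$ defined immediately afterwards (as well as the retraction, which ``creates ghost components'') depend on such configurations. So $\Sbar_n$ has top-dimensional strata other than the one you parametrize by gaps; already for $n=2$, besides the stratum with two pair-bubbles on a single strip, one has (for instance) the stratum where the bubble carrying $(z_3,z_4)$ is attached to the bubble carrying $(z_1,z_2)$ rather than to the strip, and the stratum where $z_1$ and $z_2$ sit on a chain of two singleton bubbles. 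None of these lies in the closure of your stratum (a rigid three-point bubble cannot split), so $\Sbar_2$ is a $1$-complex with several maximal edges, not an interval $(0,\infty]$. The second error is the claim that the limit $d_i \to 0$ is excluded: when two attaching nodes collide on the strip, the limit is a configuration with a connector bubble carrying no marked points, which satisfies (1) vacuously, (2) because it has zero marked points, and (3) because the cyclic order of special points on each bubble carrying an odd marked point is unchanged in the limit. So even the closure of the single stratum you consider is not $(0,\infty]^{n-1}$, and when several consecutive gaps degenerate simultaneously the resulting bubble trees are governed by associahedron-type combinatorics rather than a cube.

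The paper avoids all of this by arguing inductively: using constraint (3) one shows that the bubble carrying $z_{2n-1}$ must also carry $z_{2n}$ and have exactly one node (this argument works only for the \emph{last} pair, since it uses that all other marked points lie counterclockwise of $z_{2n-1}$), and one then retracts $\Sbar_n$ onto the image of $\Sbar_{n-1}\times\Sbar_1$ by sliding that rigid bubble along the boundary toward $z_-$, creating ghost components as needed. If you want to salvage a direct argument you would need to first establish the full stratification of $\Sbar_n$ -- including singleton bubbles, connector bubbles, and bubbles attached to bubbles -- and then exhibit a contraction of the resulting polyhedral complex, which is essentially what the induction accomplishes.
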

  \begin{proof}
    The space $ \Sbar_{1}$ is a singleton, represented by the strip with a disc, carrying the marked points $(z_1,z_2)$, attached along the boundary. The result will thus follow by induction if we construct a deformation retraction of $ \Sbar_{n}$ to $ \Sbar_{1} \times \Sbar_{n-1}$, which is obtained as follows: observe that the marked points $(z_{2n-1}, z_{2n})$ must lie on the same disc bubble, which moreover contains exactly one node. To see this, consider the component carrying $z_{2n-1}$, and note that our assumptions imply that the next special point counterclockwise along the boundary of this pointing must be the node in the direction of the strip. Since all the marked points $z_i$ for $i < 2n-1$ lie counterclockwise of $z_{2n-1}$, this implies that these points do not lie on disc bubbles attached to this component, so the only possibility for this disc to be stable is for $z_{2n}$ to also lie on it.

    Removing this disc bubble (and collapsing unstable components)  defines an element of $\Sbar_{n-1}$. The retraction is then obtained by moving this disc bubble along the boundary (creating ghost components along the way), until reaching the configuration where it is attached directly to the strip, in which case it can be moved towards $z_-$.   
  \end{proof}

  \begin{figure}[h]
    \centering
    \begin{tikzpicture}

        \coordinate[label= above:$z_+$] (+) at (-1,1);
        \coordinate[label= below:$z_-$] (-) at (-1,-1);
        \draw  (-.5,-1) -- (-.5,1);
      \draw (-1.5,-1) -- (-1.5,1) ;;

\filldraw (-.5,0) circle (2pt);
\draw (0,0) circle (0.5);
\draw (90:.4) -- +(90:0.2);
 \coordinate[label= above:$z_1$] (1) at (90:.5);
 \begin{scope}[shift={(1,0)}]
 \filldraw (-.5,0) circle (2pt);
\draw (0,0) circle (0.5);
\draw (90:.4) -- +(90:0.2);
\coordinate[label= above:$z_2$] (2) at (90:.5);  
 \end{scope}
 \begin{scope}[shift={(2,0)}]
 \filldraw (-.5,0) circle (2pt);
\draw (0,0) circle (0.5);
\draw (90:.4) -- +(90:0.2);
\coordinate[label= above:$z_3$] (3) at (90:.5);  
 \end{scope}
 \begin{scope}[shift={(3,0)}]
 \filldraw (-.5,0) circle (2pt);
\draw (0,0) circle (0.5);
\draw (90:.4) -- +(90:0.2);
\coordinate[label= above:$z_4$] (4) at (90:.5);  
 \end{scope}
 \begin{scope}[shift={(4,0)}]
 \filldraw (-.5,0) circle (2pt);
\draw (0,0) circle (0.5);
\draw (60:.4) -- +(60:0.2);
\coordinate[label= above:$z_5$] (5) at (60:.5);  
 \draw (-60:.4) -- +(-60:0.2);
 \coordinate[label= below:$z_6$] (5) at (-60:.5);
\end{scope}

    \end{tikzpicture}
    \caption{The basepoint of the space $\Sbar_3$}
    \label{fig:based_sbar}
  \end{figure}
  
  In order to set up an inductive construction we shall need to explicitly describe some of the strata of $\Sbar_{n}$: to begin, we distinguish a basepoint  given by the configuration in which no disc bubble other than the one carrying $(z_{2n-1}, z_{2n})$ carries more than one marked point, and the disc bubbles lie in a chain, starting at a unique strip component, and ordered according the the labels (see Figure \ref{fig:based_sbar}).  For each (ordered) partition $P = (P_1, \ldots, P_k)$ of the set $\{1, \ldots, n\}$, we denote by $\Sbar_{P} $ the subset of $\Sbar_n$ consisiting of configurations with the property that, for each $1 \leq j \leq k$, the result of forgetting all marked points not in $P_j$ is the basepoint of $\Sbar_{|P_j|}$.  There is a redundancy in this notation, as the paritition consisting of singletons corresponds to the entire space.  We have a generalisation of Lemma \ref{lem:basic_space_contractible}, which can be proved in the same inductive way:

\begin{lem}
    Each space $\Sbar_{P}$ is contractible. \qed
  \end{lem}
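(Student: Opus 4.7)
The plan is to proceed by induction on $n = \sum_j |P_j|$, following the template of Lemma \ref{lem:basic_space_contractible} with modifications to accommodate the partition structure. The base case $n \leq 1$ is immediate since $\Sbar_P$ is then a single point.

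For the inductive step, let $j$ be the index of the block $P_j$ containing the element $n$. First I would establish that the disc bubble carrying $(z_{2n-1}, z_{2n})$ is a leaf of the tree of bubble components. The argument of Lemma \ref{lem:basic_space_contractible} shows that the third defining condition of $\Sbar_n$ places the node to the strip immediately counterclockwise of $z_{2n-1}$; moreover, the assumption that forgetting marked points outside $P_j$ yields the basepoint of $\Sbar_{|P_j|}$ prohibits any additional subtree from hanging off this bubble clockwise of $z_{2n-1}$, since such a subtree would either destabilize the forgetful image—after collapsing any marked points from other blocks that it carries—or be itself unstable.

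I would then construct a deformation retraction of $\Sbar_P$ by sliding the leaf disc bubble $(z_{2n-1}, z_{2n})$ along the boundary of the remaining configuration, creating ghost components as necessary exactly as in the proof of Lemma \ref{lem:basic_space_contractible}, until it is attached directly to the strip at the position immediately adjacent to $z_-$. The essential verification is that the motion preserves the constraints defining $\Sbar_P$. For blocks $P_i$ with $i \neq j$, the forgetful image to $\Sbar_{|P_i|}$ is constant throughout the retraction, since collapsing the moving disc bubble produces the same configuration regardless of its attaching point on the strip. For the block $P_j$ itself, the induced motion on the forgetful image to $\Sbar_{|P_j|}$ is precisely the retraction of Lemma \ref{lem:basic_space_contractible}, which keeps the image at the basepoint throughout.

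The image of the retraction is naturally homeomorphic to $\Sbar_{P'}$, where $P'$ is the partition of $\{1, \ldots, n-1\}$ obtained from $P$ by removing $n$ from $P_j$ (and deleting $P_j$ entirely when $|P_j|=1$); applying the inductive hypothesis then yields contractibility. The main technical obstacle I anticipate is the bookkeeping required to verify that the sliding motion remains well-defined and continuous when the moving disc bubble must pass cyclically around disc bubbles belonging to other blocks. This is the point at which the partition structure makes a genuine difference with respect to Lemma \ref{lem:basic_space_contractible}, and will require a careful local analysis in terms of the strata of the Deligne-Mumford compactification of discs with boundary marked points.
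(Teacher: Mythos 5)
Your inductive step breaks down precisely at the point where the partition structure matters, namely when the block $P_j$ containing $n$ has $|P_j|\geq 2$. The retraction of Lemma \ref{lem:basic_space_contractible} does \emph{not} fix the basepoint of $\Sbar_m$: its effect on the basepoint is to detach the bubble carrying the last pair from the end of the chain and slide it down to the strip adjacent to $z_-$, where the strip breaks, landing in $\Sbar_1\times\Sbar_{m-1}$ --- a stratum that does not contain the basepoint (which has a single unbroken strip and the last bubble at the end of the chain). So your sentence ``the induced motion on the forgetful image to $\Sbar_{|P_j|}$ is precisely the retraction of Lemma \ref{lem:basic_space_contractible}, which keeps the image at the basepoint throughout'' is internally contradictory: if the forgetful image undergoes that retraction, it leaves the basepoint. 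Consequently, for $|P_j|\geq 2$ your homotopy immediately exits $\Sbar_P$, since the defining condition of $\Sbar_P$ is that the forgetful image to $P_j$ \emph{equals} the basepoint at all times; and the claimed terminal configuration (the pair-$n$ bubble broken off near $z_-$) is likewise not a point of $\Sbar_P$, so it cannot be the image of a deformation retraction of $\Sbar_P$. Your argument is correct only in the case $|P_j|=1$, where forgetting the pair $n$ collapses the moving bubble and all block conditions are indeed insensitive to the motion.

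The obstacle you flag at the end (continuity of the sliding past bubbles of other blocks) is not the essential difficulty; the essential difficulty is that the constraint attached to the block containing $n$ rigidifies the position of the pair-$n$ bubble relative to the other bubbles of that block, so that bubble cannot be moved independently. A repair must treat the whole block at once: for instance, induct on the number of blocks rather than on $n$, observe that the bubbles of the last block $P_k$ together with the condition ``forgetting to $P_k$ yields the basepoint of $\Sbar_{|P_k|}$'' determine a rigid chain (possibly with other blocks' bubbles interspersed along it), and construct a retraction onto the image of $\Sbar_{(\{1,\dots,|P_k|\})}\times\Sbar_{(P_1,\dots,P_{k-1})}$ under the gluing map of Equation \eqref{eq:gluing_discs_boundary_marked_points} by moving that entire chain (and flushing the interlopers off of it) toward $z_-$; one then checks as you do that the forgetful images to the remaining blocks are constant. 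Since the paper offers no argument beyond the assertion that the induction of Lemma \ref{lem:basic_space_contractible} generalizes, I cannot compare your route to a written one, but as it stands your inductive step does not go through.
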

  Note that all strata $\Sbar_P $ contain the distinguished basepoint. More generally, we have the following result, which reduces many constructions to the case of partitions consisting of a pair of consecutive elements, with all other elements appearing as singletons:
  \begin{lem} \label{lem:moduli_partition_intersection}
    The intersection of $\Sbar_P$ and $\Sbar_{P'}$ is the stratum associated to the minimal partition which is refined by both $P$ and $P'$. \qed
  \end{lem}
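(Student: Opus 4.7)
The plan is to reduce the lemma to a monotonicity statement about $\Sbar_P$ in the partition lattice, and then exploit the lattice structure. Denote by $P \vee P'$ the join in the partition lattice of $\{1,\ldots,n\}$, i.e., the coarsest partition refined by both $P$ and $P'$; concretely, the equivalence classes of $P \vee P'$ are the connected components of the graph in which $i \sim j$ whenever they share a block in $P$ or in $P'$. The lemma asserts $\Sbar_P \cap \Sbar_{P'} = \Sbar_{P \vee P'}$, and I will prove the two inclusions separately.

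For the easy direction ($\Sbar_{P \vee P'} \subseteq \Sbar_P \cap \Sbar_{P'}$), I will first establish a monotonicity statement: if $Q$ is coarser than $Q'$ (so $Q'$ refines $Q$), then $\Sbar_Q \subseteq \Sbar_{Q'}$. This follows from an inspection of the basepoint configuration: the basepoint of $\Sbar_m$ consists of a chain of pair-bubbles attached to the strip at a single point in a prescribed order, so forgetting any subcollection of the pair-bubbles yields again the basepoint of the corresponding smaller $\Sbar_\ell$. Applied to $C \in \Sbar_Q$ and any $Q'$-block $Q'_k \subseteq Q_j$, the restriction $C|_{Q'_k}$ factors through $C|_{Q_j}$, which is the basepoint of $\Sbar_{|Q_j|}$, and forgetting gives the basepoint of $\Sbar_{|Q'_k|}$. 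Since $P \vee P'$ is coarser than both $P$ and $P'$, monotonicity immediately yields the inclusion $\Sbar_{P \vee P'} \subseteq \Sbar_P \cap \Sbar_{P'}$.

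For the harder direction, I consider $C \in \Sbar_P \cap \Sbar_{P'}$ and must verify that for every block $Q$ of $P \vee P'$, the restriction $C|_Q$ is the basepoint of $\Sbar_{|Q|}$. By the connectivity description of $P \vee P'$, such a block $Q$ can be written as a chain $Q = A_0 \cup A_1 \cup \cdots \cup A_r$ where each $A_i$ is a block of either $P$ or $P'$, and each $A_{i+1}$ meets $A_0 \cup \cdots \cup A_i$ in at least one index. So by induction on $r$ it suffices to handle the base case: given two subsets $A, B \subseteq \{1,\ldots,n\}$ with $A \cap B \ne \emptyset$, such that $C|_A$ is the basepoint of $\Sbar_{|A|}$ and $C|_B$ is the basepoint of $\Sbar_{|B|}$, show that $C|_{A \cup B}$ is the basepoint of $\Sbar_{|A \cup B|}$.

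The main obstacle, and the heart of the proof, is this base case. It hinges on the rigidity of the basepoint configuration described after Lemma~\ref{lem:basic_space_contractible}: the pair-bubbles of $\Sbar_m$ at the basepoint sit in a single chain attached at a \emph{unique} point on the strip, ordered by their labels, with $(z_{2m-1}, z_{2m})$ on the terminal bubble. Any common index $i_0 \in A \cap B$ serves as a shared pair-bubble lying both in the basepoint chain for $A$ and in that for $B$, and rigidity of the chain forces both chains to share the same attachment point on the strip and the same direction of propagation. Combined with the uniqueness of the ordering on each sub-chain, this forces the combined pair-bubbles indexed by $A \cup B$ to themselves form the (unique) basepoint chain of $\Sbar_{|A \cup B|}$; the ordering of the resulting ordered block is determined by the common chain structure, so compatibility of the ordered data is automatic. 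Iterating this step over the chain $A_0, A_1, \ldots, A_r$ completes the proof that $C|_Q$ is the basepoint of $\Sbar_{|Q|}$, and hence that $C \in \Sbar_{P \vee P'}$.
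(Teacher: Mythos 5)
Your overall strategy---identifying the ``minimal common coarsening'' with the join $P \vee P'$, obtaining one inclusion from monotonicity of $\Sbar_{(-)}$ under coarsening, and reducing the other to the two-set statement ``$C|_A$, $C|_B$ basepoints with $A \cap B \neq \emptyset$ imply $C|_{A\cup B}$ basepoint''---is the right skeleton (the paper offers no argument for this lemma, so there is nothing to compare against). But your justification of the key two-set step rests on a false description of the basepoint. The basepoint of $\Sbar_m$ is \emph{not} a chain of pair-bubbles: each of $z_1,\ldots,z_{2m-2}$ sits \emph{alone} on its own bubble, and only the terminal pair $(z_{2m-1},z_{2m})$ shares a bubble (see Figure~\ref{fig:based_sbar}). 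For the easy direction this is harmless but your stated reason is wrong: forgetting a set of pairs containing the terminal one does return the basepoint, but only because the bubble carrying the new last even-indexed point becomes unstable and collapses onto its parent, reassembling the new terminal pair on a single bubble.

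For the hard direction the error is fatal to the argument as written. Take $A=\{1,2\}$, $B=\{2,3\}$, $i_0=2$: in $C|_A$ the pair $2$ occupies a \emph{single} terminal leaf, while in $C|_B$ it occupies \emph{two} separate bubbles, so there is no ``shared pair-bubble''; and more fundamentally, the bubbles of $C|_A$ and $C|_B$ are components of two different stabilizations of $C$ (each obtained by collapsing a different set of components and letting marked points migrate), so the two ``chains'' cannot simply be superimposed. What is actually needed is an analysis of the bubble tree of $C$ itself: condition (3) in the definition of $\Sbar_n$ forces the marked points in the subtree rooted at the bubble of each $z_{2i-1}$ to form an interval beginning at $z_{2i-1}$, every leaf must carry a full pair, and one then checks that the two basepoint hypotheses force each point indexed by a non-terminal element of $A\cup B$ onto its own bubble with a unique child---in particular ruling out branching bubbles with no marked points and bubbles carrying a whole pair $(z_{2i-1},z_{2i})$ for non-terminal $i$. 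None of this appears in your proof, so its central step remains unestablished.
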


  By doubling along the boundary component carrying the marked points, we associate to each element of $\Sbar_{n} $ a stable disc with interior marked points; the disc bubbles become sphere bubble. We will call the total space of the family obtained by collapsing all disc components which do not contain odd marked points the \emph{reduced universal curve}. For the next result, we consider a collection $\{E_i\}_{i=1}^{n}$ of vector bundles on $S^2$ with connections, and identifications of the fibres at $0$ and $\infty$ with $\bC^n$. 

  \begin{lem} \label{lem:family_complex_bundles_universal_curve}
    The reduced universal curve over $ \Sbar_{n} $ carries a complex bundle whose restriction to each component that carries an odd marked point is canonically isomorphic to $E_i$. This construction is compatible with the gluing map in Equation \eqref{eq:gluing_discs_boundary_marked_points} in the sense that there is a natural isomorphism between the pullback to universal curve over $\Sbar_{n} \times     \Sbar_{m} $ of the bundle associated to $ \Sbar_{n+m}$ and the result of gluing the two bundles along the trivial component.
  \end{lem}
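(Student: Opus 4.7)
The key structural observation is that condition (ii) in the definition of $\Sbar_n$ forces every disc bubble to contain at most one odd marked point; consequently, after doubling and collapsing all components without odd marked points, each irreducible component of any fiber of the reduced universal curve carries a unique odd marked point $z_{2i-1}$ and is unambiguously labelled by the index $i$. The plan is therefore to place $E_i$ on this component for each $i$ and to glue at the resulting nodes using the prescribed trivializations of each $E_i$ at $0$ and $\infty$.

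I first carry out the construction at the basepoint of $\Sbar_n$ described in the proof of Lemma \ref{lem:basic_space_contractible}, where the reduced fiber consists of $n$ sphere bubbles $C_1,\ldots,C_n$ (one for each odd marked point), joined together at a central nodal locus arising from the collapse of the strip and the bubbles carrying only even marked points. On $C_i$ I place $E_i$, using the identification of $z_{2i-1}$ with $\infty \in S^2$; at each node I glue the incident fibers $E_i|_0$ and $E_j|_\infty$ using the fixed isomorphisms with $\bC^n$. Extending over $\Sbar_n$ is then handled stratum by stratum: on each $\Sbar_P$ the combinatorial type of the reduced fiber is fixed, the local construction on each sphere component is identical to the basepoint case, and the gluings at new nodes are again prescribed by the $\bC^n$-trivializations. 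Contractibility of each $\Sbar_P$ together with Lemma \ref{lem:moduli_partition_intersection} guarantees the existence of a globally coherent choice on all of $\Sbar_n$.

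For the compatibility with the gluing map $\Sbar_n \times \Sbar_m \to \Sbar_{n+m}$ in Equation \eqref{eq:gluing_discs_boundary_marked_points}, the essential observation is that attaching $z_+$ of the first factor to $z_-$ of the second introduces a new strip component carrying no odd marked points, which is collapsed in the reduced universal curve. The reduced universal curve over the image is therefore canonically the union of the two factor reduced universal curves, joined at a single node where the $C_n$ of the first factor meets the $C_1$ of the second. The bundle on $\Sbar_{n+m}$ restricts on either side to the respective factor bundles, and the identification at the new node is $E_n|_0 \cong \bC^n \cong E_1|_\infty$, exactly matching the convention used on each factor; this produces the required natural isomorphism.

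\textbf{Main obstacle.} The principal technical task is to verify continuity of the bundle across strata of $\Sbar_n$ where sphere components coalesce or additional nodes appear in families. This reduces, via Lemma \ref{lem:moduli_partition_intersection}, to pairwise comparisons at adjacent strata, where smoothing a node corresponds to a large-parameter clutching construction canonically resolved by the fixed $\bC^n$-trivializations of the $E_i$'s. The fact that each sphere component carries exactly one odd marked point is crucial here: it eliminates any need for an iterated connected-sum construction within a single component and reduces all coherence checks to the pairwise clutching case, where the requisite naturality is well-known.
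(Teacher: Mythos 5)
Your overall plan (put $E_i$ on the component carrying $z_{2i-1}$, glue at the nodes) is the right one, but there are two genuine gaps. First, to place $E_i$ on a sphere component you need a canonical parametrization by $S^2$, which requires pinning down \emph{three} points; you only specify one ($z_{2i-1}\mapsto\infty$). The paper's proof uses exactly the three special points that conditions (ii)--(iii) in the definition of $\Sbar_n$ guarantee exist: $z_{2i-1}$, the node in the direction of the strip, and either $z_{2i}$ or the node separating the component from $z_{2i}$. This last dichotomy is not a technicality — it is what makes the parametrization vary continuously over $\Sbar_n$ in the limit where a bubble carrying $z_{2i}$ forms, and it is the reason the even marked points are in the definition at all. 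Your proposal never identifies the role of the $z_{2i}$'s.

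Second, and more seriously, you never use the connections on the $E_i$, which are part of the hypothesis. Your gluing prescription "$E_i|_0$ to $E_j|_\infty$ via the fixed $\bC^n$-trivializations" implicitly assumes every node of the reduced universal curve sits at a point where the fibers are already trivialized. That holds at the basepoint configuration, but over a general point of $\Sbar_n$ another chain of bubbles may be attached to the component carrying $z_{2i-1}$ at an \emph{arbitrary} point, so after collapsing, the resulting node of the reduced curve lies at a point of $C_i$ where $E_i$ has no preferred trivialization. The paper resolves this by parallel transporting (using the connection on $E_i$) along the real axis from the strip to each node; this is the step that makes the fiber identifications vary continuously in moduli. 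Relatedly, your fallback — invoking contractibility of the strata $\Sbar_P$ to produce a "globally coherent choice" — can at best produce a bundle up to non-canonical isomorphism, whereas the lemma asserts a \emph{canonical} restriction to each component and a \emph{natural} compatibility with the gluing maps of Equation \eqref{eq:gluing_discs_boundary_marked_points}; the latter naturality is needed downstream (in the inductive compatibility of framings over the collars) and does not follow from an obstruction-theoretic existence argument.
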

  \begin{proof}
  Parametrise each sphere bubble carrying an odd marked point $z_{2i-1}$ by $S^2$, with $1$ mapping to $z_{2i-1}$, with $0$ mapping to the node connecting this component to the disc component, and with $\infty$ mapping either  $z_{2i}$, or to the node separating this component from $z_{2i}$. This determines the bundle on this component, which varies smoothly in moduli since the identification of this component with $S^2$ itself varies smoothly in the limit where a bubble approches the point $z_{2i}$. Using the connection to parallel transport along the negative real axis from the strip to each node (along the shortest chain of curves connecting them), we obtain smoothly varying maps that determine the identification of fibres at the nodes, thus defining a vector bundle over the universal curve. This construction is evidently compatible with the gluing maps.
  \end{proof}

  We can now apply this construction to the proof of the main result of this subsection:
  \begin{proof}[Proof of Proposition \ref{prop:framed-ham-flow}]
    We proceed by induction on the  integer $d$. In the inductive step, the key point is to  construct maps from the collars of  $\hat{X}(p,r)^{(d)}$ to the spaces $\Sbar_k$: each collar is associated to a stratum labelled by a sequence $(q_1, \ldots, q_{k-1})$ of intermediate orbits, and a decomposition $d = d_1 + \ldots + d_k$. Starting with the collars of the highest codimension strata, we choose a map from these collars to $\Sbar_{k+1}$, so that, for each choice of element $1 \leq i < k$, the following properties hold:
  \begin{enumerate}
  \item the inclusion of the collar of $ \hat{X}(p,q_1)^{(d_1)} \times \cdots \times     \hat{X}(q_{k-1},r)^{(d_{k})}$ in $ X(p,q_{i})^{(d_1 + \ldots + d_{i})}  \times   X(q_i,r)^{(d_{i+1} + \cdots d_{k})}$ as a codimension $1$ factor of its collar in $\hat{X}(p,r)^{(d)} $   gives rise to a commutative diagram
    \begin{equation}
      \begin{tikzcd}
        {[}0,1{]}^{i} \times [0,1]^{k-i-1} \ar[r] \ar[d] &  {[}0,1{]}^{k} \ar[d] \\
        \Sbar_{i+1} \times \Sbar_{k-i} \ar[r] & \Sbar_{k+1}.
      \end{tikzcd}
    \end{equation}
  \item 
    along the boundary facet meeting the collar of
    \begin{equation}
            X(p,q_1)^{(d_1)} \times \cdots  \times X^{(d_i + d_{i+1})}(q_{i-1}, q_{i+1})  \times  \cdots \times   X(q_{k-1},r)^{(d_k)}
          \end{equation}
          the map lie in the stratum $ \Sbar_{P}$ associated to the partition consisting of singletons, except for the pair $(2i-1,2i)$. 
        \end{enumerate}

The second property allows us to require in addition the compatibility condition that the projection map $\Sbar_{P} \to \Sbar_{n-1} $ associated to forgetting the $i$\th element intertwine the choices made on the two collars. Note that these properties implies further conditions in higher codimension, which can be made explicit by Lemma \ref{lem:moduli_partition_intersection}.

        Applying Lemma \ref{lem:family_complex_bundles_universal_curve} to this situation then yields an explicit realisation of a family of operators with Lagrangian boundary condition, whose index is the realification of the complex operator defining the virtual bundle $I^{\pi}(p,r)^{d}$ from Section \ref{sec:stable-fibr-compl}. As before, there is a deformation of this family to the trivial family, which is determined up to contractible choice by the extension of the Lagrangian boundary conditions on the disc. We choose this homotopy by induction to be compatible with the composition maps in the flow category, which is possible, despite the fact that the two bundles over stable spheres labelled by a boundary stratum do not have the same gluing maps across the nodes, by the fact that they nonetheless have canonically isomorphic index bundles and trivialisations, as in Lemma  \ref{lem:homotopy_codim_1}.

        At this stage, the remainder of the argument follows by complete analogy with the construction of the stable complex structure: we inductively choose finite dimensional vector spaces $W^{\pi}_{fr}(p,r)^{(d)}$  mapping surjectively to the cokernel of all operators in the family connecting the linearisation of the Floer equation (glued to the operator $D_p$), to the direct sum of the trivial operator with $D_r$, picking as well embeddings
        \begin{equation}
                   W^{\pi}_{fr}(p,q)^{(d_1)} \oplus  W^{\pi}_{fr}(q,r)^{(d_2)}   \to W^{\pi}_{fr}(p,r)^{(d)} 
        \end{equation}
which are compatible with triple compositions.  The direct sum of this vector space  with the Lie algebra $\mathfrak{u}_d $ then yields the desired vector space and structure maps required to establish the framing.

  \end{proof}

\subsubsection{Invariance of structured flow categories}
\label{sec:invar-struct-flow}

We complete this Appendix by discussion how to incorporate the complex structure and the framing in the discussion of Sections \ref{sec:cont-equat}--~\ref{sec:comp-cont-maps}:
\begin{proof}[Proof of Proposition \ref{prop:Hamiltonian_Flow_category}]
The essential point is to construct a stable complex structure on the stack quotient $\bar{B}_{d_-,d_+}(1)$ from Section \ref{sec:doubly-framed-curves}; this takes the form of an isomorphism
  \begin{equation}
  T  \bar{B}_{d_-,d_+}(1) \oplus \mathfrak{u}_{d_-,d_+} \oplus \bR \cong  I^B_{d_-,d_+} \oplus \bR,
\end{equation}
induced by the analogue of Equation \eqref{eq:stable_complex_base} for doubly framed curves where the $\bR$ factor on the right corresponds to the fibre of the projection map to $\bar{B}_{d_-,d_+}$, and $ I^B_{d_-,d_+}$ is a complex bundle, which is essentially the tangent space of the total space of the complex line bundle over the space of rational curves with two marked points in $\bC \bP^{(d_1)} \times \bC \bP^{(d_2)} \times \bC \bP^{(d_3)}$, associated to the tangent space of one of the marked points. Going back to our conventions for orienting flow simplices in Definition \ref{def:framed_flow_simplex-elementary}, the $\bR$ factor on the left will be interpreted as $\bR^{\{r_+\}}$ (when considering a continuation map with output $\bX_+$), while the $\bR$ factor on the right is interpreted at $\bR^{\{1\}}$ (when defining a flow $1$-simplex with initial vertex $0$ and terminal vertex $1$). The compatibility of this stable complex structure, at the boundary, with that of the moduli spaces $\bar{B}_{d}$ then follows by the same arguments as in Section \ref{sec:orient-tang-space}. The fibres of the projection map $X_{-+}(p_-,r_+) \to  \bar{B}_{d_-,d_+}(1)$ is equipped with a stable complex structure in exactly the same way as in Section \ref{sec:stable-fibr-compl}, which then induces a lift of  $\hat{\bX}_{-+}$ to an edge in $\Flow^{U}$, and similarly for  $\hat{\bX}_{+-}$.

Finally, to lift $\hat{\bX}_{-+-}$ to a simplex in $\Flow^{U}$, we have to orient its base $ \bar{B}_{d_1,d_2,d_3}(2)$ which labels equivalence classes of triply framed curves with two additional marked points lying along the real axis between the two framed point. Its tangent space is isomorphic to a direct sum 
  \begin{equation}
  T  \bar{B}_{d_1,d_2,d_3}(2) \oplus \mathfrak{u}_{d_1} \oplus \mathfrak{u}_{d_2}\oplus \mathfrak{u}_{d_3}  \oplus \bR \cong  I^B_{d_1,d_2,d_3} \oplus \bR^{2},
\end{equation}
where the $\bR^{2}$ factor on the right hand side is specifically split as the direct sum of two copies, corresponding to each of the marked points. Taking the direct sum with the isomorphisms of fibrewise tangent spaces, this provides us with the isomorphism required in Definition \ref{def:framed_flow_simplex-elementary}, and completes the proof that the flow categories associated to $H_-$ and $H_+$ are equivalent in $\Flow^U$, using as well the argument where their r\^ole is reversed.
\end{proof}

\bibliographystyle{alpha}
\bibliography{Flow_Categories.bbl}

\end{document}